\let\medskip\relax
\def\newaliasedtheorem#1[#2]#3{
	\newaliascnt{#1@alt}{#2}
	\newtheorem{#1}[#1@alt]{#3}
	\expandafter\newcommand\csname #1@altname\endcsname{#3}
}
\numberwithin{equation}{section}
\newtheoremstyle{slanted}{\topsep}{\topsep}{\slshape}{}{\bfseries}{.}{.5em}{}
\theoremstyle{plain}
\newtheorem{theorem}{Theorem}[section]
\theoremstyle{definition}
\theoremstyle{remark}
\newcommand{\setN}{\mathbb{N}}
\newcommand{\N}{\mathbb{N}}
\newcommand{\setR}{\mathbb{R}}
\newcommand{\R}{\mathbb{R}}
\newcommand{\Z}{\mathbb{Z}}
\newcommand{\eps}{\varepsilon}
\renewcommand{\epsilon}{\varepsilon}
\renewcommand{\phi}{\varphi}
\renewcommand{\bar}{\overline}
\renewcommand{\hat}{\widehat}
\renewcommand{\tilde}{\widetilde}
\newcommand{\de}{\partial}
\newcommand{\mz}{\frac{1}{2}}
\DeclareMathOperator{\Hess}{Hess}
\newcommand{\di}{\mathop{}\!\mathrm{d}}
\newcommand{\diam}{{\rm diam}}
\DeclareMathOperator{\Lip}{Lip}
\DeclareMathOperator{\Ric}{Ric}
\newcommand{\haus}{\mathscr{H}}
\newcommand{\Leb}{\mathscr{L}}
\newcommand{\dist}{\mathsf{d}}
\newcommand{\meas}{\mathfrak{m}}
\newcommand{\Test}{{\rm Test}}
\DeclareMathOperator{\CD}{CD}
\DeclareMathOperator{\RCD}{RCD}
\newfont{\tmpf}{cmsy10 scaled 2500}
\def\XXint#1#2#3{{\setbox0=\hbox{$#1{#2#3}{\int}$ }
		\vcenter{\hbox{$#2#3$ }}\kern-.6\wd0}}
\newcommand{\ang}[1]{\langle #1 \rangle}
\begin{document}

\title[Topological regularity of noncollapsed Ricci limits]{Topological regularity and stability \\ of noncollapsed spaces \\ with Ricci curvature bounded below}

\author{Elia Bru\`{e}, Alessandro Pigati, and Daniele Semola}

\address{\parbox{\linewidth}{Bocconi University, Department of Decision Sciences.\\
	Via Sarfatti 25,
	20136 Milano -- Italy\\[-4pt]\phantom{a}}}
\email{elia.brue@unibocconi.it}

\address{\parbox{\linewidth}{Bocconi University, Department of Decision Sciences.\\
	Via Sarfatti 25,
	20136 Milano -- Italy\\[-4pt]\phantom{a}}}
\email{alessandro.pigati@unibocconi.it}

\address{\parbox{\linewidth}{FIM-ETH Z\"urich.\\
	R\"amistrasse 101,
    8092 Z\"urich -- Switzerland\\[-4pt]\phantom{a}}}
\email{daniele.semola@math.ethz.ch}

\begin{abstract}
 We investigate the topological regularity and stability of noncollapsed Ricci limit spaces $(M_i^n,g_i,p_i)\xrightarrow{\rm{GH}} (X^n,\dist)$. We confirm a conjecture proposed by Colding and Naber in dimension $n=4$, showing that the cross-sections of tangent cones at a given point $x\in X^4$ are all homeomorphic to a fixed spherical space form $S^3/\Gamma_x$, and $\Gamma_x$ is trivial away from a $0$-dimensional set. In dimensions $n>4$, we show an analogous statement at points where all tangent cones are $(n-4)$-symmetric.
 
Furthermore, we prove that $(n-3)$-symmetric noncollapsed Ricci limits are topological manifolds, thus confirming a particular case of a conjecture due to Cheeger, Colding, and Tian.

Our analysis relies on two key results, whose importance goes beyond their applications in the study of cross-sections of noncollapsed Ricci limit spaces:

    \begin{itemize}
        \item[(i)] A new manifold recognition theorem for noncollapsed $\RCD(-2,3)$ spaces.

        \item[(ii)] A cone rigidity result ruling out noncollapsed Ricci limit spaces of the form $\R^{n-3}\times C(\mathbb{RP}^2)$.
    \end{itemize}
\end{abstract}

\maketitle

\tableofcontents

\section{Introduction}

We consider {\it noncollapsed Ricci limit spaces}, i.e., pointed Gromov--Hausdorff limits of smooth complete $n$-dimensional manifolds 
\begin{equation}\label{Riccilimits}
(M_i^n, g_i ,p_i) \to (X, \dist, p)\, ,
\end{equation}
with a uniform lower bound on the Ricci curvature and on the volume of balls
\begin{equation}
\Ric_{g_i}\ge -(n-1)\, ,\quad \mathrm{vol}_{g_i}(B_1(p_i))\ge v>0\, .
\end{equation}
By Gromov's compactness theorem, limit spaces always exist in the category of metric spaces, even dropping the uniform noncollapsing assumption $\mathrm{vol}_{g_i}(B_1(p_i))\ge v>0$. The systematic study of their structure and regularity began in the nineties with the works of Cheeger and Colding \cite{CheegerColding96, CheegerColding97I, CheegerColding00II,Coldingvol}, with earlier insights due to Fukaya \cite{Fukaya87}, and Anderson \cite{Anderson90},
and it continues to be an active research field today: see for instance \cite{ColdingNaberHolder,ColdingNabercones,CheegerJiangNaber,PanWei,BrueNaberSemolabdry}.

\medskip

By the volume convergence theorem (see \cite[Theorem 5.9]{CheegerColding97I} after \cite{Coldingvol}), the volume measures $\mathrm{vol}_{g_i}$ converge to the $n$-dimensional Hausdorff measure $\haus^n$ of $(X,\dist)$ in the above setting. The latter plays a central role in the fine analysis of the structure of noncollapsed Ricci limit spaces. Indeed, the ``volume cone implies metric cone'' theorem from \cite{CheegerColding96} can be employed to show that blow-ups at any point $x\in X$
  \begin{equation}
  	(X,r^{-1}_j\dist,x ) \to
  	(C(Z), \dist, o) \, , \quad r_j\downarrow 0 \, ,
  \end{equation}
  are {\it metric cones} with tip point $o\in C(Z)$. The Euclidean symmetries of tangent cones can be used in conjuction with the splitting theorem from \cite{CheegerColding96} to break up the space $X$ into a {\it regular set} $\mathcal{R}(X)$, where all tangents are isometric to $\R^n$, and a family of {\it singular strata} $\mathcal{S}^k(X)$ with $\mathcal{S}^{k-1}(X)\subseteq \mathcal{S}^k(X)$ for $k = 0, \ldots, n-1$, where no tangent cone splits a Euclidean factor $\setR^{k+1}$.
  It is known since \cite{CheegerColding97I} that $\mathcal{S}^{n-1}(X)\setminus \mathcal{S}^{n-2}(X) = \emptyset$ if the manifolds $M^n_i$ have empty boundaries, and $\dim_{\mathcal{H}} \mathcal{S}^k(X) \le k$ for every $k\le n-1$. Furthermore, a Reifenberg-type result of Cheeger and Colding \cite[Theorem A.1.1]{CheegerColding97I} shows that there is an open neighbourhood of $\mathcal{R}(X)$ which is biH\"older homeomorphic to a smooth manifold. By the more recent quantitative estimates on the singular strata due to Cheeger, Jiang, and Naber the complement of this manifold set can be taken to be $(n-2)$-rectifiable with locally finite $\mathcal{H}^{n-2}$-measure \cite[Theorem 1.14]{CheegerJiangNaber}.

\medskip

 Conjecturally, noncollapsed Ricci limit spaces might be homeomorphic to manifolds away from a closed subset of Hausdorff codimension at least $4$: see
\cite[Conjecture 0.7]{CheegerColding97I}, \cite[Remark 10.23]{CheegerFermi} and \cite[Remark 1.19]{CheegerColdingTian}.

The case $n=2$ in this conjecture is classical and originally due to Alexandrov. We note that if $n=2$ the statement follows also from Perelman's stability theorem \cite{Perelman99}, which deals with noncollapsed limits with a uniform lower bound on the sectional curvature in any dimension.

In dimension $n=3$, there is again a complete understanding of the topology of noncollapsed Ricci limit spaces. In \cite{Zhu93}, Zhu proved that noncollapsed three-dimensional Ricci limits are homology manifolds, based on the uniform local contractibility of smooth $3$-manifolds with lower Ricci and volume bounds and the methods in \cite{Petersen90}. More recently, Simon \cite{Simon14} and Simon--Topping \cite{SimonTopping22a,SimonTopping22} have shown that three-dimensional noncollapsed Ricci limit spaces are biH\"older homeomorphic to smooth Riemannian manifolds, using Ricci flow techniques.

If $n\ge 4$, there is a gap of two dimensions between the conjectural picture about the topological regularity of noncollapsed Ricci limit spaces and the present state of the art.

 A variant of this conjecture for noncollapsed limits of manifolds with two-sided bounds on the Ricci curvature (see for instance \cite[Conjecture 2.3]{Anderson93}) became known as the \emph{codimension four conjecture} and was settled by Cheeger and Naber in \cite{CheegerNaber15}.

\subsection{Topological regularity and stability of tangent cones}

The first main result of this paper concerns the topological regularity and stability of cross-sections of tangent cones of $4$-dimensional noncollapsed Ricci limit spaces.

\begin{theorem}[Tangent cones of $4$-dimensional limits]
\label{cor:4d}
Let $(M_i^4, g_i ,p_i)\to (X^4,\dist,p)$ be a $4$-dimensional noncollapsed Ricci limit space.
For each $x\in X$ there exists a topological $3$-manifold $\Sigma_x$ with universal cover homeomorphic to $S^3$ such that all tangent cones at $x$ have cross-section homeomorphic to $\Sigma_x$. Furthermore, the set of points $x\in X$ such that $\Sigma_x$ is not homeomorphic to $S^3$ has Hausdorff dimension $0$. 
\end{theorem}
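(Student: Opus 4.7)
My plan is to combine the manifold recognition theorem (i), Bonnet--Myers for $\RCD$ spaces, and Perelman's solution of the Poincar\'e conjecture to identify each individual tangent cone cross-section, and then to run a stability argument across scales to conclude that the homeomorphism type depends only on $x$. Concretely, any tangent cone at $x \in X^4$ has the form $C(Z)$ with $Z$ compact, $3$-dimensional, and noncollapsed $\RCD(2,3)$. Applying (i) to small balls in $Z$, I would conclude that $Z$ is a topological $3$-manifold. The universal cover $\tilde Z$ is again an $\RCD(2,3)$ topological $3$-manifold; by Bonnet--Myers it has finite diameter, hence $\pi_1(Z)$ is finite and $\tilde Z$ is a compact, simply connected topological $3$-manifold. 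By the Poincar\'e conjecture, $\tilde Z$ is homeomorphic to $S^3$, establishing both individual claims about $\Sigma_x$.

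Next, to show scale independence, I would argue as follows. For sufficiently small $r>0$, the rescaled annular region $\{y : r/2 \leq \dist(y,x) \leq 2r\}$ is Gromov--Hausdorff close to the corresponding annulus of some tangent cone, which by the previous step is homeomorphic to a product $Z \times [1/2,2]$. A topological stability result for noncollapsed $\RCD(-3,4)$ spaces at tangent-cone-manifold points --- derivable from the quantitative version of (i) in the spirit of Perelman's stability theorem --- yields that all such annuli are mutually homeomorphic once $r$ is small enough, in fact to $\Sigma_x \times [1/2,2]$ for a fixed topological $3$-manifold $\Sigma_x$. Passing to the limit, every tangent cone at $x$ has cross-section homeomorphic to $\Sigma_x$.

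For the exceptional set, if $x \notin \mathcal{S}^0(X)$ then some tangent cone splits a Euclidean factor and can be written as $\R^k \times C(Y) = C(S^{k-1} \ast Y)$ for some $k \geq 1$ and a compact connected cross-section $Y$. The spherical join $S^{k-1} \ast Y$ is a (possibly iterated) suspension of a connected space, hence simply connected; by the first step and the Poincar\'e conjecture it is homeomorphic to $S^3$. Combined with scale independence, this gives $\Sigma_x \cong S^3$. Therefore $\{x : \Sigma_x \not\cong S^3\} \subseteq \mathcal{S}^0(X)$, which has Hausdorff dimension zero.

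The main obstacle will be the scale-independence step: upgrading the pointwise manifold recognition (i) to a uniform homeomorphism type across all scales at a fixed point requires a quantitative topological stability theorem for noncollapsed $\RCD$ spaces with $3$-manifold cross-sections under Gromov--Hausdorff convergence. I expect this to be the technical heart of the proof and to rely essentially on (i); the cone rigidity (ii) should moreover enter implicitly to ensure that no cross-section of the suspension form $\R \times C(\mathbb{RP}^2)$ can obstruct the topological identification in intermediate arguments.
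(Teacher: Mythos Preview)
There are two genuine gaps. First, in your identification step you write ``applying (i) to small balls in $Z$, I would conclude that $Z$ is a topological $3$-manifold,'' but the manifold recognition theorem has a hypothesis: every tangent cone of $Z$ must have cross-section homeomorphic to $S^2$. A priori the cross-section of a tangent cone of $Z$ is an $\RCD(1,2)$ surface without boundary, hence $S^2$ or $\mathbb{RP}^2$, and ruling out $\mathbb{RP}^2$ is exactly where the cone rigidity result (ii) enters: one observes that if $C(\Sigma)$ is a tangent cone of $Z$ at $z$, then $\R\times C(\Sigma)$ arises as a pointed limit of rescalings of the original $M_i^4$, so (ii) forces $\Sigma\cong S^2$. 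You relegate (ii) to an ``implicit'' role in later steps, but in the paper's argument it is the key input precisely here.

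Second, your scale-independence step proposes a $4$-dimensional topological stability theorem for annular regions of $X$, ``derivable from the quantitative version of (i).'' No such $4$-dimensional stability is available: the annuli in $X^4$ need not be topological manifolds, and (i) concerns $3$-dimensional spaces. The paper avoids this entirely by working directly with the $3$-dimensional cross-sections: the set $\mathcal{C}_x$ of cross-sections at $x$ is compact and connected in the Gromov--Hausdorff topology with constant $\haus^3$-volume; each element is an $\RCD(2,3)$ topological $3$-manifold (by the first step); uniform local contractibility (\autoref{thm:unifcontrtopRCD3}) plus the $3$-dimensional topological stability theorem (\autoref{thm:topstabintro}) then show that homeomorphism type is locally constant on $\mathcal{C}_x$, hence constant by connectedness. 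Your treatment of the exceptional set is essentially correct and in fact slightly slicker than the paper's (you use that a suspension of a connected space is simply connected rather than re-invoking cone rigidity), but it relies on already knowing that the cross-section is a $3$-manifold, which brings you back to the first gap.
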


\autoref{cor:4d} confirms in particular a conjecture by Colding and Naber \cite[Conjecture 1.2]{ColdingNabercones} in dimension $n=4$. An analogous topological regularity and stability statement holds for blow-downs of smooth complete $4$-manifolds with nonnegative Ricci curvature and Euclidean volume growth, with the very same proof.

\begin{remark}[Eguchi--Hanson metric]\label{rm:EguchiHanson}
It is well known that in dimension $n=4$ a noncollapsed Ricci limit space might have points where the cross-section of the tangent cone is not homeomorphic to $S^3$. 
A classical example is the blow-down of the Eguchi--Hanson metric \cite{EguchiHanson},  which is a complete Ricci flat metric $g$ with Euclidean volume growth over the cotangent bundle of $S^2$. The blow-down of this manifold is the metric cone over $\mathbb{RP}^3$ where $\mathbb{RP}^3$ is endowed with a metric with constant sectional curvature $1$. The example also shows that $4$ would be the sharp codimension for the non-manifold set of noncollapsed Ricci limit spaces.
\end{remark}

Analogously, the blow-down of the product between the Eguchi--Hanson metric and a line is a metric cone with cross-section the suspension over $\mathbb{RP}^3$, which is not a topological manifold. In particular, the topological manifold regularity of cross-sections does not extend to dimensions $n\ge 5$.

\begin{remark}[Colding--Naber example]\label{rm:exCN}
In dimension $n=5$, Colding and Naber \cite{ColdingNabercones} have constructed a noncollapsed Ricci limit space $(X^5,\dist)$ such that at a point $x\in X$ there are two distinct tangent cones with non-homeomorphic cross-sections.
In particular, the topological stability part of \autoref{cor:4d} is dimensionally sharp, in the sense that it does not generalize to noncollapsed Ricci limits of dimensions $n\ge 5$.
\end{remark}

In higher dimensions $n>4$, we can partially settle \cite[Conjecture 1.2]{ColdingNabercones}, as follows.

\begin{theorem}\label{thm:omeo n-4 sym}
Let $(X^n,\dist)$ be a noncollapsed Ricci limit space of dimension $n\ge 4$.
\begin{itemize}
    \item[(i)] If $\R^{n-4}\times C(Z^3)$ is an $(n-4)$-symmetric tangent cone at $x\in X$, then $(Z^3,\dist_Z)$ is homeomorphic to a topological $3$-manifold whose universal cover is $S^3$.

    \item[(ii)] If all tangent cones at $x\in X$ are $(n-4)$-symmetric, i.e., each one is isometric to $\mathbb{R}^{n-4}\times C(Z)$ for some metric space $Z$, then all the cross-sections $Z$ must be homeomorphic to each other.
\end{itemize}
\end{theorem}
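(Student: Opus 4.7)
For part (i), the plan is a direct reduction to \autoref{cor:4d}. Any tangent cone of a noncollapsed Ricci limit space is itself a noncollapsed Ricci limit of the same dimension, and the factor $C(Z^3)$ can be realised as a $4$-dimensional noncollapsed Ricci limit by peeling off the Euclidean factor (using the behaviour of the splitting under GH limits, or passing, if needed, to the $\RCD$ formulation of \autoref{cor:4d}, which is expected to hold by the same proof). Since metric cones are self-similar under dilations about the vertex, $C(Z^3)$ arises as a tangent cone at its own tip $o$, so applying \autoref{cor:4d} at $o$ identifies the cross-section $Z^3$ with a topological $3$-manifold whose universal cover is $S^3$.

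For part (ii), the strategy is to combine Colding--Naber's connectedness theorem for the space of tangent cones at $x$ with a topological stability statement for noncollapsed $\RCD(2,3)$ spaces. By Colding--Naber, the set $\mathcal{T}_x$ of tangent cones at $x$ is connected in the pointed Gromov--Hausdorff topology. By hypothesis every element of $\mathcal{T}_x$ has the form $\R^{n-4}\times C(Z)$, and the cross-section $Z$ is canonically determined by the cone (as the link of the maximally symmetric ``spine'' $\R^{n-4}$) and depends continuously in GH. By part (i), each such $Z$ is a closed topological $3$-manifold with universal cover $S^3$.

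The proof then reduces to showing that the map $\mathcal{T}_x\to\{\text{homeomorphism types of closed $3$-manifolds}\}$ sending a tangent cone to the homeomorphism class of its cross-section is locally constant. Since the target is discrete, combined with connectedness of $\mathcal{T}_x$ this yields constancy, hence all cross-sections are pairwise homeomorphic. Local constancy in turn follows from a topological stability statement: if $W$ is a noncollapsed $\RCD(2,3)$ space sufficiently close in GH to a compact topological $3$-manifold $Z$, then $W$ is homeomorphic to $Z$. This should be derived from the manifold recognition theorem for $\RCD(-2,3)$ spaces announced as item (i) of the abstract, together with a uniform lower bound on the ``regularity scale'' along the compact $Z$ that allows one to glue the local charts produced by the recognition theorem into a global homeomorphism stably under small GH perturbations.

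The main obstacle is exactly this topological stability statement, an $\RCD$ analogue of Perelman's Alexandrov stability theorem, specialised to the case in which the limit is a topological $3$-manifold. Upgrading the qualitative manifold recognition theorem to a quantitative, stability-type result requires uniformity of the recognition scale across $Z$, together with some control on the way the local charts depend on the base space; this is where the noncollapsing hypothesis and the uniform lower Ricci bound have to be exploited in a truly effective way.
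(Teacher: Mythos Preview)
Your approach to part (ii) is essentially the paper's: connectedness of the space of (iterated) cross-sections at $x$, combined with a topological stability theorem for noncollapsed $\RCD(2,3)$ manifolds (which the paper states and proves as \autoref{thm:topstabintro}, via uniform local contractibility, Petersen's homotopy stability, and Jakobsche's $\alpha$-approximation theorem together with the Poincar\'e conjecture). One point you gloss over is that the cross-sections all have the \emph{same} $\haus^3$-volume (this comes from Bishop--Gromov and is recorded in \autoref{lemma:crossconn}); without a uniform volume lower bound you cannot invoke uniform local contractibility, and hence not the stability theorem. Also, the claim that the cross-section $Z$ is canonically determined by the cone and varies continuously needs the injectivity of the iterated suspension map, which the paper isolates as \autoref{susp}.

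Your approach to part (i), however, has a genuine gap. You want to peel off the $\R^{n-4}$ factor and regard $C(Z^3)$ as a $4$-dimensional noncollapsed Ricci limit, then apply \autoref{cor:4d}. But it is \emph{not known} that a factor of a noncollapsed Ricci limit is itself a Ricci limit; the paper explicitly flags this as open (see the discussion after \autoref{thm:top n-3} and \autoref{q:smoothing}). Your fallback of ``passing to the $\RCD$ formulation of \autoref{cor:4d}'' does not work either: the proof of \autoref{cor:4d} goes through \autoref{thm:noRP2}, whose argument genuinely uses the smooth approximating sequence $M_i^n$ (the good Green-sphere is shown to bound a compact $3$-submanifold of $M_i$, forcing even Euler characteristic and ruling out $\mathbb{RP}^2$). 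There is no purely $\RCD$ version of this step.

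The paper circumvents this by never peeling off the factor at the level of the limit. Instead, for any $z\in Z$ and any tangent cone $C(\Sigma)$ of $Z$ at $z$, one observes that $\R^{n-3}\times C(\Sigma)$ arises as a pointed limit of suitable rescalings of the \emph{original smooth} manifolds $M_i^n$ (a diagonal argument). Then \autoref{thm:noRP2} applies directly in dimension $n$ and gives $\Sigma\cong S^2$. This verifies the hypothesis of the manifold recognition \autoref{thm:RCDtopma} for $Z$ itself, yielding that $Z$ is a topological $3$-manifold; the universal cover statement then follows from Poincar\'e.
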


\begin{remark}
A conjecture by Naber \cite[Conjecture 2.16]{Naberconj} predicts that for a noncollapsed Ricci limit space $(X^n,\dist)$ all tangent cones at a given point should be $k$-symmetric away from a set of Hausdorff dimension less than $k-1$. If confirmed, in combination with \autoref{thm:omeo n-4 sym} (ii), this would establish \cite[Conjecture 1.2]{ColdingNabercones}.
\end{remark}

\subsection{Topology of $(n-3)$-symmetric limits}

As we already mentioned, when $n\ge 4$ there is a gap of two dimensions between the conjectural topological manifold regularity away from codimension $4$ for noncollapsed Ricci limits and the present state of the art. Four would be the sharp codimension for the non-manifold set, as \autoref{rm:EguchiHanson} illustrates. We note also that Menguy constructed examples of noncollapsed Ricci limits in dimension $4$ which are not topological manifolds even though all tangent cones are homeomorphic to $\setR^4$, see \cite[Theorem 0.6]{Menguy inftop}.

\medskip

Our next results represent partial progress towards the conjectural topological regularity of noncollapsed Ricci limits away from sets of codimension $4$. As a first step, we can rule out the existence of topological singularities of the form $\mathbb{R}^{n-3} \times C(\mathbb{RP}^2)$.

\begin{theorem}\label{thm:noRP2}
Let $(M_i^n, g_i, p_i)\to (X^n,\dist, p)$ be a noncollapsed Ricci limit space with $n\ge 3$. Assume that $X^n =\setR^{n-3}\times C(Z^2)$ is an $(n-3)$-symmetric cone. Then $(Z^2,\dist_Z)$ is homeomorphic to the $2$-sphere $S^2$.  
\end{theorem}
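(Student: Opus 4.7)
The plan is to deduce the statement from the manifold recognition theorem for noncollapsed $\RCD(-2,3)$ spaces (item (i) in the introduction), applied to the factor $C(Z^2)$, after first promoting $C(Z^2)$ to a noncollapsed $3$-dimensional Ricci limit.

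The space $\setR^{n-3}\times C(Z^2)$ is a noncollapsed $\RCD(0,n)$ metric measure cone. By the splitting theorem together with the uniqueness of the Euclidean-factor decomposition in the noncollapsed RCD category, the second factor $C(Z^2)$ is a noncollapsed $\RCD(0,3)$ space (hence in particular $\RCD(-2,3)$), and $Z^2$ is a compact $2$-dimensional noncollapsed $\RCD(1,2)$ space of diameter at most $\pi$. A diagonal argument on the smooth approximating manifolds $M_i^n\to\setR^{n-3}\times C(Z^2)$, combined with an appropriate slicing or iterated cone-splitting procedure, would then realize $C(Z^2)$ itself as a noncollapsed Ricci limit of smooth Riemannian $3$-manifolds.

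Applying (i) to $C(Z^2)$ yields that it is a topological $3$-manifold. Consequently, the cone tip $o\in C(Z^2)$ is a topological manifold point, so a small conical neighborhood of $o$ is homeomorphic to an open $3$-ball, and its boundary is homeomorphic to $Z^2$. Since $Z^2$ is a closed topological surface (via the known structure theory for $2$-dimensional noncollapsed RCD spaces as Alexandrov-type surfaces), the only possibility compatible with being the link of a manifold point in a $3$-manifold is $Z^2\cong S^2$, which is the desired conclusion.

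The step I expect to be the main obstacle is the promotion of $C(Z^2)$ to a noncollapsed Ricci limit of smooth $3$-manifolds: the naive attempt via tangent cones of $\setR^{n-3}\times C(Z^2)$ at a point on the Euclidean axis just reproduces the whole space and does not reduce dimension, so a more refined slicing or cone-splitting approximation argument is required to isolate the $3$-dimensional smooth Ricci-limit structure on $C(Z^2)$. Once this is available, (i) and the elementary topological computation close the argument; one should also verify that the hypotheses of (i) genuinely use the Ricci-limit (and not just RCD) structure, since $C(\mathbb{RP}^2)$ itself is a noncollapsed $\RCD(0,3)$ cone and thus must be excluded by something beyond the synthetic curvature-dimension bound alone.
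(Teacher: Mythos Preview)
Your proposal has two genuine gaps, and the paper's argument proceeds along an entirely different route.

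\textbf{Gap 1: the promotion step is an open problem.} You correctly flag the reduction of $C(Z^2)$ to a noncollapsed $3$-dimensional Ricci limit as the main obstacle, but you offer no mechanism for it. The paper explicitly records (just after \autoref{thm:top n-3}) that it is presently unknown whether a factor $Z^3$ in a splitting $\setR^{n-3}\times Z^3$ of a noncollapsed $n$-dimensional Ricci limit is itself a noncollapsed $3$-dimensional Ricci limit when $n\ge 4$. Naive attempts via tangent cones or cone-splitting, as you note, reproduce the whole space without reducing dimension. The paper's Sections~4--7 (Green-type distance, slicing theorem, topology of good level sets) are developed precisely to bypass this reduction.

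\textbf{Gap 2: circularity with the manifold recognition theorem.} Even granting the promotion, the manifold recognition theorem (\autoref{thm:RCDtopma}) has as its hypothesis that every tangent cone has cross-section homeomorphic to $S^2$. At the tip $o\in C(Z^2)$ the unique tangent cone is $C(Z^2)$ itself, with cross-section $Z^2$; so the hypothesis you must verify is exactly $Z^2\cong S^2$, which is the conclusion you seek. Your last paragraph shows you sense this, but it is not resolved. (If instead you invoke Simon--Topping or Zhu directly, you are back to needing Gap~1.) In the paper's logical architecture the dependence runs the other way: \autoref{thm:noRP2} is used to \emph{verify} the tangent-cone hypothesis of \autoref{thm:RCDtopma} in applications, not the reverse.

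\textbf{What the paper actually does.} The proof stays inside the smooth $n$-manifolds $M_i^n$. One builds $(n-3)$ harmonic almost-splitting functions $v_i$ together with a Green-type distance $b_{p_i}$, and sets $u_i:=\sqrt{b_{p_i}^2-|v_i|^2}$. The slicing theorem (\autoref{trans}) produces values $(x_i,y_i)$ whose level sets $\Sigma_i:=\{(v_i,u_i)=(x_i,y_i)\}\subset M_i^n$ are good: $(v_i,u_i)$ is $\eps$-splitting (after a linear transformation) at every point of $\Sigma_i$ and every small scale. \autoref{prop:topsurf} then shows each $\Sigma_i$ is a closed, uniformly locally contractible topological surface. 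One checks $\Sigma_i\to Z$ in the Gromov--Hausdorff sense, so by Petersen's theorem $\Sigma_i$ is eventually homeomorphic to $Z$. Finally, by Sard, $\Sigma_i$ bounds a compact $3$-submanifold $\{u_i\le y_i\}\cap\{v_i=x_i\}$ of $M_i^n$, hence has even Euler characteristic; since $Z$ is either $S^2$ or $\mathbb{RP}^2$, this forces $Z\cong S^2$. The smooth ambient structure enters only at the very end, through the existence of this bounding $3$-manifold.
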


We note that the case $n=3$ in \autoref{thm:noRP2} follows already from \cite{Zhu93}, or alternatively from \cite{SimonTopping22}, in combination with the work of Lytchak--Stalder \cite{LytchakStadler}. However, their techniques do not seem to adapt to the case $n\ge 4$. In the case of noncollapsed limits with two-sided bounds on the Ricci curvature, the analogous result is one of the key steps towards the resolution of the codimension four conjecture: see in particular \cite[Theorem 5.12]{CheegerNaber15}.
\medskip

By relying on \autoref{thm:noRP2} and the manifold recognition \autoref{thm:RCDtopma} that we shall discuss in the next section, we can prove that \cite[Conjecture 0.7]{CheegerColding97I} is verified for $(n-3)$-symmetric noncollapsed Ricci limit spaces. 

\begin{theorem}\label{thm:top n-3}
Let $(M_i^n, g_i, p_i)\to (X^n,\dist, p)$ be a noncollapsed Ricci limit space with $n\ge 3$. 
Assume that $X^n =\setR^{n-3}\times Z^3$ as metric measure spaces.
Then $Z^3$ is homeomorphic to a topological $3$-manifold. 
\end{theorem}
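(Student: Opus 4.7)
The plan is to combine the cone rigidity result \autoref{thm:noRP2} with the manifold recognition theorem for noncollapsed $\RCD(-2,3)$ spaces listed as key result (i) in the introduction. Since $X^n = \setR^{n-3}\times Z^3$ as metric measure spaces and $X^n$ is noncollapsed of dimension $n$, the factor $Z^3$ inherits the structure of a noncollapsed $\RCD$ space of essential dimension $3$; after a standard local rescaling one may assume it is $\RCD(-2,3)$ on bounded scales, which is enough since the topological manifold property is local. By the noncollapsed volume-cone-implies-metric-cone theorem, every tangent cone to $Z^3$ at any point $z$ is then a metric cone $C(Y_z)$ whose cross-section $Y_z$ is $2$-dimensional.

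Next, I would use the fact that tangent cones of noncollapsed Ricci limits are themselves noncollapsed Ricci limits, via a diagonal extraction from the smooth approximating manifolds of $X^n$. Fix $z \in Z^3$ and a scaling sequence $r_j \downarrow 0$ realising $C(Y_z)$ as a tangent cone of $Z^3$ at $z$. Along the same scaling sequence, the tangent cone to $X^n$ at $(0, z)$ is $\setR^{n-3} \times C(Y_z)$, which is thus an $(n-3)$-symmetric cone and a noncollapsed Ricci limit. Applying \autoref{thm:noRP2} to this tangent cone yields that $Y_z$ is homeomorphic to $S^2$.

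The previous step shows that at every $z \in Z^3$, every tangent cone has cross-section homeomorphic to $S^2$. This is exactly the hypothesis for the manifold recognition theorem of key result (i), which then implies that $Z^3$ is homeomorphic to a topological $3$-manifold.

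The main obstacle is to guarantee that the $(n-3)$-symmetric form of the tangent cone to $X^n$ at $(0, z)$ is available uniformly along every blow-up sequence and for every $z$, so that \autoref{thm:noRP2} yields $Y_z \cong S^2$ for all tangent cones of $Z^3$. This is precisely where the rigidity of the global metric measure splitting $X^n = \setR^{n-3} \times Z^3$ plays the decisive role: blow-ups at points of the form $(0, z)$ retain the $\setR^{n-3}$ factor exactly, so $(n-3)$-symmetry comes for free and \autoref{thm:noRP2} can be invoked without any further approximation argument. Without a global splitting, one would only have almost $(n-3)$-symmetry in tangent cones and the reduction to \autoref{thm:noRP2} would become substantially more delicate.
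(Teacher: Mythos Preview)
Your proposal is correct and follows essentially the same route as the paper's proof: establish that $Z^3$ is a noncollapsed $\RCD$ space of dimension $3$, use the exact $\setR^{n-3}$ factor together with a diagonal extraction to realise $\setR^{n-3}\times C(Y_z)$ as a noncollapsed Ricci limit for every tangent cone $C(Y_z)$ of $Z^3$, apply \autoref{thm:noRP2} to get $Y_z\cong S^2$, and conclude via \autoref{thm:RCDtopma}. The paper cites \cite[Proposition~2.15]{DePhilippisGigli2} for the fact that $Z^3$ is $\RCD(-(n-1),3)$, and your remark that a local rescaling reduces this to the $\RCD(-2,3)$ hypothesis of the recognition theorem is exactly right.
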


In particular we recover, with a different proof, the topological regularity of noncollapsed three-dimensional Ricci limit spaces originally obtained in \cite{Simon14,SimonTopping22}. In this regard, we note that the methods in \cite{Simon14,SimonTopping22a,SimonTopping22} heavily exploit the invariance of lower Ricci curvature bounds under Ricci flow, which is very much specific to dimension three. Moreover, it is presently an open question whether any metric space $(Z^3,\dist_Z)$ as in the statement of \autoref{thm:top n-3} is a noncollapsed three-dimensional Ricci limit space when $n\ge 4$.

\subsection{The topology of three-dimensional $\RCD$ spaces}

In analogy with the theory of Alexandrov spaces with sectional curvature bounded from below, a {\it synthetic approach} to the study of metric measure spaces with Ricci curvature bounded from below was put forward in the seminal works of Sturm \cite{Sturm07I,Sturm07II} and independently Lott--Villani \cite{LottVillani}. The approach is based on the interplay between lower bounds on the Ricci curvature and the Optimal Transport problem, and the resulting spaces are known as $\CD$ spaces, standing for \emph{Curvature-Dimension}. 
More recently, the class of metric measure spaces satisfying the \emph{Riemannian Curvature-Dimension} condition $\RCD$ has attracted significant attention. We address the reader to the survey papers \cite{AmbrosioICM,GigliSurvey,SturmECM} and to the references therein for an overview of the subject.

\medskip
	
  Recently, De Philippis and Gigli introduced the notion of noncollapsed $\RCD$ space in \cite{DePhilippisGigli2}. 
  An $\RCD(K,n)$ metric measure space $(X,\dist,\meas)$ is said to be noncollapsed (or $n$-dimensional) if $\meas=\haus^n$. The class of noncollapsed $\RCD(K,n)$ spaces includes all $n$-dimensional noncollapsed Ricci limit spaces and the inclusion is strict. For instance, the metric cone $C(\mathbb{RP}^2)$ over a projective plane endowed with a metric of constant sectional curvature $1$ is a noncollapsed $\RCD(0,3)$ space which is not a noncollapsed Ricci limit space, by \cite{Zhu93} or alternatively \cite{SimonTopping22}.

  On the other hand, combining the work of Ketterer \cite{Ketterer15} with the results of \cite{DePhilippisGigli,DePhilippisGigli2}, we understand that the cross-section of each tangent cone to a noncollapsed $\RCD(K,n)$ space is a noncollapsed $\RCD(n-2,n-1)$ space. In particular, this is the case for cross-sections of tangent cones of noncollapsed Ricci limit spaces, while such cross-sections are not known to be Ricci limit spaces themselves. 
  This statement motivates our interest in the topological regularity and stability of three-dimensional $\RCD$ spaces.

  In this regard, we introduce three fundamental new results: a manifold recognition theorem for $3$-dimensional $\RCD$ spaces, the uniform local contractibility of noncollapsed $\RCD(-2,3)$ spaces that are topological manifolds, and a topological stability theorem under Gromov--Hausdorff convergence within the same class.

\begin{theorem}[Manifold recognition]\label{thm:RCDtopma}
Let $(X,\dist,\haus^3)$ be an $\RCD(-2,3)$ space. Then $(X,\dist)$ is a topological $3$-manifold without boundary if and only if each point of $X$ has a tangent cone with cross-section homeomorphic to $S^2$.
\end{theorem}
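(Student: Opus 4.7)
The plan is to treat the two implications separately. The \textbf{(only if)} direction is the easier one: assume $X$ is a topological $3$-manifold without boundary and fix $x \in X$. Any sub-sequential tangent cone $C(Z_x)$ exists by noncollapsing, and by \cite{Ketterer15} combined with \cite{DePhilippisGigli2} its cross-section $Z_x$ is a closed noncollapsed $\RCD(1,2)$ space. The $2$-dimensional classification identifies $Z_x$ with a topological closed surface, hence with $S^2$ or $\mathbb{RP}^2$. Using that a small distance sphere in $X$ bounds a topological $3$-ball by the manifold hypothesis, a degree/cohomology argument on the GH-approximating spheres rules out $Z_x \cong \mathbb{RP}^2$ and yields $Z_x \cong S^2$ for at least one tangent cone.

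The heart of the theorem is the \textbf{(if)} direction. Fix $x \in X$ together with a tangent cone $C(Z_x)$ satisfying $Z_x \cong S^2$. The goal is to construct an open neighborhood of $x$ homeomorphic to $\R^3$. First I would upgrade the topological information via the $2$-dimensional structure theory: $Z_x \cong S^2$ together with $\RCD(1,2)$ forces $Z_x$ to be a spherical Alexandrov surface (curvature $\ge 1$), and hence $C(Z_x)$ is topologically $\R^3$. Then I would choose a small scale $r > 0$ at which the rescaled ball in $(X, r^{-1}\dist, x)$ is close in pGH distance to the unit ball of $C(Z_x)$, and use the good regularity of $\dist(x,\cdot)$ available on $\RCD(-2,3)$ spaces.

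The central step is to show that for a generic radius $s \in (r/2, r)$, the distance level set $\partial B_s(x)$ is a topological $2$-sphere. The idea is to view $\partial B_s(x)$ as a noncollapsed $2$-dimensional metric space inheriting an $\RCD$-like structure, apply the $2$-dimensional topological classification to it, and combine this with pGH proximity to $Z_x \cong S^2$. Here the openness of the topological type $S^2$ within the class of noncollapsed $\RCD(1,2)$ surfaces (vs.\ $\mathbb{RP}^2$) is essential, as is a cone-rigidity tool in the spirit of (ii) from the introduction specialized to $n = 3$, which should forbid intermediate scales from looking like $C(\mathbb{RP}^2)$. Once $\partial B_s(x) \cong S^2$ is established, the radial gradient flow of $\dist(x,\cdot)$ sweeps $B_s(x)$ by $2$-spheres down to $x$, producing a homeomorphism between $B_s(x)$ and the open unit ball of $\R^3$. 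Since $x$ was arbitrary, $X$ is a topological $3$-manifold without boundary.

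\textbf{Main obstacle.} The hardest ingredient is transferring the topological $S^2$ type from a single abstract tangent cone to an actual distance level set $\partial B_s(x)$ at a finite good scale. In the Alexandrov regime this would follow from Perelman's stability theorem, which is unavailable in the RCD framework. The argument will thus have to combine the special $2$-dimensional rigidity of cross-sections, the noncollapsing hypothesis $\meas=\haus^3$, and a cone-rigidity input that prevents $\mathbb{RP}^2$-type tangent cones from appearing along the chosen blow-up sequence and obstructing the construction of a chart at $x$.
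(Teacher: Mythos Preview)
Your plan contains several genuine gaps that would prevent the argument from going through.

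\textbf{Distance spheres and gradient flow do not have the needed regularity.} The central step of your (if) direction relies on showing that $\partial B_s(x)$ is a topological $2$-sphere and then sweeping $B_s(x)$ by level sets of $\dist(x,\cdot)$ via its radial gradient flow. Neither ingredient is available on an $\RCD(-2,3)$ space: distance level sets are not known to be topological surfaces, they do not inherit any $\RCD$-like structure that would let you invoke the $2$-dimensional classification, and there is no well-defined gradient flow of $\dist(x,\cdot)$ producing a homeomorphism. The paper circumvents this entirely by replacing $\dist(x,\cdot)$ with a \emph{Green-type distance} $b_p$ (Section~\ref{sec:Green}) and proving a slicing theorem (\autoref{trans n=3}) guaranteeing that, for \emph{generic} radii, $b_p$ is an almost-splitting map at every point of the level set and at every small scale. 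This is what makes the level sets (``Green-spheres'') locally uniformly contractible closed surfaces (\autoref{main.green.sphere}), and their homeomorphism type is then pinned down by Petersen's stability theorem rather than by an inherited curvature condition.

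\textbf{The cone-rigidity input does not apply here.} You invoke a $C(\mathbb{RP}^2)$-exclusion ``in the spirit of (ii) from the introduction'' to control intermediate scales, but \autoref{thm:noRP2} is a statement about noncollapsed \emph{Ricci limit} spaces and uses the smooth approximation essentially; $C(\mathbb{RP}^2)$ is a perfectly good $\RCD(0,3)$ space. In \autoref{thm:RCDtopma} the $S^2$-hypothesis on tangent cones is the input, not a conclusion, and the difficulty is propagating it from the infinitesimal scale to finite scales and across all points of a Green-ball; the paper does this via \autoref{mfd.with.bdry} using uniform conicality and cone-splitting, not via \autoref{thm:noRP2}.

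\textbf{The passage from ``spherical level sets'' to ``chart'' is where the real work lies.} Even after obtaining Green-spheres homeomorphic to $S^2$, there is no direct sweeping argument. The paper instead proves that $X$ is a \emph{generalized} $3$-manifold (local contractibility via a delicate iterative Mayer--Vietoris scheme, \autoref{prop:3ctoc}; local relative homology via deformation retractions of punctured Green-balls onto Green-spheres, \autoref{punct}), and then upgrades to a genuine manifold using Thickstun's resolution theorem and the Daverman--Repov\v{s} recognition criterion, together with Bing's tameness results and Perelman's resolution of the Poincar\'e conjecture (Section~\ref{sec:topman}). Your outline does not touch any of this machinery, and the ``sweep by spheres'' shortcut is exactly the analogue of Perelman's conical-neighbourhood theorem for Alexandrov spaces---which, as you correctly note, is unavailable for $\RCD$ spaces and is precisely what the paper works around.

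For the (only if) direction, your degree/cohomology sketch is in the right spirit; the paper's actual argument (\autoref{cor:impl1}) makes it precise by showing that a good Green-sphere near $x$ bounds a compact $3$-manifold in $X$ (via Bing's tameness criterion, \autoref{prop:GballinR}), hence has even Euler characteristic, hence is $S^2$ rather than $\mathbb{RP}^2$.
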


In the statement of \autoref{thm:RCDtopma} it is equivalent to assume that the cross-section of \emph{every} tangent cone is homeomorphic to $S^2$ at each point. Indeed, for an $\RCD(-2,3)$ space $(X,\dist,\haus^3)$, the cross-sections of tangent cones at a given point are all homeomorphic to each other. We also note that the statement can be localized to open sets.
\medskip

A conjecture due to Mondino predicts that a noncollapsed $\RCD(-2,3)$ space should be homeomorphic to an orbifold, possibly with a boundary. The conjecture might be rephrased by saying that the local topology should be determined by the topology of tangent cones. 
Our manifold recognition \autoref{thm:RCDtopma} establishes this conjecture in the case where all tangent cones are homeomorphic to $\setR^3$. In a forthcoming work we aim at addressing the conjecture in its full generality. A characterization of the local topology in terms of tangent cones cannot extend to dimensions $n\ge 4$, as the examples in \cite{Menguy inftop} show.

\medskip

A byproduct of the proof of \autoref{thm:RCDtopma}, together with the solution to the Poincaré conjecture due to Perelman \cite{PerelmanPoincare}, is the following topological rigidity result for noncollapsed $\RCD(0,3)$ manifolds with Euclidean volume growth.

\begin{theorem}\label{thm:volumegrowthR^3}
    Let $(Z^3,\dist_Z)$ be a noncollapsed $\RCD(0,3)$ topological manifold with Euclidean volume growth. Then $Z^3$ is homeomorphic to $\R^3$.
\end{theorem}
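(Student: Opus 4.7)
My plan is to identify the asymptotic cone of $Z$ with $\R^3$, use the topological stability theorem of this paper to describe the topology of $Z$ near infinity, and then invoke Perelman's Poincar\'e conjecture. First, by Euclidean volume growth and the $\RCD$ version of the ``volume cone implies metric cone'' theorem \cite{CheegerColding96,DePhilippisGigli2}, every subsequential pointed Gromov--Hausdorff limit $C(Y)$ of the rescalings $(Z, r^{-1}\dist_Z, p)$ with $r \to \infty$ is a noncollapsed $\RCD(0,3)$ metric cone, whose cross-section $Y$ is a noncollapsed $\RCD(1,2)$ space by Ketterer's theorem.

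Next I will show that $Y$ is a closed topological 2-manifold. At any $p = (y, r) \in C(Y)$ with $r > 0$, the tangent cone of $C(Y)$ at $p$ splits off the radial line as $\R \times T_y Y$, and is itself a 3D cone whose cross-section is the topological suspension $\Sigma V_y$, where $V_y$ is the 1D cross-section of the 2D cone $T_y Y$. Since $Z$ is a topological manifold without boundary and the absence of boundary for noncollapsed $\RCD$ spaces is preserved under blow-down, $Y$ has no boundary; equivalently, $V_y \cong S^1$, and therefore $\Sigma V_y \cong S^2$. Applying the local version of \autoref{thm:RCDtopma} to $C(Y) \setminus \{o\}$, the latter is a topological 3-manifold, and since $C(Y) \setminus \{o\} \cong Y \times (0, \infty)$, it follows that $Y$ is a closed 2-manifold. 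The Gauss--Bonnet constraint for noncollapsed $\RCD(1,2)$ spaces then forces $Y$ to be homeomorphic to $S^2$ or $\mathbb{RP}^2$.

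To rule out $Y \cong \mathbb{RP}^2$, I apply the topological stability theorem to the annuli $B_{2r}(p) \setminus \bar B_{r/2}(p)$: if $Y \cong \mathbb{RP}^2$, then $Z \cong W \cup_{\mathbb{RP}^2} (\mathbb{RP}^2 \times [0, \infty))$ for some compact 3-manifold $W$ with $\partial W \cong \mathbb{RP}^2$, contradicting the fact that $\mathbb{RP}^2$ is not null-cobordant ($w_1^2 \neq 0$). Hence $Y \cong S^2$, the end of $Z$ is homeomorphic to $S^2 \times [0, \infty)$, and $Z$ is the interior of a compact 3-manifold $W$ with $\partial W \cong S^2$. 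It remains to show $\pi_1(Z) = \pi_1(W) = 1$ (these agree by van Kampen since the end is simply connected). By the Anderson-type finiteness theorem in the noncollapsed $\RCD(0,n)$ setting with Euclidean volume growth, $G := \pi_1(Z)$ is finite, so $\tilde Z$ is a $|G|$-fold cover in the same class. Gigli's splitting theorem together with the classification of 2D noncollapsed $\RCD(0,2)$ topological manifolds shows that any noncollapsed $\RCD(0,3)$ space with Euclidean volume growth has exactly one end, so $\tilde Z$ has one end; on the other hand, the simply connected end of $Z$ lifts to $|G|$ disjoint ends of $\tilde Z$, forcing $|G| = 1$. With $W$ simply connected and $\partial W \cong S^2$, its double is a closed simply connected 3-manifold, hence $\cong S^3$ by Perelman's theorem; by Alexander's theorem each half is a 3-ball, so $W \cong B^3$ and $Z \cong \mathrm{int}(B^3) \cong \R^3$.

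The most delicate step will be the transfer of the absence of boundary from $Z$ to the cross-section $Y$ of its asymptotic cone, which requires a careful combination of the local version of \autoref{thm:RCDtopma} with the topological stability theorem.
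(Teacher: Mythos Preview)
Your approach is genuinely different from the paper's, but it has a real gap in the step where you invoke the topological stability theorem.

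The paper's proof is much shorter: it exhausts $Z$ by good Green-balls $\overline{\mathbb{B}}_{r_i}(p)$ with $r_i\to\infty$. Since $Z$ is a topological manifold, each such ball is contained in $\mathcal{R}_{\rm top}$, so by \autoref{contractible} it is a contractible compact $3$-manifold with boundary $S^2$, hence homeomorphic to $\overline{D}^3$ by the Poincar\'e conjecture (\autoref{poinc.ball}). Brown's theorem on monotone unions of open cells then gives $Z\cong\R^3$ directly.

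The gap in your argument is the sentence ``I apply the topological stability theorem to the annuli $B_{2r}(p)\setminus\bar B_{r/2}(p)$.'' \autoref{thm:topstabintro} is stated and proved only for \emph{closed} $\RCD(-2,3)$ manifolds; metric annuli are neither intrinsic $\RCD$ spaces nor closed manifolds, so the theorem does not apply as written. To actually show that annuli near infinity are homeomorphic to $Y\times[a,b]$ and that infinitely many of them glue coherently to $Y\times[0,\infty)$, you need precisely the tameness of Green-spheres (\autoref{prop:GballinR}) and the annular deformation retractions built in the proof of \autoref{punct}. Once you invoke those, you are essentially carrying out the paper's argument in a less direct form: the same tameness already tells you that each good Green-ball is a $3$-ball, and Brown's theorem finishes without ever needing to rule out $\mathbb{RP}^2$ separately, to analyze ends, or to double $W$.

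Two minor points. First, the detour through the manifold recognition theorem to show $Y$ is a surface is unnecessary: $Y$ is an $\RCD(1,2)$ space with empty boundary (by \cite{BrueNaberSemolabdry}), hence directly a closed surface homeomorphic to $S^2$ or $\mathbb{RP}^2$ by \autoref{cor:2dK>0}. Second, your simple-connectedness argument via counting ends of $\tilde Z$ is correct but the paper obtains it more cheaply in \autoref{lemma:avrS2}, by lifting the Green distance and observing that a good Green-sphere in $\tilde Z$ covers one in $Z$.
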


In particular, a contractible $3$-manifold not homeomorphic to $\mathbb{R}^3$, such as the Whitehead manifold, does not admit any $\RCD(0,3)$ structure with Euclidean volume growth. This provides a positive answer in a special case to a question asked by Besson \cite[Question 4.1]{Besson21}.

\medskip

Our next result generalizes to $\RCD(-2,3)$ spaces $(X,\dist,\haus^3)$ that are topological manifolds a uniform local contractibility statement originally proved for smooth three-manifolds with lower Ricci curvature and volume bounds by Zhu in \cite{Zhu93}.

\begin{theorem}[Uniform local contractibility]\label{thm:unifcontrtopRCD3}
Let $v>0$ be fixed. There exist constants $C=C(v)>0$ and $\rho=\rho(v)>0$ such that if $(X,\dist,\haus^3)$ is an $\RCD(-2,3)$ topological manifold with $\haus^3(B_1(p))\ge v$ for any $p\in X$, then the ball $B_r(p)$ is contractible inside $B_{Cr}(p)$ for every $r\le\rho$ and every $p\in X$.
\end{theorem}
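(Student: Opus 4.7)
The plan is to argue by contradiction via a rescaling and pointed Gromov--Hausdorff compactness scheme, combining the rigidity provided by Theorem~\ref{thm:volumegrowthR^3} with the topological stability theorem for $\RCD(-2,3)$ topological manifolds (one of the three main results of this paper, which we assume to be at our disposal).

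Assume by contradiction that the conclusion of Theorem~\ref{thm:unifcontrtopRCD3} fails for some $v>0$. A diagonal argument then yields a sequence $(X_i,\dist_i,\haus^3)$ of $\RCD(-2,3)$ topological manifolds satisfying $\haus^3(B_1(q))\ge v$ for every $q\in X_i$, along with points $p_i\in X_i$, radii $r_i\downarrow 0$, and constants $C_i\to\infty$, such that $B_{r_i}(p_i)$ is not contractible in $B_{C_ir_i}(p_i)$. Consider the rescaled pointed spaces $Y_i:=(X_i,r_i^{-1}\dist_i,p_i)$, which are $\RCD(-2r_i^2,3)$; applying Bishop--Gromov monotonicity on the original $X_i$ together with the uniform noncollapsing at scale~$1$, one gets universal two-sided bounds
\[
c(v)\,R^3\le\haus^3_{Y_i}(B_R(p_i))\le C\,R^3\qquad\text{for }1\le R\le\tfrac{1}{2}r_i^{-1}.
\]
By Gromov compactness, after extracting a subsequence, $Y_i\to (Y_\infty,q_\infty)$ in the pointed Gromov--Hausdorff topology, and $Y_\infty$ is a noncollapsed $\RCD(0,3)$ space with Euclidean volume growth.

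By the topological stability theorem applied to $Y_i\to Y_\infty$, the limit $Y_\infty$ is itself a topological $3$-manifold, and Theorem~\ref{thm:volumegrowthR^3} then forces $Y_\infty\cong\R^3$; in particular, $B_1(q_\infty)$ is contractible inside $B_2(q_\infty)$. A second application of the stability theorem, now used to transfer contractibility of a bounded ball in the limit back to the approximating spaces at the cost of an arbitrarily small inflation of the ambient ball, yields that $B_1(p_i)\subset Y_i$ is contractible inside $B_3(p_i)$ for every $i$ sufficiently large. Scaling back to the original metric, this contradicts $C_i\to\infty$.

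The main obstacle is to ensure that the topological stability theorem is strong enough to play both of the above roles: first, to guarantee that a noncollapsed $\RCD(0,3)$ Gromov--Hausdorff limit of $\RCD(-2r_i^2,3)$ topological manifolds remains a topological manifold, so that Theorem~\ref{thm:volumegrowthR^3} can be applied; and second, to allow the transfer of contractibility from $Y_\infty$ back to $Y_i$ in a quantitative, uniform form. The noncollapsing hypothesis is essential throughout, to prevent the limit from dropping dimension and to secure the volume lower bound on all scales of $Y_\infty$.
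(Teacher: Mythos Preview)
Your argument is circular. In the paper's logical architecture, the topological stability theorem (Theorem~\ref{thm:topstabintro}) is \emph{deduced from} the uniform local contractibility theorem you are trying to prove: its proof feeds Theorem~\ref{thm:unifcontrtopRCD3} into Petersen's framework, which requires precisely that the family of spaces under consideration be uniformly locally contractible. So invoking topological stability to establish Theorem~\ref{thm:unifcontrtopRCD3} assumes the conclusion. The same circularity infects your ``transfer of contractibility'' step: passing a homotopy-theoretic conclusion from $Y_\infty$ back to nearby $Y_i$ via Petersen's results again needs uniform local contractibility of the $Y_i$ as input.

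Even setting circularity aside, two further gaps remain. First, Theorem~\ref{thm:topstabintro} is stated for \emph{compact} spaces and asserts that two nearby $\RCD(-2,3)$ \emph{topological manifolds} are homeomorphic; it does not say that a GH-limit of $\RCD$ topological manifolds is itself a manifold. To obtain that $Y_\infty$ is a manifold you would need the manifold recognition Theorem~\ref{thm:RCDtopma}, and for that you must check that every cross-section of every tangent cone of $Y_\infty$ is $S^2$---a nontrivial step you have not addressed. Second, Theorem~\ref{thm:volumegrowthR^3} itself rests on the Green-ball machinery (Proposition~\ref{contractible}), so you are not gaining any simplification by routing through it.

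The paper's proof is direct and avoids all of this: one fixes $\eta_0,\delta_0$ so that good Green-balls are contractible manifolds with boundary (Proposition~\ref{contractible}, available since $X$ is assumed to be a topological manifold), and then Lemma~\ref{cor:densegoodr} furnishes, for every $p$ and every small $r$, a good radius $r'\in(r,Cr)\cap\mathcal{G}_p$ with $B_r(p)\subset\overline{\mathbb{B}}_{r'}(p)\subset B_{Cr}(p)$. The contractible Green-ball $\overline{\mathbb{B}}_{r'}(p)$ is the sought intermediate set.
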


As above, we remark that an analogous local uniform contractibility result cannot hold in dimension $n\ge 4$. Indeed the examples constructed by Otsu in \cite{Otsu} show that the statement can fail already at the level of the fundamental group. 

\medskip

The uniform local contractibility and the topological manifold regularity can be combined with some abstract results in geometric topology proved in \cite{Petersen90} and \cite{Jakobsche}, relying on the positive resolution of the Poincar\'e conjecture, to obtain a topological stability theorem under Gromov--Hausdorff convergence. 

\begin{theorem}[Topological stability]\label{thm:topstabintro}
Let $(X,\dist,\haus^3)$ be a compact $\RCD(-2,3)$ space which is a topological manifold. There exists $\epsilon=\epsilon(X)>0$ such that if $(Y,\dist_Y,\haus^3)$ is an $\RCD(-2,3)$ space which is a topological manifold and $\dist_{\rm{GH}}(X,Y)<\epsilon$, then $X$ and $Y$ are homeomorphic.    
\end{theorem}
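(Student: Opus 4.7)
The plan is to combine the uniform local contractibility of \autoref{thm:unifcontrtopRCD3} with classical topological stability theorems from geometric topology. First, I would verify that both $X$ and all GH-close $Y$ satisfy the volume hypothesis of \autoref{thm:unifcontrtopRCD3} with a \emph{common} constant $v>0$. Since $X$ is a compact $\RCD(-2,3)$ topological manifold, by the manifold recognition \autoref{thm:RCDtopma} the cross-sections of tangent cones at all points of $X$ are homeomorphic to $S^2$; this forces the volume density to be strictly positive pointwise, and by compactness one obtains $v_X := \inf_{p\in X}\haus^3(B_1(p))>0$. The volume convergence theorem for noncollapsed $\RCD$ spaces (De~Philippis--Gigli) then guarantees that, for $Y$ sufficiently close to $X$ in GH distance, $\haus^3(B_1(q))\ge v_X/2$ for every $q\in Y$.

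Applying \autoref{thm:unifcontrtopRCD3} to both $X$ and $Y$ with this common bound produces constants $C,\rho>0$ depending only on $v_X$ such that in both spaces balls of radius $r\le\rho$ are contractible inside balls of radius $Cr$. In the language of \cite{Petersen90}, $X$ and $Y$ then share a common geometric local contractibility function and both live in a GH-precompact class of metric spaces of bounded diameter.

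The topological conclusion is then obtained by invoking the abstract topological stability theorem of Petersen \cite{Petersen90}: within the class of compact metric spaces sharing a fixed contractibility function and a uniform diameter bound, any two sufficiently GH-close spaces are connected by a map which is, up to small error, a near-isometric homotopy equivalence. This produces a homotopy equivalence $f\colon Y\to X$. To upgrade this into a homeomorphism in dimension three, I would finally invoke the result of Jakobsche \cite{Jakobsche}, which crucially relies on Perelman's resolution of the Poincar\'e conjecture \cite{PerelmanPoincare}: in this range, between two three-manifolds sharing a geometric contractibility function and sufficiently close in GH distance, the homotopy equivalence produced above can be approximated by a homeomorphism.

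The hard part I anticipate is the inherently non-quantitative nature of the Petersen--Jakobsche machinery: the stability radius $\epsilon=\epsilon(X)$ depends on the global structure of $X$ rather than on a finite list of quantitative invariants such as volume and diameter, and consequently the argument only supplies finitely many homeomorphism types inside each fixed noncollapsed class rather than a universal stability modulus. The remaining technical effort is to certify that the analytic inputs available for $\RCD(-2,3)$ spaces --- in particular volume convergence and the local contractibility of \autoref{thm:unifcontrtopRCD3} --- match precisely the hypotheses required by \cite{Petersen90} and \cite{Jakobsche}.
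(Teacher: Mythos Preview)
Your proposal is correct and follows essentially the same route as the paper: establish a common volume lower bound via volume convergence, apply \autoref{thm:unifcontrtopRCD3} to obtain a uniform local contractibility function for $X$ and all nearby $Y$, then invoke Petersen's $\eps$-equivalence theorem together with Jakobsche's $\alpha$-approximation theorem (the latter depending on the Poincar\'e conjecture) to pass from homotopy equivalence to homeomorphism. The only superfluous step is your appeal to \autoref{thm:RCDtopma} to justify $v_X>0$; compactness of $X$ and positivity of $\haus^3$ on balls already give this directly.
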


\begin{remark}
    The homeomorphisms constructed in \autoref{thm:topstabintro} are actually perturbations of almost GH-isometries. More precisely, we can prove that any $\delta$-GH isometry is uniformly $\eps$-close to a homeomorphism, provided $\delta \le \delta_0(X,\eps)$.
\end{remark}

The above statement partially generalizes Perelman's stability theorem \cite{Perelman99}, which deals with Alexandrov spaces with curvature bounded from below in any dimension. Again, we stress that an analogous statement does not hold in the case of Ricci curvature lower bounds when the dimension is larger or equal than $4$.

\medskip

As anticipated, \autoref{thm:RCDtopma} and \autoref{thm:topstabintro} are fundamental tools for proving the main results about noncollapsed Ricci limit spaces and the cross-sections of their tangent cones. 
In essence, \autoref{thm:RCDtopma} provides the topological manifold structure, while \autoref{thm:topstabintro} yields the topological stability part in \autoref{cor:4d} and \autoref{thm:omeo n-4 sym}. In order to verify that the assumption about tangent cones in \autoref{thm:RCDtopma} is met in this setting we heavily rely on \autoref{thm:noRP2}. The details will be discussed in Section \ref{sec:Proofmaintheorems}.

\subsection{Strategy of proof}

The technical core of the paper consists of the proofs of the manifold recognition theorem for $3$-dimensional $\RCD$ spaces, namely \autoref{thm:RCDtopma}, and of \autoref{thm:noRP2}, which rules out noncollapsed Ricci limit spaces of the form $\setR^{n-3}\times C(\mathbb{RP}^2)$. The uniform local contractibility within the class of noncollapsed $\RCD(-2,3)$ spaces that are homeomorphic to manifolds (see \autoref{thm:unifcontrtopRCD3}) follows from the methods introduced in the proof of \autoref{thm:RCDtopma}. As already mentioned, \autoref{thm:topstabintro} follows from \autoref{thm:unifcontrtopRCD3} by relying on some abstract results in geometric topology due to \cite{Petersen90,Jakobsche} and on the resolution of the Poincar\'e conjecture \cite{PerelmanPoincare}. Given \autoref{thm:RCDtopma} and \autoref{thm:noRP2}, all the other statements will follow from known 
results and methods: see Section \ref{sec:Proofmaintheorems} for the details.

\medskip

The proofs of \autoref{thm:RCDtopma} and \autoref{thm:noRP2} require several steps and the introduction of many new ideas. The goal of this section is to provide a roadmap to these steps, referring to the relevant sections for the specific statements and arguments.

\medskip

The first ingredient is a variant of the slicing theorem due to Cheeger and Naber \cite[Theorem 1.23]{CheegerNaber15}. In the original formulation, the slicing theorem asserts that a harmonic almost-splitting map $u:B_1(p)\to\setR^{n-2}$ remains almost-splitting at all scales up to composition with a linear transformation, for all points in ``most'' level sets. Here $B_1(p)\subset M^n$ and $(M^n,g)$ has almost nonnegative Ricci curvature. Moreover, ``most'' is understood in a measure theoretic sense. The statement can be thought of as an effective version of Sard's theorem for $(n-2)$-almost splitting maps on $n$-dimensional manifolds with almost nonnegative Ricci curvature. It was the key tool for the proof of the codimension four conjecture in \cite{CheegerNaber15}. 
In Section \ref{sec:slicing}, we establish a variant of the slicing theorem (see \autoref{trans} for the precise statement) where one of the components of the harmonic almost-splitting map is replaced by a ``Green-type distance'' associated with the local Green function of the Laplacian. We address the reader to Section \ref{sec:Green} for the relevant background and terminology. 

To discuss the role of the latter in our work, we first concentrate on the three-dimensional scenario. Consider $(X^3, \dist)$, a noncollapsed $\RCD(-2,3)$ space, and a $\delta$-conical ball $B_r(p) \subset X$ (i.e., a ball $\delta$-GH close to a metric cone). In \autoref{thm:prop_Green} we construct a Green-type distance $b_p$ that approximates and regularizes the distance function from $p$. A similar use of the Green function of the Laplacian in the context of manifolds with Ricci bounded from below was made in \cite{ColdingGreen,NaberJiang}. 
The slicing theorem (\autoref{trans n=3}) indicates that, for most level sets of $b_p$, the function $b_p$ (up to normalization) induces an almost splitting of the ambient space $X$ in every sufficiently small ball centered at a point belonging to one of those level sets. 
The overall strategy of the proofs of \autoref{trans} and \autoref{trans n=3} follows \cite{CheegerNaber15}, although some steps require a number of nontrivial adjustments which are carried out in Sections \ref{sec:slicing} and \ref{sec:proofslicing}. 

\medskip

In the context of noncollapsed limits with bounded Ricci curvature, the slicing theorem was used to rule out the presence of codimension two (metric) singularities in the first place \cite[Theorem 5.2]{CheegerNaber15}. The topology of good slices could then be controlled in the second place by $\epsilon$-regularity \cite[Theorem 5.12]{CheegerNaber15}. In Section \ref{sec:topgoodlevels} we prove that a lower bound on the Ricci curvature and on the volume as in \eqref{Riccilimits} are actually sufficient to control the topology of good slices in a very effective way. Achieving this control requires a new approach with respect to \cite{CheegerNaber15} as the presence of metric singularities of codimension two cannot be ruled out in the present setting.
As a result, we prove that good slices, referred to in the following as (good) {\it Green-spheres}, are locally uniformly contractible closed topological surfaces.

\medskip

A similar construction can be carried out on $(n-3)$-symmetric balls of $n$-dimensional manifolds $M^n$ satisfying lower Ricci curvature and volume bounds as in \eqref{Riccilimits}. In this case, we employ a Green-type distance from a reference point along with $n-3$ almost splitting maps. As before, the slicing \autoref{trans} ensures the existence of numerous good Green-spheres, which are shown to be locally uniformly contractible closed topological surfaces in \autoref{prop:topsurf}.
Combined with the stability of almost splitting maps and Green-type distances, this is the key tool to establish \autoref{thm:noRP2}. When a smooth manifold satisfying \eqref{Riccilimits} is sufficiently Gromov--Hausdorff close to the model space $\mathbb{R}^{n-3}\times C(Z^2)$, the good Green-spheres approximate in the Gromov--Hausdorff sense the cross-section $Z^2$. Thanks to the local uniform contractibility and the results in \cite{Petersen90}, they are actually homeomorphic to $Z^2$. However, since they bound a three-dimensional submanifold in the smooth ambient manifold $M^n$, they cannot be homeomorphic to $\mathbb{RP}^2$.

It is worth mentioning that, in the rigid case where the ambient space splits $\setR^{n-3}$ and it is conical, local uniform contractibility and topological regularity follow from the work of Lytchak--Stadler \cite{LytchakStadler}, which identifies two-dimensional $\RCD(K,2)$ spaces and Alexandrov spaces with curvature bounded from below by $K$.

\medskip

The existence of many good Green-spheres with effectively controlled topology plays a central role also for the proof of the manifold recognition \autoref{thm:RCDtopma}, which we outline below.
We consider a three-dimensional $\RCD(-2,3)$ space $(X^3,\dist)$ whose tangent cones have cross-sections homeomorphic to $S^2$ at each point. The goal is to show that the set of non-manifold points $\mathcal{S}_{\rm top}(X)$ is empty. 

\medskip

As a first step we are going to prove that any such $(X^3,\dist)$ is locally uniformly contractible. Since the covering dimension of $X$ is equal to $3$, it is enough to show that $X$ is locally uniformly $3$-connected. In the proof, we will establish local uniform $k$-connectedness for every $k\le 3$ by a finite induction over $k$. The argument is based on three main points: 
\begin{itemize}
\item[(i)] that good Green-balls, i.e., the sub-level sets of Green-type distances bounded by good Green-spheres, are simply connected if their boundary is homeomorphic to $S^2$; 
\item[(ii)] that good Green-balls disjoint from $\mathcal{S}_{\rm top}(X)$ are contractible manifolds with boundary; 
\item[(iii)] that the contractibility part of the previous statement holds also without the latter restriction on the intersection with the non-manifold set.
\end{itemize}
\medskip

The first item in the above list corresponds to the step $k=1$, in the inductive proof of the local uniform contractibility. For the sake of illustration, we discuss the following simplified situation. We consider an $\RCD(0,3)$ space $(X^3,\dist,\haus^3)$ with Euclidean volume growth such that all the cross-sections of tangent cones at infinity are homeomorphic to $S^2$. In this case, we can prove that $X^3$ is simply connected as follows. Consider a global Green-type distance $b_p : X \to \mathbb{R}$ induced by the Green function of the Laplacian with pole at $p\in X$, and a scale $R>0$ big enough so that $B_R(p)$ is sufficiently close to a cone at infinity of $X$. We can assume without loss of generality that the Green-sphere $\mathbb{S}_R = \{b_p=R\}$ is a good slice for $b_p$ and hence it is homeomorphic to $S^2$ by the results of Section \ref{sec:topgoodlevels} and \cite{Petersen90}. We can lift $b_p$ to the universal covering space $\pi : \widetilde{X}^3 \to X^3$, obtaining a function $\widetilde{b}_p$ with similar properties. In particular, the level set $\widetilde{\mathbb{S}}_R = \{\widetilde{b}_p = R \}$ must be a connected topological surface homeomorphic to the cross-section of any blow-down of $\widetilde{X}$. On the other hand, the map $\pi : \widetilde{\mathbb{S}}_R \to \mathbb{S}_R$ is a covering map. Since $\mathbb{S}_R$ is simply connected, $\pi$ must be trivial, i.e., $X$ is simply connected. 
In Section \ref{subsec:locsc}, we will illustrate how to localize the previous argument, showing that Green-balls whose boundary Green-sphere is homeomorphic to $S^2$ are simply connected. 

It is worth remarking that this line of reasoning is different from the proof of the simple-connectedness of smooth complete three-manifolds with nonnegative Ricci curvature and Euclidean volume growth in \cite{Zhu93}. In that case, following the earlier work of Schoen and Yau \cite{SchoenYau}, the author argues that the universal covering space must be contractible in the first place by relying on the sphere theorem in $3$-manifolds topology. This is enough to rule out the presence of torsion elements in the fundamental group. The simple-connectedness then follows by \cite{Li86} or \cite{Anderson90}. In the present setting, we are forced to argue the other way around as it is not a priori clear whether the space is a $3$-manifold.

\medskip

In order to address item (ii) in the above list, we exploit in a different way the local splitting at all scales along good Green-spheres in order to check that they are tamely embedded in the manifold part of $X$. We stress that it is fundamental that this property holds everywhere in the manifold part and not only in the Reifenberg regular part, where an easier argument applies. The proof of this tameness property heavily relies on Bing's work \cite{Bingtame,Bing 1ULC, Bing appr} in $3$-manifolds topology; see Section \ref{subsec:topGgm} for the details. In particular, a good Green-ball that does not intersect $\mathcal{S}_{\rm top}(X)$ is a $3$-manifold whose boundary is homeomorphic to $S^2$. By (i), it is simply connected and hence by Poincar\'e--Lefschetz duality and Whitehead's theorem it is contractible. We note that items (i) and (ii) are already enough for the proof of the local uniform contractibility \autoref{thm:unifcontrtopRCD3}. The tameness of Green-spheres and their topological stability are also sufficient to prove the converse implication in \autoref{thm:RCDtopma}, from the manifold regularity to the structure of cross-sections. See the proof of \autoref{cor:impl1} for the details.

\medskip

The local contractibility in the general case where the non-manifold set is a priori not empty requires a few additional ideas. 

Arguing by contradiction and relying on Baire's category theorem, we can reduce ourselves to the case where the space is uniformly conical at all scales smaller than $1$ around points on the non-manifold set. Then we prove by induction over $k$ that each singular $k$-cycle with support contained in a good Green-ball is homologically trivial. The base step $k=1$ corresponds to item (i) above. For the inductive step the key point is to show that any such $k$-cycle supported in $\mathbb{B}_1(p)$ is homologous to a sum of $k$-cycles supported in a finite family of good Green-balls $\mathbb{B}_c(p_i)$ for some $c<1$. This procedure can then be iterated to obtain triviality in homology, with an argument similar in spirit to \cite{Perelmanmaximal} and the more recent \cite{WangRCD}; see \autoref{cinese.plus} for the precise technical statement. 

The key step mentioned above relies on a delicate covering argument, whose details are discussed in Section \ref{subsec:loccontr}. Roughly speaking, we split the given $k$-cycle into a homologous sum of $k$-cycles using the Mayer--Vietoris sequence and the inductive assumption. Then we rely on item (ii) to obtain triviality of those cycles whose support is far away from the non-manifold set on one hand, and on the uniform conicality to keep pushing the other cycles closer and closer to $\mathcal{S}_{\rm{top}}(X)$ on the other hand. We note that by the general theory of noncollapsed $\RCD$ spaces the set of non-manifold points $\mathcal{S}_{\mathrm{top}}(X)$ is contained in the effective singular stratum $\mathcal{S}^1_{\epsilon}(X)$ for some $\epsilon>0$. Moreover, at a conical scale the effective singular stratum is packed into a tubular neighbourhood of finitely many segments of size which is scale-invariantly much smaller than the scale: see Section \ref{subsec:partcontr} for the precise statements.

\medskip

Once the local uniform contractibility has been established, we prove that a punctured neighbourhood of each point deformation retracts onto a Green-sphere. In particular, it is homotopically equivalent to $S^2$ and hence the relative homology of $X$ at each point is the same as for $(\setR^3,\setR^3\setminus\{0\})$. In the language of geometric topology, we can prove that $X$ is a generalized $3$-manifold (without boundary), as discussed in Section \ref{subsec:locrelhom}.

\medskip

The second part of the proof of \autoref{thm:RCDtopma} amounts to upgrading the topological regularity from generalized manifold to manifold.
We address the reader to the survey papers \cite{Cannon} and \cite{Cavicchiolietal} and to Section \ref{subsec:recogn} below for some background about the recognition problem for topological manifolds among generalized manifolds. In the present setting, this step requires a few additional ideas with respect to those outlined above and it will heavily rely on the works of Thickstun \cite{Thickstuna,Thickstunb}, Daverman and Repovš \cite{DavermanRepovs}, and on Perelman's resolution of the Poincar\'e conjecture.

\medskip

To illustrate more concretely how to rule out the existence of non-manifold points, let us look at a vastly simplified scenario, where the use of the abstract recognition theorems from \cite{Thickstuna,Thickstunb,DavermanRepovs} can be avoided. Namely, we assume that $\mathcal{S}_{\rm{top}}(X)$ is discrete, i.e., the non-manifold points are isolated. We fix any $p\in \mathcal{S}_{\rm{top}}(X)$. For any given $\delta>0$ there is no harm in assuming that $B_s(p)$ is $\delta$-conical for every $s<r$, for some $r>0$, by \cite{DePhilippisGigli} (see also the earlier \cite{CheegerColding96}). In particular, we can construct a sequence of Green-spheres $\mathbb{S}_{r_i} \subset B_{2 r_i}(p)\setminus B_{r_i/2}(p)$ and consider the ``annular'' regions $A_i$ enclosed between two consecutive ones. The topological stability of Green-spheres and the assumption on the cross-sections of tangent cones ensure that all the $\mathbb{S}_{r_i}$ are homeomorphic to $S^2$. Moreover, they are tamely embedded in the manifold part of $X$. Hence the regions $A_i$ are topological manifolds with boundary, with two boundary components homeomorphic to $S^2$. It is worth stressing that this conclusion requires some ``exterior'' regularity of the embedding of the Green-spheres into $X$ to avoid pathological examples such as the Alexander horned sphere (cf.\ \autoref{rm:Alexander}). This moral point is important also in the general case.
By relying on the already established uniform local contractibility, we can argue that the regions $A_i$ are homotopically equivalent to $S^2$. Then we use the solution to the Poincaré conjecture to deduce that each $A_i$ is homeomorphic to the Euclidean annular region $\bar B_1(0^3)\setminus B_{1/2}(0^3)$. Finally, we can rescale and glue together these homeomorphisms to conclude that $B_r(p)$ is homeomorphic to the Euclidean ball $B_1(0^3)$, hence $p$ is a manifold point.

\medskip

The general case when $\mathcal{S}_{\rm{top}}(X)$ is not assumed to be discrete is considerably more delicate. One of the key morals is that the non-manifold set, if present, has general-position dimension one in $X$ according to \autoref{def:GPDO}. More precisely, we are going to prove that any continuous map $f:\overline{D}^2\to X$ can be approximated arbitrarily well with maps $f_{\epsilon}:\overline{D}^2\to X$ whose image intersects $\mathcal{S}_{\rm{top}}(X)$ at finitely many points. Verifying that this condition holds in the present setting is highly nontrivial, as $\mathcal{S}_{\rm{top}}(X)$ is only known to have Hausdorff dimension less than $1$ in this generality, and it requires a different use of the existence and structure of Green-spheres. We do not delve into the details here, referring the reader to Section \ref{sec:topman}.

\subsection{Related literature}

In this section we briefly review some of the literature related to the results and the methods of this paper, without the aim of being complete.

\medskip

The statements of \autoref{cor:4d} and \autoref{thm:omeo n-4 sym} in the case where the lower Ricci curvature bound is strenghtened to a two-sided bound on the Ricci curvature follow from the solution of the codimension four conjecture by Cheeger and Naber \cite{CheegerNaber15}. See also \cite{NaberJiang} for a sharper statement in this setting.

Analogous results for noncollapsed limits of K\"ahler manifolds with two-sided bounds on the Ricci curvature had been obtained earlier by Cheeger, Colding and Tian in \cite{CheegerColdingTian} and later refined by Cheeger in \cite{Cheeger03}. More recently Zhou has established in \cite[Appendix A]{Zhou23} a stronger version of \autoref{cor:4d} for noncollapsed limits of K\"ahler surfaces with Ricci curvature bounded from below, relying on earlier contributions due to Liu--Sz\'ekelyhidi \cite{LiuSzekeI,LiuSzekeII}. In that setting, the limit surfaces are homeomorphic to holomorphic orbifolds.

\medskip

For noncollapsed limits of manifolds with sectional curvature bounded from below, a theorem of Kapovitch \cite{Kapovitchcross} says that the cross-section of the tangent cone at any given point is homeomorphic to the sphere. Furthermore any such cross-section is a noncollapsed limit of smooth spheres with a uniform lower bound on the sectional curvature itself. This should be contrasted with the examples discussed in \autoref{rm:EguchiHanson} and \autoref{rm:exCN} in the context of lower Ricci curvature bounds.

\medskip

In the case of collapsed Ricci limit spaces, where the assumption on the lower volume bound in \eqref{Riccilimits} is dropped, the existence of an open and dense subset homeomorphic to a topological manifold has been an open question since the early developments of the theory: see for instance \cite[Open Problem 3.4]{Naberconj}. Recently, Hupp, Naber and Wang have constructed examples of (collapsed) Ricci limit spaces where no manifold point exists \cite{HuppNaberWang}, thus providing a negative answer to the question above when the rectifiable dimension is $\ge 4$. An even more recent example due to Zhou \cite{Zhoub} shows that we might have no manifold points also for collapsed Ricci limit spaces with rectifiable dimension $3$.

\medskip

Broadly speaking, \autoref{thm:RCDtopma}, \autoref{thm:unifcontrtopRCD3} and \autoref{thm:topstabintro} fit into the study of the topological structure of metric spaces satisfying some curvature bound in synthetic sense, such as Alexandrov, $\rm{CAT}$, and $\RCD$ spaces. This is a subject that has received a lot of attention in recent years: see for instance \cite{BuragoGromovPerelman92,Perelman99,KapovitchPer,LytchakNagano19,LytchakNagano22,KapovitchMondino,LytchakStadler,BrueNaberSemolabdry,WangRCD}.

\medskip

In the framework of Alexandrov spaces with curvature bounded from below, Perelman's conical neighbourhood theorem \cite{Perelman99,PerelmanII} guarantees that each point has a neighbourhood homeomorphic to the tangent cone at that point. In particular, the manifold set coincides with the set of points where the tangent cone is homeomorphic to the Euclidean space. The complement of this set has Hausdorff codimension at least three if the space has empty boundary. For the same reason, three-dimensional Alexandrov spaces are homeomorphic to orbifolds with boundary. \\
More in general, Perelman's stability theorem \cite{Perelman99,KapovitchPer} asserts that two Alexandrov spaces with curvature bounded below of the same dimension which are sufficiently close in the Gromov--Hausdorff sense are homeomorphic. 

For smooth Riemannian manifolds with sectional curvature and volume uniformly bounded from below, a weaker stability theorem with ``homotopically equivalent'' replacing ``homeomorphic'' had been established earlier by Grove and Petersen in \cite{GrovePetersen88}. The uniform local contractibility within this class (in any dimension) is due to Petersen \cite{Petersen90}. Our main results generalize some of these statements to three-dimensional $\RCD$ spaces. As we already argued, the analogous statements fail in dimension $n\ge 4$ in the case of Ricci curvature bounded from below.

\medskip

In the class of $\rm{CAT}$ spaces, i.e., metric spaces with sectional curvature bounded from above, Lytchak and Nagano have investigated the metric and topological structure in \cite{LytchakNagano19,LytchakNagano22}. Among other results, they prove a manifold recognition theorem stating that a locally compact space with an upper curvature bound is a topological manifold if and only if all of its spaces of directions are homotopy equivalent and not contractible. The use of the abstract manifold recognition theorems from geometric topology in the present work was partly inspired by \cite{LytchakNagano22}. There are also some analogies with the more recent work of Lytchak, Nagano, and Stadler on the topology of $\rm{CAT}(0)$ $4$-manifolds \cite{LytchakNaganoStadler}.

\subsection{Open questions}
We conclude the introduction with a list of open questions related to the topology of noncollapsed Ricci limits and $\RCD$ spaces that arose in this work, without aiming to be exhaustive.

\medskip

In the framework of four-dimensional Ricci limit spaces, \autoref{cor:4d} ensures that all cross-sections of tangent cones are homeomorphic to a spherical space form. 
The blow-down of the Eguchi--Hanson metric shows that space forms with nontrivial fundamental group such as $\mathbb{RP}^3$ might appear as cross-sections of tangent cones even in the Ricci flat setting. From \autoref{cor:4d}, we know that the set of points where the cross-sections are not homeomorphic to $S^3$ is zero dimensional.

\begin{conjecture}
    Let $(X^4,\dist)$ be a noncollapsed Ricci limit space. The set of points $x\in X^4$ where cross-sections of tangent cones are not homeomorphic to $S^3$ is discrete.
\end{conjecture}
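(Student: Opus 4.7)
The plan is to combine the density upper bound at bad points with Cheeger--Naber's quantitative stratification and the rigid radial structure at non-tip points of a metric cone, in order to rule out accumulation. Suppose, for contradiction, that there is a sequence of distinct points $x_n \to x$ at which $\Sigma_{x_n}$ is not homeomorphic to $S^3$. By \autoref{cor:4d}, $\Sigma_{x_n}\cong S^3/\Gamma_{x_n}$ with $|\Gamma_{x_n}|\ge 2$; lifting volumes to the (topologically spherical) universal cover $\tilde\Sigma_{x_n}$ and applying Bishop's inequality on the compact $\RCD(2,3)$ sphere yields the density bound $\Theta(x_n)\le 1/|\Gamma_{x_n}|\le 1/2$. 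Lower semicontinuity of $\Theta$ under noncollapsed GH convergence --- a consequence of Bishop--Gromov monotonicity and volume convergence --- gives $\liminf_n\Theta(x_n)\ge\Theta(x)$, hence $|\Gamma_{x_n}|\le 1/\Theta(x)+o(1)$ is uniformly bounded. After passing to a subsequence, all $\Sigma_{x_n}$ are homeomorphic to a fixed spherical space form $S^3/\Gamma$ with $\Gamma$ nontrivial.

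Since $\Gamma$ acts freely on $S^3$, the model cone $C(S^3/\Gamma)$ does not split any $\setR$-factor, and there is a uniform GH-gap $\eps_0>0$ from $C(S^3/\Gamma)$ to every $1$-symmetric cone $\setR\times Y$. Combined with the fact that $C(S^3/\Gamma)$ is the tangent cone at every $x_n$, a compactness argument (exploiting the fixed topology of the cross-sections, the noncollapsing assumption, and the lower Ricci bound) should place every $x_n$ in the quantitative stratum $\mathcal{S}^0_{\eps_0/2}(X)$ at all scales $s\le\rho$ for some $\rho>0$ uniform in $n$. Setting $r_n:=\dist(x_n,x)\to 0$ and extracting the pointed GH limit $(X, r_n^{-1}\dist, x)\to (C(\Sigma_x),o)$, one gets $x_n\to y\in C(\Sigma_x)$ with $\dist(y,o)=1$. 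The stability of the effective stratum under noncollapsed GH convergence then gives $y\in\mathcal{S}^0_{\eps_0/4}(C(\Sigma_x))$. However, at every non-tip point $(\rho_0,\bar p)\in C(\Sigma_x)$ the tangent cone splits a radial $\setR$-factor, $T_{(\rho_0,\bar p)}C(\Sigma_x)=\setR\times T_{\bar p}\Sigma_x$, so $\mathcal{S}^0_{\eps}(C(\Sigma_x))\subseteq\{o\}$ for every $\eps>0$. This forces $y=o$, contradicting $\dist(y,o)=1$.

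The main obstacle I expect is producing the uniform scale threshold $\rho$, i.e., ruling out that $B_s(x_n)$ is $(\eps_0 s/2)$-close to a $1$-symmetric cone at some scale $s$ comparable to $r_n$, not merely at scales much below the individual tangent-cone resolution at $x_n$. Concretely, one would argue by contradiction: if the threshold were to vanish along a sequence, a diagonal argument would extract from the $B_s(x_n)$ a tangent cone realizing an approximate $1$-symmetric splitting, in contradiction with the uniform gap $\eps_0$ on $C(S^3/\Gamma)$. This effective rigidity is in the spirit of, and we expect derivable from, the neck structure theorems for noncollapsed Ricci limits of Cheeger--Jiang--Naber combined with the rigidity of cross-sections provided by \autoref{cor:4d}.
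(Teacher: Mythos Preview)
This statement is posed in the paper as an \emph{open conjecture}, not a theorem; the paper notes only that it holds under two-sided Ricci bounds (Cheeger--Naber) and for K\"ahler surfaces (Zhou). There is thus no ``paper's own proof'' to compare against, and your outline has a genuine gap at precisely the point you flag --- one that the diagonal argument you propose does not close.

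The obstruction is the uniform scale threshold. Rescale by $r_n:=\dist(x_n,x)$ so that $(X,r_n^{-1}\dist,x)\to(C(\Sigma_x),o)$ and $x_n\to y=(1,\bar p)$. At any non-tip point of a metric cone the tangent cone splits an $\setR$-factor; hence for each fixed $s\in(0,1)$ and all large $n$, $B_{s r_n}(x_n)$ is GH-close to $B_s(y)\subset C(\Sigma_x)$, which \emph{is} approximately $1$-symmetric for small $s$. So at scales comparable to $r_n$ the ball around $x_n$ \emph{does} look $1$-symmetric, regardless of the structure of the tangent cones at $x_n$; you cannot place $x_n$ in the effective stratum $\mathcal{S}^0_{\eps}$ down to scale $r_n$. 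The diagonal argument fails because a limit of $(X,s_n^{-1}\dist,x_n)$ with $s_n\to 0$ and $x_n$ moving need not be a tangent cone at any fixed point, so there is no reason its cross-section should be homeomorphic to $S^3/\Gamma$. Two further issues compound this: your ``model cone $C(S^3/\Gamma)$'' presumes a fixed metric, whereas \autoref{cor:4d} pins down only the \emph{topology} of the cross-section, so no uniform gap $\eps_0$ is available a priori; and the density argument yields at best $\Theta_{\Sigma_x}(\bar p)\le 1/2$, which is no contradiction --- $\Sigma_x$ is an $\RCD(2,3)$ space whose singular set is only known to have Hausdorff dimension $\le 1$, and nothing in the present toolkit rules out such a $\bar p$.
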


The conjecture is satisfied for noncollapsed limits with two-sided bounds on the Ricci curvature by \cite{CheegerNaber15}, and for noncollapsed limits of K\"ahler surfaces with Ricci bounded from below by the very recent \cite{Zhou23}.
We believe that, more in general, in any dimension $n\ge 4$ the set of points where the cross-section of some tangent cone is not homeomorphic to $S^{n-1}$ should be $(n-4)$-rectifiable with locally finite $\haus^{n-4}$ measure.

\medskip

Kronheimer in \cite{Kronheimer} proved that, for every discrete group $\Gamma < \rm{U}(2)$ acting freely on $S^3$, there exists a Ricci flat $4$-manifold with Euclidean volume growth whose cone at infinity is isometric to $C(S^3/\Gamma)$.

\begin{remark}[Poincar\'e homology sphere]
Let $\Gamma$ be the binary icosahedral group. It is known that $\Gamma < \rm{SU}(2)$ and $S^3/\Gamma$ is the Poincaré homology sphere. In particular, by \cite{Kronheimer}, the cone over the Poincaré homology sphere is a noncollapsed Ricci limit space.
Notice that $C(S^3/\Gamma)$ is a contractible generalized $4$-manifold which is not a topological $4$-manifold.  
\end{remark}

To the best of our knowledge, there is presently no known restriction to the possible spherical space forms that might arise as cross-sections of tangent cones for nonncollapsed Ricci limits in dimension $4$. In this regard we pose the following.

\begin{question}
    Let $\Gamma < \rm{O}(4)$ be a discrete group acting freely on $S^3$. Is there an $\RCD(2,3)$ metric over $S^3/\Gamma$ such that $C(S^3/\Gamma)$ is a noncollapsed Ricci limit space?
\end{question}

The above question is a particular instance of the broad inquiry about the \emph{smoothability} of $\RCD$ spaces. Indeed, in the above setting $C(S^3/\Gamma)$ would be an $\mathrm{RCD}(0,4)$ space. In the context of smoothability, we ask the following.

\begin{question}\label{q:smoothing}
    Let $(X^3,\dist)$ be a noncollapsed $\RCD(-2,3)$ spaces. Assume that $X^3$ is a topological manifold. Is $(X^3,\dist)$ a noncollapsed Ricci limit space?
\end{question}
No restriction to the smoothability in this setting is known to the authors. We mention that the analogous question for Alexandrov spaces has been around since the eighties and remains widely open: see \cite[Question 1.9]{Kapovitchcross}, \cite[Section 13.7]{BuragoGromovPerelman92}, \cite[Open Problem 2.4]{Lebedevaetal}.

As a special case of \autoref{q:smoothing} one might consider the case where $(X^3,\dist)$ is the cross-section of the tangent cone of a nonncollapsed four-dimensional Ricci limit space. 

\medskip

By \cite{Simon14,SimonTopping22}, three-dimensional noncollapsed Ricci limit spaces are biH\"older homeomorphic to smooth manifolds. On the other hand, the manifold recognition \autoref{thm:RCDtopmain} only provides a $C^0$ structure.

\begin{conjecture}\label{conj:biholder}
Any noncollapsed $\RCD(-2,3)$ space with Euclidean tangent cones is biH\"older homeomorphic to a smooth Riemannian manifold.
\end{conjecture}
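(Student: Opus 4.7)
The plan is to promote the $C^0$ manifold structure ensured by \autoref{thm:RCDtopma} to a biH\"older structure, and then to smooth the resulting atlas. To begin, I would combine the hypothesis that every tangent cone at every point is isometric to $\R^3$ with the Gromov--Hausdorff compactness of the class of noncollapsed $\RCD(-2,3)$ spaces, upgrading the pointwise tangent cone hypothesis to the following effective version via a standard contradiction argument: for every compact $K\subset X$ and every $\eps>0$ there exists $r_0>0$ such that $\dist_{\rm{GH}}(B_r(x),B_r(0^3))<\eps r$ for every $x\in K$ and every $r<r_0$. In other words, the effective Reifenberg condition holds uniformly on compact subsets of $X$.

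Next, I would adapt the Cheeger--Colding Reifenberg-type chart construction of \cite[Theorem A.1.1]{CheegerColding97I} to the noncollapsed $\RCD(-2,3)$ setting. The required ingredients are the existence of harmonic almost-splitting maps $u:B_r(x)\to\R^3$ that are close to isometries whenever $B_r(x)$ is sufficiently close to a Euclidean ball, together with Lipschitz control of these maps, both of which are available in the noncollapsed $\RCD$ theory via the works of Mondino--Naber, Bru\`{e}--Semola, Kapovitch--Mondino and Honda. The output is a covering of $X$ by biH\"older charts $\phi_x:B_{r(x)}(x)\to V_x\subset\R^3$ with a uniform H\"older exponent $\alpha\in(0,1)$ and biH\"older transition maps, turning $X$ into a $C^{\alpha}$-manifold.

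The final step is to pass from the biH\"older atlas to a smooth structure. By Moise's theorem, every topological $3$-manifold admits a unique compatible smooth structure, so an abstract smooth manifold $M$ homeomorphic to $X$ exists. To upgrade the given homeomorphism to a biH\"older one, I would inductively smooth the transition maps of the Cheeger--Colding atlas by convolution with mollifiers, using a partition of unity to patch the smoothings together. At each step the adjustment to the charts can be kept within an arbitrarily small biH\"older neighborhood of the identity, and the output is a smooth atlas on an abstract $M$ together with a biH\"older homeomorphism $\Phi:M\to X$ of exponent arbitrarily close to $\alpha$. Endowing $M$ with any Riemannian metric via partition of unity then completes the construction.

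The main obstacle is the second step: establishing the $\RCD$ analogue of the Cheeger--Colding Reifenberg theorem with a uniform biH\"older exponent at every point, without access to an approximating sequence of smooth manifolds. In the classical Ricci limit setting one relies crucially on Hessian estimates coming from Bochner's formula applied to harmonic functions, whereas in the synthetic $\RCD$ world the quantitative Hessian bounds needed to obtain biH\"older (as opposed to merely $C^0$) charts are presently delicate to extract in a purely intrinsic way. A conceptually cleaner alternative to the smoothing step would be to run a Ricci flow starting from the (possibly only $C^{\alpha}$) ambient metric in the spirit of Simon--Topping \cite{Simon14,SimonTopping22}, but implementing the flow directly at the synthetic level would itself be a major achievement, essentially answering \autoref{q:smoothing} affirmatively in the Euclidean-tangent-cones case.
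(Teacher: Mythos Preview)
The statement you are attempting to prove is labelled a \emph{Conjecture} in the paper and is presented as an open problem; the paper offers no proof of it. There is therefore nothing to compare your proposal against, and any argument that actually closed this gap would be a new theorem, not a reproduction of something in the text.

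On the substance of your outline, your diagnosis of the difficulty is off. Step~1 is fine: the hypothesis that every tangent cone is Euclidean forces the density to equal $1$ at every point, and Bishop--Gromov monotonicity together with volume continuity upgrades this to uniform Reifenberg flatness on compact sets via a Dini-type argument. Step~2 is also not the obstacle you make it out to be. The Cheeger--Colding metric Reifenberg theorem \cite[Theorem~A.1.1]{CheegerColding97I} is stated for arbitrary complete metric spaces satisfying the Reifenberg condition and requires neither harmonic almost-splitting maps, nor Hessian estimates, nor any approximating sequence of smooth manifolds; it already produces local biH\"older charts to $\R^3$ with a uniform exponent. The paper itself invokes exactly this result in the introduction when discussing the biH\"older manifold structure of a neighbourhood of $\mathcal{R}(X)$.

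The genuine gap is your Step~3, which you treat as routine. Mollifying the transition maps of a $C^{\alpha}$ atlas by convolution produces smooth maps that are $C^0$-close to the originals, not biH\"older-close, and there is no reason the mollified maps remain injective (the derivative of a biH\"older homeomorphism need not exist, let alone be nondegenerate, so nearby smooth maps can easily fail to be local diffeomorphisms). Your assertion that ``the adjustment to the charts can be kept within an arbitrarily small biH\"older neighborhood of the identity'' is exactly the missing content of the conjecture, not a known lemma. Moise's theorem gives \emph{some} smooth structure on the underlying topological $3$-manifold, but provides no control on how that structure interacts with the biH\"older charts, so it does not help here. Your closing remark about Ricci flow is the honest assessment: the known route to biH\"older smoothing in dimension three goes through \cite{SimonTopping22}, and extending that to the synthetic setting is precisely what makes the conjecture open.
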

The proof of the topological rigidity of $\RCD(0,3)$ spaces with Euclidean volume growth that are homeomorphic to manifolds, as stated in \autoref{thm:volumegrowthR^3}, relies on the solution of the Poincaré conjecture. We believe that in order to make progress towards a resolution of \autoref{conj:biholder} it might be important to find a different proof avoiding the latter.

\begin{question}
Is there a proof of \autoref{thm:volumegrowthR^3} that does not rely on the solution to the Poincar\'e conjecture?
\end{question}
The fact that a smooth complete three-manifold with nonnegative Ricci curvature and Euclidean volume is homeomorphic to $\setR^3$ follows from \cite{Liu13}, which however relies on the solution of the Poincar\'e conjecture. See also the earlier work of Schoen and Yau \cite{SchoenYau} for the case of positive Ricci curvature. The statement (in the smooth case) should also follow from \cite{SimonTopping22} taking into account that each blow-down of any such manifold is homeomorphic to $\setR^3$, by \cite{CheegerColding96,Ketterer15,LytchakStadler} and \cite{SimonTopping22} again.

\subsection*{Acknowledgements}

EB is grateful for the financial support provided by Bocconi University. Additionally, part of this work was conducted during EB's visit to FIM-ETH Zürich, and he acknowledges their financial support. 
\\
DS is supported by the FIM-ETH Z\"urich through a Hermann Weil Instructorship. Part of this work was completed during a visit of DS at Bocconi University, whose financial support is gratefully acknowledged.  
\\
We thank Camillo Brena for useful discussions and for suggesting numerous corrections on a preliminary version of the paper.

\section{Notation and terminology}

\begin{tabular}{p{1.75cm}p{10cm}p{1cm}}
$(X,\dist)$ & Metric space \\
$\dist_{\rm{GH}}(X,Y)$ & Gromov--Hausdorff distance between $X$ and $Y$\\
$\haus^n$ & Hausdorff measure of dimension $n$\\
$\omega_n$ & Lebesgue measure of the unit ball in $\setR^n$\\
$B_r(x)$ & Open ball of radius $r$ centered at $x\in X$\\
$\overline{B}_r(x)$&  Closed ball of radius $r$ centered at $x\in X$\\
$C(Z)$ & Metric cone over $Z$\\
$\mathcal{R}(X)$& Regular set of $X$\\
$\mathcal{R}_\eps(X)$ & Quantitative regular set of $X$\\
$\mathcal{S}^k(X)$& $k$-dimensional singular set of $X$\\
$\mathcal{S}^k_\eps(X)$& $k$-dimensional effective singular set of $X$\\
$\mathcal{R}_{\rm top}(X)$& Set of manifold points of $X$\\
$\mathcal{S}_{\rm top}(X)$& Set of non-manifold points of $X$\\
$\mathcal{R}_{\rm gm^+}(X)$ & Generalized manifold points of $X$\\
$\mathcal{S}_{\rm gm^+}(X)$ & Non-generalized manifold points of $X$\\
$\mathcal{G}_p$ & Set of good radii at $p$\\
$\mathbb{B}_r(p)$ & Green-ball centered at $p$ with radius $r$\\
$\mathbb{S}_r(p)$ & Green-sphere centered at $p$ with radius $r$
\end{tabular}

\section{Preliminaries}\label{sec:prel}

In this preliminary section we gather some mostly well-known results that will be important later throughout this work. We assume the reader to have some familiarity with the notion of $\RCD$ space.

\subsection{Two-dimensional $\RCD$ spaces}\label{subsec:2dRCD}

The following is the main result of \cite{LytchakStadler}. It confirms that in dimension two a lower bound on the Ricci curvature and a lower bound on the sectional curvature are the same also in the singular case.

\begin{theorem}\label{thm:RCD2Alex}
Let $(X,\dist,\haus^2)$ be an $\RCD(K,2)$ metric measure space for some $K\in\setR$. Then $(X,\dist)$ is an Alexandrov space with curvature bounded from below by $K$.
\end{theorem}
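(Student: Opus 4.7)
The plan is to exploit the strong structure theory of two-dimensional noncollapsed $\RCD$ spaces and the fact that in dimension two Ricci and sectional curvature coincide at smooth points; the main work is then to understand the singular set and verify comparison across it.

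First, I would describe the local structure by analyzing tangent cones. Since $(X,\dist,\haus^2)$ is noncollapsed, the theory of De Philippis and Gigli shows that every tangent cone at a point $x\in X$ is a noncollapsed $\RCD(0,2)$ metric cone $C(Z_x)$. By Ketterer's cone/cross-section correspondence, $(Z_x,\dist_{Z_x})$ is a noncollapsed $\RCD(0,1)$ space, and such spaces are classified: either a circle of circumference $\ell\le 2\pi$ or a closed interval of length $\ell\le\pi$, where the sharp upper bound follows from the Bishop--Gromov inequality applied to $C(Z_x)$. Hence every $x$ is a conical point, interior of angle $\theta\in(0,2\pi]$ or boundary of angle $\theta\in(0,\pi]$. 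Combined with the $\eps$-regularity and Reifenberg-type results available in the noncollapsed $\RCD$ setting, this should imply that $X$ is a topological surface with (possibly empty) boundary, smooth away from a discrete set $\Sigma$ of conical singularities with $\theta<2\pi$ (respectively $\theta<\pi$).

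Next, I would verify the Alexandrov four-point condition on small triangles. On $X \setminus \Sigma$ the space is a smooth Riemannian surface on which the $\RCD(K,2)$ condition coincides with $\Ric\ge K$; in two dimensions this is equivalent to a Gaussian curvature bound $\kappa\ge K$, so classical Toponogov comparison applies locally. At a conical point $p\in\Sigma$ of angle $\theta\le 2\pi$, the model cone $C(S^1_\theta)$ is an Alexandrov space with curvature $\ge 0$; to upgrade the bound to $\ge K$, I would use a smoothing approximation, replacing a small conical neighbourhood of $p$ by a rotationally symmetric cap of constant Gaussian curvature $K$ matching the cone angle along its boundary, verify the bound on the resulting smooth surface, and pass to the limit. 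Toponogov's globalization theorem (valid in complete inner metric spaces) then extends the local four-point condition to the whole space.

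The main obstacle, I expect, will be the smoothing argument at conical points with angle close to the critical value $2\pi$, where the comparison becomes borderline, together with controlling how geodesics of $(X,\dist)$ interact with the singular set $\Sigma$. Overcoming this likely requires a careful Gromov--Hausdorff stability argument for Alexandrov lower curvature bounds in the spirit of Burago--Gromov--Perelman, coupled with Ketterer's precise warped-product description of a neighbourhood of each conical point, to ensure that the smoothed surfaces converge back to $X$ while retaining the sharp constant $K$.
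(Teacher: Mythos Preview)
The paper does not give its own proof of this statement: it is quoted as the main result of Lytchak--Stadler \cite{LytchakStadler} and used as a black box. So there is no ``paper's proof'' to compare with, only the external reference.

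Your outline has a genuine gap. You write that on $X\setminus\Sigma$ ``the space is a smooth Riemannian surface on which the $\RCD(K,2)$ condition coincides with $\Ric\ge K$'', and then invoke classical Toponogov comparison. But this smoothness is precisely what is not available a priori for an $\RCD$ space. The structure theory you cite (De Philippis--Gigli, Reifenberg-type arguments) only gives that the regular set is biH\"older homeomorphic to $\mathbb{R}^2$; it does not supply a $C^2$ (or even $C^{1,\alpha}$) Riemannian metric for which one could read off Gaussian curvature pointwise and run Toponogov. Likewise, the assertion that the singular set $\Sigma$ is discrete is not an immediate consequence of the tangent-cone classification: knowing that every tangent cone is a cone over a circle or interval does not by itself prevent an accumulation of conical points, and controlling this is part of the actual content of \cite{LytchakStadler}.

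The Lytchak--Stadler argument does not proceed by first establishing smoothness of the regular part. Your smoothing-and-globalization strategy would work if you already knew $X$ were a polyhedral or smooth-with-conical-points surface, but bridging the gap from ``noncollapsed $\RCD(K,2)$'' to that picture is essentially the whole theorem.
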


The well-established theory of Alexandrov surfaces then yields the following.

\begin{corollary}\label{cor:Alsurf}
Let $(X,\dist,\haus^2)$ be an $\RCD(K,2)$ metric measure space for some $K\in\setR$. Then $X$ is a topological surface, possibly with boundary.
\end{corollary}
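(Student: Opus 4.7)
The plan is to reduce the statement to the classical theory of two-dimensional Alexandrov spaces via \autoref{thm:RCD2Alex}, and then rely on a local analysis of tangent cones. First I would invoke \autoref{thm:RCD2Alex} to deduce that $(X,\dist)$ is an Alexandrov space with curvature bounded below by $K$ and with Hausdorff dimension $2$ (the dimension is preserved since the conclusion of \autoref{thm:RCD2Alex} provides the same underlying metric space). From here the question is purely one of local topology at an arbitrary point $p \in X$.

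The key local tool is the structure of tangent cones in an Alexandrov space: at every $p \in X$, the tangent cone $T_pX$ is the Euclidean cone over the space of directions $\Sigma_p$, which itself is a one-dimensional Alexandrov space of curvature $\geq 1$. In dimension one, such a space must be either a circle of length $\leq 2\pi$ (when $p$ is an interior point) or a closed arc of length $\leq \pi$ (when $p$ is a boundary point, in the Alexandrov sense). Consequently $T_pX$ is homeomorphic either to $\setR^2$ or to the closed half-plane $\setR\times[0,\infty)$, and this is the only local topological picture available.

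Next I would upgrade this pointwise tangent-cone information to a neighborhood statement. In dimension $2$ this step is classical and predates Perelman's general stability theorem: it is already contained in Aleksandrov's work on intrinsic surfaces and is covered by the structure theorems for two-dimensional Alexandrov spaces (see e.g.\ the $n=2$ case of \cite{BuragoGromovPerelman92} and \cite{Perelman99}). The conical neighborhood theorem produces, around each $p\in X$, an open set homeomorphic to $T_pX$; combining this with the dichotomy above, every point has a neighborhood homeomorphic either to an open disk or to a closed half-disk relative to an arc on its boundary. This gives a $C^0$ atlas exhibiting $X$ as a topological surface, with boundary consisting exactly of the set of Alexandrov boundary points.

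The step requiring the most care is the conical neighborhood statement, since in general dimension Perelman's argument is substantial. However, in dimension $2$ the analysis is elementary: one can directly construct a bi-Lipschitz (indeed, harmonic) chart around each point using a pair of coordinate functions coming from distance functions to suitable base points, exploiting the fact that $\Sigma_p$ is one-dimensional to verify the homeomorphism property. Alternatively, one can quote directly the structural results for surfaces of bounded integral curvature. Either route delivers the conclusion, and no further information beyond \autoref{thm:RCD2Alex} is needed.
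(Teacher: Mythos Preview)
Your proposal is correct and follows essentially the same route as the paper: reduce via \autoref{thm:RCD2Alex} to a two-dimensional Alexandrov space, classify the one-dimensional spaces of directions (circle or arc), and invoke the conical neighbourhood theorem to obtain local charts onto $\setR^2$ or a half-plane. The paper's proof is terser---it just cites Perelman's conical neighbourhood theorem together with the elementary classification of two-dimensional cones and points to \cite{BuragoBuragoIvanov}---but the content is the same.
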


\begin{proof}
The statement is well known and originally due to Alexandrov. However, it follows also from Perelman's conical neighbourhood theorem and the elementary classification of cones in dimension $2$: see \cite[Theorem 10.10.2, Corollary 10.10.3]{BuragoBuragoIvanov} and the references therein indicated. 
\end{proof}

\begin{corollary}\label{cor:2dK>0}
Let $(X,\dist,\haus^2)$ be an $\RCD(1,2)$ metric measure space. If $X$ has empty boundary then it is homeomorphic either to $S^2$ or to $\mathbb{RP}^2$. 
\end{corollary}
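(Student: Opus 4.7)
The plan is to exploit positive Ricci curvature twice, once on $X$ and once on its universal cover, and then invoke the classification of closed surfaces.

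First, since $(X,\dist,\haus^2)$ is $\RCD(1,2)$, the generalized Bonnet--Myers theorem for $\RCD(K,N)$ spaces with $K>0$ gives $\diam(X)\le\pi$, so $X$ is compact. Combined with \autoref{cor:Alsurf} and the hypothesis of empty boundary, $X$ is a \emph{closed} topological $2$-manifold. The goal is therefore to rule out every closed surface except $S^2$ and $\mathbb{RP}^2$.

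Next, I would pass to the universal cover $\pi\colon\tilde X\to X$. Because $X$ is a topological manifold it is locally simply connected, so the cover exists and inherits from $X$ a canonical length metric and a Radon measure (the pullback of $\haus^2$, which agrees with the $2$-dimensional Hausdorff measure of the lifted metric). Since $\pi$ is a local isometry in the metric sense, the lifted space is locally isomorphic as a metric measure space to $X$; by \autoref{thm:RCD2Alex} this means $\tilde X$ is locally an Alexandrov space of curvature $\ge 1$, and hence globally so by Toponogov, so $\tilde X$ is again $\RCD(1,2)$. Applying Bonnet--Myers to $\tilde X$, we conclude that $\tilde X$ is a \emph{compact} simply connected topological $2$-manifold. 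By the classification of closed surfaces $\tilde X\cong S^2$.

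Finally, $X=\tilde X/\Gamma$ where $\Gamma=\pi_1(X)$ acts freely by deck transformations; the cover is finite of degree $|\Gamma|$ since both spaces are compact. Multiplicativity of the Euler characteristic under finite covers yields
\begin{equation*}
 2=\chi(S^2)=\chi(\tilde X)=|\Gamma|\cdot\chi(X),
\end{equation*}
so $|\Gamma|\in\{1,2\}$ and $\chi(X)\in\{2,1\}$. The classification of closed surfaces forces $X\cong S^2$ in the first case and $X\cong\mathbb{RP}^2$ in the second.

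The only nontrivial point is the assertion that the universal cover inherits the $\RCD(1,2)$ (equivalently, curvature-$\ge 1$ Alexandrov) structure. This is standard for Alexandrov spaces and is the step I would double-check, but given that $X$ is a topological manifold and the covering map is a local isometry of length spaces, no pathology arises and lower curvature bounds lift immediately from $X$ to $\tilde X$.
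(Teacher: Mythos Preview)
Your proof is correct and follows essentially the same route as the paper: pass to the universal cover, use the Alexandrov structure (via \autoref{thm:RCD2Alex}) together with Bonnet--Myers to see that the cover is a compact simply connected surface, hence $S^2$, and then conclude. The paper's argument is terser---it simply states that the universal cover is compact without boundary, hence $S^2$, and that ``the statement follows''---whereas you spell out both the lifting of the curvature bound (local Alexandrov plus Toponogov globalization) and the final step via multiplicativity of the Euler characteristic; these are exactly the details the paper leaves implicit.
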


\begin{proof}
By \autoref{thm:RCD2Alex}, \autoref{cor:Alsurf} and the Bonnet--Myers theorem for Alexandrov spaces, see \cite[Theorem 8.44]{AlexanderKapovitchPetrunin}, the universal cover of $(X,\dist)$ is a compact surface without boundary. Hence, it is homeomorphic to $S^2$. The statement follows.
\end{proof}

\begin{corollary}\label{cor:2dAVR}
Let $(X,\dist,\haus^2)$ be an $\RCD(0,2)$ metric measure space with quadratic volume growth. If $X$ has empty boundary, then it is homeomorphic to $\setR^2$. 
\end{corollary}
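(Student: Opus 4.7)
My plan is to combine the Alexandrov structure provided by the earlier corollaries with the splitting and soul theorems in the Alexandrov category.

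First, \autoref{thm:RCD2Alex} identifies $(X,\dist)$ with a $2$-dimensional Alexandrov space of nonnegative curvature, and \autoref{cor:Alsurf} further identifies it with a topological surface. The hypothesis that the boundary is empty removes the possibility of a boundary in the Alexandrov sense as well. The quadratic volume growth hypothesis, together with Bishop--Gromov, forces $X$ to be noncompact, since on a compact $\RCD(0,2)$ space volumes of balls are uniformly bounded.

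Second, I would split the analysis according to whether $X$ contains a metric line. If it does, the Toponogov splitting theorem (equivalently, the Cheeger--Gromoll--Gigli splitting for $\RCD(0,N)$ spaces) yields an isometric product decomposition $X = \R \times Y$, where $Y$ is a $1$-dimensional nonnegatively curved Alexandrov space with empty boundary. The only such spaces are $\R$ and $S^1$. The cylinder $\R \times S^1$ has linear volume growth, which contradicts the quadratic growth assumption, so $Y = \R$ and $X$ is isometric to $\R^2$.

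Third, if $X$ contains no line, I would invoke the soul theorem for nonnegatively curved Alexandrov spaces (due to Perelman, after Sharafutdinov): there is a compact totally convex soul $S$ such that $X$ is homeomorphic to the open normal ``bundle'' of $S$. Since $X$ is $2$-dimensional with empty boundary, $\dim S \in \{0,1\}$. If $S$ were $1$-dimensional it would be a simple closed geodesic, making $X$ homeomorphic either to $\R \times S^1$ or to an open flat M\"obius band; both of these have linear volume growth (the second case follows by passing to the orientation double cover, which is a cylinder of at most twice the volume at each scale). Hence $S$ reduces to a single point $\{o\}$, and $X$ is homeomorphic to the tangent cone $T_o X$. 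This tangent cone is a $2$-dimensional metric cone whose cross-section is a $1$-dimensional Alexandrov space of curvature $\ge 1$ with empty boundary, hence a circle of length at most $2\pi$, and in particular $T_o X$ is homeomorphic to $\R^2$.

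The main obstacle I anticipate is the careful invocation of the soul theorem and the clean verification that a $1$-dimensional soul is incompatible with quadratic volume growth; these facts are standard in the Alexandrov literature but must be cited precisely. The splitting branch is immediate from \autoref{thm:RCD2Alex} and the $1$-dimensional classification.
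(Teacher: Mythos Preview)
Your argument is correct, but it follows a genuinely different route from the paper's. The paper first invokes \cite[Theorem~1.6]{MondinoWei} to conclude that $\pi_1(X)$ is finite, then endows $X$ with a Riemann surface structure and uses uniformization: the universal cover $\tilde X$ is $\setC$ or $\mathbb{H}$ (since $X$ is noncompact), and an automorphism of either with finite order and no fixed points must be trivial, so $\tilde X=X\cong\R^2$. By contrast, you stay entirely within comparison geometry, using the splitting and soul theorems for nonnegatively curved Alexandrov spaces and eliminating the cylinder and M\"obius band by volume growth. Your approach avoids the $\RCD$ input from \cite{MondinoWei} and the complex-analytic step, at the price of citing Perelman's soul theorem in the Alexandrov category; the paper's approach is shorter once one accepts those two external ingredients.

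One small point to tighten: when the soul is a point $\{o\}$, your assertion that ``$X$ is homeomorphic to the tangent cone $T_oX$'' is stronger than what the Sharafutdinov retraction immediately yields; it gives only that $X$ deformation retracts onto $\{o\}$, hence is contractible. That is already enough: a contractible noncompact surface without boundary is homeomorphic to $\R^2$ by the classification of surfaces, so you can finish that branch without invoking a global homeomorphism to the tangent cone.
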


\begin{proof}
By \cite[Theorem 1.6]{MondinoWei}, $X$ has finite fundamental group. 
The conclusion is certainly classical, and can be obtained as follows.
Endowing $X$ with a structure of Riemann surface, its universal cover $\tilde X$ is either $\mathbb{C}$ or $\mathbb{H}$, since $X$ is not compact. To conclude that $\tilde X=X$, it suffices to check that an automorphism of $\mathbb{C}$ or $\mathbb{H}$ with finite order and no fixed points must be the identity. In the case of $\mathbb{C}$, this is clear since automorphisms without fixed points are translations. In the case of $\mathbb{H}$, an automorphism has the form $z\mapsto\frac{az+b}{cz+d}$, with $a,b,c,d\in\R$ and $ad-bc=1$;
if it has finite order, then the matrix $A$ with these entries has $A^k=\pm I$ for some $k\ge1$. Hence, $A$ is diagonalizable and, unless $A=\pm I$, its eigenvalues are roots of unity different from $\pm1$. In particular, the eigenvalues are not real: this is equivalent to $(a-d)^2+4bc<0$, which in turn implies the existence of $z\in\mathbb{H}$ such that $\frac{az+b}{cz+d}=z$.
\end{proof}

\begin{proposition}\label{prop:Ale2loccontr}
Let $K\in\setR$ and $v>0$ be fixed. Let $\mathcal{F}_{K,v}$ be the class of all $\RCD(K,2)$ spaces such that $\haus^2(B_1(p))\ge v$ for any $p\in X$. Then $\mathcal{F}_{K,v}$ is a family of locally uniformly contractible metric spaces. 
\end{proposition}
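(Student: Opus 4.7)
By \autoref{thm:RCD2Alex}, every $(X,\dist,\haus^2)\in\mathcal{F}_{K,v}$ is a two-dimensional Alexandrov space with curvature bounded below by $K$, and the uniform volume bound $\haus^2(B_1(p))\ge v$ renders the family uniformly non-collapsed in dimension $2$. To establish locally uniform contractibility it suffices to prove the following claim: for every $\eta>0$ there exists $\rho=\rho(K,v,\eta)>0$ such that, for every $X\in\mathcal{F}_{K,v}$, every $p\in X$, and every $r\in(0,\rho]$, the ball $B_r(p)$ is contractible inside $B_\eta(p)$. Setting $\phi(r):=\inf\{\eta>0:r\le\rho(K,v,\eta)\}$ then produces a uniform contractibility function $\phi$ with $\phi(r)\to 0$ as $r\to 0$.

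I plan to prove the claim by contradiction combined with Gromov--Hausdorff compactness. If the claim failed, there would exist $\eta>0$, spaces $X_i\in\mathcal{F}_{K,v}$, points $p_i\in X_i$, and radii $r_i\downarrow 0$ such that $B_{r_i}(p_i)$ does not contract inside $B_\eta(p_i)$. Applying Gromov's pointed compactness theorem for Alexandrov spaces of fixed curvature lower bound, one can extract a subsequence with $(X_i,\dist_i,p_i)\xrightarrow{\mathrm{GH}}(Y,\dist_Y,q)$. Volume convergence in the non-collapsed Alexandrov category transfers the lower bound $\haus^2(B_1(q))\ge v$ to the limit, so $(Y,\dist_Y,\haus^2)\in\mathcal{F}_{K,v}$, and by \autoref{cor:Alsurf} the space $Y$ is a topological surface, possibly with boundary. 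In particular $q$ admits an open neighborhood $U$ with $\overline U\subset B_\eta(q)$ homeomorphic to a topological disk (or to a half-disk if $q\in\partial Y$), and hence contractible.

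Now I invoke Perelman's stability theorem for non-collapsed Alexandrov spaces of fixed dimension: for $i$ sufficiently large there exist homeomorphisms $f_i:B_{3\eta/2}(p_i)\to V_i\subset Y$ with $f_i(p_i)=q$ and $\sup_{x}\dist_Y(f_i(x),x)\to 0$. Choosing $i$ large enough that $r_i$ is much smaller than $\eta$ and $f_i$ is a sufficiently fine GH-approximation, we obtain inclusions $B_{r_i}(p_i)\subset f_i^{-1}(U)\subset B_\eta(p_i)$; since $f_i^{-1}(U)$ is homeomorphic to $U$, it is contractible, which yields a contraction of $B_{r_i}(p_i)$ inside $B_\eta(p_i)$, contradicting our assumption. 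The main technical ingredient is Perelman's stability theorem, which is legitimate here because the family is non-collapsed and of fixed dimension $2$; the rest of the argument is a standard compactness reduction combined with the elementary topological structure of $2$-dimensional Alexandrov spaces coming from \autoref{thm:RCD2Alex} and \autoref{cor:Alsurf}. An alternative, more self-contained route would bypass stability by using Perelman's conical neighborhood theorem to produce a disk neighborhood of each point explicitly and then extract the uniform lower bound on its radius by the same compactness scheme, but the stability-based version is shorter.
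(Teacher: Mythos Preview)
Your argument is correct, but it takes a different route from the paper. The paper's proof is essentially a citation: it invokes the classical fact that two-dimensional Alexandrov spaces can be Gromov--Hausdorff approximated by smooth Riemannian surfaces with the same lower sectional curvature bound, and then applies the Grove--Petersen uniform contractibility theorem for smooth manifolds. Your approach instead stays entirely within the Alexandrov category, running a compactness--contradiction scheme and appealing to Perelman's stability theorem to transfer the contractible disk neighborhood from the limit back to the sequence.

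Both approaches are legitimate. The paper's route is shorter to state but leans on the smooth approximation of Alexandrov surfaces, which is a nontrivial two-dimensional fact. Your route avoids smoothing and would in principle work in any dimension where stability is available; the cost is that you are invoking the local (pointed) form of Perelman's stability theorem on metric balls, which is a bit more delicate than the compact version and should be cited carefully (e.g.\ via Kapovitch's survey). The alternative you sketch at the end---using Perelman's conical neighborhood theorem directly and extracting a uniform lower radius bound by compactness---is arguably the cleanest way to make your argument self-contained, and is closer in spirit to how such statements are usually proved in Alexandrov geometry.
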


\begin{proof}
The statement is well known. It follows for instance from the analogous statement for smooth Riemannian manifolds in \cite{GrovePetersen88} and the fact that two-dimensional Alexandrov spaces with curvature bounded from below can be approximated in the Gromov--Hausdorff sense by smooth Riemannian $2$-manifolds with a uniform lower bound on the sectional curvature.
\end{proof}

\begin{definition}\label{def:effreg}
Let $(X,\dist,\haus^n)$ be an $\RCD(K,n)$ space. Given $\epsilon>0$ and $r>0$ we shall denote
\begin{equation}
\mathcal{R}_{\epsilon,r}(X)=\mathcal{R}_{\epsilon,r}:=\{x\in X\, :\, \dist_{\rm{GH}}(B_s(x),B_s(0^n))<\epsilon s\text{ for all } 0<s<2r \}\, ,
\end{equation}
and 
\begin{equation}
\mathcal{R}_{\epsilon}(X)=\mathcal{R}_{\epsilon}:=\bigcup_{r>0}\mathcal{R}_{\epsilon,r}\, .
\end{equation}
\end{definition}

A consequence of the metric Reifenberg theorem \cite{CheegerColding97I} and of the estimates for the quantitative strata of Alexandrov spaces with curvature bounded below (see \cite[Theorem 1.3]{LiNaber}) that will be particularly relevant for us is the following.

\begin{proposition}\label{prop:sing2dsmall}
Let $v>0$ and $\epsilon<\epsilon_0$ be fixed such that Reifenberg's theorem applies. There exists $N=N(\epsilon)$ such that the following holds.
For every Alexandrov space $(X,\dist,\haus^2)$ with curvature bounded below by $-1$, empty boundary, and such that $\haus^2(B_1(p))\ge v$, for every $r<r_0(v)$ there exists a finite set $\{x_i\}\subset B_1(p)$ with less than $N$ elements such that
\begin{equation}
\left(X\setminus \mathcal{R}_{\epsilon,r}\right)\cap B_1(p)\subset \bigcup_i B_{3r}(x_i)\, .
\end{equation}
\end{proposition}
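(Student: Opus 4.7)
The strategy is to apply the metric Reifenberg theorem \cite{CheegerColding97I} to reduce the proposition to a uniform covering estimate for a quantitative singular stratum, and then to derive that estimate from the quantitative stratification of \cite[Theorem 1.3]{LiNaber}, refined by the two-dimensional Alexandrov structure.

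First, I would use the metric Reifenberg theorem to identify $(X\setminus\mathcal{R}_{\epsilon,r})\cap B_1(p)$ with an effective singular stratum. Specifically, for $\delta=\delta(\epsilon)>0$ small enough, Reifenberg gives the implication $\dist_{\rm{GH}}(B_{2r}(x),B_{2r}(0^2))<2r\delta \Longrightarrow x\in\mathcal{R}_{\epsilon,r}$. Taking contrapositives,
\[
(X\setminus\mathcal{R}_{\epsilon,r})\cap B_1(p)\subset\mathcal{S}^{0}_{\delta,2r}(X)\cap B_1(p),
\]
where $\mathcal{S}^{0}_{\delta,2r}$ denotes the effective $0$-stratum at scale $2r$ in the sense of \cite{LiNaber}. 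In the empty-boundary $2$-dimensional case, $(1,\delta)$-symmetry is equivalent to proximity to $B_s(0^2)$, since the only admissible $\delta$-symmetric model cones are Euclidean.

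Second, I would bound the $3r$-covering number of $\mathcal{S}^{0}_{\delta,2r}(X)\cap B_1(p)$ by a constant $N(\epsilon)$ independent of $r$. A direct application of \cite[Theorem 1.3]{LiNaber} together with Bishop--Gromov yields only a bound of the form $Cr^{-\eta}$, depending mildly on $r$. The improvement to the uniform constant $N(\epsilon)$ comes from the two-dimensional Alexandrov structure: by Alexandrov's Gauss--Bonnet applied to $B_2(p)$ (at a generic radius so that $\partial B_2(p)$ is a tame $1$-manifold of controlled length), the positive part of the curvature measure on $B_2(p)$ is bounded by topological and boundary terms plus $\haus^2(B_2(p))\le C$, the latter from Bishop--Gromov. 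This forces the discrete set of points with angular defect $\ge\eta=\eta(\delta)$ in $B_2(p)$ to have cardinality $\le C/\eta=:N(\epsilon)$, and a blow-up/compactness argument within the GH-compact class $\mathcal{F}_{-1,v}$ identifies each point of $\mathcal{S}^{0}_{\delta,2r}\cap B_1(p)$ with one lying within distance $2r$ of such an ambient angular-defect point. The cover by balls $B_{3r}$ centred at these $\le N(\epsilon)$ points completes the proof.

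The main obstacle is the blow-up step converting a persistent bad scale into the existence of an ambient angular defect. Even in dimension $2$, it relies on a combination of two-sided volume pinching on small balls (from the upper and lower Bishop--Gromov estimates, requiring $r<r_0(v)$) and the rigidity of Alexandrov surfaces with non-negative curvature and no angular defect. If one is content with $N$ depending on both $\epsilon$ and $v$ the step can be bypassed by applying Gauss--Bonnet at increasingly small scales; the uniformity in $r$ is the delicate point, and it uses the interplay between Reifenberg, Li--Naber, and the Alexandrov discreteness specific to dimension $2$.
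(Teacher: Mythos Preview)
Your reduction in the first step is correct and matches the paper: the paper attributes the proposition to the metric Reifenberg theorem together with \cite[Theorem~1.3]{LiNaber}, and your identification of $(X\setminus\mathcal{R}_{\epsilon,r})\cap B_1(p)$ with a subset of the quantitative $0$-stratum $\mathcal{S}^0_{\delta,2r}$ is exactly that reduction.

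The gap is in your second step. You write that a direct application of \cite[Theorem~1.3]{LiNaber} yields only a covering bound of the form $Cr^{-\eta}$. This is not so: the $r^{-k-\eta}$ bounds are the Cheeger--Naber estimates for spaces with Ricci bounded below, whereas the point of Li--Naber is that for Alexandrov spaces the quantitative strata satisfy the \emph{sharp} Minkowski/packing estimates. In particular, \cite[Theorem~1.3]{LiNaber} gives $\haus^2\bigl(B_r(\mathcal{S}^0_{\delta,r})\cap B_1(p)\bigr)\le C(\delta,v)\,r^2$, which by Bishop--Gromov translates into a cover of $\mathcal{S}^0_{\delta,r}\cap B_1(p)$ by at most $C(\delta,v)$ balls of radius $r$, with $C$ independent of $r$. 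This already finishes the proof and is precisely the paper's route; the remark following the proposition is just the $r\to 0$ limit of this uniform bound.

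Your Gauss--Bonnet argument is a legitimate alternative in spirit and is in fact closer to how one proves the ``classical'' discreteness statement in the remark, but as written the blow-up step converting membership in $\mathcal{S}^0_{\delta,2r}$ into proximity to a genuine cone point of defect $\ge\eta$ is not justified: a point in $\mathcal{S}^0_{\delta,2r}$ need not be $2r$-close to any conical singularity, and your compactness argument would have to produce a nearby point of definite density drop, which is essentially re-proving the Li--Naber estimate. Since the sharp Alexandrov estimate is already available, the detour is unnecessary.
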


\begin{remark}\label{rm:finitesingset}
Note that $N=N(\epsilon)$ is independent of $r$ in the statement of \autoref{prop:sing2dsmall}. In particular, the statement is an effective version of the (more classical) fact that, for an Alexandrov surface with curvature bounded from below by $-1$, for every $\epsilon>0$ the number of singular points $x\in B_1(p)$ with cone angle less than $2\pi-\epsilon$ is uniformly bounded by $N(\epsilon)$.
\end{remark}

\subsection{Almost volume-cone implies almost metric-cone and consequences}\label{subsec:almostvcalmostmc}

We address the reader to \cite[Chapter 3]{BuragoBuragoIvanov} for the relevant terminology about metric cones over metric spaces.  

It follows from \cite{Ketterer15} that a metric measure cone $(C(Y),\dist_{C(Y)},\haus^n)$ is an $\RCD(0,n)$ space if and only if the cross-section $(Y,\dist_Y,\haus^{n-1})$ is an $\RCD(n-2,n-1)$ space. 

A standard compactness argument, in combination with the so-called ``volume-cone implies metric-cone'' theorem from \cite{DePhilippisGigli}, leads to the following ``almost volume-cone implies almost metric-cone'' theorem for $\RCD$ spaces. A similar statement was obtained with a completely different method for smooth Riemannian manifolds earlier in \cite{CheegerColding96}. We address the reader to \cite{AntonelliBrueSemola} for a detailed proof.

\begin{theorem}\label{thm:almvcalmmc}
Let $\delta>0$, $v>0$ and $n\in\setN$, $n\ge 2$ be fixed. There exists $\epsilon=\epsilon(\delta,v,n)>0$ such that for every $\RCD(-\epsilon^2,n)$ space $(X,\dist,\haus^n)$ with
\begin{equation}
\haus^n(B_1(p))\ge v\, 
\end{equation}
the following holds. If 
\begin{equation}
\frac{\haus^n(B_4(p))}{\haus^n(B_2(p))}\ge (1-\epsilon)2^n\, ,
\end{equation}
then there exists an $\RCD(n-2,n-1)$ space $(Y,\dist_Y,\haus^{n-1})$ with $\rm{diam}(Y)\le \pi$ such that 
\begin{equation}
\dist_{\rm{GH}}(B_1(p),B_1(o))\le \delta\, ,
\end{equation}
where $B_1(o)\subset C(Y)$ is the ball centered at the tip.
\end{theorem}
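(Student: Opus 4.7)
The plan is to argue by contradiction via a pointed measured Gromov--Hausdorff (pmGH) compactness scheme, reducing the claimed almost-rigidity to the rigid volume-cone-implies-metric-cone theorem of De Philippis--Gigli \cite{DePhilippisGigli} combined with Ketterer's cone characterization \cite{Ketterer15}. First I would negate the conclusion: fix $\delta_0>0$, $v>0$ and $n\ge 2$, and suppose there exist $\RCD(-\epsilon_k^2,n)$ spaces $(X_k,\dist_k,\haus^n,p_k)$ with $\epsilon_k\downarrow 0$ such that $\haus^n(B_1(p_k))\ge v$ and
\[
\frac{\haus^n(B_4(p_k))}{\haus^n(B_2(p_k))}\ge (1-\epsilon_k)\,2^n,
\]
while $\dist_{\rm{GH}}(B_1(p_k),B_1(o))>\delta_0$ for every pointed cone $(C(Y),o)$ whose cross-section $(Y,\dist_Y,\haus^{n-1})$ is an $\RCD(n-2,n-1)$ space of $\rm{diam}(Y)\le\pi$.

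Next I would extract a subsequential pmGH limit $(X_\infty,\dist_\infty,\meas_\infty,p_\infty)$ via Gromov precompactness. Stability of the $\RCD$ condition under pmGH convergence promotes the curvature-dimension bound to the limit, so $X_\infty$ is $\RCD(0,n)$. The uniform noncollapsing $\haus^n(B_1(p_k))\ge v$, combined with the continuity of the Hausdorff measure along noncollapsing $\RCD$-sequences established in \cite{DePhilippisGigli2}, yields $\meas_\infty=c\,\haus^n$ for some $c>0$, so $X_\infty$ is itself noncollapsed.

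Passing the volume-ratio inequality to the limit and combining with Bishop--Gromov on the $\RCD(0,n)$ space $X_\infty$ forces the equality
\[
\haus^n(B_4(p_\infty))=2^n\,\haus^n(B_2(p_\infty)).
\]
The volume-cone-implies-metric-cone theorem of \cite{DePhilippisGigli} then yields an isometry of $B_2(p_\infty)$ with the concentric ball of radius $2$ about the tip of a metric cone $C(Y)$; since this cone is $\RCD(0,n)$, Ketterer's characterization \cite{Ketterer15} forces $(Y,\dist_Y,\haus^{n-1})$ to be an $\RCD(n-2,n-1)$ space, and $\rm{diam}(Y)\le\pi$ by Bonnet--Myers (or, for $n=2$, directly by the cone construction). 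In particular $B_1(p_k)\to B_1(o)\subset C(Y)$ in the GH topology along the chosen subsequence, contradicting the standing assumption.

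The decisive step is ensuring that the limit $X_\infty$ is genuinely noncollapsed with $\meas_\infty$ proportional to $\haus^n$, because otherwise the De Philippis--Gigli rigidity would produce only an abstract ``volume cone'' structure with no control on the dimension or on the $\RCD$-regularity of the cross-section $Y$. The quantitative lower bound $\haus^n(B_1(p_k))\ge v$ together with the continuity of the Hausdorff measure in the noncollapsed $\RCD$ category provide exactly this input, and they are also what allow Bishop--Gromov to turn the almost-equality in the hypothesis into an exact equality in the limit.
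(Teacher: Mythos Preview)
Your proposal is correct and follows precisely the ``standard compactness argument, in combination with the so-called `volume-cone implies metric-cone' theorem from \cite{DePhilippisGigli}'' that the paper itself outlines (referring to \cite{AntonelliBrueSemola} for the details). The ingredients you invoke---pmGH compactness, stability of the $\RCD$ condition, volume convergence in the noncollapsed setting \cite{DePhilippisGigli2}, the rigid theorem of \cite{DePhilippisGigli}, and Ketterer's cone characterization \cite{Ketterer15}---are exactly those required.
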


\begin{definition}
Let $(X,\dist,\haus^n)$ be an $\RCD(-\delta^2,n)$ space and let $p\in X$. We shall say that the ball $B_1(p)\subseteq X$ is \emph{$\delta$-conical} (or \emph{$(0,\delta)$-symmetric}) if there exists an $\RCD(n-2,n-1)$ space $(Y,\dist_Y,\haus^{n-1})$ such that 
\begin{equation}
\dist_{\rm{GH}}(B_1(p),B_1(o))\le\delta\, ,
\end{equation}
where $B_1(o)\subset C(Y)$ is the ball centered at the tip. An analogous terminology will be employed for balls of any radius $r>0$, with the understanding that the conditions hold after scaling to radius $1$.
\end{definition}

We are going to heavily rely on two by now classical consequences of \autoref{thm:almvcalmmc}. We address the reader to \cite{CheegerColding96,CheegerColding97I} and \cite{CheegerNaber13}, where the arguments originated from, and to \cite{DePhilippisGigli2,AntonelliBrueSemola} for the present setting.

\begin{lemma}\label{lemma:manyconsc}
Let $\delta>0$ and $v>0$ be fixed. There exists $C=C(n,\delta,v)\ge 1$ such that for every $\RCD(-(n-1),n)$ space and for every $p\in X$ such that $\haus^n(B_1(p))\ge v$ the following holds. For every $0<r<1$ there exists $r<r'<Cr$ such that the ball $B_{r'}(p)$ is $\delta$-conical.
\end{lemma}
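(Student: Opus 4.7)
The plan is a pigeonhole argument on dyadic scales, combining the Bishop--Gromov volume monotonicity with \autoref{thm:almvcalmmc}.

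First, Bishop--Gromov under $\RCD(-(n-1),n)$ shows that $s \mapsto \haus^n(B_s(p))/v_{-1,n}(s)$ is non-increasing, where $v_{-1,n}(s)$ denotes the volume of a ball of radius $s$ in the hyperbolic model. The noncollapsing hypothesis $\haus^n(B_1(p)) \ge v$ therefore propagates to the uniform lower bound $\haus^n(B_s(p))/s^n \ge c_1(n,v) > 0$ for every $s \in (0,1]$. Rescaling the metric by $1/s$ turns $B_s(p)$ into the unit ball in an $\RCD(-(n-1)s^2, n)$ space of rescaled volume $\ge c_1(n,v)$, so \autoref{thm:almvcalmmc} can be applied uniformly at every such scale, provided $s$ is small enough for the rescaled curvature to be within the range of validity.

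Second, I would record the contrapositive of \autoref{thm:almvcalmmc} as a quantitative volume-ratio gap: for every $\delta > 0$ there exists $\eps = \eps(n, \delta, v) > 0$ such that, whenever $s \le \eps/\sqrt{n-1}$ and $B_s(p)$ (rescaled to unit radius) fails to be $\delta$-conical, one has
\[
\haus^n(B_{4s}(p)) < (1-\eps)\cdot 2^n \cdot \haus^n(B_{2s}(p)).
\]

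Third, assume for contradiction that no $r' \in (r, Cr)$ makes $B_{r'}(p)$ $\delta$-conical, and inspect the dyadic scales $s_k := 2^k r$ for $k = 0, 1, \ldots, K$, with $K \sim \log_2 C$ chosen so that $s_{K+2} \le \eps/\sqrt{n-1}$. Applying the contrapositive at each $s_k$ gives $\haus^n(B_{s_{k+2}}(p)) < (1-\eps) \cdot 2^n \cdot \haus^n(B_{s_{k+1}}(p))$, and iterating yields
\[
\frac{\haus^n(B_{s_{K+2}}(p))}{s_{K+2}^n} < (1-\eps)^{K+1} \cdot \frac{\haus^n(B_{s_1}(p))}{s_1^n}.
\]
The left-hand side is bounded below by $c_1(n,v)$ (Bishop--Gromov), while the right-hand side is bounded above by a universal constant $C(n)$, so $K$ must be bounded by some $K_0 = K_0(n, \delta, v)$, which contradicts the choice of $C \ge 2^{K_0+3}$ and completes the argument.

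The main (minor) obstacle is the scale restriction $s \le \eps/\sqrt{n-1}$ dictated by the rescaling: the pigeonhole works directly only for $r \le r_0(n, \delta, v)$. The remaining range $r \in [r_0, 1)$ contains only finitely many dyadic layers and is handled by enlarging $C$: the argument applied at an auxiliary starting radius $r' \le r_0$ produces a $\delta$-conical scale in $(r', Cr')$ which, for $C$ large enough, lies inside every $(r, Cr)$ with $r \in [r_0, 1)$.
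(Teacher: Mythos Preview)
The paper does not give its own proof of this lemma; it simply records it as a ``classical consequence'' of \autoref{thm:almvcalmmc} and sends the reader to the Cheeger--Colding, Cheeger--Naber, and $\RCD$ references. Your dyadic pigeonhole via Bishop--Gromov monotonicity and the contrapositive of \autoref{thm:almvcalmmc} is precisely the standard argument from those sources, and it is correct in the regime $r\le r_0(n,\delta,v)$, which is the only regime the paper ever uses (see \autoref{cor:densegoodr}).

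Your last paragraph, however, does not salvage the range $r\in[r_0,1)$: the $\delta$-conical scale produced in $(\tilde r,C\tilde r)$ from an auxiliary $\tilde r\le r_0$ has no reason to lie above $r$, so the inclusion into $(r,Cr)$ is unjustified. In fact the lemma as literally stated cannot hold over all of $(0,1)$. In $\mathbb{H}^n$, which is $\RCD(-(n-1),n)$ with $\haus^n(B_1(p))=v_{-1,n}(1)$, a ball $B_s(p)$ rescaled to unit size has volume $v_{-1,n}(s)/s^n$, which for $s$ bounded away from $0$ strictly exceeds $\omega_n\ge\haus^n(B_1(o))$ for every conical model $C(Z)$; by volume convergence this forbids $\delta$-conicality once $\delta$ is small. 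Hence for $r$ close to $1$ there is no $\delta$-conical scale in $(r,Cr)$ for any $C$. The correct reading of the lemma is with the restriction $r<r_0(n,\delta,v)$ (equivalently $r<1/C$), and for that range your proof is complete.
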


\begin{lemma}\label{lemma:infveryc}
Let $(X,\dist,\haus^n)$ be an $\RCD(-(n-1),n)$ space and let $\delta>0$. For any $p\in X$ there exists $r_0=r_0(p,\delta)>0$ such that for every $0<r<r_0$ the ball $B_r(p)$ is $\delta$-conical.
\end{lemma}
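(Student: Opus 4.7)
The plan is to combine the Bishop--Gromov monotonicity available in $\RCD(-(n-1),n)$ spaces with the almost-volume-cone implies almost-metric-cone theorem \autoref{thm:almvcalmmc}, after rescaling the ball $B_r(p)$ to unit radius.

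First, I recall that for any $\RCD(-(n-1),n)$ space the map $r \mapsto \haus^n(B_r(p))/V(r)$ is non-increasing in $r$, where $V(r)$ denotes the volume of a ball of radius $r$ in the $n$-dimensional space form of constant sectional curvature $-1$. Since $V(r) = \omega_n r^n (1 + o(1))$ as $r \to 0^+$, the density
\begin{equation*}
\theta(p) := \lim_{r \to 0^+} \frac{\haus^n(B_r(p))}{\omega_n r^n}
\end{equation*}
exists and coincides with the supremum of the above monotone function. In particular, $\theta(p) > 0$, because the ratio is bounded below by its value at any fixed $R > 0$ and $\haus^n(B_R(p)) > 0$ as $p \in \supp \haus^n = X$. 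From this I deduce that for every $\eta > 0$ there exists $r_\eta > 0$ with
\begin{equation*}
\frac{\haus^n(B_{4r}(p))}{\haus^n(B_{2r}(p))} \geq (1 - \eta)\, 2^n \quad \text{for all } r < r_\eta,
\end{equation*}
since the Bishop--Gromov ratios at scales $2r$ and $4r$ both tend to $\theta(p)$ while $V(4r)/V(2r) \to 2^n$.

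To conclude, fix $\delta > 0$, set $v := \theta(p)\omega_n / 2 > 0$, and apply \autoref{thm:almvcalmmc} with parameters $(\delta, v, n)$ to obtain $\epsilon = \epsilon(\delta, v, n) > 0$. For every sufficiently small $r > 0$, the rescaled metric measure space $(X, r^{-1}\dist, r^{-n}\haus^n)$ is $\RCD(-(n-1)r^2, n)$, hence $\RCD(-\epsilon^2, n)$; moreover its unit ball centered at $p$ has volume $r^{-n}\haus^n(B_r(p)) \geq v$ (since this quantity converges to $\theta(p)\omega_n$), and it satisfies the almost-volume-cone condition with parameter $\epsilon$ by the previous step. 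Invoking \autoref{thm:almvcalmmc} on the rescaled ball produces an $\RCD(n-2, n-1)$ cross-section $(Y, \dist_Y, \haus^{n-1})$ such that the rescaled unit ball at $p$ lies within Gromov--Hausdorff distance $\delta$ of the unit ball centered at the tip of $C(Y)$; by the scaling convention in the definition of $\delta$-conical, this is precisely the assertion that $B_r(p)$ is $\delta$-conical, and taking $r_0(p,\delta)$ smaller than all the thresholds above yields the lemma.

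The argument contains no serious obstacle and is essentially a rescaling exercise; the one mildly delicate point is securing the positivity of $\theta(p)$ at the fixed point $p$, which however is immediate from Bishop--Gromov monotonicity together with $\haus^n(B_R(p)) > 0$ for some $R > 0$.
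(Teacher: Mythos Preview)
Your proof is correct and follows precisely the standard route via Bishop--Gromov monotonicity combined with \autoref{thm:almvcalmmc}; the paper does not give its own proof but simply cites \cite{CheegerColding96,CheegerColding97I,CheegerNaber13,DePhilippisGigli2,AntonelliBrueSemola}, where exactly this argument appears. The only point you leave implicit is that $\theta(p)<\infty$ (in fact $\theta(p)\le 1$ for noncollapsed $\RCD$ spaces, see \cite{DePhilippisGigli2}), which is what guarantees that the ratio $\haus^n(B_{4r}(p))/\haus^n(B_{2r}(p))$ actually tends to $2^n$.
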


\subsection{Almost splitting maps}

We provide an overview of $\delta$-splitting maps  and the almost splitting theorems within the framework of $\RCD$ spaces. They are classical tools in the study of spaces with Ricci curvature bounded below \cite{CheegerColding96,CheegerColding97I,CheegerColding00II,CheegerNaber15}. Our presentation will mainly follow \cite{BrueNaberSemolabdry}.

\begin{definition}\label{spl.def}
	Let $(X,\dist,\meas)$ be an $\mathrm{RCD}(K,N)$ space for some $K\in\R$ and some $1\le N <\infty$. Let $p\in X$ and $s>0$. A map
	$u :B_{2s}(p)\to\R^k$ is a \emph{$\delta$-splitting} map if it belongs to the domain of the local Laplacian on $B_{2s}(p)$, and
	\begin{align}\label{eq:splitting}
    \begin{aligned}
		&|\nabla u|\le C(N)\, \quad\text{on $B_s(p)$}\, ,
		\\ &\sum_{a,b=1}^k\fint_{B_s(p)}|\ang{\nabla u^a,\nabla u^b}-\delta^{ab}|\le\delta^2\, ,
		\\ &\sum_{a=1}^k s^2\fint_{B_s(p)}|\operatorname{Hess}(u^a)|^2\le\delta^2\, .
    \end{aligned}
	\end{align}
\end{definition}

\begin{remark}
    As clarified later on,
    the constant $C(N)$ appearing in the definition is a (computable) constant related to the one appearing in \autoref{lemma:Mosersharper}.
\end{remark}

\begin{remark}[Harmonic almost splitting]
In the literature, it is often assumed that $\delta$-splitting maps are harmonic. Under this assumption, the Hessian bound in equation \eqref{eq:splitting} arises as a consequence of the $L^1$ gradient estimate. However, for our purposes, we find it more convenient to drop the harmonicity assumption and directly assume that the Hessian is small in $L^2$.

\end{remark}

\begin{remark}[Sharp gradient bound]
	\label{rmk:sharp_gradient}
	If $(X,\dist,\meas)$ is an $\RCD(-\delta(N-1),N)$ and $u:B_{2}(p) \to \R^k$ is harmonic, then the gradient bound in \eqref{eq:splitting} can be sharpened to
	\begin{equation}
		\sup_{B_{1}(p)}| \nabla u| \le 1 + C(N)\delta^{1/2} \, .
	\end{equation}
	See \cite[eqs.\ (3.42)--(3.46)]{CheegerNaber15} and \cite[Remark 3.3]{BrueNaberSemolabdry}.
\end{remark}

It is now a classical result that the existence of an almost splitting function is equivalent to Gromov--Hausdorff closeness to a space that splits a Euclidean factor. The following statement corresponds to \cite[Theorem 3.8]{BrueNaberSemolabdry}. For similar statements concerning smooth manifolds and Ricci limits, we refer to \cite{CheegerColding96}, \cite[Lemma 1.21]{CheegerNaber15}, and \cite[Theorem 4.11]{CheegerJiangNaber}.

\begin{theorem}[$\delta$-splitting vs $\eps$-isometry]\label{thm:splittings}
	Let $1\le N <\infty$ be fixed. 
	\begin{itemize}
		\item[(i)] For every $\delta>0$ and $\eps<\eps_0(N,\delta)$, if $(X,\dist,\meas)$ is an $\RCD(-\eps(N-1), N)$ m.m.s.\ satisfying
		\begin{equation}
			\dist_{\rm{mGH}}(B_2(p),B_2(0,z))\le \eps \, , \quad (0^k,z)\in \R^k \times Z \, ,
			\quad p\in X
		\end{equation}
		for some m.m.s.\ $(Z,\dist_Z,\meas_Z)$, then
		there exists a harmonic $\delta$-splitting map 
		\begin{equation}
			u :B_1(p)\to \R^k \, .
		\end{equation}

		\item[(ii)] For every $\eps>0$ and $\delta < \delta_0(N,\eps)$, if  $(X,\dist,\meas)$ is an $\RCD(-\delta(N - 1), N)$ m.m.s.\ and there exists a $\delta$-splitting map $u : B_6(p)\to \R^k$ for a given $p\in X$, then
		\begin{equation}
			\dist_{\rm{GH}}(B_{1/k}(p),B_{1/k}(0^k,z))\le \eps\, , \quad
			(0^k,z)\in \R^k \times Z
		\end{equation}
		for some $\RCD(0,N-k)$ m.m.s.\ $(Z,\dist_Z, \meas_Z)$. Moreover, there exists $f:B_{1}(p)\to Z$ such that
		\begin{equation}
			(u-u(p),f):B_{1/k}(p)\to B_{C(n)\delta+1/k}(0^k,z)
			\quad \text{is an $\eps$-GH isometry.}
		\end{equation} 
	\end{itemize}
\end{theorem}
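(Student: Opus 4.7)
My plan is a two-way compactness-and-contradiction argument, modeled on the classical scheme of Cheeger--Colding and its $\RCD$ adaptation. For part (i), I would suppose that for some $\delta>0$ no such $\eps_0$ exists and extract a sequence of $\RCD(-\eps_j(N-1),N)$ spaces $(X_j,p_j)$ with $\eps_j\downarrow 0$, $\dist_{\rm{mGH}}(B_2(p_j),B_2(0^k,z_j))\le\eps_j$, and no harmonic $\delta$-splitting map on $B_1(p_j)$. Passing to an mGH subsequential limit $(\setR^k\times Z,(0^k,z_\infty))$, the coordinate projections $\pi^a$ are harmonic $1$-Lipschitz functions satisfying the exact splitting identities. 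I would construct candidates $u^a_j$ on $B_2(p_j)$ by solving the Dirichlet problem $\Delta u^a_j=0$ with boundary values given by a GH-transport of $\pi^a$, and invoke the stability of harmonic functions and of their Dirichlet energies under mGH convergence of $\RCD$ spaces. This yields $u^a_j\to\pi^a$ uniformly and in $H^{1,2}_{\rm loc}$, which in turn produces smallness of the orthonormality defect in \eqref{eq:splitting}; the $C^0$ gradient bound is then supplied by \autoref{rmk:sharp_gradient}.

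The Hessian bound is the analytic core of part (i). On any $\RCD(K,N)$ space, Bochner's inequality for harmonic functions gives, in integrated form,
\begin{equation}
\int \phi^2 |\Hess u|^2 \le \int \tfrac{1}{2}|\nabla u|^2 \Delta\phi^2 - K\int \phi^2 |\nabla u|^2
\end{equation}
for any nonnegative cut-off $\phi$. Plugging $u=u^a_j$ with a cut-off $\phi$ supported on $B_2(p_j)$ and identically $1$ on $B_s(p_j)$, and using that $|\nabla u^a_j|^2\to 1$ in $L^1_{\rm loc}$ (so the right-hand side vanishes asymptotically), gives $\int_{B_s(p_j)}|\Hess u^a_j|^2\to 0$. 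For $j$ large this contradicts the assumed failure, proving (i).

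For part (ii) I would again argue by contradiction and compactness. Given a sequence $u_j:B_6(p_j)\to\setR^k$ of $\delta_j$-splitting maps with $\delta_j\downarrow 0$, after translating $u_j(p_j)=0$ the uniform Lipschitz bound and pointed mGH compactness of $\RCD(-\delta_j(N-1),N)$ spaces produce a subsequential limit $(X_\infty,p_\infty,u_\infty)$ with $u_\infty$ $1$-Lipschitz. The stability of the Laplacian and of the Hessian operator under mGH convergence upgrades the $\delta_j$-splitting conditions to $\Hess u^a_\infty=0$ and $\ang{\nabla u^a_\infty,\nabla u^b_\infty}=\delta^{ab}$ in the limit. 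Gigli's splitting theorem for $\RCD(0,N)$ spaces then forces $X_\infty=\setR^k\times Z$ as a metric measure space, with $u_\infty$ the Euclidean projection and $Z$ an $\RCD(0,N-k)$ space. The map $f_j$ in the statement is obtained by pulling back the projection to the $Z$-factor through a GH-approximation; a diagonal argument verifies that $(u_j-u_j(p_j),f_j)$ is an $\eps$-GH isometry on $B_{1/k}(p_j)$ for $j$ large, yielding the contradiction. The main obstacle in both directions is leveraging the second-order calculus on $\RCD$ spaces — Bochner's inequality with its self-improvement from $L^1$ to $L^2$ Hessian estimates for harmonic functions, together with the mGH-stability of the Laplacian and Gigli's splitting theorem — which is by now well established but technically nontrivial; once these tools are assumed, the argument faithfully transplants the smooth Cheeger--Colding scheme to the $\RCD$ setting.
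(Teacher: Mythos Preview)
The paper does not prove this theorem; it is stated as background and attributed to \cite[Theorem 3.8]{BrueNaberSemolabdry}, with the smooth antecedents in \cite{CheegerColding96}, \cite[Lemma 1.21]{CheegerNaber15}, and \cite[Theorem 4.11]{CheegerJiangNaber} cited for context. Your compactness-and-contradiction outline is precisely the scheme used in those references, so there is nothing to compare against in the present paper: your proposal is correct and matches the established approach in the literature.

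One small point worth tightening in your Bochner step for part (i): the term $\int\tfrac12|\nabla u_j^a|^2\,\Delta\phi^2$ does not vanish merely because $|\nabla u_j^a|^2\to1$ in $L^1_{\rm loc}$; rather, you use that $\int\Delta\phi^2=0$ to rewrite it as $\int\tfrac12(|\nabla u_j^a|^2-1)\,\Delta\phi^2$, which then goes to zero, while the curvature term vanishes because $K=-\eps_j(N-1)\to0$. With that clarification the sketch is complete.
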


\subsection{General topological properties of $\RCD(K,n)$ spaces}

	Recall that a topological space $Y$ has covering dimension $n\ge-1$
	if each finite open cover admits a refinement for which at most $n+1$ sets intersect, and $n$ is the least integer such that this holds.
	The finiteness of the cover is a superfluous assumption for separable metric spaces: see \cite[Exercise 1.7.E and Proposition 3.2.2]{Engelking}.

The following lemma is certainly well known to experts. We sketch a proof for the sake of completeness.

	\begin{lemma}\label{lemma:dimcov}
		Let $(X,\dist,\haus^n)$ be an $\RCD(-(n-1),n)$ space. The covering dimension $\operatorname{dim}_c(U)$ of any nonempty open set $U\subseteq X$ is $n$.
	\end{lemma}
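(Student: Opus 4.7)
The plan is to prove the two inequalities $\dim_c(U) \le n$ and $\dim_c(U) \ge n$ separately, combining the structure theory of noncollapsed $\RCD(K,n)$ spaces already recalled in the introduction with classical tools from general dimension theory.

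For the upper bound I would invoke the classical inequality $\dim_c(Y) \le \dim_{\haus}(Y)$ valid for any separable metric space (Hurewicz--Wallman). Since a noncollapsed $\RCD(-(n-1),n)$ space has Hausdorff dimension at most $n$, as a direct consequence of Bishop--Gromov volume comparison, this gives $\dim_c(U) \le \dim_{\haus}(U) \le n$ at once.

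For the lower bound, my plan is to exhibit an open set $V \subseteq U$ homeomorphic to a nonempty open subset of $\R^n$, and then to invoke the monotonicity of covering dimension on (open) subspaces of separable metric spaces to conclude $\dim_c(U) \ge \dim_c(V) = n$. To construct $V$, I would first note that $\haus^n(U) > 0$ by Bishop--Gromov applied to any small ball contained in $U$, so the sharp singular dimension bound $\dim_{\haus}(X \setminus \mathcal{R}(X)) \le n-1$ for noncollapsed $\RCD$ spaces (De Philippis--Gigli) forces the existence of a regular point $x \in U \cap \mathcal{R}(X)$. Uniqueness of the Euclidean tangent cone at a regular point implies that for every $\eps > 0$ there is some $r > 0$ with $x \in \mathcal{R}_{\eps,r}(X)$ in the sense of \autoref{def:effreg}. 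Choosing $\eps$ small enough to enter the regime of the metric Reifenberg theorem of Cheeger--Colding (\cite[Theorem A.1.1]{CheegerColding97I}, whose purely metric proof extends to the noncollapsed $\RCD$ category), I obtain an open neighborhood of $x$ that is bi-H\"older homeomorphic to an open set in $\R^n$ and, taking $x$ sufficiently deep inside $U$, contained in $U$. This neighborhood plays the role of $V$.

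The step that I expect to require the most care is the availability of the Reifenberg bi-H\"older parametrization at quantitatively regular points in the full noncollapsed $\RCD$ setting rather than for Ricci limits only; however, since the Cheeger--Colding argument relies solely on scale-invariant Gromov--Hausdorff closeness of balls to Euclidean balls, which is exactly the content of $x \in \mathcal{R}_{\eps,r}(X)$, it adapts without essential modifications. The only other subtle point is that the inequalities $\dim_c \le \dim_{\haus}$ and $\dim_c(V) \le \dim_c(U)$ both rely on the fact that, on separable metric spaces, covering dimension agrees with the small and large inductive dimensions; this is classical.
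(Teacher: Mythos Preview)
Your proposal is correct and follows essentially the same approach as the paper: both prove the upper bound via $\dim_c \le \dim_{\haus} \le n$ through Hurewicz--Wallman, and both prove the lower bound by locating a Euclidean patch inside $U$ via the Reifenberg-type manifold regularity of noncollapsed $\RCD$ spaces (the paper cites \cite{KapovitchMondino} directly for an open dense manifold set and extracts a compact $\overline{D}^n\subset U$, while you reconstruct this step by hand from the singular-set dimension bound and the metric Reifenberg theorem).
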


	\begin{proof}
		By the topological manifold regularity of $\RCD(K,n)$ spaces $(X,\dist,\haus^n)$ on an open dense set (see \cite{KapovitchMondino} after \cite{CheegerColding97I}), $U$ contains a compact subset homeomorphic to the closed ball $\overline D^n$.
		As $\operatorname{dim}_c$ decreases when passing to closed subsets, we have $\operatorname{dim}_c(U)\ge\operatorname{dim}_c(\overline D^n)=n$.
		On the other hand, since the Hausdorff dimension $\operatorname{dim}_{\haus}(U)\le n$, we also have $\operatorname{dim}_c(U)\le\operatorname{dim}_{\haus}(U)\le n$.
		Indeed, as shown in \cite[Section VII.2]{HurewiczWallman}, the so-called small inductive dimension $\operatorname{dim}_{si}(U)$ (used throughout the book \cite{HurewiczWallman}) satisfies $\operatorname{dim}_{si}(U)\le\operatorname{dim}_{\haus}(U)$, and moreover by \cite[Theorem V.1]{HurewiczWallman}
		we have $\operatorname{dim}_c(U)\le\operatorname{dim}_{si}(U)$ (in fact, the last inequality turns out to be an equality by Urysohn's theorem).
	\end{proof}

\subsection{Generalized manifolds and manifold recognition}\label{subsec:recogn}

In this section we gather some background material about the recognition problem for topological manifolds among generalized manifolds. Recall that a topological space $X$ is \emph{locally contractible} if, for any $p\in X$ and any open neighbourhood $U$ of $p$,
there exists an open neighbourhood $V\subseteq U$ such that the inclusion $V\hookrightarrow U$ is homotopic to a constant among maps $V\to U$.

\begin{definition}[Generalized manifold]\label{def:genmanifold}
Let $(X,\dist)$ be a metric space. We say that $X$ is a \emph{generalized $n$-manifold} if it is locally compact, locally contractible, finite-dimensional (in the sense of the covering dimension) and it has the local relative homology of $\setR^n$, i.e., the groups $H_{*}(X,X\setminus\{x\};\Z)$ are isomorphic to $H_{*}(\setR^n,\setR^n\setminus\{0\};\Z)$ for all $x\in X$. 
\end{definition}

Given a metric space $(X,\dist)$ and $n\in\setN$, we shall denote by $\mathcal{S}_{\rm{top},n}(X)$ the set of points $x\in X$ that have no neighbourhood homeomorphic to $\setR^n$. The dimension $n$ will be often suppressed when it is clear from the context. 
\medskip

The recognition of genuine manifolds among generalized manifolds has two steps. The first one is the resolution step according to the following.

\begin{definition}\label{def:resolvable}
A generalized $n$-manifold $X$ is said to be \emph{resolvable} if there exist an $n$-manifold $N$ and a proper cell-like surjective continuous map $f:N\to X$. In this case, the map $f$ is called a \emph{resolution} of $X$.
\end{definition}

We do not discuss the general definition of {\it cell-like} maps and just mention that it amounts to ask that the preimage of each point is cell-like, a slightly weaker notion compared to contractibility.

\medskip

When $n=3$, the main tool for the resolution step in our setting will be borrowed from \cite{Thickstunb} (see also the earlier work \cite{Thickstuna} where some of the ideas employed in \cite{Thickstunb} were originally introduced). In order to state the resolution criterion we need some terminology.

\begin{definition}\label{def:GPDO}
If $X$ is a generalized $3$-manifold and $A\subset X$ is a closed subset, we say that $A$ has \emph{general-position dimension one} in $X$ if any continuous map $f:\overline D^2\to X$ can be approximated arbitrarily well by maps $g:\overline D^2\to X$ such that $g(\overline D^2)\cap A$ is $0$-dimensional.
\end{definition}

In \cite[p.\ 68]{Thickstunb} it is stated that, modulo the resolution of the Poincar\'e conjecture, any compact generalized $3$-manifold whose singular set $\mathcal{S}_{\rm{top}}$ has general-position dimension one is resolvable. Hence, after Perelman's resolution of the Poincar\'e conjecture \cite{PerelmanPoincare}, we have the following.

\begin{theorem}\label{thm:recog2Thick}
Let $(X,\dist)$ be a compact generalized $3$-manifold. Assume that $\mathcal{S}_{\rm{top}}(X)$ has general-position dimension one. Then $X$ admits a resolution. 
\end{theorem}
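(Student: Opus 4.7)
The plan is essentially to quote a result of Thickstun \cite{Thickstunb}. As noted in the paragraph preceding the statement, on p.~68 of \cite{Thickstunb} one finds exactly the assertion: any compact generalized $3$-manifold whose non-manifold set $\mathcal{S}_{\rm{top}}$ has general-position dimension one admits a resolution in the sense of \autoref{def:resolvable}, conditional on the Poincar\'e conjecture. Invoking Perelman's proof of the latter \cite{PerelmanPoincare} removes the hypothesis and the statement follows. So the \emph{proof} consists of a single sentence, plus a careful check that our hypotheses are the ones used in \cite{Thickstunb}.

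If one wished to describe the substance of Thickstun's argument (which we treat as a black box), the idea is a decomposition-theoretic construction building on \cite{Thickstuna}: one produces the manifold $N$ and the cell-like map $f:N\to X$ by a delicate combinatorial procedure in which the general-position dimension one hypothesis is used to arrange $2$-disks appearing in the construction to meet $\mathcal{S}_{\rm{top}}(X)$ in a $0$-dimensional set. This geometric input is what forces the point preimages of $f$ to be cell-like rather than merely compact. The Poincar\'e conjecture enters to identify certain simply-connected closed $3$-manifolds that emerge in the decomposition with $S^3$, so that the corresponding pieces of $N$ can be recognized as standard $3$-balls and glued consistently.

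The only verification required of us is that the hypotheses in our statement match those of Thickstun's theorem: compactness, the generalized $3$-manifold structure (\autoref{def:genmanifold}), and general-position dimension one of $\mathcal{S}_{\rm{top}}$ (\autoref{def:GPDO}) are all the same notions used in \cite{Thickstunb}, so no translation is needed. There is no genuine obstacle on our side; the depth of the theorem lies entirely in \cite{Thickstunb} and \cite{PerelmanPoincare}. The real work of this section of the paper will be to \emph{establish} that the hypothesis on $\mathcal{S}_{\rm{top}}(X)$ holds for our $\RCD(-2,3)$ spaces, which is a separate task carried out elsewhere via the existence and tameness of Green-spheres.
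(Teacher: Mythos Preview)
Your proposal is correct and matches the paper's treatment exactly: the paper does not give a proof of this theorem but simply states it as the combination of Thickstun's result \cite[p.~68]{Thickstunb} with Perelman's resolution of the Poincar\'e conjecture \cite{PerelmanPoincare}, precisely as you describe. Your additional commentary on the content of Thickstun's argument and on where the real work lies is accurate but goes beyond what the paper itself says.
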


The second step of the recognition problem in our setting (i.e., from resolvable generalized $3$-manifold to genuine $3$-manifold) will be accomplished thanks to the results in \cite{DavermanRepovs}. Again, we need to introduce some terminology. 

\begin{definition}\label{def:kcoco}
Let $(X,\dist)$ be a metric space. A subset $C\subseteq X$ is said to be \emph{locally $k$-coconnected} (abbreviated to $k$-LCC) if every neighbourhood $U\subseteq X$ of an arbitrary point $x\in X$ contains another neighbourhood $V\subseteq X$ such that all continuous maps $\partial I^{k+1}\to V\setminus C$ extend to maps $I^{k+1}\to U\setminus C$, where $I:=[0,1]$.
\end{definition}

Combining \cite[Proposition 1.2]{DavermanRepovs} with \cite[Theorem 3.4]{DavermanRepovs} we obtain the following.

\begin{theorem}\label{thm:recog2DR}
A resolvable generalized $3$-manifold $(X,\dist)$ is a $3$-manifold if any $x\in X$ is $1$-LCC and admits arbitrarily small neighbourhoods $U$ such that there exist maps $f:S^2\to U\setminus\{x\}$ with the following properties:
\begin{itemize}
\item[(i)] $f:S^2\to f(S^2)\subset X$ is a homeomorphism;
\item[(ii)] $f(S^2)$ is $1$-LCC in $X$;
\item[(iii)] $f:S^2\to U$ is homotopically trivial;
\item[(iv)] $f:S^2\to U\setminus\{x\}$ is not homotopically trivial.
\end{itemize}
\end{theorem}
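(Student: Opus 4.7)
The goal is to produce, for each $x\in X$, arbitrarily small neighborhoods of $x$ that are homeomorphic to the open $3$-ball, so that $X$ is locally Euclidean in dimension three. My plan is to work inside one of the neighborhoods $U$ provided by the hypotheses, and use the map $f\colon S^2\to U\setminus\{x\}$ to carve out a $3$-cell around $x$. Denote $\Sigma:=f(S^2)$, which by (i) is an embedded topological $2$-sphere in $U\setminus\{x\}$. The central claim to establish is that the component of $U\setminus\Sigma$ containing $x$ has compact closure equal to a topological closed $3$-cell with boundary $\Sigma$.

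The first step is to show that $\Sigma$ separates $U$ into exactly two components, with $x$ in the bounded one. This should follow from condition (ii) (the $1$-LCC embedding of $\Sigma$) combined with an Alexander duality computation on the resolution $N$. More precisely, fix a resolution $\phi\colon N\to X$, lift $\Sigma$ to $N$ via small perturbation (possible because $1$-LCC prevents the pathologies that obstruct general-position arguments like those around the Alexander horned sphere), and apply Jordan--Brouwer separation in $N$. Condition (iii) forces $\Sigma$ to bound on the side of $x$ in a singular sense, while condition (iv) forces $x$ to lie in one of the two components rather than on $\Sigma$ itself.

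The second step is to upgrade the topological ball region $V$ containing $x$ to a genuine $3$-cell. Using that $x$ is $1$-LCC and that $X$ is a resolvable generalized $3$-manifold, one shows that $\overline V$ is a compact generalized $3$-manifold with boundary $\Sigma\cong S^2$ which is simply connected: simple connectedness is a consequence of (iv) (nontriviality of $f$ in $U\setminus\{x\}$) together with the local contractibility of the generalized manifold, since a nontrivial loop in $V\setminus\{x\}$ would conflict with the $1$-LCC condition at $x$ by a standard general-position argument. Pulling back through the resolution and applying Siebenmann's approximation theorem for cell-like maps between $3$-manifolds (available after Perelman's resolution of the Poincar\'e conjecture), $\overline V$ becomes homeomorphic to the closed $3$-ball, hence $V$ provides the desired Euclidean neighborhood of $x$.

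The hardest step, I expect, is the control of the bounding relation between $\Sigma$ and $x$: one must rule out wild limit behavior where $\Sigma$ accumulates onto $x$ or onto the singular set $\mathcal{S}_{\mathrm{top}}$ in a way that destroys the Sch\"onflies-type conclusion. This is exactly what the combination of the $1$-LCC conditions on both $x$ and $\Sigma$ is designed to prevent, by providing enough room to push singular $2$-disks off both of them and to apply Bing's shrinking criterion. The subtle point is that the shrinking must be performed while preserving the nontriviality of $[f]$ in $\pi_2(U\setminus\{x\})$ captured by (iv); otherwise one could not exclude configurations in which the ``inside'' of $\Sigma$ fails to be cell-like in $X$.
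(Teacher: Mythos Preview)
The paper does not prove this theorem. It is stated as a quotation from the geometric topology literature: the sentence immediately preceding it reads ``Combining \cite[Proposition 1.2]{DavermanRepovs} with \cite[Theorem 3.4]{DavermanRepovs} we obtain the following.'' So there is nothing to compare your proposal against; the result is used as a black box, and its proof lives entirely in the Daverman--Repov\v{s} paper.

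That said, your sketch has real gaps if taken as a standalone argument. The most serious is circularity in Step~2: you want to conclude that $\overline V$ is a closed $3$-cell by ``pulling back through the resolution and applying Siebenmann's approximation theorem for cell-like maps between $3$-manifolds,'' but that theorem requires both source and target to be manifolds, and whether $\overline V$ is a manifold is exactly what you are trying to prove. The resolution $\phi\colon N\to X$ goes the wrong way for this: $\phi^{-1}(\overline V)$ is a manifold (with some boundary), but the cell-like surjection $\phi^{-1}(\overline V)\to\overline V$ does not by itself force $\overline V$ to be a manifold---indeed, the whole content of the recognition problem is to find additional hypotheses under which it does. Separately, your derivation of simple connectedness of $\overline V$ from condition~(iv) is not right: (iv) is a statement about $\pi_2(U\setminus\{x\})$, not about $\pi_1$ of anything, and the sentence ``a nontrivial loop in $V\setminus\{x\}$ would conflict with the $1$-LCC condition at $x$'' conflates two different scales (the $1$-LCC condition is local near $x$, while a putative nontrivial loop could live far from $x$). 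The actual Daverman--Repov\v{s} argument is considerably more delicate and does not proceed by directly exhibiting a $3$-cell neighbourhood in this way.
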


\section{The Green-type distance}
\label{sec:Green}

This is the first of four sections where we study smooth complete Riemannian manifolds $(M^n,g)$ with $\Ric\ge -\delta(n-1)$ such that $B_{100}(p)\subset M^n$ is $\delta$-Gromov--Hausdorff close to a ball centered at a vertex of an $(n-3)$-symmetric cone $\setR^{n-3}\times C(Z^2)$, with $(Z^2,\dist_Z)$ an Alexandrov surface with curvature $\ge 1$. Our goal is to find a regular function $w:B_{100}(p)\to \setR^{n-2}$ such that for many of its level sets (in a measure theoretic sense): 
\begin{itemize}
\item[(i)] the function $w$ is almost splitting up to composition with a linear transformation for every point in the level set and for any sufficiently small scale;
\item[(ii)] the level set is Gromov--Hausdorff close and homeomorphic to the cross-section $Z$;
\item[(iii)] the level set is locally uniformly contractible.
\end{itemize}
All the statements above will be effective. We will also have an analogous more general statement in dimension $3$ valid for possibly non-smooth $\RCD(-\delta,3)$ spaces $(X,\dist,\haus^3)$. Both tools will be fundamental later on for studying the topological regularity and stability of noncollapsed spaces with lower Ricci bounds.

\medskip

The broad moral is similar to the slicing theorem due to Cheeger and Naber and its use in the proof of the codimension four conjecture for noncollapsed limits of manifolds with bounded Ricci curvature \cite{CheegerNaber15}. In our setting, the proof of (i) will amount to a generalization of their techniques to the case where one of the components of the map $w$ is not a harmonic almost splitting function but it is derived from the Green function of the Laplacian instead.

\medskip

In this section, we start by studying the fine properties of the \textit{local Green-type distance} within the framework of $\RCD$ spaces. It is well known that the Green function of the Laplacian enjoys effective bounds on spaces with Ricci curvature bounded below: see for instance \cite{Varopoulos,LiYau,ColdingGreen}. This makes it a powerful tool, often used to construct good regularizations of the distance function, as in \cite{ColdingGreen,ColdingMinicozziEinstein,NaberJiang}.

In the next two sections we will discuss the relevant generalization of the slicing theorem to the present setting, thus addressing the first item in the above list. The precise statements will be \autoref{trans} and \autoref{trans n=3}.

The uniform control on the topology of good level sets as in (ii) and (iii) above requires some new ideas with respect to \cite{CheegerNaber15}, where the two-sided Ricci curvature bound was heavily exploited for the analogous aim. This will be the subject of Section \ref{sec:topgoodlevels}.

\medskip

Given $n\ge3$, let us begin by considering an $\RCD(0,n)$ m.m.s.\ $(X,\dist,\haus^n)$ with Euclidean volume growth, i.e., such that 
\begin{equation}
	\lim_{r\to \infty}\frac{\haus^n(B_r(p))}{\omega_n r^n} =:\theta>0\quad \text{for some and hence for all }p\in X \, .
\end{equation}
We denote by $p_t(x,y)$ the heat kernel of the Laplacian in $X$.
If $n\ge 3$, it is known that
\begin{equation}
	G_p(x):=\int_0^{\infty}p_t(p,x)\di t\,,
\end{equation}
for $p,x\in X$,
defines a {\it positive Green function of the Laplacian}, i.e.,
\begin{equation}\label{eq:Green1}
	- \Delta G_p = \delta_p \, ,
\end{equation}
where the identity is understood in duality with test functions \cite{Gigli18}
\begin{equation}\label{eq:test}
\Test(X):=\{f\in L^\infty \cap \Lip\cap D(\Delta)\, : \, \Delta f \in W^{1,2}(X)\}\, .
\end{equation}
Moreover, $G_p$ satisfies uniform estimates for $x\neq p$:
\begin{align}
\begin{aligned}
	&\frac{C(n,\theta)^{-1}}{\dist(p,x)^{n-2}}
	\le 
	G_p(x)
	\le \frac{C(n,\theta)}{\dist(p,x)^{n-2}}\, ,\quad|\nabla G_p(x)|\le \frac{C(n,\theta)}{\dist(p,x)^{n - 1}}\, .
\end{aligned}
\end{align}
Following \cite{ColdingGreen}, the {\it Green-type distance} at $p\in X$ is then defined as
\begin{equation}\label{eq:b_pdef1}
	b_p := [n(n-2)\omega_n\theta G_p]^{-\frac{1}{n-2}} \, .
\end{equation}
The normalization constant in \eqref{eq:b_pdef1} is chosen so that on a cone it holds $b_p = \dist_p$ for the standard Green function, as in the remark below.

\begin{remark}\label{rmk:greencone}
	Let $(Z,\dist_Z,\haus^{n-1})$ be an $\RCD(n-2,n-1)$ m.m.s.\ for some $n\ge 3$. The metric cone over $Z$, $(C(Z),\dist_{C(Z)},\haus^n)$, has Euclidean volume growth with
	\begin{equation}
		\theta = \lim_{r\to 0}\frac{\haus^n(B_r(p))}{\omega_n r^n}=\frac{\haus^{n-1}(Z)}{n\omega_n}
	\end{equation}
	where $p\in C(Z)$ is the tip of the cone.
	In this case a positive Green function of the Laplacian with pole at $p$ is given by
	\begin{equation}
		G_p(x)=\frac{\dist(p,x)^{2-n}}{n(n-2)\omega_n\theta}\, .
	\end{equation}
\end{remark}

\subsection{Local Green distance}
It is well known that the construction of a positive Green function and the associated Green-type distance can be localized on balls. In this section, we sketch this construction and discuss the relevant estimates on $\RCD$ spaces.

\begin{proposition}\label{prop:Greenexistence}
	Let $(X,\dist,\haus^n)$ be an $\RCD(K,n)$ space. Assume that $\haus^n(X\setminus B_2(p))>0$ and $\haus^n(B_1(p))\ge v>0$, for some $p\in X$. For every $c > 0$ there exists a local Green function $G_p:B_2(p)\to [c,\infty)$ such that the following hold:
	\begin{itemize}
		\item[(i)] $-\Delta G_p = \delta_p$ on $B_1(p)$;
		
		\item[(ii)] $G_p = c$ on $\partial B_2(p)$;
		
		\item[(iii)] there exists $C=C(K,n,v,c)>1$ such that
		\begin{equation}
			\frac{1}{C\dist(p,x)^{n-2}}\le G_p(x) \le \frac{C}{\dist(p,x)^{n-2}}\, ,
			\quad \text{for all $x\in B_1(p)\setminus \{p\}$}\, ;
		\end{equation}
		\item[(iv)] $G_p$ is locally Lipschitz on $B_1(p)\setminus\{p\}$ and 
		\begin{equation}
			|\nabla G_p(x)|\le \frac{C(K,n,v,c)}{\dist(p,x)^{n-1}}\, ,\quad \text{for $\haus^n$-a.e.\ $x\in B_1(p)$}\, .
		\end{equation}
	\end{itemize}
\end{proposition}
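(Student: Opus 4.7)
The plan is to construct $G_p$ via the Dirichlet heat kernel on $B_2(p)$ and to derive the two-sided bound and the gradient bound from standard Gaussian heat-kernel estimates in the $\RCD$ setting. The assumption $\haus^n(X\setminus B_2(p))>0$ ensures the Dirichlet problem on $B_2(p)$ is well posed, while the noncollapsing $\haus^n(B_1(p))\ge v$ together with Bishop--Gromov provides the Ahlfors-regular estimate $c(K,n,v)r^n\le \haus^n(B_r(x))\le C(K,n)r^n$ for every $x\in B_1(p)$ and $r\le 1$. I would let $p_t^{B_2}(\cdot,\cdot)$ denote the symmetric Dirichlet heat kernel on $B_2(p)$, whose existence follows from standard Dirichlet-form theory on $\RCD$ spaces, and set
\begin{equation*}
\widetilde{G}_p(x):=\int_0^{\infty} p_t^{B_2}(p,x)\,dt,
\end{equation*}
so that $G_p:=\widetilde{G}_p+c$ satisfies (i) by construction, and (ii) because the Dirichlet heat flow has vanishing Sobolev trace on $\partial B_2(p)$. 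Integrability at $t=0$ is ensured by the Gaussian upper bound $p_t^{B_2}(p,x)\le p_t(p,x)\le C\haus^n(B_{\sqrt t}(p))^{-1}\exp(-\dist(p,x)^2/(Ct))$ for $x\ne p$, and at $t=\infty$ by the spectral gap of the Dirichlet Laplacian on the bounded domain $B_2(p)$.

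For (iii), the upper bound is obtained by plugging the Gaussian upper bound into the time integral and splitting into $t\le 1$, where Ahlfors regularity gives $\haus^n(B_{\sqrt t}(p))\ge c\,t^{n/2}$ and the integral is comparable to $\dist(p,x)^{2-n}$, and $t\ge 1$, handled by exponential decay from the spectral gap. For the lower bound, for $x\in B_1(p)$ both $p$ and $x$ lie at distance $\ge 1$ from $\partial B_2(p)$; a direct comparison with the unrestricted heat kernel then yields $p_t^{B_2}(p,x)\ge \tfrac{1}{2}p_t(p,x)$ for all $t\le \tau_0(K,n,v)$, and integrating the Gaussian lower bound for the full kernel over the window $t\in [\dist(p,x)^2,2\dist(p,x)^2]$ produces the desired $\dist(p,x)^{2-n}$ lower bound for $\widetilde{G}_p$; adding the positive shift $c$ only modifies constants.

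Finally, for (iv) the gradient bound follows by applying a Cheng--Yau type estimate to the positive harmonic function $G_p$ on the annulus $B_{2r}(x)\setminus B_{r/2}(x)$ with $r=\dist(p,x)/4$: the supremum bound $\sup_{B_r(x)} G_p\le C\dist(p,x)^{2-n}$ from (iii) yields $|\nabla G_p|(x)\le C(K,n,v,c)\dist(p,x)^{1-n}$, while local Lipschitz continuity on $B_1(p)\setminus\{p\}$ follows from Moser iteration for positive harmonic functions on $\RCD$ spaces. The main technical point in the argument is the localized lower bound for $p_t^{B_2}(p,x)$ over the time window $t\sim\dist(p,x)^2$: one must genuinely verify that the Dirichlet correction coming from $\partial B_2(p)$ is negligible on this range, rather than merely invoke the obvious comparison $p_t^{B_2}\le p_t$ (which alone would not yield any lower bound).
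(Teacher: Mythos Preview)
Your construction is correct, but it follows a different route from the paper's. The paper does \emph{not} use the Dirichlet heat kernel on $B_2(p)$: instead it writes $G_p=G_p^1-G_p^2$, where $G_p^1(x):=\int_0^1 p_t(p,x)\,dt$ is built from the \emph{global} heat kernel truncated at $t=1$, and $G_p^2$ is the solution of the (smooth) Dirichlet problem $\Delta G_p^2=p_1(p,\cdot)$ with boundary datum $G_p^1-c$ on $\partial B_2(p)$. The two-sided bound and the gradient bound for $G_p^1$ then come directly from the standard heat-kernel and heat-kernel-gradient estimates on $\RCD$ spaces, while $G_p^2$ is uniformly bounded with bounded gradient on $B_{4/3}(p)$ by elliptic regularity for a problem with bounded right-hand side; the lower bound $G_p\ge c$ is the maximum principle. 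This decomposition sidesteps exactly the technical point you flag (the quantitative comparison $p_t^{B_2}\ge \tfrac12 p_t$ for small $t$) and also the spectral-gap argument at $t=\infty$, at the price of solving one auxiliary Dirichlet problem with smooth data. Your approach is arguably more conceptual and yields harmonicity on all of $B_2(p)\setminus\{p\}$ rather than just $B_1(p)$, but it leans on slightly heavier machinery (Dirichlet heat kernel, its short-time comparison with the full kernel, and a Cheng--Yau gradient estimate for harmonic functions in the $\RCD$ setting) where the paper only needs the global heat-kernel bounds and a single Dirichlet solve. One small correction: in (iv) you should apply the gradient estimate on the full ball $B_{2r}(x)$ (which avoids $p$ since $r=\dist(p,x)/4$), not on an annulus, and the local Lipschitz continuity follows from the gradient bound itself rather than from Moser iteration.
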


\begin{proof}
	We only sketch the construction, which is standard.

	We set $G^1_p(x):= \int_0^1 p_t(p,x)\, \di t$. Let $G^2_p(x)$ be the solution of the Dirichlet problem
	\begin{equation}
		\Delta G^2_p(x) = p_1(p,x) \quad \text{in $B_1(p)$}\, , \quad
		G_p^2(x) - G_p^1(x) + c \in H_0^{1,2}(B_2(p))\, ,
	\end{equation}
	as in \cite[Theorem 2.58]{MondinoSemola23}.
	It is clear that $G_p:= G_p^1 - G_p^2$ satisfies (i) and (ii). As a consequence of the maximum principle, $G_p \ge c > 0$ on $B_2(p)$. Standard heat flow estimates and gradient comparison imply the following:

	\begin{itemize}
	    \item[(i')]  $G_p^1$ is locally Lipschitz in $X\setminus \{p\}$ and
		\begin{equation}
			\frac{C(K,n,v)^{-1}}{\dist(p,x)^{n-2}}
			\le 
			G_p^1(x)
			\le \frac{C(K,n,v)}{\dist(p,x)^{n-2}}\, ,\quad
			|\nabla G_p^1(x)|\le \frac{C(K,n,v)}{\dist(p,x)^{n - 1}}\, ;
		\end{equation}
		
		\item[(ii')] $G_p^2$ is locally Lipschitz in $B_2(p)$ and
		\begin{equation}
			|G_p^2| + |\nabla G_p^2| \le C(K,n,v,c) \, ,
			\quad \text{in $B_{4/3}(p)$}\, .
		\end{equation}
	\end{itemize}
	It is immediate to check that (iii) and (iv) follow from (i'), (ii')
	and the lower bound $G_p \ge c$ on $B_2(p)$.
\end{proof}

\begin{definition}[Local Green-type distance]\label{def:Gdistance}
	Let $(X,\dist,\haus^n)$ be an $\RCD(K,n)$ space for some $n\ge 3$. Let  $G_p : B_r(p)\to (0,\infty)$ be a local positive Green function of the Laplacian. We define the associated local \emph{Green-type distance} $b_p:B_r(p)\to [0,\infty)$ by
	\begin{equation}\label{eq:b_pdef}
		b_p:=\left[n (n-2)\frac{\haus^n(B_r(p))}{r^n}G_p\right]^{-\frac{1}{n-2}}\, .
	\end{equation}
\end{definition}

\begin{remark}
	The normalization constant in \eqref{eq:b_pdef} is chosen so that on a cone it holds $b_p=\dist_p$ if $p$ is the tip point, for a suitable $c$ (compare with \autoref{rmk:greencone}).
\end{remark}

\begin{corollary}\label{cor:boundb}
	Let $(X,\dist,\haus^n)$ be an $\RCD(-(n-1),n)$ space for some $n\ge 3$. Assume that $\haus^n(X\setminus B_2(p))>0$ and $\haus^n(B_1(p))\ge v$. Let $c > 0$ and $G_p:B_2(p) \to [c,\infty)$ be as in \autoref{prop:Greenexistence}.
	Then, for the Green-type distance $b_p:B_1(p)\to [0,\infty)$ introduced in \eqref{eq:b_pdef}, the following hold:
	\begin{itemize}
		\item[(i)] $b_p = [n(n-2)\haus^n(B_1(p))c]^{-\frac{1}{n-2}}$ on $\partial B_2(p)$;
		
		\item[(ii)] there exists $C = C(n,v,c)>1$ such that
		\begin{equation}
			C^{-1}\dist_p
			\le 
			b_p
			\le C\dist_p\, , \quad \text{on $B_{1}(p)$} 
		\end{equation}
  and 
  \begin{equation}\label{eq:Lipbp}
      |\nabla b_p|\le C\, ,\quad \text{on $B_1(p)$}\, ;
  \end{equation}
		\item[(iii)] $b_p$ belongs to the domain of the Laplacian in $B_1(p)$ and 
		\begin{equation}\label{eq:laplabp2}
			\Delta b_p^2 = 2n|\nabla b_p|^2\, ,\quad \text{on $B_{1}(p)$}\, .
		\end{equation}
	\end{itemize}
\end{corollary}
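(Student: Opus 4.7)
The plan is to derive the three statements directly from the explicit definition \eqref{eq:b_pdef} and the quantitative bounds on $G_p$ collected in \autoref{prop:Greenexistence}. Statement (i) is a direct substitution: since $G_p = c$ on $\partial B_2(p)$, plugging this into \eqref{eq:b_pdef} with $r=1$ yields the announced constant. The analytic content lies in items (ii) and (iii), and both will boil down to applying the chain rule to the algebraic relation $b_p = [n(n-2)A\, G_p]^{-1/(n-2)}$, where $A := \haus^n(B_1(p))$.

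For (ii), the two-sided comparison $C^{-1}\dist_p \le b_p \le C\dist_p$ follows at once by raising the two-sided bound on $G_p$ from \autoref{prop:Greenexistence}(iii) to the power $-1/(n-2)$. For the gradient bound \eqref{eq:Lipbp}, the chain rule gives
\[
\nabla b_p = -nA\, b_p^{n-1}\, \nabla G_p\, ,
\]
and I would then observe that the singular factor $\dist_p^{-(n-1)}$ in $|\nabla G_p|$ coming from \autoref{prop:Greenexistence}(iv) is exactly cancelled by $b_p^{n-1} \le (C\dist_p)^{n-1}$, producing a uniform pointwise bound.

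For (iii), I would apply the chain rule for the (measure-valued) Laplacian on $\RCD$ spaces to $b_p^2 = \phi(G_p)$, with $\phi(t) = [n(n-2)A]^{-2/(n-2)}\, t^{-2/(n-2)}$. Since $\Delta G_p \equiv 0$ on $B_1(p)\setminus\{p\}$ by \autoref{prop:Greenexistence}(i), this gives
\[
\Delta b_p^2 = \phi''(G_p)\,|\nabla G_p|^2 \quad \text{on } B_1(p)\setminus\{p\}\, ,
\]
and a direct algebraic check using the formula for $\nabla b_p$ from (ii) identifies the right-hand side with $2n|\nabla b_p|^2$, giving \eqref{eq:laplabp2} on the punctured ball. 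The genuine subtlety is then to extend the identity and the Laplacian regularity across the pole: since $b_p^2$ is Lipschitz on $B_1(p)$ and $|\nabla b_p|^2$ is locally bounded, and since $n \ge 3$ guarantees that the singleton $\{p\}$ has zero $W^{1,2}$-capacity, a standard cutoff argument applied to the weak formulation $\int b_p^2\,\Delta\psi\,\di\haus^n = \int 2n|\nabla b_p|^2\,\psi\,\di\haus^n$ against test functions $\psi \in \Test(X)$ supported in $B_1(p)$ shows that the boundary contributions on $\partial B_\eps(p)$ vanish as $\eps \downarrow 0$, ruling out any Dirac-type singular part at $p$. This pole-removal step is the main (and only) place I expect to require care.
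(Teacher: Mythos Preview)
Your proposal is correct and follows the natural route; the paper states this corollary without proof, treating it as an immediate consequence of \autoref{prop:Greenexistence} and the definition \eqref{eq:b_pdef}, which is exactly the direct-verification approach you outline. Your identification of the pole-removal in (iii) as the only nontrivial point is accurate, and the capacity argument you sketch (using $|\nabla b_p^2|=2b_p|\nabla b_p|\le C\dist_p$ together with $\haus^n(B_\eps(p))\le C\eps^n$ from Bishop--Gromov) goes through cleanly for $n\ge 3$.
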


\subsection{Green-type distance on conical balls}
In this section, we study the fine properties of the local Green-type distance on balls that are $\delta$-Gromov--Hausdorff close to a ball centered at the tip of a cone. These estimates will play a central role in the proof of the \textit{annular slicing theorem} later in Section \ref{sec:slicing}.

\medskip

Given $0<s<r$, we denote by
\begin{equation}
	A_{r,s}(p) := B_r(p)\setminus \overline{B}_s(p)
\end{equation}
the {\it open annulus} centered at $p\in X$.

\begin{theorem}\label{thm:prop_Green}
	For every $\eps>0$, if $\delta<\delta_0(\eps,n,\theta)$ the following statement holds. Let $(X,\dist,\haus^n)$ be an $\RCD(-\delta(n-1),n)$ m.m.s.\ such that, for some $p\in X$
    and another $\RCD(n-2,n-1)$ space $(Z,\dist_Z,\haus^{n-1})$, it holds
	\begin{equation}
		\dist_{\rm{GH}}(B_{100}(p), B_{100}(o)) \le \delta \, ,
	\end{equation}
	where $o\in C(Z)$ is a tip point with density $\theta>0$. Then there exists a local Green-type distance $b_p:B_{20}(p) \to [0,\infty)$ satisfying the following properties:
	\begin{itemize}
	    \item[(i)] we have the first order bounds
		\begin{equation}\label{eq:gradboundsbp}
			\sup_{B_{20}(p)}|b_p - \dist_p| \le \eps \, , \quad
			\int_{B_{20}(p)} ||\nabla b_p|-1| \le \eps \, ;
		\end{equation}
		\item[(ii)] we have the higher order bounds
		\begin{equation}\label{eq:delta_grad_b}
			\int_{A_{11,6}(p)} |\nabla |\nabla b_p|| \le \eps \, ,
			\quad 
			\int_{A_{11,6}(p)} |\Delta |\nabla b_p|| \le \eps \, ;
		\end{equation}
		\item there exists $E\subset A_{11,6}(p)$ such that $\haus^n(E)\le \eps$ and 
		\begin{equation} 
			\left|\operatorname{Hess}(b_p)-\frac{1}{b_p}\left(I-\frac{\nabla b_p}{|\nabla b_p|}\otimes\frac{\nabla b_p}{|\nabla b_p|}\right)\right|^2\le \eps\, ,
			\quad \text{on $A_{11,6}(p)\setminus E$}\, .
		\end{equation}
	\end{itemize}
\end{theorem}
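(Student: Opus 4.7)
The plan is to argue by contradiction and compactness. Suppose the statement fails for some $\eps>0$: then there exist $\delta_k\to 0$ and $\RCD(-\delta_k(n-1),n)$ spaces $(X_k,\dist_k,\haus^n,p_k)$, together with $\RCD(n-2,n-1)$ cross-sections $Z_k$, such that $\dist_{\rm{GH}}(B_{100}(p_k),B_{100}(o_k))\le\delta_k$ for tips $o_k\in C(Z_k)$ of density $\theta$, yet no local Green-type distance on $X_k$ satisfies (i)--(iii). By Gromov's compactness and the stability of the $\RCD$ class, a subsequence converges in the pointed measured Gromov--Hausdorff sense to $(C(Z_\infty),\dist,\haus^n,o_\infty)$ with $Z_\infty$ an $\RCD(n-2,n-1)$ space of the same density $\theta$.

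First I would fix the boundary constant $c$ in \autoref{prop:Greenexistence} so that, on $C(Z_\infty)$, the resulting local Green function equals $\dist_{o_\infty}^{2-n}/[n(n-2)\omega_n\theta]$ on $B_2(o_\infty)$: this is the normalization that makes $b_{o_\infty}=\dist_{o_\infty}$ by \autoref{rmk:greencone}. Invoking the stability of the heat kernel and of elliptic Dirichlet problems under measured Gromov--Hausdorff convergence of $\RCD$ spaces, I obtain that, away from the poles, $G_{p_k}\to G_{o_\infty}$ locally uniformly and strongly in $W^{1,2}_\loc$. Combined with the volume convergence $\haus^n(B_2(p_k))\to 2^n\omega_n\theta$ and the Lipschitz bound of \autoref{cor:boundb}(ii), this yields the locally uniform convergence $b_{p_k}\to\dist_{o_\infty}$ on $B_{20}(p_k)$ and the two bounds in (i).

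For (ii) and (iii), I would apply Bochner to the harmonic function $G_{p_k}$ on the annulus, integrated against a non-negative cutoff $\phi$ supported in $A_{12,5}(p_k)$ and equal to $1$ on $A_{11,6}(p_k)$. After integration by parts, the almost-Ricci-flat inequality
\begin{equation*}
\tfrac12\int\Delta\phi\,|\nabla G_{p_k}|^2\ge\int\phi\,|\Hess G_{p_k}|^2-\delta_k(n-1)\int\phi\,|\nabla G_{p_k}|^2
\end{equation*}
passes to the limit via strong $W^{1,2}_\loc$-convergence; on the cone $\Ric\equiv 0$ and Bochner is an equality, so the limiting left-hand side equals $\int\phi\,|\Hess G_{o_\infty}|^2$. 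Combined with the lower semicontinuity of the Hessian $L^2$-energy under measured Gromov--Hausdorff convergence, this upgrades weak to strong $L^2$-convergence $\Hess G_{p_k}\to\Hess G_{o_\infty}$ on $A_{11,6}$. Via the chain rule $b_p\propto G_p^{-1/(n-2)}$ and the uniform $L^\infty_\loc$-bound on $G_{p_k}$ away from the pole, this yields $\Hess b_{p_k}\to\Hess\dist_{o_\infty}$ strongly in $L^2(A_{11,6})$, where the model Hessian on the cone is $\frac{1}{\dist_{o_\infty}}(I-\nu\otimes\nu)$ with $\nu:=\nabla\dist_{o_\infty}$. Item (iii) then follows from Markov's inequality. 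The first bound in (ii) follows from the identity $\nabla|\nabla b|=\Hess b(\nabla b,\cdot)/|\nabla b|$, the strong convergence of $\Hess b_{p_k}$ and $\nabla b_{p_k}$, and the cone identity $\Hess\dist_{o_\infty}(\nabla\dist_{o_\infty},\cdot)\equiv 0$. The second bound in (ii) follows from the chain-rule expansion
\begin{equation*}
\Delta|\nabla b|=\frac{\Delta|\nabla b|^2}{2|\nabla b|}-\frac{|\nabla|\nabla b|^2|^2}{4|\nabla b|^3},
\end{equation*}
the generalized Bochner identity for $b_{p_k}$, the identity $\Delta b_p=(n-1)|\nabla b_p|^2/b_p$ derived from \autoref{cor:boundb}(iii), and a pointwise check that the resulting combination vanishes on the limit cone.

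The hardest part will be the upgrade from weak to strong $L^2$-convergence of the Hessians under measured Gromov--Hausdorff convergence: it rests on the saturation of Bochner on the limit cone and on the Bakry--\'Emery calculus for $\RCD$ spaces.
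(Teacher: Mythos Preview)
Your treatment of (i) matches the paper's Step~1 exactly: both use stability of the local Green function under measured Gromov--Hausdorff convergence (\autoref{prop:Gstability}). For (ii)--(iii), however, the paper takes a substantially different route that avoids strong Hessian convergence altogether. After a uniform $L^2$ Hessian bound from Bochner, the paper applies a Caccioppoli-type estimate (\autoref{dir}) with $f=|\nabla b_p|$ to obtain the $L^1$ smallness of $\nabla|\nabla b_p|$ directly; for the total variation of $\Delta|\nabla b_p|$ it combines the Bochner lower bound \eqref{eq:Delta|nabla b|} with two elementary linear-algebra inequalities (\autoref{trace} and \autoref{lemma:linalgqua}) which encode the near-cancellation of $|\Hess b|^2$ against $(\Delta b)^2/(n-1)$ once $|\nabla|\nabla b||$ is small, and then (iii) is read off from \autoref{lemma:linalgqua}. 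Your route through strong $L^2$ convergence of $\Hess G_{p_k}$ is viable---Bochner does saturate for the radial Green function on an $\RCD$ cone, so the Ambrosio--Honda upgrade applies and (iii) plus the first half of (ii) follow cleanly---but your sketch for $\int|\Delta|\nabla b||$ is loose: both the chain rule for $\Delta\sqrt{\cdot}$ and Bochner are only \emph{inequalities} in the $\RCD$ setting, so your ``pointwise check on the cone'' controls only $(\Delta|\nabla b|)^-$, and you still need the integration-by-parts step $\int\varphi\,\Delta|\nabla b|=\int\Delta\varphi\,(|\nabla b|-1)\to 0$ to handle the positive part (the paper does exactly this in Step~3). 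In short, the paper trades your soft stability machinery for explicit pointwise algebra that makes the cancellation mechanism transparent and sidesteps the lower semicontinuity of the Hessian energy, while your approach is more conceptual but leans on that nontrivial piece of $\RCD$ calculus.
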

From now on we will often refer to a local Green-type distance simply as a \emph{Green distance}.

\begin{remark}
    Constants such as $\delta_0(\eps,n,\theta)$ in the previous statement in fact depend only on a given positive lower bound on $\theta$, rather than its specific value.
\end{remark}

\begin{remark}
	In our generality, the term $\Delta |\nabla b_p|$ is a (possibly singular) measure. So the estimate
	\begin{equation}
		\int_{A_{11,6}(p)} |\Delta |\nabla b_p|| \le \eps \, ,
	\end{equation}
	needs to be understood as a bound on the total variation of $\Delta |\nabla b_p|$.
\end{remark}

\begin{remark}[Hessian estimate]
	The Bochner inequality and the identity
	\begin{equation}\label{eq:Delta b}
		\Delta b_p = \frac{n-1}{b_p}|\nabla b_p|^2 \, , \quad \text{on $A_{20,1}(p)$}\, 
	\end{equation}
	yield the estimate
	\begin{equation}
		\int_{A_{11,6}(p)} | \Hess b_p|^2 + \int_{A_{11,6}(p)} \left\langle \nabla b_p, \nabla \left(\frac{(n-1)|\nabla b_p|^2}{b_p} \right) \right\rangle 
		\le \eps \, ,
	\end{equation}
	which ensures the $L^2$ Hessian bound
	\begin{equation}\label{eq:Hess b}
		\int_{A_{11,6}(p)} | \Hess b_p|^2 \le C(n,\theta)\, .
	\end{equation}
\end{remark}

\begin{remark}[Good Green distance]
	Let $(X,\dist, \haus^n)$ be an $\RCD(-\eps(n-1),n)$ space satisfying the assumptions of \autoref{thm:prop_Green}. We will call \emph{good Green distance} any function $b_p : B_{20}(p) \to [0,\infty)$ satisfying properties (i) and (ii).
\end{remark}

\begin{remark}
	As we will see in the proof of \autoref{thm:prop_Green}, every local Green function $G_p$ over $B_{40}(p)$ gives rise to a good Green distance, provided the following boundary condition is met:
	\begin{equation}\label{eq:boundary_b}
		\left|G_p - \frac{40^{2-n}}{n(n-2)\omega_n \theta}\right| \le \delta \, 
		\quad \text{on } \partial B_{40}(p) \, .
	\end{equation}
\end{remark}

\subsection{Proof of \autoref{thm:prop_Green}}
The proof of \autoref{thm:prop_Green} is divided into four parts. In Step 1 we shall verify \autoref{thm:prop_Green} (i) with a compactness argument. In Step 2 we obtain $L^1$ bounds for $|\Hess b_p|^2$ and $|\nabla|\nabla b_p||$. In Step 3 we obtain $L^1$ bounds for $|\Delta |\nabla b_p||$. In Step 4 we will complete the argument with the proof of \autoref{thm:prop_Green} (iii). 

\subsubsection{Step 1: Stability of the Green distance}
The first goal is to verify (i) for Green-type distances satisfying	\eqref{eq:boundary_b}, when $\delta < \delta_0(\eps,n,\theta)$. This is a soft argument based on the  classical stability for Green functions of the Laplacian under noncollapsed Gromov--Hausdorff convergence. 
We report below the simplified statement relevant to our purposes, referring the reader to \cite{AmbrosioHonda,GigliMondinoSavare15}, \cite[Section 2]{BrueDengSemola} for the relevant background and terminology.

\begin{proposition}\label{prop:Greenstability}\label{prop:Gstability}
	Let $(Y,\dist_Y,\haus^{n-1})$ be an $\RCD(n-2,n-1)$ m.m.s.\ for some $n\ge 3$. Let $(C(Y),\dist_{C(Y)},\haus^n)$ be the metric measure cone over $Y$, with tip $o$ and density $\theta>0$. If $(X_i,\dist_i,\haus^n,p_i)$ are $\RCD(-\delta_i(n-1),n)$ spaces such that
	\begin{equation}
		\dist_{{\rm GH}}\left(B_{100}(p_i),B_{100}(o)\right)\le \delta_i \to 0\, , \quad \text{as $i\to \infty$}\, ,
	\end{equation} 
	then any sequence of Green-type distances $b_{p_i}:B_{40}(p_i)\to [0,\infty)$ satisfying \eqref{eq:boundary_b} with $\delta = \delta_i$ converges uniformly and in $H^{1,2}$ to $\dist_o:B_{40}(o)\to [0,\infty)$.
\end{proposition}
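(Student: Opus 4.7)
The plan is to decompose the Green function $G_{p_i}$ as in \autoref{prop:Greenexistence}, establish stability of each piece under pmGH convergence, and then transfer the conclusion from $G_{p_i}$ to $b_{p_i}$ through the algebraic formula \eqref{eq:b_pdef}. On the cone $(C(Y),\dist_{C(Y)},\haus^n)$, \autoref{rmk:greencone} provides the positive Green function $G_o(x) = [n(n-2)\omega_n\theta]^{-1}\dist_o(x)^{2-n}$; since $\haus^n(B_{40}(o))/40^n = \omega_n\theta$, the associated local Green distance on $B_{40}(o)$ with Dirichlet value $c_0 := 40^{2-n}[n(n-2)\omega_n\theta]^{-1}$ on $\partial B_{40}(o)$ is precisely $b_o = \dist_o$. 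This is the boundary value towards which the traces of $G_{p_i}$ on $\partial B_{40}(p_i)$ converge by \eqref{eq:boundary_b}.

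For the stability step, I would write $G_{p_i} = G^1_{p_i} - G^2_{p_i}$ as in the construction of \autoref{prop:Greenexistence}: $G^1_{p_i}(x) = \int_0^1 p^{X_i}_t(p_i,x)\,\di t$ is the partial heat-kernel integral, and $G^2_{p_i}$ solves $\Delta G^2_{p_i} = p^{X_i}_1(p_i,\cdot)$ on $B_{40}(p_i)$ with trace determined by \eqref{eq:boundary_b}. Stability of heat kernels under noncollapsed pmGH convergence of $\RCD$ spaces (see \cite{AmbrosioHonda,GigliMondinoSavare15,BrueDengSemola}) yields $p^{X_i}_t(p_i,\cdot) \to p^{C(Y)}_t(o,\cdot)$ locally uniformly and in $H^{1,2}_{\loc}$ on $C(Y)\setminus\{o\}$ for every $t>0$; uniform Gaussian upper bounds on the heat kernel allow dominated convergence in $t\in(0,1]$, producing locally uniform and $H^{1,2}_{\loc}$ convergence of $G^1_{p_i}$ to its cone counterpart away from the pole. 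The right-hand sides $p^{X_i}_1(p_i,\cdot)$ and the boundary data of the $G^2_{p_i}$ are uniformly bounded and convergent as well, so standard stability of linear Dirichlet problems on noncollapsed $\RCD$ spaces gives $G^2_{p_i} \to G^2_o$ uniformly and in $H^{1,2}$ on $B_{40}(o)$. Summing, $G_{p_i}\to G_o$ locally uniformly and in $H^{1,2}_{\loc}$ on $B_{40}(o)\setminus\{o\}$.

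By the volume convergence theorem, $\haus^n(B_{40}(p_i))/40^n \to \omega_n\theta$, so the normalization constants in \eqref{eq:b_pdef} converge; since $b_p$ is a smooth nonlinear function of $G_p$ away from the pole, this transfers to locally uniform and $H^{1,2}_{\loc}$ convergence $b_{p_i}\to\dist_o$ on $B_{40}(o)\setminus\{o\}$. To upgrade to uniform convergence on all of $B_{40}(o)$, the two-sided bound $C^{-1}\dist_{p_i}\le b_{p_i}\le C\dist_{p_i}$ from \autoref{cor:boundb} (after rescaling to the domain $B_{40}$) yields a uniform modulus of continuity at the pole, matching the trivial one for $\dist_o$ at $o$. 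For $H^{1,2}$ convergence up to the pole, the uniform Lipschitz bound \eqref{eq:Lipbp} (again rescaled) shows that the energy contribution of $b_{p_i}$ on a small ball $B_r(p_i)$ is $O(r^n)$, uniformly in $i$, so letting $r\downarrow 0$ closes the argument. The main technical subtlety will be the stability of the singular piece $G^1_{p_i}$, which hinges on heat-kernel stability together with effective Gaussian bounds; once this is in place, passing to the Lipschitz and bounded function $b_{p_i}$ is essentially routine.
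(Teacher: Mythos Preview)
The paper does not actually prove this proposition: it introduces it as ``a soft argument based on the classical stability for Green functions of the Laplacian under noncollapsed Gromov--Hausdorff convergence'' and then simply states the proposition, referring the reader to \cite{AmbrosioHonda,GigliMondinoSavare15} and \cite[Section 2]{BrueDengSemola} for the relevant background. Your proposal therefore supplies details where the paper supplies none, and there is nothing to compare against at the level of argument.

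That said, your outline is a natural and correct way to substantiate the paper's claim. You follow exactly the decomposition $G_{p_i}=G^1_{p_i}-G^2_{p_i}$ from \autoref{prop:Greenexistence}, invoke heat-kernel stability and Dirichlet-problem stability (which is precisely the content of the references the paper cites), use volume convergence for the normalization constant in \eqref{eq:b_pdef}, and then handle the pole via the two-sided bound and the Lipschitz estimate from \autoref{cor:boundb}. This is the expected unpacking of ``classical stability,'' and the one subtlety you flag---controlling the singular piece $G^1_{p_i}$ near the pole through uniform Gaussian bounds and dominated convergence---is indeed the only place requiring care.
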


The uniform convergence $b_{p_i} \to \dist_o$ immediately implies 
\begin{equation}
	\sup_{B_{20}(p)} |b_{p_i} - \dist_{p_i}| \to 0 \quad \text{as $i\to \infty$}\, .
\end{equation}
By relying on the strong $H^{1,2}$ convergence of $b_{p_i} - \dist_{p_i} \to 0$, we deduce
\begin{equation}
	\int_{B_{20}(p_i)}| |\nabla b_{p_i}| - 1| 
	\le
	\int_{B_{20}(p_i)}|\nabla (b_{p_i} - \dist_{p_i})| \to 0 \, ,
	\quad \text{as $i\to \infty$}\, .
\end{equation}
We refer the reader unfamiliar with the notion of Sobolev and uniform convergence on varying spaces to \cite{AmbrosioHonda,GigliMondinoSavare15}.

\subsubsection{Step 2: Hessian bounds}\label{subsec:Hess bound}
At this point it is clear that any Green-type distance $b_p : B_{40}(p) \to [0,\infty)$ enjoying the boundary condition \eqref{eq:boundary_b} satisfies \autoref{thm:prop_Green} (i). The second step of the proof consists in showing that any such a Green-type distance satisfies
\begin{equation}\label{eq:hess_b}
	\int_{A_{11,6}(p)} | \Hess b_p|^2 \le C(n,\theta)
	\, , \quad
	\int_{A_{11,6}} |\nabla |\nabla b_p|| \le \eps\, ,
\end{equation}
provided $\delta < \delta_0(\eps,n,\theta)$.

\medskip

Let $0\le\varphi\le1$ be a cut-off function, with $|\nabla\varphi|,|\Delta\varphi|\le C(n,\theta)$, such that $\varphi=1$ on $A_{11,6}(p)$ and $\varphi=0$ on the complement of $A_{12,5}(p)$. See \cite{CheegerColding96,AmbrosioMondinoSavare} and \cite[Lemma 3.1]{MondinoNaber19} for the detailed construction of good cut-off functions on spaces with Ricci bounded below.

\medskip

By Bochner's inequality, the estimates $4\le b_p\le 13$ (as $|b_p - d_p|\le 1$) on $A_{12,5}(p)$, and \eqref{eq:Delta b} we then have that $\Delta |\nabla b_p|^2$ is a measure satisfying the lower bound
\begin{align}\label{boch.sq}\begin{aligned}
		\Delta\frac{|\nabla b_p|^2}{2} &\ge|\operatorname{Hess}(b_p)|^2+\ang{\nabla\Delta b_p,\nabla b_p}-\delta(n-1)|\nabla b_p|^2 \\
		&=|\operatorname{Hess}(b_p)|^2-\frac{n-1}{b_p^2}|\nabla b_p|^4+2\frac{n-1}{b_p}\operatorname{Hess}(b_p)[\nabla b_p,\nabla b_p]-\delta(n-1)|\nabla b_p|^2 \\
		&\ge|\operatorname{Hess}(b_p)|^2-C(n,\theta)(1+|\operatorname{Hess}(b_p)|)\, .
\end{aligned}\end{align}
Integrating by parts against the cut-off function $\varphi$ and using Young's inequality, we deduce 
\begin{equation}\label{eq:hessbd}
	\begin{split}
		\int_{A_{11,6}(p)} | \Hess b_p|^2 
		&\le 
		\int\varphi|\operatorname{Hess} \, b_p|^2
		\\&
		\le C(n,\theta)+\mz\int|\Delta\varphi||\nabla b_p|^2
		\\&
		\le C(n,\theta)\, ,
	\end{split} 
\end{equation}
thanks to the Lipschitz bound on $b_p$ from \eqref{eq:Lipbp}.

\medskip

By the Bochner inequality and the chain rule \cite[Proposition 3.3]{GigliViolo} (see also \cite[Theorem 3.4]{GigliViolo}), $\Delta \sqrt{|\nabla b_p|^2 + \eps}$ is a measure on $A_{12,5}(p)$ for every $\eps>0$. Moreover
\begin{equation*}
	\begin{split}
		\Delta \sqrt{|\nabla b_p|^2 + \eps} &
		\ge 
		\frac{|\Hess b_p|^2 - |\nabla |\nabla b_p||^2 \frac{|\nabla b_p|^2}{|\nabla b_p|^2 + \eps}}{\sqrt{|\nabla b_p|^2 + \eps}} - \frac{n-1}{b_p^2}\frac{|\nabla b_p|^4}{\sqrt{|\nabla b_p|^2 + \eps}}
		\\ & \quad + 2 \frac{n-1}{b_p} |\nabla b_p| \Hess (b_p) \left[\frac{\nabla b_p}{|\nabla b_p|}, \frac{\nabla b_p}{\sqrt{|\nabla b_p|^2 + \eps}}\right] - \delta(n-1) \frac{|\nabla b_p|^2}{\sqrt{|\nabla b_p|^2 + \eps}} \, .
	\end{split}
\end{equation*}
By sending $\eps \to 0$, we have that $\Delta \sqrt{|\nabla b_p| + \eps} \to \Delta |\nabla b_p|$ in duality with test functions. Moreover, the following lower bound on $A_{12,5}(p)$ holds in the sense of distributions:
\begin{equation}\label{eq:Delta|nabla b|}
	\begin{split}
		\Delta |\nabla b_p| &\ge 
		\frac{|\Hess b_p|^2 - |\nabla |\nabla b_p||^2}{|\nabla b_p|} - \frac{n-1}{b_p^2}|\nabla b_p|^3
		\\& \quad + 2 \frac{n-1}{b_p} |\nabla b_p| \Hess (b_p) \left[\frac{\nabla b_p}{|\nabla b_p|}, \frac{\nabla b_p}{|\nabla b_p|} \right] - \delta(n-1) |\nabla b_p| 
		\\& \ge -C(n,\theta)(1+|\operatorname{Hess}\, b_p|)\, ,
	\end{split}
\end{equation}
where the previous expression is understood to be zero on $\{\nabla b_p=0\}$.

Therefore, $\Delta |\nabla b_p| + C(n,\theta)(1+|\Hess(b_p)|)$ is a nonnegative distribution in $A_{12,5}(p)$,
and by the previous bounds $\Delta |\nabla b_p|$ is a measure in $A_{11,6}(p)$.

\medskip

Taking $s:=\frac{1}{100}$, for any $q\in A_{11,6}(p)$ it holds $B_{20s}(q)\subset\{\varphi=1\}$, up to slightly modifying $\varphi$.
Using \autoref{dir} below with $f:=|\nabla b|$ and \eqref{eq:hessbd}, we can find a set $E_b(q)$ of volume $\haus^n(E_b(q))\le\tau$ such that
\begin{align}
	\nonumber s^2\int_{B_s(q)\setminus E_b(q)}|\nabla|\nabla b||^2
	&\le\tau^{-1}\||\nabla b|-1\|_{L^2}(\||\nabla b|-1\|_{L^2}+C(n,\theta)(1+\|\operatorname{Hess}(b)\|_{L^2})) \\
	&\le C(n,\theta)\tau^{-1}\||\nabla b|-1\|_{L^2}\, ,\label{eq:rhsbd}
\end{align}
where the $L^2$ norms are taken on $B_{2s}(q)$. By \autoref{thm:prop_Green} (i), the right-hand side in \eqref{eq:rhsbd} becomes arbitrarily small as $\delta\to 0$.

By the doubling property of the volume, taking a maximal collection of points $q_i\in A_{11,6}(p)$ with pairwise distance at least $s$,
the balls $B_s(q_i)$ cover $A_{11,6}(p)$ and this collection has cardinality bounded by $C(n,\theta)$ (since the balls $B_{s/2}(q_i)$ are disjoint).
Hence, setting $E_b:=\bigcup_i E_b(q_i)$, it holds
\begin{equation}\label{eqzz}
	\int_{A_{11,6}(p)\setminus E_b}|\nabla|\nabla b||^2\le\tau
\end{equation}
for $\delta$ small. Clearly, \eqref{eqzz} and \eqref{eq:hessbd} imply \eqref{eq:hess_b}.

\medskip

\begin{lemma}
	\label{dir}
	Let $(X,\dist, \haus^n)$ be an $\RCD(-(n-1),n)$ space for some $n\ge 2$ and fix $p \in X$. For any $f\in H^{1,2}_{\rm loc}(B_{20}(p))$ such that $\Delta f$ is a measure satisfying the lower bound
	\begin{equation}
		\Delta f \ge - g \, , \quad \text{on $B_{20}(p)$}
	\end{equation}
	for some $g \ge 0$, $g\in L^2(B_{20}(p))$, the following holds. For every $\tau\in (0,1)$ and $c\in \R$ there exists $E\subset B_{20}(p)$ such that $\haus^n(E)\le \tau$ and 
	\begin{equation}
		\int_{B_1(p)\setminus E} |\nabla f|^2\le C(n) \tau^{-1}\|f-c\|_{L^2(B_{20}(p))}(\|f-c\|_{L^2(B_{20}(p))}+\|g\|_{L^2(B_{20}(p))})\, .
	\end{equation}
\end{lemma}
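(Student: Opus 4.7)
The plan is to use a one-sided truncation argument, testing the distributional inequality $-\Delta f\le g$ against a well-chosen nonnegative function. Given $\tau\in(0,1)$ and $c\in\R$, set
\begin{equation*}
M:=\tau^{-1/2}\|f-c\|_{L^2(B_{20}(p))},\qquad E:=\{x\in B_{20}(p):f(x)-c<-M\}.
\end{equation*}
Chebyshev's inequality immediately yields $\haus^n(E)\le M^{-2}\|f-c\|_{L^2}^2=\tau$, so the remaining task is to estimate $\int_{B_1(p)\setminus E}|\nabla f|^2$.

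The key auxiliary function is the Lipschitz truncation $\psi:=(M+f-c)^+$, which satisfies $\psi\ge 0$, vanishes precisely on $E$, belongs to $H^{1,2}_{\rm loc}(B_{20}(p))$ with $\nabla\psi=\mathbb{1}_{\{\psi>0\}}\nabla f$ by the RCD chain rule, and is pointwise bounded by $\psi\le M+(f-c)^+$. Choose a standard cut-off $\varphi$ with $\varphi\equiv 1$ on $B_1(p)$, $\operatorname{supp}\varphi\subset B_2(p)$, and $|\nabla\varphi|\le C(n)$, using the good cut-offs on $\RCD$ spaces from \cite{MondinoNaber19}. Testing $-\Delta f\le g$ against the nonnegative function $\varphi^2\psi$ and integrating by parts, one obtains
\begin{equation*}
\int\varphi^2|\nabla f|^2\mathbb{1}_{\{\psi>0\}}+2\int\varphi\psi\,\nabla\varphi\cdot\nabla f=-\int\varphi^2\psi\,d\Delta f\le\int\varphi^2\psi g.
\end{equation*}
Applying Young's inequality to absorb the cross term and using that $\{\psi>0\}\supseteq B_1(p)\setminus E$ yields
\begin{equation*}
\tfrac{1}{2}\int_{B_1(p)\setminus E}|\nabla f|^2\le\int\varphi^2\psi g+2\int\psi^2|\nabla\varphi|^2.
\end{equation*}
Combining Cauchy--Schwarz with the pointwise bound on $\psi$ and Bishop--Gromov's $\haus^n(B_2(p))\le C(n)$, one gets $\|\psi\|_{L^2(B_2(p))}\le C(n)M=C(n)\tau^{-1/2}\|f-c\|_{L^2}$ (using $\tau\le 1$). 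Plugging this into the two right-hand terms gives, respectively, $C(n)\tau^{-1/2}\|f-c\|_{L^2}\|g\|_{L^2}$ and $C(n)\tau^{-1}\|f-c\|_{L^2}^2$, which together yield the claimed bound.

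The main technical point is the validity of the integration by parts in this generality: $\Delta f$ is only a Radon measure and $\psi=(M+f-c)^+$ is not a priori Lipschitz (as $f$ itself need not be). The identity $\int\eta\,d\Delta f=-\int\nabla\eta\cdot\nabla f$ for $\eta=\varphi^2\psi$ is however justified by the standard calculus for the measure-valued Laplacian on RCD spaces (see \cite{Gigli18}), since $\eta\in H^{1,2}$ has compact support in the interior of the domain where $\Delta f$ is defined; the chain rule identity $\nabla\psi=\mathbb{1}_{\{\psi>0\}}\nabla f$ likewise follows from the locality of the minimal weak upper gradient. Beyond this, the argument is a direct Caccioppoli/Stampacchia-type computation.
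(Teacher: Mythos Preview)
Your proof is correct and follows the same strategy as the paper: truncate $f-c$ from below at level $-\tau^{-1/2}\|f-c\|_{L^2}$, control the truncation set by Chebyshev, and run a Caccioppoli estimate on the positive part against a cutoff. The only difference lies in how the integration-by-parts step you flag is made rigorous: the paper first regularizes $f$ via the heat semigroup (after localizing with an extra cutoff $\varphi_{10}$), so that the truncation $h_\eps:=(P_\eps(\varphi_{10}f)-c+\lambda)^+$ is globally Lipschitz and can be tested directly against the measure Laplacian, and then passes $\eps\to0$. Your direct route is equally valid because you only need the one-sided inequality $\int\nabla(\varphi^2\psi)\cdot\nabla f\le\int\varphi^2\psi\,g$, and this passes to the limit under approximation of $\varphi^2\psi\ge0$ in $H^{1,2}$ by nonnegative compactly supported Lipschitz functions (both sides are continuous under $H^{1,2}$-convergence since $\nabla f,\,g\in L^2_{\rm loc}$).
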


\begin{proof}
	Let $\varphi_{10}$ be a good cut-off on the ball $B_{10}(p)$, i.e., $\varphi_{10}$ is a test function satisfying $\varphi = 1$ on $B_{10}(p)$ and $\varphi = 0$ on the complement of $B_{15}(p)$. Similarly, $\varphi_2$ will be a test function relative to $B_2(p)$. For every $\eps,\lambda>0$ we define
	\begin{equation}
		h_\eps := (P_\eps(\varphi_{10}f) - c + \lambda)^+ \in \Lip(X)\, ,
	\end{equation}
	where $P_\eps$ is the heat semigroup. It is easy to check that
	\begin{equation}
		\Delta h_\eps \ge - g_\eps \, 
		\quad \text{on $B_5(p)$}
	\end{equation}
	for some functions $g_\eps \ge 0$ converging to $g$ in $L^2(B_5(p))$ as $\eps \to 0$. We apply the standard Caccioppoli estimate to $h$ obtaining
	\begin{equation}
		\begin{split}
			\int \varphi_2^2 |\nabla h_\eps|^2
			&= - \int h_\eps \nabla \varphi_2^2 \cdot \nabla h_\eps - \int \varphi_2^2 h_\eps \Delta h_\eps
			\\&\le 
			C(n)\int_{B_3(p)} h_\eps^2 + \frac{1}{2} \int \varphi^2_2 |\nabla h_\eps|^2 + \int \varphi_2^2 h_\eps g_\eps \, \, .
		\end{split}
	\end{equation}
	Therefore
	\begin{equation}
		\int_{B_2(p)} |\nabla h_\eps|^2 \le C(n) \| h_\eps \|_{L^2(B_3(p))}(\|h_\eps\|_{L^2(B_3(p))} + \| g_\eps \|_{L^2(B_3(p))}) \, .
	\end{equation}
	We send $\eps \to 0$ and observe that $h_\eps \to h$ in $H^{1,2}$. Moreover, $|\nabla h| = |\nabla f|$ in the complement of $E:=\{f \le c - \lambda\}$, and $h \le |f-c| + \lambda$ in $B_3(p)$, so
	\begin{equation}
		\int_{B_2(p)\setminus E} |\nabla f|^2 \le C(n) \| |f-c| + \lambda \|_{L^2(B_3(p))}(\||f-c| + \lambda\|_{L^2(B_3(p))} + \| g \|_{L^2(B_3(p))}) \, .
	\end{equation}
	The conclusion follows by choosing $\lambda:= \tau^{-1/2}\| f - c \|_{L^2(B_3(p))}$.

\end{proof}

\subsubsection{Step 3:}
We claim that
\begin{equation}
	\int_{A_{11,6}(p)} |\Delta |\nabla b_p|| \le \eps \, ,
\end{equation}
provided $\delta \le \delta_0(\eps, n, \theta)$.

\medskip

Let $\varphi$ be a good cut-off as above.
In order to conclude, it suffices to show that for a set $E$ of arbitrarily small measure it holds
\begin{equation}
	\label{claim.compl}
	\int_{E^c}\varphi(\Delta|\nabla b_p|)^-\le \frac{\eps}{8}\, ,
\end{equation}
where $(\Delta|\nabla b_p|)^-$ denotes the negative part of the measure $\Delta |\nabla b_p|$.
Indeed, in view of \eqref{eq:Delta|nabla b|}, using also Cauchy--Schwarz and the $L^2$ bound on $\operatorname{Hess}(b_p)$, \eqref{claim.compl} would give
\begin{equation} 
	\int\varphi(\Delta|\nabla b_p|)^-
	\le
	\frac{\eps}{8} + C(n,\theta)\int_E\varphi(1+|\operatorname{Hess}(b_p)|)
	\le
	\frac{\eps}{8} + C(n,\theta)\haus^n(E)^{1/2}\le \frac{\eps}{4}\, ,
\end{equation}
provided that $\haus^n(E)$ is small enough.
The sought conclusion would follow since
\begin{align*}
	\int\varphi(\Delta|\nabla b_p|)^+=&\int\varphi(\Delta|\nabla b_p|)^-+\int\varphi\Delta|\nabla b_p|\\
	\le&\int\varphi(\Delta|\nabla b_p|)^-+\int|\Delta\varphi|\cdot ||\nabla b_p|-1|
	\le
	\frac{\eps}{2}\, ,
\end{align*}
thanks to the pointwise bound $|\Delta\varphi|\le C(n,\theta)$, and \autoref{thm:prop_Green} (i).

\medskip

In order to check \eqref{claim.compl},
we fix $\tau\in(0,\mz)$ and note that, since $|\nabla b_p| - 1$ and $|\nabla |\nabla b_p||$ are small in $L^1$, there exists a set $E$ of volume $\haus^n(E)\le\tau$ such that
\begin{equation}
	|\nabla|\nabla b_p||\le\tau\, ,\ \frac{1}{1+\tau}<|\nabla b_p|<1+\tau\quad\text{on }A_{11,6}(p)\setminus E\, .
\end{equation}
By \eqref{eq:Delta|nabla b|} we then have
\begin{align}
	\nonumber \Delta|\nabla b_p|
	&\ge (1-\tau)|\operatorname{Hess}(b_p)|^2-(1+\tau)\tau -(1+\tau)\frac{n-1}{b_p^2}|\nabla b_p|^4 \\
	&\quad-C(n)\left|\operatorname{Hess}(b_p)\left[\frac{\nabla b_p}{|\nabla b_p|},\frac{\nabla b_p}{|\nabla b_p|}\right]\right|-\delta(n-1)|\nabla b_p|\, ,\label{eq:firstuse}
\end{align}
on $A_{11,6}(p)\setminus E$. Since $\nabla|\nabla b_p|=\operatorname{Hess}(b_p)[\frac{\nabla b_p}{|\nabla b_p|},\cdot]$, using the bound $|\nabla|\nabla b_p||\le\tau$ and \eqref{eq:Delta b}, the inequality \eqref{eq:firstuse} rearranges to
\begin{equation} 
	\Delta|\nabla b_p|
	\ge(1-\tau)|\operatorname{Hess}(b_p)|^2-(1+\tau)\frac{(\Delta b_p)^2}{n-1}-C(n)(\tau+\delta)\, , \quad \text{on $A_{11,6}(p)\setminus E$}\, .
\end{equation}
Applying \autoref{trace} below with $A:=\operatorname{Hess}(b_p)$ and $v:=\frac{\nabla b_p}{|\nabla b_p|}$, and recalling that the Laplacian is the trace of the Hessian on noncollapsed $\RCD$ spaces (see for instance \cite{BrenaGigliHondaZhu}), we get
\begin{align*}
	\Delta|\nabla b_p|
	&\ge (1-\tau)\left[(1-\tau)\frac{(\Delta b_p)^2}{n-1}-\frac{1}{\tau}|\nabla|\nabla b_p||^2\right]-(1+\tau)\frac{(\Delta b_p)^2}{n-1}-C(n,\theta)(\tau+\delta) \\
	&\ge -3\tau (\Delta b_p)^2-C(n,\theta)(\tau+\delta)\, ,\quad \text{on $A_{11,6}(p)\setminus E$}\, .
\end{align*}
Recalling that $\|\operatorname{Hess}(b_p)\|_{L^2}\le C(n,\theta)$ on $A_{11,6}(p)$ (see \eqref{eq:hess_b}) and choosing $\delta\le \delta_0(\eps, \tau, n , \theta)$ we conclude the proof.

\subsubsection{Step 4: Proof of (iii)}

At this point we know that $\int_{A_{11,6}}|\Delta|\nabla b_p||$ is arbitrarily small
(up to decreasing $\delta$). Hence, outside of a set $E$ with $\haus^n(E)\le \tau$, it holds
\begin{equation}\label{eq:zz}
	(1-\tau)|\operatorname{Hess}(b_p)|^2-(1+\tau)\frac{(\Delta b_p)^2}{n-1}-C(n)\tau\le|\Delta|\nabla b_p||\le\tau\, ,
\end{equation}
if $\delta \le \delta_0(\tau, n ,\theta)$, where the inequality \eqref{eq:zz} is understood in the sense of measures. 

Applying \autoref{lemma:linalgqua} below with $A:=\operatorname{Hess}(b_p)$ and $v:=\frac{\nabla b_p}{|\nabla b_p|}$, we obtain
\begin{align*}
	&\left|\operatorname{Hess}(b_p)-\frac{\Delta b_p}{n-1}\left(I-\frac{\nabla b_p}{|\nabla b_p|}\otimes\frac{\nabla b_p}{|\nabla b_p|}\right)\right|^2\\
	&\le|\operatorname{Hess}(b_p)|^2-(1-\tau)\frac{(\Delta b_p)^2}{n-1}+\frac{1}{\tau}|\nabla|\nabla b_p||^2 \\
	&\le|\Delta|\nabla b_p||+C(n)\tau(1+|\operatorname{Hess}(b_p)|^2)+\frac{1}{\tau}|\nabla|\nabla b_p||^2 \\
	&\le C(n)\tau(1+|\operatorname{Hess}(b_p)|^2)\, ,
\end{align*}
on $A_{11,6}(p)\setminus E$.
Thanks to the $L^2$ bound on $\Hess(b_p)$ (see \eqref{eq:Hess b}), up to slightly enlarging $E$ we can assume that $|\Hess (b_p)|\le C(n,\theta)\tau^{-1/4}$ in $A_{11,6}(p)\setminus E$. 

Recalling that $\Delta b_p =\frac{n-1}{b_p}|\nabla b_p|^2$, on $A_{11,6}(p)\setminus E$ we then have
\begin{equation} 
	\left|\operatorname{Hess}(b_p)-\frac{|\nabla b_p|^2}{b_p}\left(I-\frac{\nabla b_p}{|\nabla b_p|}\otimes\frac{\nabla b_p}{|\nabla b_p|}\right)\right|^2\le C(n)\sqrt{\tau}\, . 
\end{equation}
To conclude the proof we observe that, up to slightly enlarging $E$, we can assume $1-\tau \le |\nabla b_p| \le 1+\tau$ by the second inequality in \eqref{eq:gradboundsbp}.

\subsubsection{Elementary lemmas}

In this section we present two elementary lemmas from linear algebra that were useful in the proof of \autoref{thm:prop_Green}.

\begin{lemma}\label{trace}
	For any symmetric $n\times n$ matrix $A$, $\tau\in(0,1)$ and $v\in\R^n$ with $|v|=1$, it holds
	\begin{equation} 
		|A|^2\ge(1-\tau)\frac{\operatorname{tr}(A)^2}{n-1}-\frac{1}{\tau}|Av|^2\, .
	\end{equation}
\end{lemma}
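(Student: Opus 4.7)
The plan is to reduce to a block decomposition of $A$ adapted to $v$, apply Cauchy--Schwarz on the orthogonal complement, and then absorb the remaining $a^2$ terms via Young's inequality. Specifically, I would complete $v$ to an orthonormal basis of $\R^n$ and write
\begin{equation*}
A = \begin{pmatrix} a & b^T \\ b & C \end{pmatrix},
\end{equation*}
where $a = \langle Av, v\rangle \in \R$, $b \in \R^{n-1}$, and $C$ is a symmetric $(n-1)\times(n-1)$ matrix. Under this decomposition one reads off immediately
\begin{equation*}
|Av|^2 = a^2 + |b|^2, \qquad |A|^2 = a^2 + 2|b|^2 + |C|^2, \qquad \operatorname{tr}(A) = a + \operatorname{tr}(C).
\end{equation*}

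The key inequality is then $|C|^2 \ge \operatorname{tr}(C)^2/(n-1)$, which follows from Cauchy--Schwarz applied to the identity matrix on $\R^{n-1}$ against $C$. Dropping the harmless $2|b|^2$ contribution and using $|Av|^2 \ge a^2$, it suffices to prove the scalar inequality
\begin{equation*}
\bigl(1 + \tfrac{1}{\tau}\bigr) a^2 + \frac{(\operatorname{tr}(A) - a)^2}{n-1} \ge (1 - \tau) \frac{\operatorname{tr}(A)^2}{n-1}.
\end{equation*}

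Setting $t := \operatorname{tr}(A)$ and expanding, this rearranges to the quadratic inequality in $a$
\begin{equation*}
\bigl(1 + \tfrac{1}{\tau} + \tfrac{1}{n-1}\bigr) a^2 - \tfrac{2 t}{n-1}\, a + \tau \tfrac{t^2}{n-1} \ge 0,
\end{equation*}
whose discriminant is nonpositive precisely when $\tfrac{1-\tau}{n-1} \le 1 + \tau$. This holds trivially for every $\tau \in (0,1)$ and $n \ge 2$, completing the argument. There is no real obstacle here: the proof is a routine block-matrix computation combined with a Young-type inequality, and the slight slack in the discriminant condition reflects the factor $(1 - \tau)$ appearing on the right-hand side of the lemma.
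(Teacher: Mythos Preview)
Your proof is correct, but it follows a different route from the paper's. The paper orders the eigenvalues $|\lambda_1|\le\dots\le|\lambda_n|$, uses the pointwise bound $|Av|^2\ge\lambda_1^2$ (valid for any unit $v$), then applies Young's inequality to split $\operatorname{tr}(A)=\lambda_1+\sum_{i\ge2}\lambda_i$ and Cauchy--Schwarz on the remaining sum. Your argument instead passes through the block decomposition adapted to $v$, applies Cauchy--Schwarz to the $(n-1)\times(n-1)$ block $C$ to get $|C|^2\ge\operatorname{tr}(C)^2/(n-1)$, and reduces everything to a discriminant check on a quadratic in $a=\langle Av,v\rangle$. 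The eigenvalue approach is slightly slicker in that Young's inequality delivers the result in one line, while your route requires expanding and checking the quadratic; on the other hand, your block decomposition is more direct in that it never leaves the vector $v$ behind for an eigenvector, and makes the role of the orthogonal complement of $v$ (where the rigid model $A=\frac{\operatorname{tr}(A)}{n-1}(I-v\otimes v)$ lives) more transparent.
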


\begin{proof}
	Let $\lambda_1,\dots,\lambda_n$ be the eigenvalues of $A$, counted with multiplicity. Up to reordering, we can assume that $|\lambda_1|\le\dots\le|\lambda_n|$, so that $|Av|^2\ge\lambda_1^2$. Since $\operatorname{tr}(A)=\sum_i\lambda_i$, it holds
	\begin{equation}
		\begin{split}
			\operatorname{tr}(A)^2
			&\le
			\left(1+\frac{1}{\tau}\right)\lambda_1^2+(1+\tau)\left(\sum_{i=2}^{n}\lambda_i\right)^2
			\\&\le
			\left(1+\frac{1}{\tau}\right)|Av|^2+(1+\tau)(n-1)\sum_{i=2}^{n}\lambda_i^2\, ,
		\end{split}   		
	\end{equation}
	where we used Young's inequality and Cauchy--Schwarz. Recalling that $|A|^2=\sum_{i=1}^n\lambda_i^2$, we obtain
	\begin{equation}
		(1+\tau)(n-1)|A|^2\ge\operatorname{tr}(A)^2-\frac{1+\tau}{\tau}|Av|^2\, ,
	\end{equation}
	which gives the claim.
\end{proof}

Looking at the model case where $Av=0$, the argument in \autoref{trace} gives in fact 
\begin{equation}\label{eq:linal0}
|A|^2-\frac{\operatorname{tr}(A)^2}{n-1}\ge 0\, ,
\end{equation}
and equality occurs precisely when $A$ is a multiple of $I-v\otimes v$, i.e., when 
\begin{equation}
A=\frac{\operatorname{tr}(A)}{n-1}(I-v\otimes v)\, .
\end{equation}
Thus, the nonnegative quantity in \eqref{eq:linal0} (and $|Av|^2$) should bound the distance from this rigid case. This is exactly the content of the next lemma
(which, in fact, implies the previous one).

\begin{lemma}\label{lemma:linalgqua}
	For any symmetric $n\times n$ matrix $A$, $\tau\in(0,1)$ and $v\in\R^n$ with $|v|=1$, it holds
	\begin{equation}
		\label{rigid.mat}
		\left|A-\frac{\operatorname{tr}(A)}{n-1}(I-v\otimes v)\right|^2\le|A|^2-(1-\tau)\frac{\operatorname{tr}(A)^2}{n-1}+\frac{1}{\tau}|Av|^2\, .
	\end{equation}
\end{lemma}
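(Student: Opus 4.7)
The plan is to reduce the inequality to an application of Young's inequality after expanding the Frobenius norm on the left-hand side. Set $t := \operatorname{tr}(A)$ and $P := I - v \otimes v$, so that $P$ is the orthogonal projection onto $v^\perp$; in particular $\langle P, P\rangle = \operatorname{tr}(P) = n-1$, and
\begin{equation*}
\langle A, P\rangle = \operatorname{tr}(A) - \operatorname{tr}(A\, v\otimes v) = t - \langle Av, v\rangle.
\end{equation*}
Expanding the square I would compute
\begin{equation*}
\left|A - \tfrac{t}{n-1} P\right|^2 = |A|^2 - \tfrac{2t}{n-1}\langle A, P\rangle + \tfrac{t^2}{(n-1)^2}|P|^2 = |A|^2 - \tfrac{t^2}{n-1} + \tfrac{2t \langle Av, v\rangle}{n-1}.
\end{equation*}

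Comparing with the right-hand side of \eqref{rigid.mat}, the desired estimate is therefore equivalent to
\begin{equation*}
\tfrac{2t\, \langle Av, v\rangle}{n-1} \le \tau\, \tfrac{t^2}{n-1} + \tfrac{1}{\tau}|Av|^2.
\end{equation*}
This I would obtain at once from Young's inequality $2ab \le \tau a^2 + \frac{1}{\tau}b^2$ with $a := t/\sqrt{n-1}$ and $b := \langle Av, v\rangle/\sqrt{n-1}$, which yields $\tfrac{2t\langle Av, v\rangle}{n-1} \le \tau \tfrac{t^2}{n-1} + \tfrac{1}{\tau}\tfrac{\langle Av, v\rangle^2}{n-1}$, followed by the Cauchy--Schwarz estimate $\langle Av, v\rangle^2 \le |Av|^2 |v|^2 = |Av|^2$ (and the trivial bound $\tfrac{1}{n-1}\le 1$ for $n\ge 2$; for $n=1$ the matrix $I-v\otimes v$ vanishes and the claim is trivial).

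There is essentially no obstacle in this lemma; the only thing to be careful about is tracking coefficients correctly after expanding the square so that the cross term, rather than a combination of terms, is exactly what Young's inequality controls. Since the identity obtained after expansion is exact, the losses in the final inequality come only from Young and from discarding $|Av|^2 - \langle Av, v\rangle^2 \ge 0$, which matches the two slack terms $\tau\frac{t^2}{n-1}$ and $\frac{1}{\tau}|Av|^2$ on the right-hand side of \eqref{rigid.mat}.
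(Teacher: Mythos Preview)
Your proof is correct and follows exactly the approach of the paper: expand the Frobenius norm to obtain $|A|^2-\frac{\operatorname{tr}(A)^2}{n-1}+\frac{2\operatorname{tr}(A)\langle Av,v\rangle}{n-1}$, then bound the cross term via Young's inequality. The paper's proof is simply a terser version of what you wrote.
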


\begin{proof}
	We compute
	\begin{equation}
		\left|A-\frac{\operatorname{tr}(A)}{n-1}(I-v\otimes v)\right|^2
		=|A|^2-\frac{\operatorname{tr}(A)^2}{n-1}+2\frac{\operatorname{tr}(A)\ang{v,Av}}{n-1}\, ,
	\end{equation}
	and apply Young's inequality.
\end{proof}

\section{Slicing Theorem}\label{sec:slicing}

The goal of this section is to establish an annular version of the slicing theorem by Cheeger and Naber \cite{CheegerNaber15} for smooth manifolds with Ricci curvature bounded below: see \autoref{trans} for the precise statement. In the special case of dimension $n=3$, we extend our result to the broader class of noncollapsed $\RCD$ spaces, in \autoref{trans n=3} below. This extension will be important later on to study the topology of $3$-dimensional $\RCD$ spaces. The overall strategy of the proof will be similar to \cite{CheegerNaber15}, although in the present setting there will be some additional technical challenges.

It seems likely that also \autoref{trans} holds for $\RCD$ spaces. However, generalizing the present argument would require some nontrivial technical work, in particular concerning the tensor calculus for $k$-forms for $k\ge 2$ that is heavily exploited in the proof of \autoref{step1}. For this reason, we leave the generalization to the $\RCD$ setting to the future investigation.

\medskip

We consider a smooth and complete manifold $(M^n,g)$ with $n\ge 3$ and $\Ric_g \ge -\delta (n-1)$. Let $p\in M$ be a  reference point. Our setup will be that the ball $B_{100}(p)$ is $\delta$-GH close to the ball in an $(n-3)$-symmetric cone, i.e.,
\begin{equation}\label{eq:close_cone}
	\dist_{{\rm GH}}\left(B_{100}(p),B_{100}(0^{n-3},o)\right)<\delta \, ,
	\quad
	(0^{n-3},o) \in \R^{n-3}\times C(Z^2) \, ,
\end{equation}
where $(Z^2,\dist_Z)$ is a two-dimensional Alexandrov space with curvature $\ge 1$. We denote by $\theta$ the density of the cone $\R^{n-3} \times C(Z)$ at $o$.

\medskip

We let $b:= b_p$ be a good Green distance with center $p$, as in \autoref{thm:prop_Green}.
Also, given a harmonic $\delta$-splitting map $v:B_{100}(p)\to\R^{n-3}$ (see \autoref{spl.def} and \autoref{thm:splittings}) such that $v(p)=0$, on the set $\{b>|v|\}$ we define the function
\begin{equation}\label{eq:u}
	u:=\sqrt{b^2-|v|^2} \, .
\end{equation}

\begin{remark}
	In the model case $M=\R^{n-3}\times C(Z^2)$ where the second factor is a three-dimensional cone
	and $p=(0, o)$ (with $o$ the tip of the cone),
	the map $v$ will simply be the projection onto the first factor, and for a point $q=(v,y)$ (with $y\in C(Z^2)$)
	we will have $b(q)=\dist_p(q)=\sqrt{|v|^2+\dist_{o}(y)^2}$. Hence, in this case $u$ coincides with the distance function from the set of vertices $\R^{n-3}\times \{o\}\subset \setR^{n-3}\times C(Z^2)$.
\end{remark}

\medskip

We define annular regions
\begin{equation}
	A:= [B_{9}(p)\setminus \overline{B}_{8}(p)]\cap\{|v|< 1\}\, ,
	\quad 
	A':=[B_{10}(p)\setminus \overline{B}_{7}(p)]\cap\{|v|<2\}\, .
\end{equation}
Notice that, for $\delta$ sufficiently small, $u$ will be defined on $A$ and $A'$, thanks to \autoref{thm:prop_Green}.
Moreover, since $|v|$ is $2$-Lipschitz (for $\delta$ small enough), we have $B_s(q)\subseteq A'$ whenever $q\in A$ and $s<\frac{1}{2}$.

\begin{theorem}[Slicing Theorem]\label{trans}
	For any $\eps>0$ there exists $\delta(\eps,n,\theta)>0$ with the following property.
	Let $(M^n,g)$ be a smooth complete manifold $(M^n,g)$ with $n\ge 3$, $\Ric_g \ge -\delta (n-1)$ such that \eqref{eq:close_cone} holds for some $p\in M$. Let $v:B_{100}(p)\to\R^{n-3}$ be a harmonic $\delta$-splitting map with $v(p)=0\in \R^{n-3}$, and $u$ be defined as in \eqref{eq:u}. Then there exists a Borel set $\mathcal{B}\, \subset B_1(0^{n-3})\times [0,10]$
	such that
	\begin{itemize}
		\item[(i)] 	$\Leb^{n-2}(\mathcal{B})\le\eps $;

		\item[(ii)] for every $(x,y) \in  B_1(0^{n-3}) \times [0,10]$ such that 
		\begin{equation}
			8 \le \sqrt{|x|^2 + y^2}\le 9 \, , \quad
			(x,y)\notin \mathcal{B} \, ,
		\end{equation}
		the level set $\{(v,u)=(x,y)\}$ is not empty;

		\item[(iii)] for every $s\in(0,c(\eps,n,\theta))$ and every $q\in A$ with $(v,u)(q)\notin\mathcal{B}$,
		there exists a lower triangular $(n-2)\times(n-2)$ matrix $L_{q,s}$ with positive diagonal entries such that
		\begin{equation}
			L_{q,s}\circ (v,u):B_s(q)\to\R^{n-2} 
		\end{equation}
		is an $\eps$-splitting map.
	\end{itemize}
\end{theorem}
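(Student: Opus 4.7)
The strategy follows the scheme of the slicing theorem in \cite{CheegerNaber15}, but with two genuine modifications. First, the last component $u$ is not itself almost-harmonic nor an almost-splitting function; it is built from the Green distance $b$, so every Bochner-type calculation must be revisited. Second, we now need to select good level sets in the image $\setR^{n-2}=\setR^{n-3}\times\setR$ rather than only in the $(n-4)$-dimensional image of an almost-splitting.

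\smallskip

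\textbf{Step 1: Initial-scale almost-splitting of $(v,u)$.} I first verify that $(v,u)$ is an $\eta(\delta)$-splitting at scale $1$ on $A'$, with $\eta(\delta)\to 0$ as $\delta\to 0$. Differentiating $u^2=b^2-\abs{v}^2$ gives $u\nabla u=b\nabla b-\sum_a v^a\nabla v^a$. Using $\ang{\nabla v^a,\nabla v^b}\approx\delta^{ab}$, $\abs{\Hess v^a}\ll 1$ (the $\delta$-splitting property of $v$) together with $\abs{\nabla b}\approx 1$ and the pointwise form of $\Hess b$ from \autoref{thm:prop_Green}, one checks $\abs{\nabla u}\approx 1$, $\ang{\nabla v^a,\nabla u}\approx 0$ and $\Hess u\in L^2(A')$.

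\smallskip

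\textbf{Step 2: Scale-integrated Hessian bound via Bochner.} From $\Delta u^2=\Delta b^2-\Delta\abs{v}^2$ together with $\Delta b^2=2n\abs{\nabla b}^2$ (see \autoref{cor:boundb}) and the harmonicity of $v$, one obtains an effective Laplacian formula $\Delta u=(3-\abs{\nabla u}^2)/u+O(\delta)$. Feeding this into Bochner yields
\begin{equation*}
\Delta\tfrac{1}{2}\abs{\nabla u}^2\ge \abs{\Hess u}^2-\frac{C}{u}\bigl(\abs{\Hess u}+1\bigr)-C\delta
\end{equation*}
on $A'$. Testing against a cut-off supported in $A'$ and using Cauchy--Schwarz gives the $L^2$ bound $\int_{A}\abs{\Hess u}^2\le C(n,\theta)$. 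Multiplying Bochner by a radial weight in $s$ and integrating in both space and scale, as in \cite[Sections 7--8]{CheegerNaber15}, yields
\begin{equation*}
\int_{A}\!\int_0^{1/10}s\fint_{B_s(q)}\abs{\Hess u}^2\di s\,\di\haus^n(q)\le C(n,\theta),
\end{equation*}
which is the key input for the slicing argument.

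\smallskip

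\textbf{Step 3: Cholesky transformations and propagation across scales.} For $q\in A$ and $s>0$, let
$M(q,s)_{ab}:=\fint_{B_s(q)}\ang{\nabla w^a,\nabla w^b}$ with $w:=(v,u)$, and let $L_{q,s}$ be the unique lower triangular matrix with positive diagonal such that $L_{q,s}M(q,s)L_{q,s}^T=I$. Since the first $n-3$ rows of $L_{q,s}$ correspond to the already good splitting $v$, the nontrivial correction is confined to the last row. A Poincaré estimate on $L_{q,s}-L_{q,2s}$, controlled by $s\bigl(\fint_{B_{2s}(q)}\abs{\Hess u}^2\bigr)^{1/2}$, telescopes into a uniform bound on $L_{q,s}$ in terms of $I(q)^{1/2}$, where $I(q)$ is the scale integral appearing in Step 2.

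\smallskip

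\textbf{Step 4: Coarea selection of good level sets.} Since Step 1 provides $\abs{\nabla v^1\wedge\cdots\wedge\nabla v^{n-3}\wedge\nabla u}\ge 1-\eta(\delta)$ on $A$, the coarea formula for $w=(v,u)$ gives
\begin{equation*}
\int_{\setR^{n-2}}\Bigl(\int_{w^{-1}(x,y)\cap A}I(q)\,\di\haus^2(q)\Bigr)\di(x,y)\le C(n,\theta).
\end{equation*}
A Chebyshev argument produces a Borel set $\mathcal{B}\subset B_1(0^{n-3})\times[0,10]$ with $\Leb^{n-2}(\mathcal{B})\le\eps$ such that for every $(x,y)\notin\mathcal{B}$ with $8\le\sqrt{\abs{x}^2+y^2}\le 9$, most points $q$ of the fiber $w^{-1}(x,y)\cap A$ satisfy $I(q)\le\eps^{10}$. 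Combining Steps 2--3 shows that at every such $q$ and every $s<c(\eps,n,\theta)$ the map $L_{q,s}\circ w$ is an $\eps$-splitting on $B_s(q)$, giving (iii). Nonemptiness of the level set (statement (ii)) follows because, at scale $1$, $w$ is uniformly close to the analogous map $(\text{pr}_1,\dist_{o})$ on the model cone $\setR^{n-3}\times C(Z^2)$, which is surjective onto $\{(x,y):\sqrt{\abs{x}^2+y^2}\in[8,9]\}$.

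\smallskip

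\textbf{Main obstacle.} The technical core is Step 2: establishing the Bochner estimate for $u$ even though $u$ is not harmonic. The term $\ang{\nabla\Delta u,\nabla u\ang}$ does not vanish and generates derivatives of $\abs{\nabla u}^2$; making these quantitative under only a one-sided Ricci bound requires the fine pointwise structure of $\Hess b$ supplied by \autoref{thm:prop_Green}(iii), the bound $\int_{A'}\abs{\Delta\abs{\nabla b}}\le\eps$, and the near-orthogonality of $\nabla b$ with the $\nabla v^a$ inherited from the $\delta$-splitting assumption. Once the scale-integrated $L^2$ Hessian bound is in place, the rest of the argument parallels \cite{CheegerNaber15}, modulo the bookkeeping of having a Green coordinate play the role of the last splitting function.
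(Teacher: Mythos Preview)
Your outline misses the mechanism that makes the slicing theorem work, and Steps 2--4 do not close as written.

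The scale-integrated bound in Step 2 is essentially tautological: by Fubini, $\int_A\int_0^{1/10}s\fint_{B_s(q)}|\Hess u|^2\,ds\,d\haus^n(q)\approx\int_0^{1/10}s\,ds\cdot\int_{A'}|\Hess u|^2\le C$, which uses only the scale-$1$ estimate and says nothing about individual scales. Knowing that $I(q)=\int_0^{1/10}s\fint_{B_s(q)}|\Hess u|^2\,ds$ is small does \emph{not} give $s^2\fint_{B_s(q)}|\Hess u|^2\le\eps$ for every $s$, which is what an $\eps$-splitting at scale $s$ requires. So Step 4 cannot deduce (iii) from smallness of $I(q)$. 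Moreover, the telescoping in Step 3 is circular: the Poincar\'e bound $|L_{q,s}-L_{q,2s}|\lesssim s(\fint_{B_{2s}}|\Hess w|^2)^{1/2}$ is only valid once you already know $|L_{q,s}|$ is bounded (the implicit constant depends on $|L_{q,s}|$ through the inversion of the Gram matrix), and that boundedness is exactly the conclusion you are after.

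What the paper actually does is work with the transformation-invariant quantities $|\omega^\ell|=|dv^1\wedge\cdots\wedge dw^\ell|$ (with $w^{n-2}=u$) and prove the $L^1$ smallness of the \emph{measure} $\Delta|\omega^{n-2}|$ on $A'$ (\autoref{step1}). This is the hard analytic step, and it is where the pointwise Hessian structure of $b$ from \autoref{thm:prop_Green}(iii) and the bound $\int|\Delta|\nabla b||\le\eps$ are genuinely used, via an improved Kato-type inequality (cf.\ \autoref{trace.bis}). The singular scale $s_q$ is then defined by $s^2\int_{B_s(q)}|\Delta|\omega^\ell||\le\eta\int_{B_s(q)}|\omega^\ell|$; above $s_q$ the existence of the matrices $L_{q,s}$ is obtained not by telescoping but by a \emph{contradiction/blow-up} argument (\autoref{step2}), precisely to break the circularity you run into. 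Finally, the bad set $\mathcal{B}$ is the image under $w$ of $\bigcup\{B_{s_q}(q):s_q>0\}$, and its measure is bounded by a Vitali covering together with $\int|\Delta|\omega^{n-2}||\le\eta'$; this is what guarantees that \emph{every} point on a good fiber (not just most points) satisfies (iii). Your ``Main obstacle'' paragraph identifies the right ingredients but feeds them into the wrong machine; you need $\Delta|\omega^{n-2}|$, not a raw Hessian integral.
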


In a nutshell, \autoref{trans} tells us that there are many good points $(x,y)\in B_1(0^{n-2}) \times [0,10]$ (with a quantitative volume estimate) in the image of $(v,u): A \to \R^{n-2}$ such that, for each point $q$ in the level set $\{(v,u)=(x,y)\}$, the map $(v,u)$ becomes $\eps$-splitting at every scale $s\le c(\eps,n,\theta)$, up to composing with a suitable transformation matrix.

In view of \autoref{thm:splittings}, if $(u,v)(q) = (x,y)$ is a good level set and $s<c(\eps,n,\theta)$, the ball $B_s(q)$ is $\eps'$-GH close to $B_s(0,w)\subset\R^{n-2}\times W$, where $(W,\dist_W)$ is a two-dimensional Alexandrov space with curvature $\kappa \ge 0$, and $\eps \le \eps_0(n, \eps')$.

\begin{remark}[Sharp gradient bound]
	Let $q\in A$ be as in \autoref{trans} (iii). According to our definition of $\eps$-splitting map, $L_{q,s}\circ (v,u)$ is a $C(n)$-Lipschitz function. However, a variant of the observation in \autoref{rmk:sharp_gradient}, based on the fact that
	\begin{equation}
		F(v,u):=\left(v,(u^2+|v|^2)^{(2-n)/2}\right)\, 
	\end{equation}
	is harmonic (despite $(v,u)$ is not), ensures the sharp gradient bound:
	\begin{equation}
		\sup_{B_{s/2}(q)}
		| \nabla L_{q,s}\circ (v,u) | \le 1 + C(n)\eps^{1/2}\, .
	\end{equation}
\end{remark}

As anticipated, in dimension $n=3$ it will be important to have a version of the annular transformation theorem valid in the broader class of noncollapsed $\RCD(K,3)$ spaces $(X,\dist,\haus^3)$. Below we report the relevant statement.

\begin{theorem}[Slicing Theorem, $n=3$]\label{trans n=3}
	For any $\eps>0$, there exists $\delta(\eps,\theta)>0$ with the following property.
	If $(X,\dist,\haus^3)$ is an $\RCD(-\delta,3)$ space and 
	\begin{equation}
		\dist_{{\rm GH}}(B_{100}(p), B_{100}(o)) \le \delta \, , 
	\end{equation}
	where $p\in X$ and $o\in C(Z)$ is a tip of an $\RCD(0,3)$ cone with density $\theta>0$, then the following holds. For every good Green distance $b_p : B_{40}(p) \to (0,\infty)$ there exists a Borel set $\mathcal{B} \subset [8,9]$
	such that
	\begin{itemize}
		\item[(i)] $\label{meas} \Leb^{1}(\mathcal{B})\le\eps$;

		\item[(ii)] for every $y\in [8,9]\setminus \mathcal{B}$,
		the level set $\{b_p = y\}$ is not empty;

		\item[(iii)] for every $s\in(0,c(\eps,\theta))$ and $q\in b_p^{-1}([8,9]\setminus \mathcal{B})$, the normalized Green distance 
		\begin{equation}
			\hat b_p := \frac{b_p}{\fint_{B_s(q)}|\nabla b_p|} : B_s(q) \to \R\, 
		\end{equation}
		is an $\eps$-splitting map.
	\end{itemize}
\end{theorem}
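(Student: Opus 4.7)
My plan is to adapt the slicing argument of \cite{CheegerNaber15} to the Green-distance setting, exploiting that in dimension $n=3$ the situation is notably simpler than in \autoref{trans}: there is no auxiliary harmonic splitting map $v$, and the transformation matrix $L_{q,s}$ reduces to the positive scalar $\lambda_{q,s}^{-1}$ with $\lambda_{q,s} := \fint_{B_s(q)}|\nabla b_p|$. All the heavy lifting has been done in \autoref{thm:prop_Green}; what remains is to pass from annular $L^1$ bounds and a.e.\ pointwise rigidity of the Hessian to a scale-invariant splitting statement on most level sets.

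First, I would combine the pointwise estimate from \autoref{thm:prop_Green}(iii) with the $L^2$ Hessian bound \eqref{eq:Hess b} to write $|\Hess b_p|^2 = h_1 + h_2$, where $h_1 := \mathbf{1}_{A_{11,6}(p)\setminus E}|\Hess b_p|^2 \le 2b_p^{-2} + 2\eps \le C(\theta)$ pointwise, and $h_2 := \mathbf{1}_{E}|\Hess b_p|^2$ satisfies $\|h_2\|_{L^1} \le C(\theta)$. Likewise, by \autoref{thm:prop_Green}(ii), the function $h_3 := |\nabla|\nabla b_p||$ is small in $L^1$ on $A_{11,6}(p)$. Denoting by $M$ the centered maximal operator on $(X,\haus^3)$, a weak-$(1,1)$ argument yields a set
\begin{equation*}
F_\eta := \bigl\{q\in A_{9,8}(p) : Mh_2(q) + Mh_3(q) > \eta^{-1}\bigr\}
\end{equation*}
with $\haus^3(F_\eta)\le C(\theta)\eta$. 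Since $b_p$ is a good Green distance we have $|\nabla b_p|\ge 1/2$ on $A_{9,8}(p)$ for $\delta$ small, so the coarea formula gives $\int_8^9 \haus^2(F_\eta\cap b_p^{-1}(y))\,dy \le 2\haus^3(F_\eta)$. Setting
\begin{equation*}
\mathcal{B} := \bigl\{y\in[8,9] : \haus^2(F_\eta\cap b_p^{-1}(y)) > \sqrt{\eta}\bigr\}
\end{equation*}
gives $\Leb^1(\mathcal{B}) \le C(\theta)\sqrt{\eta}$, yielding (i) after picking $\eta$ small in terms of $\eps$. Non-emptiness of level sets in (ii) follows from continuity of $b_p$ combined with the uniform estimate $|b_p - \dist_p|\le \eps$ from \autoref{thm:prop_Green}(i).

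To verify (iii), fix $y\in[8,9]\setminus\mathcal{B}$, choose $q\in b_p^{-1}(y)\cap(A_{9,8}(p)\setminus F_\eta)$ (such $q$ exist outside an $\haus^2$-null subset of the level set), and let $s\in(0,c(\eps,\theta))$. By Poincaré applied to the Lipschitz function $|\nabla b_p|$, the condition $Mh_3(q)\le\eta^{-1}$ yields
\begin{equation*}
\fint_{B_s(q)}\bigl||\nabla b_p|^2 - \lambda_{q,s}^2\bigr| \le C(\theta)\,s\,Mh_3(q) \le C(\theta)s/\eta,
\end{equation*}
while splitting the $L^2$ Hessian integral according to the decomposition $|\Hess b_p|^2 = h_1 + h_2$ gives
\begin{equation*}
s^2\fint_{B_s(q)}|\Hess b_p|^2 \le s^2\bigl(C(\theta) + Mh_2(q)\bigr) \le s^2\bigl(C(\theta)+\eta^{-1}\bigr).
\end{equation*}
Dividing by $\lambda_{q,s}^2\ge 1/4$, which is bounded below since $|\nabla b_p|\ge 1/2$ on $B_s(q)\subset A_{9,8}(p)$, both quantities are $\le \eps^2$ for $s < c(\eps,\theta) := c\sqrt{\eta}\,\eps$. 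The Lipschitz bound $|\nabla \hat b_p|\le C$ is inherited from \autoref{cor:boundb}.

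The principal obstacle is the absence of harmonicity: the identity \eqref{eq:Delta b} shows $\Delta b_p \ne 0$, so the standard Bochner-plus-cutoff argument from \cite{CheegerNaber15} does not produce a scale-invariantly small $L^2$ Hessian bound for $b_p$ itself. The key point that rescues the argument is conceptual: the pointwise Hessian rigidity of \autoref{thm:prop_Green}(iii) says $\Hess b_p$ is close not to zero but to the radial model $b_p^{-1}(I-\hat\nu\otimes\hat\nu)$, whose norm $\sqrt{2}/b_p$ is merely bounded on $A_{9,8}(p)$. It is therefore essential that the $\eps$-splitting condition comes with the scaling factor $s^2$, which absorbs this $O(1)$ radial Hessian as $s\to 0$ and yields the required smallness at all sufficiently small scales simultaneously, rather than only at a.e.\ scale.
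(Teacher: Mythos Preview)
Your approach has a genuine gap in passing from the $3$-dimensional bad set $F_\eta$ to the $1$-dimensional bad set $\mathcal{B}$ of values. The theorem requires the $\eps$-splitting property at \emph{every} $q\in b_p^{-1}(y)$ for $y\notin\mathcal{B}$, but your maximal-function/coarea argument only shows that for good $y$ the set $b_p^{-1}(y)\cap F_\eta$ has small $\haus^2$-measure---it can still be nonempty. Controlling $\haus^3(F_\eta)$ does not control $\Leb^1(b_p(F_\eta))$: a Lipschitz map $X^3\to\R$ can send a set of small $3$-measure onto an interval of full length (think of $F_\eta$ as a thin tube transverse to all level sets). The paper's route addresses exactly this: one introduces the singular scale $s_q$ via the condition $s^2\int_{B_s(q)}|\Delta|\nabla b_p||\le\eta\int_{B_s(q)}|\nabla b_p|$, sets $\tilde{\mathcal{B}}:=\bigcup_{s_q>0}B_{s_q}(q)$, and uses Vitali together with the fact that the normalized $b_p$ is already $\eps$-splitting at scale $s_q$ (\autoref{step2 n=3}) to bound $\Leb^1(b_p(B_{5s_{q_j}}(q_j)))\le Cs_{q_j}^{-2}\int_{B_{s_{q_j}}}|\nabla b_p|$; combined with the failure of the defining inequality at $s_{q_j}$, the disjoint sum is $\le C\eta^{-1}\int_{A'}|\Delta|\nabla b_p||$. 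Taking $\mathcal{B}\supseteq b_p(\tilde{\mathcal{B}})$ then yields splitting at \emph{all} points of good level sets, which is essential for the downstream topological arguments.

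A secondary issue: you use $|\nabla b_p|\ge 1/2$ pointwise on $A_{9,8}(p)$, but \autoref{thm:prop_Green}(i) only gives $L^1$-closeness of $|\nabla b_p|$ to $1$; no pointwise lower bound is available. This affects your coarea step (which actually needs the Lipschitz \emph{upper} bound, so is salvageable) and more seriously your bound $\lambda_{q,s}^2\ge 1/4$. In the paper, control on $\lambda_{q,s}$ comes from a bootstrap (\eqref{lip.tilde.u}) combined with Moser iteration (\autoref{lemma:Mosersharper}) applied to $|\nabla b_p|$, which satisfies $\Delta|\nabla b_p|\ge -C|\nabla b_p|$ by \eqref{eq:bochproof3}; this is again tied to the singular-scale framework rather than a maximal-function bound.
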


In other words, when centered at all points on most level sets, the Green distance $b_p$ induces a geometric splitting. Therefore, if $\{b_p = y\}$ is a good level set, for every point $q\in \{b_p = y\}$ and sufficiently small scale $s<c(\eps,n,\theta)$, it holds
\begin{equation}
	\dist_{{\rm GH}}(B_s(q), B_s(0,p_0)) \le \eps' \, , \quad p_0 \in W
\end{equation}
where $(W^2,\dist_W)$ is an Alexandrov space with curvature $\kappa \ge 0$, provided $\eps \le \eps_0(\eps')$.

\subsection{Outline of the proof}
\label{sec:outline trans}
The proof of \autoref{trans} is based on two distinct steps, namely \autoref{step1} and \autoref{step2} below. After briefly discussing the two steps, in this section we show how to complete the proof using them.

The proof of \autoref{trans n=3} follows an analogous strategy, albeit with the necessity to formulate the key steps in the broader context of $\RCD$ spaces. Specifically, \autoref{rmk:step1 n=3} serves as a substitute for \autoref{step1}, while \autoref{step2 n=3} takes the place of \autoref{step2}.

\medskip

Following \cite{CheegerNaber15} we introduce
\begin{equation}
	\begin{split}
		&\omega^\ell:= dv^{1}\wedge\dots\wedge dv^\ell \, ,
		\quad \text{for $\ell=1,\dots,n-3$}\, ,
		\\
		&\omega^{n-2}:= dv^{1}\wedge\dots\wedge dv^{n-3} \wedge du\, .
	\end{split}
\end{equation}
When $n=3$, we have  $\omega^1=du=db$. 

Notice that $\omega^\ell$ is well-defined and smooth in $A'$ and, as detailed later on, the Bochner formula gives
\begin{equation}\label{eq:|omega| Bochner}
	\begin{split}
		\Delta\frac{|\omega^\ell|^2}{2}
		& =|\nabla\omega^\ell|^2+\ang{\Delta\omega^\ell,\omega^\ell}
		\\&
		\ge|\nabla\omega^\ell|^2+\ang{dv^1\wedge\dots\wedge dv^{\ell-1}\wedge d\Delta u,\omega^\ell}-C(n)\delta|\omega^\ell|^2\, ,
	\end{split}
\end{equation}
where the middle term appears only when $\ell=n-2$, and
where we used the fact that $\Delta v=0$ and $\operatorname{Ric}_g\ge-\delta$. As in 
\cite[Lemma 3.7]{CheegerNaber15}, we will need to consider the Laplacian of $|\omega_\ell|$, which is not smooth. The following lemma corresponds to \cite[Lemma 3.7]{CheegerNaber15}.

\begin{lemma}
	The distributional Laplacians $\Delta|\omega^\ell|$ are measures with nonnegative singular part on $A'$. Moreover, they satisfy
	\begin{equation}
		\label{delta.uno}
		\Delta|\omega^\ell|\ge\frac{\ang{\Delta\omega^\ell,\omega^\ell}+|\nabla\omega^\ell|^2-|\nabla|\omega^\ell||^2}{|\omega^\ell|}\, ,
	\end{equation}
	where the right-hand side should be interpreted as zero on the closed set $\{\omega^\ell=0\}$ and $\Delta \omega^\ell$ is the connection Laplacian of $\omega^\ell$.	
\end{lemma}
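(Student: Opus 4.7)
My plan is a standard Kato-type regularization argument, in the spirit of \cite[Lemma 3.7]{CheegerNaber15}. The starting observation is that $\omega^\ell$ is smooth on $A'$: indeed, $v$ is harmonic and hence smooth, $b_p$ is smooth on $A'$ by interior elliptic regularity (the pole $p$ lies outside $A'$), and $u = \sqrt{b_p^2 - |v|^2}$ is smooth on $A'$ since the combined bounds $b_p \ge 6$ and $|v| < 2$ from \autoref{thm:prop_Green} and the definition of $A'$ keep $b_p^2 - |v|^2$ uniformly bounded away from $0$. Thus the only obstruction to smoothness of $|\omega^\ell|$ lies on the zero set $Z := \{\omega^\ell = 0\}$. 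I would approximate $|\omega^\ell|$ by $f_\eps := \sqrt{|\omega^\ell|^2 + \eps}$ for $\eps > 0$, which is smooth and strictly positive on $A'$ and converges to $|\omega^\ell|$ uniformly on compact subsets as $\eps \to 0$.

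The next step is a pointwise computation. The chain rule gives $\Delta f_\eps = (\tfrac{1}{2}\Delta|\omega^\ell|^2 - |\nabla f_\eps|^2)/f_\eps$. Applying the Leibniz rule for the connection Laplacian to $|\omega^\ell|^2 = \ang{\omega^\ell, \omega^\ell}$ yields $\tfrac{1}{2}\Delta|\omega^\ell|^2 = |\nabla\omega^\ell|^2 + \ang{\Delta\omega^\ell, \omega^\ell}$. On the other hand, $\nabla f_\eps = \ang{\nabla\omega^\ell, \omega^\ell}/f_\eps$, and combining Cauchy--Schwarz with the identity $|\nabla|\omega^\ell||^2 = |\ang{\nabla\omega^\ell, \omega^\ell}|^2/|\omega^\ell|^2$ (valid on $\{\omega^\ell \ne 0\}$) shows $|\nabla f_\eps|^2 = (|\omega^\ell|^2/f_\eps^2)|\nabla|\omega^\ell||^2 \le |\nabla|\omega^\ell||^2$ throughout $A'$ (using the convention $|\nabla|\omega^\ell|| := 0$ on $Z$). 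Putting the pieces together yields the pointwise inequality
\[
\Delta f_\eps \ge \frac{\ang{\Delta\omega^\ell, \omega^\ell} + |\nabla\omega^\ell|^2 - |\nabla|\omega^\ell||^2}{f_\eps}.
\]

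I would then pass to the distributional limit as $\eps \to 0$. For any nonnegative $\phi \in C^\infty_c(A')$, smoothness of $f_\eps$ gives $\int f_\eps \Delta\phi \, dV = \int \Delta f_\eps \cdot \phi \, dV$, and the left-hand side converges to $\int |\omega^\ell|\Delta\phi \, dV$ by uniform convergence. The right-hand side splits into a manifestly nonnegative Kato contribution $(|\nabla\omega^\ell|^2 - |\nabla|\omega^\ell||^2)/f_\eps$, to be handled by Fatou's lemma (since $1/f_\eps \to 1/|\omega^\ell|$ pointwise off $Z$), and a remainder $\ang{\Delta\omega^\ell, \omega^\ell}/f_\eps$, to be handled by dominated convergence (using the pointwise bound $|\ang{\Delta\omega^\ell, \omega^\ell}|/f_\eps \le |\Delta\omega^\ell|$, which is uniform on $\supp(\phi)$ thanks to $|\omega^\ell|/f_\eps \le 1$). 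In the limit one obtains
\[
\int |\omega^\ell| \Delta\phi \, dV \ge \int \phi \cdot \frac{\ang{\Delta\omega^\ell, \omega^\ell} + |\nabla\omega^\ell|^2 - |\nabla|\omega^\ell||^2}{|\omega^\ell|} \, dV,
\]
which is exactly the claimed distributional inequality; the integrand on the right is locally $L^1$ on $A'$ (well-defined a.e.\ off $Z$ and extended by $0$ on $Z$). Since the difference between $\Delta|\omega^\ell|$ and this $L^1_{\mathrm{loc}}$ function is a nonnegative distribution, it must be a nonnegative Radon measure, so by Lebesgue decomposition $\Delta|\omega^\ell|$ is itself a signed Radon measure on $A'$ whose singular part (necessarily concentrated on $Z$) is nonnegative.

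The main obstacle, and the decisive role of Kato's inequality, lies in the limit analysis: one must rule out that the concentration of $|\nabla f_\eps|^2/f_\eps$ near $Z$ dominates the favorable Bochner term $|\nabla\omega^\ell|^2/f_\eps$. Kato does exactly this, replacing $|\nabla\omega^\ell|^2$ and $|\nabla f_\eps|^2$ by the nonnegative quantity $|\nabla\omega^\ell|^2 - |\nabla|\omega^\ell||^2$ in the limit. Without this improvement, one could only obtain the weaker Kato-type bound $\Delta|\omega^\ell| \ge \ang{\Delta\omega^\ell, \omega^\ell}/|\omega^\ell|$, which is insufficient for the refined almost-monotonicity arguments needed in the proof of the slicing theorem.
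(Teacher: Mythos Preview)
Your proof is correct and follows exactly the standard $\sqrt{|\omega^\ell|^2+\eps}$ regularization argument that the paper invokes by citing \cite[Lemma~3.7]{CheegerNaber15} (the paper does not supply its own proof of this lemma); the same scheme also appears in the paper's treatment of $\Delta|\nabla b_p|$ in Section~\ref{subsec:Hess bound}. Your handling of the limit---dominated convergence for the $\ang{\Delta\omega^\ell,\omega^\ell}/f_\eps$ term via the uniform bound $|\omega^\ell|/f_\eps\le 1$, and Fatou for the nonnegative Kato remainder---is clean, and the observation that local integrability of the right-hand side is in fact a \emph{consequence} of the inequality (rather than an a~priori input) is worth making explicit.
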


\begin{remark}
	When $n=3$, the previous lemma amounts to the statement that $\Delta |\nabla b|$ is a measure in the annulus $A_{9,8}(p)$ satisfying the lower bound
	\begin{equation}\label{eq:Delta|nabla b|2}
		\begin{split}
			\Delta |\nabla b| &\ge 
			\frac{|\Hess b|^2 - |\nabla |\nabla b||^2}{|\nabla b|} - \frac{n-1}{b^2}|\nabla b|^3
			\\& \quad \quad+ 2 \frac{n-1}{b} |\nabla b| \Hess (b) \left[\frac{\nabla b}{|\nabla b|}, \frac{\nabla b}{|\nabla b|} \right] - \delta |\nabla b| \, .
		\end{split}
	\end{equation}
	The latter has been verified in the previous section in the generality of $\RCD$ spaces.
\end{remark}

\autoref{trans} will be accomplished in two steps, corresponding to the following two propositions.

\begin{proposition}\label{step1}
	Given $\eta>0$, there exists $\delta(\eta,n,\theta)>0$ such that under the same assumptions of \autoref{trans},
	it holds
	\begin{equation}
		\int_{A'}|\Delta|\omega^\ell||\le\eta\, ,\quad\int_{A'}||\omega^\ell|-1|\le\eta\, ,
	\end{equation}
	for all $\ell = 1,\dots,n-2$, where the first integral denotes the total variation of the measure $\Delta|\omega^\ell|$ on $A'$.
\end{proposition}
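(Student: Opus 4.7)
The plan is to follow the strategy of Cheeger--Naber [CheegerNaber15, Lemma 3.7], adapting it to the present setting where the last would-be coordinate $u$ is derived from the Green-type distance $b$ rather than from a harmonic splitting coordinate. I would split into the easy cases $\ell \le n-3$ and the main case $\ell = n-2$.

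For $\ell \le n-3$, the identity $|\omega^\ell|^2 = \det(\langle \nabla v^a, \nabla v^b\rangle)_{a,b\le\ell}$ combined with the bound $\sum \fint|\langle\nabla v^a,\nabla v^b\rangle - \delta^{ab}|\le\delta^2$ from \autoref{spl.def} gives $\int_{A'}||\omega^\ell|-1|\le\eta$ directly. For the Laplacian estimate, the Bochner identity \eqref{eq:|omega| Bochner} together with the Kato-type inequality $|\nabla\omega^\ell|^2 - |\nabla|\omega^\ell||^2 \ge 0$ and \eqref{delta.uno} gives $\int_{A'}(\Delta|\omega^\ell|)^- \le C\delta$ using the $L^2$ Hessian bound for the $v^a$. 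Testing $\Delta|\omega^\ell|$ against a good cut-off $\varphi$ on a slight enlargement of $A'$ and integrating by parts then converts the $L^1$ smallness of $|\omega^\ell|-1$ into a matching bound on the positive part.

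The main case is $\ell = n-2$. From $u^2 = b^2 - |v|^2$ together with $\Delta b^2 = 2n|\nabla b|^2$ (equation \eqref{eq:laplabp2}) and $\Delta |v|^2 = 2|\nabla v|^2$, I derive the identities
\begin{equation*}
u\nabla u = b\nabla b - \sum_{i=1}^{n-3} v^i\nabla v^i,\qquad \Delta u = \frac{n|\nabla b|^2 - |\nabla v|^2 - |\nabla u|^2}{u}.
\end{equation*}
On the model cone these reduce to $|\nabla u|=1$, $\langle \nabla u,\nabla v^j\rangle = 0$, and $\Delta u = 2/u$. I would recover these in averaged form on $A'$ (where $u$ is bounded away from zero) by expanding $u^2|\nabla u|^2$ and $u\langle\nabla u,\nabla v^j\rangle$ from the first identity, and integrating the cross term $b\langle\nabla b,\nabla v^i\rangle = \tfrac{1}{2}\langle\nabla b^2,\nabla v^i\rangle$ by parts against a good cut-off, using $\Delta b^2 = 2n|\nabla b|^2$ and $\Delta v^i = 0$. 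The error is controlled in $L^1$ and $L^2$ by the $\delta$-splitting bounds on $v$, the closeness $\int||\nabla b|-1|\le\eps$ from \eqref{eq:gradboundsbp}, and the $L^2$ Hessian bound \eqref{eq:Hess b} on $b$. This yields $\int_{A'}||\omega^{n-2}|-1|\le\eta$ and, as a byproduct, $L^2$ smallness of $\langle\nabla u,\nabla v^j\rangle$.

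For the Laplacian estimate on $|\omega^{n-2}|$, I would apply \eqref{delta.uno}: via the Weitzenböck formula and harmonicity of $v$, $\langle\Delta\omega^{n-2},\omega^{n-2}\rangle$ reduces to $\langle dv^1\wedge\cdots\wedge dv^{n-3}\wedge d(\Delta u),\omega^{n-2}\rangle$ plus $O(\delta)$ Ricci terms. Differentiating the formula for $\Delta u$ above produces quantities involving $\Hess b$, $\Hess v^i$, and $\nabla|\nabla b|$, whose $L^2$ norms are respectively bounded by $C(n,\theta)$ (by \eqref{eq:Hess b}), $\delta$ (splitting), and $\eps$ (by \eqref{eq:delta_grad_b}). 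Testing $\Delta|\omega^{n-2}|$ against a cut-off supported slightly beyond $A'$ and running an argument parallel to Section 3.3 Step 3 then controls the negative part, and the $L^1$ closeness $||\omega^{n-2}|-1|\le\eta$ converts this into a total variation bound. The main obstacle throughout is the $\ell = n-2$ case, and specifically replacing the pointwise cone cancellations $\langle\nabla b,\nabla v^i\rangle = v^i/b$ and $\Delta u = 2/u$ by averaged analogues using only the integral Green-distance bounds of \autoref{thm:prop_Green}; unlike \cite{CheegerNaber15}, we have no upper Ricci bound to fall back on, so the entire effective argument must be driven by the Bochner inequality and integration by parts against the identity $\Delta b^2 = 2n|\nabla b|^2$.
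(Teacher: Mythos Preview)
Your outline is correct for $\ell\le n-3$ and for the $L^1$ closeness $\int_{A'}||\omega^{n-2}|-1|\le\eta$ (though the paper obtains the latter more cheaply by the compactness argument \eqref{eq:prel} via \autoref{prop:Gstability}, rather than by your direct integration-by-parts). The genuine gap is in the Laplacian estimate for $\ell=n-2$: you have not identified the cancellation that actually makes the argument close.

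When you differentiate $\Delta u=\frac{n|\nabla b|^2-|\nabla v|^2-|\nabla u|^2}{u}$, besides the gradient terms $\nabla|\nabla b|,\nabla|\nabla v|,\nabla|\nabla u|$ (which are indeed small in $L^1$ by \eqref{eq:delta_grad_b} and \autoref{abs.sobolev}), there is the term $-\frac{\Delta u}{u}\,du$. Wedged with $dv^1\wedge\cdots\wedge dv^{n-3}$ and paired with $\omega^{n-2}$, this contributes $-\frac{\Delta u}{u}|\omega^{n-2}|\approx -\frac{2}{u^2}$ to the lower bound on $\Delta|\omega^{n-2}|$. This is of order one, not $o(1)$, and cannot be absorbed by any of the bounds you list; your reference to ``$\Hess b$ with $L^2$ norm $C(n,\theta)$'' suggests you are aware something is only bounded, but you do not say what will cancel it. The analogy with Section~\ref{subsec:Hess bound} Step~3 is misleading: there the cancellation is between $|\Hess b|^2$ and $(\Delta b)^2/(n-1)$ via \autoref{trace}, whereas here the mechanism is different. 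The paper keeps the Kato defect $\frac{|\nabla\omega^{n-2}|^2-|\nabla|\omega^{n-2}||^2}{|\omega^{n-2}|}$ from \eqref{delta.uno}, shows on a good set that $|\nabla\omega^{n-2}|^2\ge(1-\tau)|\Hess(u)\circ\iota_W|^2$ with $W=\bigcap_i\ker dv^i$, and then uses the pointwise almost-rigidity of $\Hess b$ from \autoref{thm:prop_Green}~(iii) to deduce $|\Hess(u)\circ\iota_W|^2\ge\frac{2}{u^2}-C\tau$. This is what absorbs the $-\frac{2}{u^2}$ term; without invoking \autoref{thm:prop_Green}~(iii) and the restriction to $W$, the negative part of $\Delta|\omega^{n-2}|$ cannot be made small.
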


\begin{remark}\label{rmk:step1 n=3}
	When $n=3$, \autoref{step1} amounts to say that if $\delta \le \delta_0(\eta,\theta)$, then
	\begin{equation}
		\int_{A_{10,7}(p)} |\Delta |\nabla b|| \le \eta \, ,
		\quad
		\int_{A_{10,7}(p)} ||\nabla b|-1| \le \eta \, .
	\end{equation}
	This statement holds true in the class of noncollapsed $\RCD(-\delta(n-1),n)$ spaces as proven in \autoref{thm:prop_Green}.
\end{remark}

\begin{definition}[Singular scale]
	Given $\eta>0$, for any $q\in A$ we define the \emph{singular scale} $0\le s_q\le\mz$ to be the smallest number
	such that for all $s_q\le s<\mz$ it holds
	\begin{equation}
		s^2\int_{B_s(q)}|\Delta|\omega^\ell||\le\eta\int_{B_s(q)}|\omega^\ell|\, , 
	\end{equation}
	for all $\ell\in\{1,\dots,n-2\}$ (hence, $s_q=\mz$ if this fails already for $s=\mz$).
\end{definition}

\begin{proposition}\label{step2}
	Given $\eps>0$, if $\eta \le \eta_0(\eps,n,\theta)$ the following holds.
	Let $(M^n,g)$ be a complete smooth manifold with $n\ge 3$, $\Ric_g \ge -\eta (n-1)$ such that \eqref{eq:close_cone} holds for some $p\in X$ and $\delta=\mu$, where  $v:B_{100}(p)\to\R^{n-3}$ is a harmonic $\eta$-splitting map, and $u$ is defined as in \eqref{eq:u}.
	Then for every $q\in A$ and $s_q\le s\le c(\eps,n,\theta)$ there exists a lower triangular $(n-2)\times(n-2)$ matrix $L_{q,s}$ with positive diagonal entries such that
	\begin{equation}
		L_{q,s}\circ (v,u):B_s(q)\to\R^{n-2} 
	\end{equation}
	is an $\eps$-splitting map.

\end{proposition}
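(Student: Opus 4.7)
The plan is to follow the strategy of Cheeger--Naber \cite[Theorem 1.23]{CheegerNaber15}, adapting the Gram--Schmidt based transformation scheme to the setting at hand, where one of the coordinates is the Green-type distance $u$ (built from $b$) rather than a harmonic almost-splitting function. For each $q\in A$ and each scale $s\in[s_q,c]$ define the symmetric Gram matrix
\begin{equation}
A_{q,s}^{ab}:=\fint_{B_s(q)}\ang{\nabla(v,u)^a,\nabla(v,u)^b},\qquad a,b=1,\dots,n-2,
\end{equation}
and let $L_{q,s}$ be the lower triangular matrix with positive diagonal entries produced by Gram--Schmidt so that $L_{q,s}A_{q,s}L_{q,s}^{\top}=I_{n-2}$. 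Then $L_{q,s}\circ(v,u)$ automatically satisfies the averaged orthonormality condition in \autoref{spl.def} on $B_s(q)$, and the remaining task is to verify the Lipschitz bound, the $L^2$ Hessian bound, and to prove that $\|L_{q,s}\|,\|L_{q,s}^{-1}\|\le C(n,\eps,\theta)$ uniformly for $s\in[s_q,c]$.

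First I would establish the base step at the scale $s=c$. Here the Green-distance estimate \autoref{thm:prop_Green} together with the $\eta$-splitting property of $v$ and the Gromov--Hausdorff closeness \eqref{eq:close_cone} to $\R^{n-3}\times C(Z)$ ensure that $A_{q,c}$ is close to $I_{n-2}$, so $L_{q,c}$ is close to the identity and $(v,u)$ is already an $\eps$-splitting map on $B_c(q)$ as long as $c=c(\eps,n,\theta)$ is small enough and $\eta\le\eta_0$. The inductive step between consecutive dyadic scales $2s$ and $s$ is obtained via the identity
\begin{equation}
L_{q,s}L_{q,2s}^{-1}=I_{n-2}+E_{q,s},\qquad \|E_{q,s}\|\le C(n)\|A_{q,s}-A_{q,2s}\|,
\end{equation}
together with the Bochner-type integration-by-parts estimate
\begin{equation}
\|A_{q,s}-A_{q,2s}\|\le C(n)\,s^2\fint_{B_{2s}(q)}|\Delta|\omega^\ell||\le C(n)\eta,
\end{equation}
where the last inequality is exactly the defining property of the singular scale $s_q$. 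Summing the resulting geometric series over the dyadic scales $s=2^{-k}c\ge s_q$ yields a uniform bound $\|L_{q,s}\|+\|L_{q,s}^{-1}\|\le C(n,\theta)$ and preserves non-degeneracy of $A_{q,s}$ throughout the induction.

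To upgrade the uniform matrix bound to the $\eps$-splitting property at every intermediate scale, I would argue by contradiction and compactness. Assume the conclusion fails for some $\eps_0>0$: one finds $\eta_i\to 0$, manifolds $(M_i^n,g_i,p_i)$, points $q_i\in A_i$ and scales $s_i\in[s_{q_i},c]$ such that $L_{q_i,s_i}\circ(v_i,u_i)$ is not an $\eps_0$-splitting on $B_{s_i}(q_i)$. Rescaling $B_{s_i}(q_i)$ to unit radius and passing to a pointed Gromov--Hausdorff limit $(Y,\dist_Y,q_\infty)$ (an $\RCD(0,n)$ space since $\eta_i\to 0$), the uniform matrix bounds together with the $L^2$ Hessian bound \eqref{eq:Hess b} produce a limit harmonic $1$-Lipschitz map $w:B_1(q_\infty)\to\R^{n-2}$ with $\fint_{B_1(q_\infty)}\ang{\nabla w^a,\nabla w^b}=\delta^{ab}$. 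The scale-invariant versions of \autoref{step1} force $|\omega^\ell_\infty|\equiv 1$ and $\Delta|\omega^\ell_\infty|\equiv 0$ in the limit; combined with the rigidity case of the Bochner inequality on $\RCD(0,n)$ spaces, this implies that the components of $w$ have vanishing Hessian, whence by the splitting theorem $w$ induces an isometric splitting $Y\simeq\R^{n-2}\times Y'$ and $L_{q_i,s_i}\circ(v_i,u_i)$ is $\eps_0$-splitting for $i$ large, a contradiction.

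The principal obstacle is the non-harmonicity of $u=\sqrt{b^2-|v|^2}$: unlike in \cite{CheegerNaber15}, the Bochner inequality for $\omega^{n-2}=dv^1\wedge\cdots\wedge dv^{n-3}\wedge du$ picks up an additional term containing $d\Delta u$, which is singular where $u\to0$. This is the same difficulty already surmounted in \autoref{step1} through the harmonicity of $F=(v,G_p)$ and the identity $\Delta b=\frac{n-1}{b}|\nabla b|^2$; both ingredients must now propagate into the scale-by-scale control by rewriting the entries of $A_{q,s}$ that involve $u$ in terms of $b$ and $v$ before comparing across dyadic scales. A secondary technical point is that the definition of the singular scale is asymmetric (it can equal $\frac{1}{2}$ if the $L^1$ bound fails already at the top ball), forcing the induction to be initialized at the smaller scale $c\ll\frac{1}{2}$, where closeness to the cone model gives a direct estimate rather than appealing to the Bochner control of $|\omega^\ell|$ itself.
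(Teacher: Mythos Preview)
Your direct induction via Gram--Schmidt has a genuine gap. The displayed chain
\[
\|A_{q,s}-A_{q,2s}\|\le C(n)\,s^2\fint_{B_{2s}(q)}|\Delta|\omega^\ell||\le C(n)\eta
\]
fails at both steps. The singular scale hypothesis only says $s^2\int|\Delta|\omega^\ell||\le\eta\int|\omega^\ell|$, and $\fint|\omega^\ell|$ is exactly the quantity that might blow up or degenerate as $s\downarrow s_q$; bounding it by a universal constant is the content of the proposition, so the last inequality is circular. The first inequality is also unjustified: $|\omega^\ell|$ is a determinant, and there is no direct control of the oscillation of the \emph{individual} Gram entries $\fint\langle\nabla w^a,\nabla w^b\rangle$ by $\Delta|\omega^\ell|$. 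The actual route goes through the Hessian bound, which in turn requires the $\eps$-splitting of $L_{q,2s}\circ(v,u)$ at the larger scale---again circular. Consequently, the telescoping sum does not give a uniform bound on $\|L_{q,s}\|$; in the correct argument one only obtains the polynomial growth $|L_r|+|L_r^{-1}|\le r^{C(n)\eps}$ after rescaling.

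The paper (following \cite{CheegerNaber15}) breaks the circularity by taking $s_j$ to be the \emph{supremum} of the bad radii and rescaling so that $s_j$ becomes $1$. Then for every $r\ge 2$ in the rescaled metric one automatically has a transformation $L_r$ making $\hat w_j$ an $\eps$-splitting map on $B_r$ (because $rs_j>s_j$), and comparing $L_r$ with $L_{2r}$ through the $\eps$-splitting at scale $2r$ gives the growth $r^{C\eps}$. Your contradiction step picks an arbitrary bad scale rather than the supremum, so you never acquire this input. From here the paper also needs substantially more than ``rigidity of Bochner'': one must show $|\Delta\hat w_j^{n-2}|\lesssim s_j^{1-C\eps}\to 0$ by tracking $d\Delta u$ through the rescaling and the matrix $L_j$ (this is where the non-harmonicity of $u$ actually enters, not via the harmonicity of $(v,G_p)$), then prove $\fint||\hat\omega_j|-c_r|\to 0$ and $\fint|\nabla\hat\omega_j|^2\to 0$ scale by scale, and finally exploit the decomposition $\hat w^{n-2}_\infty(x,z)=\xi\cdot x+w_Z(z)$ on the limit $\R^{n-3}\times Z$ to force $\operatorname{Hess}\hat w_j^{n-2}\to 0$. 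Your limit argument collapses these steps and would not go through as written.
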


When $n=3$, as in the statement of \autoref{trans n=3}, \autoref{step2} involves only the Green distance $b_p$. The splitting result in terms of the singular scale can be expressed in the broader context of noncollapsed $\RCD$ spaces of any dimension, as follows.

\begin{proposition}\label{step2 n=3}
	For any $\eps>0$, if $\eta \le \eta_0(\eps,n,\theta)$ the following holds. Let
	$(X,\dist,\haus^n)$ be an $\RCD(-\eta(n-1),n)$ space such that
	\begin{equation}
		\dist_{{\rm GH}}(B_{100}(p), B_{100}(o)) \le \eta \, , 
	\end{equation}
	where $p\in X$ and $o\in C(Z)$ is a tip of an $\RCD(0,n)$ cone with density $\theta>0$. Let $b_p : B_{40}(p) \to (0,\infty)$ be a good Green distance. Let $q\in A_{8,9}(p)$ such that  
	\begin{equation}
		s^2 \int_{B_s(q)} |\Delta |\nabla b|| \le \eta \int_{B_s(q)} |\nabla b| \, 
		\quad \text{for every $s_q\le s <\mz$}\, .
	\end{equation}
	Then for every $s_q\le s \le c(\eps,n,\theta)$
	the normalized Green distance 
	\begin{equation}
		\hat b_p := \frac{b_p}{\fint_{B_s(q)}|\nabla b_p|} : B_s(q) \to \R\, 
	\end{equation}
	is an $\eps$-splitting map.
\end{proposition}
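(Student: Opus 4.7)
The plan is a contradiction and compactness argument combined with the rigidity of almost-splitting maps on $\RCD(0,n)$ spaces, in the spirit of Cheeger--Naber's proof of their transformation theorem. Suppose the statement fails. Then there are sequences $\eta_i \to 0$, $\RCD(-\eta_i(n-1), n)$ spaces $(X_i, \dist_i, \haus^n, p_i)$ satisfying the cone GH-closeness assumption with fixed density $\theta$, good Green distances $b_i := b_{p_i}$, points $q_i \in A_{8,9}(p_i)$ at which the Laplacian hypothesis holds at every scale $s \ge s_{q_i}$, and scales $r_i \in [s_{q_i}, c(\eps, n, \theta)]$ at which $\hat b_i^{(r_i)} := b_i / c_{r_i}^i$, with $c_s^i := \fint_{B_s(q_i)}|\nabla b_i|$, fails to be $\eps$-splitting at scale $r_i$. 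By a standard continuity maneuver (replacing $r_i$ by a carefully chosen scale close to the supremum of the bad set, and $\eps$ by $\eps/2$), one may additionally assume that $\hat b_i^{(s)}$ is $\eps$-splitting at every $s \in (r_i, c(\eps, n, \theta)]$.

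Next, I would rescale by $r_i^{-1}$ and pass to a pointed measured Gromov--Hausdorff limit. Since $r_i \le c(\eps,n,\theta)$ and $\eta_i \to 0$, the limit is an $\RCD(0,n)$ space $(Y_\infty, \dist_\infty, \nu_\infty, q_\infty)$. I would consider the rescaled and normalized function $\tilde b_i := (b_i - b_i(q_i))/(r_i c_{r_i}^i)$; thanks to the Lipschitz bound $|\nabla b_i| \le C(n,\theta)$ from \autoref{thm:prop_Green} and a uniform lower bound on $c_{r_i}^i$ (propagated from the base scale $c(\eps, n, \theta)$, where $c_s^i \approx 1$ by the GH-closeness to the cone), $\tilde b_i$ is uniformly Lipschitz with respect to $r_i^{-1}\dist_i$ and converges, along a subsequence, uniformly and in $H^{1,2}_{\loc}$ to a limit $b_\infty : Y_\infty \to \R$. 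The minimality-of-scale property together with strong $H^{1,2}$ convergence gives $|\nabla b_\infty| = 1$ a.e.\ and $\Hess b_\infty = 0$ on every bounded ball (the Hessian bound passes using lower semicontinuity of $\int |\Hess|^2$ on converging $\RCD$ sequences, together with the uniform $L^2$ Hessian bound available from \autoref{thm:prop_Green} via Bochner's inequality). By the rigidity of splitting on $\RCD(0,n)$, $Y_\infty$ splits as $\R \times W$ near $q_\infty$ with $b_\infty$ the projection. The quantitative converse in \autoref{thm:splittings} then forces $\tilde b_i$ to be $(\eps/2)$-splitting at scale $1$ in the rescaled picture for $i$ large, contradicting the choice of $r_i$.

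The main obstacle is to control the normalization factors $c_s^i$ consistently across scales, i.e., to show that $c_{r_i}^i$ stays uniformly bounded away from $0$ and $\infty$. Using the hypothesis $s^2 \int_{B_s(q)} |\Delta |\nabla b|| \le \eta \int_{B_s(q)} |\nabla b|$ together with a Caccioppoli-type argument and the reverse-Poincar\'e bound of \autoref{dir}, I would show that $|c_s^i / c_{2s}^i - 1|$ is small uniformly in $s \in [s_{q_i}, c(\eps,n,\theta)]$; iterating across dyadic scales then gives $c_{r_i}^i \approx c_{c(\eps,n,\theta)}^i \approx 1$. Without this consistency the rescaled functions $\tilde b_i$ could fail to admit a nontrivial limit and the compactness argument would collapse. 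This is precisely the role of the Laplacian hypothesis: it is the scale-invariant quantity preventing $|\nabla b|$ from oscillating wildly between dyadic scales on $[s_q, c(\eps,n,\theta)]$, and it is the sole input that rules out unbounded drift of the normalization across the range of admissible scales.
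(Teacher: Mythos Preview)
Your contradiction-and-compactness outline differs from the paper's direct argument, and as written it has a real gap at the step where you claim $|\nabla b_\infty|=1$ and $\Hess b_\infty=0$. The minimality-of-scale assumption only tells you that the rescaled $\tilde b_i$ is $\eps$-splitting at scales $>1$, with $\eps$ \emph{fixed}; passing to the limit with lower semicontinuity of energies yields at best that $b_\infty$ is $\eps$-splitting at those scales, not a perfect splitting. Likewise, the ``uniform $L^2$ Hessian bound from \autoref{thm:prop_Green}'' lives at the original scale $A_{11,6}(p)$ and degenerates under blow-up by $r_i^{-1}$. So from what you wrote there is no contradiction. What actually forces the Hessian to vanish is the hypothesis $s^2\int_{B_s}|\Delta|\nabla b||\le\eta_i\int_{B_s}|\nabla b|$ with $\eta_i\to 0$, combined with the Bochner-type lower bound $\Delta|\nabla b|\ge C^{-1}|\Hess b|^2/|\nabla b|-C(n,\theta)|\nabla b|$ (the last term becoming $O(r_i^2)$ after rescaling): together these give $\int_{B_1'}|\Hess'\tilde b_i|^2/|\nabla'\tilde b_i|\to 0$. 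You recognize in your last paragraph that the Laplacian hypothesis is the decisive input, but you deploy it only to control the normalizations $c_s^i$, not at the point where it is actually needed.

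Once this fix is in place, the contradiction argument is essentially the paper's direct proof in disguise, and the direct route is both shorter and quantitative. The paper uses the Kato-type inequality \autoref{trace.bis} inside Bochner to get the lower bound on $\Delta|\nabla b|$ above, then integrates against the Laplacian hypothesis and applies Moser iteration for $|\nabla\hat b|$ (\autoref{lemma:Mosersharper}) together with Poincar\'e to obtain directly
\[
\fint_{B_s(q)}\big||\nabla\hat b|-1\big|^2 + s^2\fint_{B_{2s}(q)}|\Hess\hat b|^2 \le C(n,\theta)(\eta+s^2)\Big(\fint_{B_{4s}(q)}|\nabla\hat b|\Big)^2,
\]
and closes by establishing the doubling bound $\fint_{B_{4s}}|\nabla b|\le 10\fint_{B_s}|\nabla b|$ for all $s\in[s_q,c]$ via a short continuity argument (using that $\hat b$ is already $\eps$-splitting at the slightly larger scale). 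Your proposed mechanism for the normalization control (``Caccioppoli and \autoref{dir}'') is vague and does not obviously yield this scale comparison; the clean way is precisely the doubling bound just described.
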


\begin{proof}
	[Proof of \autoref{trans} and \autoref{trans n=3}]

 As in \cite{CheegerNaber15}, \autoref{trans} follows from \autoref{step1} and \autoref{step2}.
Similarly, \autoref{trans n=3} follows from \autoref{rmk:step1 n=3} and \autoref{step2 n=3} when $n=3$. 

	For the sake of illustration, we outline only the argument for \autoref{trans}, pointing out the main changes needed for the proof of  \autoref{trans n=3}.

	\medskip

	Let $\eta$ be given by \autoref{step2} (depending on $\eps$), and let $\delta$ be given by \autoref{step1}, applied with $\eta$ replaced by a smaller threshold $\eta'<\eta$ to be chosen momentarily.
	Since
	\begin{equation}
		\int_{A'}|\Delta|\omega^\ell||\le\eta',\quad\int_{A'}||\omega^\ell|-1|\le\eta'\, ,
	\end{equation}
	we necessarily have $s_q\le\frac{c}{10}$ for all $q\in A$, provided that $\eta'$ is small enough (here $c$ is the constant appearing in \autoref{step2}).
	Indeed, for any $s\in[\frac{c}{10},c]$, we have
	\begin{equation}
		\int_{B_s(q)}|\omega^\ell|\ge\haus^n(B_s(q))-\int_{B_s(q)}||\omega^\ell|-1|
		\ge \haus^n(B_s(q))-\eta'\ge\frac{1}{C(\eps,n,\theta)}\, , 
	\end{equation}
	thanks to $s\ge\frac{c}{10}$ and the noncollapsing condition (provided $\eta'$ is taken small enough).
	Hence
	\begin{equation}
		\int_{B_s(q)}|\Delta|\omega^\ell||\le\eta'\le C(\eps,n,\theta)\eta'\int_{B_s(q)}|\omega^\ell|\le\eta\int_{B_s(q)}|\omega^\ell|\, , 
	\end{equation}
	provided that $\eta'$ is so small that $C(\eps,n,\theta)\eta'\le\eta$,
	as desired. 
	
	\medskip

	Setting $w:=(v,u)$ and
	\begin{equation}
		\tilde{\mathcal{B}}:=\bigcup\{B_{s_q}(q)\mid q\in A\text{ such that }s_q>0\}\, , 
	\end{equation}
	we can apply Vitali's covering lemma to obtain that
	$\tilde{\mathcal{B}}\subseteq\bigcup_j B_{5s_{q_j}}(q_j)$ for a suitable collection of disjoint balls $B_{5s_{q_j}}(q_j)$. Then we can bound
	\begin{equation}
		\Leb^{n-2}(w(\tilde{\mathcal{B}}))\le \sum_j\Leb^{n-2}(w(B_{5s_{q_j}}(q_j)))\le C(n)\sum_j s_{q_j}^{-2}\int_{B_{s_{q_j}}}|\omega^{n-2}|\, , 
	\end{equation}
	where the last inequality follows easily from the fact that $w$ is $\eps$-splitting for the radii $s=s_{q_j},5s_{q_j}\in[s_{q_j},c)$, up to composing with a linear transformation, by \autoref{step2} (see \cite[Lemma 4.1 and Lemma 4.10]{CheegerNaber15} for the details).
	Moreover, by definition of $s_q$, since $s_{q_j}>0$ we can bound
	\begin{equation}
		\int_{B_{s_{q_j}}(q_j)}|\omega^{n-2}|\le\eta^{-1}s_{q_j}^2\int_{B_{s_{q_j}}(q_j)}|\Delta|\omega^{n-2}||\, .
	\end{equation}
	Hence
	\begin{equation}\label{eq:intbound3}
		\Leb^{n-2}(w(\tilde{\mathcal{B}}))
		\le C(n)\eta^{-1}\int_{A'}|\Delta|\omega^{n-2}||
		\le C(n)\eta^{-1}\eta' \, ,
	\end{equation}
	since the balls are disjoint and included in $A'$. The right-hand side in \eqref{eq:intbound3} is less than $\eps$ once we choose $\eta'$ small enough.
 
	We define
	\begin{equation}
		\mathcal{B}:=w(\tilde{\mathcal{B}} \cap A) \cup (\{(x,y)\in B_1(0^{n-3})\times [0,10]\, : \, 8 \le \sqrt{|x|^2 + y^2} \le 9\}\setminus w(A))\, .	
	\end{equation}
	In other words, $\mathcal{B}$ consists of those points that either belong to the image of the bad set $\tilde{\mathcal{B}}$, or such that the preimage through $w$ does not intersect $A$.
	
	\medskip
	
	To complete the proof, it is sufficient to check that 
	\begin{equation}\label{eq:annoying}
		\Leb^{n-2}(\{(x,y)\in B_1(0^{n-3})\times [0,10]\, : \, 8 \le \sqrt{|x|^2 + y^2} \le 9\}\setminus w(A))\le \eps \, ,
	\end{equation}
	if $\delta\le \delta_0(\eps,n,\theta)$. 
 The analogous estimate in \cite{CheegerNaber15} follows from \cite{CheegerColdingTian}. In the present setting we need to slightly modify the argument due to the fact that the last component of the map is not a harmonic splitting function.
 
 We begin with the case $n=3$, which is immediate. In that case, \eqref{eq:annoying} amounts to showing that 
	\begin{equation}
		\Leb^1([8,9]\setminus b(A_{9,8}(p))) \le \eps 
		\quad \text{provided $\delta \le \delta_0(\eps,n)$}\, .
	\end{equation}
	Let $q$ be a point such that $\dist(p,q)=9$. We consider a minimizing geodesic $\gamma:[0,9]\to X$ connecting $p$ and $q$, parametrized by arclength.
	From \autoref{thm:prop_Green}, we know that $b$ is $\eps$-close to $\dist_p$, giving $|b(\gamma(8)) - 8| \le \eps$ and $|b(\gamma(9)) - 9| \le \eps$. Hence, by continuity,
	\begin{equation}
		(8 + \eps, 9-\eps) \subseteq b \circ \gamma((8,9)) \subseteq b(A_{9,8}(p)) \, .
	\end{equation}
	
	\medskip
	Let us consider the general case $n>3$. 
	Recall that $B_{100}(p)$ is $\delta$-GH close to the corresponding ball $B_{100}(0^{n-3},o)$ in $\R^{n-3} \times C(Z^2)$. Since $Z^2$ is a noncollapsed $\RCD(1,2)$ space, there exist $z_0\in Z$ and $\rho = \rho(\eta,n,\theta)>0$ such that $B_{10\rho}(z_0)$ is $\eta \rho$-GH close to the Euclidean ball $B_{10\rho}(0^2)$, and there is a harmonic $\eta$-splitting map $f=(f_1,f_2):B_{\rho}(z_0)\to \R^2$ which is also an $\eta \rho$-GH isometry. We assume that $f(z_0) = 0$ and extend $f$ to the open domain
	\begin{equation}
		\Omega :=\{(x,r,z)\in \R^{n-3}\times C(Z)\, : \, |x|< 1\, , \   r\in (7,10)\, , \  z\in B_\rho(z_0) \} \, ,
	\end{equation}
	by setting $f(x,r,z) := f(z)$. Note that $f: \Omega \to \R^2$ is harmonic, and the domain $\Omega$ is included in a finite union of $C(n)\eta\rho$-Euclidean balls.
	
	\medskip

    Let $\eps' \le \eta$.
	If $\delta \le \delta_0(\eps',n,\theta)$, we can find an open domain $\tilde \Omega\subset M^n$ that is $\eps'$-GH close to $\Omega$, and a harmonic function $\tilde f = (\tilde f_1, \tilde f_2): \tilde \Omega \to \R^2$ such that $f$ and $\tilde f$ are $\eps'$-GH close in uniform norm (i.e., there is an $\eps'$-GH isometry $\psi:\Omega \to \tilde \Omega$ such that $\|f -\tilde f \circ \psi\|_{\infty} \le \eps'$). 
	
	We consider the mapping
	\begin{equation}
		\tilde \Phi:= (v,u,\tilde f) : 
		A' \cap \tilde \Omega \to \R^n \, .
	\end{equation}
	Observe that, if $\delta \le \delta_0(\eps',n,\theta)$ is small enough, $\tilde \Phi$ is $C(n)\eps'$-close in $W^{1,2}$ to $\Phi(x,r,z) := (x,r,f(z))$, which is defined on $\Omega \subset \R^{n-3}\times C(Z)$. By relying on the $W^{1,2}$-closeness of $\tilde \Phi$ and $\Phi$ we get
	\begin{equation}
		\begin{split}
			&\fint_{A' \cap \tilde \Omega} |  \langle \nabla \tilde \Phi^a , \nabla \tilde \Phi^b \rangle| \le \eta \, , \quad \text{for $a\neq b$,}
			\\
			&\fint_{A' \cap \tilde \Omega} ||\nabla v |^2 - 1| \le \eta \, , 
			\quad
			\fint_{A' \cap \tilde \Omega} ||\nabla u|^2 - 1| \le \eta \, , 
			\quad 
			\fint_{A' \cap \tilde \Omega} |\dist_p |\nabla \tilde f|-1| \le \eta\, .
		\end{split}		
	\end{equation}
	By a standard maximal function argument, we can find $E\subset A\cap \tilde \Omega$ such that 
 \begin{equation}\label{E.small.complement}
     \haus^n((A\cap \tilde \Omega)\setminus E) \le C(n,\theta)\eta^{1/2} \haus^n(A\cap \tilde \Omega) \le C(n,\theta) \eta^{1/2} \rho^2 \, ,
 \end{equation}
 and for every $q\in E$, $r\in(0,\mz)$ it holds
	\begin{equation}\label{eqz90}
		\begin{split}
			&\fint_{B_r(q)} |  \langle \nabla \tilde \Phi^a , \nabla \tilde \Phi^b \rangle| \le \eta^{1/2}
			\\
			&\fint_{B_r(q)}||\nabla v|^2-1| 
			+
			\fint_{B_r(q)}||\nabla u|^2-1|
			+
			\fint_{B_r(q)}|\dist_p|\nabla \tilde f| - 1| \le \eta^{1/2}\, ,
		\end{split}
	\end{equation}
    where we used the inclusion $B_r(q)\subset A'\cap \tilde \Omega$.
    Let $q\in E$ and $r>0$. An easy contradiction argument based on \eqref{eqz90} shows that if $q'\in B_r(q)\cap A \cap \tilde \Omega$ and $\dist(q,q') \ge r/2$, then $|\tilde \Phi(q) - \tilde \Phi(q')| \ge r/4$. This is enough to show that
    for every $q\in E$, it holds
    \begin{equation}\label{eqz91}
    	\tilde \Phi^{-1}(\tilde \Phi(q))\cap A \cap \tilde \Omega = \{q\} \, ,
    	\quad \text{provided $\delta \le \delta_0(\eta, \eps ,n)$}\, .
    \end{equation} 
    Indeed, if we assume by contradiction the existence of $q\in E$ and $q'\in A \cap \tilde \Omega$ such that $\tilde \Phi(q) = \tilde \Phi(q')$, we can pick $r:= 2\dist(q,q')$ and apply the observation above.

	\medskip
	
	We claim that $\tilde \Phi(E)$ covers the set
	\begin{equation*}
		\Lambda:=\lbrace(x,y,t)\in B_{1-\eta}(0^{n-3})\times \R \times \R^2\, : \,
		8 + \eta \le \sqrt{|x|^2 + y^2} \le 9-\eta\, , \  |t|\le \rho (1 - \eta)\rbrace
	\end{equation*}
    up to a set of Lebesgue measure $C(n,\theta)\eta^{1/2}\rho^2$, provided $\delta \le \delta_0(\eta, \eps, n)$. It is not hard to check that this claim implies in turn the sought \eqref{eq:annoying}.

    \medskip
    
    The first observation is that $\tilde \Phi(E)$ is included in
    $$\lbrace(x,y,t)\in B_{1+\eta}(0^{n-3})\times \R \times \R^2\, : \,
		8 - \eta \le \sqrt{|x|^2 + y^2} \le 9+\eta\, , \  |t|\le \rho (1 + \eta)\rbrace$$
    provided that $\delta \le \delta_0(\eps',n,\theta)$. So it is enough to check that $\Leb^{n}(\tilde \Phi(E)) \ge \Leb^n(\Lambda) - C(n,\theta)[\eta^{1/2}\rho^2+\eps']$.
    
    By the area formula and \eqref{E.small.complement}--\eqref{eqz91}, we obtain
    \begin{equation}
    	\Leb^n(\tilde \Phi(E))
    	=
    	\int_E |J\tilde \Phi|
    	\ge \int_{A\cap \tilde \Omega} |J\tilde \Phi| - C(n,\theta)\eta^{1/2}\rho^2\, .
    \end{equation}
    Moreover, if $\delta \le \delta_0(\eps',n,\theta)$, by $W^{1,2}$-closeness of $\Phi$ and $\tilde \Phi$ we have
    \begin{align*}
    	\int_{A\cap \tilde \Omega} |J\tilde \Phi|
    	&\ge 
    	\int_{\psi^{-1}(A)\cap\Omega} |J \Phi| - C(n)\eps'\\
    	&\ge \Leb^n(\Phi(\psi^{-1}(A)\cap\Omega)) - C(n)\eps'\\
        &\ge \Leb^n(\Phi(\Omega)) - C(n)\epsilon'\, ,
    \end{align*}
    where we applied the area formula on $\R^{n-3} \times C(Z^2)$ and assumed
    that $\Omega\setminus\psi^{-1}(A)$ has measure at most $\eps'$, which holds
    for $\delta \le \delta_0(\eps',n,\theta)$.
    Finally, we use that $f:B_\rho(z_0) \to \R^2$ is an $\eta \rho$-GH isometry to deduce that $\Phi(\Omega)\supseteq\Lambda$, giving the claim once we choose $\eta,\eps'$ so small that $C(n,\theta)\eta^{1/2}\rho^2+C(n)\eps'\le\eps$.
\end{proof}

\subsection{Proof of \autoref{step2 n=3}}

Considering \autoref{rmk:step1 n=3} and the discussion in Section \ref{sec:outline trans}, to complete the proof of \autoref{trans n=3} we only need to prove \autoref{step2 n=3}. The strategy in this part will be to argue as in the proof of \cite[Lemma 3.3]{CheegerNaber15}. To this aim, we begin with an elementary fact, similar to \autoref{trace} in spirit. It will be used as a replacement of {\it Kato's inequality}.

\begin{lemma}\label{trace.bis}
	For any symmetric $n\times n$ matrix $A$ and $v\in\R^n$ with $|v|=1$, it holds
	\begin{equation} 
		|A|^2-|Av|^2\ge\frac{1}{2n-1}|A|^2-\frac{2}{2n-1}\operatorname{tr}(A)^2\, . 
	\end{equation}
\end{lemma}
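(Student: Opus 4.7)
The plan is to diagonalize $A$ and reduce the claim to a scalar inequality on the eigenvalues, in the same spirit as the proof of \autoref{trace}. Pick an orthonormal basis of eigenvectors with eigenvalues $\lambda_1,\dots,\lambda_n$, writing $v=\sum_i v_i e_i$ with $\sum v_i^2=1$. Then $|A|^2=\sum_i\lambda_i^2$ and $|Av|^2=\sum_i\lambda_i^2 v_i^2$. Since $\sum v_i^2=1$, we have the uniform bound $|Av|^2\le\max_i\lambda_i^2$. After relabeling, we may assume this maximum is attained at $i=n$, so $|Av|^2\le\lambda_n^2$. Thus the claim reduces to showing
\begin{equation*}
(2n-2)\sum_{i=1}^n\lambda_i^2 + 2\left(\sum_{i=1}^n\lambda_i\right)^2 \ge (2n-1)\lambda_n^2,
\end{equation*}
or equivalently, after moving $(2n-2)\lambda_n^2$ from the first sum,
\begin{equation*}
(2n-2)\sum_{i=1}^{n-1}\lambda_i^2 + 2\,\operatorname{tr}(A)^2 \ge \lambda_n^2.
\end{equation*}

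To establish this remaining scalar inequality I would write $\lambda_n=\operatorname{tr}(A)-\sum_{i=1}^{n-1}\lambda_i$ and estimate, using Young's inequality and then Cauchy--Schwarz,
\begin{equation*}
\lambda_n^2 \le 2\,\operatorname{tr}(A)^2 + 2\left(\sum_{i=1}^{n-1}\lambda_i\right)^2 \le 2\,\operatorname{tr}(A)^2 + 2(n-1)\sum_{i=1}^{n-1}\lambda_i^2,
\end{equation*}
which is exactly the required bound. Chaining these inequalities back through the reductions above yields the lemma.

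I do not expect a genuine obstacle here: the statement is a finite-dimensional linear-algebraic inequality, and the only delicate point is picking the correct constants. The crucial choices are (i) to bound $|Av|^2$ by the \emph{largest} eigenvalue squared (rather than by $|A|^2$, which would be too wasteful), and (ii) to use Young with weight $1$ in splitting $\lambda_n^2\le 2\operatorname{tr}(A)^2+2(\sum_{i<n}\lambda_i)^2$, which after Cauchy--Schwarz produces precisely the factor $2(n-1)$ needed to cancel the coefficient $(2n-2)$ in front of $\sum_{i<n}\lambda_i^2$. Any other choice of weights degrades the constant $\frac{1}{2n-1}$ in the statement.
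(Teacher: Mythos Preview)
Your proof is correct and is essentially the same as the paper's: both diagonalize, bound $|Av|^2\le\lambda_n^2$ where $|\lambda_n|$ is maximal, write $\lambda_n=\operatorname{tr}(A)-\sum_{i<n}\lambda_i$, and apply Young's inequality followed by Cauchy--Schwarz. The only cosmetic difference is that the paper carries a free Young parameter $\tau$ and sets $\tau=1$ at the end, while you choose $\tau=1$ from the start.
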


\begin{proof}
	With the same notation used in the proof of \autoref{trace}, it holds
	\begin{equation}
		|A|^2-|Av|^2\ge\sum_{i=1}^{n-1}\lambda_i^2 \,. 
	\end{equation}
	Setting $s:=\sum_{i=1}^{n-1}\lambda_i$, for any $\tau>0$ it holds 
	\begin{align*}
		\lambda_n^2=&(\operatorname{tr}(A)-s)^2\le\left(1+\frac{1}{\tau}\right)\operatorname{tr}(A)^2+(1+\tau)s^2\\
		\le&\left(1+\frac{1}{\tau}\right)\operatorname{tr}(A)^2+(1+\tau)(n-1)\sum_{i=1}^{n-1}\lambda_i^2\, .
	\end{align*}
	Adding $\sum_{i=1}^{n-1}\lambda_i^2$ to both sides, we get
	$$ |A|^2\le\left(1+\frac{1}{\tau}\right)\operatorname{tr}(A)^2+[1+(1+\tau)(n-1)](|A|^2-|Av|^2)\, , $$
	and the claim follows by taking $\tau:=1$.
\end{proof}

\medskip

Next we state a technical lemma stemming from the Moser iteration technique. Its proof is standard, as it follows from the very same argument as in the Euclidean case, and therefore we omit it.

\begin{lemma}\label{lemma:Mosersharper}
	Let $(X,\dist,\meas)$ be an $\RCD(-(N-1),N)$ m.m.s.\ and $f\in H^{1,2}_{\rm loc}$ nonnegative such that
	\begin{equation}
		\Delta f\ge -\delta f\, ,\quad \text{on $B_1(p)\subset X$}\, ,
	\end{equation}
	in duality with test functions, for some $\delta>0$. Then
	\begin{equation}
		\sup_{B_r(x)}f\le C(N)\fint_{B_{2r}(x)}f\, ,
	\end{equation}
	for any $B_{2r}(x)\subset B_1(p)$.
\end{lemma}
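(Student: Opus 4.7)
The plan is to run a classical De Giorgi--Moser iteration in the $\RCD(-(N-1),N)$ setting. The two analytic ingredients we need, volume doubling and a local $(1,2)$-Poincar\'e inequality, are both available, and together they yield a local Sobolev embedding with exponent $2^{\ast}:=2N/(N-2)$ when $N>2$ (with a standard Trudinger-type substitute in the borderline case). The calculus needed to test the distributional inequality against powers of $f$ is provided by the Leibniz and chain rules available on $\RCD$ spaces.

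First I would derive a Caccioppoli-type estimate. Fix $k>1/2$ and a Lipschitz cut-off $\phi$ on an intermediate ball, constructed for instance as in \cite{MondinoNaber19}. After a standard regularization $f_\eta := f + \eta$ (to give meaning to negative powers), testing the inequality $\Delta f\ge -\delta f$ against $\phi^2 f_\eta^{2k-1}$ and using the Leibniz rule yields, after Young's inequality,
\begin{equation*}
\int \phi^2 |\nabla(f_\eta^{k})|^2 \le C(k) \int \bigl(|\nabla \phi|^2 + \delta\,\phi^2\bigr) f_\eta^{2k}\, .
\end{equation*}
Combined with the Sobolev inequality applied to $\phi f_\eta^{k}$, and letting $\eta\downarrow 0$, this produces the reverse H\"older bound
\begin{equation*}
\left(\fint_{B_{r'}(x)} f^{2k\chi}\right)^{1/\chi} \le \frac{C(N)(1+\delta r^2)}{(r-r')^2 r^{-2}}\, \fint_{B_r(x)} f^{2k}\, ,
\end{equation*}
where $\chi := N/(N-2)>1$ and $r/2\le r' < r\le 2r_0$, with $B_{2r_0}(x)\subset B_1(p)$.

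Next I would iterate along the sequence of exponents $k_j := \chi^j k_0$ and a geometrically shrinking sequence of radii interpolating between $2r$ and $r$. Since $\chi>1$ all the relevant telescoping series converge, and since $\delta r^2\le \delta$ is bounded, the resulting constants depend only on $N$, $\delta$, and on the starting exponent $k_0$. This yields $\sup_{B_r(x)} f \le C(N,\delta,k_0)\bigl(\fint_{B_{2r}(x)} f^{2k_0}\bigr)^{1/(2k_0)}$ for any fixed $k_0>1/2$. The passage to $2k_0=1$---which is what gives the bound in terms of $\fint f$---uses the standard Bombieri--Giusti self-improvement lemma: one interpolates $\int f^{2k_0} \le (\sup f)^{2k_0-1}\int f$ and reabsorbs the $\sup$ across a continuous one-parameter family of radii, exploiting that the Caccioppoli/Sobolev step is uniform in the pair of radii. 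The main technical subtlety lies in the very first step, namely justifying the testing of the distributional subsolution inequality against arbitrary real powers of $f$; but this is routine by the regularization argument together with the Leibniz and chain rules for the minimal weak upper gradient on $\RCD$ spaces.
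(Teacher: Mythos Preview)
Your proposal is correct and follows exactly the approach the paper intends: the paper omits the proof entirely, stating only that it is the standard Moser iteration argument from the Euclidean case, and your outline supplies precisely this with the correct $\RCD$-specific ingredients (doubling, Poincar\'e, chain and Leibniz rules). Your more careful bookkeeping yields a constant $C(N,\delta)$ rather than the $C(N)$ written in the paper; this is a harmless imprecision on the paper's side, since in the only application the parameter $\delta$ equals $C(n,\theta)$ and is therefore already controlled.
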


\medskip

We are now ready to prove \autoref{step2 n=3}.
Let $b$ be as in the assumption of \autoref{step2 n=3}. Applying \autoref{trace.bis} above in \eqref{eq:Delta|nabla b|}, choosing $A:=\operatorname{Hess}(b)$ and $v:=\frac{\nabla b}{|\nabla b|}$, on $A_{12,5}(p)$ we have
\begin{equation}\label{eq:bochproof3}
	\begin{split}
		\Delta|\nabla b|&\ge
		\frac{|\operatorname{Hess}(b)|^2}{(2n-1)|\nabla b|}-C(n)|\nabla b|(|\nabla b|^2+|\operatorname{Hess}(b)|+\delta)
		\\& \ge
		C(n)^{-1} \frac{|\Hess (b)|^2}{|\nabla b|} - C(n,\theta)|\nabla b|\, ,
	\end{split}
\end{equation}
since the trace of $\operatorname{Hess}(b)$ is $\Delta b =\frac{n-1}{b}|\nabla b|^2$ (see \eqref{eq:laplabp2}), as already mentioned. Above in the second inequality we used Young's inequality and the Lipschitz bound $|\nabla b| \le C(n,\theta)$ on $A_{12,5}(p)$ (see \eqref{eq:Lipbp}).

\medskip

Recall our assumption:
\begin{equation}
	s^2\int_{B_s(q)}|\Delta|\nabla b||\le\eta\int_{B_s(q)}|\nabla b|\, ,
\end{equation}
for all $s_q\le s<\frac{1}{2}$. Fix a radius $s\in[\frac{s_q}{2}, \frac{1}{4})$ and set
\begin{equation}
	\hat b :=\frac{b}{\fint_{B_s(q)}|\nabla b|}\, . 
\end{equation}
From \eqref{eq:bochproof3} and the fact that $2s\ge s_q$, we deduce
\begin{equation}\label{eq:intbound15}
	\begin{split}
		s^2\fint_{B_{2s}(q)} \frac{|\Hess (\hat b)|^2}{|\nabla \hat b|}
		&\le 
		C(n)s^2\fint_{B_{2s}(q)} |\Delta |\nabla \hat b|| + C(n, \theta)s^2 \fint_{B_{2s}(q)} |\nabla  \hat b|
		\\&
		\le C(n,\theta) (\eta + s^2) \fint_{B_{2s}(q)} |\nabla  \hat b| \, .
	\end{split}
\end{equation}
Together with \autoref{lemma:Mosersharper} applied to $f:=|\nabla \hat b|$, \eqref{eq:intbound15} implies
\begin{equation}
	\begin{split}
		s^2 \fint_{B_{2s}(q)} |\Hess \hat b|^2
		&\le 
		\sup_{B_{2s}(q)} |\nabla \hat b| \left( s^2\fint_{B_{2s}(q)} \frac{|\Hess (\hat b)|^2}{|\nabla \hat b|} \right)
		\\&\le C(n,\theta)(\eta + s^2) \left(\sup_{B_{2s}(q)} |\nabla \hat b|\right)^2
		\\& \le
		C(n,\theta) (\eta + s^2) \left(\fint_{B_{4s}(q)} |\nabla  \hat b|\right)^2\, .
	\end{split}
\end{equation}
Moreover, since $|\nabla|\nabla\hat b||\le|\operatorname{Hess}(\hat b)|$ and $\fint_{B_s(q)}|\nabla\hat b|=1$, by the Poincar\'e inequality
we have
\begin{equation} 
	\fint_{B_s(q)}||\nabla\hat b|-1|^2\le C(n)s^2\fint_{B_{2s}(q)}|\operatorname{Hess}(\hat b)|^2\le C(n,\theta)(\eta+s^2)\left(\fint_{B_{4s}(q)}|\nabla \hat b|\right)^2 \, .
\end{equation}
To conclude the proof it suffices to pick $\eta \le \eta_0(\eps,n, \theta)$ and $s\le c(\eps,n,\theta)$ so that $C(n,\theta)(\eta + c(\eps,n,\theta)^2)\le \eps^3$ and prove the following gradient bound:
\begin{equation}
	\label{lip.tilde.u} 
	\fint_{B_{4s}(q)}
	|\nabla b|
	\le 10
	\fint_{B_{s}(q)}
	|\nabla b| \, , \quad \text{for every $s\in (s_q, c(\eps,n,\theta))$}\, 
\end{equation}
provided $\delta \le \delta_0(\eps,n,\theta)$.

When $\frac{1}{10}c(\eps,n,\theta)\le s\le c(\eps,n,\theta)$, \eqref{lip.tilde.u} is satisfied provided $\delta \le \delta_0(\eps,n,\theta)$, by \autoref{thm:prop_Green}
(which also guarantees that $s_q\le\frac{1}{10}c(\eps,n,\theta)$, as seen above).
Let $\overline s$ be the smallest number in $[s_q,c(\eps,n,\theta)]$ such that \eqref{lip.tilde.u} holds, for all $s\in[\overline s,c(\eps,n,\theta)]$. Assume by contradiction that $\overline s>s_q$.
Since \eqref{lip.tilde.u} holds for $4\overline s$, we know that
\begin{equation}
	\tilde b := \frac{b}{\fint_{B_{4\overline s}(q)}|\nabla b|} : B_{4\overline s}(q) \to \R \, 
\end{equation}
is an $\eps$-splitting map. Hence,
\begin{equation}
	\begin{split}
		\left| \fint_{B_{\overline s}(q)} |\nabla b| - \fint_{B_{4\overline s}(q)} |\nabla b| \right| 
		& \le
		\fint_{B_{\overline s}(q)} \left| |\nabla b| - \fint_{B_{4\overline s}(q)}|\nabla b| \right|
		\\& \le 
		C(n) \fint_{B_{4 \overline s}(q)} \left| |\nabla b| - \fint_{B_{4\overline s}(q)}|\nabla b| \right|
		\\& = C(n) \left(\fint_{B_{4\overline s}(q)} |\nabla b|\right) \fint_{B_{4\overline s}(q)} ||\nabla \tilde b|-1|
		\\&\le  \eps C(n) \fint_{B_{4 \overline s}(q)} |\nabla b| \, ,
	\end{split}
\end{equation}
which implies
\begin{equation}
	\fint_{B_{4\overline s}(q)} |\nabla b|
	\le (1 + C(n)\eps) \fint_{B_{\overline s}(q)} |\nabla b| \, ,
\end{equation}
a contradiction for $\eps \le \eps_0(n)$.

\begin{remark}
    The same argument used in this proof shows that we cannot have $\nabla b=0$ on a ball $B_s(q)$ with $s\ge s_q$.
\end{remark}

\section{Proofs of \autoref{step1} and \autoref{step2}}\label{sec:proofslicing}

As in \cite{CheegerNaber15}, the proof of \autoref{step2} in the general case requires an argument by contradiction, and we will just recall and sketch the main steps here, discussing more carefully only the parts which are different in our setting.

\subsection{Preliminary bounds}

Before showing \autoref{step1} and \autoref{step2}, we obtain some preliminary bounds on $w:=(v,u)$.
We will need the larger domains
\begin{equation}
	A'' 
	:= [B_{11}(p)\setminus \overline{B_6}(p)]\cap\{|v|<3\}\, ,\quad A''' := [B_{12}(p)\setminus \overline{B_5}(p)]\cap\{|v|< 4\}\, .
\end{equation}
Notice that, if $\delta\le \delta_0(n,\theta)$, then
$u$ is defined on $A'''$ and
\begin{equation}\label{prel2}
	u \ge 1 \, ,
	\quad 
	|\nabla u|\le C(n,\theta)
	\quad \text{on $A'''$}\, ,
\end{equation} 
by \autoref{thm:prop_Green}. 
Setting $w:=(v,u)$, an easy compactness argument based on \autoref{prop:Gstability} ensures that
\begin{equation}\label{eq:prel}
	\int_{A'''}|\ang{\nabla w^a,\nabla w^b}-\delta^{ab}|\le\eps\, ,
	\quad \text{for any $a,b=1,\dots,n-2$}\, ,
\end{equation}
provided $\delta\le \delta_0(\eps,n,\theta)$.

\begin{lemma}\label{abs.sobolev}
	Given $\tau>0$, if $\delta\le \delta_0(\tau,n,\theta)$, then 
	\begin{equation} 
		\int_{A''} |\nabla|\nabla u|| \le \tau \,,\quad
		\int_{A''}|\nabla|\nabla v||\le \tau \, ,\quad
		\int_{A''}|\nabla|\omega^\ell||\le\tau \, .
	\end{equation}
\end{lemma}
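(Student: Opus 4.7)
My plan is to treat the three estimates separately. The first two will follow almost immediately from the $\delta$-splitting condition on $v$, whereas the bound on $|\nabla u|$ (and consequently on $|\omega^{n-2}|$) requires a more delicate argument, since $\Hess(u)$ is only $L^2$-bounded and not small.

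For $|\nabla v|$, I will use Kato's inequality $|\nabla|\nabla v||\le|\Hess(v)|$ combined with the $\delta$-splitting bound $\int_{B_{50}(p)}|\Hess(v)|^2\le C(n)\delta^2$; Cauchy--Schwarz then gives $\int_{A''}|\nabla|\nabla v||\le C(n,\theta)\delta\le\tau$ for $\delta\le\delta_0(\tau,n,\theta)$. For $\omega^\ell=dv^1\wedge\cdots\wedge dv^\ell$ with $\ell\le n-3$, expanding the covariant derivative of the wedge and using the gradient bound $|\nabla v|\le C(n,\theta)$ yields the pointwise estimate $|\nabla\omega^\ell|\le C(n,\theta)|\Hess(v)|$, and Kato's inequality $|\nabla|\omega^\ell||\le|\nabla\omega^\ell|$ reduces the claim to the previous case.

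The hard part will be the bound for $|\nabla u|$. Starting from $u^2=b^2-|v|^2$, together with $\Delta b^2=2n|\nabla b|^2$ (from \eqref{eq:laplabp2}) and $\Delta|v|^2=2|\nabla v|^2$ (as $v$ is harmonic), I plan to derive $\Delta u=u^{-1}(n|\nabla b|^2-|\nabla v|^2-|\nabla u|^2)$ on $A'''$. Inserting this into the Bochner inequality for $u$ and combining the $L^2$ bounds on $\Hess(b)$ from \autoref{thm:prop_Green} and on $\Hess(v)$ from the $\delta$-splitting condition with the Lipschitz bounds on $b$, $u$ and $v$, I will obtain a distributional lower bound $\Delta|\nabla u|\ge -g$ on $A''$ with $g\ge0$ and $\|g\|_{L^2(A''')}\le C(n,\theta)$. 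Moreover, \eqref{eq:prel} applied with $a=b=n-2$ gives $\||\nabla u|^2-1\|_{L^1(A''')}\le\eps$, and the uniform Lipschitz bound on $|\nabla u|$ upgrades this to $\||\nabla u|-1\|_{L^2(A''')}\le C(n,\theta)\eps^{1/2}$.

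The conclusion for $|\nabla u|$ then follows by mimicking Step 2 of the proof of \autoref{thm:prop_Green}. Covering $A''$ by a uniformly bounded number of balls $B_s(q_i)$ of small fixed radius $s=s(n,\theta)$ with $B_{20s}(q_i)\subset A'''$, I plan to apply \autoref{dir} with $f:=|\nabla u|$ and $c:=1$ on each such ball, to produce sets $E_i\subset B_s(q_i)$ with $\haus^n(E_i)\le\tau$ and arbitrarily small $L^2$ norm of $\nabla|\nabla u|$ on $B_s(q_i)\setminus E_i$. The contribution over $E:=\bigcup_i E_i$ will be absorbed by the Lipschitz bound on $|\nabla u|$, while Cauchy--Schwarz controls the integral on $A''\setminus E$. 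The estimate for $|\omega^{n-2}|$ is obtained by the very same Bochner-plus-Caccioppoli scheme applied to the $(n-2)$-form $\omega^{n-2}$, using the Weitzenb\"ock-type formula recalled around \eqref{delta.uno}--\eqref{eq:|omega| Bochner}, the $L^2$ closeness $\||\omega^{n-2}|-1\|_{L^2(A''')}\to 0$ inferred from \eqref{eq:prel}, and the $L^2$ Hessian bounds already discussed.
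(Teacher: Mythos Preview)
Your approach is essentially the same as the paper's: handle $v$ directly via Kato and the $\delta$-splitting Hessian bound, derive the formula $\Delta u=u^{-1}(n|\nabla b|^2-|\nabla v|^2-|\nabla u|^2)$, feed it into Bochner to get $\Delta|\nabla u|\ge -g$ with $g\in L^2$, and then invoke \autoref{dir} exactly as in Step~2 of the proof of \autoref{thm:prop_Green}; the paper likewise treats $|\omega^{n-2}|$ by the same Caccioppoli-type scheme after using \eqref{delta.uno}--\eqref{eq:|omega| Bochner} together with $\||\omega^{n-2}|-1\|_{L^2}\to 0$ from \eqref{eq:prel}. Your direct pointwise bound $|\nabla\omega^\ell|\le C(n,\theta)|\Hess(v)|$ for $\ell\le n-3$ is a harmless simplification (the paper runs the \autoref{dir} argument uniformly in $\ell$).

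One small slip: the contribution of $\int_E|\nabla|\nabla u||$ over the exceptional set $E$ is \emph{not} absorbed by the Lipschitz bound on $|\nabla u|$ (that is only a first-order bound), but rather by the $L^2$ bound on $\Hess(u)$ via Cauchy--Schwarz, $\int_E|\nabla|\nabla u||\le \haus^n(E)^{1/2}\|\Hess(u)\|_{L^2}\le C(n,\theta)\tau^{1/2}$; the paper obtains $\int\varphi|\Hess(u)|^2\le C(n,\theta)$ from the chain rule (since $u$ is a smooth function of $b$ and $v$ on $A'''$ with $u\ge 1$) combined with the Hessian bounds on $b$ and $v$, and this is the ingredient you should invoke here.
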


\begin{proof}
	First of all, note that we can construct a cut-off function $0\le\varphi\le1$, with $|\nabla\varphi|,|\Delta\varphi|\le C(n,\theta)$,
	such that $\varphi=1$ on $A''$ and $\varphi=0$ on the complement of $A'''$.\\
	Indeed, let $\psi:\R^2\to\R$ smooth such that $\psi(s,t)=1$ if $(s,t)\in[6-\frac14,11+\frac14]\times[0,3+\frac14]$,
	while $\psi(s,t)=0$ if $(s,t)\notin[5+\frac14,12-\frac14]\times[0,4-\frac14]$.
	For $\delta$ small enough, $\varphi:=\psi(b, |v|)$ will have the desired properties.

	\medskip
	
	The $L^1$ bound on $|\nabla |\nabla v||$ is obvious since $|\nabla|\nabla v^i||\le|\operatorname{Hess}(v^i)|$ is small in $L^2$. For $u$ we argue as follows. We compute that
	\begin{equation}
		u \nabla u = b\nabla b-v\nabla v
	\end{equation}
	(abbreviating $v\nabla v:=\sum_i v^i\nabla v^i$) and, since $\Delta v=0$,
	\begin{align}
		\begin{aligned}
			\label{lapl.u}
			\Delta u&=-\frac{|\nabla u|^2}{u}+\frac{b\Delta b+|\nabla b|^2-|\nabla v|^2}{u} \\
			&=\frac{n|\nabla b|^2-|\nabla u|^2-|\nabla v|^2}{u}\, .
		\end{aligned}
	\end{align}
	By \eqref{eq:Hess b} and the defining conditions of splitting maps,  $\int\varphi(|\operatorname{Hess}(b)|^2+|\operatorname{Hess}(v)|^2)\le C(n,\theta)$.
    The analogous bound for $u$ follows from the chain rule, as $u$ is a function of $b$ and $v$.

      In particular, we conclude that
      \begin{equation}\label{eqz11}
        \int\varphi|\operatorname{Hess}(u)|^2
        \le C(n,\theta)\, .
      \end{equation}
	Moreover, arguing as in Section \ref{subsec:Hess bound}, from the Bochner inequality again we deduce that $\Delta |\nabla u|$ is a measure satisfying the lower bound
	\begin{equation}
		\Delta |\nabla u| \ge -C(n,\theta)(1 + |\Hess u| + |\Hess v| + |\Hess v|)\, .
	\end{equation}
	The $L^1$ bound on $|\nabla |\nabla u||$ follows by \autoref{dir} with $f:=|\nabla u|$.

	\medskip

	Finally, for $\omega^\ell$ we employ \eqref{delta.uno}.
	From \eqref{lapl.u} we see that $|d\Delta u|\le C(n,\theta)(|\operatorname{Hess}(b)|+|\operatorname{Hess}(u)|+|\operatorname{Hess}(v)|)$,
	thanks to the Lipschitz bounds on $b$, $u$, and $v$. Thus, integrating by parts against $\varphi$, we get
	\begin{equation}
		\int\varphi|\nabla\omega^\ell|^2\le C(n,\theta)\, ,
	\end{equation}
	and we can apply \autoref{dir} with $f:=|\omega^\ell|$. Since $|\omega^\ell|$ is arbitrarily close to $1$ in $L^2$ by \eqref{eq:prel}, the conclusion follows.
\end{proof}

\subsection{Proof of \autoref{step1}}

The estimate on $\Delta|\omega^\ell|$ when $\ell < n-2$ corresponds to \cite[Theorem 1.6]{CheegerNaber15}, since $\omega^\ell = dv^1 \wedge \ldots \wedge dv^\ell$ is a wedge product of differentials of harmonic splitting maps. The main novelty in our proof concerns the bound on $\Delta | \omega^{n-2}|$, which requires the estimates on the Green distance $b$. We sketch also the proof in the case $\ell < n-2$, as the argument for $\ell=n-2$ will be similar.

\medskip

Since $|\omega^\ell|$ is arbitrarily close to $1$ by \eqref{eq:prel}, it suffices to show that
\begin{equation}\label{eq:bdnegla}
	\int\varphi(\Delta|\omega^\ell|)^-\le C(n,\theta)\eta\, ,
\end{equation}
where $\varphi$ is a good cut-off function interpolating between $A'$ and $A''$, provided $\delta \le \delta_0(\eta,n,\theta)$.
From \eqref{eq:bdnegla} it will follow that
\begin{equation}
	\begin{split}
		\int\varphi|\Delta|\omega^\ell|| &=
		\int\varphi \Delta|\omega^\ell| + 2 \int\varphi (\Delta|\omega^\ell|)^{-}
		\\& 
        = \int \Delta \varphi (|\omega^\ell| - 1) + 2 \int\varphi(\Delta|\omega^\ell|)^{-}
		\\& \le C(n,\theta) \eta \, ,
	\end{split}
\end{equation}
as desired. The remaining part of this proof is devoted to showing \eqref{eq:bdnegla}.

\medskip

Recall that
a linear endomorphism $L:V\to V$ on a vector space $V$ gives rise to a well-defined endomorphism $\hat{L}:\Lambda^{k}V\to\Lambda^k V$, given by
\begin{equation} 
	\hat{L}(z_1\wedge\dots\wedge z_k):=\sum_{i=1}^k z_1\wedge\dots\wedge z_{i-1}\wedge Lz_i\wedge z_{i+1}\wedge\dots\wedge z_k\, .
\end{equation}
Also, if $L\ge-\delta I$ with respect to a given positive definite scalar product, then
\begin{equation}
	\label{pos.def}
	\ang{\hat{L}(z_1\wedge\dots\wedge z_k),z_1\wedge\dots\wedge z_k}\ge -k\delta|z_1\wedge\dots\wedge z_k|^2\, .
\end{equation}
Indeed, by homogeneity we can assume that the vectors $z_1,\dots,z_k$ are orthonormal, so that
$$ \ang{z_1\wedge\dots\wedge z_{i-1}\wedge Lz_i\wedge z_{i+1}\wedge\dots\wedge z_k,z_1\wedge\dots\wedge z_k}=\ang{Lz_i,z_i}\ge-\delta\, , $$
from which the claim follows by summing over $i=1,\dots,k$.

\medskip

We can apply the previous observation to the Ricci tensor $\operatorname{Ric}=\operatorname{Ric}_g$, viewed pointwise as a symmetric endomorphism on the space of covectors $T_x^*M$, for every $x\in M$.\\ 
Using Bochner's formula and denoting $w:=(v,u)$, since $\Delta v=0$ we get
\begin{align}
	\Delta\omega^\ell= \hat{\operatorname{Ric}}(\omega^\ell)
	+2\sum_{1\le a<b\le\ell}\sum_{j=1}^{\ell}\dots\wedge\nabla_j(dw^a)\wedge\dots\wedge\nabla_j(dw^b)\wedge\dots\, , \label{eq:Bochomega}
\end{align}
when $\ell < n-2$, and
\begin{align}\label{eq:Bochomega2}
\begin{aligned}
	\Delta\omega^{n-2}&=dv^1\wedge\dots\wedge dv^{n-3}\wedge d\Delta u+\hat{\operatorname{Ric}}(\omega^{n-2})\\
	&\quad+2\sum_{1\le a<b\le n-2}\sum_{j=1}^{n-2}\dots\wedge\nabla_j(dw^a)\wedge\dots\wedge\nabla_j(dw^b)\wedge\dots\, .
\end{aligned}
\end{align}
In \eqref{eq:Bochomega} and \eqref{eq:Bochomega2}, $\nabla_j$ denotes the covariant derivative in a local orthonormal frame, and
the omitted factors are just $dw^i$, for $i\notin\{a,b\}$, without the covariant derivative.

\medskip

Since $|\nabla w|\le C(n,\theta)$ on $A'''$, the last sum in \eqref{eq:Bochomega} and \eqref{eq:Bochomega2} is bounded by 
$$C(n,\theta)\sum_{a<b}| \omega^\ell| |\operatorname{Hess}(w^a)||\operatorname{Hess}(w^b)|$$
in absolute value.
Hence, using also \eqref{pos.def}, we obtain
$$\ang{\Delta\omega^\ell,\omega^\ell}
\ge
-C(n)\delta|\omega^\ell|^2
-C(n,\theta)| \omega^\ell|\sum_{a<b}|\operatorname{Hess}(w^a)||\operatorname{Hess}(w^b)|$$
for $\ell<n-2$, while
\begin{equation}\label{eq:laplaomegaomega}
	\begin{split}
		\ang{\Delta\omega^{n-2},\omega^{n-2}}
		&\ge\ang{dv^1\wedge\dots\wedge dv^{n-3}\wedge d\Delta u,\omega^{n-2}}
		\\& \quad
		-C(n)\delta|\omega^{n-2}|^2
		-C(n,\theta)| \omega^{n-2}|\sum_{a<b}|\operatorname{Hess}(w^a)||\operatorname{Hess}(w^b)|\, .
	\end{split}
\end{equation}
Recalling \eqref{lapl.u}, namely
\begin{equation}
\Delta u=\frac{n|\nabla b|^2-|\nabla u|^2-|\nabla v|^2}{u}=\frac{n|\nabla b|^2-|\nabla w|^2}{u}\, , 
\end{equation}
we get
\begin{equation} 
	d\Delta u=-\frac{\Delta u}{u}du+\frac{2}{u}(n\operatorname{Hess}(b)[\nabla b,\cdot]-\operatorname{Hess}(w)[\nabla w,\cdot])\, ,
\end{equation}
where we abbreviate $\operatorname{Hess}(w)[\nabla w,\cdot]:=\sum_i\operatorname{Hess}(w^i)[\nabla w^i,\cdot]$.
Hence,
\begin{equation*} 
	\ang{dv^1\wedge\dots\wedge dv^{n-3}\wedge d\Delta u,\omega^{n-2}}
	\ge -\frac{\Delta u}{u}|\omega^{n-2}|^2-C(n,\theta)\left(|\nabla|\nabla b||+\sum_{i=1}^{n-2}|\nabla|\nabla w^i||\right)|\omega^{n-2}|\, . 
\end{equation*}
Recall now \eqref{delta.uno}, namely
\begin{equation} 
	\Delta|\omega^\ell|\ge\frac{\ang{\Delta\omega^\ell,\omega^\ell}+|\nabla\omega^\ell|^2-|\nabla|\omega^\ell||^2}{|\omega^\ell|}\, . 
\end{equation}
Since $|\nabla|\omega^\ell||\le|\nabla\omega^\ell|$, using \eqref{eq:laplaomegaomega} to lower bound $\ang{\Delta\omega^\ell,\omega^\ell}$
we immediately deduce that
\begin{equation*}
    \Delta|\omega^\ell|
	\ge C(n)\delta |\omega^\ell|
		-C(n,\theta)\sum_{a<b}|\operatorname{Hess}(w^a)||\operatorname{Hess}(w^b)| \, ,
		\quad \text{for $\ell < n-2$,}
\end{equation*}
while
\begin{align}\label{rough.bis}\begin{aligned}
		\Delta|\omega^{n-2}|
		&\ge-\left(\frac{\Delta u}{u}+C(n)\delta\right)|\omega^{n-2}|
		-C(n,\theta)|\nabla|\nabla b||-C(n,\theta)\sum_{i=1}^{n-2}|\nabla|\nabla w^i|| \\
		&\quad-C(n,\theta)\sum_{a<b}|\operatorname{Hess}(w^a)||\operatorname{Hess}(w^b)|
		\, .
\end{aligned}\end{align}
We let $f_1:=\delta|\omega^\ell|+|\nabla|\nabla b||+\sum_i|\nabla|\nabla w^i||$ and $f_2:=\sum_{(a,b)\neq(n-2,n-2)}|\operatorname{Hess}(w^a)||\operatorname{Hess}(w^b)|$.
As seen in the proof of \autoref{abs.sobolev}, we can bound $|\nabla|\nabla b||\le|\operatorname{Hess}(b)|$ in $L^2(A'')$, and the same holds for $w$. Hence,
\begin{equation}
	\label{f1}
	\int_{A''}|f_1|^2\le C(n,\theta)\, .
\end{equation}
On the other hand, by Cauchy--Schwarz
\begin{equation}
	\label{hess.hess}
	\int_{A''}f_2
	\le C(n,\theta)\delta\, ,
\end{equation}
since, in each term of the sum defining $f_2$, either $w^a$ or $w^b$ is one of the $\delta$-splitting maps $v^i$.
This is enough to prove \eqref{eq:bdnegla} when $\ell < n-2$. 
\medskip

Rather, in the case $\ell = n-2$, so far we only have that $\int_{A''}(\Delta|\omega^{n-2}|)^- \le C(n,\theta)$.
From now on, we focus on the remaining case $\ell = n-2$.
Given $\tau\in(0,\mz)$, we can find a subset $E\subset A''$ such that $\haus^n(E)\le\tau$ and on $A''\setminus E$ the following properties hold:
\begin{itemize}
	\item[(i)] $|\omega^\ell|\in(\frac{1}{1+\tau},1+\tau)$ (this follows from \eqref{eq:prel});
	\item[(ii)] $||\nabla b|^2-1|+\sum_{a,b}|\ang{\nabla w^a,\nabla w^b}-\delta^{ab}|\le\tau$ (again using \eqref{eq:prel});
	\item[(iii)] $|\nabla|\nabla b||+\sum_i|\nabla|\nabla w^i||+|\nabla|\omega^\ell||\le\tau$ (using \autoref{abs.sobolev});
	\item[(iv)] $f_2\le\tau$ (using \eqref{hess.hess} and taking $\delta$ small enough);
    \item[(v)] $|\Hess b|+|\Hess w|\le C(n,\theta)$.
\end{itemize}
On $A''\setminus E$, using again \eqref{delta.uno} (but without throwing away the term $\frac{|\nabla\omega^\ell|^2-|\nabla|\omega^\ell||^2}{|\omega^\ell|}$) we obtain
\begin{align*}
	\Delta|\omega^{n-2}|&\ge\frac{|\nabla\omega^{n-2}|^2-|\nabla|\omega^{n-2}||^2+\ang{\Delta\omega^{n-2},\omega^{n-2}}}{|\omega^{n-2}|} \\
	&\ge(1-\tau)|\nabla\omega^{n-2}|^2-(1+\tau)|\nabla|\omega^{n-2}||^2-\frac{\Delta u}{u}|\omega^{n-2}|-C(n,\theta)(f_1+f_2) \\
	&\ge(1-\tau)|\nabla\omega^{n-2}|^2-\frac{\Delta u}{u}|\omega^{n-2}|-C(n,\theta)\tau\, .
\end{align*}
Moreover,
\begin{equation}
	|\nabla\omega^{n-2}|^2
	\ge |dv^1\wedge\dots\wedge dv^{n-2}\wedge\nabla du|^2-C(n,\theta)\tau\, .
\end{equation}
At each point $x\in A''\setminus E$, let $W_x:=\bigcap_{i=1}^{n-3}\operatorname{ker}(dv^i)$, which is a $3$-dimensional subspace since
the $1$-forms $dv^i$ are almost orthonormal (for $\tau\le c(n)$ small enough).\\ 
Given a local orthonormal frame $\{e_j\}$, denoting $\nabla_j=\nabla_{e_j}$, we have
\begin{align*}
	|dv^1\wedge\dots\wedge dv^{n-3}\wedge\nabla du|^2
	&=\sum_{j=1}^n|dv^1\wedge\dots\wedge dv^{n-3}\wedge\nabla_j du|^2\\
	&=\sum_{j=1}^n|dv^1\wedge\dots\wedge dv^{n-3}\wedge\operatorname{Hess}(u)[e_j,\cdot]|^2\\
	&=\sum_{j=1}^n|dv^1\wedge\dots\wedge dv^{n-3}|^2|\operatorname{Hess}(u)[e_j,\cdot]\circ\iota_{W}|^2\\
	&=|dv^1\wedge\dots\wedge dv^{n-3}|^2|\operatorname{Hess}(u)\circ\iota_W|^2\\
	&\ge(1-C(n)\tau)|\operatorname{Hess}(u)\circ\iota_W|^2\, ,
\end{align*}
where $\iota_W$ denotes the inclusion $W_x\to T_xM$ at each $x$, and where we view $\operatorname{Hess}(u)$ as an endomorphism $T_xM\to T_xM$ after the last equality.
Moreover, by definition of $E$,
\begin{equation} 
	\frac{\Delta u}{u}=\frac{n|\nabla b|^2-|\nabla w|^2}{u^2}\le \frac{2}{u^2}+C(n)\tau\, ,\quad \text{on $A''\setminus E$}\, .
\end{equation}
To sum up, we have
\begin{equation}
	\label{almost.done}
	\Delta|\omega^{n-2}|\ge(1-C(n)\tau)|\operatorname{Hess}(u)\circ\iota_W|^2-\frac{2}{u^2}-C(n,\theta)\tau
\end{equation}
on $A''\setminus E$.

\medskip

From \autoref{thm:prop_Green} (iii) we deduce that
\begin{equation} 
	\left|\operatorname{Hess}(b)-\frac{1}{b}\left(I-\frac{\nabla b}{|\nabla b|}\otimes\frac{\nabla b}{|\nabla b|}\right)\right|^2\le \tau \quad  \text{on $A''\setminus E'$} \, ,
\end{equation}
where $\haus^n(E')\le \tau$, provided $\delta \le \delta_0(\tau,n,\theta)$.
Since
\begin{equation}
	\operatorname{Hess}(u)=\frac{\nabla b\otimes\nabla b-\nabla w\otimes\nabla w+b\operatorname{Hess}(b)-v\operatorname{Hess}(v)}{u}\, ,
\end{equation}
and $\operatorname{Hess}(v)$ is small we get
\begin{equation} 
	\left|\operatorname{Hess}(u)-\frac{I-\nabla w\otimes\nabla w}{u}\right|\le C(n)\tau\, .
\end{equation}
Since $I-\nabla w\otimes\nabla w$ is close to the orthogonal projection $P$ onto $\operatorname{ker}(du)\cap W$, which has rank $2$ and satisfies $|P\circ\iota_W|^2=|P|^2=2$, this is enough to conclude that
\begin{equation}\label{eq:hessnormfrombl}
	|\operatorname{Hess}(u)\circ\iota_W|^2\ge\frac{2}{u^2}-C(n)\tau\, , 
	\quad \text{on $A''\setminus (E \cup E')$}\, .
\end{equation}
Plugging \eqref{eq:hessnormfrombl} in \eqref{almost.done}, we finally see that
\begin{equation} 
	\Delta|\omega^{n-2}|\ge -C(n)\tau|\operatorname{Hess}(u)|^2-C(n)\tau
\end{equation}
on the complement of $E\cup E'$. On the other hand, on $E\cup E'$ using \eqref{rough.bis} we have
\begin{equation} 
	\begin{split}
		\int_{A''\cap (E\cup E')}(\Delta|\omega^{n-2}|)^-
		&\le C(n,\theta)\int_{A''\cap(E\cup E')}(1+f_1+f_2)
		\\&
		\le C(n,\theta)\haus^n(E\cup E')^{1/2}+C(n,\theta)\delta\, , 
	\end{split}
\end{equation}
by \eqref{f1} and \eqref{hess.hess}. In conclusion,
\begin{equation} 
	\int_{A''}(\Delta|\omega^{n-2}|)^-\le C(n,\theta)\tau\, .
\end{equation}

\subsection{Proof of \autoref{step2}}

We argue by contradiction as in \cite{CheegerNaber15}. If the conclusion were false, we could find $\eps>0$, a sequence $\eta_j \to 0$, and complete smooth pointed manifolds $(M^n_j, g_j, p_j)$ satisfying the following properties:

\begin{itemize}
	\item[(i)] $\Ric_{g_i} \ge -\eta_i(n-2)$;
	
	\item[(ii)] $B_{100}(p_j)$ is $\delta(n)$-GH close to the ball of radius $100$ over the tip $o$ of a cone $\R^{n-3} \times C(Z_j)$ with density $\theta_j \ge \theta$;

	\item[(iii)] there exist $\eta_j$-splitting maps $v_j: B_{40}(p_j) \to \R^{n-2}$, good Green distances $b_j : B_{40}(p_j) \to (0,\infty)$ centered at $p_j$, and $q_j \in A_{9,8}(p_j)\cap \{|v_j| < 1\}$ such that $L\circ w_j : B_s(q_j) \to \R^{n-2}$ is not an $\eps$-splitting map for every lower triangular matrix $L$, where $s \ge s_{q_j}$ depends on $j$.
\end{itemize}

Let $s_j$ be the supremum of the bad radii $s$ as in (iii). In particular, there exists an lower triangular matrix with positive diagonal entries such that $L_j \circ w_j : B_{2s_j}(q_j) \to \R^{n-2}$ is an $\eps$-splitting map.
It is not hard to see that $s_j \to 0$. We then consider the (noncollapsing) sequence of pointed manifolds $(M^n, s_j^{-2}g_j, q_j)$ and maps 
\begin{equation}
	\hat w_j : = s_j^{-1}L_j \circ (w_j - w_j(q_j)) : B_{s_j^{-1}}(q_j) \to \R^{n-2} \, ,
\end{equation}
which are $\eps$-splitting in the ball $B_2(q_j)$ in the rescaled metric. In particular
\begin{equation}\label{eqz15}
	\fint_{B_2(q_j)} |\langle \nabla \hat w^a_j, \nabla \hat w^b_j\rangle - \delta^{ab}| \le \eps \, .
\end{equation}
Moreover, for every $2\le r\le cs_j^{-1}$ there is a triangular matrix $L_r$ such that $L_r \circ \hat w_j : B_r(q_j) \to \R^{n-2}$ is an $\eps$-splitting map. The first $n-3$ components of $w_j$ are harmonic; hence, by \cite[Theorem 1.32]{CheegerNaber15}, up to composing with an lower triangular matrix, they are almost splitting above the singular scale. Arguing as in \cite[Claim 2, p.\ 1119]{CheegerNaber15} we can assume that the first $n-3$ components of $L_r \circ \hat w_j$ produce an $\eps_j$-splitting map in $B_r(q_j)$ for every $2\le r \le cs_{j}^{-1}$, where $\eps_j \to 0$.

Arguing as in \cite[Claim 1, p. 1118]{CheegerNaber15}, we deduce that 
\begin{equation}
	|L_r| + |L_r^{-1}| \le  r^{ C(n)\eps}\,  ,
	\quad
	\text{for $2 \le r \le cs_j^{-1}$} \, .
\end{equation}
In particular,
\begin{equation}\label{eqz16}
	\begin{split}
		&r^2\fint_{B_r(p_j)}| \Hess (\hat w_j)|^2 \le C(n)\eps r^{2C(n)\eps} \, ,
		\\
		&\fint_{B_r(q_j)} |\nabla \hat w_j| \le (1 + C(n)\eps) r^{C(n)\eps} \, ,\quad
		\text{for every $2\le r\le cs_{j}^{-1}$}\, .
	\end{split}
\end{equation}
Recalling \eqref{lapl.u}, which gives $|\Delta u_j|\le C$ (on the domain $A_j'$, with the metric $g_j$), and the fact that $\Delta v_j=0$, we have
\begin{equation}\label{eqz17}
|\Delta\hat w_j|
=
s_j^{-1}|L_j\circ\Delta(w_j^1,\dots,w_j^{\ell+1})|
\le
s_j^{-1}|L_j||\Delta w_j|
\le 
s_j^{1-C\eps}\to 0
\, , \quad \text{on $B_{cs_{j}^{-1}}(q_j)$}\,  .
\end{equation}
Up to the extraction of a subsequence that we do not relabel, we can pass to the pointed GH-limit $(M^n, s_j^{-2}, q_j) \to (X,\dist,q)$ where $(X,\dist,\haus^n)$ is an $\RCD(0,n)$ space. Up to sub-sequence, the maps $\hat w_j$ limit to a map $\hat w: X \to \R^{n-2}$ whose first $(n-3)$ components are splitting maps. In particular, $X = \R^{n-3} \times Z$ and $\hat w^a(x,z) = x^a$ where $x\in \R^{n-3}$, $z\in Z$, $a\le n-3$.
Also the last component $\hat w^{n-2}$ is harmonic, as a consequence of \eqref{eqz17}. Arguing as in \cite[Claim 5, p.\ 1127]{CheegerNaber15}, using the sublinear growth estimates \eqref{eqz16} we deduce that 
\begin{equation}\label{eqz20}
	\hat w^{n-2}(x,z) = \xi \cdot x + w_Z^{n-2}(z) \, ,
\end{equation}
where $\xi \in \R^{n-3}$ and $w_Z^{n-2} : Z \to \R$ is harmonic. Moreover, $w_Z^{n-2} \neq 0$ as a consequence of \eqref{eqz15}.
The contradiction will follow if we can establish that
\begin{equation}\label{goal2}
	r\fint_{B_r(q_j)} |\Hess \hat w^{n-2}_j|^2 \to 0 \, , \quad \text{as $j\to \infty$}\, ,
\end{equation}
for every $2\le r \le cs_j^{-1}$. Indeed, \eqref{goal2} implies that $\omega_Z^{n-2}$ is parallel and forces a splitting. Therefore, there exists a lower triangular matrix $L$ (which is actually the identity in the first $(n-3)\times (n-3)$ block) such that $L\circ w$ is a splitting map. It is not hard to deduce that $L\circ \hat w_j$ is an $\eps_j'$-splitting map in $B_{1}(q_j)$, where $\eps_j'\to 0$. This would give the sought contradiction.

\medskip

From now on, our goal will be to prove \eqref{goal2}. This requires a study of the form
\begin{equation}
	\hat \omega := d \hat w^1 \wedge \ldots \wedge d \hat w^{n-2}\, .
\end{equation}
We begin by noticing that $\hat \omega_j = \det(L_j) \omega_j$, hence
\begin{equation}\label{eq:key}
	r^2 \fint_{B_r(q_j)} |\Delta |\hat \omega_j|| \le \eta_j \fint_{B_r(q_j)} |\hat \omega_j|
	\le \eta_j C(n)r^{C(n)\eps}\, ,
	\quad \text{for $2 \le r \le cs_{j}^{-1}$}\, .
\end{equation}
As a first step, we need to show that, for these radii $r$, it holds
\begin{equation}\label{eqz18}
	\fint_{B_r(q_j)} \left| |\hat \omega_j|^2 - \fint_{B_r(q_j)} |\hat \omega_j|^2 \right| \to 0 \, ,
	\quad \text{as $j\to \infty$}\, .
\end{equation}
This follows verbatim from the argument in \cite[Claim 3, p.\ 1121]{CheegerNaber15}. We stress that the key ingredient is \eqref{eq:key}.

The second key claim is that
\begin{equation}\label{eqz19}
	\fint_{B_r(q_j)} |\nabla \hat \omega_j|^2 \to 0
	\, ,
	\quad \text{as $j\to \infty$}\, .
\end{equation}
This claim corresponds to \cite[Claim 4, p.\ 1121]{CheegerNaber15}: it follows by integrating the Bochner inequality against a good cut-off function and then using \eqref{eqz18}. However, in our setting we have to deal with an extra term in the Bochner identity. Specifically, in the expression of $\Delta |\hat \omega_j|^2$ we have the additional term
\begin{equation}
	2\ang{d\hat w_j^1\wedge\dots\wedge d\hat w_j^{n-3}\wedge d\Delta\hat w_j^{n-2},\hat\omega_j}\, . 
\end{equation}
Note that $\Delta\hat w_j^i=0$ for $i<n-2$, since in this case $\hat w_j^i$ is just a linear combination of the harmonic maps $v_j^1,\dots,v_j^i,1$, as the transformation matrix is lower triangular.
However, on $B_r(q_j)$ we have the bound $|\nabla\hat w_j|\le C(n,\theta)r^{C(n)\eps}$, while from \eqref{lapl.u} we see that
\begin{equation} 
	|d\Delta u_j|\le C(n,\theta)(1+|\operatorname{Hess}(b_j)|+|\operatorname{Hess}(w_j)|) 
\end{equation}
with respect to the metric $g_j$. In particular,
\begin{equation}
	|\operatorname{Hess}(w_j)|\le s_j|L_j^{-1}||\operatorname{Hess}(\hat w_j)|\, . 
\end{equation}
Also, since $b_j=\sqrt{u^2_j+|v_j|^2}$, it holds $|\operatorname{Hess}(b_j)|\le C(n)|\operatorname{Hess}(w_j)|$.
Combining these bounds, we get
\begin{align*}
|d\Delta\hat w_j^{n-2}|
& = s_j^{-1}|(L_j)_{n-2,n-2}d\Delta u_j|
\\&
\le s_j^{-1}| L_j| |d \Delta u_j|
\\&
\le C s_j^{-1}|L_j|(1 + s_j |L_j^{-1}| |\operatorname{Hess}(\widehat w_j)|)
\\&
\le Cs_j^{-1-C\eps} + 
 C s_j^{-C\eps}|\operatorname{Hess}(\hat w_j)| 
\end{align*}
with respect to $g_j$, and hence
\begin{equation}\label{eqz21}
	|d\Delta \hat w_j^{n-2}|\le Cs_j^{2 - C\eps} + Cs_j^{1-C\eps}|\operatorname{Hess}(\hat w_j)| 
\end{equation}
with respect to the rescaled metric. The latter is infinitesimal in $L^2$, on any ball $B_r(q_j)$.

\medskip

The final step involves combining \eqref{eqz19} and \eqref{eqz20} to get \eqref{goal2}. This corresponds to \cite[Claim 6, p.\ 1129]{CheegerNaber15}, and its proof can be adapted to our setting with one change in the very last part of the argument. Indeed, in \cite[Claim 6, p.\ 1129]{CheegerNaber15} it is first proved that
\begin{equation}
	\fint_{B_r(q_j)} | |\nabla \hat w^{n-2}_j|^2 - 1| \to 0 \, ,
	\quad \text{for $2\le r\le cs_j^{-1}$}\, ,
\end{equation}
and then the Hessian bound \eqref{goal2} follows from the Bochner inequality, using the harmonicity of $\hat w^{n-2}_j$. In our setting, $\hat w^{n-2}_j$ is not harmonic, producing the extra term in the Bochner inequality
\begin{equation}
	\langle \nabla \hat w^{n-2}_j \, , \nabla \Delta \hat w^{n-2}_j\rangle \, .
\end{equation}
However, the latter is infinitesimal in $L^2(B_r(q_j))$ by \eqref{eqz21}.

\section{The topology of good level sets}\label{sec:topgoodlevels}

This section aims to discuss the (quantitative) topological regularity of \emph{good level sets} of almost splitting maps $u:B_1(p)\to \setR^{n-2}$, where $B_1(p)\subset X$ and $(X,\dist,\haus^n)$ is an $\RCD(-(n-1),n)$ space. Good level sets are those levels for which the map $u$ remains an almost splitting map up to transformation at all locations and sufficiently small scales on the level set. The results of Section \ref{sec:slicing} guarantee the existence of (plenty of) good level sets for suitable almost splitting maps $u:B_1(p)\to \setR^{n-2}$. 
The end goal is to prove that good level sets are locally uniformly contractible topological surfaces with empty boundary when the ambient space $(X,\dist,\haus^n)$ has empty boundary in the sense of \cite{DePhilippisGigli2}. The statement will be uniform among all $\RCD(-(n-1),n)$ spaces such that $\haus^n(B_1(p))\ge v$ for a given $v>0$. This uniformity will be key for many of the subsequent applications.

\begin{proposition}\label{prop:topsurf}
Let $(X,\dist,\haus^n)$ be an $\RCD(-(n-1),n)$ space with empty boundary such that $\haus^n(B_1(p))\ge v>0$ and let $u:B_2(p)\to \setR^{n-2}$ be an almost splitting map. Assume that $u(p)=0$, and for each $q\in B_1(p)\cap\{u=0\}$ and each $0<r<1$ there exists an lower triangular $(n-2)\times (n-2)$ matrix $T_{q,r}$ such that $T_{q,r}\circ u:B_r(q)\to \setR^{n-2}$ is a $\delta$-splitting map.
If $\delta<\delta_0(n,v)$ the following hold:
	\begin{itemize}
		\item[(i)] $\{u=0\}\cap B_1(p)$ is a topological surface;
		
		\item[(ii)] if $r\le c(n,v)$ and $q\in \{u=0\}\cap B_{1/2}(p)$, then $B_r(q)\cap \{u=0\}$ is $2$-connected in $B_{\overline C r}(q)\cap \{u=0\}$, for some $\overline C>1$. Explicitly, for $k\in\{0,1,2\}$, any continuous map $S^k \to B_r(q)\cap \{u=0\}$ can be extended to a map $\overline{D}^{k+1} \to B_{\overline C r}(q)\cap \{u=0\}$.
	\end{itemize}
\end{proposition}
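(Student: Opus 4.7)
The plan is to reduce the topology of $\{u=0\}$ to that of two-dimensional $\RCD(0,2)$ cross-sections, which are under good control by Section \ref{subsec:2dRCD}.

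For each $q \in \{u=0\} \cap B_{1/2}(p)$ and each $r \le c(n,v)$, I would apply \autoref{thm:splittings}(ii) to the $\delta$-splitting map $T_{q,r}\circ u$. Provided $\delta$ is small enough, at a slightly smaller scale $cr$ this produces an $\eps r$-GH isometry
\begin{equation*}
\Psi_{q,r} = (T_{q,r}\circ u,\, f_{q,r}) : B_{cr}(q) \longrightarrow B_{cr + C\delta}(0^{n-2}, y_{q,r}) \subset \R^{n-2} \times W_{q,r},
\end{equation*}
with $(W_{q,r},\dist_{W_{q,r}},\haus^2)$ an $\RCD(0,2)$ space. By \autoref{thm:RCD2Alex} and \autoref{cor:Alsurf}, each $W_{q,r}$ is a two-dimensional Alexandrov surface; since $X$ has empty boundary and the statement is local, the product decomposition forces each $W_{q,r}$ to have empty boundary as well, so $W_{q,r}$ is a topological surface without boundary. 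Because $T_{q,r}\circ u(q)=0$, the zero level set $\{u=0\}\cap B_{cr}(q)$ corresponds under $\Psi_{q,r}$ to a set that is $\eps r$-close to $\{0^{n-2}\} \times B_{cr}(y_{q,r})$, and $f_{q,r}$ restricts to an approximate GH-isometry between a piece of the level set and a ball in $W_{q,r}$.

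For item (ii), I would invoke \autoref{prop:Ale2loccontr}: with the uniform volume lower bound, the class of cross-sections $\{W_{q,r}\}$ is locally uniformly contractible, so every ball $B_{cr}(y_{q,r}) \subset W_{q,r}$ is $k$-contractible inside $B_{C'cr}(y_{q,r})$ for $k\le 2$, with a universal $C'=C'(n,v)$. A map $\alpha : S^k \to \{u=0\}\cap B_r(q)$ can be pushed forward to $W_{q,r}$ via $f_{q,r}$, contracted there, and pulled back through the approximate inverse of $\Psi_{q,r}$, absorbing the $O(\eps r)$-error by slightly enlarging the target ball. The rigorous execution of this transfer would follow the scheme of \cite{Petersen90}: triangulate $S^k$ along an $\eps$-net, and extend skeleton by skeleton, alternating between the GH-isometry $\Psi_{q,r}$ and contractions available in $W_{q,r}$. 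Choosing $\delta \le \delta_0(n,v)$ small enough produces the desired extension inside $\{u=0\}\cap B_{\bar C r}(q)$ for a universal $\bar C = \bar C(n,v)$.

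Item (i) then follows by combining (ii) with the fact that $\{u=0\}$ has topological dimension $2$ and is locally GH-close (at every scale) to an open subset of a topological surface $W_{q,r}$. A topological recognition argument, either in the spirit of Reifenberg or via the manifold criteria developed for locally uniformly contractible spaces in \cite{Petersen90}, then identifies $\{u=0\}$ with a topological $2$-manifold without boundary. The main obstacle I expect is the \emph{scale-dependence} of the cross-sections $W_{q,r}$: standard topological stability results apply to a GH-convergent sequence with a fixed limit, whereas here a single level set must be compared simultaneously to infinitely many distinct $W_{q,r}$'s at different scales and at different base points. Ensuring that the constant $\bar C$ depends only on $n$ and $v$, while keeping the transfer argument uniform across scales, requires a careful multi-scale bookkeeping of the $\delta$-splitting errors and the universal control on 2D Alexandrov surfaces provided by Section \ref{subsec:2dRCD}.
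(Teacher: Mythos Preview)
Your approach to item (ii) is essentially the paper's: the GH-isometry $\Psi_{q,r}$ you describe is exactly \autoref{lemma:level GH}, and the skeleton-by-skeleton transfer using \autoref{prop:Ale2loccontr} is how the cases $k=0,1$ are handled. One point you gloss over is that the approximate inverse of $\Psi_{q,r}$ lands in $B_{cr}(q)$, not in $\{u=0\}\cap B_{cr}(q)$; the paper spends the proof of \autoref{lemma:level GH}(ii') on exactly this, completing $T_{q,r}\circ u$ to an $\R^n$-valued $\delta'$-splitting map and invoking the transformation theorem to guarantee a nearby preimage actually on the level set. You should flag that this step is nontrivial.

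For item (i) there is a real gap. Neither of the recognition mechanisms you invoke applies: the metric Reifenberg theorem needs closeness to $\R^2$ at every scale, whereas your $W_{q,r}$ are arbitrary Alexandrov surfaces that may well have a cone point near $y_{q,r}$; and \cite{Petersen90} yields homotopy equivalence, not a manifold chart. The paper proceeds quite differently. It first proves \autoref{lemma:finiteint}: the level set meets $X\setminus\mathcal{R}_{\eps_0}$ in only finitely many points (by a density-gap and blow-up contradiction). At the remaining Reifenberg-regular points one \emph{completes} $u$ to an $\R^n$-valued almost splitting map $\bar u$, which is a biH\"older homeomorphism onto an open subset of $\R^n$; this directly exhibits $\{u=0\}$ as a $2$-plane slice, giving a chart. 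The finitely many bad points are then handled by a separate cone-annulus patching argument. Without the finiteness lemma and the completion-to-$\R^n$ step, your route to (i) does not close; note also that the paper uses (i) to settle the case $k=2$ of (ii) (via $H_2=0$ for a noncompact surface and local Hurewicz), so your proposed order ``(ii) then (i)'' would have to supply an independent argument for $k=2$.
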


\begin{remark}
In the case where $(Y,\dist_Y,\haus^2)$ is an $\RCD(-1,2)$ space (and hence an Alexandrov space with curvature $\ge -1$), $X:=\setR^{n-2}\times Y$ with the canonical product structure, and $u:X\to\setR^{n-2}$ is the projection onto the first factor, the statements above reduce to \autoref{cor:Alsurf} and \autoref{prop:Ale2loccontr}. 
\end{remark}

The proof of \autoref{prop:topsurf} will be achieved through a series of intermediate steps. We let $\eps_0=\eps_0(n)>0$ be fixed such that the metric Reifenberg theorem from \cite{CheegerColding97I} applies for $\eps<\eps_0$, i.e., $\mathcal{R}_{\eps}(X)$ is a topological $n$-manifold; see \autoref{def:effreg} for the relevant notation.

\begin{itemize}
\item[(i)] In \autoref{lemma:finiteint} we shall see that the intersection of a good level set with $X\setminus \mathcal{R}_{\eps_0}$ is locally finite. This statement will be crucial later in Section \ref{sec:topman}.

\item[(ii)]  We will prove that each good level set is a topological surface away from the (finitely many) intersection points with $X\setminus \mathcal{R}_{\eps_0}$. Then we are going to extend the conclusion to the isolated bad points, concluding the proof of \autoref{prop:topsurf} (i).

\item[(iii)] The proof of the local uniform contractibility of good level sets will hinge on the local splitting at all locations and sufficiently small scales along a good level set and on the local uniform contractibility of Alexandrov surfaces with nonnegative curvature and (uniform) Euclidean volume growth, as in \autoref{cor:2dAVR} and \autoref{prop:Ale2loccontr}.

\end{itemize}

As mentioned above, the first step will be to show that each good level set intersects $X\setminus \mathcal{R}_{\epsilon_0}$ at locally finitely many points. The precise statement follows.

\begin{lemma}\label{lemma:finiteint}
Let $(X,\dist,\haus^n)$ be an $\RCD(-(n-1),n)$ space with empty boundary such that $\haus^n(B_1(p))\ge v>0$, and let $u:B_2(p)\to \setR^{n-2}$ be an almost splitting map. Assume that for each $q\in B_1(p)\cap\{u=0\}$ and each $0<r<1$ there exists an lower triangular $(n-2)\times (n-2)$ matrix $T_{q,r}$ such that $T_{q,r}\circ u:B_r(q)\to \setR^{n-2}$ is a $\delta$-splitting map.
If $\delta<\delta_0(\eps,v,n)$ then $(\{u=0\}\cap B_1(p))\setminus \mathcal{R}_{\eps}$ is a finite set. 
\end{lemma}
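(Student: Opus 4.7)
The plan is to argue by contradiction via a compactness and rescaling argument, reducing the count of bad points on the good level set to the count of singular points of a $2$-dimensional Alexandrov surface, where \autoref{prop:sing2dsmall} provides the uniform bound.

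Suppose for contradiction that no $\delta_0$ works. Then there are sequences $\delta_i \downarrow 0$, pointed $\RCD(-(n-1),n)$ spaces $(X_i,\dist_i,\haus^n,p_i)$ with empty boundary and $\haus^n(B_1(p_i))\ge v$, and maps $u_i:B_2(p_i)\to\R^{n-2}$ satisfying the hypotheses with $\delta=\delta_i$, such that $N_i:=|\{u_i=0\}\cap B_1(p_i)\setminus\mathcal{R}_\eps(X_i)|\to\infty$. By Gromov pre-compactness and \autoref{thm:splittings} applied to $u_i$ at $p_i$ (with $\delta_i\to 0$), a subsequence converges in pGH to $(X_\infty,p_\infty)$ with $X_\infty$ isometric to $\R^{n-2}\times Y$ near $p_\infty$, and $u_\infty$ the projection onto the first factor. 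Here $Y$ is an $\RCD(0,2)$ space with empty boundary, and by \autoref{thm:RCD2Alex} and \autoref{cor:Alsurf} it is an Alexandrov surface of nonnegative curvature; volume convergence gives $\haus^2(B_1(y_\infty))\ge v'>0$. Then \autoref{prop:sing2dsmall} and \autoref{rm:finitesingset} applied to $Y$ bound the number of singular points of $Y$ (those with cone angle $\le 2\pi-\eps'$, for some $\eps'=\eps'(\eps,n)>0$) inside $B_2(y_\infty)$ by a finite number $N_0=N_0(\eps',v)$.

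The first step is to identify limit points of the bad set with singular points of $Y$. For each bad $q_i\in X_i$, choose $s_i\in(0,1)$ with $\dist_{\mathrm{GH}}(B_{s_i}(q_i),B_{s_i}(0^n))\ge\eps s_i$. Since bad-ness is scale invariant, rescaling $(X_i,s_i^{-1}\dist_i,q_i)$ and invoking the $\delta_i$-splitting on $B_r(q_i)$ for every $r<1$ (together with \autoref{thm:splittings}) produces a pGH sub-limit of the form $\R^{n-2}\times\tilde Y_\infty$ in which the unit ball around the base point is not $\eps$-close to $\R^2$ in the $\tilde Y_\infty$-factor. Tracing this construction back, the limit $q_\infty=(0,y_\infty)$ must have $y_\infty$ singular in $Y$: either $s_i$ stays bounded away from $0$ (yielding macroscopic singularity) or $s_i\to 0$ (yielding a non-Euclidean tangent cone of $Y$ at $y_\infty$). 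In either case, $y_\infty$ lies in the finite singular set of $Y$, so there are at most $N_0$ possible limit points.

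The second step precludes accumulation: given distinct bad points $q_i^{(1)}\neq q_i^{(2)}$ both converging to a single $(0,y^*)\in X_\infty$, set $r_i:=\dist_i(q_i^{(1)},q_i^{(2)})\to 0$ and rescale to $\hat X_i:=(X_i,r_i^{-1}\dist_i,\haus^n,q_i^{(1)})$. The scale-invariant $\delta_i$-splitting hypothesis passes through the rescaling, and by \autoref{thm:splittings} again $\hat X_\infty$ splits as $\R^{n-2}\times\hat Y$, where $\hat Y$ is the tangent cone of $Y$ at $y^*$, isometric to a $2$-dimensional cone with cone angle $\alpha<2\pi$ (since $y^*$ is singular in $Y$). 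The rescaled $\hat q_i^{(j)}$ are still bad and their limits lie on $\{0\}\times\hat Y$ at mutual distance $1$; by the first step these limits must be singular points of $\hat Y$, but the unique singular point of such a cone is the tip $o$, so both limits coincide with $(0,o)$, contradicting their distance being $1$. Combining the two steps gives $N_i\le N_0$ uniformly, contradicting $N_i\to\infty$. The main technical obstacle is the transfer of ``bad at some scale'' to the pGH limit, since this is a condition at arbitrarily small scales; the scale-invariant $\delta_i$-splitting hypothesis across all radii $r<1$ is exactly what makes the rescaling argument go through.
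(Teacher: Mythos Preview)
Your overall strategy---contradiction, compactness, reduction to singular points of a $2$-dimensional Alexandrov factor---is in the same spirit as the paper's, but there is a genuine gap in Step~2, and Step~1 is not quite rigorous as written.

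\textbf{Step 2.} The claim that $\hat Y$ is the tangent cone of $Y$ at $y^*$ is unjustified. You are blowing up a \emph{varying} sequence $(X_i,r_i^{-1}\dist_i,q_i^{(1)})$ with $r_i\to 0$; the limit of such a sequence need not coincide with any tangent cone of $X_\infty=\lim X_i$. All you know is that $\hat X_\infty$ splits off $\R^{n-2}$ (from the $\delta_i$-splitting hypothesis), so $\hat Y$ is some Alexandrov surface with curvature $\ge 0$---but not, in general, a cone. Without the cone structure, having two singular points at distance $1$ in $\hat Y$ is no contradiction. The paper avoids this by \emph{reversing the order of the two limits}: it first fixes a single space $X$, takes an accumulation point $z_\infty$ of the bad set, and blows up $X$ at $z_\infty$; this guarantees the limit is a genuine tangent cone $C(Z)$. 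Only then does it run a second contradiction in $\delta$, sending $\delta_j\to 0$ along a sequence of such \emph{cones} $C(Z_j)$; the limit is therefore both a cone and $(n-2)$-split, i.e.\ $\R^{n-2}\times C(S^1_\ell)$, which is smooth away from the spine---yielding the contradiction.

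\textbf{Step 1.} Your transfer of ``bad at some scale $s_i$'' to the limit via a separate rescaling by $s_i^{-1}$ does not cleanly show that the \emph{unrescaled} limit point $y_\infty$ is singular in $Y$: the two limits (in $i$ and in scale) do not commute without further argument. The paper handles this with the density: $q_i\notin\mathcal{R}_\eps$ forces $\theta_{X_i}(q_i)\le 1-\eta_0(\eps)$ by $\eps$-regularity, and $\theta$ is lower semicontinuous under pGH convergence, so $\theta_{X_\infty}(q_\infty)\le 1-\eta_0$. This is a one-line replacement for your Step~1 and is what makes the contradiction in Step~2 (in the paper's ordering) go through.
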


The rest of this section is organized as follows. In Section \ref{subsec:prooflemmafinit} we are going to prove \autoref{lemma:finiteint}. In Section \ref{subsec:topsurfi)} we will prove \autoref{prop:topsurf} (i). In Section \ref{subsec:prooftopsurfii)} we will prove \autoref{prop:topsurf} (ii).

\subsection{Proof of \autoref{lemma:finiteint}}\label{subsec:prooflemmafinit}
For the sake of illustration, we first discuss the argument in the case $\delta=0$; namely, we recall how to prove that for any Alexandrov surface with empty boundary the effective singular set is locally finite. The statement is well known (see for instance \cite{LytchakStadler}). It is helpful to report the argument here as the proof of \autoref{lemma:finiteint} in full generality is an effective variant of it.
\medskip

For an Alexandrov surface $(Z,\dist_Z)$ with empty boundary and curvature bounded from below by $-1$, all tangent cones are unique and are metric cones over circles with diameter $\le \pi$. In particular, they are smooth (and flat) away from the tip. Assume by contradiction that there is a sequence $z_i\in Z\setminus \mathcal{R}_{\eps}(Z)$ such that $z_i\to z_{\infty}$ as $i\to \infty$, and $z_i\neq z_{\infty}$ for each $i\in\setN$. Since $z_i\in Z\setminus \mathcal{R}_{\eps}(Z)$, the classical $\eps$-regularity theorem in the context of Ricci curvature (see \cite{DePhilippisGigli2} after \cite{CheegerColding97I}) implies that
\begin{equation}\label{eq:densgap}
\theta_Z(z_i)\le 1-\eta_0
\end{equation}
for each $i\in\setN$, where
\begin{equation}
\theta_Z(z):=\lim_{r\to 0}\frac{\haus^2(B_r(z))}{\pi r^2} \, ,
\end{equation}
and $\eta_0=\eta_0(\eps)>0$. 
Let $r_i:=\dist_Z(z_i,z_{\infty})>0$ for each $i\in\setN$. We consider the sequence of pointed metric spaces $(Z_i,\dist_{Z_i},z_{\infty}):=
(Z,r_i^{-1}\dist_Z,z_{\infty})$. Up to the extraction of a subsequence that we do not relabel, $(Z_i,z_{\infty})\to (C(S^1_{\ell}),o)$ in the pointed Gromov--Hausdoff sense, where $C(S^1_{\ell})$ is the tangent cone of $(Z,\dist_Z)$ at $z_{\infty}$ and $o\in C(S^1_{\ell})$ denotes the vertex. Up to the extraction of a further subsequence, $Z_i\ni z_i\to\overline{z}\in C(S^1_{\ell})$ for some point $\overline{z}$ such that $\dist_{C(S^1_{\ell})}(\overline{z},o)=1$. By the smoothness of $C(S^1_{\ell})$ away from the vertex, 
\begin{equation}\label{eq:limdens1}
\theta_{C(S^1_{\ell})}(\overline{z})=1\, .
\end{equation}
On the other hand, by lower semicontinuity of the density with respect to pointed Gromov--Hausdorff convergence (see for instance \cite[Lemma 2.2]{DePhilippisGigli2}) and \eqref{eq:densgap}, we have
\begin{equation}
\theta_{C(S^1_{\ell})}(\overline{z})\le 1-\eta_0<1\, ,
\end{equation}
a contradiction to \eqref{eq:limdens1}.

\medskip

Next, we discuss the proof in the general case.
We assume by contradiction that there exists a sequence $z_i\in (\{u=0\}\cap B_1(p))\setminus \mathcal{R}_{\eps}$ with $z_i\to z_{\infty}$ as $i\to\infty$ and $r_i:=\dist(z_i,z_{\infty})>0$ for each $i\in\setN$. Since $z_i\in X\setminus \mathcal{R}_{\eps}(X)$, as above the classical $\eps$-regularity theorem in the context of Ricci curvature (see \cite{DePhilippisGigli2} after \cite{CheegerColding97I}) implies that
\begin{equation}\label{eq:densgap-bis}
\theta_X(z_i)\le 1-\eta_0
\end{equation}
for each $i\in\setN$, where
\begin{equation}\label{eq:densgap2}
\theta_X(z):=\lim_{r\to 0}\frac{\haus^n(B_r(z))}{\omega_nr^n}
\end{equation}
for each $z\in X$, and $\eta_0=\eta_0(\eps)>0$. We let $(X_i,\dist_i,z_{\infty}):=(X,r_i^{-1}\dist,z_{\infty})$ and $u_i:=r_i^{-1}T_{z_\infty,2r_i}\circ u:B^{X_i}_2(z_{\infty})\to \setR^{n-2}$ (where $T_{z_\infty,2r_i}$ are the $(n-2)\times (n-2)$ transformation matrices at scale $2r_i$ with center $z_\infty$, as in the assumptions). Up to the extraction of a subsequence that we do not relabel, $(X_i,\dist_i,z_{\infty})\to (C(Z),\dist_{C(Z)},o)$, as $i\to \infty$ in the pointed Gromov--Hausdorff sense for some $\RCD(n-2,n-1)$ space $(Z,\dist_Z,\haus^{n-1})$ with $\haus^{n-1}(Z)\ge v$. Moreover, $X_i\ni z_i\to \overline{z}\in C(Z)$ as $i\to\infty$ for some point $\overline{z}$ such that $\dist_{C(Z)}(\overline{z},o)=1$, and $u_i\to u_{\infty}$ uniformly along the sequence $X_i\to C(Z)$ for some limit $\delta$-splitting map $u_{\infty}:B_2(o)\to \setR^{n-2}$ such that $u(\overline{z})=u(o)=0$. 

As above, the lower semicontinuity of the density with respect to pointed Gromov--Hausdorff convergence and \eqref{eq:densgap-bis} yield
\begin{equation}\label{eq:limdensgap}
\theta_{C(Z)}(\overline{z})\le 1-\eta_0\, .
\end{equation}
We claim that \eqref{eq:limdensgap} yields to a contradiction for $\delta<\delta_0$. If not, there exist a sequence of $\RCD(n-2,n-1)$ spaces $(Z_j,\dist_j,\haus^{n-1})$ with $\haus^{n-1}(Z_j)\ge v$, a sequence of $1/j$-splitting maps $u_j:B_2(p_j)\to \setR^{n-2}$, and  points $\overline{z}_j\in C(Z_j)$ such that $u_j(\overline{z}_j)=u_j(o_j)=0$ and $\dist_{C(Z_j)}(\overline{z}_j,o_j)=1$, where  $o_j\in C(Z_j)$ is a vertex. Up to the extraction of a subsequence, $(C(Z_j),\dist_{C(Z_j)},o_j)\to (C(Z_{\infty}),\dist_{C(Z_{\infty})},o_{\infty})$ in the pointed Gromov--Hausdorff sense for some $\RCD(n-2,n-1)$ space $(Z_{\infty},\dist_{\infty},\haus^{n-1})$ such that $B_{3/2}(o_{\infty})$ is isometric to $B_{3/2}(0^{n-2},o)\subset \setR^{n-2}\times C(S^1_{\ell})$, where $o\in C(S^1_{\ell})$ denotes the vertex and $0<\ell\le 1$. Note that the absence of boundary in the limit cone follows from \cite{BrueNaberSemolabdry}.

We shall identify the two spaces $C(Z_{\infty})$ and $\setR^{n-2}\times C(S^1_{\ell})$ in the sequel with a slight abuse of notation.
Moreover, up to the extraction of a further subsequence, $C(Z_j)\ni \overline{z}_j\to \overline{z}_{\infty}=(0^{n-2},\tilde{z}_{\infty})\in C(Z_\infty)$ for some point $\tilde{z}_{\infty}\in C(S^1_{\ell})$ such that $\dist_{C(S^1_{\ell})}(\tilde{z}_{\infty},o)=1$. As already mentioned, $C(S^1_{\ell})$ is smooth away from the vertex $o$. Hence
\begin{equation}
\theta_{C(Z_{\infty})}(\overline{z}_{\infty})=1\, .
\end{equation}
By lower semicontinuity of the density, this results in a contradiction with the condition
\begin{equation}
\theta_{C(Z_i)}(\overline{z}_i)\le 1-\eta_0\, ,\quad \text{for each $i\in\setN$}\, ,
\end{equation}
as soon as $i\in \setN$ is large enough. 

\subsection{Proof of \autoref{prop:topsurf} (i)}\label{subsec:topsurfi)}

There are two main steps in the proof. In Step 1 we are going to prove that, for $\eps_0>0$ sufficiently small, for every point $q\in (\{u=0\}\cap B_1(p))\cap\mathcal{R}_{\epsilon_0}$ there is a neighbourhood $q\in U_q \subset X$ such that $U_q\cap \{u=0\}$ is homeomorphic to $\setR^2$. By \autoref{lemma:finiteint} this is sufficient to show that each good level set is a topological surface away from a discrete set of points. The aim of Step 2 will be to extend the topological regularity to these potentially bad points.
\medskip

\emph{Step 1:} We claim that, for $\eps_0>0$ sufficiently small, for every point $q$ in the set $(\{u=0\}\cap B_1(p))\cap\mathcal{R}_{\eps_0}$ there is a neighbourhood $U_q\ni q$ in $X$ such that $U_q\cap \{u=0\}$ is homeomorphic to $\setR^2$.

Indeed, if $q\in \mathcal{R}_{\epsilon_0}$ there exists $r>0$ such that $B_r(q)$ is $\eps_0 r$-close to $B_r(0)\subset \setR^n$. Then we can complete the $(n-2)$-almost splitting map $u:B_r(q)\to \setR^{n-2}$ to an almost splitting map $\overline{u}:B_r(q)\to \setR^{n}$ whose first $(n-2)$ components coincide with $u$ and with harmonic last two components. Arguing as in \cite[Section 7.5]{CheegerJiangNaber} (see also \cite[Section 3.3]{BrueNaberSemolabdry} for the $\RCD$ setting) it is possible to prove that $\overline{u}:B_{r/2}(q)\to \setR^{n}$ is a biH\"older homeomorphism and a Gromov--Hausdorff approximation with its image, which is an open set $U\subset \setR^n$ such that $B_{(1-\epsilon_0)r/2}(0)\subset U\subset B_{(1+\epsilon_0)r/2}(0)$. It follows that $\{u=0\}\cap B_{r/2}(q)$ is locally homeomorphic and Gromov--Hausdorff close to $B_{r/2}(0)\subset \setR^2$, as we claimed.

\medskip

\emph{Step 2:} We claim that also for the finitely many points $q\in (\{u=0\}\cap B_1(p))\setminus \mathcal{R}_{\eps_0}$ there exists a neighbourhood $q\in U_q\subset X$ such that $U_q\cap \{u=0\}$ is homeomorphic to $\setR^2$. Up to scaling and without loss of generality we can assume that $(\{u=0\}\cap B_1(p))\setminus \{q\}$ is a topological surface. We fix $\eta_0>0$ such that the metric Reifenberg theorem applies for $\eta_0$ below.
A slight variant of the argument employed in Step 1 proves the following claim: there exist $r = r(q)$ and $r_0=r_0(\eta)$ such that for each $0<s<r$ and for every $q'\in (B_{2s}(q)\setminus B_s(q))\cap \{u=0\}$ it holds
\begin{equation}
\dist_{\rm{GH}}(\{u=0\}\cap B_t(q'),B_t(0^2))\le \eta_0 t\, ,
\end{equation}
for each $0<t<r_0s$, provided that $\eps<\eps_0(\eta_0,n,v)$ and $\delta<\delta_0(\eta_0,n,v)$. Up to possibly choosing a smaller $r>0$, the claim can be exploited in conjunction with the metric Reifenberg theorem (alternatively, with the explicit construction of the local biH\"older homeomorphism as above) and the Gromov--Hausdorff closeness of
$\{u=0\}\cap B_s(q)$ to a cone
at all scales $s<r$ to construct a family of nested open neighbourhoods $U_i\ni q$ such that: 
\begin{itemize}
\item[(i)] $\overline{U}_i\setminus U_{i+1}\cap \{u=0\}$ is homeomorphic to a closed annulus $\overline{B}_1(0)\setminus B_{1/2}(0)\subset \setR^2$ for each $i\in\setN$;
\item[(ii)] $B_{2^{-(i+1)}r}(q)\subset U_i\subset B_{2^{-(i-1)}r}(q)$ for each $i\in \setN$
\end{itemize}
(see, e.g., the arguments from \cite[Appendix 1]{CheegerColding97I}).
The homeomorphisms with annuli in (i) can be rescaled and patched together by (ii) and (iii) to obtain a homeomorphism between $U_1\cap \{u=0\}\setminus \{q\}$ and $B_1(0)\setminus \{0\}\subset \setR^2$. This homeomorphism is easily seen to extend to a homeomorphism between $U_1\cap \{u=0\}$ and $B_1(0)\subset \setR^2$.

\begin{remark}
 In the case where the ambient space is a smooth Riemannian manifold, the proof of the topological regularity of good level sets could be greatly simplified. Indeed, arguing as in the proof of \cite[Theorem 7.10]{CheegerJiangNaber}, it is possible to check that $du$ has maximal rank at each point on each good level set. The topological regularity then follows from the implicit function theorem.

 We also note that for most purposes, it would be sufficient to assume that the level set is non-critical, as we can always neglect a set of values of $\Leb^{n-2}$-measure zero, according to Sard's theorem.
\end{remark}

\subsection{Proof of \autoref{prop:topsurf} (ii)}\label{subsec:prooftopsurfii)}

As a first step, we prove a lemma ensuring that the $\delta$-splitting map $T_{q,r}\circ u: B_r(q)\to \R^{n-2}$ can be used to build an almost isometry with the model space $\R^{n-2} \times Z^2$, which induces an almost isometry between (the ball in) the level set $\{u=0\}$ and (the ball in) $Z$.

\begin{lemma}\label{lemma:level GH}
Let $r\le c(n,v)$ and $q\in \{u=0\}\cap B_1(p)$. If $\delta \le \delta_0(\eps,n,v)$, there exists an Alexandrov surface $(Z,\dist_Z,z_0)$ with curvature $\ge 0$ such that the following holds.

\begin{itemize}
  \item[(i')] There exists a $\eps r$-GH isometry
  \begin{equation}
      \Psi : B_r(q) \to B_r(0^{n-2},z_0) \, ,
      \quad 
      (0^{n-2},z_0)\in \R^{n-2}\times Z \, ,
  \end{equation}
  whose Euclidean components are given by $T_{q,10nr}\circ u$.

   \item[(ii')] If $\eps \le \eps_0(\eps',n,v)$, then $\{u=0\}\cap B_r(q)$ is $\eps' r$-close to $B_r(z_0)$ through $\Psi$.
\end{itemize}
\end{lemma}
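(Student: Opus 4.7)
The proof reduces to a direct application of part (ii) of \autoref{thm:splittings} to the $\delta$-splitting map $T_{q,10nr}\circ u:B_{10nr}(q)\to\R^{n-2}$ provided by the standing assumption at $q\in\{u=0\}\cap B_1(p)$ and scale $10nr$. For $\delta\le\delta_0(\eps,n)$, that theorem (after rescaling by $10nr$) yields an $\RCD(0,2)$ factor space $(Z,\dist_Z,\meas_Z)$, a base point $z_0\in Z$, and a map $f:B_r(q)\to Z$ such that
\[
\Psi:=(T_{q,10nr}\circ u,f):B_r(q)\longrightarrow B_r(0^{n-2},z_0)\subset\R^{n-2}\times Z
\]
is an $\eps r$-GH isometry; the prefactor $10n$ is chosen so that, after accounting for the factor $1/k$ with $k=n-2$ in \autoref{thm:splittings}(ii), the GH isometry is valid on the full ball $B_r(q)$. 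The noncollapsing assumption $\haus^n(B_1(p))\ge v$ propagates through the splitting to force $\meas_Z=\haus^2$, and \autoref{thm:RCD2Alex} then identifies $(Z,\dist_Z)$ with an Alexandrov surface of curvature $\ge 0$. Since by construction the first $n-2$ components of $\Psi$ are $T_{q,10nr}\circ u$, this establishes (i').

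For (ii'), the forward Hausdorff inclusion is immediate: any $x\in\{u=0\}\cap B_r(q)$ satisfies $T_{q,10nr}(u(x))=0$ \emph{exactly}, so $\Psi(x)\in\{0^{n-2}\}\times Z$, and the GH isometry property places $\Psi(x)$ within $\eps r$ of $\{0^{n-2}\}\times B_r(z_0)$. The reverse inclusion---that each $z\in B_r(z_0)$ is approximated by $f(x)$ for some $x\in\{u=0\}\cap B_r(q)$ up to error $\eps' r$---I would establish by compactness and contradiction. If the reverse inclusion failed with some fixed $\eps'>0$ along sequences $\eps_i\to 0$, $X_i,q_i,r_i,u_i$ satisfying the assumptions, rescaling by $r_i$ and passing to a pointed GH subsequential limit, the uniform noncollapsing combined with the $\eps_i$-splitting at all scales along $\{u_i=0\}$ forces the limit to be the metric product $\R^{n-2}\times Z_\infty$ with $q_i\to(0^{n-2},z_{0,\infty})$, the maps $\Psi_i$ converging to the identity of this product, and the maps $T_{q_i,10nr_i}\circ u_i$ converging to the Euclidean projection $\pi_1$.

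The main technical step is then the pointed Hausdorff convergence of the zero sets $\{u_i=0\}\cap B_{r_i}(q_i)$ to $\{0^{n-2}\}\times B_1(z_{0,\infty})$, which would yield the sought contradiction. The forward direction is automatic from the uniform continuity of $u_i\to\pi_1$. The reverse direction---the main obstacle---amounts to approximating any limit point $(0^{n-2},z_\infty)$ of the slice, realized as the limit of points $y_i\in B_{r_i}(q_i)$ with $|u_i(y_i)|\to 0$ (obtained from GH surjectivity of $\Psi_i$), by genuine zero-fiber points $x_i\in\{u_i=0\}$ at vanishing distance from $y_i$. This perturbation is controlled via the non-degenerate gradient condition $|\nabla(T_{q_i,10nr_i}\circ u_i)|\approx 1$ guaranteed by the $\eps_i$-splitting: it makes $T_{q_i,10nr_i}\circ u_i$ an almost-submersion whose level sets $\{u_i=c\}$ vary Lipschitz-continuously with $c\to 0$, so that $\dist(y_i,\{u_i=0\})=O(|u_i(y_i)|)\to 0$.
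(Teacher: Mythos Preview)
Your treatment of (i') and the forward inclusion in (ii') is correct and matches the paper. The gap is in the reverse inclusion of (ii'), specifically in your final step: the claim that the bound $|\nabla(T_{q_i,10nr_i}\circ u_i)|\approx 1$ makes the map an ``almost-submersion whose level sets vary Lipschitz-continuously,'' yielding $\dist(y_i,\{u_i=0\})=O(|u_i(y_i)|)$. This inference is not justified in the non-smooth $\RCD$ setting. An integral (or even pointwise) gradient bound of this type does not by itself guarantee that approximate zeros can be perturbed to exact zeros at comparable distance; for a vector-valued map into $\R^{n-2}$ there is no intermediate-value argument available, and the standing hypothesis only supplies transformation matrices $T_{q',r}$ at points $q'$ \emph{already} in $\{u=0\}$, so it cannot be invoked at $y_i$ without circularity. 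The step you wave through is precisely the substantive content of the lemma.

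The paper establishes the reverse inclusion by a direct (non-contradiction) construction that supplies exactly what your sketch is missing. Given $(0^{n-2},z)$ in the target, it first exploits that $Z$ is an Alexandrov surface with a volume lower bound to find a nearby $z'\in B_{\eps' r}(z)$ and a scale $s=s(\eps')$ at which $B_{sr}(z')$ is almost-Euclidean. This forces $B_{sr}(q')$, for $q'$ close to $(0^{n-2},z')$, to be $C(n)\eps' sr$-close to a Euclidean ball in $\R^n$. One can then \emph{complete} $T_{q,10nr}\circ u$ to a full $n$-component $\delta'$-splitting map $v:B_{sr/2}(q')\to\R^n$, and the transformation theorem \cite[Section~7.5]{CheegerJiangNaber} makes $v$ a biH\"older homeomorphism whose image contains $B_{sr/4}(v(q'))\ni 0^n$. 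The preimage $q''=v^{-1}(0)$ is the sought exact zero of $u$ near $q'$. If you want to salvage your compactness route, you would ultimately need to run this same construction in the limit (finding a regular point of $Z_\infty$ and completing the splitting there), so the direct argument is both shorter and more transparent.
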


\begin{proof}
We suppress the dependency of constants on $v$ to ease notation.
Item (i') follows from \autoref{thm:splittings}, up to a slight dilation of $T_{q,10nr}$
to ensure that $T_{q,10nr}\circ u$ takes values in $B_r(0^{n-2},z_0)$. Since the first $(n-2)$-components of $\Psi$ are $T_{q,10nr}\circ u$, it is clear that 
\begin{equation}
    \Psi(\{u=0\}\cap B_r(q))
    \subseteq
    \{(0^{n-2},z)\mid z\in Z\}\cap B_r(0^{n-2},z_0) \, .
\end{equation}
Fix $\eps'>0$.
To prove (ii'), we need to show the opposite inclusion, i.e., that for every $(0^{n-2},z)\in B_r(0^{n-2},z_0)$ there exists $q''\in \{u=0\}\cap B_r(q)$ such that $\dist(\Psi(q''), (0^{n-2},z)) \le \eps' r$, provided $\eps \le \eps_0(\eps',n)$.
Since $Z$ is an Alexandrov surface with a lower bound on the area of each ball $B_r(z)$, there exists $s=s(\eps')\le \eps'$ and $z'\in B_{\eps' r}(z)$ such that $B_{sr}(z')$ is $\eps'sr$-GH close the Euclidean ball $B_{sr}(0^2)$.

Let $q'\in B_r(q)$ be a point $\eps r$-GH close to $(0^{n-2},z')$, i.e., $\dist(\Psi(q'),(0^{n-2},z')) \le \eps r$. 
Notice that $B_{sr}(q')$ is $C(n)\eps' sr$-GH close to the Euclidean ball $B_{sr}(0^n)$, if $\eps \le \eps_0(\eps',n)$.
Since $T_{q,10nr}\circ u$ gives the first $n-2$ components of $\Psi$, it turns out that $|T_{q,10nr}\circ u(q')|\le \eps r$. Our goal is to find $q''\in B_{\eps' r}(q')$ such that $T_{q,10nr}\circ u(q'')=0$. Once this is done, the proof of the lemma will be completed.

To this aim, we build a $\delta'$-splitting map $v: B_{sr/2}(q') \to \R^n$ whose first $n-2$ components are given by $T_{q,10nr}\circ u$ and such that $v(q')=(T_{q,10nr}\circ u(q'),0^2)$. This is possible thanks to \autoref{thm:splittings}, if $\delta' = \delta'(\eps',n)$ and $\eps\le \eps_0(\eps',n)$.

By the transformation theorem (see \cite[Section 7.5]{CheegerJiangNaber}), $v: B_{sr/2}(q') \to \R^{n}$ is biH\"older with its image and $B_{sr/4}(v(q'))\subset v(B_{sr/2}(q'))$. In particular, if $\eps \le \eps_0(\eps',n)$ we have that $0^n\in v(B_{sr/2}(q'))$ since $|v(q')|\le \eps$ and $s=s(\eps',n)>10\eps$. In particular, there exists $q''\in B_{sr/2}(q')$ such that $v(q'')=0$.
\end{proof}

\medskip

We proceed with the proof of \autoref{prop:topsurf} (ii).
Let us show the statement for $k=0$. Let $r\le c(n,v)$, $q\in \{u=0\}\cap B_{1/2}(p)$, and $x,y\in \{u=0\}\cap B_r(q)$.
We claim that we can find a finite sequence $x = x_0 ,x_1,\dots,x_5 =y$ in $\{u=0\} \cap B_{3r/2}(q)$
		such that $\dist(x_j,x_{j+1})<\frac{r}{2}$.
  
        If $\delta \le \delta_0(\eps,n)$, $B_r(q)$ is $\eps r$-GH close to $B_r(0^{n-2}, z_0)$, where $(0^{n-2},z_0)\in \R^{n-2} \times Z^2$ and $Z$ is an Alexandrov surface with nonnegative curvature. 
        Moreover, by \autoref{lemma:level GH}, $\{u=0\}\cap B_r(q)$ is $\eps r$-close to $B_r(z_0)$, and $x,y\in B_r(q)$ are $\eps r$-GH close to points $(0^{n-2}, \tilde x)$, $(0^{n-2}, \tilde y)\in \R^{n-2}\times Z$.     
        We can find a finite sequence $\tilde x = \tilde x_0, \tilde x_1, \ldots, \tilde x_5 = \tilde y$ in $Z$ such that $\dist_Z(\tilde x_j, \tilde x_{j+1})\le r/4$. We then define $x = x_0 ,x_1,\dots,x_5 =y$ in $\{u=0\}\cap B_{3r/2}(q)$ so that $x_i$ is $\eps r$-GH close to $\tilde x_i$. It is immediate to see that $\dist(x_j,x_{j+1})\le r/2$ if $\eps < \frac18$.

		Iterating, replacing $q$ with each $x_j$, we can find a chain of $5^2+1$ points of pairwise distances at most $s/4$, and so on.
		Repeating this procedure infinitely many times, we obtain a map $\gamma$ defined on the set
		$$[0,1]\cap\bigcup_{m\in\N}5^{-m}\Z$$
		(specifically, $\gamma(j/5)=x_j$, while on multiples of $5^{-2}$ we use the finer chain, and so on). Since $\gamma$ is H\"older continuous, it extends to a map defined on $[0,1]$. Also, the latter takes values in $\{u=0\} \cap B_{4r}(q)$.

    \medskip

		Let us now turn to $k=1$. Given a loop in $B_r(q)\cap \{u=0\}$, which we view as a function $\alpha$ defined on the boundary $\de[0,1]^2$ of the square, we claim that we can find $\ell\ge2$ and a function
		\begin{equation}
        h:\left\{\frac{j}{\ell}\mid j=0,\dots,\ell\right\}^2\cup\de[0,1]^2\to B_{Cr}(q)\cap \{u=0\}
        \end{equation}
		which agrees with $\alpha$ on $\de[0,1]^2$, such that the image of each square $[\frac{j}{\ell},\frac{j+1}{\ell}]\times[\frac{j'}{\ell},\frac{j'+1}{\ell}]$
		(intersected with the domain of $h$) has diameter less than $\frac{r}{10}$. 
       Arguing as above, we know that 
      $B_r(q)$ is $\eps r$-GH close to $B_r(0^{n-2}, z_0)$, where $(0^{n-2},z_0)\in \R^{n-2} \times Z^2$ and $Z$ is an Alexandrov surface with nonnegative curvature. Moreover, by \autoref{lemma:level GH} again, $\{u=0\}\cap B_r(q)$ is $\eps r$-close to $B_r(z_0)$, and $\alpha$ is uniformly $10\eps r$-GH close to a loop $\beta$ in $Z$, which is contained in $B_{2r}(z_0)$. The loop $\beta$ can be build by dividing $\de[0,1]^2$ in $4m$ small pieces, with $m$ so large that their image (through $\alpha$) has small diameter, selecting a point $\beta(t)\in Z$ close to $\alpha(t)$ for each endpoint $t$, and completing $\beta$ by making it piecewise geodesic.
  
     Since Alexandrov surfaces with nonnegative curvature are locally uniformly contractible (see \autoref{prop:Ale2loccontr}), we can fill $\beta$ with a continuous map $\overline\beta:[0,1]^2\to Z$ supported in $B_{Cr}(z_0)$ for some $C>1$. We now take $\ell$ large, such that
     $\overline\beta([\frac{j}{\ell},\frac{j+1}{\ell}]\times[\frac{j'}{\ell},\frac{j'+1}{\ell}])$ has small diameter for each $0\le j,j'<\ell$.
     We can now define $h$ at each point $(x,y)\in\{\frac{j}{\ell}\mid j=1,\dots,\ell-1\}^2$ by taking a point in $\{u=0\}$ close to $(0^{n-2},\overline\beta(x,y))$, completing the proof of the claim.
		
     Now, by applying the case $k=0$, we can extend $h$ to a function $\overline h$ defined on the full 1-skeleton 
     \begin{equation}
     \bigcup_{0\le j,j'<\ell}\de\left(\left[\frac{j}{\ell},\frac{j+1}{\ell}\right]\times\left[\frac{j'}{\ell},\frac{j'+1}{\ell}\right]\right)\, .
     \end{equation}
		The loop $\alpha$ is thus homotopic to the concatenation of $\ell^2$ loops based at $\alpha(0,0)$, each of which has the form
		$$ \eta_{jj'}*\alpha_{jj'}*i(\eta_{jj'}) $$
		(where $*$ denotes concatenation and $i(\cdot)$ indicates a path traveled backwards),
		where $\eta_{jj'}$ joins $\alpha(0,0)$ to $\alpha(\frac{j}{\ell},\frac{j'}{\ell})$, while $\eta_{jj'}$ is a loop based at $\alpha(\frac{j}{\ell},\frac{j'}{\ell})$
		with diameter (of the image) less than $\frac{r}{2}$.
        We can now iterate and apply exactly the same argument used in \cite{WangRCD}.

     \medskip
  
		Finally, let us deal with the case $k=2$. The set $\{u=0\} \cap B_{r}(q)$ is a proper subset of $\{u=0\}$. Indeed, by \autoref{lemma:level GH}, again $\{u=0\} \cap B_{2r}(q)$ is $\eps r$-GH close to $B_{2r}(z_0)$, where $z_0\in Z$. In particular $\{u=0\}\cap (B_{2r}(q) \setminus B_{r}(q)) \neq \emptyset$.
        Applying the case $k=0$ to join $q$ with another point $q'$ in the latter set,
        we see that $\{u=0\} \cap B_{r}(q)$ is not compact.
  
		Hence, by \autoref{prop:topsurf} (i), $\{u=0\} \cap B_{r}(q)$ is an open surface, implying that its second homology group vanishes. A local version of Hurewicz theorem (see
		\cite[Theorem 0.8.3]{DavermanVenema})
		now implies that the inclusion $\{u=0\}\cap B_{r}(q)\hookrightarrow \{u=0\} \cap B_{Cr}(q)$ induces the trivial map on the second homotopy groups.


\section{Proof of \autoref{thm:noRP2}}

The goal of this section is to prove \autoref{thm:noRP2}, which we restate below for the ease of readability.

\begin{theorem}\label{thm:noRP^2main}
Let $(M_i^n,g_i,p_i)\to (X^n,\dist,\haus^n,p)$ be a noncollapsed Ricci limit space. Assume that $X^n=\setR^{n-3}\times C(Z^2)$ is an $(n-3)$-symmetric cone. Then $(Z^2,\dist_Z)$ is homeomorphic to the $2$-sphere $S^2$.  
\end{theorem}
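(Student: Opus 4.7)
I would first identify the structure of $Z^2$. The cross-section of the cone $C(Z^2)$ is an $\RCD(1,2)$ space by Ketterer's theorem, and the noncollapsing assumption together with the fact that noncollapsed Ricci limits of manifolds without boundary have empty boundary in the sense of \cite{DePhilippisGigli2} (a property which transfers to $Z^2$ through the product structure) ensures that the hypotheses of \autoref{cor:2dK>0} hold. Therefore $Z^2$ is homeomorphic to either $S^2$ or $\mathbb{RP}^2$, and the plan is to argue by contradiction, assuming $Z^2 \cong \mathbb{RP}^2$.

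Next, I would exploit the GH-closeness $(M_i^n, g_i, p_i) \to (\R^{n-3} \times C(Z^2), p)$ to produce good level sets on the smooth manifolds. For $i$ large, $B_{100}(p_i) \subset M_i^n$ is $\delta_i$-GH close to $B_{100}(0^{n-3}, o)$ with $\delta_i \to 0$. By \autoref{thm:splittings} there exist harmonic $\eps_i$-splitting maps $v_i : B_{100}(p_i) \to \R^{n-3}$ with $v_i(p_i)=0$; by \autoref{thm:prop_Green} there exist good Green distances $b_i : B_{40}(p_i) \to [0, \infty)$ centered at $p_i$; set $u_i := \sqrt{b_i^2 - |v_i|^2}$ as in Section \ref{sec:slicing}. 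The slicing theorem \autoref{trans} then produces, for $i$ large, a ``good value'' $(x_0, y_0) \in B_1(0^{n-3}) \times [8, 9]$ whose level set $L_i := (v_i, u_i)^{-1}(x_0, y_0)$ is non-empty, and moreover at every point $q \in L_i$ the map $(v_i, u_i)$ becomes an $\eps_i$-splitting map (after a lower triangular transformation) on $B_s(q)$ for every $s \le c(\eps_i, n, \theta)$. By \autoref{prop:topsurf}, $L_i$ is a closed topological surface enjoying a uniform local contractibility estimate, with constants independent of $i$. Combining \autoref{lemma:level GH} with the global GH-convergence shows that $L_i$ Gromov--Hausdorff converges to the connected model cross-section $\{x_0\} \times y_0 Z \cong Z^2$; connectedness of $L_i$ for large $i$ follows because $(v_i, u_i)$ is a local submersion near $L_i$, so a small tubular neighborhood of $L_i$ GH-converges to a tubular neighborhood of the connected limit cross-section. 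Thanks to the uniform local contractibility, Petersen's topological stability theorem \cite{Petersen90} then implies that $L_i$ is homeomorphic to $Z^2 \cong \mathbb{RP}^2$ for $i$ sufficiently large.

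To derive the contradiction, I would exploit the orientability of $L_i$. Since $(v_i, u_i)$ is an almost-splitting map at every scale on $L_i$ (after a linear transformation), the gradients $\nabla v_i^1, \ldots, \nabla v_i^{n-3}, \nabla u_i$ are linearly independent at each point of $L_i$, so $(v_i, u_i)$ is a smooth submersion on an open neighborhood of $L_i$ in $M_i^n$. Hence $L_i$ is a smooth closed embedded submanifold of $M_i^n$ with trivial normal bundle, and since $M_i^n$ is orientable, $L_i$ must be orientable as well, contradicting $L_i \cong \mathbb{RP}^2$. The main obstacle in this argument is the passage from Gromov--Hausdorff convergence to a bona-fide homeomorphism between $L_i$ and $Z^2$: this requires both the $i$-independent uniform local contractibility furnished by \autoref{prop:topsurf} (ii) and a careful check that the (connected) good level set $L_i$ GH-converges to the entire $Z^2$ rather than to a proper subset of it.
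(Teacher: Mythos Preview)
Your outline tracks the paper's proof closely through the construction of the maps $v_i,b_i,u_i$, the application of the slicing theorem \autoref{trans}, the uniform local contractibility of good level sets via \autoref{prop:topsurf}, and the use of Petersen's stability to identify $L_i$ with $Z^2$. The divergence is only in the final contradiction step, and there your argument has a genuine gap.

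You conclude by asserting that $M_i^n$ is orientable, so that $L_i$ (having trivial normal bundle) must be orientable, contradicting $L_i\cong\mathbb{RP}^2$. But the hypotheses of the theorem do not include orientability of the approximating manifolds $M_i^n$; the statement concerns arbitrary noncollapsed Ricci limits. Passing to the orientable double cover $\tilde M_i\to M_i$ is a plausible fix, but it is not free: you would need to argue that a subsequential limit of $(\tilde M_i,\tilde p_i)$ is again of the form $\R^{n-3}\times C(\tilde Z^2)$, which is not immediate.

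The paper sidesteps this entirely. Instead of looking at the normal bundle of $\Sigma_i$ in $M_i^n$, it observes (using Sard to perturb $(x_i,y_i)$ to a regular value) that $\Sigma_i$ is the boundary of the compact $3$-manifold $\{v_i=x_i\}\cap\{u_i\le y_i\}$. The boundary of any compact $3$-manifold has even Euler characteristic, regardless of orientability of the ambient space, which rules out $\mathbb{RP}^2$. This is both simpler and avoids the unjustified assumption. Your submersion argument near $L_i$ is correct, but it only gives regularity \emph{on} the level set; to get the bounding $3$-manifold you need $x_i$ to be a regular value of $v_i$ throughout $\{u_i\le y_i\}$, which is where Sard enters.
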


The strategy for proving \autoref{thm:noRP^2main} will be to approximate the $2$-dimensional cross-section $Z^2$ with level sets of maps $(v_i,u_i):B_2(p_i)\to \setR^{n-2}$, where $v_i:B_2(p_i)\to\setR^{n-3}$ are harmonic almost splitting maps (which approximate the splitting functions from the $\setR^{n-3}$ factor in the limit), $u_i:= \sqrt{b_{p_i}^2-|v_i|^2}$, and $b_{p_i}:B_2(p_i)\to\setR$ are Green-type distances as in Section \ref{sec:slicing}. The results from Section \ref{sec:slicing} show that for a generic good level set of these maps, the ambient Riemannian manifold $M_i$ almost splits a factor $\setR^{n-2}$ in the direction perpendicular to the level set, for all points in the level set at all scales below an effectively quantified one. 
This geometric almost rigidity can be used via \autoref{prop:topsurf} (ii) to show that generic level sets of the map above have uniformly controlled, and hence stable, topology. This will show that for $i$ large enough the generic good level set of the above map is homeomorphic to the limit cross-section, and hence to either $S^2$ or $\mathbb{RP}^2$. However, generic good level sets bound their respective sub-level sets, and hence they have even Euler characteristic. This will be enough to show that $Z^2$ is homeomorphic to $S^2$.

\medskip

The broad outline of the argument is similar to the one devised for the proof of the analogous result in the case of noncollapsing sequences with $|\Ric_i|\to 0$: see \cite[Theorem 5.12]{CheegerNaber15}. However, in \cite{CheegerNaber15} the homeomorphism between the level sets and the limit cross-section follows from smooth elliptic estimates and $C^{1,\alpha}$ convergence relying on the previously proved non-existence of codimension $2$ singularities (see \cite[Theorem 5.2]{CheegerNaber15}). This strategy is unfeasible in the present setting, so that establishing the sought homeomorphism requires some new insights. 

\begin{proof}[Proof of \autoref{thm:noRP^2main}]
Since the limit space is a metric cone, we can assume without loss of generality that  $\Ric_i\ge -1/i$.
If $(M_i^n,g_i,p_i)\to \setR^{n-3}\times C(Z^2)$ with $\Ric_i\ge -1/i$, then there exists a sequence $\eps_i \to 0$ such that for every $i\in\N$ big enough, there are good Green distances $b_{p_i}: B_{100}(p_i) \to \R$ (see \autoref{thm:prop_Green}), and $\eps_i$-splitting maps $v_i: B_{100}(p_i) \to \R^{n-3}$ normalized so that $v_i(p_i)=0$ (see \autoref{thm:splittings}).
We set 
\begin{equation}
u_i: A_i\to \setR\, ,\quad u_i:= \sqrt{b_{p_i}^2 - |v_i|^2}\, , 
\end{equation}
where the annular domain $A_i\subset M_i$ was introduced in \autoref{trans}. To ease notation, we write $w_i:=(v_i,u_i)$.

\medskip

Fix $\eps>0$. If $i\in \N$ is big enough so that $\eps_i \le \eps_i(\eps,n)$, we can find a good level set corresponding to the value $(x_i,y_i)\in B_1(0^{n-3})\times (8,9)$ for $w_i = (v_i,u_i)$, as in \autoref{trans}. This means that for every $q\in \Sigma_i := \{(v_i, u_i)=(x_i,y_i)\}$ and $r\le r_0 = r_0(\eps,n)$ there exists a transformation matrix $L_{q,r}$ such that $L_{q,r}\circ w_i : B_r(q) \to \R^{n-2}$ is a $\eps$-splitting map, and moreover $\Sigma_i\neq\emptyset$.

As a consequence of \autoref{prop:topsurf}, $\Sigma_i$ is a uniformly locally contractible two-dimensional manifold, uniformly also in $i$.
Moreover, by Sard's theorem we can assume without loss of generality that $(x_i,y_i)$ is a regular value for $(v_i,u_i)$, hence $\{u_i \le y_i\} \cap \{v_i = x_i\}$ is a compact $3$-manifold bounded by $\Sigma_i$.

\medskip

To conclude the proof, it is enough to show that (up to extracting a subsequence) there exist $x_\infty\in \R^{n-3}$, $y_\infty\in [8,9]$ such that
\begin{equation}\label{eqz10}
   \Sigma_i \to \Sigma_\infty 
   := 
   \{(x_\infty, z)\in \R^{n-3} \times C(Z)\, : \, \dist_Z(z,0)=y_\infty\} \, ,
   \quad \text{in the GH-topology}\, ,
\end{equation}
where $\Sigma_i$ and $\Sigma_\infty$ are endowed with the restriction of the respective ambient distances.

Indeed, if \eqref{eqz10} holds, then from the uniform local contractibility of the sequence $\Sigma_i$, we deduce that $\Sigma_i$ is eventually homotopically equivalent (and hence homeomorphic since they are surfaces) to $\Sigma_\infty \cong Z$, by \cite{Petersen90}. 
Since $Z$ is an Alexandrov surface with curvature $\ge 1$ by \cite{Ketterer15} and \autoref{thm:RCD2Alex} and it has empty boundary by \cite[Theorem 6.1]{CheegerColding97I}, it is either homeomorphic to $S^2$ or $\mathbb{RP}^2$, by \autoref{cor:2dK>0}. However, only the first possibility can occur since $\Sigma_i$ is the boundary of the three-manifold $\{u_i\le y_i\}\cap \{v_i = x_i\}$ and hence it must have even Euler characteristic.

\medskip

We now pass to the proof of \eqref{eqz10}. Up to extracting a subsequence, we assume that $x_i \to x_\infty \in \bar B_1(0^{n-3})$, $y_i \to y_\infty \in [8,9]$.

Notice that $w_i : A_i\subset M_i \to \R^{n-2}$ converges uniformly to the mapping
\begin{equation}
 w_\infty : \R^{n-3} \times C(Z) \to \R^{n-2}\, ,
 \quad
 (x,r,z) \mapsto (x, r)
\end{equation}
(up to a possibly different identification of the limit space with $\R^{n-3} \times C(Z)$),
whose level set $\{w_\infty = (x_\infty, y_\infty)\}$ coincides with $\Sigma_\infty$.

In particular, denoting by $\Psi_i : B_{100}(p_i) \to B_{100}(0^{n-2},0) \subset \R^{n-2}\times C(Z)$ an $\eps_i$-GH isometry, it is clear that $\Psi_i(\Sigma_i) \subset B_{\eps'}( \Sigma_\infty)$, provided $i\ge i(\eps')$. To conclude the proof of \eqref{eqz10}, we need to show the converse inclusion which easily follows from the following.

\medskip

\emph{Claim:}
Fix $q_\infty \in \Sigma_\infty$. Then there exists $q_i'\in \Sigma_i$ such that $q_i'\to q_\infty$ as $i\to\infty$.

\medskip

We notice that \autoref{trans} applies to $w_\infty: \R^{n-3} \times C(Z) \to \R^{n-2}$; moreover, because of the symmetries, $(x_\infty, y_\infty)$ is automatically a good value, i.e., the corresponding level set is good. In particular, for every $r\le 2r_0(\eps',n)$ there exists a transformation such that $L_{r,q_\infty} \circ w_\infty : B_r(q_\infty) \to \R^{n-2}$ is an $\eps'$-splitting map. If $i \ge i(\eps')$, it is easy to deduce that there exists $q_i\in M_i$ such that the map 
\begin{equation}
    L_{r_0(\eps',n),q_\infty} \circ w_i : B_{r_0(\eps',n)}(q_i) \to \R^{n-2}
    \quad \text{is an $\eps''$-splitting map}
\end{equation}
(with $\eps''\to0$ as $\eps'\to0$)
and $|w_i(q_i) - w_\infty(q_\infty)|\to0$. In particular, $|w_i(q_i) - (x_i,y_i)|\to0$.

We are in the same situation as in \autoref{lemma:level GH} (ii): arguing in the same way we can find $q_i'\in B_{10\eps''}(q_i)$ such that $w_i(q_i)=(x_i,y_i)$, thus completing the proof of the claim.
\end{proof}

\section{Manifold recognition: setup and preliminaries}\label{sec:genmanprel}

This is the first of three sections whose aim is to prove \autoref{thm:RCDtopma}, which we restate below for the reader's ease.

\begin{theorem}\label{thm:RCDtopmain}
Let $(X,\dist,\haus^3)$ be an $\RCD(-2,3)$ space. Then $(X,\dist)$ is a topological $3$-manifold without boundary if and only if all the cross-sections of all tangent cones of $X$ at any point are homeomorphic to $S^2$.
\end{theorem}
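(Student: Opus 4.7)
The plan is to prove both implications. For the easy direction $(\Rightarrow)$, fix $x \in X$ in the assumed topological $3$-manifold and let $C(Z)$ be any tangent cone at $x$, realized along a sequence of scales $r_i \to 0$. The good Green-spheres $\mathbb{S}_{r_i}(x)$ obtained from the Green-type distance on the rescaled balls centered at $x$ converge in the Gromov--Hausdorff sense to the cross-section $Z$, and since good Green-spheres are uniformly locally contractible closed surfaces (Section \ref{sec:topgoodlevels}), the topological stability statement \autoref{thm:topstabintro} forces $\mathbb{S}_{r_i}(x) \cong Z$ for $i$ large. On the other hand, tameness of good Green-spheres in the ambient manifold $X$, via Bing's work in $3$-manifold topology, ensures that each $\mathbb{S}_{r_i}(x)$ bounds a closed topological $3$-ball and hence is homeomorphic to $S^2$ by the Sch\"onflies theorem; we conclude $Z \cong S^2$.

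For the hard direction $(\Leftarrow)$, assume every cross-section of every tangent cone of $X$ is homeomorphic to $S^2$. The first stage is to establish local uniform contractibility of $X$. Since $\operatorname{dim}_c(X) = 3$ by \autoref{lemma:dimcov}, it suffices to prove local uniform $k$-connectedness for $k = 1, 2, 3$, which we do by induction on $k$. The base case $k = 1$ reduces to showing that every good Green-ball whose boundary Green-sphere is homeomorphic to $S^2$ is simply connected: we lift the Green-type distance $b_p$ to the universal cover of the Green-ball and observe that the lifted level set must be a connected covering of $S^2$, hence a single copy. For the inductive step, a Baire-category argument reduces matters to the case where $X$ is uniformly conical around non-manifold points at all scales smaller than $1$; a Mayer--Vietoris covering argument then splits a given singular $k$-cycle into pieces either supported in good Green-balls disjoint from $\mathcal{S}_{\rm top}(X)$ (which are contractible $3$-manifolds with boundary, by tameness of Green-spheres and Poincar\'e--Lefschetz duality) or in strictly smaller Green-balls closer to $\mathcal{S}_{\rm top}(X)$; iterating drives the latter pieces to zero.

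The second stage upgrades local contractibility to the structure of a generalized $3$-manifold in the sense of \autoref{def:genmanifold}. Uniform contractibility combined with the tameness of good Green-spheres implies that any punctured neighborhood of $x \in X$ deformation retracts onto a good Green-sphere $\mathbb{S}_r(x)$; by topological stability and the standing hypothesis, $\mathbb{S}_r(x) \cong S^2$, yielding the required local relative homology $H_*(X, X\setminus\{x\};\Z) \cong H_*(\R^3, \R^3\setminus\{0\};\Z)$.

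The final stage invokes the recognition theorems \autoref{thm:recog2Thick} and \autoref{thm:recog2DR}, together with Perelman's resolution of the Poincar\'e conjecture, to pass from generalized $3$-manifold to genuine $3$-manifold. The main obstacle is verifying the hypothesis of \autoref{thm:recog2Thick}, namely that $\mathcal{S}_{\rm top}(X)$ has general-position dimension one (\autoref{def:GPDO}): every continuous map $f \colon \overline{D}^2 \to X$ must admit approximations $g$ with $g(\overline{D}^2) \cap \mathcal{S}_{\rm top}(X)$ of dimension zero. Since $\mathcal{S}_{\rm top}(X)$ is a priori only known to have Hausdorff dimension strictly less than $1$, the inclusion alone does not suffice; the plan is to perturb $f$ by exploiting the abundance of good Green-spheres, whose intersection with the non-manifold set is finite by \autoref{lemma:finiteint}, and sweep the image of $f$ off $\mathcal{S}_{\rm top}(X)$ except at finitely many points while staying $C^0$-close to $f$. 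Once resolvability is secured, the $1$-LCC hypotheses of \autoref{thm:recog2DR} and the existence of the required $2$-spheres that are nontrivial in $U \setminus \{x\}$ but trivial in $U$ follow from uniform local contractibility and the tame good Green-spheres together with the Poincar\'e conjecture, thereby completing the manifold recognition.
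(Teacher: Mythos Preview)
Your outline tracks the paper's strategy closely, but there are two genuine issues.

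In the $(\Rightarrow)$ direction, the appeal to the Sch\"onflies theorem is circular: Sch\"onflies takes a tamely embedded $S^2$ as \emph{input} and outputs a bounding $3$-ball, so it cannot be used to deduce that $\mathbb{S}_{r_i}(x)\cong S^2$. The paper's argument (\autoref{cor:impl1}, via \autoref{prop:GballinR}) is that Bing's tameness criterion makes $\overline{\mathbb{B}}_r(p)$ a compact $3$-manifold with boundary $\mathbb{S}_r(p)$; since $\mathbb{S}_r(p)$ is a priori either $S^2$ or $\mathbb{RP}^2$ and now bounds a compact $3$-manifold, its Euler characteristic is even, forcing $S^2$. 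You also omit the preliminary step that $X$ has empty boundary in the sense of \cite{DePhilippisGigli2}, handled via \cite{BrueNaberSemolabdry}.

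In the $(\Leftarrow)$ direction, the placement of the Baire argument does not close. You run Baire on $\mathcal{S}_{\rm top}(X)$ inside stage~1 to obtain uniform conicality near some $p_0\in\mathcal{S}_{\rm top}(X)$, but establishing local contractibility near $p_0$ does \emph{not} contradict $p_0\in\mathcal{S}_{\rm top}(X)$; you would need stages~2 and~3 as well, yet you phrase those as global statements outside the contradiction. The paper resolves this by introducing the auxiliary open set $\mathcal{R}_{\rm gm^+}$ (\autoref{def:genmanset+}: points with a neighbourhood in which all good Green-balls are contractible \emph{and} the local relative homology is Euclidean) and running Baire on $\mathcal{S}_{\rm gm^+}$. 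The contradiction then closes as soon as contractibility (\autoref{prop:3ctoc}) and relative homology (\autoref{prop:locrelhom}) are verified near $p_0$, yielding \autoref{prop:genmanrec} globally; the upgrade to a genuine manifold via \autoref{thm:recog2Thick} and \autoref{thm:recog2DR} is a separate, subsequent step. Correspondingly, in the inductive Mayer--Vietoris splitting the ``good'' pieces are Green-balls contained in $\mathcal{R}_{\rm gm^+}$, handled by \autoref{prop:Gballscongm} (which needs Poincar\'e--Lefschetz duality for generalized manifolds), not merely balls disjoint from $\mathcal{S}_{\rm top}(X)$.
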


\begin{remark}
The assumptions of \autoref{thm:RCDtopmain} are equivalent to the requirement that all tangent cones of $(X,\dist)$ are homeomorphic to $\setR^3$, since the cross-sections are homeomorphic to either $S^2$ or $\mathbb{RP}^2$. Moreover, it is sufficient to assume that one tangent cone at each point is homeomorphic to $\setR^3$, as this implies that each tangent cone is homeomorphic to $\setR^3$.
Indeed, by \autoref{lemma:crossconn}, the set of cross-sections at a given point is connected;
by \autoref{prop:Ale2loccontr} and \cite{Petersen90}, it follows that they are homeomorphic.
\end{remark}

The proof of the implication from topological regularity to the homeomorphism of tangent cones with $\setR^3$ will be completed in this section: see \autoref{cor:impl1} in particular.
In the proof of the converse implication, from the structure of tangent cones to the manifold regularity, there are two main steps. 
The goal of Section \ref{sec:genman2} will be to prove that any $(X,\dist,\haus^3)$ as in the statement of \autoref{thm:RCDtopmain} is a generalized $3$-manifold with empty boundary, according to \autoref{def:genmanifold}.

\begin{proposition}\label{prop:genmanrec}
Let $(X,\dist,\haus^3)$ be an $\RCD(-2,3)$ space. Assume that all tangent cones at every point in $X$ have cross-section homeomorphic to $S^2$. Then $X$ is a generalized $3$-manifold (without boundary). 
\end{proposition}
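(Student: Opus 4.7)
The plan is to verify each of the four defining conditions of a generalized $3$-manifold in \autoref{def:genmanifold}. Local compactness is automatic for $\RCD$ spaces, and \autoref{lemma:dimcov} already gives $\dim_c(X)=3$. Hence the content of the statement reduces to (a) local uniform contractibility, and (b) the computation of the local relative homology groups $H_*(X,X\setminus\{x\};\Z)\cong H_*(\R^3,\R^3\setminus\{0\};\Z)$ at every $x\in X$.

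For (a), I will follow the inductive scheme outlined in the introduction, working on good Green-balls $\mathbb{B}_r(p)$ with good Green-sphere boundary $\mathbb{S}_r(p)\cong S^2$, produced via the slicing \autoref{trans n=3} and the topological regularity \autoref{prop:topsurf}. By a Baire category reduction I may assume that, around any fixed point of $\mathcal{S}_{\rm top}(X)$, all balls up to a definite scale are uniformly $\delta$-conical. The induction is on $k$-connectedness of good Green-balls inside enlarged good Green-balls. The base case $k=1$ is the key covering-space argument: lifting the Green distance $b_p$ to the universal cover of a Green-ball gives a proper function whose regular level sets are connected topological surfaces covering the base Green-sphere $\mathbb{S}_r(p)\cong S^2$, so the covering must be trivial and the Green-ball is simply connected. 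For the inductive step at level $k\in\{2,3\}$, I will show (via \autoref{cinese.plus}, to be used in the same spirit as in \cite{Perelmanmaximal,WangRCD}) that a singular $k$-cycle supported in $\mathbb{B}_1(p)$ is homologous, through a Mayer--Vietoris decomposition subordinate to a finite cover by smaller good Green-balls, to a sum of $k$-cycles supported in good Green-balls $\mathbb{B}_c(p_i)$. Those cycles whose support stays away from $\mathcal{S}_{\rm top}(X)$ are trivial because the corresponding Green-balls are tamely embedded compact $3$-manifolds with $S^2$ boundary, hence simply connected by the base step, hence contractible by Poincar\'e--Lefschetz duality and Whitehead's theorem; the remaining cycles lie closer and closer to $\mathcal{S}_{\rm top}(X)$ by the uniform conicality, and can be iteratively pushed into arbitrarily small Green-balls, yielding triviality in the limit.

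For (b), once local uniform contractibility is known, I will exploit the existence of a cofinal family of good Green-balls $\mathbb{B}_r(x)$ at every point $x\in X$ whose boundaries $\mathbb{S}_r(x)$ are topological $S^2$'s, tamely embedded in the manifold part. Using a radial-type deformation built from the Green distance $b_x$ within such a conical annular region, I will show that a punctured good Green-ball $\mathbb{B}_r(x)\setminus\{x\}$ deformation retracts onto a Green-sphere $\mathbb{S}_{r'}(x)\cong S^2$. Combined with contractibility of $\mathbb{B}_r(x)$, the long exact sequence of the pair and excision then give
\begin{equation}
H_*(X,X\setminus\{x\};\Z)\cong \tilde H_{*-1}(\mathbb{B}_r(x)\setminus\{x\};\Z)\cong \tilde H_{*-1}(S^2;\Z),
\end{equation}
which matches $H_*(\R^3,\R^3\setminus\{0\};\Z)$.

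The main obstacle will be the inductive step of local contractibility in the presence of a possibly nondiscrete non-manifold set $\mathcal{S}_{\rm top}(X)$: namely, carrying out the Mayer--Vietoris covering argument while simultaneously controlling the placement of the cycles with respect to $\mathcal{S}_{\rm top}(X)$, so that the "good" pieces can be killed by the manifold-with-boundary step and the "bad" pieces can be shrunk using uniform conicality. The auxiliary input that $\mathcal{S}_{\rm top}(X)\subseteq \mathcal{S}^1_\epsilon(X)$ lies in an effective singular stratum that is quantitatively packed into short segments at conical scales will be crucial for making this iteration terminate.
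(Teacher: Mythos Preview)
Your outline matches the paper's strategy closely: the inductive $k$-connectedness argument via \autoref{cinese.plus}, the covering-space proof of simple-connectedness, the tame-embedding/Poincar\'e--Lefschetz step for Green-balls in the manifold part, and the punctured-Green-ball retraction for the local relative homology are all exactly what the paper does in Sections~\ref{sec:genmanprel}--\ref{sec:genman2}.

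There is, however, a structural gap in how you set up the contradiction. You propose to run Baire on $\mathcal{S}_{\rm top}(X)$ and then argue that Green-balls near the resulting point $p_0$ are contractible with the correct local relative homology. But that conclusion does \emph{not} contradict $p_0\in\mathcal{S}_{\rm top}(X)$: establishing the generalized-manifold properties near $p_0$ does not make $p_0$ a manifold point (that upgrade is precisely the content of Section~\ref{sec:topman}, which comes later). Hence the argument, as written, only handles the single ball $B_1(p_0)$, and you have no mechanism to propagate the conclusion to the rest of $\mathcal{S}_{\rm top}(X)$. Note also that Baire does not give uniform conicality ``around any fixed point of $\mathcal{S}_{\rm top}(X)$'', only on one relatively open piece.

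The paper resolves this by introducing an auxiliary open set $\mathcal{R}_{\rm gm^+}$ (\autoref{def:genmanset+}): the points admitting a neighbourhood in which every good Green-ball is contractible and the local relative homology is that of $(\R^3,\R^3\setminus\{0\})$. The contradiction is run on $\mathcal{S}_{\rm gm^+}:=X\setminus\mathcal{R}_{\rm gm^+}$: Baire yields $p_0\in\mathcal{S}_{\rm gm^+}$ with $\mathcal{S}_{\rm gm^+}\cap B_{10}(p_0)$ uniformly conical, and the argument then shows $B_1(p_0)\subset\mathcal{R}_{\rm gm^+}$, a genuine contradiction. Correspondingly, the dichotomy in the Mayer--Vietoris step is relative to $\mathcal{S}_{\rm gm^+}$, not $\mathcal{S}_{\rm top}$: Green-balls contained in $\mathcal{R}_{\rm gm^+}$ are contractible by \autoref{prop:Gballscongm} (the generalized-manifold analogue of \autoref{contractible}), while otherwise one recenters at a nearby point of $\mathcal{S}_{\rm gm^+}$, which is uniformly conical by the Baire reduction. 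Replacing $\mathcal{S}_{\rm top}$ by $\mathcal{S}_{\rm gm^+}$ throughout your plan closes the gap.
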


In Section \ref{sec:topman} we are going to upgrade the conclusion of \autoref{prop:genmanrec} from generalized manifold to topological manifold and hence to complete the proof of \autoref{thm:RCDtopmain}. This second step will rely on Perelman's resolution of the Poincar\'e conjecture and the recognition theory for $3$-manifolds among generalized $3$-manifolds: see \cite{Thickstuna,Thickstunb,DavermanRepovs} and the discussion in Section \ref{subsec:recogn}. 

\medskip

The key to proving \autoref{thm:RCDtopmain} will be to further refine our understanding of the topology of Green-balls and Green-spheres at sufficiently conical scales in an $\RCD(-2,3)$ space $(X,\dist,\haus^3)$. Below we collect into \autoref{main.green.sphere} the relevant properties about Green-balls and Green-spheres that were obtained in the previous sections, specialized to the case of $3$-dimensional $\RCD$ spaces with empty boundary. This will provide us with the terminology for discussing the proof strategy of \autoref{prop:genmanrec}.

\medskip

Recall that an $\RCD(-(n-1),n)$ space $(X,\dist,\haus^n)$ is said to have empty boundary provided it has no tangent cone isometric to the half-space $\setR^n_{+}$ \cite{DePhilippisGigli2}. Moreover, by \cite[Theorem 1.6]{BrueNaberSemolabdry} noncollapsed (pointed) GH-limits of sequences of $\RCD(-(n-1),n)$ spaces with empty boundary have empty boundary.
In the setting of $\RCD(-2,3)$ spaces with empty boundary, we can fix $\eps_0=\eps_0(v)>0$,
where $v>0$ is the volume noncollapsing parameter such that
\begin{equation}
    \haus^3(B_1(p)) \ge v \, , \quad \text{for every $p\in X$}\, ,
\end{equation}
in such a way that $X\setminus \mathcal{S}_{\eps_0}^1$ is an open set where the metric Reifenberg theorem applies; in particular, $X\setminus \mathcal{S}_{\eps_0}^1$ is locally homeomorphic to $\R^3$: see \cite{KapovitchMondino} after \cite{CheegerColding97I}.

\begin{proposition}
\label{main.green.sphere}
		
        For every $\eta>0$, if $\delta \le \delta_0(\eta,v)$ the following statement holds.
        Let $(X,\dist,\haus^3)$ be an $\RCD(-\delta,3)$ space with empty boundary such that $\haus^3(B_1(p))\ge v$ for every $p\in X$.
        Assume that
		\begin{equation}
		\dist_{\rm{GH}}\left(B_{20}(p),B_{20}(o)\right)\le \delta  \, ,
        \quad o \in C(Z)\, , \quad p\in X \, ,
		\end{equation}
		where $(Z,\dist_Z)$ is a $2$-dimensional Alexandrov surface with curvature $\ge 1$. Then there exists a good Green distance $b_p: B_2(p)\to \R$ and
		a Borel set of radii $r\in(1/4,1)$ of measure at least $(1-\eta)\frac34$ such that
		the topological boundary $\mathbb{S}_r(p)$ of the sub-level set $\mathbb{B}_r(p):=\{b_p<r\}$ satisfies the following properties:
		\begin{itemize}
			\item[(i)] $\mathbb{S}_r(p) = \{b_p=r\}$;
			\item[(ii)] up to rescaling by $r$, $\mathbb{S}_r(p)$ is $\eta$-GH-close to $(Z,\tilde{\dist}_Z)$, where $\tilde{\dist}_Z:=2\sin(\dist_Z/2)$;
			\item[(iii)] $\mathbb{S}_r(p)\cap \mathcal{S}_{\eps_0}^1$ is a finite set;
			\item[(iv)] $\mathbb{S}_r(p)$ is a closed topological surface with empty boundary;
			\item[(v)] there exist $\rho_0=\rho_0(v)\ge 0$ and $\overline C = \overline C(v)$ such that, whenever $q\in\mathbb{S}_r(p)$ and $0< s\le\rho_0 r$, $B_s(q)\cap \mathbb{S}_r(p)$ is 2-connected in $B_{\overline C s}(q)\cap\mathbb{S}_r(p)$.
		Explicitly, for $k\in\{0,1,2\}$, any continuous map $S^{k}\to B_s(q)\cap\mathbb{S}_r(p)$ can be extended to a continuous map
		$\overline D^{k+1}\to B_{\overline C s}(q)\cap\mathbb{S}_r(p)$;
		\item[(vi)] $\mathbb{S}_r(p)$ is homeomorphic to $Z$. Hence they are both either homeomorphic to $S^2$ or to $\mathbb{RP}^2$.
		\end{itemize}
\end{proposition}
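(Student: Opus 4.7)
The proof combines the Green-distance estimates of Section~\ref{sec:Green}, the slicing theorem of Section~\ref{sec:slicing}, and the topological regularity of good level sets from Section~\ref{sec:topgoodlevels}, together with Petersen's stability theorem~\cite{Petersen90}. After rescaling to promote the closeness scale $20$ to the scale $100$ required by the quantitative statements, \autoref{thm:prop_Green} produces a good Green distance $b_p$ on $B_2(p)$ satisfying $|b_p-\dist_p|\le\eta'$ and the corresponding higher-order estimates, with $\eta'$ as small as needed (the density $\theta$ of $C(Z)$ is bounded below in terms of $v$). Applying \autoref{trans n=3} on a finite geometric sequence of rescalings whose windows $[8/\lambda,9/\lambda]$ cover $(1/4,1)$, and exploiting the cone symmetry of the limit (which allows free rescaling, preserving closeness up to further shrinking $\delta$), we obtain a Borel set $G\subset(1/4,1)$ of Lebesgue measure at least $(1-\eta)\tfrac34$ such that, for every $r\in G$, the level set $\{b_p=r\}$ is nonempty and, for each $q\in\{b_p=r\}$, the normalized Green distance $\hat b_p:=b_p/\fint_{B_s(q)}|\nabla b_p|$ is an $\eps$-splitting map on $B_s(q)$ for every $s\le c(\eps,v)$.

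Property~(i) follows because the $\eps$-splitting property forces $|\nabla b_p|$ to be essentially equal to $\fint_{B_s(q)}|\nabla b_p|\approx 1$ on small balls centered at every $q\in\mathbb{S}_r(p)$, which rules out local extrema of $b_p$ and so identifies the topological boundary of $\mathbb{B}_r(p)=\{b_p<r\}$ with $\{b_p=r\}$. Property~(ii) is a consequence of $|b_p-\dist_p|\le\eta'$ and the cone formula, which yields $r^{-1}\dist_{C(Z)}((r,z_1),(r,z_2))=2\sin(\dist_Z(z_1,z_2)/2)=\tilde{\dist}_Z(z_1,z_2)$ (recall $\operatorname{diam}(Z)\le\pi$). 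Properties~(iii), (iv), and (v) follow from \autoref{lemma:finiteint}, \autoref{prop:topsurf}(i), and \autoref{prop:topsurf}(ii) respectively, applied with the scalar map $u:=b_p-r$: a constant shift does not affect the splitting bounds on $b_p$, and in dimension $n=3$ the transformation matrix $T_{q,s}$ of the hypotheses reduces to the positive scalar $(\fint_{B_s(q)}|\nabla b_p|)^{-1}$.

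For (vi), both $\mathbb{S}_r(p)$ and $(Z,\tilde{\dist}_Z)$ are compact, locally uniformly contractible metric spaces (the first by (v), the second by \autoref{prop:Ale2loccontr} since $Z$ is an Alexandrov surface of curvature $\ge 1$), and by (ii) they are $\eta r$-GH close after rescaling by $r$. Petersen's stability~\cite{Petersen90} then yields a homeomorphism $\mathbb{S}_r(p)\cong Z$. Since $\mathbb{S}_r(p)$ is a closed two-manifold without boundary by (iv), so is $Z$; combining \cite{Ketterer15} to view $Z$ as an $\RCD(1,2)$ space with \autoref{cor:2dK>0} then forces $Z$ to be homeomorphic to $S^2$ or $\mathbb{RP}^2$. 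The main technical point is the iteration over scales in the first paragraph: a single use of \autoref{trans n=3} only controls a narrow window of radii, so one must apply it at finitely many rescalings and assemble the resulting good sets, exploiting the scale-invariance of the bad-set measure bound to obtain density $(1-\eta)\tfrac34$ in the full interval $(1/4,1)$.
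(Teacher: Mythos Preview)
Your proof is correct and follows essentially the same route as the paper: assemble the good Green distance from \autoref{thm:prop_Green}, invoke the $n=3$ slicing theorem \autoref{trans n=3} (iterated over scales to cover $(1/4,1)$), read off (iii)--(v) from \autoref{lemma:finiteint} and \autoref{prop:topsurf}, and deduce (vi) from Petersen's stability. The paper is terser about the scale iteration, but your explicit remark that a single application of \autoref{trans n=3} only handles a window $[8,9]$ (hence finitely many rescalings are needed) is exactly the missing bookkeeping. Two minor points: for (ii) the paper is slightly more careful, first showing $\mathbb{S}_r(p)$ and $\partial B_r(p)$ are $\eta$-Hausdorff close via a geodesic argument (your one-line appeal to $|b_p-\dist_p|\le\eta'$ gives one inclusion but not obviously the other); and for (vi), Petersen's theorem \cite{Petersen90} yields a homotopy equivalence, which upgrades to a homeomorphism because both sides are closed surfaces---worth saying explicitly.
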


	\begin{remark}
		In \autoref{main.green.sphere} (ii), $\mathbb{S}_r(p)$ is endowed with the restriction of the ambient distance from $X$. In this regard, we note that $\tilde{\dist}_Z$ is the restriction of the cone distance on $C(Z)$ to the sphere of radius $1$ centered at the tip.
	\end{remark}

\begin{remark}
Any Alexandrov surface $(Z,\dist_Z)$ appearing in \autoref{main.green.sphere} has empty boundary by \cite[Theorem 1.6]{BrueNaberSemolabdry}.  
\end{remark}

\begin{proof}[Proof of \autoref{main.green.sphere}]

The first part of the statement follows from the results of Section \ref{sec:slicing}.

\medskip

We begin by proving (i). We need to show that $\overline{\{b_p < r\}} = \{b_p\le r\}$. The inclusion $\overline{\{b_p < r\}} \subseteq \{b_p\le r\}$ follows from the continuity of the Green distance. To show the converse, we pick $q\in \mathbb{S}_r(p)$ and $s\le c(\eps,n, v)r$ sufficiently small so that \autoref{trans n=3} applies. Then we can rescale $b_p$ to obtain an $\eps$-splitting map $\hat b: B_s(q) \to \R$. It is immediate to check that the image $\hat b(B_s(q))$ is $\frac{s}{10}$-dense in $(\hat b(q)-s , \hat b(q) + s)$, provided $\eps$ is small enough. Hence there exists $q'\in B_s(q)$ such that $\hat b(q') < \hat b(q)$, and therefore $b(q') < r$.

\medskip

Conclusion (ii) follows by observing that $\mathbb S_r(p)$ and the boundary of the ball $B_r(p)$ are $\eta$-close in the Hausdorff topology, and that the latter is $\eta$-GH close to the corresponding sphere in $C(Z)$.

To show the first observation we use the inequality
\begin{equation}\label{eqz13}
    |b_p - \dist_p|\le \eta\, ,
    \quad
    \text{if $\delta \le \delta_0(\eta,v)$}\, ,
\end{equation}
(see \autoref{thm:prop_Green} for its proof).
Given $q\in X$ such that $\dist(p,q) = r \in (1/4,1)$, it is enough to show the existence of $q''\in \mathbb S_r(p)\cap B_{10\eta}(q)$. Since $B_{20}(p)$ is GH-close to a cone,
we can find $q'\in B_{3\eta}(q)$ such that $\dist(p,q')\ge r + 2\eta$, and hence
 $r+\eta \le b_p(q') \le r + 3\eta$ as a consequence of \eqref{eqz13}. We then connect $q'$ to $p$ by a minimizing geodesic $\gamma$, and we find an intermediate time such that $b_p(\gamma(t)) = r$. We set $q''=\gamma(t)$ and we rely once more on \eqref{eqz13}.
Similarly, any $q\in\mathbb{S}_r(p)$ is $\eta$-close to $\de B_r(p)$.
\medskip

Conclusions (iii), (iv), (v) follow from \autoref{lemma:finiteint}, \autoref{prop:topsurf} (i), and \autoref{prop:topsurf} (ii), respectively.

\medskip

The homeomorphism of good level sets with the cross-section $Z$ follows from \cite{Petersen90} by (ii), (iv), and (v) arguing as in the proof of \autoref{thm:noRP^2main}. 
\end{proof}

\begin{definition}
Fix $0 <\delta_0, \eta_0 \le 1/10$ and $v>0$.
Let $(X,\dist,\haus^3)$ be an $\RCD(-2,3)$ space with empty boundary and such that $\haus^3(B_1(p))\ge v>0$ for any $p\in X$. We let $\mathcal{G}_p'$ be the set of radii $r\in(0, \delta_0^2)$ such that the following holds:
\begin{itemize}
    \item[(i)] $B_{s}(p)$ is $\delta_0$-conical, for all $s\in[\delta_0r,r/\delta_0]$;

    \item[(ii)] there exists a good Green distance $b:B_{3r}(p)\to (0,\infty)$, possibly depending on $r$, such that $\{b=r\} = \mathbb{S}_r(p)$ satisfies the conclusions in \autoref{main.green.sphere} with $\eta = \eta_0$.
\end{itemize}
\end{definition}

Building upon \autoref{trans n=3}, together with \autoref{lemma:manyconsc} and \autoref{lemma:infveryc}, it is immediate to prove the following.

\begin{lemma}\label{cor:densegoodr}
    If $\eta_0 \le 1/10$, $\delta_0 \le \delta_0(v)$,
    there exists a measurable set $\mathcal{G}_p \subset \mathcal{G}_p'$ and $C = C(\eta_0,\delta_0)>1$ such that the following holds:
    \begin{itemize}
        \item[(i)] $(r,Cr)\cap \mathcal{G}_p \neq \emptyset$ for every $r\in (0,\delta_0^2/C)$;

        \item[(ii)] $\mathcal{L}^1(\mathcal{G}_p)>0$ and every $r\in \mathcal{G}_p$ has density one, i.e.,
        \begin{equation}
             \lim_{s\to 0} \frac{\mathcal{L}^1(\mathcal{G}_p\cap (r-s,r + s))}{2s} = 1 \, , \quad \text{for every $r\in \mathcal{G}_p$}\, ;
        \end{equation}

        \item[(iii)] $r=0$ is a point of density one, i.e.,
        \begin{equation}
            \lim_{s\to 0} \frac{\mathcal{L}^1(\mathcal{G}_p\cap (0,s))}{s} = 1 \, .
        \end{equation}
    \end{itemize}
\end{lemma}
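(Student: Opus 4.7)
My plan is to build $\mathcal G_p$ by combining good Green-radii from repeated applications of \autoref{main.green.sphere} across all sufficiently small scales, and then to pass to the subset of density-one points in order to enforce (ii) pointwise rather than only almost everywhere. The construction naturally separates into a macroscopic part responsible for property (i) and an infinitesimal refinement responsible for property (iii); the analytic input in both cases is that good Green-radii form a large-measure subset of each relevant interval.

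For the macroscopic part, I would fix the threshold $\delta_\ast := \delta_1(\eta_0, v)$ of \autoref{main.green.sphere} corresponding to $\eta = \eta_0$. Given $r \in (0, \delta_0^2/C)$ with $C = C(\eta_0, \delta_0, v)$ to be chosen, I would apply \autoref{lemma:manyconsc} at a scale $s \asymp r/\delta_0^3$ with a very small parameter $\delta'$, producing a radius $R \in (s, C_1 s)$ for which $B_R(p)$ is $\delta'$-conical. Since $B_R(p)$ being $\delta'$-conical forces $B_\sigma(p)$ to be $(\delta' R/\sigma)$-conical for every $\sigma \le R$, one can calibrate $\delta' = \delta'(\eta_0, \delta_0, v)$ so that simultaneously (a) $B_\sigma(p)$ is $\delta_0$-conical on the whole interval $[\delta_0 r, Cr/\delta_0]$, and (b) some intermediate scale $R_0 \in (20 r, 80 r)$ is $\delta_\ast$-conical. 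Condition (a) ensures item (i) of $\mathcal G_p'$ for every candidate good radius $r^* \in (r, Cr)$, while (b) allows \autoref{main.green.sphere} to be applied at scale $R_0$, yielding a good Green distance on $B_{R_0/10}(p)$ together with a set $G_{R_0} \subset (R_0/80, R_0/20)$ of good Green-radii with $\mathcal L^1(G_{R_0}) \ge (1 - \eta_0)\, 3 R_0/80$. After restricting to the subrange where $B_{3 r^*}(p)$ stays inside the domain of the Green distance, this contribution still intersects $(r, Cr)$ in positive measure for an appropriate $C$ expressible in terms of $C_1$. Iterating over a geometric sequence of scales produces a set $\widetilde{\mathcal G}_p \subset \mathcal G_p'$ with $\mathcal L^1\bigl(\widetilde{\mathcal G}_p \cap (r, Cr)\bigr) > 0$ for every small $r$.

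The infinitesimal refinement uses \autoref{lemma:infveryc}: for any $\delta^\flat > 0$, the ball $B_\sigma(p)$ is automatically $\delta^\flat$-conical for all $\sigma \le r_\bullet(p, \delta^\flat)$. Using this, one can rerun the previous construction on a sequence of dyadic scales $s_k \downarrow 0$, replacing the fixed $\eta_0$ by a vanishing sequence $\eta_k \downarrow 0$ and requiring $s_k \le r_\bullet(p, \delta_1(\eta_k, v))$. The resulting contribution at scale $s_k$ covers a fraction at least $(1 - \eta_k)$ of an interval of size $\asymp s_k$, so after appending these sets to $\widetilde{\mathcal G}_p$ its lower density at $0$ becomes $1$. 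Finally, I would set $\mathcal G_p$ equal to the set of density-one points of (the enlarged) $\widetilde{\mathcal G}_p$; this preserves (i), enforces (ii) by construction and gives $\mathcal L^1(\mathcal G_p) = \mathcal L^1(\widetilde{\mathcal G}_p) > 0$, and preserves the full density at $0$, yielding (iii).

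The main obstacle is the mismatch between the two conicality conditions. \autoref{main.green.sphere} only needs conicality at a single macroscopic scale $R_0 \asymp r$, whereas item (i) of $\mathcal G_p'$ demands $\delta_0$-conicality across the much wider interval $[\delta_0 r^*, r^*/\delta_0]$, whose upper endpoint is much larger than $r^*$. Resolving this forces the auxiliary outer scale $R$ to be chosen quantitatively larger than $r^*/\delta_0$, and the smallness parameter $\delta'$ there must be calibrated carefully against both $\delta_\ast$ and $\delta_0$, keeping track of the multiplicative loss $R/\sigma$ incurred by the conicality-propagation argument. Once these scales and constants are tuned correctly, the measure-theoretic steps (unioning over scales, passing to density-one points) and the $\eta_k \to 0$ refinement at infinitesimal scales become essentially routine.
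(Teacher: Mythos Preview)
Your approach is essentially the one the paper indicates: the paper's ``proof'' is the single sentence ``Building upon \autoref{trans n=3}, together with \autoref{lemma:manyconsc} and \autoref{lemma:infveryc}, it is immediate to prove the following,'' and you have correctly expanded this out---using \autoref{main.green.sphere} (itself a packaging of \autoref{trans n=3}) to produce large-measure sets of good Green radii at each scale, \autoref{lemma:manyconsc} to locate sufficiently conical macroscopic scales, \autoref{lemma:infveryc} for the density-one condition at $r=0$, and then passing to Lebesgue density-one points for~(ii). The scale calibration you flag (coordinating the $\delta_0$-conicality window $[\delta_0 r^*, r^*/\delta_0]$ with the single-scale conicality needed for \autoref{main.green.sphere}) is handled exactly as you describe, by invoking \autoref{lemma:manyconsc} with a much smaller $\delta'$ and propagating downward.
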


We will refer to Green-spheres $\mathbb{S}_r(p)$ and Green-balls $\mathbb{B}_r(p)$ with $r\in \mathcal{G}_p$ as \emph{good} Green-spheres and Green-balls respectively.

	\begin{remark}
		Even if $\delta_0(v), \eta_0(v)>0$ should be thought of as fixed, we will later require extra smallness assumptions on $\delta_0$ and $\eta_0$ (depending only on the noncollapsing constant $v$),
		to guarantee further properties of Green-type balls for good radii $r\in\mathcal{G}_p$. For the sake of clarity, we will sometimes write $\mathcal{G}_p(\delta_0,\eta_0)$ rather than just $\mathcal{G}_p$, to emphasize its dependence on a specific choice of $\delta_0$ and $\eta_0$.
	\end{remark}

The key geometric insight for the proof of \autoref{prop:genmanrec} is that good Green-balls such that the boundary Green-sphere is homeomorphic to $S^2$ are contractible, and moreover there are plenty of them under the assumption that all tangent cones have cross-section homeomorphic to $S^2$. Further, for sufficiently small radii, the punctured Green-ball deformation retracts onto the Green-sphere. The combination of these two properties will allow us to prove local contractibility and that the local relative homology of $X$ is the same as for $\setR^3$. 

Note that for those good Green-balls that are contained in the manifold part of $X$ the first conclusion above would follow quite easily as soon as we prove that they are simply connected manifolds with boundary, the boundary being the Green-sphere: see \autoref{contractible} for the details. This will be an important step in the proof of \autoref{prop:genmanrec} and requires some new ideas already: see \autoref{rm:Alexander} for a discussion about the main challenge in proving this claim. However, establishing the conclusion in general for good Green-balls (without knowing a priori that they are contained in the manifold part) will be considerably more delicate. The argument in this case will occupy most of Section \ref{sec:genman2}.
\medskip

In the present section we are going to prove several conditional statements that will be instrumental later when fully establishing the above claims. 
\begin{itemize}
\item In Section \ref{subsec:goodretra} we exploit the uniform local contractibility of good Green-spheres to prove the existence of a retraction from a definite size neighbourhood of the Green-ball onto the Green-ball. 
\item In Section \ref{subsec:locsc} we prove that good Green-balls whose boundary Green-spheres are homeomorphic to $S^2$ are simply connected. 
\item In Section \ref{subsec:topGgm} we show that the good Green-balls contained in the manifold part are contractible manifolds with boundary homeomorphic to $S^2$; analogously, good Green-balls contained in the generalized manifold part of an $\RCD(-2,3)$ space $(X,\dist)$ are contractible generalized manifolds with boundary homeomorphic to $S^2$. 
\item In Section \ref{subsec:partcontr} we will prove that at a sufficiently conical scale a ball can be deformation retracted onto a small neighbourhood of the quantitative singular set $\mathcal{S}^1_{\eps_0}$. 
\end{itemize}
All of these results will be key for the proof of \autoref{prop:genmanrec} which is deferred to Section \ref{sec:genman2}. Moreover, all the statements for Green-balls contained in the manifold part will become true unconditionally once we have completed the proof of \autoref{thm:RCDtopmain}. These properties will be fundamental also later in Section \ref{sec:unifcontr} when we will prove uniform local contractibility for $\RCD(-2,3)$ manifolds.

\medskip

Below, we will often drop the term \emph{good} in order not to burden the presentation. It is understood that all the Green-spheres and Green-balls considered will be good unless otherwise stated.

\subsection{Good retractions onto Green-spheres}\label{subsec:goodretra}

Our first aim is to exploit the local uniform contractibility of the good Green-spheres to construct a well-behaved retraction from a uniform neighbourhood of a good Green-ball onto the good Green-ball. The idea is classical: see \cite[Propostion 7.1]{BassoMartiWenger} for a recent application in the metric setting.

\begin{lemma}\label{retraction}
Let $(X,\dist,\haus^3)$ be an $\RCD(-2,3)$ space with empty boundary such that $\haus^3(B_1(p))\ge v>0$ for all $p\in X$.
If $\delta_0 \le \delta_0(v)$ and $\eta_0\le \eta_0(v)$ there exists $\tau=\tau(v)>0$ such that if $r\in\mathcal{G}_p$ then $\overline{\mathbb{B}}_{(1+\tau)r}(p)$ retracts onto $\overline{\mathbb{B}}_s(p)$ for any $s\in\mathcal{G}_p\cap[r-\tau,r]$ (thus for $s=r$ and certain radii $s<r$ arbitrarily close to $r$), with a retraction map $\rho:\overline{\mathbb{B}}_{(1+\tau)r}(p)\to \overline{\mathbb{B}}_s(p)$ such that 
\begin{equation}\label{eq:contrretra}
\dist(\rho(x),y)\le C(v)\dist(x,y)\, ,
\end{equation}
for all $x\in \overline{\mathbb{B}}_{(1+\tau)r}(p)$ and $y\in \overline{\mathbb{B}}_s(p)$.
\end{lemma}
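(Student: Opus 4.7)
The plan is to build the retraction as the identity on $\overline{\mathbb{B}}_s(p)$ glued to a continuous map $\rho : \overline{\mathbb{B}}_{(1+\tau)r}(p)\setminus \mathbb{B}_s(p) \to \mathbb{S}_s(p)$ that agrees with the identity on $\mathbb{S}_s(p)$ and satisfies the pointwise bound
\begin{equation}\label{eq:ret-plan}
\dist(\rho(x), x) \le C(v)\, \Dist(x, \mathbb{S}_s(p))\, .
\end{equation}
From \eqref{eq:ret-plan} the conclusion follows by the triangle inequality: for any $y\in\overline{\mathbb{B}}_s(p)$ one has $\Dist(x, \mathbb{S}_s(p)) \le \dist(x,y)$, so that $\dist(\rho(x), y)\le\dist(\rho(x), x)+\dist(x,y)\le (C(v)+1)\dist(x,y)$.

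First, I would fix $\tau=\tau(v)$ so small that the annular region $A := \overline{\mathbb{B}}_{(1+\tau)r}(p)\setminus \mathbb{B}_s(p)$ lies in a uniform neighbourhood of $\mathbb{S}_s(p)$: combining $|b_p - \dist_p|\le \eta_0$ from \autoref{thm:prop_Green}, the $\delta_0$-conicality of enlarged balls, and the local almost-splitting of $b_p$ near $\mathbb{S}_s(p)$ from \autoref{trans n=3}, every $x\in A$ satisfies $\Dist(x, \mathbb{S}_s(p))\le C(v)(b_p(x)-s)\le C(v)\tau r$, which I will arrange to lie well below the contractibility radius $\rho_0(v)s$ of \autoref{main.green.sphere} (v). I would then fix a Whitney-type cover $\{B_{\sigma_j}(x_j)\}_{j\in J}$ of $A$ with $\sigma_j := c(v)\, \Dist(x_j, \mathbb{S}_s(p))$, produced via a Vitali selection so that $\{B_{\sigma_j/5}(x_j)\}$ are pairwise disjoint and the multiplicity is bounded by some $N(v)$; refining if necessary, and exploiting that $A$ has covering dimension at most $3$ (\autoref{lemma:dimcov}), I may assume that the nerve $\mathcal{N}$ of the cover has dimension at most $3$. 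For each $j$ I pick $y_j\in\mathbb{S}_s(p)$ realizing $\dist(x_j, y_j)\le 2\Dist(x_j, \mathbb{S}_s(p))$; the bounded multiplicity ensures that whenever $B_{\sigma_j}(x_j)\cap B_{\sigma_\ell}(x_\ell)\neq\emptyset$ the radii $\sigma_j,\sigma_\ell$ are comparable and $\dist(y_j, y_\ell)\le C(v)\min(\sigma_j, \sigma_\ell)$.

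Next, choosing a Lipschitz partition of unity $\{\phi_j\}$ subordinate to the cover, I would produce a continuous map $\Psi : A \to |\mathcal{N}|$ by $x\mapsto \sum_j \phi_j(x) v_j$, and then build a continuous map $\Phi : |\mathcal{N}|\to\mathbb{S}_s(p)$ by induction on skeleta, starting from $\Phi(v_j):=y_j$. An edge $\{v_j, v_\ell\}$ is filled by a path in $\mathbb{S}_s(p)$ whose diameter is controlled by $\overline{C}(v)\dist(y_j, y_\ell)$ via \autoref{main.green.sphere} (v) with $k=0$; $2$-simplices are filled using $k=1$; and $3$-simplices using $k=2$. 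At every step, iterating (v) keeps the image of each simplex of $\mathcal{N}$ meeting $\supp\phi_j$ inside a ball around $y_j$ of radius at most $C(v)\sigma_j\le C(v)\Dist(x_j, \mathbb{S}_s(p))$. Defining $\rho(x):= \Phi(\Psi(x))$ and using the bounded multiplicity to compare $\sigma_j$ with $\Dist(x, \mathbb{S}_s(p))$ for every $j$ with $\phi_j(x)>0$, the bound \eqref{eq:ret-plan} follows; the resulting map extends continuously by the identity across $\mathbb{S}_s(p)$ since $\sigma_j\to 0$ as $x\to \mathbb{S}_s(p)$.

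The main obstacle I anticipate is the compatibility between the scales of the Whitney pieces and the scales at which property (v) of \autoref{main.green.sphere} is actually available: the constant $\overline{C}(v)$ enters at each of the three inductive extension steps, so the accumulated factor $\overline{C}(v)^3$ must stay below the contractibility radius $\rho_0(v)s$, which forces the parameter $c(v)$ in the Whitney construction to be chosen small and $\tau=\tau(v)$ small enough that every $\sigma_j$ lies well below $\rho_0(v)s/\overline{C}(v)^3$. Verifying this chain of choices, and checking that each simplicial extension remains inside the range where (v) applies, is the delicate technical step.
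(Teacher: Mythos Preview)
Your approach is essentially the same as the paper's: both use a Whitney-type cover of the annulus whose sets shrink near $\mathbb{S}_s(p)$, pass to the nerve (of dimension $\le 3$ by \autoref{lemma:dimcov}), map to the nerve via a partition of unity, and then build the map to $\mathbb{S}_s(p)$ inductively over skeleta using the uniform local $2$-connectedness of Green-spheres from \autoref{main.green.sphere} (v); the paper defers the quantitative estimate \eqref{eq:contrretra} to the argument in \cite[Proposition 7.1]{BassoMartiWenger}, which is exactly the mechanism you spell out. Your identification of the technical constraint---that the accumulated factor $\overline C(v)^3$ from the three skeletal extensions must stay below $\rho_0(v)$, forcing $\tau$ and the Whitney parameter $c(v)$ to be chosen accordingly---is precisely the point the paper handles implicitly by the choice of $\tau$.
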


\begin{proof}
The proof will be just sketched since it is based on a simple modification of the arguments in \cite[Section 3]{Petersen90}. 

 \medskip
	
		Fix $\tau>0$ small. We consider a $(2\tau r)$-neighbourhood $U$ of $\overline{\mathbb{B}}_r(p)$
		and let $V:=U\setminus \overline{\mathbb{B}}_r(p)$. 
		Consider a countable collection of open sets $(V_a)_{a\in A}$ covering $V$, with $V_a\subseteq V$, $\operatorname{diam}(V_a)<\tau r$
		and such that $\operatorname{diam}(V_a)<\dist(V_a,\overline{\mathbb{B}}_r(p))$ (in other words, the sets $V_a$ are smaller and smaller as we approach $\mathbb{S}_r(p)$).
		By \autoref{lemma:dimcov}, up to pass to a refinement, we can assume that no point of $V$ belongs to $V_a$ for more than four different indices $a$. For each $a\in A$, we select $p_a\in V_a$.

        \medskip
		
		Let $K$ be the nerve of the open cover $(V_a)_{a\in A}$, which we view as an abstract simplicial complex of dimension at most 3.
		We identify $A$ with the 0-skeleton of $K$.
		As in \cite{Petersen90},
		using a partition of unity we obtain a continuous map $\varphi:V\to K$.
		On the other hand, we can construct a map $f:K\to\mathbb{S}_r(p)$ inductively on the skeleta: for each vertex $a\in A$,
		we choose a point $q_a\in \mathbb{S}_r(p)$ minimizing distance from $p_a$, so that $\dist(p_a,q_a)<2\tau r$ (as $B_{2r}(p)$ is almost conical), and we let $f(a):=q_a$, thus defining $f$ on the 0-skeleton.
		
		If $a,a'\in A$ are adjacent in $K$ (i.e., $V_a\cap V_{a'}\neq\emptyset$)
		then $\dist(f(a),f(a'))<6\tau r$. In this case \autoref{main.green.sphere} (v) guarantees that we can find a small path joining them in $\mathbb{S}_r(p)$, provided $6\tau<\rho_0(v)$.
		Hence, on the edge connecting $a$ and $a'$ we can define $f$ to be this path, completing the extension of $f$ to the 1-skeleton of $K$.
		Finally, for each 2-simplex $\Delta$ in $K$, the function $f$ is already defined on $\de\Delta\cong S^1$ (where it equals a small loop),
		and \autoref{main.green.sphere} (v) again gives an extension to $\Delta$, completing the extension to the 2-skeleton of $K$; similarly, we arrive at a map defined on the 3-skeleton, which is all of $K$.

        \medskip

		The composition $f\circ\varphi:V\to\mathbb{S}_r(p)$ is a continuous map. Also, as $b_p(x)\to r$ we have $\dist(x,f\circ\varphi(x))\to0$:
		indeed, $\dist(x,p_a)$ and $\dist(p_a,q_a)$ (for $a\in A$ such that $x\in V_a$) become smaller and smaller; calling $\Delta$ a simplex in $K$ containing $\varphi(x)$ in its interior, we have $x\in V_a$ for each vertex $a$ of $\Delta$, which clearly gives the claim as $f(\Delta)$ has vanishing diameter. We can then extend $f\circ\varphi$
		to a retraction $U\to\overline{\mathbb{B}}_r(p)$, by taking the identity map on $\overline{\mathbb{B}}_r(p)$.

        \medskip
  
		Finally, if $\eta_0\le \eta_0(v,\tau)$ and $\delta_0 \le \delta_0(v,\tau,\eta_0)$ in the definition of $\mathcal{G}_p$, we can assume that $\overline{\mathbb{B}}_{(1+\tau)r}(p)$ is included in $U$, i.e., the $(2\tau r)$-neighbourhood of $\overline{\mathbb{B}}_r(p)$.
        Moreover, we can take any $s\in\mathcal{G}_p\cap(0,r)$ close enough to $r$,
		replace $\overline{\mathbb{B}}_r(p)$ with $\overline{\mathbb{B}}_s(p)$ and repeat the argument above. The estimate \eqref{eq:contrretra} is obtained arguing as in the proof of \cite[Proposition 7.1]{BassoMartiWenger}.
	\end{proof}
	
     In the sequel, we will apply the following consequence a few times, for constants $\Lambda=\Lambda(v)$ depending only on the noncollapsing constant $v$.

	\begin{corollary}\label{lambda}
		For every $\Lambda>1$, if $\eta_0 \le \eta_0(v)$ and $\delta_0\le \delta_0(\Lambda,v)$ in the definition of $\mathcal{G}_p$, whenever $r\in\mathcal{G}_p\cap(0,\delta_0^2/\Lambda)$ there exists a retraction
		$\overline{\mathbb{B}}_{\Lambda r}(p)\to\overline{\mathbb{B}}_s(p)$, for suitable radii $s<r$ arbitrarily close to $r$, as well as $s=r$.
		In particular, if $\mathbb{B}_r(p)$ is $k$-connected in $B_{\Lambda r}(p)$, then $\mathbb{B}_r(p)$ is $k$-connected.
	\end{corollary}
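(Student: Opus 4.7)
The plan is to iterate \autoref{retraction} finitely many times, producing retractions between Green-balls at gradually increasing scales. Specifically, I will build a finite increasing sequence of good radii
\begin{equation*}
r = s_0 < s_1 < \cdots < s_N
\end{equation*}
in $\mathcal{G}_p$ with $s_i/s_{i-1} \le 1+\tau$ for each $i$ and $s_N \ge \Lambda r$, where $\tau = \tau(v)>0$ is the constant produced by \autoref{retraction}. The existence of such a sequence follows from \autoref{cor:densegoodr}: the density-one property of $\mathcal{G}_p$ at each of its own points allows arbitrarily small upward perturbations inside $\mathcal{G}_p$, so the sequence can always be extended, while a supremum argument combined with the quantitative density from item (i) of \autoref{cor:densegoodr} ensures it eventually exceeds $\Lambda r$. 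The smallness condition $\delta_0 \le \delta_0(\Lambda, v)$ on the parameter defining $\mathcal{G}_p$ guarantees that every $s_i$ stays inside $(0,\delta_0^2)$.

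At the $i$-th step, \autoref{retraction} applied with its parameter $r$ taken to be $s_{i-1} \in \mathcal{G}_p$ and its parameter $s$ taken to be $s_{i-1}$ as well (one of the choices explicitly allowed by the lemma) produces a retraction $\overline{\mathbb{B}}_{(1+\tau)s_{i-1}}(p) \to \overline{\mathbb{B}}_{s_{i-1}}(p)$ which is the identity on $\overline{\mathbb{B}}_{s_{i-1}}(p)$. Restricting to the subset $\overline{\mathbb{B}}_{s_i}(p)\subseteq \overline{\mathbb{B}}_{(1+\tau)s_{i-1}}(p)$ yields a retraction $\rho_i\colon \overline{\mathbb{B}}_{s_i}(p)\to \overline{\mathbb{B}}_{s_{i-1}}(p)$, and the composition $\rho:=\rho_1\circ\cdots\circ\rho_N$ is a retraction $\overline{\mathbb{B}}_{s_N}(p)\to \overline{\mathbb{B}}_r(p)$, identity on $\overline{\mathbb{B}}_r(p)$. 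Its restriction to $\overline{\mathbb{B}}_{\Lambda r}(p)\subseteq \overline{\mathbb{B}}_{s_N}(p)$ is the sought retraction. The variant with target $\overline{\mathbb{B}}_s(p)$, $s\in\mathcal{G}_p\cap[r-\tau, r)$ close to $r$, is obtained by replacing only the final retraction $\rho_1$ with the corresponding version of \autoref{retraction} allowing $s<r$.

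For the $k$-connectedness statement, recall that the comparison $b_p\le C(v)\,\dist_p$ from \autoref{cor:boundb} gives the inclusion $B_{\Lambda r}(p) \subseteq \overline{\mathbb{B}}_{C(v)\Lambda r}(p)$. After absorbing the constant $C(v)$ into $\Lambda$, the retraction constructed above, $\rho\colon \overline{\mathbb{B}}_{\Lambda r}(p) \to \overline{\mathbb{B}}_r(p)$, is the identity on $\overline{\mathbb{B}}_r(p)$ and in particular on $\mathbb{B}_r(p)$. Given any $f\colon S^k\to\mathbb{B}_r(p)$ with an extension $F\colon \overline{D}^{k+1}\to B_{\Lambda r}(p)$, the composition $\rho\circ F\colon \overline{D}^{k+1}\to \overline{\mathbb{B}}_r(p)$ extends $f$; to push the image strictly inside the open Green-ball one may use instead the variant retraction onto $\overline{\mathbb{B}}_s(p)$ with $s\in\mathcal{G}_p\cap[r-\tau,r)$, observing that the one-parameter family of such retractions (as $s$ ranges in a small interval) supplies a homotopy in $\mathbb{B}_r(p)$ between the boundary map $\rho\circ f$ and $f$ itself.

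The only delicate point is the existence of the intermediate good radii $s_i$ in $\mathcal{G}_p$ with the prescribed ratio bound $s_i/s_{i-1} \le 1+\tau$, which is handled by the density properties of \autoref{cor:densegoodr}; no further analytic or topological input is required beyond \autoref{retraction}.
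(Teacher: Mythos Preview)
Your overall strategy---iterate \autoref{retraction} through a finite chain of good radii $r=s_0<s_1<\cdots<s_N$ with $s_i/s_{i-1}\le 1+\tau$ and $s_N\ge\Lambda r$---is exactly what the paper does. The gap is in how you produce the chain. Density one at each $s_{i-1}\in\mathcal{G}_p$ (item (ii) of \autoref{cor:densegoodr}) indeed gives points of $\mathcal{G}_p$ just above $s_{i-1}$, but it places no lower bound on the step $s_i-s_{i-1}$; a ``supremum argument'' then stalls because the supremum of reachable radii need not lie in $\mathcal{G}_p$, so you cannot invoke density there. Item (i) does not help either: it only guarantees a good radius in $(t,Ct)$ with $C=C(\eta_0,\delta_0)$ possibly much larger than $1+\tau$, so it does not furnish the short steps you need.

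The paper's fix uses what the hypothesis $\delta_0\le\delta_0(\Lambda,v)$ actually buys: membership $r\in\mathcal{G}_p$ forces $\delta_0$-conicality of $B_s(p)$ for all $s\in[\delta_0 r,r/\delta_0]$, hence (for $\delta_0$ small depending on $\Lambda$) for all $s\in[r,2\Lambda r]$. Applying the slicing theorem (\autoref{main.green.sphere}) at each dyadic scale then yields a good radius in \emph{every} short window $(R^jr,R^{j+1}r)$ with $R=\sqrt{1+\tau}$, $j=0,\dots,J-1$, $R^J\in[\Lambda,2\Lambda)$. This is the clean way to obtain the chain; your appeal to \autoref{cor:densegoodr} alone is not enough, since that corollary does not record the quantitative density at the relevant scales.

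For the $k$-connectedness part, you do not need a ``one-parameter family of retractions supplying a homotopy'' (and such a family need not vary continuously in $s$, since $\mathcal{G}_p$ is merely measurable). Simply note, as the paper does, that $\mathbb{B}_r(p)$ is exhausted by closed good Green-balls $\overline{\mathbb{B}}_{s}(p)$ with $s\in\mathcal{G}_p$, $s\uparrow r$; since $f(S^k)$ is compact it lies in some $\overline{\mathbb{B}}_s(p)$, and then the single retraction $\rho_s:\overline{\mathbb{B}}_{\Lambda r}(p)\to\overline{\mathbb{B}}_s(p)$ already satisfies $\rho_s\circ F|_{S^k}=f$ and lands in $\mathbb{B}_r(p)$.
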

	
	\begin{proof}
        Let $\delta_0 = \delta_0(v)$, $\eta_0=\eta_0(v)$, and $\tau = \tau(v)$, be as in \autoref{retraction}.
		Let $R:=\sqrt{1+\tau}$ and $J\ge 1$ such that $R^J\in[\Lambda,2\Lambda)$. If $\delta = \delta(\eta_0,\delta_0,\Lambda)$ and 
		$B_{s}(p)$ is $\delta$-conical for all $s\in(\delta r,r/\delta)$ then
		all intervals $(R^jr,R^{j+1}r)$ intersect $\mathcal{G}_p=\mathcal{G}_p(\delta_0,\eta_0)$, for $j=0,\dots,J-1$. 
		Letting $r_j\in(R^jr,R^{j+1}r)\cap\mathcal{G}_p$,
		since $r_{j+1}\le(1+\tau)r_j$ we obtain from \autoref{retraction} a retraction
		\begin{equation} 
		 \overline{\mathbb{B}}_{r_{j+1}}(p)\to\overline{\mathbb{B}}_{r_j}(p)
		 \end{equation}
		for each $j$, together with another one $\overline{\mathbb{B}}_{r}(p)\to\overline{\mathbb{B}}_s(p)$ (for $s<r$ arbitrarily close to $r$ or $s=r$).
		The composition of these $J+1$ maps is the desired retraction. Thus, it suffices to replace $\delta_0$ with $\delta=\delta(\eta_0,\delta_0,\Lambda)$ in the definition of $\mathcal{G}_p$
		(i.e., to replace $\mathcal{G}_p(\delta_0,\eta_0)$ with $\mathcal{G}_p(\delta,\eta_0)$).
  
		\medskip
		
		The second part of the statement easily follows from the first one, noting that by the very definition of $\mathcal{G}_p$ the good open Green-ball $\mathbb{B}_r$ admits an exhaustion into closed good Green balls $\overline{\mathbb{B}}_{r_i}(p)$ with $r_i\uparrow r$ as $i\to\infty$.
	\end{proof}

\subsection{Local simple-connectedness}\label{subsec:locsc}

The goal of this section is to prove that those Green-balls whose Green-spheres are homeomorphic to $S^2$ are simply connected: see \autoref{simply.conn.clean} for the precise statement. This statement will be a key ingredient later to prove that $\RCD(-2,3)$ spaces $(X,\dist,\haus^3)$ such that no cross-section of a tangent cone is homeomorphic to $\mathbb{RP}^2$ are locally uniformly simply connected. 

We address the reader to \cite{MondinoWei,WangRCD} for the relevant background about universal covers of $\RCD(K,N)$ spaces.

In order to illustrate the idea of the proof by avoiding some of the technicalities, we start by proving a \emph{global} version of \autoref{simply.conn.clean}.

\begin{lemma}\label{lemma:avrS2}
Let $(Y,\dist,\haus^3)$ be an $\RCD(0,3)$ space with Euclidean volume growth and such that the cross-section of each blow-down is homeomorphic to $S^2$. Then $Y$ is simply connected. 
\end{lemma}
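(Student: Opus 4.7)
The plan is to use the Green function of the Laplacian as a bridge between the local topology of $Y$ and the topology of the cross-sections of its blow-downs, and then to pass to the universal cover.

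First, using the Euclidean volume growth, I would construct a global Green distance $b_p\colon Y\to[0,\infty)$ as in Section~\ref{sec:Green}. Bishop--Gromov monotonicity combined with \autoref{thm:almvcalmmc} yields that $B_R(p)$ is $\delta_R$-conical with $\delta_R\to0$ as $R\to\infty$. Hence \autoref{main.green.sphere} together with \autoref{cor:densegoodr} provides arbitrarily large good radii $R$ for which $\mathbb{S}_R:=\{b_p=R\}$ is a closed topological surface homeomorphic to the cross-section $Z$ of the corresponding blow-down of $Y$. By hypothesis, $Z\cong S^2$.

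Next, I would pass to the universal cover $\pi\colon\tilde Y\to Y$. By \cite{MondinoWei}, $G:=\pi_1(Y)$ is finite; set $k:=|G|$. Then $\tilde Y$ is a noncollapsed $\RCD(0,3)$ space with Euclidean volume growth of density $k\theta$, where $\theta$ is the density of $Y$ at infinity. Along a blow-down sequence $R_i\to\infty$, one has $(Y,\dist/R_i,p)\to(C(Z),o)$ and $(\tilde Y,\dist/R_i,\tilde p)\to(C(\tilde Z),\tilde o)$ in the pointed-measured Gromov--Hausdorff sense, for a chosen lift $\tilde p\in\pi^{-1}(p)$. Since $\pi$ is a local isometry, it passes to a surjective local isometry $\pi_\infty\colon C(\tilde Z)\to C(Z)$ in the limit. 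The volume ratio $\haus^3(B_r(\tilde p))/\haus^3(B_r(p))\to k$ as $r\to\infty$ combined with the local-isometry property forces $\pi_\infty$ to be a $k$-sheeted topological covering over $C(Z)\setminus\{o\}$; restricting to the unit spheres about the apices yields a $k$-fold topological cover $\pi_\infty|\colon\tilde Z\to Z$.

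To conclude, $\tilde Z$ is a noncollapsed $\RCD(1,2)$ space (by \cite{Ketterer15} combined with \cite{DePhilippisGigli2}) and hence connected as a length space. A connected $k$-fold topological cover of $Z\cong S^2$ must equal $S^2$ with covering degree $1$, since $S^2$ is simply connected. Therefore $k=1$ and $\pi_1(Y)=0$.

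The hardest part will be the rigorous identification of $\pi_\infty$ as a genuine $k$-fold cover away from the apex: a priori distinct sheets of $\pi$ could coalesce under the blow-down rescaling, reducing the effective covering degree in the limit. This scenario is however ruled out by the volume comparison itself: if such coalescence occurred, $C(\tilde Z)$ and $C(Z)$ would agree as metric spaces, and matching their intrinsic $\haus^3$ measures would force $k\theta=\theta$, hence $k=1$ directly. The remaining technical care concerns the equivariant convergence of the $G$-action on $\tilde Y$ and the identification of the branching locus of $\pi_\infty$ as the single apex $\tilde o$.
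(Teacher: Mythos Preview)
There is a genuine gap at the step you flag as hardest. Under blow-down, the free action of $G=\pi_1(Y)$ on $\tilde Y$ need not remain free on $C(\tilde Z)$: a free isometry of $\tilde Y$ can converge, after rescaling, to an isometry of the cone with a fixed ray. In that case $\pi_\infty|_{\tilde Z}\colon\tilde Z\to Z$ is only a \emph{branched} cover of degree $k$, and simple connectedness of $Z\cong S^2$ does not force $k=1$ (think of $S^2\to S^2$, $z\mapsto z^k$, in complex coordinates). Your volume dichotomy does not rescue this: for any finite isometric action on a closed surface the branch locus has $\haus^2$-measure zero, so $\haus^2(\tilde Z)=k\,\haus^2(Z)$ holds whether or not the action is free. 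Volume comparison therefore cannot distinguish a genuine cover from a branched one, and the coalescence-versus-covering dichotomy you sketch is too coarse.

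The paper sidesteps the limit entirely by working at a fixed large scale. One lifts the Green function to $\tilde G:=G_p\circ\pi$ on $\tilde Y$; since the orbit $G\cdot\tilde p$ is finite, $\tilde G$ has finitely many poles, and a minor variant of \autoref{main.green.sphere} shows that for good $r$ the level set $\{\tilde G=r\}$ is itself a \emph{connected} closed surface. Now $\pi$ restricts to a map $\{\tilde G=r\}\to\{G=r\}\cong S^2$; because the deck group acts \emph{freely} on $\tilde Y$ at every finite scale, this restriction is an unbranched $|G|$-fold cover of $S^2$ by a connected surface, forcing $|G|=1$. Your construction of Green-spheres on $Y$ in the opening paragraph is exactly the right tool, but you abandon it in favour of the blow-down; the paper's move is to lift that construction to $\tilde Y$ instead.
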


\begin{proof}
We argue by contradiction. Assume that $\pi_1(Y)$ is not trivial and fix a base-point $p\in Y$. Let $(\tilde{Y},\dist_{\tilde{Y}},\haus^3)$ be the universal cover of $(Y,\dist_Y,\haus^3)$ and let $\pi:\tilde{Y}\to Y$ be the covering map.  By the Euclidean volume growth assumption, $\pi_1(Y)$ is finite: see \cite[Theorem 1.6]{MondinoWei} generalizing the previous \cite{Li86,Andersonshort} and note that the revised fundamental group coincides with the usual fundamental group by the semi-local simple-connectedness of $\RCD(K,N)$ spaces proved in \cite{WangRCD}. Let $N>1$ denote the cardinality of $\pi_1(Y)$. 

Let $G:Y\setminus\{p\}\to (0,\infty)$ be the unique Green function of the Laplacian with pole at $p$ and decaying to $0$ at infinity. Since the cross-section of each blow-down of $(Y,\dist_Y,\haus^3,p)$ is homeomorphic to $S^2$, by \autoref{main.green.sphere} there exists a sequence $r_i\downarrow 0$ such that the Green-spheres $\{G=r_i\}\subset Y$ are good in the usual sense and homeomorphic to $S^2$.

Let $\tilde{p}_1\in \tilde{Y}$ be such that $\pi(\tilde{p}_1)=p$ and note that the orbit of $\tilde{p}_1$ under the action of $\pi_1(Y)$ by deck transformations on $\tilde{Y}$ is finite with $N$ elements $\{\tilde{p}_1,\dots,\tilde{p}_N\}$. Let $\tilde{G}:=G\circ\pi:\tilde{Y}\setminus\{\tilde{p}_1,\dots,\tilde{p}_N\}\to (0,\infty)$. By a minor variation of the proof of \autoref{main.green.sphere} we can assume that the level sets $\{\tilde{G}=r_i\}$ are good in the usual sense also for $\tilde{G}$.
Note that $\tilde G$ looks like $\dist_{\tilde p_1}$ at large scales.

In particular, these level sets are either homeomorphic to $S^2$ or to $\mathbb{RP}^2$, depending on the topology of the cross-section of the blow-downs of $\tilde{Y}$. We claim that both cases lead to a contradiction. 

Indeed, for each such good level $r_i$, it is elementary to check that $\pi$ restricts to a covering map $\pi_{r_i}:\{\tilde{G}=r_i\}\to \{G=r_i\}$ with covering degree equal to $N$. However, if $N>1$ there is no such covering map 
when $\{\tilde{G}=r_i\}$ is connected.
\end{proof}

\begin{remark}
In combination with \autoref{thm:noRP2}, applied to blow-downs, \autoref{lemma:avrS2} shows that complete $3$-manifolds with $\Ric\ge 0$ and Euclidean volume growth are simply connected, a statement originally proved with a completely different argument in \cite{Zhu93}.
\end{remark}

\begin{remark}
In dimensions $\ge 4$ there exist complete manifolds with $\Ric\ge 0$ and Euclidean volume growth that are not simply connected. The examples can be also taken to be Ricci flat: see for instance \cite{Andersonsurvey}.
\end{remark}

 Below we state and prove an effective version of \autoref{lemma:avrS2}.

 	\begin{proposition}\label{simply.conn.clean}
		If $\eta_0 \le \eta_0(v)$ and $\delta_0 \le \delta_0(v)$, for any $r\in \mathcal{G}_p$
		such that the Green-sphere $\mathbb{S}_r(p)$ is homeomorphic to $S^2$, the Green ball $\mathbb{B}_r(p)$ and its closure $\overline{\mathbb{B}}_r(p)$ are simply connected.
	\end{proposition}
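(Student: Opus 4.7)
The proof localizes the strategy of \autoref{lemma:avrS2}. Arguing by contradiction, suppose $\pi_1(\overline{\mathbb{B}}_r(p)) \neq 0$. Combining the semi-local simple connectedness of $\RCD$ spaces from \cite{WangRCD} with the uniform local contractibility of the boundary Green-sphere $\mathbb{S}_r(p) \cong S^2$ provided by \autoref{main.green.sphere}(v)--(vi), the closed Green-ball $\overline{\mathbb{B}}_r(p)$ admits a universal covering $\tilde\pi \colon \widetilde{\mathbb{B}} \to \overline{\mathbb{B}}_r(p)$ of degree $N := |\pi_1(\overline{\mathbb{B}}_r(p))| > 1$, where a priori $N$ may be infinite.

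Lift the Green-type distance by $\tilde b := b_p \circ \tilde\pi$ on $\widetilde{\mathbb{B}}$, and pick a good radius $s \in \mathcal{G}_p$ close to (and strictly less than) $r$ such that $\mathbb{S}_s(p) \cong S^2$. Such $s$ exists since the homeomorphism type of good Green-spheres is preserved under $C^0$-small variations of the radius: this follows from \autoref{main.green.sphere}(ii)--(vi) together with Petersen's topological stability \cite{Petersen90}. Because $S^2$ is simply connected, the restricted covering $\tilde b^{-1}(s) \to \mathbb{S}_s(p)$ is trivial, so $\tilde b^{-1}(s)$ decomposes as the disjoint union of $N$ copies of $S^2$, each mapped homeomorphically onto $\mathbb{S}_s(p)$ by $\tilde\pi$.

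The contradiction is then obtained through a van Kampen reduction propagated down to infinitesimal scales. Decomposing $\overline{\mathbb{B}}_r(p) = \overline{\mathbb{B}}_s(p) \cup A_{s,r}$ with $A_{s,r} := \{s \le b_p \le r\}$ and intersection $\mathbb{S}_s(p) \cong S^2$, the key technical step is to show that $A_{s,r}$ is simply connected. This is done by adapting the nerve-based construction in the proof of \autoref{retraction} to build a deformation retraction of $A_{s,r}$ onto the outer sphere $\mathbb{S}_r(p) \cong S^2$, exploiting the local almost-product structure near good Green-spheres supplied by \autoref{trans n=3} together with the uniform local $2$-connectedness of $\mathbb{S}_s(p)$ and $\mathbb{S}_r(p)$. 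Given $\pi_1(A_{s,r}) = 0 = \pi_1(\mathbb{S}_s(p))$, van Kampen yields $\pi_1(\overline{\mathbb{B}}_r(p)) \cong \pi_1(\overline{\mathbb{B}}_s(p))$. Iterating along a descending sequence of good radii $r = r_0 > r_1 > \cdots$ in $\mathcal{G}_p$ with $\mathbb{S}_{r_i}(p) \cong S^2$ at each step, and closing the induction at scales so small that $\overline{\mathbb{B}}_{r_k}(p)$ is Gromov--Hausdorff close to the contractible tangent-cone ball in $C(Z) \cong \R^3$ (valid since $Z \cong S^2$ by assumption), where simple connectedness transfers via a further application of the uniform local contractibility, forces $\pi_1(\overline{\mathbb{B}}_r(p)) = 0$, contradicting $N > 1$.

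The main obstacle is establishing the simple connectedness of the annular region $A_{s,r}$, which plays the role analogous to the connectedness of lifted Green-spheres used in \autoref{lemma:avrS2}. The extension of the nerve retraction of \autoref{retraction} from outer neighborhoods to the annular setting, and the induction base at infinitesimal scales, are the most delicate points. Once the closed-ball statement is established (simultaneously for every good radius with Green-sphere $\cong S^2$), the open-ball claim follows by a straightforward exhaustion: any loop in $\mathbb{B}_r(p)$ is compactly contained in some $\overline{\mathbb{B}}_{r'}(p)$ with $r' \in \mathcal{G}_p \cap (0,r)$ and $\mathbb{S}_{r'}(p) \cong S^2$, and thus contracts in $\overline{\mathbb{B}}_{r'}(p) \subset \mathbb{B}_r(p)$ by the closed-ball case.
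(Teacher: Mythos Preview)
Your proposal has a genuine gap. The covering-space setup in your first two paragraphs is correct but never delivers a contradiction: observing that $\tilde b^{-1}(s)$ is a disjoint union of $N$ copies of $S^2$ is just the trivial consequence of $\mathbb{S}_s(p)\cong S^2$ being simply connected, and you never use it again. The paper's actual argument (\autoref{lemma:Gballsc}) extracts a contradiction from the cover by a different mechanism: it works by contradiction on a \emph{relative} statement (small loops in $B_{\eps_0 r}(p)$ bound in $\mathbb{B}_r(p)$), so in the contradicting sequence the loops $\gamma_i$ shrink to the center. This forces the deck-orbit of $\tilde p_i$ to have diameter $\to 0$, which is precisely what makes the lifted function $\tilde b_i$ behave like a Green-type distance on the cover. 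Then \autoref{main.green.sphere} applies upstairs and forces $\{\tilde b_i=s_i\}$ to be a \emph{connected} closed surface---and a connected surface cannot cover $S^2$ nontrivially. Without the shrinking-loop setup, you have no control on the orbit diameter, $\tilde b$ need not look like a Green distance near any single preimage, and connectedness of the lifted level set is unavailable.

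Your fallback van Kampen/induction argument has two further gaps. First, the claim that $A_{s,r}$ deformation retracts onto $\mathbb{S}_r(p)$: \autoref{retraction} only yields a \emph{retraction}, and upgrading it to a deformation retraction requires filling simplices inside the annulus, i.e.\ local contractibility of $A_{s,r}$---which is exactly what the paper establishes only \emph{after} \autoref{simply.conn.clean} (see the proof of \autoref{punct}, which invokes \autoref{prop:3ctoc}). Second, your base case ``at scales where $\overline{\mathbb{B}}_{r_k}(p)$ is GH-close to a ball in $C(Z)\cong\R^3$'' does not yield $\pi_1=0$ without uniform local contractibility of the approximating balls, which is again circular. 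The paper sidesteps both issues: once \autoref{lemma:Gballsc} gives the relative statement, \autoref{lambda} supplies a retraction $\overline{\mathbb{B}}_{\Lambda r}(p)\to\overline{\mathbb{B}}_r(p)$ that converts ``1-connected in $B_{\Lambda r}(p)$'' into ``1-connected'', with no annulus analysis and no small-scale base case needed.
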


The main step of the proof of \autoref{simply.conn.clean} is to show that (scale invariantly) sufficiently small loops in a good Green-ball with Green-sphere homeomorphic to $S^2$ are contractible inside the Green-ball, as follows.

\begin{lemma}\label{lemma:Gballsc}
Let $v>0$ be fixed. There exists $\eps_0=\eps_0(v)$ such that if $\eta_0 \le \eta_0(v)$ and $\delta_0 \le \delta_0(v)$ the following statement holds. For any $\RCD(-2,3)$ space with empty boundary $(X,\dist,\haus^3)$ and for any $p\in X$ such that $\haus^3(B_1(p))\ge v$, for all $r\in \mathcal{G}_p$ such that the Green-sphere $\mathbb{S}_r(p)$ is homeomorphic to $S^2$ the inclusion
\begin{equation}
B_{\eps_0 r}(p)\hookrightarrow \mathbb{B}_r(p)
\end{equation}
induces the trivial map $\pi_1(B_{\eps_0 r}(p),p)\to\pi_1(\mathbb{B}_{r}(p),p)$.
\end{lemma}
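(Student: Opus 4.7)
The plan is to localize the argument of \autoref{lemma:avrS2} by lifting $b_p$ to the universal cover of $\overline{\mathbb{B}}_r(p)$ and exploiting the sphericity of the preimages of good Green-spheres. By the semi-local simple-connectedness of $\mathrm{RCD}$ spaces established in \cite{WangRCD}, $\overline{\mathbb{B}}_r(p)$ admits a universal cover $\pi\colon\widetilde{\Omega}\to\overline{\mathbb{B}}_r(p)$, which we endow with the lifted length metric so that $\pi$ is a local isometry. Set $\widetilde{b}:=b_p\circ\pi$ and $\pi^{-1}(p)=\{\widetilde{p}_\alpha\}_{\alpha\in A}$. By \autoref{cor:densegoodr} together with the GH-stability of good Green-spheres (\autoref{retraction} and \autoref{main.green.sphere}(ii),(v),(vi)), there exists an interval $\mathcal{I}:=(r-c(v)r,\,r]$ such that $\mathbb{S}_s(p)\cong S^2$ for every $s\in\mathcal{I}\cap\mathcal{G}_p$. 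For every such $s$, the restriction $\pi|_{\pi^{-1}(\mathbb{S}_s(p))}$ is a covering of the simply connected space $S^2$, so each connected component is mapped homeomorphically onto $\mathbb{S}_s(p)$; thus $\pi^{-1}(\mathbb{S}_s(p))$ is a disjoint union of embedded $2$-spheres in $\widetilde{\Omega}$. A parallel application of \autoref{main.green.sphere}(vi) in a small neighbourhood of each $\widetilde{p}_\alpha$ (where $\widetilde{\Omega}$ is locally isometric to $X$) shows that for sufficiently small $s$ the connected components of $\{\widetilde{b}<s\}$ are pairwise disjoint regions, each containing a unique $\widetilde{p}_\alpha$ and each mapping homeomorphically onto $\mathbb{B}_s(p)$ via $\pi$.

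The key step---and the main obstacle---is to propagate this bijection between components of $\{\widetilde{b}<s\}$ and lifts of $p$ from small $s$ up to some $s\in\mathcal{I}\cap\mathcal{G}_p$ close to $r$. Consider the set $\mathcal{J}\subset(0,r]\cap\mathcal{G}_p$ of radii at which the bijection holds (equivalently, since the cover has constant degree, at which $\pi$ restricts to a homeomorphism on each connected component of $\{\widetilde{b}<s\}$). The set $\mathcal{J}$ is nonempty (small good radii), open in $\mathcal{I}\cap\mathcal{G}_p$ (perturbations of $s$ preserve the combinatorial structure by continuity of $\widetilde{b}$ and discreteness of $\pi^{-1}(p)$), and closed, by the following Morse-theoretic argument: if $s_n\uparrow s_\infty\in\mathcal{I}\cap\mathcal{G}_p$ with $s_n\in\mathcal{J}$, the only way to have $s_\infty\notin\mathcal{J}$ is for two distinct components of $\{\widetilde{b}<s_n\}$ to merge in the limit, which would force the merged region's limit boundary to fail to be an $S^2$, contradicting $\mathbb{S}_{s_\infty}(p)\cong S^2$. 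A connectedness argument then shows $\mathcal{J}\supseteq\mathcal{I}\cap\mathcal{G}_p$, and in particular $\mathcal{J}$ contains radii arbitrarily close to $r$.

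Fix such $s\in\mathcal{J}$ close to $r$. By the uniform bounds $b_p\asymp\dist_p$ on $\overline{\mathbb{B}}_r(p)$ from \autoref{cor:boundb}, there exists $\epsilon_0=\epsilon_0(v)$ such that $B_{\epsilon_0 r}(p)\subset\mathbb{B}_s(p)$. Any loop $\gamma\colon S^1\to B_{\epsilon_0 r}(p)$ based at $p$ lifts, by uniqueness of path lifts, to a path $\widetilde{\gamma}$ in the component $\widetilde{W}_0\subset\{\widetilde{b}<s\}$ containing $\widetilde{p}_0$. Since $\pi|_{\widetilde{W}_0}$ is a homeomorphism and $\pi^{-1}(p)\cap\widetilde{W}_0=\{\widetilde{p}_0\}$, we have $\widetilde{\gamma}(1)=\widetilde{p}_0$, so $\widetilde{\gamma}$ is a loop in the simply connected space $\widetilde{\Omega}$ and is null-homotopic; projecting through $\pi$, $\gamma$ is null-homotopic in $\mathbb{B}_r(p)$, as desired.
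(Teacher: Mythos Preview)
Your outline follows the same broad idea as the paper---lift $b_p$ to the universal cover and study level sets there---but the execution has a genuine gap, and the paper's argument is structured quite differently to avoid it.

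The central problem is your ``open--closed'' propagation of the set $\mathcal{J}$. First, $\mathcal{G}_p$ is merely a measurable set of radii (with density conditions as in \autoref{cor:densegoodr}), not an interval, so a connectedness argument on $\mathcal{I}\cap\mathcal{G}_p$ has no force. Second, and more seriously, your closedness step is not justified: if two components of $\{\widetilde b<s_n\}$ merge as $s_n\uparrow s_\infty$, the boundary of the merged region in $\{\widetilde b=s_\infty\}$ is a union of connected components of $\pi^{-1}(\mathbb{S}_{s_\infty}(p))$, and since $\mathbb{S}_{s_\infty}(p)\cong S^2$ is simply connected, each such component \emph{is} homeomorphic to $S^2$. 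Nothing forbids a region in the cover from being bounded by several disjoint $2$-spheres; this is not a contradiction. In fact, the merging you want to rule out is precisely equivalent to the existence of a short loop in $\mathbb{B}_r(p)$ representing a nontrivial deck transformation---which is exactly what you are trying to disprove. Your argument is circular at this point.

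The paper's proof avoids this by arguing by contradiction and compactness. Assuming the statement fails, one obtains a sequence of spaces with noncontractible loops $\gamma_i\subset B_{r_i/i}(p_i)$. The key new ingredient---absent from your sketch---is an Anderson-type volume argument showing that the subgroup $\Gamma_i<\pi_1(\mathbb{B}_1(p_i))$ generated by $\gamma_i$ has order bounded by $N(v)$. Since $\gamma_i$ becomes arbitrarily short, the orbit $\Gamma_i\cdot\widetilde p_i$ has diameter tending to $0$. Only in this limiting regime does the lifted function $\widetilde b_i$ behave like a Green-type distance from a \emph{single} pole, so that \autoref{main.green.sphere} applies upstairs and forces its good level set to be a connected surface ($S^2$ or $\mathbb{RP}^2$). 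The covering map then restricts to a nontrivial cover of $\mathbb{S}_{s_i}(p_i)\cong S^2$, which is impossible. Without the finiteness of $\Gamma_i$ and the collapsing of its orbit, there is no reason for $\widetilde b$ to have connected good level sets, and your propagation argument cannot be salvaged.
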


\begin{proof}
The argument will be an effective version of the one employed for proving \autoref{lemma:avrS2}. 

We argue by contradiction and assume that no such $\eps_0>0$ exists. Then there exists a sequence of $\RCD(-2,3)$ spaces $(X_i,\dist_i,\haus^3,p_i)$ such that $\haus^3(B_1(p_i))\ge v>0$, $r_i\in \mathcal{G}_{p_i}$ is such that $\mathbb{S}_{r_i}(p_i)$ is homeomorphic to $S^2$ and there exist loops $\gamma_i\subset B_{r_i/i}(p_i)$ that are not contractible in $\mathbb{B}_{r_i}(p_i)$.

Up to scaling the distances $\dist_i$, we can assume that $r_i=1$ for all $i\in\setN$, $i\ge 1$. We consider good local Green functions $G_{i}:B_{2}(p_i)\to (0,\infty)$ and the associated Green-type distances $b_{i}$ inducing good Green-balls and spheres as in \autoref{main.green.sphere} for all $i\in\setN$.
While in principle we are changing the Green distance, since good Green-spheres $\mathbb{S}_r(p_i)$ are homeomorphic to the cross-section of the cone
approximating $B_{2r}(p_i)$ we still have $\mathbb{S}_r(p_i)\cong S^2$.

Then we argue as in the proof of \cite[Lemma 3.3]{PanWei} and consider the universal coverings $(U_i,\tilde{p}_i)$ of $(\mathbb{B}_{1}(p_i),p_i)$ endowed with the covering groups $H_i:=\pi_1(\mathbb{B}_{1}(p_i),p_i)$ and projection maps $\pi_i:U_i\to \mathbb{B}_{1}(p_i)$. Let $\Gamma_i<H_i$ be the subgroup generated by the loop $\gamma_i$. A minor variant of the argument in \cite{Andersonshort} shows that $\Gamma_i$ is a finite group with cardinality bounded by $N=N(v)$ independently of $i\in\setN$. Since $\gamma_i\subset B_{1/i}(p_i)$ and it has order $\le N(v)$, we obtain
\begin{equation}\label{eq:orbitto0}
\mathrm{diam} (\Gamma_i\cdot \tilde{p}_i)\to 0\, ,\quad \text{as $i\to\infty$}\, .
\end{equation}
We lift the Green-type distances $b_{i}$ to the universal covers $U_i$ by setting $\tilde{b}_i:=b_i\circ\pi_i:U_i\to [0,\infty)$. By \eqref{eq:orbitto0}, for $i$ sufficiently large the functions $\tilde{b}_i$ have properties analogous to those of the Green-type distances $b_i$, namely they satisfy the conclusions of \autoref{main.green.sphere}.
Hence we can find good radii $s_i\in \mathcal{G}_{p_i}$ such that $\mathbb{S}_{s_i}(p_i)$ is homeomorphic to $S^2$ and $\{\tilde{b}_i=s_i\}$ is either homeomorphic to $S^2$ or to $\mathbb{RP}^2$. Arguing as in the last part of the proof of \autoref{lemma:avrS2}, if $\Gamma_i\neq \{e\}$ we reach a contradiction in both cases. 
\end{proof}

	\begin{proof}[Proof of \autoref{simply.conn.clean}]
		If $\eta_0\le \eta_0(v)$ and $\delta_0\le \delta_0(v)$ are sufficiently small, then $\overline{\mathbb{B}}_r(p)\subseteq B_{2r}(p)$ and, setting $\Lambda:=4\eps_0^{-1}$, $B_{2r}(p)$ is 1-connected in $B_{\Lambda r}(p)$ by \autoref{lemma:Gballsc}.
		Thus, $\overline{\mathbb{B}}_r(p)$ is 1-connected in $B_{\Lambda r}(p)$. The conclusion follows from \autoref{lambda}, provided $\delta_0\le \delta_0(\Lambda,v)$ is small enough.
	\end{proof}

\subsection{Topology of Green-balls in the (generalized) manifold part}\label{subsec:topGgm}

	Our goal in this section is to study the topology of those good Green-balls that do not intersect the non-manifold part of $X$. It will turn out that they are contractible $3$-manifolds with boundary homeomorphic to $S^2$. This conclusion will be achieved in two steps: first, we are going to prove that they are $3$-manifolds with boundary homeomorphic to $S^2$. The contractibility then follows by a classical argument taking into account the simple-connectedness obtained in \autoref{simply.conn.clean}: see \autoref{contractible} for the details. 
Therefore, the main challenge in this section is to prove that good Green-balls are manifolds with boundary. 

\begin{remark}\label{rm:Alexander}
It is well known that for an open set in a topological $3$-manifold $M^3$ with compact closure and topological boundary homeomorphic to $S^2$ it is not necessarily true that the closure is homeomorphic to a $3$-manifold with boundary $S^2$. A classical example is the so-called Alexander horned sphere: see \cite{Cannon} for a discussion very much in the spirit of some of the techniques of the present paper. 
\end{remark}

In the case where the ambient space is a smooth Riemannian manifold, an argument similar to the conclusion of the proof of \cite[Theorem 7.10]{CheegerJiangNaber} shows that any Green-ball of good radius is a smooth submanifold bounded by the respective Green-sphere. In the general case, we will exploit \autoref{retraction}, which in turn is a consequence of the local splitting at all sufficiently small scales at points on the Green-sphere, to prove that Bing's \emph{tameness} criterion from \cite{Bing 1ULC} is satisfied.

\begin{definition}
Let $(X,\dist,\haus^3)$ be an $\RCD(-2,3)$ space. We let $\mathcal{R}_{\rm{top}}(X)=\mathcal{R}_{\rm{top}}\subseteq X$ be the open set of those points $x\in X$ such that there exists a neighbourhood $U_x\ni x$ homeomorphic to $\setR^3$. We will refer to $\mathcal{R}_{\rm{top}}(X)$ as the \emph{manifold set} of $X$.
We shall denote $\mathcal{S}_{\rm{top}}(X)=\mathcal{S}_{\rm{top}}:=X\setminus\mathcal{R}_{\rm{top}}$ the topologically singular set. 
\end{definition}

\begin{remark}
As we already remarked, the non-manifold set is included in the effective $1$-dimensional singular set for $\RCD(-2,3)$ spaces $(X,\dist,\haus^3)$ with empty boundary, namely it holds $\mathcal{S}_{\rm{top}}\subseteq \mathcal{S}^1_{\eps_0}$.
\end{remark}

	\begin{proposition}\label{prop:GballinR}
		Let $(X,\dist,\haus^3)$ be an $\RCD(-2,3)$ space. 
  Assume that $\eta_0\le \eta_0(v)$ and $\delta_0 \le \delta_0(v)$.
  If a Green-type ball $\overline{\mathbb{B}}_r(p)$ with radius $r\in\mathcal{G}_p$ is included in the manifold part $\mathcal{R}_{\rm{top}}(X)\subseteq X$, then it is a $3$-manifold with boundary and its boundary $\mathbb{S}_r(p)$ is homeomorphic to $S^2$.
	\end{proposition}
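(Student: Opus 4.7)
The plan is to verify Bing's tameness criterion for the Green-sphere $\mathbb{S}_r(p)$, regarded as a closed $2$-surface embedded in the open topological $3$-manifold $\mathcal{R}_{\rm top}(X)$, and then to read off the manifold-with-boundary structure together with the homeomorphism type of the boundary. This is exactly the situation warned about in \autoref{rm:Alexander}: the conclusion is nontrivial because an a priori wild embedding could look like the Alexander horned sphere. The key geometric input ruling this out will be the uniform local splitting at all small scales along $\mathbb{S}_r(p)$, as encoded in \autoref{retraction} and \autoref{lambda}.

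First, by \autoref{main.green.sphere} (iv) and (vi), $\mathbb{S}_r(p)$ is a closed topological $2$-surface homeomorphic to either $S^2$ or $\mathbb{RP}^2$. Since $\mathcal{R}_{\rm top}(X)$ is open and contains $\overline{\mathbb{B}}_r(p)$, the good-radius density property of $\mathcal{G}_p$ (\autoref{cor:densegoodr}) allows me to pick radii $r_-<r<r_+$ in $\mathcal{G}_p$ arbitrarily close to $r$ with $\overline{\mathbb{B}}_{r_+}(p)\subseteq\mathcal{R}_{\rm top}(X)$. This provides inner and outer good Green-spheres $\mathbb{S}_{r_\pm}(p)$ which also sit inside the manifold part, and reduces the proof to a purely topological study of a tubular neighbourhood of $\mathbb{S}_r(p)$ inside an honest $3$-manifold.

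Second, I apply the tameness criterion of \cite{Bingtame,Bing 1ULC}: it suffices to show that $\mathcal{R}_{\rm top}(X)\setminus\mathbb{S}_r(p)$ is $1$-ULC from both sides, i.e., every sufficiently small loop in the complement of $\mathbb{S}_r(p)$ bounds a disk inside an arbitrarily small neighbourhood on the same side. On the interior side, the Lipschitz retractions from \autoref{retraction} push a small one-sided collar of $\mathbb{S}_r(p)$ onto inner Green-spheres $\mathbb{S}_s(p)\subset\mathbb{B}_r(p)$ with $s\uparrow r$; composing with the null-homotopies supplied by the uniform local $1$-connectedness of good Green-spheres in \autoref{main.green.sphere} (v) contracts any small interior loop near $\mathbb{S}_r(p)$ inside $\mathbb{B}_r(p)$ at a controlled scale. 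On the exterior side, I run the analogous argument using the outer Green-ball $\overline{\mathbb{B}}_{r_+}(p)$: retractions associated with $\mathbb{S}_{r_+}(p)$ push a small outer collar of $\mathbb{S}_r(p)$ onto $\mathbb{S}_{r_+}(p)$ without crossing $\mathbb{S}_r(p)$, and local $1$-connectedness of $\mathbb{S}_{r_+}(p)$ again yields the required null-homotopies on that side.

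Third, once tameness is known, Bing's approximation theorem \cite{Bing appr} implies that $\mathbb{S}_r(p)$ is bicollared in $\mathcal{R}_{\rm top}(X)$; in particular $\overline{\mathbb{B}}_r(p)$ is a compact topological $3$-manifold with boundary $\mathbb{S}_r(p)$. Since a closed surface that bounds a compact $3$-manifold has even Euler characteristic, the possibility $\mathbb{S}_r(p)\cong\mathbb{RP}^2$ (whose Euler characteristic is $1$) is excluded, forcing $\mathbb{S}_r(p)\cong S^2$. The main obstacle in this plan is the $1$-ULC verification on the exterior side of $\mathbb{S}_r(p)$: unlike the interior, where nested good Green-balls $\mathbb{B}_s(p)\subset\mathbb{B}_r(p)$ provide natural retractions, on the outside one has to patch retractions coming from outer Green-balls $\overline{\mathbb{B}}_{r_+}(p)$ without introducing paths that cross $\mathbb{S}_r(p)$, and this is precisely where the hypothesis $\overline{\mathbb{B}}_r(p)\subseteq\mathcal{R}_{\rm top}(X)$ is used in an essential way, so that Bing's criteria are applied inside a genuine $3$-manifold rather than inside a mere generalized manifold.
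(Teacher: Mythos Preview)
Your overall strategy matches the paper: verify Bing's 1-ULC criterion from \cite{Bing 1ULC} for $\mathbb{S}_r(p)\subset\mathcal{R}_{\rm top}(X)$, conclude that $\overline{\mathbb{B}}_r(p)$ is a compact $3$-manifold with boundary $\mathbb{S}_r(p)$, and then rule out $\mathbb{RP}^2$ by parity of the Euler characteristic. The gap is in how you verify 1-ULC. You propose to retract a small loop $\gamma\subset V\cap\{b_p<r\}$ onto an inner Green-sphere $\mathbb{S}_s(p)$ via \autoref{retraction} and then fill it there using \autoref{main.green.sphere} (v). But \autoref{retraction} only produces a \emph{retraction} $\rho_s$ onto $\overline{\mathbb{B}}_s(p)$, not a deformation retraction: no homotopy in $\{b_p<r\}$ from $\gamma$ to $\rho_s\circ\gamma$ is available at this stage of the paper (the deformation retraction of a Green annulus onto a Green-sphere appears only later in \autoref{punct}, and that argument itself relies on the contractibility of Green balls). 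Hence the disk you build on $\mathbb{S}_s(p)$ bounds $\rho_s\circ\gamma$, not $\gamma$. Your exterior-side argument has the same defect; introducing auxiliary radii $r_\pm$ does not help.

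The paper fixes this by reversing the order of operations and using the manifold hypothesis more directly. Since $q\in\mathcal{R}_{\rm top}(X)$, choose $V$ small and \emph{contractible}; first fill $\gamma$ inside $V$ to get a disk $\Gamma:\overline D^2\to V$ (which may well cross $\mathbb{S}_r(p)$); then pick $s<r$ in $\mathcal{G}_p$ close enough to $r$ that $\gamma(S^1)\subset\overline{\mathbb{B}}_s(p)$, so that $\rho_s$ \emph{fixes} $\gamma$. Now $\rho_s\circ\Gamma$ is a disk with boundary exactly $\gamma$, lying in $\overline{\mathbb{B}}_s(p)\subset\{b_p<r\}$, and the displacement estimate \eqref{eq:contrretra} forces $\rho_s(V)\subseteq U$. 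The exterior side uses the symmetric variant, namely a retraction onto a superlevel set $\{b_p\ge s\}$ for some $s>r$. Note that \autoref{main.green.sphere} (v) is not invoked directly in this argument; it enters only through the construction of $\rho_s$ in \autoref{retraction}.
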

	
	\begin{proof}
		We claim that $\overline{\mathbb{B}}_r(p)$ is a manifold with boundary $\mathbb{S}_r(p)$. If this is the case, then it follows that $\mathbb{S}_r(p)$ is homeomorphic to $S^2$. Indeed, by \autoref{main.green.sphere} (vi), $\mathbb{S}_r(p)$ is either homeomorphic to $S^2$ or to $\mathbb{RP}^2$. Moreover, it bounds a compact $3$-manifold and hence it has even Euler characteristic. Hence $\mathbb{S}_r(p)$ is homeomorphic to $S^2$.

  \medskip
		
		The remaining part of the proof is dedicated to establishing the claim.
		Since $\mathbb{S}_r(p)$ is a closed surface, it suffices to show that the topological embedding $\mathbb{S}_r(p)\hookrightarrow\mathcal{R}_{\rm{top}}$ is locally tame
		(see \cite[p.\ 294]{Bing 1ULC} for the definition).
		
		By virtue of \cite[Theorem 7]{Bing 1ULC} and the compactness of a neighbourhood of $\mathbb{S}_r(p)$, it is enough to show the following:
		for any $q\in\mathbb{S}_r(p)$ and any open neighbourhood $U$ of $q$, there exists a smaller open neighbourhood $V\subseteq U$ of $q$
		such that any continuous map $S^1\to V\setminus\mathbb{S}_r(p)$ extends to a continuous map $\overline D^2\to U\setminus\mathbb{S}_r(p)$.
		
		Given $U\ni q$, let $V\subseteq U$ a small contractible neighbourhood of $q$, which exists as $q\in\mathcal{R}_{\rm{top}}$.
		Given a loop $\gamma:S^1\to V\setminus\mathbb{S}_r(p)$, recalling that $\mathbb{S}_r(p)=\{b_p=r\}$
		we see by continuity that either $\gamma( S^1)\subset\{b_p<r\}$ or $\gamma( S^1)\subset\{b_p>r\}$.
		Assume that we are in the first case. We select $s<r$ such that \autoref{retraction} applies and $\gamma(S^1)\subset\{b_p\le s\} = \overline{\mathbb{B}}_s(p)$.
		Since $V$ is contractible, $\gamma$ extends to a map $\Gamma:\overline D^2\to V$. Calling $\rho_s$ the retraction to $\overline{\mathbb{B}}_s(p)$ given by \autoref{retraction},
		whose domain includes $V$ (if $V$ is taken small enough), the required map is the composition $\rho_s\circ\Gamma$.
		Indeed, by \eqref{eq:contrretra}, $\sup_x\dist(x,\rho_s(x))\to0$ as both $b_p(x)\to r$ and $s\to r$.
		Taking $V$ small enough, this guarantees that $\rho_s(V)\subseteq U$. Thus, $\rho_s\circ\Gamma$ takes values in $U\setminus\mathbb{S}_r(p)$, as desired.
		
		The case where $\gamma( S^1)\subset\{b_p>r\}$ is similar, and follows from an analogous version of \autoref{retraction}, where we push a neighbourhood of $\mathbb{S}_r(p)$
		to a superlevel set $\{b_p\ge s\}$ with $s>r$. Given \cite[Theorem 7]{Bing 1ULC} this completes the proof that $\bar{\mathbb{B}}_r(p)$ is a manifold with boundary $\mathbb{S}_r(p)$ and hence of the proposition.

	\end{proof}

   \begin{remark}
       The tameness criterion in \cite{Bing 1ULC}, used in the proof of \autoref{prop:GballinR}, is based in turn on a previous result of Bing \cite{Bingtame}. The latter offers a different condition to check that a surface is tame, and it is based on the so-called approximation property from both sides. It seems possible that one could use the criterion from \cite{Bingtame} directly in our work. However, we choose to employ \cite{Bing 1ULC} for the sake of simplicity.
   \end{remark}

	Thanks to \autoref{prop:GballinR} we can already prove one of the two implications in \autoref{thm:RCDtopmain}. 
	
	\begin{corollary}\label{cor:impl1}
	Let $(X,\dist,\haus^3)$ be an $\RCD(-2,3)$ space. If $X$ is homeomorphic to a $3$-manifold without boundary, then all the cross-sections of all tangent cones of $X$ are homeomorphic to $S^2$.
	\end{corollary}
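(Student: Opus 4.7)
The plan is to combine \autoref{prop:GballinR} with the topological identification of Green-spheres in \autoref{main.green.sphere} (vi), viewed through a blow-up at a given point. Fix $x\in X$ and let $C(Z)$ be any tangent cone at $x$, so that $(X,r_i^{-1}\dist,x)\to(C(Z),o)$ in the pointed Gromov--Hausdorff topology for some sequence $r_i\downarrow 0$. Since $X$ is a topological manifold, the density $\theta_X(x)>0$, and by Bishop--Gromov volume comparison on $\RCD(-2,3)$ spaces there exist $v=v(x)>0$ and $R=R(x)>0$ such that, after rescaling by $s^{-1}$ at any scale $s<R$, one has the uniform noncollapsing bound $\haus^3(B_1(p))\ge v$ throughout the rescaled ball $B_{20}(x)$. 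Fix thresholds $\delta_0=\delta_0(v)$ and $\eta_0=\eta_0(v)$ small enough so that \autoref{main.green.sphere}, \autoref{cor:densegoodr}, and \autoref{prop:GballinR} all apply.

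For $i$ large, the rescaled ball $(X,r_i^{-1}\dist,x)$ is $\delta$-GH close to $(C(Z),o)$ for $\delta$ as small as we wish. By \autoref{cor:densegoodr}, we may select a good radius $s_i\in\mathcal{G}_x$ with $s_i/r_i\to 1$; after this mild perturbation the rescaled $B_{20}(x)$ in the metric $s_i^{-1}\dist$ remains $\delta$-GH close to $B_{20}(o)\subset C(Z)$, with approximating cross-section $(Z,\dist_Z)$. Applying \autoref{main.green.sphere} (vi) in this rescaled metric yields that the good Green-sphere $\mathbb{S}_{s_i}(x)$ (corresponding to radius $1$ after rescaling) is homeomorphic to $Z$.

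On the other hand, since $X$ is a topological $3$-manifold, $\mathcal{R}_{\text{top}}(X)=X$, and in particular the closed good Green-ball $\overline{\mathbb{B}}_{s_i}(x)$ is entirely contained in the manifold part. By \autoref{prop:GballinR}, $\mathbb{S}_{s_i}(x)$ is therefore homeomorphic to $S^2$. Combining the two homeomorphisms gives $Z\cong S^2$, as desired, and since $Z$ was an arbitrary cross-section at an arbitrary point, the conclusion follows.

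There is no serious obstacle here, since the substantial analytic and topological content is already packaged in \autoref{prop:GballinR} (tame embedding of Green-spheres in the manifold part, exploiting \cite{Bing 1ULC}) and in \autoref{main.green.sphere} (vi) (identification of Green-spheres with the approximating cross-section via \cite{Petersen90}). The only minor technical point is the justification of uniform volume noncollapsing for the rescaled balls around $x$, which is routine.
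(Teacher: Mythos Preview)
Your proposal is correct and follows essentially the same route as the paper: identify the Green-sphere with the cross-section via \autoref{main.green.sphere} (vi), then use \autoref{prop:GballinR} (which needs $\overline{\mathbb{B}}_r(x)\subset\mathcal{R}_{\rm top}=X$) to force $\mathbb{S}_r(x)\cong S^2$.

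One genuine omission: \autoref{main.green.sphere} is stated for $\RCD$ spaces \emph{with empty boundary}, and you invoke it without checking this. The paper handles this in one line: by \cite[Theorem 1.4]{BrueNaberSemolabdry}, if $\partial X\neq\emptyset$ there would be a point with a neighbourhood homeomorphic to a half-space, contradicting the manifold-without-boundary hypothesis. You should insert this check before appealing to \autoref{main.green.sphere}. A second, purely cosmetic point: the claim ``since $X$ is a topological manifold, $\theta_X(x)>0$'' is true but the reasoning is misplaced---positivity of the density holds for any noncollapsed $\RCD$ space regardless of topological regularity; your actual noncollapsing input is just Bishop--Gromov applied at the fixed point $x$, which you state correctly afterwards.

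Compared with the paper, you track a \emph{specific} tangent cone $C(Z)$ and match the good radius $s_i$ to the blow-up scale $r_i$, whereas the paper simply picks any small $r\in\mathcal{G}_p$ and uses (implicitly) that all cross-sections at $p$ are mutually homeomorphic. Your version is slightly more self-contained on that point; the paper's is terser. Either works.
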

	
	\begin{proof}
	By \cite[Theorem 1.4]{BrueNaberSemolabdry}, if $(X,\dist)$ has nonempty boundary $\partial X$ then there is $p\in \partial X$ such that a neighbourhood of $p$ is homeomorphic to a neighbourhood of $0\in \setR^3_+$. This contradiction shows that $\partial X=\emptyset$ and \autoref{main.green.sphere} applies. For any $p\in X$ by \autoref{main.green.sphere} (vi) and \autoref{cor:densegoodr} we can find $r\in \mathcal{G}_p$ sufficiently small such that $\mathbb{S}_r(p)$ is homeomorphic to the cross-section of all tangent cones of $X$ at $p$. Since $X$ is a manifold by assumption, \autoref{prop:GballinR} implies that $\mathbb{S}_r(p)$ is homeomorphic to $S^2$. Hence all cross-sections of all the tangent cones of $X$ are homeomorphic to $S^2$. 
	\end{proof}

It is well known that a compact simply connected $3$-manifold with nonempty connected boundary homeomorphic to $S^2$ is contractible. We recall the proof here.

	\begin{proposition}\label{contractible}
		If $r\in\mathcal{G}_p$ and $\overline{\mathbb{B}}_r(p)\subseteq\mathcal{R}_{\rm{top}}$ then $\overline{\mathbb{B}}_r(p)$ is a contractible $3$-manifold with boundary, provided $\eta_0 \le \eta_0(v)$, $\delta_0\le \delta_0(v)$.
	\end{proposition}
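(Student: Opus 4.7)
The plan is to combine the two preceding results with a standard homological computation in the topology of $3$-manifolds with boundary. By \autoref{prop:GballinR}, the hypothesis $\overline{\mathbb{B}}_r(p)\subseteq\mathcal{R}_{\rm top}$ guarantees that $M:=\overline{\mathbb{B}}_r(p)$ is a compact $3$-manifold with boundary $\partial M=\mathbb{S}_r(p)$ homeomorphic to $S^2$. By \autoref{simply.conn.clean}, the fact that $\partial M\cong S^2$ forces $M$ to be simply connected. Hence, the only remaining task is to upgrade simple-connectedness to contractibility.

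I would carry this out by computing $H_*(M;\Z)$ and invoking Whitehead's theorem. Connectedness gives $H_0(M;\Z)=\Z$; the Hurewicz theorem combined with $\pi_1(M)=0$ gives $H_1(M;\Z)=0$; and $H_3(M;\Z)=0$ because $M$ is a compact $3$-manifold with nonempty boundary. For $H_2$, one notes first that $M$ is orientable: since $\pi_1(M)=0$ the orientation double cover is trivial, or equivalently $H^1(M;\Z/2)=0$ by universal coefficients. Poincar\'e--Lefschetz duality then yields $H_2(M;\Z)\cong H^1(M,\partial M;\Z)$. The portion of the long exact cohomology sequence of the pair $(M,\partial M)$ reading
\begin{equation*}
H^0(M;\Z)\longrightarrow H^0(\partial M;\Z)\longrightarrow H^1(M,\partial M;\Z)\longrightarrow H^1(M;\Z)
\end{equation*}
together with $H^1(M;\Z)\cong\operatorname{Hom}(H_1(M;\Z),\Z)=0$ and the fact that $H^0(M;\Z)\to H^0(\partial M;\Z)$ is an isomorphism (both are $\Z$, since $M$ and $\partial M\cong S^2$ are connected) forces $H^1(M,\partial M;\Z)=0$. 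Hence $M$ has the integral homology of a point.

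Finally, since $M$ is a compact manifold with boundary it has the homotopy type of a finite CW-complex, and since it is simply connected with trivial reduced homology, Whitehead's theorem applied to the inclusion of a point shows that $M$ is contractible. I do not foresee any genuine obstacle at this stage: the substantive work has already been carried out in establishing tameness of the good Green-sphere (namely the manifold-with-boundary structure in \autoref{prop:GballinR}) and in ruling out the possibility $\mathbb{S}_r(p)\cong\mathbb{RP}^2$ via the universal cover argument underlying \autoref{simply.conn.clean}. The present statement is simply the algebraic-topological distillation of those two ingredients.
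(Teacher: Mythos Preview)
Your proof is correct and follows essentially the same route as the paper's: invoke \autoref{prop:GballinR} for the manifold-with-boundary structure and $\partial M\cong S^2$, invoke \autoref{simply.conn.clean} for $\pi_1(M)=0$, then compute $H_2(M)\cong H^1(M,\partial M)=0$ via Lefschetz duality and the long exact sequence of the pair, and finish with Whitehead. The only notable difference is that you explicitly justify orientability (needed for Poincar\'e--Lefschetz duality with $\Z$ coefficients), which the paper leaves implicit.
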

	
	\begin{proof}
		By \autoref{prop:GballinR}, $M:=\overline{\mathbb{B}}_r(p)$ is a manifold with boundary homeomorphic to $S^2$.
		In particular, we have a long exact sequence in cohomology
		\begin{equation}
		\dots\to H^0(M)\to H^0(\de M)\to H^1(M,\de M)\to H^1(M)\to H^1(\de M)\to\dots \, .
		\end{equation}
		Since the first map $H^0(M)\to H^0(\de M)$ is surjective (as both $M$ and $\de M$ have one connected component), the next map is trivial; hence, we obtain
		the exact sequence
		\begin{equation}
		0\to H^1(M,\de M)\to H^1(M)\, .
		\end{equation}
		Thus, $H^1(M,\de M)$ is isomorphic to a subgroup of $H^1(M)$. Also, $M$ is simply connected by \autoref{simply.conn.clean}.
		In particular, we have $H_1(M)=0$, and the universal coefficient theorem gives
		$H^1(M)=0$ as well, and hence $H^1(M,\de M)=0$. Finally, by Lefschetz duality
		\begin{equation}
		H_2(M)\cong H^1(M,\de M)=0\, ,
		\end{equation}
		while $H_3(M)=0$ as $M$ is connected with nonempty boundary. Thus, $M$ has the weak homotopy type of a point. Since a compact manifold is always homotopy equivalent to
		a CW complex, Whitehead's theorem applies to $M$, and hence $M$ is contractible.
	\end{proof}

	\begin{remark}\label{poinc.ball}
	We note that by the resolution of the Poincar\'e conjecture any Green-ball as in the statement of \autoref{contractible} is homeomorphic to $\overline{D}^3$. This stronger statement will be important for some of the subsequent arguments. However, it plays no role in the proof of \autoref{prop:genmanrec}. 
	\end{remark}

	Later on, to complete the proof of \autoref{prop:genmanrec}, it will be important to know that the contractibility of good Green-balls holds under a weaker assumption (with respect to inclusion in the manifold part).

\begin{definition}\label{def:genmanset+}
    Given an $\RCD(-2,3)$ space $(X,\dist,\haus^3)$ we shall denote by $\mathcal{R}_{\rm{gm^+}}(X)=\mathcal{R}_{\rm{gm^+}}\subseteq X$ the open subset of $X$ consisting of those points $x\in X$ such that there exists a neighbourhood $U_x\ni x$ with the following two properties:
	\begin{itemize}
	\item[(i)] any good Green-ball $\mathbb{B}_r(y)\subset U_x$ is contractible;
	\item[(ii)] $H_{*}(X,X\setminus\{y\};\mathbb{Z})=H_{*}(\setR^3,\setR^3\setminus\{0\};\mathbb{Z})$ for all $y\in U_x$.
	\end{itemize}
		\end{definition}
		
	\begin{remark}\label{rm:gm+gm}
	It is clear from the definition that $\mathcal{R}_{\rm{gm^+}}$ is a generalized $3$-manifold without boundary. Later in Section \ref{sec:genman2} we will prove \autoref{prop:genmanrec} by showing that $\mathcal{R}_{\rm{gm^+}}(X)=X$.
	\end{remark}	
	
	Our next claim is that the local conditions in \autoref{def:genmanset+} yield global implications on the topology of all good Green-balls contained in $\mathcal{R}_{\rm{gm^+}}$.
	
	\begin{proposition}\label{prop:Gballscongm}
	Let $(X,\dist,\haus^3)$ be an $\RCD(-2,3)$ space.
 Assume that $\eta_0\le \eta_0(v)$ and $\delta_0 \le \delta_0(v)$.
 Let $p\in X$ and $r\in\mathcal{G}_p$ be such that $\overline{\mathbb{B}}_r(p)\subset \mathcal{R}_{\rm{gm^+}}$. Then $\overline{\mathbb{B}}_r(p)$ is a contractible generalized $3$-manifold with boundary $\mathbb{S}_r(p)$ homeomorphic to $S^2$. 
	\end{proposition}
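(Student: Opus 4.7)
The proof will adapt the strategy of \autoref{prop:GballinR} and \autoref{contractible} to the generalized manifold setting, replacing Bing's tameness criterion by a direct local homological computation. The starting point is that $\mathbb{B}_r(p)$ is already a generalized $3$-manifold, as an open subset of $\mathcal{R}_{\rm{gm^+}}$, and that $\mathbb{S}_r(p)$ is a closed topological surface by \autoref{main.green.sphere} (iv), homeomorphic either to $S^2$ or to $\mathbb{RP}^2$. Local contractibility of $\overline{\mathbb{B}}_r(p)$ follows by combining the retraction provided by \autoref{retraction} with the local contractibility of $\mathcal{R}_{\rm{gm^+}}$.

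The main step, parallel to \autoref{prop:GballinR}, is to verify that $\overline{\mathbb{B}}_r(p)$ is a generalized $3$-manifold with boundary $\mathbb{S}_r(p)$. The key point is to show that for each $q\in\mathbb{S}_r(p)$ one has $H_{*}(\overline{\mathbb{B}}_r(p),\overline{\mathbb{B}}_r(p)\setminus\{q\})=0$, i.e.\ that $q$ behaves like a point on a generalized half-space. The plan is to run the relative Mayer--Vietoris sequence for the decomposition $X=\overline{\mathbb{B}}_r(p)\cup(X\setminus\mathbb{B}_r(p))$ with intersection $\mathbb{S}_r(p)$: since $q\in\mathcal{R}_{\rm{gm^+}}$, the local relative homology of $X$ at $q$ is that of $\setR^3$, and that of the surface $\mathbb{S}_r(p)$ at $q$ is that of $\setR^2$. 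It remains to show that the local relative homology of the complement $X\setminus\mathbb{B}_r(p)$ at $q$ is that of a generalized half-space, namely vanishing. For this I would use that points $q'\in X\setminus\overline{\mathbb{B}}_r(p)$ arbitrarily close to $q$ admit contractible good Green-ball neighborhoods $\mathbb{B}_s(q')\subset\mathcal{R}_{\rm{gm^+}}$ by assumption, together with an outward analogue of the retraction from \autoref{retraction} that pushes neighborhoods of $\mathbb{S}_r(p)$ outside $\mathbb{B}_r(p)$. I expect this boundary-point local-homology computation to be the main obstacle, since it requires carefully combining contractibility of nearby good Green-balls with retraction arguments on both sides of the Green-sphere.

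Once $\overline{\mathbb{B}}_r(p)$ is recognized as a generalized $3$-manifold with boundary $\mathbb{S}_r(p)$, the identification $\mathbb{S}_r(p)\cong S^2$ follows as in \autoref{prop:GballinR} from the Euler-characteristic relation $\chi(\mathbb{S}_r(p))=2\chi(\overline{\mathbb{B}}_r(p))$, which still holds for compact generalized $3$-manifolds with boundary and rules out the odd value $\chi(\mathbb{RP}^2)=1$. Contractibility is then obtained by repeating the argument of \autoref{contractible}: \autoref{simply.conn.clean} ensures that $\overline{\mathbb{B}}_r(p)$ is simply connected; Poincar\'e--Lefschetz duality for compact generalized $3$-manifolds with boundary, together with the long exact cohomology sequence of the pair $(\overline{\mathbb{B}}_r(p),\mathbb{S}_r(p))$, yields $H_2(\overline{\mathbb{B}}_r(p))\cong H^1(\overline{\mathbb{B}}_r(p),\mathbb{S}_r(p))=0$; combined with $H_3=0$ (connected with nonempty boundary) and Hurewicz, this forces $\overline{\mathbb{B}}_r(p)$ to be weakly contractible. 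Since $\overline{\mathbb{B}}_r(p)$ is a compact, locally contractible, finite-dimensional metric space, it is an ANR and Milnor's theorem gives it the homotopy type of a CW complex; Whitehead's theorem then upgrades weak contractibility to genuine contractibility.
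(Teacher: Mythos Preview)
Your overall strategy matches the paper's: establish that $\overline{\mathbb{B}}_r(p)$ is a generalized $3$-manifold with boundary $\mathbb{S}_r(p)$, use parity of $\chi(\mathbb{S}_r(p))$ to rule out $\mathbb{RP}^2$, then repeat \autoref{contractible} via Poincar\'e--Lefschetz and Whitehead. Two differences are worth noting.

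For the first step, the paper does not attempt your direct Mayer--Vietoris computation of the local relative homology at boundary points; it simply invokes \cite[Lemma 3.1]{DavermanThickstun}, which gives the generalized-manifold-with-boundary structure directly. Your proposed computation is not wrong in spirit, but note that the interiors of $\overline{\mathbb{B}}_r(p)$ and $X\setminus\mathbb{B}_r(p)$ do not cover $\mathbb{S}_r(p)$, so the Mayer--Vietoris sequence needs justification, and even then you must still identify the connecting map $H_3(X,X\setminus\{q\})\to H_2(\mathbb{S}_r(p),\mathbb{S}_r(p)\setminus\{q\})$ as an isomorphism; your sketch for the complement side does not obviously bypass this.

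The more substantive gap is in the last two steps. You invoke the identity $\chi(\de M)=2\chi(M)$ and Poincar\'e--Lefschetz duality as if they were standard for compact generalized $3$-manifolds with boundary. For genuine manifolds both rest on the existence of a collar; for generalized manifolds there is no collar neighbourhood theorem, and this is precisely where the paper does real work. The paper's key technical input (its Claim~1) is that for $s\in\mathcal{G}_p$ with $s<r$ close to $r$, the annulus $\overline{\mathbb{B}}_r(p)\setminus\mathbb{B}_s(p)$ is homotopy equivalent to $\mathbb{S}_r(p)$: this uses that all small Green-balls in $\mathcal{R}_{\rm{gm^+}}$ are contractible (\autoref{def:genmanset+}(i)) to get uniform local contractibility of these annuli, and then Petersen's theorem together with Hausdorff convergence to $\mathbb{S}_r(p)$. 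This ``homotopy collar'' is what allows the doubling argument for the Euler characteristic, and what allows one to derive Poincar\'e--Lefschetz from Poincar\'e duality on the open interior \cite{Begleduality,Borelduality} by mimicking the classical proof for manifolds with boundary. Without this step your argument is incomplete; the hypothesis $\overline{\mathbb{B}}_r(p)\subset\mathcal{R}_{\rm{gm^+}}$ enters the paper's proof exactly here, not only through the local relative homology condition.
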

	
	\begin{proof}
	We note that $\mathcal{R}_{\rm{gm^+}}\cap\partial X=\emptyset$. Indeed, if this is not the case, then by \cite[Theorem 1.4]{BrueNaberSemolabdry} there is a regular boundary point $x\in \mathcal{R}_{\rm{gm^+}}$ with a neighbourhood homeomorphic to $\mathbb{R}^3_+$. This is clearly in contradiction with the condition on the relative homology in \autoref{def:genmanset+} (ii).

	By \cite[Lemma 3.1]{DavermanThickstun} any good Green ball $\overline{\mathbb{B}}_r(p)\subset \mathcal{R}_{\rm{gm^+}}$ is a generalized $3$-manifold with boundary, the boundary being the Green-sphere $\mathbb{S}_r(p)$.

     \medskip
	
\emph{Claim 1:}
There exists $r'<r$ such that $\overline{\mathbb{B}}_r(p)\setminus \mathbb{B}_s(p)$ is homotopically equivalent to $\mathbb{S}_r(p)$ for all $r'<s<r$ such that $s\in \mathcal{G}_p$.

By \autoref{def:genmanset+} (i) and compactness, there exists $\rho>0$, independent of $s$, such that each Green ball of radius $r<\rho$ centered in $\overline{\mathbb{B}}_r(p)$ is contractible. In particular, employing the retraction built in \autoref{retraction} we can find $r''<r$ such that for any $r''<s<r$, $s\in \mathcal{G}_p$ the metric spaces $\overline{\mathbb{B}}_r(p)\setminus \mathbb{B}_s(p)$ (endowed with the restriction of the ambient distance $\dist$) are uniformly locally contractible, with contractibility radius independent of $s$.

As $s\to r$, $\overline{\mathbb{B}}_r(p)\setminus \mathbb{B}_s(p)$ Hausdorff converges to $\mathbb{S}_r(p)$. Hence by \cite{Petersen90} there exists $r''<r'<r$ such that the claimed homotopy equivalence holds for every $r'<s<s$ such that $s\in\mathcal{G}_p$.	

    \medskip

\emph{Claim 2:} $\mathbb{S}_r(p)$ is homeomorphic to $S^2$.
In order to prove this, we notice that the usual argument to prove that the Euler characteristic of the boundary of a closed manifold is even applies in the present setting, thanks to Claim 1.

More precisely, we consider the doubling $\widehat{\mathbb B}_r(p)$ obtained by gluing two copies of $\mathbb B_r(p)$ along the boundary $\mathbb S_r(p)$. To compute the Euler characteristic of $\mathbb S_r(p)$ we apply Mayer--Vietoris with two open sets intersecting in a neighbourhood of $\mathbb S_r(p)$ homoeomorphic to the doubling along $\mathbb S_r(p)$ of $\overline{\mathbb{B}}_r(p)\setminus \mathbb{B}_s(p)$, for some $s<r$ as in the proof of Claim 1.

Claim 1 ensures that this neighbourhood is homotopically equivalent to $\mathbb S_r(p)$. From this point on, the computation of the Euler characteristic of $\mathbb S_r(p)$ goes exactly as in the case of closed manifolds with boundary.

Hence, the Euler characteristic of $\mathbb{S}_r(p)$ is even and $\mathbb{S}_r(p)$ must be homeomorphic to $S^2$, and thus $\overline{B}_r(p)$ is simply connected by \autoref{simply.conn.clean}. Moreover, we claim that the Poincar\'e--Lefschetz duality holds for $\overline{B}_r(p)$, hence the argument in the first part of the proof of \autoref{contractible} can be repeated verbatim to obtain that $\overline{\mathbb{B}}_r(p)$ has the weak homotopy type of a point.  Note that $H_3(M)\cong H^0(M,\de M)=0$, again by Poincar\'e--Lefschetz.

 In order to prove the claim we note that Poincar\'e duality holds for generalized manifolds, even in the non-compact case \cite{Begleduality,Borelduality}. Moreover, as noted above there is a family of neighbourhoods of $\mathbb S_r(p)$ whose intersection coincides with $\mathbb S_r(p)$ and such that they are all homotopy equivalent to $\mathbb S_r(p)$. These two tools can be combined to show that Poincar\'e--Lefschetz duality holds for $\overline{B}_r(p)$ by following the classical proof in the case of manifolds with boundary: see for instance \cite[Theorem 3.43]{Hatcherbook}.
 
 Since $\overline{\mathbb{B}}_r(p)$ is a generalized manifold, it is an absolute neighbourhood retract in particular. Hence Whitehead theorem applies and from the fact that $\overline{\mathbb{B}}_r(p)$ has the weak homotopy type of a point we conclude that it is contractible.
\end{proof}

\subsection{Partial contractibility at conical scales}\label{subsec:partcontr}
		The proof of the local contractibility part of \autoref{prop:genmanrec} later in Section \ref{sec:genman2} will be based on an iterative argument. The main idea will be to break each $k$-cycle for $k\le 3$ into several $k$-cycles with ``smaller'' supports whose sum is homologous to the original cycle. The technical statements in this section will be instrumental in this decomposition procedure.
	\medskip
	
	The effect of \autoref{lemma:packedsingular} below is to show that, at each conical scale, the complement of the quantitative Reifenberg set is packed into a small tubular neighbourhood of a union of a uniformly bounded number of segments coming out of the base point. Moreover, \autoref{lemma:pushin} will tell us that, roughly speaking, all the topology at this conical scale, if there is any, lives in this tubular neighbourhood. The key insight is that, on an $\RCD(0,3)$ cone without boundary, the non-Reifenberg region is the cone over the non-Reifenberg region of the cross-section, and this is well controlled by \autoref{prop:sing2dsmall}.
\medskip

Recall that we fixed $\eps_0>0$ such that the metric Reifenberg theorem applies.

	\begin{lemma}\label{lemma:packedsingular}
		Let $v>0$, $\tilde C>1$ be fixed. For every $\overline \gamma >0$, if $0<\delta<\delta_0(v,\overline \gamma, \tilde C)$ the following statement holds. If $(X,\dist,\haus^3)$ is an $\RCD(-\delta,3)$ space without boundary and the ball $B_{2}(p)\subset X$ is $\delta$-conical then there exists a collection of at most $C(v)$ geodesic segments
		$\ell_j$ joining $p$ to $p_j\in\de B_1(p)$ and $\gamma\in (\overline \gamma, C(v,\tilde C)\overline{\gamma})$ such that
		\begin{equation}\label{eq:inclusionsing}
		(B_1(p)\setminus B_{5\gamma}(p)) \cap\mathcal{S}^1_{\eps_0}
        \subseteq (B_1(p)\setminus B_{5\gamma}(p))\setminus\mathcal{R}_{\eps_0,\gamma}
        \subseteq 
        \bigcup_j B_{(2\tilde{C})^{-1}\gamma}(\ell_j) \, .
		\end{equation}
		Moreover, the $\gamma$-tubular neighbourhoods of $\ell_j$ are disjoint, i.e.,
		\begin{equation}\label{eq:disjoint}
		[B_{\gamma }(\ell_j)\cap \left(B_1(p)\setminus B_{5 \gamma}(p)\right)] \cap [B_{\gamma }(\ell_{j'})\cap \left(B_1(p)\setminus B_{5 \gamma }(p)\right)]
        =\emptyset
		\end{equation}
		for each $j\neq j'$.
	\end{lemma}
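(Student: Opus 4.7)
The plan is to reduce the statement to the structure of the two-dimensional cross-section of the model cone via the $\delta$-conical approximation, and then use the quantitative finiteness of singular points on Alexandrov surfaces. Since $B_2(p)$ is $\delta$-conical, there is an $\RCD(1,2)$ space $(Z,\dist_Z,\haus^2)$ with empty boundary (by \cite{BrueNaberSemolabdry}, as $X$ has empty boundary) such that $B_2(p)$ is $\delta$-GH close to the ball $B_2(o)\subseteq C(Z)$ centered at the tip. By \autoref{thm:RCD2Alex}, $Z$ is an Alexandrov surface with curvature $\ge 1$, and its area is bounded from below in terms of $v$. By \autoref{prop:sing2dsmall} applied to $Z$ (more precisely, to a rescaling of $Z$ covered by finitely many unit balls), there exists $N=N(v)$ such that for every scale $s\in(0,s_0(v))$ the set $Z\setminus\mathcal{R}_{\eps_0,s}$ is covered by at most $N$ balls of radius $3s$ centered at points $x_1(s),\dots,x_{m(s)}(s)\in Z$.

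The key step is to choose the scale $\gamma$ via pigeonhole, so as to cluster the points $x_i(\gamma)$ into well-separated groups at radius $\gamma$. Apply the covering result at scale $s=\bar\gamma$ to obtain a set $F:=\{x_1,\dots,x_m\}\subseteq Z$ with $m\le N$. Define an equivalence relation on $F$ at scale $\rho>0$ by merging points whose $\dist_Z$-distance is $\le \rho$, and let $F/\rho$ be the set of resulting clusters. As $\rho$ increases from $0$ to $\operatorname{diam}(F)$, the cardinality $|F/\rho|$ decreases at most $N-1$ times, so by pigeonhole there exists an interval $[\rho_0,100\tilde C\rho_0]$ with $\rho_0\in[\bar\gamma,(100\tilde C)^{N-1}\bar\gamma]$ on which $F/\rho$ is constant. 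Set $\gamma:=\rho_0$; then the clusters in $F/\gamma$ are pairwise at $\dist_Z$-distance greater than $100\tilde C\gamma$, while points within the same cluster have $\dist_Z$-distance at most $N\gamma$. Pick one representative $z_j\in Z$ per cluster, where $j$ ranges over at most $N$ values.

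Next, transfer to $X$. For each $j$, choose a point $p_j\in X$ within GH-distance $\delta$ from $(1,z_j)\in C(Z)$ and with $\dist(p,p_j)=1$, and let $\ell_j$ be a minimizing geodesic from $p$ to $p_j$. The first inclusion in \eqref{eq:inclusionsing} is immediate from the definition of $\mathcal{S}^1_{\eps_0}$ (points in $\mathcal{R}_{\eps_0,\gamma}$ have a Reifenberg-regular scale and hence cannot lie in the quantitative stratum, for $\eps_0$ appropriately fixed). For the second inclusion, note that on the cone the annular non-Reifenberg region $\bigl(B_1(o)\setminus B_{5\gamma}(o)\bigr)\setminus\mathcal{R}_{\eps_0,\gamma}$ is a subset of the set of points of the form $(r,z)$ with $z\in Z\setminus\mathcal{R}_{\eps_0,c\gamma/r}$, which in turn is contained in $\bigcup_i\bigl\{(r,z)\mid z\in B_{Cr\gamma/r}(x_i)\bigr\}=\bigcup_i C(B_{C\gamma/r}(x_i))\cdot r$; by the cluster construction, the latter is packed into $\bigcup_j B_{(4\tilde C)^{-1}\gamma}\bigl(\{(r,z_j)\mid r\in[0,1]\}\bigr)$. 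Taking $\delta\le\delta_0(v,\bar\gamma,\tilde C)$ small enough, the stability of the $\eps_0$-Reifenberg set under GH convergence (a standard consequence of almost-volume-cone-implies-almost-metric-cone, as in \autoref{thm:almvcalmmc} and the theory recalled in Section \ref{subsec:almostvcalmostmc}) transfers this containment to $X$, with the rays replaced by $\ell_j$ and with the smaller constant $(2\tilde C)^{-1}$ on the thickening.

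Finally, disjointness \eqref{eq:disjoint}: at radial distance $r\ge 5\gamma$ from $p$, the $\gamma$-tubular neighborhood of $\ell_j$ corresponds (via the cone identification) to a spherical cap in $Z$ of angular radius $\sim\gamma/r$ around $z_j$; since the clusters are separated by $\dist_Z$-distance at least $100\tilde C\gamma$ and $r\le 1$, the caps are pairwise at $\dist_Z$-distance at least $100\tilde C\gamma-2\gamma/r\ge 50\tilde C\gamma$ for $r\ge 5\gamma$, which is much larger than $4\gamma/r\le 4/5$, giving disjointness on the cone; the same conclusion transfers to $X$ for $\delta$ small. The main obstacle is precisely this pigeonhole argument and the need to handle both the tightness of the covering (forcing the choice of clusters to depend on $\bar\gamma$) and the disjointness (which forces the cluster gap to depend on $\tilde C$ and the total number of merge scales to be bounded by $N(v)$); this is what determines the final dependence $C(v,\tilde C)\sim(100\tilde C)^{N(v)}$.
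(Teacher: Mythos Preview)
Your proposal is essentially correct and follows the same approach as the paper: reduce to the cone via the $\delta$-conical approximation, use the quantitative finiteness of singular points on the Alexandrov cross-section $Z$ (\autoref{prop:sing2dsmall}), and run a pigeonhole argument to find a good scale $\gamma$ at which the covering balls become pairwise separated. Your clustering pigeonhole is equivalent to the paper's packing argument (which observes that bad scales $\gamma$ lie in finitely many dyadic intervals determined by pairwise distances of the centers).

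The only structural difference is in how the transfer from $C(Z)$ to $X$ is handled. The paper first proves the statement exactly on cones (Step~1) and then invokes a soft compactness/contradiction argument based on volume convergence (Step~2), which avoids tracking constants through the GH approximation. You instead transfer directly, appealing to stability of the $\eps_0$-Reifenberg set under GH convergence. Both are valid; the paper's route is cleaner because it sidesteps the need to quantify how the non-Reifenberg set of $X$ compares to that of $C(Z)$ at a fixed scale $\gamma$, while your route is more explicit. One point to be careful about in your direct approach: you apply \autoref{prop:sing2dsmall} at scale $\bar\gamma$ but ultimately need to cover the (potentially larger) set $Z\setminus\mathcal{R}_{\eps_0,\gamma}$ with $\gamma\ge\bar\gamma$; this is harmless since on an Alexandrov surface the actual singular set $Z\setminus\mathcal{R}_{\eps_0}$ is finite with at most $N(v)$ points (\autoref{rm:finitesingset}), so one can work with these fixed points from the start and the scale dependence of the cover disappears.
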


\begin{proof}
The statement will be proved in two steps: in Step 1 we prove that it holds (actually in a stronger form) for cones. In Step 2 we argue that it holds in general via a contradiction argument based on Step 1.

 \medskip
	
\emph{Step 1:}
We claim that the statement holds on cones. Indeed, for an $\RCD(0,3)$ cone $C(Z)$ the effective Reifenberg set is the cone over the effective Reifenberg set of the cross-section, away from the vertex. Moreover, up to dimensional constants that we neglect to unburden the notation and the presentation, it holds
\begin{equation}
\mathcal{R}_{\eps_0,r}(C(Z))\setminus B_r(o)
=
C(\mathcal{R}_{\eps_0,r}(Z))\setminus B_r(o)\, ,
\end{equation}
for any $0<r<r(v)$.

Note that for an $\RCD(0,3)$ cone $C(Z)$ without boundary, $(Z,\dist_Z)$ must be an Alexandrov surface with empty boundary and curvature bounded below by $1$, by \cite{Ketterer15} and \cite{LytchakStadler}.

Hence, we can cover $Z\setminus\mathcal{R}_{\eps_0,\gamma}(Z)$ with at most $N(\eps_0)$ balls of radius $\gamma$, by \autoref{prop:sing2dsmall}. To conclude, we need to show that for every $\overline \gamma \le \overline\gamma(v)$ there exists $\gamma\in (\overline \gamma, C(v,\tilde C)\overline \gamma)$ such that $Z\setminus\mathcal{R}_{\eps_0,\gamma}(Z)$ can be covered with balls of radius $\tilde C^{-1}\gamma$ and their enlargement of radius $\gamma$ is a disjoint family.

The latter statement follows from an elementary packing argument: given any $\gamma \le \gamma(v)$, we can find a Vitali cover of $Z\setminus\mathcal{R}_{\eps_0,\gamma}(Z)$ with balls of radius $\tilde C^{-1}\gamma$ that are disjoint when we decrease the radii to $(5\tilde C)^{-1}\gamma$. If the family of balls with the same centers and with radius $\gamma$ is not disjoint, there exist two centers $x,y$ such that $2(5\tilde C)^{-1}\gamma \le \dist_Z(x,y) \le 2\gamma$. So the set of radii $\gamma$ such that the covering does not satisfy the desired properties is included in the finite union of intervals
\begin{equation}
    \bigcup_{x,y} \left[\frac{1}{2}\dist_Z(x,y), \frac{5 \tilde C}{2} \dist_Z(x,y)\right] \, .
\end{equation}
The claim easily follows.

\medskip

\emph{Step 2:} The statement follows from Step 1 by a contradiction argument based on the volume convergence theorem, volume monotonicity and volume almost-rigidity for $\RCD(K,n)$ spaces $(X,\dist,\haus^n)$: see \cite{DePhilippisGigli2} after \cite{Coldingvol,CheegerColding97I}. 
\end{proof}

\begin{lemma}\label{lemma:pushin}
Under the same assumptions and with the same notation of \autoref{lemma:packedsingular}, $\mathbb{B}_1(p)\subseteq B_{1+\gamma}(p)$ and the identity $\mathbb{B}_1(p)\to \mathbb{B}_1(p)$ is homotopic to a map $\mathbb{B}_1(p)\to\bigcup_j B_{\tilde{C}^{-1}\gamma}(\ell_j)\cup B_{10\gamma}(p)$, with a homotopy among maps with values in $\mathbb{B}_{1+5\gamma}(p)$.
\end{lemma}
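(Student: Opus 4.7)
The plan is to first establish the inclusion $\mathbb{B}_1(p)\subseteq B_{1+\gamma}(p)$, which is immediate from \autoref{thm:prop_Green}~(i): this gives $\sup|b_p-\dist_p|\le\eta_0$, and taking $\eta_0\le\gamma$ in the definition of $\mathcal{G}_p(\delta_0,\eta_0)$ (harmless up to shrinking $\delta$) forces any point with $b_p<1$ to lie in $B_{1+\gamma}(p)$.

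For the homotopy, the idea is to mimic on $X$ an explicit radial-type contraction on the approximating cone. On $C(Z)$, by \autoref{prop:sing2dsmall} combined with cone rigidity, the effective singular set $\mathcal{S}^1_{\eps_0}$ away from the vertex is the cone over a finite set $\{z_1,\dots,z_m\}\subset Z$ which, via the GH-approximation, corresponds to the directions of the $p_j$. Picking continuous cut-offs $\chi_j:Z\to[0,1]$ supported in $B_{\tilde C^{-1}\gamma}(z_j)$ and equal to $1$ on $B_{(2\tilde C)^{-1}\gamma}(z_j)$, setting $\rho(z):=5\gamma+(1-5\gamma)\max_j\chi_j(z)$, and defining
\begin{equation*}
\Phi_t(r,z):=\bigl((1-t)r+t\min(r,\rho(z)),\,z\bigr),\qquad (r,z)\in B_1(o)\subset C(Z),\ t\in[0,1],
\end{equation*}
an elementary check using the cone distance formula shows that $\Phi_t$ is a continuous homotopy of $\mathrm{id}_{B_1(o)}$ to a map $\Phi_1$ whose image lies in $B_{5\gamma}(o)\cup\bigcup_j B_{\tilde C^{-1}\gamma}(\ell_j^{\mathrm{cone}})$, staying inside $B_1(o)$.

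The substantive step is to realize this homotopy on $X$ with controlled error. Using \autoref{lemma:packedsingular}, the open set $W:=(\mathbb{B}_1(p)\setminus B_{5\gamma}(p))\setminus\bigcup_j B_{(2\tilde C)^{-1}\gamma}(\ell_j)$ is contained in $\mathcal{R}_{\eps_0,\gamma}$, so by the metric Reifenberg theorem it carries bi-Hölder charts to open sets in $\mathbb{R}^3$ at scale $\gamma$. Combining these charts with the almost-splitting coordinates produced by the slicing \autoref{trans n=3} at nearby good radii --- available for every $q\in W$ thanks to the conicality of $B_s(q)$ on the range $s\in[\delta_0 r,r/\delta_0]$ --- yields, via a partition of unity adapted to the cut-offs $\chi_j$, a continuous vector field on $W$ whose flow $\tilde\Phi_t:\mathbb{B}_1(p)\to\mathbb{B}_{1+5\gamma}(p)$ realizes the cone homotopy $\Phi_t$ up to $O(\delta)\ll\gamma$ error. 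Extending by the identity on $B_{\tilde C^{-1}\gamma}(\ell_j)\cup B_{5\gamma}(p)$ produces the desired homotopy, with image of $\tilde\Phi_1$ contained in $\bigcup_j B_{\tilde C^{-1}\gamma}(\ell_j)\cup B_{10\gamma}(p)$.

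\textbf{Main obstacle.} The delicate point is the continuity of the globally patched flow across the boundary of the tubular obstacles $B_{\tilde C^{-1}\gamma}(\ell_j)$: the vector field is only defined on the Reifenberg region $W$, and the flow must vanish continuously as one approaches $\partial W$. This is arranged by choosing the partition of unity so that the identity contribution grows continuously to $1$ near $\partial W$, exactly mirroring the role of the cut-offs $\chi_j$ on the cone side; the Lipschitz control inherited from $\Phi_t$, together with the smallness $\delta\ll\gamma$, then keeps every trajectory confined to $\mathbb{B}_{1+5\gamma}(p)$, yielding the required homotopy.
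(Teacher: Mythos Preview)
Your proposal has a genuine gap: the construction of a ``continuous vector field on $W$ whose flow realizes the cone homotopy'' is not well-defined. The Reifenberg region $W$ is only a topological manifold --- the charts produced by the metric Reifenberg theorem are bi-H\"older, not Lipschitz or smooth --- so there is no meaningful notion of vector field or ODE flow on $W$, and gluing local objects via a partition of unity requires a coherent transformation law under transitions that bi-H\"older charts do not provide. Moreover, your appeal to ``the conicality of $B_s(q)$ on the range $s\in[\delta_0 r,r/\delta_0]$'' for $q\in W$ is unjustified: the hypotheses of \autoref{lemma:packedsingular} give $\delta$-conicality only at the center $p$, not at arbitrary points of $W$, so the slicing theorem does not apply where you invoke it.

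The paper's proof avoids these problems by a different route. Rather than transporting a cone homotopy pointwise, it applies the metric Reifenberg theorem together with Kapovitch's fibration theorem \cite[Theorem 4.6]{KapovitchPer} to produce directly a topological embedding
\[
i:\Bigl(Z\setminus\bigcup_jB_{\tilde C^{-1}\gamma/2}(x_j)\Bigr)\times(8\gamma,1+2\gamma)\longrightarrow B_{1+3\gamma}(p)\setminus B_{7\gamma}(p)
\]
which is a $\gamma$-GH approximation and is surjective onto $B_{1+\gamma}(p)\setminus\bigl(B_{9\gamma}(p)\cup\bigcup_jB_{\tilde C^{-1}\gamma}(\ell_j)\bigr)$. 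The product structure on the source then furnishes the deformation retraction immediately (retract in the interval factor), with no need for vector fields, flows, or patching. The key idea you are missing is precisely this global fibration/product structure, which is what \cite[Theorem A.1.1]{CheegerColding97I} and \cite[Theorem 4.6]{KapovitchPer} provide in one stroke.
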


\begin{proof}
The statement will follow from the metric Reifenberg theorem  \cite[Appendix 1]{CheegerColding97I}. 
\medskip

For the sake of the illustration, we first discuss the argument in the case where $B_1(p)\setminus \mathcal{R}_{\eps_0,\gamma}\subset B_{\gamma}(p)$, or equivalently, $B_2(p)$ is $\delta$-GH close to a cone $C(Z)$ with uniformly Reifenberg cross-section $\mathcal{R}_{\eps_0,\gamma}(Z)=Z$.

Under this simplifying assumption, the metric Reifenberg theorem \cite[Theorem A.1.1]{CheegerColding97I} and \cite[Theorem 4.6]{KapovitchPer} combine to show that there is a topological embedding
\begin{equation}
i:Z\times (8\gamma,1+2\gamma)\to B_{1+3\gamma}(p)\setminus B_{7\gamma}(p) 
\end{equation}
which is surjective onto 
\begin{equation}
B_{1+\gamma}(p)\setminus B_{9\gamma}(p) 
\end{equation}
and a $\gamma$-GH approximation. Hence $\mathbb{B}_1(p)$ deformation retracts onto $B_{10\gamma}(p)$ with a deformation with values into $B_{1+3\gamma}(p)\subset \mathbb{B}_2(p)$ and the statement clearly follows.

\medskip

In the general case, when $Z$ is not uniformly Reifenberg, an analogous strategy applies. By \cite[Theorem A.1.1]{CheegerColding97I} and \cite[Theorem 4.6]{KapovitchPer} again, there is a topological embedding
\begin{equation}\label{eq:embeddingprod}
i:\left(Z\setminus \bigcup_jB_{\tilde{C}^{-1}\gamma/2}(x_j)\right)
\times (8\gamma, 1+2\gamma)\to B_{1+3\gamma}(p)\setminus B_{7\gamma}(p) \, ,
\end{equation}
which is surjective onto 
\begin{equation}\label{eq:almostsurj}
B_{1+\gamma}(p)\setminus \left(B_{9\gamma}(p) \cup \bigcup_j B_{\tilde{C}^{-1}\gamma}(\ell_j) \right)
\end{equation}
and a $\gamma$-GH approximation. 
In \eqref{eq:embeddingprod} the points $x_j\in Z$ are chosen so that 
\begin{equation}
Z\setminus \bigcup_jB_{\tilde{C}^{-1}\gamma/2}(x_j)\subset \mathcal{R}_{\eps_0,\gamma}(Z) \, ,
\end{equation}
and the geodesic segments $\ell_j$ from the application of \autoref{lemma:packedsingular} are $\delta$-GH close to the segments joining the vertex $o$ to the points $(1,x_j)\in C(Z)$ for each $j$. From the product structure at the left-hand side in \eqref{eq:embeddingprod} and the surjectivity \eqref{eq:almostsurj}, we obtain as before that $\mathbb{B}_1(p)$ deformation retracts onto
\begin{equation*}
B_{10 \gamma}(p)\cup\bigcup_j B_{\tilde{C}^{-1}\gamma}(\ell_j) \, .\qedhere
\end{equation*}
\end{proof}

\section{Generalized manifold recognition: proof of \autoref{prop:genmanrec}}\label{sec:genman2}

	The goal of this section is to complete the proof of \autoref{prop:genmanrec}. We are going to argue that $\mathcal{R}_{\rm{gm}^+}(X)=X$; see \autoref{def:genmanset+} for the relevant notation. As noted in \autoref{rm:gm+gm}, this will be enough to show that $X$ is a generalized $3$-manifold with empty boundary. 
\medskip	
	
Let us fix $\eta_0, \delta_0>0$ sufficiently small so that all the results of Section \ref{sec:genmanprel} apply. In particular, the good Green-balls obtained by \autoref{main.green.sphere}:
\begin{itemize}
\item[(i)] admit well-behaved retractions from a neighbourhood, by \autoref{retraction};
\item[(ii)] are simply connected if the Green-sphere is homeomorphic to $S^2$, by \autoref{simply.conn.clean};
\item[(iii)] are contractible if included in $\mathcal{R}_{\rm{gm}^+}$, by \autoref{prop:Gballscongm};
\item[(iv)] deformation retract onto a scale-invariantly small tubular neighbourhood of their effective singular set, by \autoref{lemma:packedsingular} and \autoref{lemma:pushin}.
\end{itemize}
We argue by contradiction and assume that $\mathcal{S}_{\rm{gm}^+}:=X\setminus\mathcal{R}_{\rm{gm}^+}\neq\emptyset$.	

 \medskip
	
For $r>0$, we let
\begin{align}\label{delta.0}
&\mathcal{S}_{\rm{ gm}^+}^r:=\{p\in\mathcal{S}_{\rm{gm}^+}\,:\,B_s(p)\text{ is $\delta_0$-conical for all $s\in(0,r)$}\}\, .
\end{align}
It is elementary to check that $\mathcal{S}_{\rm{ gm}^+}^r$ is closed in $X$ for each $r>0$, by compactness of the class of $\RCD(1,2)$ spaces $(X,\dist,\haus^2)$ with volume uniformly bounded away from $0$. Moreover, by \autoref{lemma:infveryc}, 
\begin{equation}
\mathcal{S}_{\rm{gm}^+}=\bigcup_{r\in(0,1)}\mathcal{S}_{\rm{gm}^+}^r\, . 
\end{equation}
By Baire's category theorem, we can find $r\in(0,1)$ such that $\mathcal{S}_{\rm{gm}^+}^r$ has nonempty interior in $\mathcal{S}_{\rm{gm}^+}$.
Up to rescaling, this means that we can find $p_0\in \mathcal{S}_{\rm{gm}^+}$ such that
\begin{align}
\label{baire}
\mathcal{S}_{\rm{gm}^+}\cap B_{10}(p_0)\subseteq\mathcal{S}_{\rm{gm}^+}^{10}\, .
\end{align}
In other words, for all $p\in \mathcal{S}_{\rm{gm}^+}\cap B_{10}(p_0)$ and all $r\in(0,10)$, the ball $B_r(p)$ is $\delta_0$-conical.

\medskip

In the rest of this section, we are going to argue that any good Green-ball contained in $B_{10}(p_0)$ is contractible and that at each point $p\in B_{10}(p_0)$ the local relative homology coincides with the one of $(\setR^3,\setR^3\setminus\{0\})$. This results in a contradiction since we assumed that $\mathcal{S}_{\rm{gm}^+}^{10}\cap B_{10}(p_0)\neq \emptyset$ (in particular, this set contains $p_0$).
	
	There will be three main steps for the proof, corresponding to different subsections.
In Section \ref{subsec:GS^2} we will exploit the uniform conicality at singular points \eqref{baire} in combination with the assumption that the cross-sections of all tangent cones are homeomorphic to $S^2$ and \autoref{prop:Gballscongm} to prove the following.

	\begin{proposition}\label{mfd.with.bdry}
		For any $p\in B_{5}(p_0)$ and any good radius $r\in\mathcal{G}_p\cap(0,1)$, the Green-sphere $\mathbb{S}_r(p)=\de\mathbb{B}_r(p)$
		is homeomorphic to $S^2$.
	\end{proposition}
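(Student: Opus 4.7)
My plan is to argue by contradiction. Suppose $p\in B_5(p_0)$, $r\in\mathcal{G}_p\cap(0,1)$ and $\mathbb{S}_r(p)\cong\mathbb{RP}^2$ (which, together with $S^2$, is the only alternative by \autoref{main.green.sphere}(vi) and \autoref{cor:2dK>0}). Then $B_{2r}(p)$ is $(2\delta_0 r)$-GH close to a ball $B_{2r}(o_1)\subset C(\mathbb{RP}^2)$ centered at the vertex $o_1$. The contrapositive of \autoref{prop:Gballscongm} forces $\overline{\mathbb{B}}_r(p)\not\subseteq\mathcal{R}_{\rm gm^+}$, so I can pick $q\in\overline{\mathbb{B}}_r(p)\cap\mathcal{S}_{\rm gm^+}$. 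By \autoref{main.green.sphere}(ii), $\dist(p,q)\le(1+\eta_0)r\le 2r$, and since $p\in B_5(p_0)$ we have $q\in B_7(p_0)\subseteq B_{10}(p_0)$, hence $q\in\mathcal{S}_{\rm gm^+}^{10}$. Condition \eqref{baire} then gives that $B_s(q)$ is $\delta_0$-conical for every $s\in(0,10)$. (Note: when $p\in\mathcal{S}_{\rm gm^+}$ we can simply take $q=p$ and the rest of the argument still applies.)

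Next I claim that the cross-section of the approximating cone at $q$ is homeomorphic to $S^2$ at every conical scale $s\in(0,10)$. As $s\to 0^+$, the approximating cones converge in GH to tangent cones at $q$, whose cross-sections are homeomorphic to $S^2$ by the standing hypothesis of \autoref{prop:genmanrec}. Since each cross-section is either $S^2$ or $\mathbb{RP}^2$ (by \autoref{cor:2dK>0}) and these two topological types are separated in the GH topology on $\RCD(1,2)$ spaces with uniform volume bound (via \autoref{prop:Ale2loccontr} and \cite{Petersen90}), it suffices to propagate the $S^2$ type from small scales up to scale $4r$. This I do by a nested restriction argument comparing the approximating cones at consecutive dyadic scales $s$ and $2s$: restricting the $\delta_0$-approximation $B_{2s}(q)\approx B_{2s}(o_{2s})\subset C(Z_{2s})$ to the ball of radius $s$ and comparing with $B_s(q)\approx B_s(o_s)\subset C(Z_s)$ shows, after rescaling to unit radius, that $C(Z_s)$ and $C(Z_{2s})$ are $O(\delta_0)$-GH close, hence $Z_s\cong Z_{2s}$ for $\delta_0$ small. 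Iterating $O(\log(4r/s))$ times yields $Z_{4r}\cong S^2$.

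Consequently, $B_{4r}(q)$ is $(4\delta_0 r)$-GH close to the Euclidean ball $B_{4r}(0^3)\subset\mathbb{R}^3=C(S^2)$. Since $\dist(p,q)\le 2r$, I have $B_{2r}(p)\subseteq B_{4r}(q)$, and transporting $p$ through the GH approximation yields a point $p'\in B_{3r}(0^3)$ with $B_{2r}(p')\subseteq B_{4r}(0^3)$ and $B_{2r}(p)$ is $(C\delta_0 r)$-GH close to $B_{2r}(p')$. Combining with the initial approximation $B_{2r}(p)\approx B_{2r}(o_1)\subset C(\mathbb{RP}^2)$ and rescaling by $r$, I obtain
\[
\dist_{\rm GH}\bigl(B_2(0^3)\subset\mathbb{R}^3,\ B_2(o_1)\subset C(\mathbb{RP}^2)\bigr)\le C\delta_0,
\]
using that $B_2(p')$ is isometric to $B_2(0^3)$ by translation. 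However, these two fixed metric balls are separated by a universal positive GH distance, since the density at $o_1$ equals $1/2$ while the density at any point of $\mathbb{R}^3$ equals $1$, and density is continuous under noncollapsed GH convergence of $\RCD$ spaces (cf.\ \cite{DePhilippisGigli2}). For $\delta_0=\delta_0(v)$ chosen smaller than this universal gap, this yields the desired contradiction.

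The main obstacle I anticipate is bookkeeping: ensuring that the chain of smallness conditions required on $\delta_0$ and $\eta_0$ (for the nested GH approximations, for the iterative topological stability across dyadic scales, and for the final Euclidean-vs-$C(\mathbb{RP}^2)$ comparison) can all be met simultaneously by a single choice depending only on the noncollapsing constant $v$. The conceptual core, namely that the uniform conicality \eqref{baire} forces every singular point in $B_{10}(p_0)$ to see Euclidean geometry at every scale, allowing local information at $q$ to veto global information at $p$, is clean; the technical work lies in quantifying this.
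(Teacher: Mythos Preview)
Your argument has a genuine gap at the step ``Consequently, $B_{4r}(q)$ is $(4\delta_0 r)$-GH close to the Euclidean ball $B_{4r}(0^3)\subset\mathbb{R}^3=C(S^2)$''. What you have established (correctly, and essentially reproving \autoref{lemma:GspheresS}) is that the cross-section $Z_{4r}$ of the cone approximating $B_{4r}(q)$ is \emph{homeomorphic} to $S^2$. This does not make $C(Z_{4r})$ isometric to $\mathbb{R}^3$: $Z_{4r}$ is some Alexandrov sphere with curvature $\ge 1$ and volume possibly much less than $4\pi$, so $C(Z_{4r})$ can be a genuinely singular cone. Your subsequent use of translation invariance (``$B_2(p')$ is isometric to $B_2(0^3)$ by translation'') and of the density gap (``the density at any point of $\mathbb{R}^3$ equals $1$'') both depend on this wrong identification. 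In $C(Z_{4r})$ the density at a point away from the tip equals the density of $Z_{4r}$ at the corresponding point, which can be any value in $(0,1]$; there is no universal separation from $1/2$.

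What is actually needed is a comparison of the cross-section $Z$ at $p$ with the cross-section $Z'=Z_{4r}$ at $q$, and this requires a case distinction on $\dist(p,q)/r$. If $\dist(p,q)\le c(v)r$ is small, the tips of the two approximating cones are close at scale $r$, so $Z$ and $Z'$ are GH-close and hence homotopy equivalent by \cite{Petersen90}, giving $Z\cong S^2$. If $\dist(p,q)\ge c(v)r$, then $B_{2r}(p)\subset B_{4r}(q)$ sits inside two conical structures with well-separated tips, and almost cone-splitting forces $B_{2r}(p)$ to be close to a ball in $\mathbb{R}\times C(Y)$; the cross-section $Z$ is then a spherical suspension over an Alexandrov circle, hence $\cong S^2$. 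This is how the paper proceeds, and your argument as written does not cover the second case.
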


In Section \ref{subsec:loccontr} we will prove that Green-balls contained in $B_{1}(p_0)$ are contractible.

	\begin{proposition}\label{prop:3ctoc}
		For any $p\in B_1(p_0)$ and $r\in(0,c(\nu))\cap\mathcal{G}_p$ the closed Green-ball $\overline{\mathbb{B}}_r(p)$ is contractible.
	\end{proposition}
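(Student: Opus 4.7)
The plan is to establish the vanishing of all positive-degree singular homology groups of $\overline{\mathbb{B}}_r(p)$, to combine this with the simple-connectedness obtained from \autoref{mfd.with.bdry} and \autoref{simply.conn.clean}, and then to conclude contractibility by a Hurewicz--Whitehead argument, using that $\overline{\mathbb{B}}_r(p)$ is homotopy equivalent to a CW complex thanks to the approximate ANR structure provided by \autoref{retraction}. Concretely, I would prove by induction on $k\in\{1,2,3\}$ the following claim: for some constants $c(v),\theta(v)\in(0,1)$, every singular $k$-cycle supported in a good Green-ball $\overline{\mathbb{B}}_r(q)$ with $q\in B_5(p_0)$ and $r\in\mathcal{G}_q\cap(0,c(v))$ is the boundary of a $(k+1)$-chain in $\overline{\mathbb{B}}_r(q)$. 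The base case $k=1$ is the simple-connectedness (via Hurewicz), which is already available under the Baire reduction by \autoref{mfd.with.bdry} and \autoref{simply.conn.clean}.

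For the inductive step I would fix a $k$-cycle $\sigma$ supported in $\overline{\mathbb{B}}_r(q)$ and cover $\overline{\mathbb{B}}_r(q)$ by a controlled number of smaller good Green-balls $\overline{\mathbb{B}}_{\theta r}(q_j)$, $\theta=\theta(v)<1$, with good radii guaranteed by \autoref{cor:densegoodr}. A Mayer--Vietoris decomposition together with the inductive hypothesis in dimension $k-1$ applied on the overlaps would split $\sigma$, up to a boundary, into a finite sum $\sum_j\sigma_j$ of $k$-cycles each supported in $\overline{\mathbb{B}}_{\theta r}(q_j)$. Each piece is treated separately. If $\overline{\mathbb{B}}_{\theta r}(q_j)\subseteq\mathcal{R}_{\rm{gm}^+}$, then \autoref{prop:Gballscongm} yields contractibility of the ball and $\sigma_j$ is a boundary. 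Otherwise $\overline{\mathbb{B}}_{\theta r}(q_j)$ meets $\mathcal{S}_{\rm{gm}^+}$, and by \eqref{baire} the center is close to a point at which every scale below $10$ is $\delta_0$-conical; \autoref{lemma:packedsingular} combined with \autoref{lemma:pushin} then allows me to homotope $\sigma_j$ (keeping its support in $\overline{\mathbb{B}}_{\theta r}(q_j)$) into a sum of $k$-cycles, each supported in a strictly smaller good Green-ball of radius $c'\theta r$, clustered along at most $C(v)$ geodesic segments emanating from a point near the local conical tip.

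This procedure is then iterated scale by scale: at every stage, the ``good'' pieces — those lying in $\mathcal{R}_{\rm{gm}^+}$ — are explicitly trivialized, while each ``bad'' piece is replaced by at most $C(v)$ cycles in Green-balls of strictly smaller radius, all nested inside $\overline{\mathbb{B}}_r(q)$. Since the radii decay geometrically and the branching at each stage is uniformly bounded, a telescoping argument in the spirit of \cite{Perelmanmaximal} and \cite{WangRCD} — the technical outcome anticipated as \autoref{cinese.plus} in the introduction — assembles the sequence of partial trivializations into a single $(k+1)$-chain in $\overline{\mathbb{B}}_r(q)$ whose boundary is $\sigma$. Specializing $q=p$ closes the induction and gives $H_k(\overline{\mathbb{B}}_r(p))=0$ for all $k\geq 1$; together with simple-connectedness and the approximate CW/ANR structure from \autoref{retraction} this yields contractibility.

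The main obstacle is the quantitative control needed to make this iteration converge. One must arrange the Mayer--Vietoris decomposition and the pushing-in step so that, at every stage, the newly produced cycles are supported in good Green-balls at the next scale, with uniform bounds on both the branching number and the ratio between consecutive radii; and one must confirm that the intermediate $(k+1)$-chains produced at each stage remain inside $\overline{\mathbb{B}}_r(p)$ rather than leaking out. The uniform conicality \eqref{baire} is essential here, because via \autoref{cor:densegoodr} it produces a definite density of good radii at every sufficiently small scale around any point of $\mathcal{S}_{\rm{gm}^+}\cap B_{10}(p_0)$, and it prevents the decomposition from degenerating as we drill inward toward $\mathcal{S}_{\rm{gm}^+}$. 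Confining the homotopies coming from \autoref{lemma:pushin} inside $\overline{\mathbb{B}}_r(p)$ can be arranged by combining them with the controlled retractions from \autoref{retraction} and \autoref{lambda}, choosing the cover of $\overline{\mathbb{B}}_r(p)$ slightly inside a Green-ball of larger radius at the first step.
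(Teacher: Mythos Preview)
Your proposal follows essentially the same route as the paper: induction on $k\le 3$, pushing cycles into a tubular neighbourhood of the effective singular set via \autoref{lemma:packedsingular} and \autoref{lemma:pushin}, splitting by Mayer--Vietoris into pieces supported in small good Green-balls, trivializing those pieces that land in $\mathcal{R}_{\rm{gm}^+}$ by \autoref{prop:Gballscongm}, and iterating the remaining pieces in the spirit of \autoref{cinese.plus}, with \autoref{lambda} used to confine everything inside the original ball. The paper in fact applies the push-in step first and only then builds the cover, arranging it carefully along the segments $\ell_j$ so that the pairwise overlaps are themselves small simply connected Green-balls; your ``cover first, then push'' ordering and the claim that the inductive hypothesis on overlaps suffices for Mayer--Vietoris would need this kind of care, since arbitrary intersections of good Green-balls are not good Green-balls.

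The one genuine gap is your final step. Your induction only yields $H_k(\overline{\mathbb{B}}_r(p))=0$ for $k\le 3$, yet you assert it for all $k\ge 1$ and then appeal to Hurewicz--Whitehead. The paper does not go through Whitehead at all: it uses that $\overline{\mathbb{B}}_r(p)$ has covering dimension $3$ (\autoref{lemma:dimcov}), so $[0,1]\times\overline{\mathbb{B}}_r(p)$ has covering dimension $4$, and builds the contracting homotopy directly by a nerve construction, extending over simplices of dimension at most $4$, which only requires $\pi_k=0$ for $k\le 3$. Your route can be salvaged by the same dimension observation (an ANR of covering dimension $3$ is dominated by a $3$-dimensional complex, so $H_k=0$ for $k\ge 4$ is automatic), but you must invoke it explicitly.
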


The contractibility will be achieved by first arguing that the following holds.

	\begin{proposition}
		For any $p\in B_1(p_0)$ and $r\in(0,c(\nu))\cap\mathcal{G}_p$
		the Green-ball $\mathbb{B}_r(p)$ and its closure are $3$-connected.
	\end{proposition}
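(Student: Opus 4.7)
The plan is to induct on $k\in\{1,2,3\}$, showing that any continuous map $S^k\to\mathbb{B}_r(p)$ (resp.\ $S^k\to\overline{\mathbb{B}}_r(p)$) extends to a continuous map $\overline D^{k+1}\to\mathbb{B}_r(p)$ (resp.\ $\overline D^{k+1}\to\overline{\mathbb{B}}_r(p)$). The base case $k=1$ is already in hand: by \autoref{mfd.with.bdry}, $\mathbb{S}_r(p)\cong S^2$, and \autoref{simply.conn.clean} then yields simple-connectedness. For $k\in\{2,3\}$, since $\mathbb{B}_r(p)$ is simply connected, the Hurewicz theorem (applied iteratively together with the previously established $(k-1)$-connectedness) reduces the problem to the vanishing of the singular homology $H_j(\mathbb{B}_r(p);\mathbb{Z})$ for $2\le j\le k$, which we establish inductively at the homological level.

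The heart of the inductive step is a decomposition statement: any singular $k$-cycle $\sigma$ supported in $\overline{\mathbb{B}}_r(q)$ (for $q\in B_1(p_0)$ and $r\in\mathcal{G}_q$) is homologous inside $\overline{\mathbb{B}}_r(q)$ to a sum of $k$-cycles $\sigma_j$ supported in a finite collection of smaller good Green-balls $\mathbb{B}_{c_j}(q_j)$ with $c_j<r/2$. To obtain this, choose a Vitali-type covering of $\overline{\mathbb{B}}_r(q)$ by good Green-balls of comparable radius (guaranteed by \autoref{cor:densegoodr} and volume control) and apply Mayer--Vietoris: the $(k-1)$-cycles appearing on intersections are boundaries by the inductive hypothesis, while the global gluing is arranged using the retractions of \autoref{lambda}, which absorb the necessary geometric slack so that the resulting filling stays inside $\overline{\mathbb{B}}_r(q)$ rather than in a larger enlargement.

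Once this decomposition is available, the centers $q_j$ split into two types. If $q_j\in\mathcal{R}_{\rm gm^+}$, then $\mathbb{B}_{c_j}(q_j)$ is contractible by \autoref{prop:Gballscongm}, and $\sigma_j$ bounds there. If $q_j\in\mathcal{S}_{\rm gm^+}\cap B_{10}(p_0)$, the uniform conicality \eqref{baire} is decisive: every $B_t(q_j)$ with $t\le 10$ is $\delta_0$-conical, so \autoref{lemma:packedsingular} and \autoref{lemma:pushin} furnish a homotopy (taking place within a slight enlargement of $\mathbb{B}_{c_j}(q_j)$) that pushes $\sigma_j$ into a union of a uniformly bounded number of $\gamma$-tubular neighborhoods of geodesic segments through $q_j$, where $\gamma=\gamma(v)$ can be chosen arbitrarily small. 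Covering this thin region again by good Green-balls of radius $\sim\gamma c_j$ and applying the decomposition once more, one expresses $\sigma_j$ as a sum of cycles of the same two types, now supported in balls of radius a definite fraction smaller than $c_j$. Iterating, the supports of the residual (non-manifold) cycles shrink geometrically while their number grows only polynomially in the number of iterations, and after finitely many steps each residual support has diameter below the mesh of the singular simplices occurring in the original $\sigma$. A standard subdivision/continuity argument (formalized as \autoref{cinese.plus} in Section \ref{subsec:loccontr}) then forces $\sigma$ to be a boundary in $\overline{\mathbb{B}}_r(q)$.

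The main obstacle will be the combinatorial bookkeeping in the iteration: one must keep the enlargements needed by the Mayer--Vietoris retractions compatible across all scales, guarantee that the ``good'' radii used at each step can indeed be chosen in $\mathcal{G}_{q_j}$ with the required properties, and verify that the iteration can be carried out uniformly so that termination happens after a number of steps depending only on $\sigma$ (and not on the ambient ball). The scale-invariance of the effective singular stratum provided by \autoref{lemma:packedsingular}, together with the bound $C(v)$ on the number of singular segments, is what makes the geometric decay outpace the combinatorial growth. The statement about the closure $\overline{\mathbb{B}}_r(p)$ then follows from the statement about $\mathbb{B}_r(p)$ by approximating any cycle in $\overline{\mathbb{B}}_r(p)$ with one in $\mathbb{B}_{r'}(p)$ for $r'\in\mathcal{G}_p$ slightly smaller than $r$, using once more the retraction of \autoref{retraction}.
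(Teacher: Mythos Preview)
Your overall architecture matches the paper's: induction on $k$, the base case via \autoref{mfd.with.bdry} and \autoref{simply.conn.clean}, reduction to homology via Hurewicz, a decomposition-plus-iteration scheme terminated by \autoref{cinese.plus}, and the dichotomy between balls contained in $\mathcal{R}_{\rm gm^+}$ (handled by \autoref{prop:Gballscongm}) and balls meeting $\mathcal{S}_{\rm gm^+}$ (handled by uniform conicality). However, there is a genuine gap in your decomposition step, and it comes from the order of operations.

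You propose to first take a Vitali-type covering of the whole ball $\overline{\mathbb{B}}_r(q)$ by good Green-balls and then apply Mayer--Vietoris, invoking the inductive hypothesis to kill the $(k-1)$-cycles on intersections. The problem is that the inductive hypothesis gives $(k-1)$-connectedness of good Green-balls, whereas the intersection of two Green-balls in a generic covering of a $3$-dimensional region is \emph{not} a Green-ball, nor is it contained in a Green-ball which in turn sits inside both original balls. So you cannot fill the $(k-1)$-cycle in the intersection; the retractions of \autoref{lambda} do not remedy this, because the filling you produce may protrude from one of the two balls. The paper avoids this by reversing the order: it first applies \autoref{lemma:pushin} (available because the ball is centered on $\mathcal{S}_{\rm gm^+}$, hence uniformly conical by \eqref{baire}) to push the cycle into a thin tubular neighbourhood of finitely many segments, and only \emph{then} covers. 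The point is that a $1$-dimensional tube admits a ``necklace'' covering by balls in which the only nonempty pairwise intersections are between consecutive beads, and each such intersection can be arranged to lie in a single small simply connected (by induction, $(k-1)$-connected) Green-ball contained in both neighbours. This is what makes the Mayer--Vietoris splitting legitimate; see the sets $\mathcal{P}_j$ and the enlarged balls $\tilde B_{2\gamma\overline\gamma s}(\cdot)$ in the proof of \autoref{lemma:2connected}.

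A smaller point: the dichotomy is not ``$q_j\in\mathcal{R}_{\rm gm^+}$'' versus ``$q_j\in\mathcal{S}_{\rm gm^+}$'', but rather ``$\overline{\mathbb{B}}_{c_j}(q_j)\subset\mathcal{R}_{\rm gm^+}$'' (then \autoref{prop:Gballscongm} applies) versus ``$\overline{\mathbb{B}}_{c_j}(q_j)$ meets $\mathcal{S}_{\rm gm^+}$'' (then recenter at a nearby point of $\mathcal{S}_{\rm gm^+}$, at the cost of a bounded enlargement). This is why the family $\mathcal{U}$ in \autoref{cinese.plus} consists only of Green-balls centered on $\mathcal{S}_{\rm gm^+}$, where \eqref{baire} guarantees the uniform conicality needed for the next iteration.
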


We recall that $3$-connectedness amounts to say that the $k$-th homotopy group is trivial for each $k\le 3$. The statement will be proved inductively on $k$. Note that the base step, corresponding to simple-connectedness, follows from \autoref{mfd.with.bdry} and \autoref{simply.conn.clean}.
In Section \ref{subsec:locrelhom} we will finish by computing the local relative homology.

\begin{proposition}\label{prop:locrelhom}
	For every $p\in B_1(p_0)$ it holds 
	\begin{equation}
	H_{*}(X,X\setminus\{p\};\Z)=H_{*}(\setR^3,\setR^3\setminus\{0\};\Z)\, .
	\end{equation} 
	\end{proposition}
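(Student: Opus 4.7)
The plan is to compute the local relative homology at $p$ via excision, a Mayer--Vietoris decomposition of a good Green-ball around $p$, and a colimit as the puncture shrinks. The key input is that concentric thin annuli between good Green-spheres are homotopy equivalent to any intermediate good Green-sphere, hence to $S^2$.

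Fix $p\in B_1(p_0)$ and a small $r\in\mathcal{G}_p$, and for any $\rho\in\mathcal{G}_p\cap(0,r)$ apply excision to the closed set $X\setminus\overline{\mathbb{B}}_r(p)$, whose closure sits in the interior of $X\setminus\overline{\mathbb{B}}_\rho(p)$: this yields
\begin{equation*}
H_*(X,X\setminus\overline{\mathbb{B}}_\rho(p))\;\cong\;H_*\bigl(\overline{\mathbb{B}}_r(p),W_\rho\bigr),\qquad W_\rho:=\{\rho<b_p\le r\}.
\end{equation*}
Since $\{p\}=\bigcap_n\overline{\mathbb{B}}_{\rho_n}(p)$ for any sequence $\rho_n\downarrow 0$ in $\mathcal{G}_p$, and $X\setminus\{p\}=\bigcup_n X\setminus\overline{\mathbb{B}}_{\rho_n}(p)$ is an increasing union of open sets, the continuity of singular homology identifies $H_*(X,X\setminus\{p\})$ with the colimit $\varinjlim_n H_*(\overline{\mathbb{B}}_r(p),W_{\rho_n})$. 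Hence it suffices to compute each group on the right-hand side and verify that the transition maps are isomorphisms.

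To compute $H_*(\overline{\mathbb{B}}_r(p),W_\rho)$, pick an intermediate $\rho'\in\mathcal{G}_p\cap(\rho,r)$ and decompose $\overline{\mathbb{B}}_r(p)=U\cup W_\rho$ with $U:=\mathbb{B}_{\rho'}(p)$; both sets are open in $\overline{\mathbb{B}}_r(p)$, their union is $\overline{\mathbb{B}}_r(p)$ since $\rho<\rho'$, and their intersection is the open thin annulus $A:=\{\rho<b_p<\rho'\}$. By \autoref{prop:3ctoc} both $\overline{\mathbb{B}}_r(p)$ and $U$ are contractible, so the Mayer--Vietoris sequence collapses to give $H_k(W_\rho)\cong H_k(A)$ for every $k\ge 1$. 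When $\rho'-\rho$ is sufficiently small in terms of the local scale, the closed thin annulus $\{\rho\le b_p\le\rho'\}$ is uniformly locally contractible in a way controlled only by the noncollapsing constant $v$ (this follows from \autoref{prop:3ctoc} applied at each of its points, together with the uniform size of contractible Green-ball neighborhoods around points of $B_1(p_0)$), and it Hausdorff-converges as the thickness shrinks to any intermediate good Green-sphere $\mathbb{S}_\sigma(p)$, $\sigma\in\mathcal{G}_p\cap(\rho,\rho')$, which is homeomorphic to $S^2$ by \autoref{mfd.with.bdry}. Petersen's stability theorem \cite{Petersen90} then gives a homotopy equivalence between this closed thin annulus and $\mathbb{S}_\sigma(p)\cong S^2$, and \autoref{retraction} transfers the conclusion to the open annulus $A$. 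Therefore $H_*(A)=H_*(S^2)$ and thus $H_*(W_\rho)=H_*(S^2)$.

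The long exact sequence of the pair $(\overline{\mathbb{B}}_r(p),W_\rho)$ combined with the contractibility of $\overline{\mathbb{B}}_r(p)$ now yields $H_k(\overline{\mathbb{B}}_r(p),W_\rho)\cong\tilde H_{k-1}(S^2)=H_k(\mathbb{R}^3,\mathbb{R}^3\setminus\{0\})$; the degree-three generator is the image under the connecting homomorphism of the fundamental class of the equatorial sphere $\mathbb{S}_\sigma(p)\subset W_\rho$. Keeping the same $\mathbb{S}_\sigma(p)$ as an equatorial sphere inside $W_{\rho_n}$ for all small enough $\rho_n$ shows that the transition maps in the colimit $\varinjlim_n H_*(\overline{\mathbb{B}}_r(p),W_{\rho_n})$ are isomorphisms, so taking the colimit and combining with the excision identification delivers $H_*(X,X\setminus\{p\})=H_*(\mathbb{R}^3,\mathbb{R}^3\setminus\{0\})$. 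The main technical obstacle is the homotopy equivalence of thin closed annuli with their limiting equatorial Green-sphere through a thickness bound depending only on $v$: this in turn requires a uniform local contractibility estimate for such annuli, ultimately traceable to the uniform contractibility radius for good Green-balls inside $B_1(p_0)$ furnished by \autoref{prop:3ctoc} (the technical core of this section), coupled with Petersen's stability theorem.
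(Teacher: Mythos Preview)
Your argument is correct, but it is considerably more roundabout than the paper's. The paper excises directly to the pair $(\overline{\mathbb{B}}_r(p),\overline{\mathbb{B}}_r(p)\setminus\{p\})$, invokes \autoref{punct} (the punctured Green-ball deformation retracts onto $\mathbb{S}_r(p)\cong S^2$), and reads off the answer from the long exact sequence of the pair --- three lines. Your route excises instead to an annular complement $W_\rho$, introduces a Mayer--Vietoris decomposition with the inner ball $U=\mathbb{B}_{\rho'}(p)$ in order to reduce $H_*(W_\rho)$ to $H_*(A)$ for a thin annulus $A$, then argues $A\simeq S^2$ via Petersen's stability, and finally passes to a colimit over $\rho\to 0$. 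Both the Mayer--Vietoris and the colimit are avoidable: the very argument you give for $A\simeq S^2$ (retractions between nearby good levels, uniform local contractibility from \autoref{prop:3ctoc}) is exactly the content of \autoref{punct}, which shows directly that $\overline{\mathbb{B}}_r(p)\setminus\{p\}$ --- and hence each $W_\rho$ --- deformation retracts onto a Green-sphere. So your ``main technical obstacle'' is the paper's \autoref{punct}, which you reprove inline and then surround with extra bookkeeping.

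One small imprecision: you invoke \autoref{prop:3ctoc} to say $U=\mathbb{B}_{\rho'}(p)$ is contractible, but that proposition is stated for the \emph{closed} Green-ball. What you actually need in the Mayer--Vietoris is only that $U$ is acyclic, and this follows since $\mathbb{B}_{\rho'}(p)$ is an increasing union of contractible closed Green-balls $\overline{\mathbb{B}}_{\rho''}(p)$ with $\rho''\in\mathcal{G}_p$, $\rho''<\rho'$ (so its homology is the colimit, hence trivial in positive degrees). Similarly, the passage from the closed thin annulus to the open $A$ via \autoref{retraction} is correct in spirit but deserves a word: one retracts both boundary spheres slightly inward to sandwich $A$ between two homotopy-equivalent closed annuli.
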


The key insight will be that all sufficiently small punctured Green-balls deformation retract onto the respective Green-spheres, as we will see in \autoref{punct}.
\medskip

By \autoref{prop:3ctoc} and \autoref{prop:locrelhom}, $B_1(p_0)\subset \mathcal{R}_{\rm{gm}^+}$. This results in a contradiction with the assumption that $p_0\in\mathcal{S}_{\rm{gm}^+}$ and hence the proof of \autoref{prop:genmanrec} will be completed at the end of this section.

\subsection{Green-spheres are homeomorphic to $S^2$: proof of \autoref{mfd.with.bdry}}\label{subsec:GS^2}	

The goal of this subsection is to prove \autoref{mfd.with.bdry}. The fact that Green-balls not intersecting $\mathcal{S}_{\rm{gm}^+}$ have a boundary Green-sphere homeomorphic to $S^2$ follows already from \autoref{prop:Gballscongm}. The next step is to prove that the statement holds for Green-balls centered on $\mathcal{S}_{\rm{gm}^+}$.

\begin{lemma}\label{lemma:GspheresS}
For all $p\in\mathcal{S}_{\rm{gm}^+}\cap B_{10}(p_0)$ and all good radii $r\in\mathcal{G}_p$, the Green-sphere $\mathbb{S}_r(p)$ is homeomorphic to $S^2$.
\end{lemma}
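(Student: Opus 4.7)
The plan is to combine the assumption that every tangent cone at $p$ has cross-section homeomorphic to $S^2$ with the fact that, by \eqref{baire}, the ball $B_s(p)$ is $\delta_0$-conical at every scale $s\in(0,10)$. For each such $s$, fix a cross-section $Z_s$ of an $\RCD(0,3)$ cone $C(Z_s)$ such that, after rescaling, $B_s(p)$ is $\delta_0 s$-GH close to $B_s(o_{Z_s})$. Since $X$ has empty boundary, so does each $Z_s$ by \cite{BrueNaberSemolabdry}; hence $(Z_s,\dist_{Z_s})$ is a noncollapsed $\RCD(1,2)$ space with empty boundary, and by \autoref{cor:2dK>0} it is homeomorphic either to $S^2$ or to $\mathbb{RP}^2$.

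The first step is to show that the homeomorphism class of $Z_s$ does not depend on $s\in(0,5)$. Given two scales $s\le s'\le 2s$, I would rescale the original metric by $s'$ and observe that $B_{s/s'}(p)$ is $\delta_0$-GH close both to $B_{s/s'}(o_{Z_{s'}})$ (by conicality at scale $s'$) and, thanks to the scale invariance of cones, to $B_{s/s'}(o_{Z_s})$ (by conicality at scale $s$). Triangle inequality then produces a $C\delta_0$-GH closeness between $Z_s$ and $Z_{s'}$ for some universal constant $C$. Once $\delta_0=\delta_0(v)$ is chosen sufficiently small, the uniform local contractibility of $\RCD(1,2)$ spaces with empty boundary (\autoref{prop:Ale2loccontr}) together with \cite{Petersen90} upgrades this GH closeness to a homeomorphism $Z_s\cong Z_{s'}$. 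A standard open/closed partition of $(0,5)$ combined with its connectedness then yields that the homeomorphism class of $Z_s$ is constant.

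The second step is to identify this constant class with $S^2$. Along any sequence $s_i\downarrow 0$, a subsequence of the rescaled pointed spaces $(X,s_i^{-1}\dist,p)$ converges in pointed GH to a tangent cone $C(Z_\infty)$ at $p$, and the cross-sections $Z_{s_i}$ converge to $Z_\infty$ in GH. By the standing assumption of \autoref{prop:genmanrec}, $Z_\infty\cong S^2$, and a further application of \cite{Petersen90} forces $Z_{s_i}\cong S^2$ for $i$ large. Combined with step one, $Z_s\cong S^2$ for every $s\in(0,5)$. The proof is then completed by applying \autoref{main.green.sphere} (vi) to the rescaled space $(X,r^{-1}\dist,p)$: since $r\in\mathcal{G}_p\subset(0,\delta_0^2)$ satisfies $20r<10$, the Green-sphere $\mathbb{S}_r(p)$ is homeomorphic to the cross-section of the cone approximating $B_{20r}(p)$, which coincides with $Z_{20r}\cong S^2$.

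The main technical point will be the quantitative comparison of cross-sections at two nearby scales in the first step: the rescalings must be tracked carefully enough that the GH distance between $Z_s$ and $Z_{s'}$ is controlled by a universal multiple of $\delta_0$, independently of the scales involved, so that a single smallness requirement $\delta_0=\delta_0(v)$ suffices to invoke Petersen's stability theorem uniformly across all of $(0,5)$ and iterate it through the connectedness argument.
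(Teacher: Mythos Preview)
Your argument is correct and matches the paper's proof essentially line by line: both fix the family of cross-sections $Z_s$ at all conical scales, show that $Z_s\cong Z_{s'}$ whenever $s<s'<2s$ via \cite{Petersen90}, chain these together to make the homeomorphism type constant, pin it down as $S^2$ using the hypothesis on tangent cones, and conclude via \autoref{main.green.sphere} (vi). The only stylistic difference is that the paper handles the closeness of $Z_s$ and $Z_{s'}$ for nearby scales by a one-line ``standard compactness and contradiction argument,'' whereas you try a direct triangle-inequality estimate between cone-balls; note that the triangle inequality only yields closeness of $B_{s/s'}(o_{Z_s})$ and $B_{s/s'}(o_{Z_{s'}})$, and passing from this to closeness of the cross-sections themselves still requires the compactness step you flag at the end.
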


\begin{proof}	
The assumption that all the cross-sections of all tangent cones of $X$ are homeomorphic to $S^2$ yields the statement for Green-spheres with sufficiently small radii by \autoref{main.green.sphere} (vi). We shall use the assumption of uniform conicality at all scales for points $p\in \mathcal{S}_{\rm{gm}^+}$ to propagate the conclusion up to scale $1$.

\medskip

By \eqref{baire}, for each $r\in(0,1)$, there exists an Alexandrov surface $(Z_r,\dist_r)$ with curvature $\ge 1$ and $\haus^2(Z_r)\ge C(v)$ such that $\dist_{\rm{GH}}(B_{2r}(p),B_{2r}(o))\le2\delta_0 r$, where $B_{2r}(o)\subset C(Z_r)$. Note that $Z_r$ has empty boundary by \cite{BrueNaberSemolabdry}, since $(X,\dist)$ has empty boundary by assumption. 
		
		A standard compactness and contradiction argument shows that, by taking $\delta_0$ small, we can guarantee that any two such Alexandrov spaces $(Z_s,\dist_s)$ and $(Z_{s'},\dist_{s'})$ with $s<s'<2s$ are arbitrarily close to each other, and in particular homotopy equivalent by \cite[Theorem A]{Petersen90}. Hence the Alexandrov surfaces $Z_r$ are either all homotopy equivalent to $S^2$ or all homotopy equivalent to $\mathbb{RP}^2$. The assumption that the cross-sections of all tangent cones at $p$ are homeomorphic to $S^2$ implies that the first possibility occurs. By \autoref{main.green.sphere} (vi), the same is true for all the good Green-spheres $\mathbb{S}_r(p)$ for $r\in \mathcal{G}_p$.
	\end{proof}

We are now ready to complete the proof of \autoref{mfd.with.bdry}.
	
	\begin{proof}[Proof of \autoref{mfd.with.bdry}]
		If $p\in\mathcal{S}_{\rm{gm}^+}$ the statement corresponds to \autoref{lemma:GspheresS}. If $\overline{\mathbb{B}}_r(p)\subseteq\mathcal{R}_{\rm{gm}^+}$, the statement is part of \autoref{prop:Gballscongm}. Hence we can assume that
		$\overline{\mathbb{B}}_r(p)\cap\mathcal{S}_{\rm{gm}^+}\neq\emptyset$. 
		
		Let $p'\in B_{10}(p_0)\cap \overline{\mathbb{B}}_r(p)\cap\mathcal{S}_{\rm{gm}^+}$
	and notice that $\mathbb{B}_r(p)\subseteq B_{2r}(p)\subseteq{B}_{4r}(p')$. We will distinguish two cases, depending on the scale-invariant distance $\dist(p,p')/r$. In the case when it is small, the statement will follow from \autoref{lemma:GspheresS}. If the scale-invariant distance between the centers is large, this will force an additional almost splitting by cone-splitting, thus ruling out the case of cross-sections homeomorphic to $\mathbb{RP}^2$.

     \medskip
 
By \eqref{baire}, $B_{4r}(p')$ is $(4\delta_0 r)$-close in the Gromov--Hausdorff sense to $B_{4r}(o)\subset C(Z')$, where $(Z',\dist_{Z'})$ is an Alexandrov sphere with curvature $\ge 1$. By definition of $\mathcal{G}_p$, $B_{2r}(p)$ is also $(2\delta_0 r)$-close in the Gromov--Hausdorff sense to $B_{2r}(o)\subset C(Z)$, where $(Z,\dist_Z)$ is an Alexandrov surface with curvature bounded below by $1$.

If $\dist(p,p')\le c(v)r$ is small enough, it follows from \cite[Theorem A]{Petersen90} that $Z$ and $Z'$ are homotopy equivalent (provided $\delta_0$ is suitably small).
Otherwise $\dist(p,p')\ge c(v)r$ and, assuming again without loss of generality that $\delta_0$ is small enough, then $B_{2r}(p)$ almost splits a factor $\R$ by almost cone-splitting, i.e., it is close to the ball of radius $2r$ inside a cone of the form $\R\times Y$, where $Y$ is conical with tip $y_0$, with $p$ and $p'$ corresponding to $q=(0,y_0)$ and $q'=(d(p,p'),y_0)$: see \cite{CheegerNaber13}, where the argument originates from and the subsequent \cite[Theorem 1.17]{AntonelliBrueSemola} for the $\RCD$ setting. The cross-section of any such cone is a spherical suspension and an Alexandrov surface with curvature bounded below by $1$ with empty boundary, by the stability of $\RCD(K,n)$ spaces $(X,\dist,\haus^n)$ with empty boundary under noncollapsed GH-convergence \cite{BrueNaberSemolabdry}. Hence it is homeomorphic to $S^2$. 

By \autoref{main.green.sphere} (vi), also $\mathbb{S}_r(p)$ is homeomorphic to $S^2$.
\end{proof}

\subsection{Local uniform contractibility}\label{subsec:loccontr}

This section aims to prove \autoref{prop:3ctoc} below, namely the contractibility of closed Green-balls $\bar{\mathbb{B}}_r(p)\subset B_1(p_0)$. The key step will be to prove that they are $3$-connected, i.e., they have trivial $\pi_k$ for each $k\le 3$.

	\begin{proposition}\label{prop:3conn}
		If $\eta_0 \le \eta_0(v)$, $\delta_0 \le \delta_0(v)$ in the definition of $\mathcal{G}_p$, for any $p\in B_1(p_0)$ and $r\in(0,c(v))\cap\mathcal{G}_p$
		the Green-ball $\mathbb{B}_r(p)$ and its closure are 3-connected.
	\end{proposition}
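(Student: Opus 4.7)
The plan is to proceed by induction on $k\in\{1,2,3\}$. The base case $k=1$ follows immediately from results already in hand: Proposition \ref{mfd.with.bdry} asserts that $\mathbb{S}_r(p)\cong S^2$, so Proposition \ref{simply.conn.clean} yields simple-connectedness of $\mathbb{B}_r(p)$ and $\overline{\mathbb{B}}_r(p)$. Once simple-connectedness has been established, we invoke the Hurewicz theorem, which reduces the vanishing of $\pi_2$ (and, once $\pi_2=0$, of $\pi_3$) to that of the corresponding integral homology group. Thus for $k\in\{2,3\}$ it suffices to prove $H_k(\overline{\mathbb{B}}_r(p);\mathbb{Z})=0$, assuming the same statement for all good closed Green-balls centered in $B_1(p_0)$ at smaller radii, and in all homological dimensions below $k$.

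The heart of the inductive step is a self-improving decomposition lemma, in the spirit of \cite{Perelmanmaximal,WangRCD}, which we plan to state as Lemma \ref{cinese.plus}. The precise formulation will assert the existence of constants $\lambda=\lambda(v)\in(0,1)$ and $N=N(v)\in\mathbb{N}$ such that every singular $k$-cycle $\sigma$ in $\overline{\mathbb{B}}_r(p)$ is homologous, within $\overline{\mathbb{B}}_r(p)$, to a sum of at most $N$ singular $k$-cycles, each supported in a good closed Green-ball $\overline{\mathbb{B}}_{s_i}(p_i)\subset\overline{\mathbb{B}}_r(p)$ with $p_i\in\overline{\mathbb{B}}_r(p)$, $s_i\in\mathcal{G}_{p_i}$ and $s_i\le\lambda r$. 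To prove this lemma, we plan to produce a Vitali-type cover of $\overline{\mathbb{B}}_r(p)$ by good Green-balls of comparable radius (using Corollary \ref{cor:densegoodr} to locate good radii at each scale) and decompose $\sigma$ by barycentric chain subdivision together with Mayer--Vietoris, trivialising the error terms supported in pairwise intersections by the inductive hypothesis on $(k-1)$-connectedness at the relevant scale. For those covering balls lying inside $\mathcal{R}_{\rm{gm}^+}$, Proposition \ref{prop:Gballscongm} provides outright contractibility of the local piece; near points of $\mathcal{S}_{\rm{gm}^+}\cap B_{10}(p_0)$, the Baire reduction \eqref{baire} makes every ball $\delta_0$-conical at every scale, so that Lemmas \ref{lemma:packedsingular}--\ref{lemma:pushin} allow us to first deformation-retract the relevant portion of $\sigma$, inside the Green-ball, onto the union of a small central ball together with a thin tubular neighbourhood of finitely many geodesic segments emanating from the centre. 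The resulting thin tubular region can then be covered by a bounded number of good Green-balls whose radii are a fixed fraction of the starting one, to which the decomposition step applies.

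Given the decomposition lemma, we iterate: after $m$ applications, $\sigma$ becomes homologous in $\overline{\mathbb{B}}_r(p)$ to a sum of at most $N^m$ singular $k$-cycles, each supported in a good closed Green-ball of radius at most $\lambda^m r$ centered in $\overline{\mathbb{B}}_r(p)$. By compactness, as $m\to\infty$ each nested sequence of such shrinking Green-balls has an accumulation point $p_\infty\in\overline{\mathbb{B}}_r(p)$. If $p_\infty\in\mathcal{R}_{\rm{gm}^+}$, then eventually the corresponding Green-ball is contained in $\mathcal{R}_{\rm{gm}^+}$ and Proposition \ref{prop:Gballscongm} shows that the supported cycle is a boundary. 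If instead $p_\infty\in\mathcal{S}_{\rm{gm}^+}$, uniform conicality at $p_\infty$ combined with a further application of Lemmas \ref{lemma:packedsingular}--\ref{lemma:pushin} and the inductive hypothesis on $(k-1)$-connectedness of the Green-balls covering the thin tubular neighbourhood allows one to trivialise the surviving piece inside a slightly enlarged Green-ball still centered in $\overline{\mathbb{B}}_r(p)$. Summing the resulting bounding chains over the finitely many pieces produces a $(k+1)$-chain in $\overline{\mathbb{B}}_r(p)$ whose boundary is $\sigma$, closing the induction.

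The main obstacle is the rigorous implementation of the decomposition step near $\mathcal{S}_{\rm{gm}^+}$: one has to certify that after applying Lemma \ref{lemma:pushin} the chain-level Mayer--Vietoris argument can actually be carried out with covering balls of the claimed scale, and that the bookkeeping on the number of pieces produced, on the shrinkage rate, and on the placement of the Green-ball centres with respect to the (a priori not quantitatively small) set $\mathcal{S}_{\rm{gm}^+}$ is compatible with the inductive hypothesis. The scale-invariant thinness of the tubular neighbourhoods guaranteed by Lemma \ref{lemma:packedsingular} and the uniform conicality secured by \eqref{baire} are precisely the tools that make this controlled iteration converge.
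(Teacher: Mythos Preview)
Your decomposition step is essentially the right one and matches the paper: push the cycle onto the thin tubular neighbourhood of the segments $\ell_j$ via Lemmas \ref{lemma:packedsingular}--\ref{lemma:pushin}, then break it by Mayer--Vietoris using a careful chain of good Green-balls whose pairwise intersections are small simply connected Green-balls (this is where the inductive $(k-1)$-connectedness enters), discard the pieces landing in $\mathcal{R}_{\rm{gm}^+}$ by Proposition \ref{prop:Gballscongm}, and re-center the remaining pieces on nearby points of $\mathcal{S}_{\rm{gm}^+}$. Two organizational remarks: the paper does not Vitali-cover first but applies the push-to-tube immediately (the Green-ball is already centered on $\mathcal{S}_{\rm{gm}^+}$, after the reduction in the first paragraph of Lemma \ref{lemma:2connected}); and the smaller Green-balls are \emph{not} required to sit inside $\overline{\mathbb{B}}_r(p)$---one only asks that they intersect it and have diameter a fixed fraction of $r$, and the resulting ``$k$-connected in a larger ball'' is upgraded afterwards by the retraction of Corollary \ref{lambda}.

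The genuine gap is your termination argument. After $m$ iterations you have $N^m$ cycles, and you try to conclude by following each branch to an accumulation point $p_\infty$. The case $p_\infty\in\mathcal{S}_{\rm{gm}^+}$ is circular: ``trivialising the surviving piece'' near $p_\infty$ is exactly the $k$-connectedness you are proving, and the inductive hypothesis only gives $(k-1)$-connectedness there. More basically, even if every branch eventually produces a bounding chain, there is no finite $m$ at which all $N^m$ pieces are simultaneously trivial, and no topology on singular chains in which the infinite telescoping sum of bounding chains converges. The paper's Lemma \ref{cinese.plus} avoids this by working at the \emph{map} level: it realizes each $[\sigma_j]$ via Hurewicz as a based map $\partial Q^{k+1}\to U_j$, and uses the decomposition to extend the original map from $\partial Q^{k+1}$ to $Q^{k+1}\setminus\bigcup\operatorname{int}(Q_j')$, where the $Q_j'$ are sub-cubes of geometrically decaying size whose boundary images also decay geometrically. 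Iterating infinitely produces a uniformly continuous map on a dense subset of $Q^{k+1}$, which extends by completeness to the desired null-homotopy. You should replace the accumulation-point argument with this construction.
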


	Borrowing the terminology from \cite{Hubook}, it follows from \autoref{prop:3conn} and the first part of \autoref{cor:densegoodr} that $B_1(p_0)$ is uniformly locally $3$-connected. Since $B_1(p_0)$ has covering dimension $3$ by \autoref{lemma:dimcov}, the general theory from \cite[Chapters III--V]{Hubook} then implies that $B_1(p_0)$ is locally contractible. We are going to prove a slightly stronger statement in our setting, namely that all the Green-type balls with good radii are contractible. We start by establishing this claim, thus completing the proof of \autoref{prop:3ctoc} modulo the proof of \autoref{prop:3conn} that will be discussed below.
	
	\begin{proposition}
		For $p\in B_1(p_0)$ and $r\in(0,c(v))\cap\mathcal{G}_p$, the closed Green-type ball $\overline{\mathbb{B}}_r(p)$ is contractible.
	\end{proposition}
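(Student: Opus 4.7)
The plan is to upgrade the $3$-connectedness from Proposition \ref{prop:3conn} to genuine contractibility by a standard nerve-complex argument, using the covering-dimension bound $\dim_c(X)\le 3$ from Lemma \ref{lemma:dimcov} to keep the nerve three-dimensional. Thus I would not try to contract $\overline{\mathbb{B}}_r(p)$ directly: instead, I would produce a finite simplicial complex $K$ of dimension at most $3$ which dominates $\overline{\mathbb{B}}_r(p)$ up to homotopy, and then contract $K$.

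Concretely, first I would cover $\overline{\mathbb{B}}_r(p)$ by finitely many open sets $(V_a)_{a\in A}$ of arbitrarily small diameter such that each $V_a$ is contained in a good Green-ball $\mathbb{B}_{r_a}(q_a)$ with $q_a\in B_{3/2}(p_0)$ and $r_a\in\mathcal{G}_{q_a}$; such balls are contractible (indeed $3$-connected) inside themselves by Proposition \ref{prop:3conn} combined with Corollary \ref{cor:densegoodr}. Using the covering-dimension bound, refine to ensure that no point of $\overline{\mathbb{B}}_r(p)$ lies in more than four of the $V_a$, so that the nerve $K$ is an abstract simplicial complex of dimension at most $3$. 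A partition of unity subordinate to $(V_a)$ yields a continuous map $\varphi:\overline{\mathbb{B}}_r(p)\to K$.

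Next, I would build $f:K\to \overline{\mathbb{B}}_{(1+\tau)r}(p)$ by induction on the skeleta: on a vertex $a$ pick a point $p_a\in V_a$; on an edge $(a,a')$ with $V_a\cap V_{a'}\neq\emptyset$ join $p_a,p_{a'}$ by a short path inside a good Green-ball containing both (using $0$-connectedness); on $2$- and $3$-simplices, extend using the $1$- and $2$-connectedness of the ambient good Green-balls, which is again Proposition \ref{prop:3conn} applied to Green-balls of controlled radii. By making $\operatorname{diam}(V_a)$ uniformly small, the composition $f\circ\varphi$ is uniformly close to $\mathrm{id}_{\overline{\mathbb{B}}_r(p)}$, and a standard uniform-contractibility homotopy (combined with the retraction $\overline{\mathbb{B}}_{(1+\tau)r}(p)\to\overline{\mathbb{B}}_r(p)$ from Lemma \ref{retraction}) produces $f\circ\varphi\simeq \mathrm{id}_{\overline{\mathbb{B}}_r(p)}$. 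Consequently, $\varphi_\ast:\pi_k(\overline{\mathbb{B}}_r(p))\to\pi_k(K)$ is a section of $f_\ast$, hence $\pi_k(K)=0$ for $k\le 3$ because $\pi_k(\overline{\mathbb{B}}_r(p))=0$ for $k\le 3$ by Proposition \ref{prop:3conn}. Since $K$ is a CW complex of dimension at most $3$ which is $3$-connected, Hurewicz and Whitehead yield that $K$ is contractible: $H_k(K)=0$ for $k\ge 1$ (by Hurewicz up to $k=3$ and by the dimension bound for $k\ge 4$), so $K$ is acyclic and simply connected, hence contractible. Therefore $f$ is null-homotopic, so $\mathrm{id}_{\overline{\mathbb{B}}_r(p)}\simeq f\circ\varphi$ is null-homotopic as well.

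The main technical obstacle I foresee is the skeleton-by-skeleton extension of $f$, because the $k$-connectedness supplied by Proposition \ref{prop:3conn} is a statement about good Green-balls rather than about $\overline{\mathbb{B}}_r(p)$ in an intrinsic way; one must therefore choose the original cover $(V_a)$ and the target Green-balls so that the successively larger extensions remain inside $\overline{\mathbb{B}}_{(1+\tau)r}(p)$, where the retraction of Lemma \ref{retraction} is available. A secondary (routine) issue is ensuring that the cover can be refined so that its nerve has dimension at most $3$; this is exactly the content of Lemma \ref{lemma:dimcov} combined with the standard fact that separable metric spaces of covering dimension $n$ admit arbitrarily fine open covers whose nerve has dimension $n$.
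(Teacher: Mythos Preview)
There is a genuine gap in the final step. From $f\circ\varphi\simeq\mathrm{id}_{\overline{\mathbb{B}}_r(p)}$ you obtain $f_\ast\circ\varphi_\ast=\mathrm{id}$ on $\pi_k(\overline{\mathbb{B}}_r(p))$, so $\varphi_\ast$ is a section of $f_\ast$; but this only says $f_\ast$ is a split surjection, and the triviality of its \emph{target} $\pi_k(\overline{\mathbb{B}}_r(p))$ says nothing about its \emph{source} $\pi_k(K)$. (Example: $Y=\{*\}$, $K=S^1$; any $\varphi,f$ satisfy $f\circ\varphi\simeq\mathrm{id}_Y$, yet $\pi_1(K)=\mathbb{Z}$.) To get $\pi_k(K)=0$ you would need $\varphi\circ f\simeq\mathrm{id}_K$, which you have not established and which your construction does not give.

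Your strategy can be repaired without ever showing that $K$ is contractible. From $f\circ\varphi\simeq\mathrm{id}$ the space $Y:=\overline{\mathbb{B}}_r(p)$ is \emph{dominated} by the CW complex $K$; hence $H_k(Y)$ is a direct summand of $H_k(K)=0$ for $k\ge 4$, and together with Proposition~\ref{prop:3conn} and Hurewicz this makes $Y$ weakly contractible. Since a space dominated by a CW complex has the homotopy type of a CW complex (see e.g.\ \cite[Proposition~A.11]{Hatcherbook}), Whitehead's theorem then applies to $Y$ directly.

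For comparison, the paper takes a different and more direct route: it covers the cylinder $(0,1)\times Y$ (covering dimension $4$), forms its nerve $K$ (dimension $\le 4$), and builds the contracting homotopy $F:[0,1]\times Y\to Y$ \emph{as} $f\circ\varphi$, prescribing $f(a)=p$ on vertices near $\{1\}\times Y$ and $f(a)\in Y$ close to the corresponding point near $\{0\}\times Y$, then extending over the skeleta using only the $3$-connectedness of $Y$. This produces the contraction in one stroke and never needs to analyze the homotopy type of $K$ or invoke CW-domination.
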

	
	\begin{proof}
		The argument is similar to \autoref{retraction} and hence it will only be sketched; see also \cite[Chapter V, Theorem 7.1]{Hubook} for an analogous argument.

		\medskip
		
		Letting $Y:=\overline{\mathbb{B}}_r(p)$, we need to find a map $F:[0,1]\times Y\to Y$
		such that $F(0,x)=x$ and $F(1,x)=p$ for any $x\in Y$. As in the proof of \autoref{retraction}, we cover $(0,1)\times Y$ with open sets $U_a$ whose diameter shrinks to zero as they approach $\{0,1\}\times Y$.
		Since $Y$ has covering dimension $3$ by \autoref{lemma:dimcov}, $[0,1]\times Y$ has covering dimension 4. Hence we can assume that at most five $U_a$'s have nonempty intersection.
		Thus, the associated nerve $K$ has dimension 4 as well. Let $\varphi:(0,1)\times Y\to K$ (as in the proof of \autoref{retraction})
		and let us build $f:K\to Y$.
		
		We can define $f(a)$ to be $p$ if $U_a\subseteq(\mz,1)\times Y$
		and to be a chosen point $p_a\in U_a$ otherwise, and extend $f$ inductively on the skeleta of $K$, taking care that $f(\Delta)=\{p\}$
		for each simplex such that all its vertices $a$ correspond to a set $U_a\subseteq(\mz,1)\times Y$. The extension can be done since $Y$ is 3-connected by \autoref{prop:3conn},
		guaranteeing that a continuous map $\de\Delta\to Y$ extends to a map $\Delta\to Y$ for any simplex $\Delta$ of dimension $1\le k\le 4$
		(as $\de\Delta\cong S^{k-1}$).
		By \autoref{prop:3conn} and \autoref{cor:densegoodr}, we can guarantee that $f(\Delta)$ has diameter comparable with that of $f(\de\Delta)$ for each simplex, by taking an extension $\Delta\to \bar{\mathbb{B}}_s(p')$ with values in a smaller ball $\bar{\mathbb{B}}_s(p')$ and composing with the retraction onto $\bar{\mathbb{B}}_r(p)$.
		
		It is easy to see that $f\circ\varphi$ extends to a map $F:[0,1]\times Y\to Y$ as above.
	\end{proof}

	To prove \autoref{prop:3conn}, roughly speaking, we argue that the homotopy groups $\pi_k$ are trivial for each $k\le 3$ by finite induction on $k$. The base step corresponds to simple-connectedness and it follows already from \autoref{simply.conn.clean} and \autoref{mfd.with.bdry}. We give a precise statement below.

		\begin{lemma}\label{simply.conn}
		For every $p\in B_1(p_0)$ and for every $r\in \mathcal{G}_p\cap(0,1)$ the Green-ball $\mathbb{B}_r(p)$ and its closure $\overline{\mathbb{B}}_r(p)$ are simply-connected.
		Moreover, there exists a constant $C(v)>1$ such that,
		for any $p\in B_1(p_0)$ and any $r\in(0,1/C)$,
		the inclusion $B_r(p)\hookrightarrow B_{Cr}(p)$ induces the trivial map $\pi_1(B_r(p),p)\to\pi_1(B_{Cr}(p),p)$.
	\end{lemma}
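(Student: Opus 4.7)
The plan is to obtain the first assertion as a direct consequence of the results already proved in this section, and then bootstrap it to the statement about inclusions of ordinary metric balls using the density of good radii and the fact that Green-type distances uniformly approximate the metric distance.

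For the first assertion, fix $p\in B_1(p_0)\subseteq B_5(p_0)$ and $r\in\mathcal{G}_p\cap(0,1)$. By \autoref{mfd.with.bdry}, the Green-sphere $\mathbb{S}_r(p)$ is homeomorphic to $S^2$. Since $\eta_0$ and $\delta_0$ are chosen small enough so that the conclusions of Section \ref{sec:genmanprel} apply, \autoref{simply.conn.clean} then guarantees that $\mathbb{B}_r(p)$ and $\overline{\mathbb{B}}_r(p)$ are simply connected.

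For the second assertion, the key point is that Green-type distances on conical balls are uniformly close to the metric distance: by \autoref{thm:prop_Green} (i), one has $|b_p-\dist_p|\le\eta_0 s$ on scale $s$, hence after rescaling
\begin{equation*}
B_r(p)\subseteq\mathbb{B}_{(1+\eta_0)r}(p)\, ,\qquad \mathbb{B}_{r'}(p)\subseteq B_{(1+\eta_0)r'}(p)\, ,
\end{equation*}
for any good radius $r'$ in a neighborhood of $r$. I will choose $C=C(v)>1$ so large that the conclusion of \autoref{cor:densegoodr} (i) yields, for every $r\in(0,1/C)$, a good radius $r'\in\mathcal{G}_p\cap(2r,\,Cr/(1+\eta_0))$ (which is possible since $\mathcal{G}_p$ meets every interval of the form $(s,C_0 s)$ and $r'<1$ so that the first assertion applies to $r'$).

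With this choice, $B_r(p)\subseteq\mathbb{B}_{r'}(p)$ and $\mathbb{B}_{r'}(p)\subseteq B_{Cr}(p)$, so the inclusion $B_r(p)\hookrightarrow B_{Cr}(p)$ factors through the simply-connected set $\mathbb{B}_{r'}(p)$, which forces the induced map on $\pi_1$ to be trivial. No serious obstacle is expected here: the argument is essentially a bookkeeping of scales, and the nontrivial topological input (namely that good Green-balls at points in $B_5(p_0)$ are simply connected) has already been set up through \autoref{mfd.with.bdry} and \autoref{simply.conn.clean}.
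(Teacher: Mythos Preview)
Your proof is correct and follows essentially the same approach as the paper: the first assertion is obtained by combining \autoref{mfd.with.bdry} with \autoref{simply.conn.clean}, and the second by invoking \autoref{cor:densegoodr} to sandwich $B_r(p)$ inside a simply connected good Green-ball contained in $B_{Cr}(p)$. The paper's proof is terser (it simply cites these three results), while you have spelled out the inclusion $B_r(p)\subseteq\mathbb{B}_{r'}(p)\subseteq B_{Cr}(p)$ explicitly using the uniform closeness of $b_p$ to $\dist_p$.
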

	
	\begin{proof}
		We already argued that the first part of the statement follows from \autoref{simply.conn.clean} and \autoref{mfd.with.bdry}.
		The second conclusion follows from the first one by taking into account \autoref{cor:densegoodr}.
	\end{proof}

The proof of the $k$-connectedness of Green-balls for $k\ge 2$ requires a different argument. We discuss the idea for $k=2$; the case $k=3$ will be completely analogous.
 
Note that by Hurewicz it is sufficient to argue that the second homology $H_2$ of a Green-ball $\mathbb{B}_r(p)$ is trivial. This statement will be proved with an iterative construction. We outline the key steps.

\emph{Step 1:} By \autoref{lemma:pushin} we can push any $2$-cycle $[\sigma]$ supported in $\mathbb{B}_r(p)$ into a homologous $2$-cycle $[\sigma']$ supported in a scale-invariantly small tubular neighbourhood of the effective singular set (of the form discussed in \autoref{lemma:packedsingular}).  

\emph{Step 2:} We carefully cover the support of $[\sigma']$ with finitely many scale-invariantly small (and simply connected, by \autoref{simply.conn}) Green-balls. By applying the Mayer--Vietoris sequence to this covering, we can break $[\sigma']$ into a homologous $\sum[\sigma_j]$, where each $[\sigma_j]$ is supported in one of the small Green-balls. 

\emph{Step 3:} For those Green-balls in the covering whose center is at a distance less than the radius from $\mathcal{S}_{\rm{gm}^+}$, this decomposition step can be iterated (without deterioration of the parameters) by the uniformity assumption \eqref{baire}. 

\emph{Step 4:} For those Green-balls that are contained in $\mathcal{R}_{\rm{gm}^+}$, \autoref{prop:Gballscongm} applies and hence the corresponding $2$-cycles in the sum are trivial in homology.

\medskip

	The proof will be completed with the help of the following useful and general criterion to obtain $k$-connectedness. Its proof is inspired by the argument in \cite{WangRCD}.

	\begin{lemma}\label{cinese.plus}
		Let $(X,\dist)$ be a complete metric space.
		Given $k\ge 2$, let $\mathcal{U}$ be a family of bounded $(k-1)$-connected open sets such that, if $U\in\mathcal{U}$ and $\sigma$ is a $k$-cycle in $U$, there exist finitely many $\{U_j\}_{j\in J} \subset \mathcal{U}$ and $k$-cycles $\sigma_j$ in $U_j$ such that
         \begin{itemize}
             \item[(i)] $U_j\cap U \neq \emptyset$ for every $j\in J$;

             \item[(ii)] $\diam(U_j) < \frac{1}{8} \diam(U)$;

             \item[(iii)] $\sum_j[\sigma_j]=[\sigma]$ in $H_k(B_{\diam(U)/4}(U))$.
         \end{itemize}
        Moreover, we assume that the $\frac{1}{4}\diam(U)$-neighbourhood
        of $U$ is included in a $(k-1)$-connected open set $U'$, itself included in the $\frac{1}{2}\diam(U)$-neighbourhood of $U$.
		Then each $U\in\mathcal{U}$ is $k$-connected in a $\diam(U)$-neighbourhood of $U$.
	\end{lemma}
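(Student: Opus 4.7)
The plan is to iterate the decomposition hypothesis to express $\sigma$ as the boundary of a $(k+1)$-chain supported in $B_{\diam(U)}(U)$, up to a residual sum of $k$-cycles whose supports have diameter going to zero, and then absorb the residue using the $(k-1)$-connected enlargement $U'$.

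First, setting $d := \diam(U)$ and applying the hypothesis recursively, we build after $n$ generations a decomposition
\begin{equation*}
\sigma = \partial T_n + \sum_{\alpha \in \Lambda_n} \sigma_\alpha,
\end{equation*}
where each $\sigma_\alpha$ is a $k$-cycle supported in some $U_\alpha \in \mathcal{U}$ of diameter at most $d \cdot 8^{-n}$, and $T_n$ is a $(k+1)$-chain supported in $B_{r_n}(U)$ with
\begin{equation*}
r_n \le \frac{d}{4}\sum_{i=0}^{n-1}8^{-i} < \frac{2d}{7}.
\end{equation*}
Moreover, tracking the chain of intersection conditions $U_\alpha \cap (\text{parent set}) \neq \emptyset$ and the diameter bound, a straightforward induction shows that each $U_\alpha$ lies inside $B_{r_n}(U) \subseteq B_{d/3}(U)$.

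Next, let $U' \supseteq B_{d/4}(U)$ be the $(k-1)$-connected open set furnished by the hypothesis, with $U' \subseteq B_{d/2}(U)$. For $n$ large enough that $d\cdot 8^{-n}$ is smaller than the distance from $B_{d/3}(U)$ to $X \setminus U'$, each $U_\alpha$ sits inside $U'$; so, from some index $n_0$ onward, both $T_n$ and the residual $R_n := \sum_\alpha \sigma_\alpha$ may be regarded as chains in $U'$. By Hurewicz applied to the $(k-1)$-connected space $U'$, the map $\pi_k(U') \to H_k(U')$ is surjective, and every element of $H_k(U')$ is represented by a map $S^k \to U'$ whose image lies in an arbitrarily prescribed neighbourhood of the support of a representative cycle.

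Finally, to conclude that $[R_n] = 0$ in $H_k(U')$ for $n$ large enough (equivalently, in $H_k(B_d(U))$), we refine the iteration inside $U'$: since each $U_\alpha' \supseteq B_{\diam(U_\alpha)/4}(U_\alpha)$ is itself $(k-1)$-connected, we can continue decomposing each $\sigma_\alpha$ further within $U'$, producing a telescoping sequence of partial fillings $S_{n,m}$ with $\partial(T_n + S_{n,m}) = \sigma - R_{n,m}$, where $R_{n,m}$ is supported in a union of sets of diameter $\le d \cdot 8^{-(n+m)}$, all contained in $U'$. The chains $T_n + S_{n,m}$ form a Cauchy-like sequence in $U'$ whose residues shrink to chains supported on a set of covering dimension zero; a standard limit/diagonal argument (of Perelman--Wang type, as used in \cite{Perelmanmaximal,WangRCD}) then produces a genuine singular $(k+1)$-chain in $U' \subseteq B_d(U)$ bounding $\sigma$. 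The main obstacle is precisely this last step: passing from the iterated decomposition to an honest null-homology requires a careful control of singular chains under the shrinking supports, and is where the inclusion $U \subset U'$ with $(k-1)$-connected $U'$ is essentially used.
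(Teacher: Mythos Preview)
Your overall strategy—iterate the decomposition and pass to a limit—is the right picture, but the crucial ``limit'' step has a genuine gap. You attempt to produce a bounding singular $(k+1)$-chain as a limit of the partial fillings $T_n+S_{n,m}$, invoking a ``standard limit/diagonal argument of Perelman--Wang type.'' However, there is no topology on the singular chain complex in which such a limit exists: at each generation the number of residual cycles, and hence the number of simplices in the partial fillings, may grow without bound, and nothing forces the sequence $(T_n+S_{n,m})$ to stabilise or converge in any sense. The references you cite do not take limits of chains; they construct \emph{continuous maps} as limits. Your fallback, that the residue is eventually ``supported on a set of covering dimension zero,'' does not help either: even granting this, there is no general acyclicity statement that kills $[R_n]$ in $H_k(U')$.

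The paper's proof avoids this by working homotopically throughout. Given $\gamma:S^k\to U$, one uses Hurewicz in each $U_j$ to realise $[\sigma_j]$ by a based map $\gamma_j:\partial Q\to U_j$, joins basepoints by paths in $U'$, and then uses Hurewicz in the $(k-1)$-connected set $U'$ (this is where that hypothesis is really used) to obtain a homotopy in $U'$ between $\gamma$ and the concatenation of the $\gamma_j$'s. This homotopy is interpreted as a partial extension of $\gamma$ to the cube $Q=[0,1]^{k+1}$ minus finitely many small subcubes $Q_j'$, with $\partial Q_j'$ mapped into $U_j$. Iterating, one obtains a map defined on $Q$ minus a shrinking family of subcubes whose boundary images have diameters decaying like $8^{-\ell}\diam(U)$; this map is uniformly continuous on its (dense) domain and therefore extends to all of $Q$ by completeness of $(X,\dist)$. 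That extension is the desired null-homotopy in $B_{\diam(U)}(U)$. The point is that the limit is taken at the level of a single continuous map on a fixed domain, not at the level of singular chains.
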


	\begin{proof}

      We identify $S^k$ with the boundary of the unit cube $Q=Q^{k+1}$, quotiented by the union $Z$ of all sides but the top one.
		Thus, given $U\in\mathcal{U}$ and fixing a basepoint $p\in U$, we identify a map $\gamma: S^k\to U$ based at $p$ with a map $\de Q\to U$ with constant value $p$ on $Z$.

		We can associate a homology class $[\sigma]\in H_k(U)$ to any element $[\gamma]\in\pi_k(U,p)$ in a canonical way by taking the image of the fundamental class $[\de Q]$ through $\gamma:\de Q\to U$.
		By assumption, we can find $(k-1)$-connected open sets $U_1,\dots,U_J$ of diameter less than $\frac{1}{8}\operatorname{diam}(U)$ and cycles $\sigma_j$ in $U_j$
		such that
		\begin{equation}
		\sum_{j=1}^J[\sigma_j]=[\sigma]\, , \quad \text{in $H_k(U)$}\, .
		\end{equation}
		Since the open sets $U_j$ are $(k-1)$-connected, we can select basepoints $p_j\in U_j$ and apply Hurewicz's theorem (in $U_j$)
		in order to realize each $[\sigma_j]$ as the image of a map $\gamma_j:\de Q\to U_j$ based at $p_j$.
		Further, for each $j=1,\dots, J$ we can select a path $\eta_j$ joining $p$ to $p_j$ in $U'$. Then we can define new maps $\eta_j\cdot\gamma_j:\de Q\to U'$ based at $p$
		by rescaling the domain of $\gamma_j$ to be $\de Q'$, with $Q':=[\frac13,\frac23]^k\times[\frac23,1]$, extending to $Q\setminus \operatorname{int}(Q')$ by copying the path $\eta_j$ along rays from the point $(\mz,\dots,\mz,1)$ on the top side (i.e., along their intersection with $Q\setminus \operatorname{int}(Q')$), and restricting to $\de Q$. Each of these maps is based at $p$ and their concatenation
		yields $\sum_{j=1}^J[\sigma_j]=[\sigma]$ under the natural homomorphism $\pi_k(U',p)\to H_k(U')$.
		By Hurewicz (applied in $U'$), the latter is an isomorphism. Hence, the concatenation of the maps $\eta_j\cdot\gamma_j$ is homotopic to 
		$\gamma$ in $(U',p)$. 
		
		Now we select smaller cubes $Q_j$ in $Q$, sitting next to each other along the $x_1$-direction. For instance, we can let
		\begin{equation}
		Q_j:=\left[\frac{j}{J+2},\frac{j+1}{J+2}\right]\times\left[\frac{1}{J+2},\frac{2}{J+2}\right]^{k} \, ,\quad \text{for $j=1,\dots, J$}\, .
		\end{equation}
		We define a map $\Gamma$ on $\de Q\cup\bigcup_j\de Q_j$
		by copying $\gamma$ on $\de Q$ and $\eta_j\cdot\gamma_j$ on $\de Q_j$; this map is continuous since the value is $p$ on each intersection $\de Q_j\cap \de Q_{j'}$.
		The homotopy obtained before allows to extend $\Gamma$ to a continuous map defined on $Q\setminus\bigcup_j\operatorname{int}(Q_j)$, with values in $U'$. We can further extend it to each $Q_j\setminus\operatorname{int}(Q_j')$, as before, so that the image of each $\de Q_j'$ is included in $U_j$.
		\medskip
		
		The previous procedure can be iterated with each of the maps $\gamma_j$ in $U_j$, and so on.
        After $\ell$ steps, we then obtain a map defined on a set of the form $Q\setminus\bigcup_{i\in I_\ell}\operatorname{int}(Q_{\ell,i}')$,
        taking values in an $\eta_\ell$-neighbourhood of $U$, with
        $$\eta_\ell:=\frac{1}{2}\operatorname{diam}(U)\sum_{\ell'=0}^{\ell-1}\frac{1}{8^{\ell'}}.$$
        Moreover, the cubes $Q_{\ell,i}'\subset Q$ have disjoint interiors and diameter at most $(k+1)3^{-\ell}$, and the image of each $\de Q_{\ell,i}$
        has diameter at most $8^{-\ell}\operatorname{diam}(U)$.
        After repeating the procedure infinitely many times, we obtain a uniformly continuous map defined on a dense subset of $Q$,
		which then extends to a map $Q\to X$ by completeness.
        This gives a homotopy between $\gamma$ and the constant map in a
        $\operatorname{diam}(U)$-neighbourhood of $U$.
	\end{proof}

In the next lemma, we fix parameters $\delta_0'=\delta_0'(v)$ and $\eta_0'=\eta_0(v)$ such that \autoref{simply.conn} and \autoref{cor:densegoodr} apply.

		\begin{lemma}\label{lemma:2connected}
        If $\delta_0 \le \delta_0(\eta_0',\delta_0',v)$ there exists $C=C(\eta_0',\delta_0',v)>1$ such that the following holds.		
		For any $p\in B_1(p_0)$ and $r\in \mathcal{G}_p\cap (0,C^{-1})$ the closed Green-ball $\overline{\mathbb{B}}_r(p)$ is $2$-connected. Moreover, for every $p\in B_1(p_0)$ and $r\in(0, C^{-1})$, $B_r(p)$ is $2$-connected in $B_{Cr}(p)$, i.e., the inclusion $B_r(p)\hookrightarrow B_{Cr}(p)$ induces the trivial map on the first and second homotopy groups.
	\end{lemma}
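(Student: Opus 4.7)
The plan is to apply the abstract criterion \autoref{cinese.plus} with $k=2$ to the family $\mathcal{U}$ of good Green-balls $\mathbb{B}_r(q)$ centered at $q\in B_5(p_0)$ with radius $r$ smaller than a threshold $\rho(v)$ to be fixed. By \autoref{mfd.with.bdry} and \autoref{simply.conn}, every element of $\mathcal{U}$ is simply connected, providing the $(k-1)$-connectedness hypothesis. The enlarged simply connected set $U'$ sandwiched between the $\tfrac{1}{4}\mathrm{diam}(U)$- and $\tfrac{1}{2}\mathrm{diam}(U)$-neighbourhoods of a given $U=\mathbb{B}_r(q)$ can be taken as a slightly larger good Green-ball $\mathbb{B}_{r'}(q)$ with $r'\in(r+r/4,r+r/2)\cap\mathcal{G}_q$, whose existence and simple-connectedness follow respectively from \autoref{cor:densegoodr} and \autoref{simply.conn}. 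Once the decomposition hypothesis of \autoref{cinese.plus} is verified, we obtain that each $U\in\mathcal{U}$ is $2$-connected in its $\mathrm{diam}(U)$-neighbourhood, and \autoref{lambda} upgrades this to $2$-connectedness of $\overline{\mathbb{B}}_r(q)$ itself, yielding the first assertion.

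The core substep is to verify the decomposition property: given $\sigma\in Z_2(\mathbb{B}_r(q))$ with $U=\mathbb{B}_r(q)\in\mathcal{U}$, we must produce finitely many $U_j=\mathbb{B}_{r_j}(q_j)\in\mathcal{U}$ meeting $U$, with $\mathrm{diam}(U_j)\le\tfrac{1}{8}\mathrm{diam}(U)$, and $2$-cycles $\sigma_j\subset U_j$ with $[\sigma]=\sum[\sigma_j]$ in $H_2(B_{\mathrm{diam}(U)/4}(U);\mathbb{Z})$. Note first that $B_r(q)$ is automatically $\delta_0$-conical: if $q\in\mathcal{S}_{\rm{gm}^+}\cap B_{10}(p_0)$ this follows from the Baire reduction \eqref{baire}, while if $q\in\mathcal{R}_{\rm{gm}^+}$ it is built into $r\in\mathcal{G}_q$. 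We then apply \autoref{lemma:packedsingular} to produce a scale $\gamma\ll r$ (chosen so that the final $\mathrm{diam}(U_j)\le\tfrac{1}{8}\mathrm{diam}(U)$ is ensured) and a uniformly bounded number of geodesic segments $\ell_1,\dots,\ell_N$ from $q$ with $(B_r(q)\setminus B_{5\gamma}(q))\cap\mathcal{S}^1_{\eps_0}\subseteq\bigcup_a B_{(2\tilde C)^{-1}\gamma}(\ell_a)$ and mutually disjoint tubes $B_\gamma(\ell_a)$. Invoking \autoref{lemma:pushin}, $\sigma$ is homologous, inside a slight enlargement of $U$, to a cycle $\sigma'$ supported in $N:=B_{10\gamma}(q)\cup\bigcup_a B_{\tilde C^{-1}\gamma}(\ell_a)$.

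We then cover $N$ by finitely many good Green-balls $\mathbb{B}_{\rho_i}(q_i)$ of radii comparable to $\gamma$: around $q$ we take one good Green-ball of radius $\rho_0\simeq 10\gamma$, and along each segment $\ell_a$ a chain of a uniformly bounded number of good Green-balls with centers spaced by at most $\rho_i/2$. Each center $q_i$ is chosen by slightly perturbing evenly spaced points on the segment so that $\rho_i\in\mathcal{G}_{q_i}$, using \autoref{cor:densegoodr}. By \autoref{simply.conn}, every pairwise intersection of adjacent members of the chain is contained in a simply connected set (e.g.\ a slightly larger good Green-ball containing both). A standard Mayer--Vietoris argument applied to this finite cover, using the simple-connectedness of the covering Green-balls and of their pairwise intersections, then delivers the required decomposition $[\sigma']=\sum_i[\sigma_i]$ with $\sigma_i\subset\mathbb{B}_{\rho_i}(q_i)$.

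The main obstacle is making sure the iterative step of \autoref{cinese.plus} closes without deterioration of the constants. For each of the smaller Green-balls $U_j=\mathbb{B}_{\rho_j}(q_j)$ produced above, two cases arise: either $\overline{\mathbb{B}}_{\rho_j}(q_j)\subset\mathcal{R}_{\rm{gm}^+}$, in which case by \autoref{prop:Gballscongm} the Green-ball is contractible and the corresponding $[\sigma_j]$ is already trivial (so it can be discarded); or $U_j\cap\mathcal{S}_{\rm{gm}^+}\neq\emptyset$, in which case the Baire condition \eqref{baire} guarantees that $B_s(q_j')$ is $\delta_0$-conical for every $s\in(0,10)$ and every $q_j'\in U_j\cap\mathcal{S}_{\rm{gm}^+}\cap B_{10}(p_0)$, which is precisely the input required to reapply the above construction at the smaller scale. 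Thus the scale-invariance provided by \eqref{baire} is what closes the iteration. The moreover statement about $B_r(p)$ being $2$-connected in $B_{Cr}(p)$ follows from the first part by sandwiching $B_r(p)$ between two good Green-balls via \autoref{cor:densegoodr}, applying the already-proved $2$-connectedness of the inner and outer Green-balls, and concluding with the retractions from \autoref{retraction} and \autoref{lambda}.
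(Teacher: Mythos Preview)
Your overall strategy matches the paper's: apply \autoref{cinese.plus} with $k=2$, push cycles into the thin singular neighbourhood via \autoref{lemma:pushin}, cover by small Green-balls, split by Mayer--Vietoris, and discard the pieces landing in $\mathcal{R}_{\rm{gm}^+}$ using \autoref{prop:Gballscongm}. However, there is a genuine gap in your Mayer--Vietoris step.

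You assert that ``every pairwise intersection of adjacent members of the chain is contained in a simply connected set (e.g.\ a slightly larger good Green-ball containing both)'' and then invoke Mayer--Vietoris ``using the simple-connectedness of \dots\ their pairwise intersections''. But containment in a simply connected set does not make the intersection itself simply connected, and it is precisely $H_1$ of the actual intersection that must vanish for the Mayer--Vietoris connecting map $H_2(A\cup B)\to H_1(A\cap B)$ to be zero. The intersection of two overlapping Green-balls is a lens-shaped region with no a priori $\pi_1$ control. The paper resolves this by a two-scale covering: along each segment $\ell_j$ one alternates ``big'' balls $B_{\gamma s}(\ell_j(2m\gamma s))$ with ``small'' balls $B_{2\gamma\bar\gamma s}(\ell_j(2m\gamma s-\gamma s))$, then \emph{enlarges each small ball to a good Green-ball} $\tilde B$ (still much smaller than the big balls). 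The cover is then organised so that every intersection occurring in the iterated Mayer--Vietoris is exactly one of these enlarged small Green-balls $\tilde B$, which are simply connected by \autoref{simply.conn}. This careful bookkeeping (and the accompanying choice $\tilde C=\bar\gamma^{-1}$, $\gamma\in(\bar\gamma^3,\bar\gamma)$ ensuring the enlarged small balls stay disjoint) is the missing ingredient in your argument.

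A secondary point: the paper takes $\mathcal{U}$ to consist only of Green-balls \emph{centered at points of $\mathcal{S}_{\rm{gm}^+}$}. After decomposing, any piece not contained in $\mathcal{R}_{\rm{gm}^+}$ is recentered at a nearby point of $\mathcal{S}_{\rm{gm}^+}$, landing back in $\mathcal{U}$. This makes the uniform conicality from \eqref{baire} available directly at the center of every $U\in\mathcal{U}$, which is cleaner than your case distinction on whether $q\in\mathcal{S}_{\rm{gm}^+}$ or $q\in\mathcal{R}_{\rm{gm}^+}$.
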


       \begin{proof}
		If $\overline{\mathbb{B}}_r(p)\subset \mathcal{R}_{\rm{gm}^+}$, then $\mathbb{B}_r(p)$ is contractible by \autoref{contractible}.
		Otherwise, take $p'\in\overline{\mathbb{B}}_r(p)\cap\mathcal{S}_{\rm{gm}^+}$. Then it holds
		\begin{equation}
		\mathbb{B}_r(p)\subseteq B_{2r}(p)\subseteq B_{4r}(p') \, ,
		\end{equation}
		and there is $s\in(4r,4C(v)r)$ such that $\mathbb{B}_s(p')$ is simply connected by \autoref{simply.conn}. Thus, it suffices to show that $\mathbb{B}_s(p')$ is 2-connected in a larger ball $B_{10s}(p')$. Then the first conclusion of \autoref{lemma:2connected} follows from the second one by applying \autoref{lambda}.

To prove the desired statement, we apply \autoref{cinese.plus} with $k=2$ and $\mathcal{U}$ the family of good simply connected Green-balls $\mathbb{B}_{r}(p')$ with $p'\in \mathcal{S}_{\rm{gm}^+}\cap B_2(p_0)$ and
$$r\in \mathcal{G}_{p'}(\eta_0',\delta_0')\cap \left(0,\frac{2-\dist(p_0,p')}{10}\right),$$
assuming $\delta_0 \le \delta_0(\delta_0',\eta_0',v)$.

\medskip

Let $\mathbb{B}_{s}(p')\in \mathcal{U}$.
Note that $p'\in\mathcal{S}_{\rm{gm}^+}\cap B_2(p_0)\subseteq\mathcal{S}_{\rm{gm}^+}^{10}$ by \eqref{baire}. We apply \autoref{lemma:packedsingular} and \autoref{lemma:pushin} in the ball $B_{2s}(p')$.
Following the notation of \autoref{lemma:packedsingular},  we fix $\overline \gamma=\overline \gamma(v)$ and $\tilde C := \overline \gamma^{-1}$. If $\delta_0 \le \delta_0(v,\overline \gamma)$ there exists $\gamma\in (\overline \gamma^3, C(v)\tilde C^{-1} \overline \gamma^3) \subset (\overline \gamma^3, \overline \gamma)$
and a set of at most $C(v)$ geodesic rays $\ell_j$ connecting $p'$ to $\partial B_{2s}(p')$ such that $B_{2s}(p')$ deformation retracts onto $B_{10 \gamma s}(p') \cup \bigcup_j B_{\gamma \overline \gamma s}(\ell_j)$. Moreover, $\{B_{\gamma s}(\ell_j)\}$ is a disjoint family in $B_{2s}(p')\setminus B_{10\gamma s}(p')$.
In particular, given a 2-cycle $\sigma$ in $\mathbb{B}_s(p')$, we can replace it with a homologous cycle supported in $B_{10 \gamma s}(p') \cup \bigcup_j B_{\gamma \overline \gamma s}(\ell_j)$.

Let $M:=\lceil\frac{1}{2 \gamma}\rceil+10$
and replace each $\ell_j$ with its intersection with $B_{(1+10\gamma)s}(p')$. It holds
\begin{equation}\label{eq:biginclusion}
\bigcup_j B_{\gamma \overline \gamma s}(\ell_j)\subset B_{10\gamma s}(p')\cup\bigcup_j\bigcup_{m=1}^{M} B_{\gamma s}(\ell_j(2m\gamma s))\cup\bigcup_j\bigcup_{m=1}^{M} B_{2\gamma\overline \gamma s}(\ell_j(2m\gamma s-\gamma s))\, .
\end{equation}
Since each $\ell_j$ is a length-minimizing geodesic, taking into account also \eqref{eq:disjoint} and neglecting $B_{10\gamma s}(p')$, we infer that the only intersecting pairs of balls at the right-hand side in \eqref{eq:biginclusion} are $B_{\gamma s}(\ell_j(2m\gamma s))$ with $B_{2\gamma \overline \gamma s}(\ell_j(2m\gamma s\pm\gamma s))$.
For each of the smaller balls $B_{2\gamma \overline \gamma s}(\ell_j(2m\gamma s-\gamma s))$, we enlarge it to a simply connected Green-ball with parameters $\eta_0', \delta_0'$ of radius at most $C(\eta_0',\delta_0',v)\bar\gamma\gamma s$, by \autoref{simply.conn} and \autoref{cor:densegoodr} again. 

We will denote by $\tilde B_{2\gamma \overline \gamma s}(\ell_j(2m\gamma s-\gamma s))$ the enlarged ball. We can assume that $\overline \gamma \le \overline \gamma(\eta_0', \delta_0')$ so that $10C(\eta_0',\delta_0',v)\gamma \overline \gamma <\gamma$. In particular, the enlarged Green balls are disjoint.

Consider the sets
\begin{equation}
\begin{split}
\mathcal{P}_0
& :=B_{11\gamma s}(p') \cup \tilde B_{2\gamma \overline \gamma s}(\ell_j(11\gamma s)) 
\\
\mathcal{P}_j
& :=\bigcup_{m = 6}^{M} B_{\gamma s}(\ell_j(2m\gamma s))\cup \bigcup_{ m = 6 }^{M} \tilde B_{2\gamma \overline \gamma s}(\ell_j(2m\gamma s-\gamma s))\, . 
\end{split}
\end{equation}
It is elementary to check that the support of $\sigma$ is contained in $\cup_j\mathcal{P}_j$. Moreover, $\mathcal{P}_j\cap \mathcal{P}_{j'}=\emptyset$, for each $1\le j<j'$, and $\mathcal{P}_0\cap\mathcal{P}_j=\tilde B_{2\gamma \overline \gamma s}(\ell_j(11\gamma s))$ is simply connected, for all $j\ge1$. Thus, a straightforward application of Mayer--Vietoris shows that $\sigma$ is homologous to a sum $\sum_{j\ge0}\sigma_j$, where each 2-cycle $\sigma_j$ is supported in $\mathcal{P}_j$.

\medskip

For each $j\ge0$, we can consider the open sets
\begin{equation}
\tilde B_{2\gamma \overline \gamma s}(\ell_j(2m\gamma s-\gamma s))\cup B_{\gamma s}(\ell_j(2m\gamma s))\cup \tilde B_{2\gamma \overline \gamma s}(\ell_j(2m\gamma s+\gamma s))
\end{equation}
for $m\ge 6$, as well as $B_{11\gamma s}(p')\cup\bigcup_{j>0}\tilde B_{2\gamma\overline \gamma s}(\ell_j(11\gamma s))$ for $m=5$.
Again, any two of them, corresponding to two indices $m<m'$, are either disjoint (if $m+1<m'$) or intersect on the simply connected open set $\tilde B_{2\gamma\overline \gamma s}(\ell_j(2m\gamma s+\gamma s))$ (if $m+1=m'$).
Thus, up to boundaries, we can further split each $\sigma_j$ into a sum of 2-cycles, each supported in one of these sets.

If $\overline \gamma \le \overline \gamma(v,\eta_0',\delta_0')$, each of these sets is included in a simply connected Green-type ball of radius $\le C(v)\gamma s$.
If the closure of the latter is included in $\mathcal{R}_{\rm{gm}^+}$, then the corresponding cycle is homologically trivial by \autoref{contractible}.
Otherwise, we can replace it with a larger simply connected Green-type ball $\mathbb{B}\in\mathcal{U}$, centered at a point $p''\in \mathcal{S}_{\rm{gm}^+}$.
The conclusion follows from \autoref{cinese.plus}.
\end{proof}
	
The very same argument as in the proof of \autoref{lemma:2connected}, shifting up by one all the dimensions and relying on the $2$-connectedness of Green-balls rather than on their simple-connectedness in the application of \autoref{cinese.plus}, yields the following.

\begin{lemma}\label{lemma:3con}
For any $p\in B_1(p_0)$ and any $r\in(0,C^{-1})$, $B_r(p)$ is $3$-connected in $B_{Cr}(p)$, for some $C=C(v)$. Moreover, if $r\in\mathcal{G}_p\cap(0,C^{-1})$ then $\mathbb{B}_r(p)$ and its closure are $3$-connected, up to decreasing $\delta_0=\delta_0(v)$ and $\eta_0=\eta_0(v)$.
\end{lemma}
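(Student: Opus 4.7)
The plan is to mimic the proof of \autoref{lemma:2connected} verbatim, with every dimension shifted up by one, exploiting the fact that we just established the previous inductive step, namely that small good Green-balls are $2$-connected. Concretely, I will apply \autoref{cinese.plus} with $k=3$, taking $\mathcal{U}$ to be the family of \emph{$2$-connected} good Green-balls $\mathbb{B}_r(p')$ with $p'\in\mathcal{S}_{\rm{gm}^+}\cap B_2(p_0)$ and radius $r\in\mathcal{G}_{p'}(\eta_0',\delta_0')\cap(0,\tfrac{2-\dist(p_0,p')}{10})$, where $\eta_0',\delta_0'$ are the thresholds from \autoref{lemma:2connected}. The fact that this family consists of $2$-connected sets (rather than only simply connected ones) is precisely what is needed for the Mayer--Vietoris portion of the argument at one dimension higher.

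Given $\mathbb{B}_s(p')\in\mathcal{U}$ and a $3$-cycle $\sigma$ in $\mathbb{B}_s(p')$, I repeat the geometric decomposition used in the proof of \autoref{lemma:2connected}: apply \autoref{lemma:packedsingular} and \autoref{lemma:pushin} in $B_{2s}(p')$ with parameters $\overline\gamma,\tilde C$ to be chosen, to homotope $\sigma$ into a cycle supported in $B_{10\gamma s}(p')\cup\bigcup_j B_{\gamma\overline\gamma s}(\ell_j)$ along the $C(v)$ geodesic segments $\ell_j$. I cover this neighbourhood along each segment by an alternating chain of balls $B_{\gamma s}(\ell_j(2m\gamma s))$ and enlarged $2$-connected Green-balls $\tilde B_{2\gamma\overline\gamma s}(\ell_j(2m\gamma s - \gamma s))$, where the enlargement now uses \autoref{lemma:2connected} (instead of \autoref{simply.conn}) to ensure each $\tilde B$ is itself $2$-connected. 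Because of the disjointness coming from \eqref{eq:disjoint}, the only nontrivial pairwise intersections are of the form (big ball)$\cap$(small $2$-connected Green-ball).

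Now I apply Mayer--Vietoris twice, exactly as in the proof of \autoref{lemma:2connected}: the key observation is that for the $H_3$ Mayer--Vietoris sequence
\begin{equation*}
H_3(U)\oplus H_3(V)\to H_3(U\cup V)\to H_2(U\cap V)\to H_2(U)\oplus H_2(V),
\end{equation*}
the intersections $U\cap V$ are either empty or equal to one of the $2$-connected enlarged Green-balls $\tilde B$; by Hurewicz these have $H_2=0$. This allows the cycle $\sigma$ to be written, up to boundary, as a sum of $3$-cycles $\sigma_j$ each supported in one of the smaller (2-connected) sets from the covering, all of diameter $\le C(v)\gamma s$, which can be made $<\frac{1}{8}\operatorname{diam}(\mathbb{B}_s(p'))$ by choosing $\overline\gamma\le\overline\gamma(v,\eta_0',\delta_0')$ small enough. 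For any sub-cycle whose supporting small Green-ball lies in $\mathcal{R}_{\rm{gm}^+}$, \autoref{contractible} (or \autoref{prop:Gballscongm}) gives contractibility and hence homological triviality; otherwise I replace it by a larger Green-ball centered at a point of $\mathcal{S}_{\rm{gm}^+}$ and lying in $\mathcal{U}$, which exists by the uniform conicality \eqref{baire} of points in $\mathcal{S}_{\rm{gm}^+}\cap B_{10}(p_0)$.

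The hypotheses of \autoref{cinese.plus} are thus verified, yielding that each $\mathbb{B}_s(p')\in\mathcal{U}$ is $3$-connected inside a $\operatorname{diam}$-neighbourhood. For a general $\mathbb{B}_r(p)$ with $p\in B_1(p_0)$, if $\overline{\mathbb{B}}_r(p)\subset\mathcal{R}_{\rm{gm}^+}$ then \autoref{contractible} (or its generalized-manifold version \autoref{prop:Gballscongm}) gives contractibility directly; otherwise pick $p'\in\overline{\mathbb{B}}_r(p)\cap\mathcal{S}_{\rm{gm}^+}$ and apply the previous step to a suitable larger $\mathbb{B}_s(p')\in\mathcal{U}$ containing $\mathbb{B}_r(p)$, then conclude via \autoref{lambda}. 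The second statement about $B_r(p)$ being $3$-connected in $B_{Cr}(p)$ follows from the first one combined with \autoref{cor:densegoodr}. The only real obstacle is bookkeeping: keeping track of the $2$-connectedness of the intersections in the Mayer--Vietoris step, which is exactly what the inductive input from \autoref{lemma:2connected} provides.
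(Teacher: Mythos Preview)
Your proposal is correct and follows essentially the same approach as the paper, which simply remarks that the argument of \autoref{lemma:2connected} goes through verbatim with all dimensions shifted up by one, using the $2$-connectedness of good Green-balls from \autoref{lemma:2connected} in place of simple-connectedness in the application of \autoref{cinese.plus}. Your explicit identification of the Mayer--Vietoris step (where $H_2$ of the intersections vanishes because the enlarged Green-balls are now $2$-connected) is exactly the point the paper has in mind.
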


	\subsection{Local relative homology: proof of \autoref{prop:locrelhom}}\label{subsec:locrelhom}

	The goal of this subsection is to prove \autoref{prop:locrelhom}. Namely we will show that the local relative homology at each point $p\in B_1(p_0)$ coincides with the one of $(\setR^3,\setR^3\setminus\{0\})$.

	The combination of \autoref{prop:3ctoc} and \autoref{prop:locrelhom} leads to a contradiction to the assumption $p_0\in\mathcal{S}_{\rm{gm}^+}$ and hence completes the proof of \autoref{prop:genmanrec}.
	\medskip
	
	The main new tool that we need for the proof of \autoref{prop:locrelhom} is the following.
	
	\begin{lemma}\label{punct}
	For any $p\in B_1(p_0)$ and for any $r\in\mathcal{G}_p$ small enough (depending on $p$),
		the punctured Green-ball $\overline{\mathbb{B}}_r(p)\setminus\{p\}$ deformation retracts onto its boundary $\mathbb{S}_r(p)$.
	\end{lemma}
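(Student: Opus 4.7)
The plan is to build the retraction one annular shell at a time. Using \autoref{cor:densegoodr} I will fix a sequence $r = r_0 > r_1 > r_2 > \cdots$ in $\mathcal{G}_p$ with $r_i \downarrow 0$, and consider the compact annular regions $A_i := \{q : r_{i+1} \le b_p(q) \le r_i\}$, noting that $\overline{\mathbb{B}}_r(p)\setminus\{p\} = \bigcup_i A_i$ and that consecutive annuli meet along a good Green-sphere. The goal is to produce, for each $i$, a strong deformation retraction $H^i \colon [0,1] \times A_i \to A_i$ of $A_i$ onto its outer boundary $\mathbb{S}_{r_i}(p)$, and then to splice the $H^i$'s together with an appropriate time-reparametrization.

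For the deformation retraction of $A_i$ onto $\mathbb{S}_{r_i}(p)$, I will first show that the inclusion $\iota_i \colon \mathbb{S}_{r_i}(p) \hookrightarrow A_i$ is a homotopy equivalence. Using a slight open thickening of $A_i$ on the inner side (produced by \autoref{retraction}) together with a slight thickening of $\overline{\mathbb{B}}_{r_{i+1}}(p)$, Mayer--Vietoris and van Kampen applied to the decomposition $\overline{\mathbb{B}}_{r_i}(p) \simeq A_i \cup \overline{\mathbb{B}}_{r_{i+1}}(p)$, together with the contractibility of the two closed Green-balls (\autoref{prop:3ctoc}) and the fact that the common ``boundary'' $\mathbb{S}_{r_{i+1}}(p)$ is homeomorphic to $S^2$ (\autoref{mfd.with.bdry}), compute $\pi_1(A_i)=0$ and $H_*(A_i) \cong H_*(S^2)$. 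Since $A_i$ is a compact metric space of covering dimension at most $3$ (\autoref{lemma:dimcov}) and is locally contractible (by the uniform local contractibility of $B_1(p_0)$ established in \autoref{prop:3ctoc}), it is an ANR, and a fortiori has the homotopy type of a CW complex; Whitehead's theorem then upgrades the weak equivalence to a homotopy equivalence. Finally, because $\mathbb{S}_{r_i}(p)\cong S^2$ is a closed ANR subspace of the ANR $A_i$ and the inclusion is a homotopy equivalence, the pair has the homotopy extension property, so the classical theorem of Borsuk--Hu yields the desired strong deformation retraction $H^i$.

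To produce the global deformation retraction, I will choose times $0 = T_0 < T_1 < \cdots \to 1$ and define $F \colon [0,1] \times (\overline{\mathbb{B}}_r(p)\setminus\{p\}) \to \overline{\mathbb{B}}_r(p)\setminus\{p\}$ so that a point $q \in A_i$ remains stationary on $[0,T_i]$, is pushed by a suitably reparametrized $H^i$ onto $\mathbb{S}_{r_i}(p)$ during $[T_i,T_{i+1}]$, and is then transported through $A_{i-1}, \dots, A_0$ by the successive $H^{i-1}, \dots, H^0$ so as to land on $\mathbb{S}_r(p)$ at time $t=1$; points on $\mathbb{S}_r(p)$ are fixed throughout. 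Continuity of $F$ on $[0,1] \times (\overline{\mathbb{B}}_r(p)\setminus\{p\})$ follows because every $q_0 \ne p$ lies in only finitely many $A_i$'s, while the excluded point $p$ is precisely where the accumulated time parameter would degenerate. The main obstacle is the homotopy equivalence $\iota_i \colon \mathbb{S}_{r_i}(p)\hookrightarrow A_i$: the annulus $A_i$ may contain non-manifold points of $\mathcal{S}_{\rm gm^+}$, so there is no smooth normal flow of $b_p$ to rely on, and the argument rests entirely on the two recently established global facts about Green-balls in $B_1(p_0)$ -- their contractibility and the spherical topology of their boundaries -- together with ANR theory to pass from an abstract homotopy equivalence to an actual deformation retraction.
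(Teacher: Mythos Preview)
Your shell-by-shell strategy matches the paper's, but there is a real gap in the key step: showing that the inclusion $\iota_i\colon\mathbb{S}_{r_i}(p)\hookrightarrow A_i$ of the \emph{outer} sphere is a homotopy equivalence. Your Mayer--Vietoris decomposition $\overline{\mathbb{B}}_{r_i}(p)\simeq A_i\cup\overline{\mathbb{B}}_{r_{i+1}}(p)$ has the \emph{inner} sphere $\mathbb{S}_{r_{i+1}}(p)$ as intersection, and the exact sequence (together with van Kampen and the contractibility of both Green-balls) yields $\pi_1(A_i)=0$ and that the inclusion of the \emph{inner} sphere induces $H_*(\mathbb{S}_{r_{i+1}}(p))\xrightarrow{\cong}H_*(A_i)$. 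So $A_i\simeq S^2$ abstractly; but this says nothing about the map $\iota_{i,*}\colon H_2(\mathbb{S}_{r_i}(p))\to H_2(A_i)$, which a priori could be multiplication by any integer $d$. Whitehead's theorem only applies once you know $d=\pm1$, and nothing in your argument forces this. The paper fills this gap with the missing ingredient: it first constructs a retraction $\rho\colon A_i\to\mathbb{S}_{r_i}(p)$ via an obvious variant of \autoref{retraction} (which requires $r_i/r_{i+1}$ close to $1$), so that $\rho_*\circ\iota_{i,*}=\mathrm{id}$ would pin down $d=\pm1$. In fact, once this retraction is in hand, the paper bypasses the Mayer--Vietoris/ANR/Borsuk route entirely: it homotopes $\rho$ to the identity directly, using the same nerve-theoretic construction behind the proof of \autoref{prop:3ctoc}, and then post-composes with a retraction onto $A_i$ to keep the homotopy inside $A_i$.

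There is also a smaller inconsistency in your splicing: a point $q\in\mathbb{S}_{r_i}(p)=A_i\cap A_{i-1}$ receives conflicting instructions (stationary until $T_i$ as a member of $A_i$, but moved by $H^{i-1}$ starting at $T_{i-1}<T_i$ as a member of $A_{i-1}$). The paper sidesteps this by building deformation retractions of \emph{nested} sets $\overline{\mathbb{B}}_r(p)\setminus\mathbb{B}_{r_{j+1}}(p)\to\overline{\mathbb{B}}_r(p)\setminus\mathbb{B}_{r_j}(p)$, each fixing everything outside $A_j$, and then taking the infinite composition $\rho_0\circ\rho_1\circ\cdots$, for which well-definedness is immediate.
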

	
	\begin{proof}
		Given two radii $r>r'$, both in $\mathcal{G}_p$, if $r/r'$ is sufficiently close to $1$ we can build
		a deformation retraction
		\begin{equation}\label{eq:retrcrown}
		\rho:\overline{\mathbb{B}}_r(p)\setminus\mathbb{B}_{r'}(p)\to\mathbb{S}_r(p)\, .
		\end{equation}
		Indeed, the existence of a retraction $\rho$ follows from an obvious variant of \autoref{retraction}. Proceeding as in the proof of \autoref{prop:3ctoc}, we can find
		a homotopy between $\rho$ and the identity on $U:=\overline{\mathbb{B}}_r(p)\setminus\mathbb{B}_{r'}(p)$ (equal to the identity on $\mathbb{S}_r(p)$ at all times), using the local uniform contractibility that was obtained in \autoref{prop:3ctoc}. Finally,
		by composing the homotopy with a retraction onto $U$, we obtain a homotopy among maps $U\to U$. Thus $\rho$ is a deformation retraction.
		
		We then argue as in the proof of \autoref{lambda}.
		We consider a sequence of radii $r_0=r>r_1>\dots>r_j\to0$ with $r_j\in\mathcal{G}_p$ and $r_j/r_{j+1}$ sufficiently close to $1$ for each $j\in\mathbb{N}$. The existence of a sequence with these properties for $r\in \mathcal{G}_p$ sufficiently small follows from \autoref{cor:densegoodr}.
		These yield deformation retractions
		\begin{equation}
		\overline{\mathbb{B}}_r(p)\setminus\mathbb{B}_{r_{j+1}}(p)\to
       \overline{\mathbb{B}}_r(p)\setminus\mathbb{B}_{r_{j}}(p)\, ,
		\end{equation}
		for each $j\in\setN$.
		The infinite composition
		\begin{equation}
		\rho_0\circ\rho_1\circ\cdots\circ\rho_j\circ\cdots 
		\end{equation}
		is well-defined and gives the desired map.
	\end{proof}

	\begin{proof}[Proof of \autoref{prop:locrelhom}]
		By excision, for any $p\in X$ and for each $k\in\setN$ we have 
		\begin{equation}
		H_k(X,X\setminus\{p\})=H_k(\overline{\mathbb B}_r(p), \overline{\mathbb B}_r(p)\setminus\{p\})
		\end{equation}
		for a small Green-type ball $\mathbb{B}_r(p)$ such that \autoref{punct} applies. Since $\overline{\mathbb{B}}_r(p)$ is contractible by \autoref{prop:3ctoc}, $\overline{\mathbb{B}}_r(p)\setminus \{p\}$ deformation retracts onto $\mathbb{S}_r(p)$ by \autoref{punct}, and $\mathbb{S}_r(p)$ is homeomorphic to $S^2$ by \autoref{mfd.with.bdry}, by the long exact sequence of pairs it holds
		\begin{equation}
		 H_k(\overline{\mathbb B}_r(p),\overline{\mathbb B}_r(p)\setminus\{p\})\cong\tilde H_{k-1}(\overline{\mathbb B}_r(p)\setminus\{p\})\cong\tilde H_{k-1}(S^2)\, ,
		\end{equation}
		for each $k\in\setN$, where $\tilde H$ denotes the reduced homology. The latter homology group is isomorphic to $\Z$ for $k=3$ and to $0$ otherwise. Hence it is isomorphic to $H_k(\setR^3,\setR^3\setminus\{0\})$.
	\end{proof}

\section{Manifold recognition: completion of the proof of \autoref{thm:RCDtopma}}\label{sec:topman}

The goal of this section is to upgrade the conclusion of \autoref{prop:genmanrec} from generalized $3$-manifold to topological $3$-manifold and hence to complete the proof of \autoref{thm:RCDtopma}. 

\medskip

The strategy will be to further exploit the regularity and richness of good Green-balls and spheres to see that the resolution \autoref{thm:recog2Thick} and the recognition \autoref{thm:recog2DR} can be applied under the assumptions of \autoref{thm:RCDtopma}.

The proof has two main steps, which we outline under the additional assumption that $(X,\dist)$ is compact. The general case will be discussed below as it requires an additional argument.

In the first step we will show that any compact $\RCD(-2,3)$ space $(X,\dist,\haus^3)$ as in the assumptions of \autoref{thm:RCDtopma} admits a resolution in the sense of \autoref{def:resolvable}. The existence of the resolution follows from \autoref{thm:recog2Thick} via the following.

\begin{proposition}\label{prop:singGDPO}
Let $(X,\dist,\haus^3)$ be an $\RCD(-2,3)$ space such that all tangent cones have cross-section homeomorphic to $S^2$. Then the non-manifold set $\mathcal{S}_{\rm{top}}\subset X$ has general-position dimension one. 
\end{proposition}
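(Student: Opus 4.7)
The plan is to show, for any $\eps>0$ and any continuous $f:\overline D^2\to X$, the existence of an approximation $g:\overline D^2\to X$ with $\sup\dist(f,g)<\eps$ and $g(\overline D^2)\cap\mathcal S_{\rm top}$ finite (hence zero-dimensional, as required by \autoref{def:GPDO}). The essential tool is that, under the hypothesis on cross-sections, every good Green-sphere $\mathbb{S}_r(p)$ from \autoref{main.green.sphere} is homeomorphic to $S^2$, and because $\mathcal S_{\rm top}\subseteq\mathcal S^1_{\eps_0}\subseteq X\setminus\mathcal R_{\eps_0}$, \autoref{lemma:finiteint} ensures that $\mathbb{S}_r(p)\cap\mathcal S_{\rm top}$ is always a finite set. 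These Green-spheres will serve as fences along which one can deform the image of $\overline D^2$ so as to isolate the obstruction points.

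First I would cover the compact image $f(\overline D^2)$ by finitely many good Green-balls $\mathbb{B}_{r_k}(p_k)$, $k=1,\dots,N$, of radii $r_k<\eps/100$, with each $p_k$ chosen in the manifold part $\mathcal R_{\rm top}$ (open and dense by \cite{KapovitchMondino}) and each boundary Green-sphere $\mathbb{S}_{r_k}(p_k)$ homeomorphic to $S^2$ and meeting $\mathcal S_{\rm top}$ in a finite set. Using \autoref{cor:densegoodr}, the radii can be chosen in generic position so that any auxiliary Green-spheres added later intersect these ones transversally. I would then triangulate $\overline D^2$ so finely that each closed $2$-simplex $\sigma$ is mapped by $f$ into a single $\mathbb{B}_{r_{k(\sigma)}}(p_{k(\sigma)})$, setting up an inductive construction of $g$.

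The map $g$ is built skeleton-by-skeleton. On the $0$-skeleton I perturb each $f(v)$ to a nearby point $g(v)\in\mathcal R_{\rm top}$ lying inside the intersection of the relevant Green-balls and off every chosen Green-sphere. On the $1$-skeleton I connect the endpoints of each edge by an arc supported in $\mathcal R_{\rm top}\cap\mathbb{B}_{r_{k(\sigma)}}(p_{k(\sigma)})$: this is possible because $\mathcal R_{\rm top}$ is an open topological $3$-manifold and $\mathcal S_{\rm top}$ has covering dimension at most one (bounded by its Hausdorff dimension, cf.\ the argument in \autoref{lemma:dimcov}), so after removing $\mathcal S_{\rm top}$ a Green-ball remains path-connected. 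In a PL chart inside $\mathcal R_{\rm top}$ one can moreover arrange the arc to be transverse to all Green-spheres in play, hence disjoint from $\mathcal S_{\rm top}$.

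The central step, and the main obstacle, is the extension over $2$-simplices. For each $\sigma$, the loop $g(\de\sigma)\subset\mathcal R_{\rm top}\cap\mathbb{B}_{r_{k(\sigma)}}(p_{k(\sigma)})$ bounds a continuous disk in $\overline{\mathbb B}_{r_{k(\sigma)}}(p_{k(\sigma)})$ by the contractibility of the closed Green-ball (\autoref{contractible}, or \autoref{prop:Gballscongm} once the generalized manifold structure of \autoref{prop:genmanrec} is invoked). A naive filling, however, may meet $\mathcal S_{\rm top}$ in an uncontrolled set, and the task is to refine it so that its image meets $\mathcal S_{\rm top}$ only finitely. My plan is to process each problematic point $q\in\mathcal S_{\rm top}$ by surrounding it with a nested sequence of good Green-spheres $\mathbb{S}_{s_j}(q)$ (via \autoref{lemma:infveryc} and \autoref{cor:densegoodr}), each homeomorphic to $S^2$ with only finitely many points in $\mathcal S_{\rm top}$, and to push the partially constructed disk across these shells using the collared retractions from \autoref{retraction} together with PL approximation in the manifold charts of $\mathcal R_{\rm top}$ between consecutive shells. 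A diagonal exhaustion across all obstruction points and scales should then yield $g$ with the desired finite intersection property. The hard part is precisely this reduction: $\mathcal S_{\rm top}$ is only known to have Hausdorff dimension less than one and can accumulate in complicated patterns, so the argument must exploit in an essential way both the $S^2$-topology of good Green-spheres and their finite intersection with $\mathcal S_{\rm top}$.
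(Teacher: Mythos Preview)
Your skeleton-by-skeleton plan is broadly in the right spirit, but the heart of the argument---the extension over $2$-simplices---remains a sketch rather than a proof, and the mechanism you propose is not the one that works. You describe ``processing each problematic point $q\in\mathcal S_{\rm top}$'' with nested Green-spheres and a ``diagonal exhaustion across all obstruction points and scales.'' But the set of problematic points need not be countable or isolated, and there is no indication why such an infinite process would converge to a continuous map with only finite intersection. The paper's device is quite different and finite: one covers $f(\overline D)\cap\mathcal S_{\rm top}$ with finitely many good Green-balls $\overline{\mathbb B}_i$ (centered at points of $\mathcal S_{\rm top}$, not in $\mathcal R_{\rm top}$ as you suggest), perturbs the disk so that the \emph{centers} $p_i$ are off the image (using that the punctured Green-ball $\overline{\mathbb B}_i\setminus\{p_i\}$ is simply connected, from \autoref{punct}), and then composes with the retractions $\rho_i:\overline{\mathbb B}_i\setminus\{p_i\}\to\mathbb S_i$ extended to the identity outside. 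After this single finite composition the image lies in $\bigcup_i\mathbb S_i$ union a region already disjoint from $\mathcal S_{\rm top}$, and the finiteness of $\mathbb S_i\cap\mathcal S_{\rm top}$ finishes the job. The retraction you cite, \autoref{retraction}, is only the collar retraction onto the closed ball; the crucial ingredient is the punctured-ball-to-sphere retraction of \autoref{punct}.

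There is also a smaller gap in your $1$-skeleton step. You argue that $\mathcal S_{\rm top}$ does not disconnect a Green-ball because it has covering dimension at most one inside a $3$-manifold; but the Green-ball is not known to be a manifold near $\mathcal S_{\rm top}$, so Alexander-duality-type reasoning is unavailable. The paper establishes non-disconnection by a direct argument exploiting Green-spheres (\autoref{not.disconnect}).
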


The general-position dimension one property was introduced in \autoref{def:GPDO}, borrowing the terminology from \cite{Thickstunb}. 
The statement of \autoref{prop:singGDPO}  is completely nontrivial because after removing the non-manifold set (which has codimension $2$ a priori) $X$ might have a very complicated topology. The proof will hinge on \autoref{main.green.sphere} (vi) as we will prove that any continuous map $f:\overline{D}\to X$ can be slightly perturbed so as to have image contained in a finite union of Green-spheres in a neighbourhood of $\mathcal{S}_{\rm{top}}$.

\medskip

In the second step we shall see that the assumptions of the recognition \autoref{thm:recog2DR} are met by proving the following.

\begin{proposition}\label{prop:critDR}
Let $(X,\dist,\haus^3)$ be an $\RCD(-2,3)$ space such that all tangent cones have cross-section homeomorphic to $S^2$.
For every $p\in X$ there exist arbitrarily small neighbourhoods $U\ni p$ and homeomorphisms $f:S^2\to f(S^2)\subset U\setminus \{p\}$ such that:
\begin{itemize}
\item[(i)] $U\setminus \{p\}$ is simply connected;
\item[(ii)] $f:S^2\to U$ is homotopically trivial; 
\item[(iii)] $f:S^2\to U\setminus\{p\}$ is not homotopically trivial;
\item[(iv)] $f(S^2)$ is $1$-LCC (see \autoref{def:kcoco} for the relevant terminology). 
\end{itemize}
\end{proposition}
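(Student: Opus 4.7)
The plan is to take $U$ to be a small open good Green-ball centered at $p$ and $f$ to be the inclusion of a concentric good Green-sphere of smaller radius. By \autoref{cor:densegoodr} we can find arbitrarily small $r\in\mathcal{G}_p$; by \autoref{main.green.sphere} (vi) and the hypothesis on tangent cones at $p$, for such $r$ the Green-sphere $\mathbb{S}_r(p)$ is homeomorphic to $S^2$, and so is $\mathbb{S}_s(p)$ for every $s\in\mathcal{G}_p\cap(0,r)$. Set $U:=\mathbb{B}_r(p)$ and let $f:S^2\xrightarrow{\cong}\mathbb{S}_s(p)\subset U\setminus\{p\}$.

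Conditions (i)--(iii) will be checked by a short homotopy-and-homology argument. At this point in the proof, after \autoref{prop:genmanrec}, the machinery of Section \ref{sec:genman2} applies at every point of $X$: closed good Green-balls are contractible (\autoref{prop:3ctoc}) and the punctured closed Green-balls $\overline{\mathbb{B}}_t(p)\setminus\{p\}$ deformation retract onto $\mathbb{S}_t(p)$ (\autoref{punct}). Exhausting $U$ as an ascending union of closed balls $\overline{\mathbb{B}}_{r_i}(p)$, $r_i\in\mathcal{G}_p$ with $r_i\uparrow r$, any loop in $U\setminus\{p\}$ is compact and hence lies in some $\overline{\mathbb{B}}_{r_i}(p)\setminus\{p\}\simeq S^2$; this gives (i) as well as the vanishing $H_k(U)=0$ for all $k\ge 1$ (via continuity of singular homology along the filtration). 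Statement (ii) is immediate since $f(S^2)\subset\overline{\mathbb{B}}_s(p)$ and the latter is contractible. For (iii), the long exact sequence of $(U,U\setminus\{p\})$, combined with \autoref{prop:locrelhom} and the above vanishing, gives a connecting isomorphism $\partial:H_3(U,U\setminus\{p\})\xrightarrow{\sim}H_2(U\setminus\{p\})$ with both groups equal to $\Z$; the fundamental class of the compact generalized $3$-manifold-with-boundary $\overline{\mathbb{B}}_s(p)$ (see \autoref{prop:Gballscongm}) represents a generator of $H_3$, and its image under $\partial$ is $[\mathbb{S}_s(p)]$, so $f$ induces an isomorphism on $H_2$. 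Simple-connectedness from (i) and Hurewicz then give (iii).

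The main technical step is (iv), the $1$-LCC property of $\mathbb{S}_s(p)$ in $X$. Given $x\in X$ and a neighbourhood $W$ of $x$, we must find $W'\subset W$ such that every continuous map $S^1\to W'\setminus\mathbb{S}_s(p)$ extends to a continuous map $\overline D^2\to W\setminus\mathbb{S}_s(p)$. If $x\notin\mathbb{S}_s(p)$ this reduces at once to local simple-connectivity of $X$, which holds because $X$ is a generalized $3$-manifold and hence locally contractible by \autoref{def:genmanifold}. The substantial case $x\in\mathbb{S}_s(p)$ is handled by adapting the local tameness argument used in the proof of \autoref{prop:GballinR}. By continuity of $b_p$ and connectedness of $S^1$, any loop $\gamma:S^1\to W'\setminus\mathbb{S}_s(p)=W'\cap\{b_p\ne s\}$ is entirely contained either in $\{b_p<s\}$ or in $\{b_p>s\}$. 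In the first case, set $m:=\max_{S^1}b_p\circ\gamma<s$, pick $s'\in\mathcal{G}_p\cap(m,s)$ with $s-s'$ as small as we wish, extend $\gamma$ to a disk $\Gamma:\overline D^2\to W$ by local contractibility, and push it inward via the retraction $\rho_{s'}:\overline{\mathbb{B}}_{(1+\tau)s}(p)\to\overline{\mathbb{B}}_{s'}(p)$ from \autoref{retraction}. The retraction is the identity on $\gamma(S^1)\subset\overline{\mathbb{B}}_{s'}(p)$, sends $\Gamma(\overline D^2)$ into $\{b_p\le s'\}\subset X\setminus\mathbb{S}_s(p)$, and by the displacement bound derived from \eqref{eq:contrretra} together with the Lipschitz bound on $b_p$ moves points by at most $C(v)(\diam(W')+(s-s'))$; hence for $W'$ small and $s'$ close to $s$, $\rho_{s'}\circ\Gamma$ lands in $W$. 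The symmetric case $\gamma\subset\{b_p>s\}$ is handled via an outward analogue of \autoref{retraction} that retracts a two-sided neighbourhood of $\mathbb{S}_s(p)$ onto the superlevel set $\{b_p\ge s''\}$ for $s''>s$ close to $s$; this variant is produced by the very same nerve construction as \autoref{retraction}, which only relies on the local uniform $2$-connectivity of good Green-spheres provided by \autoref{main.green.sphere} (v).

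The hardest part is (iv): besides setting up the symmetric outward analogue of \autoref{retraction}, one has to calibrate the choice of $W'$, the extension of $\gamma$ via local contractibility, and the choice of $s'$ (or $s''$) so that the displacement estimate from \eqref{eq:contrretra} uniformly traps the retracted disk inside $W$, regardless of how close $\gamma$ approaches $\mathbb{S}_s(p)$ from either side. Once \autoref{prop:critDR} is established, combining it with the resolvability of $X$ supplied by \autoref{prop:singGDPO} and \autoref{thm:recog2Thick} and invoking \autoref{thm:recog2DR} will promote $X$ from generalized $3$-manifold to topological $3$-manifold, completing the proof of \autoref{thm:RCDtopma} in the compact case.
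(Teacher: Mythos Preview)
Your proposal is correct and follows essentially the same scheme as the paper: take $U=\mathbb{B}_t(p)$ and $f$ a parametrization of a smaller good Green-sphere $\mathbb{S}_s(p)$, derive (i)--(iii) from contractibility of good Green-balls and the deformation retraction of the punctured ball onto the Green-sphere (\autoref{prop:Gballscongm}, \autoref{punct}), and handle (iv) via the retractions onto sub- and super-level sets.

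The only notable difference is in the packaging of (iv). You follow the local argument of \autoref{prop:GballinR}: extend a small loop to a disk by local contractibility of the generalized manifold $X$, then push the disk off $\mathbb{S}_s(p)$ with the retraction of \autoref{retraction} (or its outward analogue), controlling displacement via \eqref{eq:contrretra}. The paper instead observes directly that for $a<s<b$ in $\mathcal{G}_p$ close to $s$, the thin annular regions $\mathbb{B}_s(p)\setminus\overline{\mathbb{B}}_a(p)$ and $\mathbb{B}_b(p)\setminus\overline{\mathbb{B}}_s(p)$ deformation retract onto intermediate Green-spheres $\mathbb{S}_{a'}(p)$, $\mathbb{S}_{b'}(p)\cong S^2$, hence are simply connected; together with the local uniform contractibility of Green-spheres (\autoref{main.green.sphere}(v)) this gives $1$-LCC without first filling the loop in $X$. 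Both routes rest on the same tool (the retractions of \autoref{retraction} with small displacement), so the difference is cosmetic. One small caution: the displacement you claim, $C(v)(\operatorname{diam}(W')+(s-s'))$, does not literally follow from \eqref{eq:contrretra} as written; what you actually need (and what the paper uses in \autoref{prop:GballinR}) is $\dist(x,\rho_{s'}(x))\le (1+C(v))\dist(x,\overline{\mathbb{B}}_{s'}(p))$, which is an immediate consequence of \eqref{eq:contrretra} and suffices for the argument.
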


We shall see that a good choice for the maps $f$ as in the statement is given by the parametrizations of a family of good Green-spheres around each point.

\medskip

Once \autoref{prop:singGDPO} and \autoref{prop:critDR} are proven, the proof of \autoref{thm:RCDtopma} can be easily completed.
Indeed by \autoref{prop:genmanrec} $(X,\dist)$ is a generalized $3$-manifold. By \autoref{prop:singGDPO} and the resolution \autoref{thm:recog2Thick}, $(X,\dist)$ is resolvable. Note that this is the step of the argument where the compactness of $(X,\dist)$ is seemingly needed for the application of \autoref{thm:recog2Thick}.
By \autoref{prop:critDR} and the recognition \autoref{thm:recog2DR}, $(X,\dist)$ is a $3$-manifold.

\medskip

In the case where $(X,\dist)$ is not necessarily compact, we need a slight extension of the strategy outlined above. Roughly speaking, we are going to show that the very same statements hold for the spaces obtained by collapsing the Green-sphere of a good Green-ball to a point, for each point and each sufficiently small good radius, even though these spaces do not have any synthetic lower Ricci curvature bound.

\medskip

Given $p\in X$ and $r\in \mathcal{G}_p$ sufficiently small we shall consider the metric space $(\widehat{\mathbb{B}}_r(p),\widehat{\dist})$, where
\begin{equation}
\widehat{\mathbb{B}}_r(p):=\overline{\mathbb{B}}_r(p)/_{\sim}\, ,
\end{equation}
with $x \sim y$ if and only if $x=y\in \mathbb{B}_r(p)$ or $x,y\in \mathbb{S}_r(p)$, and $\widehat{\dist}$ is the usual quotient distance:
\begin{equation}
\widehat{\dist}([x],[y]):=\inf\sum_{j=1}^k\dist(x_j,y_j)\, ,
\end{equation}
where the infimum runs among all $k$-tuples of pairs $(x_j,y_j)$ such that $x_1\in [x]$, $y_k\in [y]$ and $y_j\sim x_{j+1}$ for each $j$. 
It is elementary to verify that $\widehat{\dist}$ is a distance giving a length metric space and inducing the quotient topology on $\widehat{\mathbb{B}}_r(p)$. 

We are going to argue that $\widehat{\mathbb{B}}_r(p)$ is a topological $3$-manifold for each $p\in X$ and every sufficiently small $r\in\mathcal{G}_p$. This statement will be enough to show that $X$ is a topological $3$-manifold. Indeed, the projection to the quotient $\pi: \overline{\mathbb{B}}_r(p)\to \widehat{\mathbb{B}}_r(p)$ is easily seen to be a homeomorphism with the image on $\mathbb{B}_r(p)$.

\medskip

With this aim, the main steps are given by the following three propositions, which are slight variants of \autoref{prop:genmanrec}, \autoref{prop:singGDPO} and \autoref{prop:critDR} respectively. 

\begin{proposition}\label{prop:Bhatgm}
Let $(X,\dist,\haus^3)$ be an $\RCD(-2,3)$ space such that all the cross-sections of all tangent cones are homeomorphic to $S^2$. For any $p\in X$ and for any $r\in \mathcal{G}_p$ sufficiently small, $(\widehat{\mathbb{B}}_r(p),\widehat{\dist})$ is a closed generalized $3$-manifold with empty boundary. 
\end{proposition}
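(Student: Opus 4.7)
By Proposition \ref{prop:genmanrec} we know that $X$ is a generalized $3$-manifold, so $\mathcal{R}_{\rm gm^+}(X)=X$ and hence by Proposition \ref{prop:Gballscongm} the closed Green-ball $\overline{\mathbb{B}}_r(p)$ is a contractible generalized $3$-manifold with boundary $\mathbb{S}_r(p)$, the boundary being homeomorphic to $S^2$. The plan is to verify the four defining properties of a generalized $3$-manifold (without boundary) for the quotient $(\widehat{\mathbb{B}}_r(p),\widehat\dist)$, separating the analysis at the collapsed point $p^\ast:=[\mathbb{S}_r(p)]$ from the analysis at all other points, where the quotient map $q:\overline{\mathbb{B}}_r(p)\to\widehat{\mathbb{B}}_r(p)$ is a local homeomorphism.

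\emph{Compactness and off-quotient-point behavior.} The space $\overline{\mathbb{B}}_r(p)$ is compact, and hence so is $\widehat{\mathbb{B}}_r(p)$. At any point $\widehat{x}\neq p^\ast$, a small neighbourhood in $\widehat{\mathbb{B}}_r(p)$ is homeomorphic to a neighbourhood of the corresponding point $x\in\mathbb{B}_r(p)\subset X$. Local contractibility, finite covering dimension, and the local relative homology of $\R^3$ at such points are then inherited directly from $X$, thanks to Proposition \ref{prop:genmanrec} together with Proposition \ref{prop:locrelhom} (where we apply the latter, we also use that $\mathbb{B}_r(p)$ is open in $X$ and excision).

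\emph{Behavior at the collapsed point.} For $s\in\mathcal{G}_p$ close enough to $r$, a fundamental system of neighbourhoods of $p^\ast$ is given by $\widehat U_s:=q(\overline{\mathbb{B}}_r(p)\setminus\mathbb{B}_s(p))$. The central technical point is that the annular region $A_{r,s}:=\overline{\mathbb{B}}_r(p)\setminus\mathbb{B}_s(p)$ deformation retracts onto $\mathbb{S}_r(p)$; this is proved by the same argument used in Lemma \ref{punct} and in Claim 1 in the proof of Proposition \ref{prop:Gballscongm}, by combining an inward version of the retraction in Lemma \ref{retraction} with the local contractibility of $\overline{\mathbb{B}}_r(p)$ given by Proposition \ref{prop:3ctoc}. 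This deformation retraction descends to the quotient and shows that $\widehat U_s$ deformation retracts onto $\{p^\ast\}$, proving that $\widehat{\mathbb{B}}_r(p)$ is locally contractible at $p^\ast$. The covering dimension of $\widehat U_s$ is bounded by that of $A_{r,s}\subseteq X$ (quotienting by the $S^2$ factor can only decrease dimension), so it is at most $3$ by Lemma \ref{lemma:dimcov}.

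\emph{Local relative homology at $p^\ast$ and absence of boundary.} By excision we have
\begin{equation}
H_\ast(\widehat{\mathbb{B}}_r(p),\widehat{\mathbb{B}}_r(p)\setminus\{p^\ast\})\cong H_\ast(\widehat U_s,\widehat U_s\setminus\{p^\ast\}).
\end{equation}
Since $\widehat U_s\setminus\{p^\ast\}$ is homeomorphic to $A_{r,s}\setminus\mathbb{S}_r(p)$, and the latter deformation retracts onto an intermediate good Green-sphere $\mathbb{S}_{r'}(p)$ (for $r'\in\mathcal{G}_p\cap(s,r)$) which is homeomorphic to $S^2$ by Proposition \ref{mfd.with.bdry}, and since $\widehat U_s$ is contractible by the previous paragraph, the long exact sequence of the pair yields
\begin{equation}
H_k(\widehat U_s,\widehat U_s\setminus\{p^\ast\})\cong \widetilde H_{k-1}(S^2),
\end{equation}
which agrees with $H_k(\R^3,\R^3\setminus\{0\})$ for every $k$. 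This completes the verification that $\widehat{\mathbb{B}}_r(p)$ is a generalized $3$-manifold, and the correct local relative homology at every point rules out boundary points (in the sense of generalized manifolds).

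\emph{Main difficulty.} The crucial point is producing an \emph{honest deformation retraction} of $A_{r,s}$ onto $\mathbb{S}_r(p)$ (and not just a homotopy equivalence), since only such a retraction passes to the quotient to establish local contractibility at $p^\ast$. This requires using the uniform contractibility of good Green-spheres from Proposition \ref{main.green.sphere} (v) together with the retraction mechanism of Lemma \ref{retraction}, combined with the iterative infinite-composition trick already exploited in the proof of Lemma \ref{punct}.
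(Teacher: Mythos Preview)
Your proposal is correct and follows essentially the same route as the paper: both reduce to Proposition~\ref{prop:genmanrec} away from the collapsed point, and at $p^\ast$ both use the two deformation retractions of the annular shells $\overline{\mathbb{B}}_r(p)\setminus\mathbb{B}_{r'}(p)\to\mathbb{S}_r(p)$ (giving local contractibility of the quotient neighbourhoods) and $\mathbb{B}_r(p)\setminus\overline{\mathbb{B}}_{r'}(p)\to\mathbb{S}_{r'}(p)$ (giving the correct local relative homology via the long exact sequence). The only point to tighten is the covering-dimension step: the parenthetical ``quotienting by the $S^2$ factor can only decrease dimension'' is not a valid general principle (quotient maps can raise dimension); the paper instead notes that the projection $\pi$ is Lipschitz, so $\widehat{\mathbb{B}}_r(p)$ has Hausdorff dimension at most $3$, and hence covering dimension at most $3$.
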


\begin{proposition}\label{prop:gpdoBhat}
Under the same assumptions as in \autoref{prop:Bhatgm}, the non-manifold set $\mathcal{S}_{\rm{top}}(\widehat{\mathbb{B}}_r(p))\subset \widehat{\mathbb{B}}_r(p)$ has general-position dimension one.
\end{proposition}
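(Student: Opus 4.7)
The quotient map $\pi : \overline{\mathbb{B}}_r(p) \to \widehat{\mathbb{B}}_r(p)$ restricts to a homeomorphism from $\mathbb{B}_r(p)$ onto $\widehat{\mathbb{B}}_r(p) \setminus \{\infty\}$, where $\infty := \pi(\mathbb{S}_r(p))$. Identifying $\mathbb{B}_r(p)$ with its image under $\pi$, this yields the inclusion
\[
\mathcal{S}_{\rm top}(\widehat{\mathbb{B}}_r(p)) \subseteq \bigl(\mathcal{S}_{\rm top}(X) \cap \mathbb{B}_r(p)\bigr) \cup \{\infty\}\, .
\]
Since \autoref{prop:singGDPO} already provides the general-position dimension one property for $\mathcal{S}_{\rm top}(X)$ inside $X$, the plan reduces to proving that any continuous map $f : \overline{D}^2 \to \widehat{\mathbb{B}}_r(p)$ can be $\epsilon$-approximated by a map $f_1$ with $f_1^{-1}(\infty) = \emptyset$; once this is accomplished, $f_1$ can be viewed as a map into $\mathbb{B}_r(p) \subset X$, and a further application of \autoref{prop:singGDPO} produces the sought approximation $g$ whose image meets $\mathcal{S}_{\rm top}(\widehat{\mathbb{B}}_r(p))$ in at most a finite set.

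To push $f$ off $\infty$, given $\epsilon > 0$ I would choose $s \in \mathcal{G}_p$ with $r - s$ so small that the open neighborhood $W_0 := \widehat{\mathbb{B}}_r(p) \setminus \pi(\overline{\mathbb{B}}_s(p))$ of $\infty$ has diameter less than $\epsilon/2$ in $\widehat{\mathbb{B}}_r(p)$; by \autoref{main.green.sphere} both Green-spheres $\mathbb{S}_r(p)$ and $\mathbb{S}_s(p)$ are homeomorphic to $S^2$. Under $\pi$, the punctured neighborhood $W_0 \setminus \{\infty\}$ corresponds to the open annular region $\mathbb{B}_r(p) \setminus \overline{\mathbb{B}}_s(p)$, which, by the argument in Claim~1 of \autoref{prop:Gballscongm} (based on \autoref{retraction} and the local uniform contractibility), deformation retracts onto $\mathbb{S}_s(p) \cong S^2$; in particular $W_0 \setminus \{\infty\}$ is connected and simply connected. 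Setting $U := f^{-1}(W_0)$, this open set contains $K := f^{-1}(\infty)$, and $f$ takes values in $W_0 \setminus \{\infty\}$ on $\partial U$. Standard obstruction theory for the two-dimensional CW pair $(U, \partial U)$---the obstructions $H^1(U, \partial U; \pi_0(W_0 \setminus \{\infty\}))$ and $H^2(U, \partial U; \pi_1(W_0 \setminus \{\infty\}))$ both vanish---produces a continuous extension $f_1 : U \to W_0 \setminus \{\infty\}$ of $f|_{\partial U}$; gluing with $f$ on $\overline{D}^2 \setminus U$ yields a continuous $f_1 : \overline{D}^2 \to \widehat{\mathbb{B}}_r(p)$ with $f_1^{-1}(\infty) = \emptyset$ and $\sup \widehat{\dist}(f, f_1) \le \diam(W_0) < \epsilon/2$.

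Since the compact image $f_1(\overline{D}^2)$ is disjoint from $\infty$, it is contained in $\pi(\overline{\mathbb{B}}_{r''}(p))$ for some $r'' < r$, hence in $\mathbb{B}_r(p)$ via $\pi^{-1}$. An application of \autoref{prop:singGDPO} to $X$ with sufficiently small threshold then produces $g : \overline{D}^2 \to X$ with $\sup \dist(f_1, g) < \epsilon/2$, whose image meets $\mathcal{S}_{\rm top}(X)$ in a finite set and still lies in $\mathbb{B}_r(p)$; reinterpreted in $\widehat{\mathbb{B}}_r(p)$, this $g$ is the sought $\epsilon$-approximation of $f$ with $g(\overline{D}^2) \cap \mathcal{S}_{\rm top}(\widehat{\mathbb{B}}_r(p))$ finite, hence $0$-dimensional. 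The principal technical point will be establishing the simple connectedness of the punctured neighborhood $W_0 \setminus \{\infty\}$: this is what converts the extension problem on $(U, \partial U)$ into a purely formal consequence of vanishing low-dimensional obstructions, and it ultimately relies on the retraction machinery of \autoref{retraction} together with Claim~1 of \autoref{prop:Gballscongm} and the already established generalized-manifold structure of $\widehat{\mathbb{B}}_r(p)$ from \autoref{prop:Bhatgm}.
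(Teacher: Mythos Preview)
Your approach is correct in spirit and genuinely different from the paper's. The paper proves the general-position property directly in $\widehat{\mathbb{B}}_r(p)$ by a three-step argument: first perturb $f$ so that $f(\partial D)$ avoids the singular set, then cover the singular set by finitely many good Green-balls and, using the deformation retractions of punctured Green-balls onto Green-spheres (\autoref{punct}), push the image onto a finite union of Green-spheres (which meet $\mathcal{S}^1_{\eps_0}$ in finitely many points by \autoref{main.green.sphere}(iii)), and finally localize via a fine subdivision of $\overline D^2$ to make the perturbation small. Your route instead separates the single collapsed point $\infty$ from the rest, pushes $f$ off $\infty$ using the simple-connectedness of a punctured neighbourhood, and then appeals to \autoref{prop:singGDPO} in $X$. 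This modular reduction is cleaner, but note that in the paper's logical order \autoref{prop:singGDPO} is not proved before \autoref{prop:gpdoBhat}; rather, the paper remarks that the argument for \autoref{prop:gpdoBhat} specializes to give \autoref{prop:singGDPO}. Your reduction is therefore presentationally circular in the paper's arrangement, though not mathematically so.

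One technical point needs repair: with $U:=f^{-1}(W_0)$ the pair $(U,\partial U)$ is not a CW pair in general (the boundary of an arbitrary open planar set can be wild), and $f(\partial U)\subset\partial W_0$ rather than $W_0\setminus\{\infty\}$. You should instead choose a polygonal (or simplicial) open neighbourhood $V$ of the compact set $K=f^{-1}(\infty)$ with $\overline V\subset U$; then $(\overline V,\partial V)$ is a genuine $2$-dimensional CW pair, $f(\partial V)\subset W_0\setminus\{\infty\}$, and the obstruction-theoretic extension goes through exactly as you say since $W_0\setminus\{\infty\}$ is simply connected. This is the same care the paper takes in its Step~2 when it selects smooth loops enclosing disjoint disks around the preimage of each singular centre.
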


\begin{remark}
In the proof of \autoref{prop:gpdoBhat} we will show that $\pi(\mathcal{S}_{\eps_0}^1\cup\mathbb{S}_r(p))\subset \widehat{\mathbb{B}}_r(p)$ has general-position dimension one. The conclusion about $\mathcal{S}_{\rm{top}}$ follows immediately by the inclusion
\begin{equation}
\mathcal{S}_{\rm{top}}(\widehat{\mathbb{B}}_r(p))\subset \pi(\mathcal{S}_{\eps_0}^1\cup\mathbb{S}_r(p))\, .
\end{equation}
\end{remark}

\begin{proposition}\label{prop:DavRepBhat}
Under the same assumptions as in \autoref{prop:Bhatgm} above, for every $q\in  \widehat{\mathbb{B}}_r(p)$ there exist arbitrarily small neighbourhoods $U\ni q$ and homeomorphisms $f:S^2\to f(S^2)\subset U\setminus \{q\}$ such that:
\begin{itemize}
\item[(i)] $U\setminus \{q\}$ is simply connected;
\item[(ii)] $f:S^2\to U$ is homotopically trivial; 
\item[(iii)] $f:S^2\to U\setminus\{q\}$ is not homotopically trivial;
\item[(iv)] $f(S^2)$ is $1$-coconnected (see \autoref{def:kcoco} for the relevant terminology). 
\end{itemize}
\end{proposition}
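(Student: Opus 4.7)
The plan is to split the proof into two cases depending on whether $q$ equals the collapsed point $\ast$. If $q \in \pi(\mathbb{B}_r(p))$, then $\pi$ is a local homeomorphism around $q$: I choose a small neighbourhood contained entirely in $\pi(\mathbb{B}_r(p))$ and invoke \autoref{prop:critDR} applied to the preimage of $q$ in $X$. The four conclusions transfer through $\pi$ without modification, since the quotient does not disturb anything locally around an interior point.

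The substantive case is $q = \ast$. I select good radii $r' < r'' < r$ in $\mathcal{G}_p$, both close to $r$, using \autoref{cor:densegoodr}, and set $U := \pi(\{x \in \overline{\mathbb{B}}_r(p) : b_p(x) > r'\})$, an open neighbourhood of $\ast$ that becomes arbitrarily small as $r' \to r$. I define $f \colon S^2 \to \mathbb{S}_{r''}(p)$ to be a homeomorphism, whose existence is guaranteed by \autoref{main.green.sphere}(vi), and identify $f$ with $\pi \circ f \colon S^2 \to U \setminus \{\ast\}$. The technical input needed for all four properties is a one-sided collar analogue of \autoref{retraction}: for each good radius $s \in \mathcal{G}_p$ close to $r$, both regions $\{s \le b_p \le s+\tau\}$ and $\{s-\tau \le b_p \le s\}$ deformation retract onto $\mathbb{S}_s(p)$, for $\tau$ small depending on $v$. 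This is produced by the same nerve argument as in the proof of \autoref{retraction}, using \autoref{main.green.sphere}(v) to extend through the $2$-skeleton and the local contractibility of the ambient space (Section 8) to extend through the $3$-skeleton; these short collars can then be chained, as in \autoref{lambda}, to cover the whole annular region.

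Applying this to $\mathbb{S}_{r''}(p)$ from both sides yields a deformation retraction of $U \setminus \{\ast\} = \{r' < b_p < r\}$ onto $\mathbb{S}_{r''}(p) \cong S^2$. This immediately gives (i), since $S^2$ is simply connected, and (iii), since $f$ is a homeomorphism onto the deformation retract and hence represents a generator of $\pi_2(U \setminus \{\ast\}) \cong \mathbb{Z}$. For (ii), applying the inward collar retraction to $\mathbb{S}_r(p)$ shows that $\{r' < b_p \le r\}$ deformation retracts onto $\mathbb{S}_r(p)$; passing to the quotient collapses $\mathbb{S}_r(p)$ to $\ast$, so $U$ is contractible and any map into $U$, in particular $f$, is null-homotopic.

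Property (iv), the $1$-LCC condition for $f(S^2) = \pi(\mathbb{S}_{r''}(p))$ in $\widehat{\mathbb{B}}_r(p)$, is the most delicate. Since $r'' < r$ strictly, $\pi$ is a homeomorphism on a neighbourhood of $\mathbb{S}_{r''}(p)$, so the condition reduces to showing that $\mathbb{S}_{r''}(p)$ is $1$-LCC in $X$ at each of its points. This follows by the argument already used in the proof of \autoref{prop:GballinR}: a loop in a small $W \setminus \mathbb{S}_{r''}(p)$ lies in one of the two sides $\{b_p < r''\}$ or $\{b_p > r''\}$, and the local contractibility of $X$ produces a bounding disk in a slightly larger neighbourhood $V$, which is then pushed to the appropriate side of $\mathbb{S}_{r''}(p)$ via either the inward retraction of \autoref{retraction} or its outward collar analogue, keeping the image disjoint from $\mathbb{S}_{r''}(p)$. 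The main obstacle throughout is the construction of the outward collar retractions onto $\mathbb{S}_r(p)$ and $\mathbb{S}_{r''}(p)$; however this is a minor variant of the nerve argument developed for \autoref{retraction} and introduces no new geometric input.
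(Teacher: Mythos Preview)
Your proposal is correct and follows essentially the same route as the paper. One small presentational point: in the case $q\neq\ast$ you invoke \autoref{prop:critDR}, but in the paper's logical order that proposition is \emph{not} proved independently---it is explicitly noted to be a consequence of \autoref{prop:DavRepBhat}. So the reference is circular. The fix is trivial: instead of citing \autoref{prop:critDR}, cite directly the ingredients from Section~\ref{sec:genman2} (namely \autoref{prop:Gballscongm} for contractibility of $\overline{\mathbb{B}}_s(q)$ and $\mathbb{S}_s(q)\cong S^2$, and the variant of \autoref{punct} for the retraction of punctured Green-balls), which is exactly what the paper does at this step. For $q=\ast$, your use of Green-annuli around the collapsed point and the two-sided collar retractions is precisely the content of \autoref{rm:goodshat} and the proof of \autoref{prop:Bhatgm}~(i)--(ii), and your verification of (i)--(iv) matches the paper's.
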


The rest of this section is dedicated to the proofs of \autoref{prop:Bhatgm}, \autoref{prop:gpdoBhat} and \autoref{prop:DavRepBhat}. Given these statements, the proof of \autoref{thm:RCDtopma} will be easily completed also in the non-compact case.
Indeed, by \autoref{prop:Bhatgm}, \autoref{prop:gpdoBhat} and \autoref{thm:recog2Thick}, $\widehat{\mathbb{B}}_r(p)$ is a resolvable generalized $3$-manifold. By \autoref{prop:DavRepBhat} and \autoref{thm:recog2DR}, it is then a $3$-manifold.

We note that \autoref{prop:DavRepBhat} is stronger than \autoref{prop:critDR}. Moreover, it will be clear that the argument proving \autoref{prop:gpdoBhat} proves also \autoref{prop:singGDPO}.
\medskip

\begin{proof}[Proof of \autoref{prop:Bhatgm}]
Given \autoref{prop:genmanrec}, it is sufficient to note that the following hold:
\begin{itemize}
\item[(i)] for each $r'<r$ such that $r'\in \mathcal{G}_p$ and $r'/r$ is sufficiently close to $1$ there exists a deformation retraction
\begin{equation}
		\rho:\overline{\mathbb{B}}_r(p)\setminus\mathbb{B}_{r'}(p)\to\mathbb{S}_r(p)\, ;
\end{equation}
\item[(ii)] for each $r'<r$ such that $r'\in \mathcal{G}_p$ and $r'/r$ is sufficiently close to $1$ we can construct deformation retractions
		\begin{equation}
		{\mathbb{B}}_r(p)\setminus{\mathbb{B}}_{r'}(p)\to\mathbb{S}_{r'}(p)\, .
		\end{equation}
\end{itemize}
Property (i) was obtained during the proof of \autoref{punct}: see in particular \eqref{eq:retrcrown}. 

\medskip

By (i), $\pi(\overline{\mathbb{B}}_r(p)\setminus\mathbb{B}_{r'}(p))\subset \widehat{\mathbb{B}}_r(p)$ is contractible for each $r'<r$ such that $r'\in \mathcal{G}_p$ and $r'/r$ is sufficiently close to $1$. Hence, if we set $\{\widehat{x}\}:=\pi(\mathbb{S}_r(p))$, for each neighbourhood $\widehat{U}\subset \widehat{\mathbb{B}}_r(p)$ of $\widehat{x}$ there exists a contractible open set $V\subset U$ with $\widehat{x}\in V$. In particular, $\widehat{\mathbb{B}}_r(p)$ is locally contractible.

\medskip

Analogously, by (ii) we can argue as in the proof of \autoref{prop:locrelhom} and verify that 
\begin{equation}
H_{*}(\widehat{\mathbb{B}}_r(p),\widehat{\mathbb{B}}_r(p)\setminus \{\widehat{x}\})=H_*(\setR^3,\setR^3\setminus\{0\})\, .
\end{equation}
The combination of these two properties shows that $\widehat{\mathbb{B}}_r(p)$ is a generalized $3$-manifold without boundary
(since $\pi$ is Lipschitz, $\widehat{\mathbb{B}}_r(p)$ has Hausdorff dimension 3, and hence covering dimension 3).
\end{proof}

\begin{remark}\label{rm:goodshat}
The sets $\pi(\overline{\mathbb{B}}_r(p)\setminus\mathbb{B}_{r'}(p))\subset \widehat{\mathbb{B}}_r(p)$ and $\pi(\mathbb{S}_{r'}(p))\subset \widehat{\mathbb{B}}_r(p)$ for $r'\in\overline{\mathcal{G}}_p$ sufficiently close to $r$ and $r'<r$ should be thought of as a replacement of good Green-balls and Green-spheres, respectively, centered at the point $\widehat{x}\in \widehat{\mathbb{B}}_r(p)$. Below they will be exploited in the same way without further notice, with a slight abuse of terminology.
\end{remark}

For the proof of \autoref{prop:gpdoBhat} it will be helpful to know that the non-manifold set of $\mathcal{S}_{\rm{top}}(\widehat{\mathbb{B}}_r(p))$ does not disconnect $\widehat{\mathbb{B}}_r(p)$. This is a consequence of the well-known statement that the codimension $2$ singular set of any $\RCD(K,n)$ space $(X,\dist,\haus^n)$ does not disconnect it: see \cite[Theorem 3.8]{CheegerColding00II} or \cite[Appendix A]{KapovitchMondino}. However, we present a short proof tailored for the present setting for the sake of completeness; recall that $\mathcal{S}_{\eps_0}^1$ is the complement of $\mathcal{R}_{\eps_0}$.

	\begin{lemma}\label{not.disconnect}
		Under the same assumptions of \autoref{prop:Bhatgm}, let $U\subseteq \widehat{\mathbb{B}}_r(p)$ be a connected open set. 
		Then no closed set $K\subseteq\pi(\mathcal{S}_{\eps_0}^1\cup\mathbb{S}_r(p))$ disconnects $U$. In particular, also $\mathcal{S}_{\rm{top}}(\widehat{\mathbb{B}}_r(p))$ does not disconnect $U$.
	\end{lemma}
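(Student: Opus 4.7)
The plan is to lift the problem to the cover $\pi\colon\overline{\mathbb{B}}_r(p)\to\widehat{\mathbb{B}}_r(p)$ and then invoke the classical fact that the effective codimension-two stratum of an $\RCD$ space does not disconnect any connected open subset (\cite[Theorem 3.8]{CheegerColding00II}, or \cite[Appendix A]{KapovitchMondino}), applied to open subsets of $X$ upstairs. First I would note two preparatory observations: (a) $\pi^{-1}(U)$ is open and connected, because $\mathbb{S}_r(p)$ is connected, so any disjoint open decomposition $\pi^{-1}(U)=A\sqcup B$ is automatically saturated and projects to a disjoint open decomposition of $U$; (b) $\pi^{-1}(K)\subseteq\mathcal{S}_{\eps_0}^1\cup\mathbb{S}_r(p)$, since $\mathcal{S}_{\eps_0}^1\cup\mathbb{S}_r(p)$ is a saturated subset of $\overline{\mathbb{B}}_r(p)$.

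The hard part will be the intermediate claim that $W:=\pi^{-1}(U)\cap\mathbb{B}_r(p)$ is connected (as an open subset of $X$). If the distinguished point $\widehat{x}:=\pi(\mathbb{S}_r(p))$ does not lie in $U$, this is immediate since $\pi^{-1}(U)\subseteq\mathbb{B}_r(p)$. Otherwise I would argue by contradiction: assume $W=A\sqcup B$ with both pieces nonempty open. Using \autoref{cor:densegoodr} combined with the annular deformation retractions of $\overline{\mathbb{B}}_r(p)\setminus\mathbb{B}_{r'}(p)$ onto good Green-spheres already exploited in (the proof of) \autoref{prop:Bhatgm}, I can shrink $U$ around $\widehat{x}$ to an open neighbourhood $V\ni\widehat{x}$ with $\pi^{-1}(V)\setminus\mathbb{S}_r(p)$ equal to an annular region $\{r''<b_p<r\}$ that retracts onto a Green-sphere homeomorphic to $S^2$, hence connected. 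This annular region must then lie entirely in one of the two pieces, say $A$. It follows that $A\cup\mathbb{S}_r(p)$ is open in $\overline{\mathbb{B}}_r(p)$ (each point of $\mathbb{S}_r(p)$ has a neighbourhood contained in $\pi^{-1}(V)$, whose intersection with $\mathbb{B}_r(p)$ lies in $A$), producing a disconnection $\pi^{-1}(U)=(A\cup\mathbb{S}_r(p))\sqcup B$ that contradicts (a). Hence $B=\emptyset$ and $W$ is connected.

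Once this claim is established, I would conclude as follows. If, by contradiction, $U\setminus K=A'\sqcup B'$ is a disconnection, then $\pi^{-1}(A')$ and $\pi^{-1}(B')$ are disjoint nonempty open sets covering $\pi^{-1}(U)\setminus\pi^{-1}(K)$, and therefore also covering $W\setminus(\mathcal{S}_{\eps_0}^1\cap\mathbb{B}_r(p))=\pi^{-1}(U)\setminus(\mathbb{S}_r(p)\cup\mathcal{S}_{\eps_0}^1)$. But the cited non-disconnection theorem, applied to the connected open set $W\subseteq X$, shows that removing $\mathcal{S}_{\eps_0}^1$ leaves a connected subset; this yields the desired contradiction. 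For the "in particular" part, the metric Reifenberg theorem ensures that $\mathcal{R}_{\eps_0}\cap\mathbb{B}_r(p)$ is a topological $3$-manifold, and $\pi$ restricts to a homeomorphism of it onto its image, so $\pi(\mathcal{R}_{\eps_0}\cap\mathbb{B}_r(p))\subseteq\mathcal{R}_{\rm top}(\widehat{\mathbb{B}}_r(p))$. Hence $\mathcal{S}_{\rm top}(\widehat{\mathbb{B}}_r(p))\subseteq\pi(\mathcal{S}_{\eps_0}^1\cup\mathbb{S}_r(p))$, and the second assertion reduces to the first.
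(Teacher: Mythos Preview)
Your argument is essentially correct and reaches the goal, but it follows a genuinely different route from the paper. The paper gives a self-contained, direct proof that avoids lifting to $\overline{\mathbb{B}}_r(p)$ and avoids invoking the general non-disconnection theorem: given $q,q'\in U\setminus K$ it takes a boundary point $\overline{q}$ of the component of $q$ closest to $q'$, draws a small good Green-sphere $\mathbb{S}_s(\overline{q})$, observes via \autoref{main.green.sphere}(iii) that $\mathbb{S}_s(\overline{q})\setminus K$ is a $2$-sphere with finitely many points removed (hence connected), and derives a contradiction by finding points of this sphere on the ``wrong side''. Your approach instead pushes everything back to $X$ and appeals to the black-box result of \cite{CheegerColding00II,KapovitchMondino}; this is conceptually clean and shows clearly why the lemma is a quotient phenomenon, at the cost of the intermediate claim that $W=\pi^{-1}(U)\cap\mathbb{B}_r(p)$ is connected. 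That claim is fine, but note two small points to tighten: (1) the open annulus $\{r''<b_p<r\}$ is not literally one of the sets for which a deformation retraction is recorded in the proof of \autoref{prop:Bhatgm}; its connectedness follows instead by covering it with the half-open annuli $\mathbb{B}_r(p)\setminus\mathbb{B}_{r'}(p)$ (good $r'\in(r'',r)$), each of which retracts onto a connected Green-sphere and all of which overlap; (2) to invoke the cited theorem for $\mathcal{S}_{\eps_0}^1$ rather than the singular set $\mathcal{S}$, use that $\mathcal{S}_{\eps_0}^1\subseteq\mathcal{S}$ and that $W\setminus\mathcal{S}$ is dense in $W\setminus\mathcal{S}_{\eps_0}^1$. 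The paper's proof buys independence from those external references and showcases the Green-sphere machinery once more; yours buys a cleaner reduction to known structure theory.
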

	
	\begin{proof}
		Given two distinct points $q,q'$ in $U\setminus K$, let $U_q$ denote the (open) connected component of $U\setminus K$ and define $U_{q'}$ similarly. In this proof, we replace $\widehat{\dist}$ with a metric on $U$ (inducing the same topology, still denoted by $\widehat{\dist}$) such that closed balls are compact and $(U,\widehat{\dist})$ is still a length metric space.
		
		Let $\delta:=\widehat{\dist}(q',U_q)$. If $\delta=0$ then $U_q$ intersects arbitrarily small balls $B_s(q')$, and thus $U_q=U_{q'}$ (as $B_s(q')\subseteq U_{q'}$ for $s$ small enough), as desired. Assume by contradiction that $\delta>0$. By compactness of $\overline B_{2\delta}(q')$ we can find $\overline{q}\in\de U_q$ such that $\widehat{\dist}(\overline{q},q')=\delta$.
		Let us now take a small good radius $s\in\mathcal{G}_q\cap(0,\delta)$ such that $q,q'\notin\overline{\mathbb{B}}_s(\overline{q})$, and $\mathbb{S}_s(\overline{q})\not\ni\hat x$ is homeomorphic to $S^2$. Since $b_{\overline{q}}(U_q)$ is connected
		and its closure contains $0$ and $b_{\overline{q}}(q)>s$, we have $s\in b_{\overline{q}}(U_q)$. Thus, $\mathbb{S}_s(\overline{q})\cap U_q\neq \emptyset$.
		Moreover, $\Sigma:=\mathbb{S}_s(\overline{q})\cap(\hat{\mathbb{B}}_r(p)\setminus K)$ is homeomorphic to a sphere with finitely many points removed. Hence it is connected, giving $\Sigma\subseteq U_q$. On the other hand, $\mathbb{S}_s(\overline{q})$ (and hence $\Sigma$) contains points where $\widehat{\dist}(\cdot,q')<\delta$. For instance,
		this holds at the intersection between $\mathbb{S}_s(\overline{q})$ and a short geodesic between $\overline{q}$ and $q'$. This property contradicts the fact that $\Sigma\subseteq U_q$, where it holds $\hat{\dist}(\cdot,q')\ge\delta$. 
	\end{proof}

	\begin{proof}[Proof of \autoref{prop:gpdoBhat}]
		In order to shorten the notation let $D:=D^2$, $Y:=\widehat{\mathbb{B}}_r(p)$ and $\mathcal{S}_{\rm{top}}:=\mathcal{S}_{\rm{top}}(Y)$. We need to prove that any continuous map $f:\overline D\to Y$ can be approximated with continuous maps $g:\overline D\to Y$ (i.e., with $\sup_{x\in\overline D}\widehat{\dist}(f(x),g(x))$ arbitrarily small)
		such that $g(\overline D)\cap\mathcal{S}_{\rm{top}}$ is finite. The key idea will be to perturb the original map in such a way that the image of the perturbation is contained in a finite union of good Green-spheres in a neighbourhood of the singular set. This will be sufficient to conclude by \autoref{main.green.sphere} (iii).
	\medskip
		
		There are three main steps. We let $U\subset \widehat{\mathbb{B}}_r(p)$ be any open and connected set such that $f(\overline{D})\subseteq U$. In Step 1 we reduce to the case where $f(\de D)\cap\mathcal{S}_{\rm{top}}=\emptyset$. In Step 2 we prove that for any such $f$ there exists $g:\overline{D}\to U$ with $g=f$ on $\partial D$ and $g(\overline D)\cap\mathcal{S}_{\rm{top}}$ finite. It is important to note that the perturbation obtained in this step is localized to $U$ in the target. In Step 3 we complete the proof by applying a ``scale-invariant'' version of Step 2 to the restriction of $f$ to all $2$-simplices in a sufficiently fine triangulation of $D^2$.
	\medskip
	
	\emph{Step 1:}	
		We claim that for any continuous function $f:\overline{D}\to U$ as above there is a continuous map $f':\overline{D}\to U$ arbitrarily close to $f$ such that $f'(\de D)\cap \mathcal{S}_{\rm{top}}=\emptyset$.

		Indeed, we can find a loop in $U\setminus\mathcal{S}_{\rm{top}}$
		arbitrarily close to $f|_{\de D}$ subdividing $\de D$ into many small arcs, mapping each endpoint $x$ to a point in $U\setminus\mathcal{S}_{\rm{top}}$ close to $f(x)$,
		and connecting the images of two consecutive endpoints with a small arc in $U\setminus\mathcal{S}_{\rm{top}}$. The existence of this arc follows from \autoref{not.disconnect}.
		Since $U$ is locally contractible by \autoref{prop:Bhatgm}, the new loop is homotopic to $f|_{\de D}$ (provided that they are sufficiently close). We can extend $f$ to a map defined on a slightly larger disk $\overline D'$,
		given by this homotopy on $\overline D'\setminus D$. The claim follows by rescaling $\overline D'$ back to $\overline D$.

		\medskip

		\emph{Step 2:}
		We claim that for every continuous $f:\overline{D}\to U$ such that $f(\partial D)\cap \mathcal{S}_{\rm{top}}=\emptyset$ there exists a continuous map $g:\overline{D}\to U$ such that $f|_{\de D}=g|_{\de D}$ and $g(\overline{D})\cap \mathcal{S}_{\rm{top}}$ is finite.

		Indeed, we can first cover $f(\overline D)\cap\mathcal{S}_{\rm{top}}$ with finitely many good Green balls $\overline{\mathbb{B}}_i \subset U$
		disjoint from $f(\de D)$. We shall denote by $p_i$ and $r_i$ the centers and radii of the good Green balls $\mathbb{B}_i$ respectively. Arguing as in the previous step and exploiting \autoref{punct} to check that sufficiently small punctured Green-balls are simply connected, we claim that we can replace $f$ with a map $\tilde{f}$ with the same boundary values and the additional property that $p_i\notin\tilde{f}(\overline{D})$ for all $i$. 

        Indeed, we can select arbitrarily small, disjoint Green-balls $\mathbb{B}_i'$ centered at $p_i\in \mathcal{S}_{\rm{top}}$.
        For $i=1$, we can find smooth loops $\gamma_{ij}\subset f^{-1}(\mathbb{B}_i')$
        enclosing disjoint regions $D_{ij}\subset D$, each homeomorphic to $\bar D$, such that $f^{-1}(p_i)\subset\bigcup_j\operatorname{int}(D_{ij})$. In particular, $f|_{\gamma_{ij}}$ takes values in $\bar{\mathbb{B}}_i'\setminus\{p_i\}$, which is simply connected, and hence we can replace $f|_{D_{ij}}$ with a map with values in the same punctured ball, showing the claim. We repeat this for all $i$.
        The new map $\tilde f$ takes values in $U$, but could be far from $f$ since we could have $f(D_{ij})\not\subseteq \mathbb{B}_i'$.
        
        Up to slightly perturbing the radii of the Green-balls $\mathbb{B}_i$ we can assume that $p_i\notin \bigcup_j \mathbb{S}_j$ for each $i$.

		By \autoref{punct} there exists a retraction $\rho_i:\overline{\mathbb{B}}_i\setminus\{p_i\}\to\mathbb{S}_i$ for each $i$, which obviously extends to a retraction $\rho_i: Y\setminus\{p_i\} \to Y\setminus \mathbb{B}_i$. To establish the claim it suffices to compose $\tilde f$ with all these retractions. Indeed, after the first composition $\tilde f_1 := \rho_1 \circ \tilde f: \overline{D} \to X\setminus \mathbb{B}_1$, we still have $p_i \notin \tilde f_1(\overline{D})$ for $i\ge 2$, since $p_i\notin \bigcup_j \mathbb{S}_j$ for each $i$. So we can iterate and compose with all the other retractions obtaining a map $g: \overline{D}\to X$. By construction, $g$ takes values in $[g(\overline D)\setminus\bigcup_i\overline{\mathbb{B}}_i]\cup\bigcup_i\mathbb{S}_i$. However, $\mathbb{S}_i\cap \mathcal{S}_{\rm{top}}$ is a finite set for each $i$ by \autoref{main.green.sphere} (iii).

\medskip

\emph{Step 3:}
		We claim that the map $g$ as above can be chosen to be arbitrarily close to $f$.
		There is no harm in replacing $\overline D^2$ with the closed square $[0,1]^2$. For every $\eps>0$ we can divide $[0,1]^2$ into $\ell^2$ squares, with $\ell>2$ large enough so that 
		\begin{equation}
		f\left(\left[\frac{j}{\ell},\frac{j+1}{\ell}\right]\times\left[\frac{j'}{\ell},\frac{j'+1}{\ell}\right]\right)\subset B_{\eps}(p_{jj'})
		\end{equation}
		for some $p_{jj'}\in X$, for all $j,j'=0,\dots,\ell-1$.
		By repeating the argument in Step 1, we can assume that $f$ maps the 1-skeleton of the subdivision to a subset of $Y\setminus\mathcal{S}_{\rm{top}}$ (as we did at the beginning of the proof, up to replacing $f$ with a close map).
  
	By applying the argument in Step 2 to each restriction 
	\begin{equation}
	f_{jj'}:=f\Big|_{\left[\frac{j}{\ell},\frac{j+1}{\ell}\right]\times\left[\frac{j'}{\ell},\frac{j'+1}{\ell}\right]}:\left[\frac{j}{\ell},\frac{j+1}{\ell}\right]\times\left[\frac{j'}{\ell},\frac{j'+1}{\ell}\right]\to B_{\epsilon}(p_{j,j'})\, ,
	\end{equation}
		we can find continuous maps $g_{jj'}$ that coincide with the original maps $f_{jj'}$ on the boundaries of their domains such that $\widehat{\dist}(f_{jj'},g_{jj'})\le2\eps$ and the image of $g_{jj'}$ intersects $\mathcal{S}_{\rm{top}}$ in finitely many points, for any $j,j'$. The maps $g_{jj'}$ replacing each restriction
		glue together to a map $g$ defined on $[0,1]^2$, and clearly $\widehat{\dist}(g,f)\le 2\eps$.
	\end{proof}

\begin{proof}[Proof of \autoref{prop:DavRepBhat}]
We claim that suitable small Green-type balls and Green-type spheres around any point satisfy all the conditions in the statement. We will discuss the case of points $q\in Y:=\widehat{\mathbb{B}}_r(p)$ with $q\neq \widehat{x}$, where we denote $\widehat{x}=\pi(\mathbb{S}_r(p))$, as above. The remaining case can be dealt with completely analogous considerations: see \autoref{rm:goodshat}.
 
	\medskip
	
		Let $q\in Y$ be fixed and let us consider two sufficiently small radii $s,t\in\mathcal{G}_q$ with $s<t$. Let $U:=\mathbb{B}_t(q)$ and consider $\Sigma:=\mathbb{S}_s(q)$. By \autoref{prop:Gballscongm}, $\Sigma$ is homeomorphic to $S^2$ and we let $f:S^2\to \Sigma$ be any homeomorphism.
		By a variant of \autoref{punct}, $U\setminus \{q\}$ is simply connected, as it deformation retracts onto $\mathbb{S}_s(q)$. 	
		Moreover, $f$ is homotopically trivial in $U$, since $\overline{\mathbb{B}}_s(q)$ is contractible by \autoref{prop:Gballscongm},
		but not in $U\setminus\{q\}$, and in fact not even in $\overline U\setminus\{q\}$.
		Indeed, the proof of \autoref{punct} shows that the inclusion $\overline{\mathbb{B}}_s(p)\setminus\{q\}\hookrightarrow\overline{\mathbb{B}}_t(q)\setminus\{q\}$
		induces an isomorphism on the homology groups and that $[\Sigma]\in H_2(\overline{\mathbb{B}}_s(q)\setminus\{q\})$ is not trivial,
		implying that it is not trivial in $H_2(\overline{\mathbb{B}}_t(q)\setminus\{q\})$ as well.

\medskip

The discussion above proves that conditions (i), (ii) and (iii) in the statement are met with these choices. We claim that condition (iv) is also met.

Indeed, for any $t\in \mathcal{G}_q$ we can find $a,b\in\mathcal{G}_q$ arbitrarily close to $t$ with $a<t<b$. With similar arguments as in the proof of \autoref{punct} above, we can construct deformation retractions
		\begin{equation}
		{\mathbb{B}}_t(q)\setminus\overline{\mathbb{B}}_a(q)\to\mathbb{S}_{a'}(q)\, ,\quad{\mathbb{B}}_b(q)\setminus\overline{\mathbb{B}}_t(q)\to\mathbb{S}_{b'}(q)\, ,
		\end{equation}
		for intermediate levels $a'\in(a,t)\cap\mathcal{G}_q$ and $b'\in(t,b)\cap\mathcal{G}_q$.
		Hence, the annular regions ${\mathbb{B}}_t(q)\setminus\overline{\mathbb{B}}_a(q)$ and ${\mathbb{B}}_b(q)\setminus\overline{\mathbb{B}}_t(q)$ are simply connected. It follows that $\Sigma$ is $1$-LCC, as we claimed.

	\end{proof}

\section{Proof of the topological results for Ricci limits}
\label{sec:Proofmaintheorems}

The goal of this section is to conclude the proof of the topological results for Ricci limit spaces,  \autoref{cor:4d}, \autoref{thm:omeo n-4 sym}, and \autoref{thm:top n-3}, as well as to prove the uniform local contractibility of noncollapsed $\RCD(-2,3)$ topological manifolds, as stated in \autoref{thm:unifcontrtopRCD3}. Additionally, we will demonstrate \autoref{thm:volumegrowthR^3} along the way.

\medskip

The key tools will be the rigidity of $(n-3)$-symmetric cones that are noncollapsed Ricci limits, as in \autoref{thm:noRP2}, and the manifold recognition \autoref{thm:RCDtopma}.

\subsection{Manifold structure of cross-sections and sections}

We begin by proving the that cross-sections of $(n-4)$-symmetric cones are topological manifolds.

\begin{theorem}\label{thm:topreg}
Let $(M_i^n, g_i, p_i)\to (X^n,\dist, p)$ be a noncollapsed Ricci limit space with $n\ge 4$. Assume that $X^n = \setR^{n-4}\times C(Z^3)$ is an $(n-4)$-symmetric cone. Then $(Z^3,\dist_Z)$ is homeomorphic to a topological $3$-manifold whose universal cover is homeomorphic to $S^3$. 
\end{theorem}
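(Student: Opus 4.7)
The plan is to combine the manifold recognition \autoref{thm:RCDtopma} with the cone rigidity \autoref{thm:noRP2}, and then to invoke Perelman's resolution of the Poincar\'e conjecture.

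First, I would observe that $(Z^3,\dist_Z,\haus^3)$ is a noncollapsed $\RCD(2,3)$ space. Indeed, combining \cite{Ketterer15} with \cite{DePhilippisGigli2}, the factor $C(Z^3)$ of the $\RCD(0,n)$ product $X^n$ is itself a noncollapsed $\RCD(0,4)$ space, and hence its cross-section $(Z^3,\dist_Z,\haus^3)$ is a noncollapsed $\RCD(2,3)$ space.

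The crucial step is verifying the hypothesis of \autoref{thm:RCDtopma}: every tangent cone to $Z^3$ must have cross-section homeomorphic to $S^2$. Fix $z\in Z^3$ and let $C(W^2)$ be a tangent cone to $Z^3$ at $z$, where $(W^2,\dist_W,\haus^2)$ is a noncollapsed $\RCD(1,2)$ space (again using \cite{Ketterer15} and \cite{DePhilippisGigli2}). Choose any point $q_0\in C(Z^3)$ different from the tip whose radial projection to $Z^3$ equals $z$. Near $q_0$, $C(Z^3)$ is locally bi-Lipschitz to the product $\setR\times Z^3$, so the tangent cones of $C(Z^3)$ at $q_0$ are exactly of the form $\setR\times C(W^2)$ for tangent cones $C(W^2)$ to $Z^3$ at $z$. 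Consequently, blowing up $X^n=\setR^{n-4}\times C(Z^3)$ at $(0,q_0)$ yields a tangent cone isometric to $\setR^{n-3}\times C(W^2)$. This tangent cone is itself a noncollapsed Ricci limit, being a pointed Gromov--Hausdorff limit of suitable rescalings of the smooth manifolds $M_i^n$ via a standard diagonal argument. Applying \autoref{thm:noRP2} to this $(n-3)$-symmetric Ricci-limit cone then forces $(W^2,\dist_W)$ to be homeomorphic to $S^2$.

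Having verified that every tangent cone to $Z^3$ has cross-section homeomorphic to $S^2$, \autoref{thm:RCDtopma} guarantees that $Z^3$ is a topological $3$-manifold without boundary. By Bonnet--Myers for $\RCD$ spaces, $Z^3$ is compact, and by \cite[Theorem 1.6]{MondinoWei} its fundamental group is finite. Hence the universal cover $\tilde Z^3$ inherits the structure of a compact, simply connected topological $3$-manifold (local homeomorphisms lift through the covering map), and Perelman's resolution of the Poincar\'e conjecture \cite{PerelmanPoincare} yields $\tilde Z^3\cong S^3$.

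The principal delicate point is the production, in the second paragraph, of an $(n-3)$-symmetric Ricci-limit tangent cone containing $C(W^2)$ as a factor; this is made possible by the cone-splitting at nontip points of $C(Z^3)$ and is what allows \autoref{thm:noRP2} to eliminate $\mathbb{RP}^2$ as a possible cross-section. All other steps are direct invocations of already established machinery.
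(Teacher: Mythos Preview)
Your proposal is correct and follows essentially the same route as the paper: verify via \autoref{thm:noRP2} that every tangent cone of $Z^3$ has cross-section $S^2$ by realizing $\setR^{n-3}\times C(W^2)$ as a rescaled limit of the $M_i^n$, then apply \autoref{thm:RCDtopma} and finish with the Poincar\'e conjecture. The only cosmetic difference is that the paper obtains compactness of the universal cover by noting (via \cite{MondinoWei}) that $\tilde Z$ is itself an $\RCD(2,3)$ space and invoking Bonnet--Myers there, whereas you go through finiteness of $\pi_1$; both are equivalent and the argument is the same in substance.
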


\begin{proof}

In view of \autoref{thm:RCDtopma} and the solution to the Poincar\'e conjecture, it is enough to check that all the tangent cones of $Z$ have the cross-section homeomorphic to $S^2$. Indeed, if this is the case, then $Z$ is a $3$-manifold by \autoref{thm:RCDtopma}. Moreover, the universal cover $(\tilde Z, \dist_{\tilde Z})$ of $Z$ is a simply connected, noncollapsed $\RCD(2,3)$ space (see \cite{MondinoWei}) which is a topological $3$-manifold. Since $\RCD(2,3)$ spaces are compact, the solution of the Poincar\'e conjecture implies that $\tilde Z$ is homeomorphic to $S^3$.

\medskip

We now verify that, for every $z \in Z$, all the tangent cones at $z$ have a cross-section homeomorphic to $S^2$. Let $C(\Sigma)$ be a tangent cone at $z \in Z$. It is easy to check that there exist scaling factors $r_i \to 0$ and a sequence of points $x_i \in M_i^n$ such that
\begin{equation}
(M_i^n, r_i^{-1} g_i, x_i) \to \R^{n-3} \times C(\Sigma) \, 
\end{equation}
as $i\to\infty$. The conclusion then follows from \autoref{thm:noRP2}.
\end{proof}

With an analogous argument we can prove \autoref{thm:top n-3}, whose statement is repeated below for the ease of readability.

\begin{theorem}\label{thm:top n-3 main}
Let $(M_i^n, g_i, p_i)\to (X^n,\dist, p)$ be a noncollapsed Ricci limit space with $n\ge 3$. 
Assume that $X^n =\setR^{n-3}\times Z^3$ as metric measure spaces.
Then $Z^3$ is homeomorphic to a topological $3$-manifold. 

\end{theorem}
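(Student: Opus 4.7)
The plan is to reduce the statement to the manifold recognition \autoref{thm:RCDtopma} applied to the factor $Z^3$, with the $S^2$ cross-section hypothesis verified via \autoref{thm:noRP2}. Essentially I would mimic the structure of the proof of \autoref{thm:topreg} just given, but working with the factor $Z$ itself rather than with a cross-section of a cone. The decisive point is that the global equality $X^n=\R^{n-3}\times Z^3$ as metric measure spaces makes the formation of tangent cones commute with the product decomposition, which brings any tangent cone of $Z$ into the realm of \autoref{thm:noRP2}.

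First I would observe that since $X^n$ is a noncollapsed Ricci limit space (hence noncollapsed $\RCD(-(n-1),n)$), the metric measure factor $(Z^3,\dist_Z,\haus^3)$ is a noncollapsed $\RCD(K,3)$ space for some $K\in\R$, and has empty boundary (as the product $\R^{n-3}\times Z$ would inherit boundary from a boundary point of $Z$, while $X$ has no boundary by \cite{BrueNaberSemolabdry}). Since being a topological manifold is local and the hypotheses of \autoref{thm:RCDtopma} are scale-invariant, it is harmless to rescale and assume $K=-2$.

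Next, I would fix an arbitrary $z\in Z$ and an arbitrary tangent cone $C(\Sigma)$ of $Z$ at $z$. Because the splitting $X=\R^{n-3}\times Z$ is an identification as metric measure spaces, rescaling at $(x_0,z)$ for any fixed $x_0\in\R^{n-3}$ produces, in the pointed Gromov--Hausdorff limit, the space $\R^{n-3}\times C(\Sigma)$; thus $\R^{n-3}\times C(\Sigma)$ is a tangent cone of $X$ at $(x_0,z)$. A standard diagonal argument (used already in the proof of \autoref{thm:topreg}) identifies this tangent cone as itself being a noncollapsed Ricci limit of smooth $n$-manifolds with Ricci curvature bounded below, hence an $(n-3)$-symmetric noncollapsed Ricci limit cone. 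This is precisely the setting of \autoref{thm:noRP2}, which forces $\Sigma$ to be homeomorphic to $S^2$. As $z$ and the choice of tangent cone were arbitrary, every tangent cone of $Z^3$ at every point has cross-section homeomorphic to $S^2$, so \autoref{thm:RCDtopma} applies and yields that $Z^3$ is a topological $3$-manifold.

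I do not anticipate any genuine obstacle in executing this plan: all the hard work has already been invested in \autoref{thm:noRP2} (which rules out $C(\mathbb{RP}^2)$ cross-sections and encapsulates the slicing theorem, the Green-type distance construction, and the topological regularity of good Green-spheres) and in \autoref{thm:RCDtopma} (which encapsulates the full generalized-to-topological manifold recognition machinery). Taking these as black boxes, the remaining content is the routine verification that tangent cones of Ricci limits are Ricci limits and that the product with $\R^{n-3}$ commutes with the tangent cone construction when the splitting is rigid, both of which are classical.
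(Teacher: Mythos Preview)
Your proposal is correct and follows essentially the same route as the paper: the paper also invokes \cite[Proposition 2.15]{DePhilippisGigli2} to see that $(Z,\dist_Z,\haus^3)$ is $\RCD(-(n-1),3)$, then says explicitly that ``with the very same argument as in the proof of \autoref{thm:topreg}'' every tangent cone of $Z$ has $S^2$ cross-section (via \autoref{thm:noRP2} applied to the induced $(n-3)$-symmetric Ricci limit), and concludes by \autoref{thm:RCDtopma}. Your write-up matches this step for step.
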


\begin{proof}
If $\Ric_i\ge -(n-1)$ for each $i\in\setN$, then $(X,\dist,\haus^n)$ is an $\RCD(-(n-1),n)$ space. By \cite[Proposition 2.15]{DePhilippisGigli2}, $(Z,\dist_Z,\haus^3)$ is an $\RCD(-(n-1),3)$ space. With the very same argument as in the proof of \autoref{thm:topreg} it is possible to verify that the cross-section of every tangent cone at every point $z\in Z^3$ is homeomorphic to $S^2$. The conclusion follows from \autoref{thm:RCDtopma}.    
\end{proof}

\begin{remark}
 We note that the conclusion that $Z^3$ is homeomorphic to a $3$-manifold in \autoref{thm:top n-3 main} above is strictly stronger than the conclusion that $X^n$ is homeomorphic to an $n$-manifold. For instance, Bing constructed an example of a topological space $B$ such that $B\times \setR$ is homeomorphic to $\setR^4$ even though $B$ is not a topological $3$-manifold \cite{Bingproduct}.
\end{remark}

\subsection{$\RCD(0,3)$ manifolds with Euclidean volume growth}

In this section, we prove \autoref{thm:volumegrowthR^3}, which we restate below for clarity.

\begin{theorem}\label{thm:RCD03AVR}
Let $(X^3,\dist,\haus^3)$ be an $\RCD(0,3)$ manifold with Euclidean volume growth, i.e., there exist $p\in X$ and $v>0$ such that $\haus^3(B_r(p))\ge vr^3$ for any $r>0$. Then $X^3$ is homeomorphic to $\setR^3$. 
\end{theorem}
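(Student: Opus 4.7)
The hypothesis that $X^3$ is a topological $3$-manifold combined with \autoref{cor:impl1} yields that the cross-sections of all tangent cones of $X$ at all points are homeomorphic to $S^2$. The Euclidean volume growth, together with the volume-cone-implies-metric-cone theorem, ensures that every pointed Gromov--Hausdorff blow-down of $(X,\dist,p)$ is an $\RCD(0,3)$ metric cone whose cross-section is an $\RCD(1,2)$ Alexandrov surface, hence homeomorphic either to $S^2$ or to $\mathbb{RP}^2$. Since large good Green-spheres $\mathbb{S}_r(p)\subset X$ are homeomorphic to these cross-sections (via \autoref{main.green.sphere}) and lie inside the topological manifold $X=\mathcal{R}_{\rm{top}}(X)$, \autoref{prop:GballinR} rules out $\mathbb{RP}^2$ and forces each blow-down cross-section to be $S^2$. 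The plan is then to (a) prove simple-connectedness of $X$, (b) exhibit an exhaustion of $X$ by closed topological $3$-balls, and (c) glue these into a global homeomorphism $X\cong\R^3$.

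For (a), I would adapt the argument of \autoref{lemma:avrS2}. By \cite{MondinoWei}, $\pi_1(X)$ is finite of some order $N$, and the universal cover $\pi:\tilde X\to X$ is itself a noncollapsed $\RCD(0,3)$ topological manifold with Euclidean volume growth (with constant $Nv$). By the argument of the previous paragraph applied to $\tilde X$, blow-downs of $\tilde X$ also have cross-sections homeomorphic to $S^2$. Fix a pole $p\in X$, consider the Green-type distance $b_p$ on $X$, and lift it to $\tilde b:=b_p\circ\pi$ on $\tilde X$, whose pole set is the $N$-point fibre $\pi^{-1}(p)$. For $r$ much larger than $\operatorname{diam}(\pi^{-1}(p))$ and inside the appropriate good set for both distances, the level sets $\mathbb{S}_r(p)\subset X$ and $\{\tilde b=r\}\subset\tilde X$ are connected and both homeomorphic to $S^2$; the restriction of $\pi$ then realizes a covering of degree $N$ of $S^2$ onto $S^2$, forcing $N=1$.

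For (b), simple-connectedness of $X$ together with \autoref{contractible} (applied with $\mathcal{R}_{\rm{top}}(X)=X$) shows that each closed good Green-ball $\overline{\mathbb{B}}_r(p)$, for $r\in\mathcal{G}_p$, is a simply connected compact $3$-manifold with boundary the tamely embedded $2$-sphere $\mathbb{S}_r(p)$; by Perelman's resolution of the Poincar\'e conjecture, $\overline{\mathbb{B}}_r(p)\cong \overline D^3$. Choosing $r_1<r_2<\cdots\to\infty$ inside $\mathcal{G}_p$, each annular region $A_i:=\overline{\mathbb{B}}_{r_{i+1}}(p)\setminus\mathbb{B}_{r_i}(p)$ is a compact $3$-manifold whose boundary is the disjoint union of two $2$-spheres; a variant of \autoref{retraction} shows $A_i$ deformation retracts onto either boundary component, hence $A_i$ has the homotopy type of $S^2$ and, by Perelman again (or by standard $h$-cobordism theory in dimension $3$), is homeomorphic to $S^2\times[0,1]$.

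The main remaining obstacle, and the technical heart of the argument, is the coherent gluing in step (c). Analogously to the procedure sketched at the end of Section \ref{subsec:recogn} but run outwards rather than towards a singular point, I would construct the homeomorphism $X\cong\R^3$ inductively: starting from a homeomorphism $\overline{\mathbb{B}}_{r_1}(p)\cong\overline D^3\subset\R^3$, and assuming a homeomorphism of $\overline{\mathbb{B}}_{r_i}(p)$ onto a closed concentric ball has been constructed, extend it across $A_i\cong S^2\times[0,1]$ to $\overline{\mathbb{B}}_{r_{i+1}}(p)$. This extension relies on the uniqueness up to isotopy of collar neighbourhoods of the tamely embedded Green-spheres (whose tameness comes from the proof of \autoref{prop:GballinR}) to match the two homeomorphisms on their common boundary $\mathbb{S}_{r_i}(p)$. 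Passing to the direct limit of the nested homeomorphisms, one arrives at the desired global homeomorphism $X\cong\R^3$.
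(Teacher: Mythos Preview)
Your proposal is correct, but it takes a more laborious route than the paper. The paper's proof observes that once each closed good Green-ball $\overline{\mathbb{B}}_{r_i}(p)$ is homeomorphic to $\overline{D}^3$ (via \autoref{contractible} and \autoref{poinc.ball}), the corresponding open Green-balls form a monotone exhaustion of $X$ by open $3$-cells, and Brown's theorem \cite{Brown} immediately gives $X\cong\R^3$. This bypasses your steps (a) and (c) entirely. Your step (a) is in fact redundant: \autoref{contractible} already establishes simple-connectedness of each Green-ball internally (through \autoref{prop:GballinR} and \autoref{simply.conn.clean}), so you never need $\pi_1(X)=1$ as a separate input to step (b). Your step (c) is correct but essentially reproves Brown's theorem by hand, exploiting the additional information that each annular region $A_i$ is a product $S^2\times[0,1]$ (which you obtain from tameness of the Green-spheres and the Poincar\'e conjecture after capping off). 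Brown's theorem is more robust, requiring only that each $U_i$ be an open cell, without any control on the annuli. The upside of your approach is that the gluing is explicit and self-contained; the paper's is considerably shorter.
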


Then, by leveraging on cone rigidity (see \autoref{thm:noRP2}), we have the following corollary.

\begin{corollary}\label{cor:secR3AVR}
Let $(M_i^n, g_i ,p_i)\to (X^n,\dist,\haus^n,p)$ be a noncollapsed Ricci limit space with $\Ric_i\ge -1/i$ for any $i\in\setN$. Assume that $X$ has Euclidean volume growth and is $(n-3)$-symmetric, i.e., $X=\setR^{n-3}\times Z^3$.
Then $Z^3$ is homeomorphic to $\setR^3$. 
\end{corollary}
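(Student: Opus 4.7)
The plan is to reduce \autoref{cor:secR3AVR} to the combination of \autoref{thm:top n-3 main} and \autoref{thm:RCD03AVR} (i.e., \autoref{thm:volumegrowthR^3}). Since $\Ric_{g_i}\ge -1/i\to 0$ and the convergence is noncollapsing, the limit $(X^n,\dist,\haus^n)$ is an $\RCD(0,n)$ space; the metric measure splitting $X=\R^{n-3}\times Z^3$ then forces $(Z^3,\dist_Z,\haus^3)$ to be an $\RCD(0,3)$ space by the standard product/factorization result (cf.\ \cite[Proposition 2.15]{DePhilippisGigli2}, already invoked in the proof of \autoref{thm:top n-3 main}). Applying \autoref{thm:top n-3 main} directly to the hypothesis $X=\R^{n-3}\times Z^3$ gives that $Z^3$ is a topological $3$-manifold.

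The only remaining point is to check that $Z$ inherits Euclidean volume growth from $X$. Fixing $z_0\in Z$ and setting $p:=(0,z_0)$, in the $\ell^2$ product metric one has the inclusion
\begin{equation*}
B^X_r(p)\subseteq B^{\R^{n-3}}_r(0)\times B^Z_r(z_0);
\end{equation*}
moreover, since both factors are noncollapsed, the reference measure on $X$ coincides with the product $\Leb^{n-3}\otimes\haus^3$, and Fubini gives
\begin{equation*}
v\,r^n\;\le\;\haus^n(B^X_r(p))\;\le\;\omega_{n-3}\,r^{n-3}\,\haus^3(B^Z_r(z_0))
\end{equation*}
for every $r>0$. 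Hence $\haus^3(B^Z_r(z_0))\ge (v/\omega_{n-3})\,r^3$, i.e., $Z$ has Euclidean volume growth.

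At this point \autoref{thm:RCD03AVR} applies to $(Z^3,\dist_Z,\haus^3)$, yielding $Z^3\cong \R^3$, which concludes the argument. There is no substantial obstacle here: the proof is essentially a bookkeeping combination of the manifold recognition \autoref{thm:top n-3 main} and the topological rigidity \autoref{thm:RCD03AVR}, the only mild check being the elementary Fubini computation that transfers Euclidean volume growth from the product to the factor.
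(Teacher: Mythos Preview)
Your proof is correct and follows essentially the same route as the paper: apply \autoref{thm:top n-3 main} to get that $Z^3$ is a topological $3$-manifold, use \cite[Proposition 2.15]{DePhilippisGigli2} to deduce the $\RCD(0,3)$ structure on $Z$, note that $Z$ inherits Euclidean volume growth, and conclude via \autoref{thm:RCD03AVR}. The paper's proof is terser and simply asserts the Euclidean volume growth of the factor, whereas you spell out the elementary Fubini computation; otherwise the arguments coincide.
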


\begin{proof}
By \autoref{thm:top n-3 main}, $Z^3$ is a topological $3$-manifold. Hence, taking into account also \cite[Proposition 2.15]{DePhilippisGigli2}, $(Z^3,\dist_Z,\haus^3)$ is an $\RCD(0,3)$ manifold with Euclidean volume growth. The conclusion follows from \autoref{thm:RCD03AVR}.
\end{proof}

\begin{remark}\label{rm:cfZhu}
We mention that \autoref{thm:unifcontrtopRCD3main} below and the conclusion that $X^3$ is contractible under the assumptions of \autoref{thm:RCD03AVR} should follow also from the arguments in \cite{Zhu93} where the analogous statements are proved for smooth Riemannian manifolds. The arguments we present below are more in the spirit of the other parts of the paper and lead to stronger topological control.  
\end{remark}

\begin{proof}[Proof of \autoref{thm:RCD03AVR}]
We claim that any $\RCD(0,3)$ manifold $(X^3,\dist,\haus^3)$ with Euclidean volume growth admits an exhaustion $X=\bigcup_i U_i$ where each $U_i$ is homeomorphic to the $3$-ball $\overline{D}^3$. 
Then the main theorem of \cite{Brown} applies and proves that $X$ is homeomorphic to $\setR^3$. 

\medskip

The remaining part of the proof is dedicated to the verification of the claim above. We fix a point $p\in X$, let $G_p:X\setminus\{p\}\to (0,\infty)$ be the Green function of the Laplacian with pole at $p$, and let $b_p$ be the induced Green-type distance (see Section \ref{sec:Green}). A family of domains $U_i$ with the properties claimed above is given by sub-level sets $\{b_p\le r_i\}$ for a suitably chosen sequence $r_i\to\infty$. Indeed, by \autoref{cor:densegoodr} applied to rescalings of $X$, there exists a sequence of good radii $r_i\to \infty$. By \autoref{contractible}, all the closed Green-balls $\overline{\mathbb{B}}_{r_i}(p)$ are $3$-manifolds with boundary, which by \autoref{poinc.ball} are homeomorphic to $\overline{D}^3$, as we claimed.
\end{proof}

\subsection{Uniform local contractibility}\label{sec:unifcontr}

The goal of this section is to establish the uniform local contractibility result stated in \autoref{thm:unifcontrtopRCD3}, which is restated below for clarity. The proof will rely on \autoref{contractible} and \autoref{cor:densegoodr}.

\begin{theorem}\label{thm:unifcontrtopRCD3main}
Let $v>0$ be fixed. There exist constants $C=C(v)>0$ and $\rho=\rho(v)>0$ such that if $(X,\dist,\haus^3)$ is an $\RCD(-2,3)$ manifold with $\haus^3(B_1(p))\ge v$ for any $p\in X$, then the ball $B_r(p)$ is contractible inside $B_{Cr}(p)$ for every $r<\rho$ and for every $p\in X$.
\end{theorem}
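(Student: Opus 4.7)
\textbf{Proof proposal for \autoref{thm:unifcontrtopRCD3main}.} The plan is to exploit the fact that, in a topological manifold, every point lies in $\mathcal{R}_{\rm top}$, so \autoref{contractible} applies unconditionally to every good Green-ball centred at any point, and to combine this with the density of good radii provided by \autoref{cor:densegoodr} to sandwich each metric ball $B_r(p)$ between two concentric Green-balls of comparable radius.

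First I would fix $\delta_0 = \delta_0(v)$ and $\eta_0 = \eta_0(v)$ small enough that \autoref{contractible}, \autoref{cor:densegoodr}, \autoref{retraction} and \autoref{main.green.sphere} all apply, together with the gradient and pointwise estimates of \autoref{thm:prop_Green}. After rescaling by $r^{-1}$, the space $(X, r^{-1}\dist, \haus^3)$ is an $\RCD(-2r^2, 3)$ space satisfying, by Bishop--Gromov, a uniform volume lower bound $\haus^3(B_1(q)) \ge v' = v'(v)$ for all $q$, provided $r \le \rho_0(v)$. Hence all arguments from Section \ref{sec:genmanprel} apply with parameters depending only on $v$, after rescaling back.

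Next I would use \autoref{cor:densegoodr}(i) to produce a constant $C_0 = C_0(v) > 1$ such that $(r, C_0 r) \cap \mathcal{G}_p \neq \emptyset$ for every $r$ below a uniform threshold $\rho = \rho(v)$. Pick any $r' \in (r, C_0 r) \cap \mathcal{G}_p$. From the bound $|b_p - \dist_p| \le \eta_0$ on scales where the Green distance is defined (a consequence of \autoref{thm:prop_Green}, rescaled so that $\eta_0$ becomes $\eta_0 r'$ at radius $r'$), together with the flexibility in choosing $\eta_0$ small compared to $C_0 - 1$, one obtains the metric sandwich
\begin{equation*}
    B_r(p) \;\subseteq\; \mathbb{B}_{r'}(p) \;\subseteq\; \overline{\mathbb{B}}_{r'}(p) \;\subseteq\; B_{C r}(p)
\end{equation*}
for some $C = C(v) > C_0$. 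Since $X = \mathcal{R}_{\rm top}$ by hypothesis, the good Green-ball $\overline{\mathbb{B}}_{r'}(p)$ is contained in $\mathcal{R}_{\rm top}$, so \autoref{contractible} ensures it is contractible.

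Finally I would conclude: the inclusion $B_r(p) \hookrightarrow B_{C r}(p)$ factors through the contractible space $\overline{\mathbb{B}}_{r'}(p)$, and any contracting homotopy of $\overline{\mathbb{B}}_{r'}(p)$ to a point produces a null-homotopy of $B_r(p)$ inside $B_{Cr}(p)$. The main obstacle is bookkeeping the uniformity of constants: one must verify that $\delta_0, \eta_0, C_0$ and the resulting $C$ and $\rho$ depend only on $v$ and not on the particular space $X$ or point $p$. This is not conceptually difficult but requires carefully tracing the dependencies through the rescaling and through \autoref{cor:densegoodr}, \autoref{contractible}, and the closeness estimate $|b_p - \dist_p| \le \eta_0$. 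No further topological input beyond these two ingredients is needed, since the manifold assumption makes the non-manifold set empty and thereby trivializes the hypothesis of \autoref{contractible}.
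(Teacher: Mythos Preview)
Your proposal is correct and follows essentially the same approach as the paper: fix $\eta_0,\delta_0$ depending only on $v$ so that \autoref{contractible} applies to every good Green-ball (trivially, since $X=\mathcal{R}_{\rm top}$), use \autoref{cor:densegoodr}(i) to find $r'\in(r,C_0r)\cap\mathcal{G}_p$, and sandwich $B_r(p)\subseteq\overline{\mathbb{B}}_{r'}(p)\subseteq B_{Cr}(p)$ so that the inclusion factors through a contractible set. Your write-up is in fact slightly more detailed than the paper's in justifying the sandwich inclusion via $|b_p-\dist_p|\le\eta_0 r'$ and in tracking the rescaling, but the argument is the same.
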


\begin{remark}[Comparison with \autoref{thm:RCD03AVR}]
    The uniform local contractibility of noncollapsed $\RCD(-2,3)$ spaces should be interpreted as a localized version of the assertion that $\mathrm{RCD}(0,3)$ spaces with Euclidean volume growth are contractible. In this context, \autoref{thm:RCD03AVR} yields a stronger conclusion by providing a homeomorphism with $\R^3$. As mentioned before (see \autoref{rm:cfZhu}), it should be possible to obtain an alternative proof of \autoref{thm:unifcontrtopRCD3main} by following the arguments in \cite{Zhu93}.
\end{remark}

\begin{remark}
It is possible to achieve the same conclusion of \autoref{thm:unifcontrtopRCD3main} independently of the results of Section \ref{sec:topman}, still under the assumption that all tangent cones of $(X,\dist)$ have cross-section homeomorphic to $S^2$. 
Indeed, using \autoref{prop:Gballscongm} in place of \autoref{contractible}, we obtain that the space $X$ admits an exhaustion into open sets with contractible closure. This guarantees that $X$ is itself contractible by Whitehead's theorem, since all the homotopy groups of $X$ are trivial and $X$ is an absolute neighbourhood retract; alternatively, we can apply the main result of \cite{AncelEdwards}.
\end{remark}

\begin{proof}[Proof of \autoref{thm:unifcontrtopRCD3main}]
It is sufficient to show that there exist constants $C=C(v)>0$ and $\rho=\rho(v)>0$ such that if $(X,\dist,\haus^3)$ is an $\RCD(-2,3)$ manifold with $\haus^3(B_1(p))\ge v$ for every $p\in X$, then for every $0<r<\rho$ and for every $p\in X$ there exists $U\subset X$ such that $U$ is contractible and $B_r(p)\subset U\subset B_{Cr}(p)$. If this is the case, then the inclusion of $B_r(p)$ into $B_{Cr}(p)$ induces the trivial map in homotopy. 

\medskip

As in the proof of \autoref{thm:RCD03AVR}, the sought domain is going to be a suitably chosen good Green-ball. Indeed, we can fix $\eta_0 = \eta_0(v)$ and $\delta_0 = \delta_0(v)$ such that any Green-ball $\mathbb{B}_r(p)$ with $r\in \mathcal{G}_p(\eta_0,\delta_0)$ in an $\RCD(-2,3)$ space $(X,\dist,\haus^3)$ with $\haus^3(B_1(p))\ge v$ for all $p\in X$ verifies the conclusion of \autoref{contractible}. Then we can choose $C=C(v)=C(\eta_0(v), \delta_0(v), v)$ as given by \autoref{cor:densegoodr}. For each $0 < r < \rho_0:=\delta_0^2/C$ there exists $r'\in (r,Cr) \cap \mathcal{G}_p$, and $r'$ can be chosen so that $B_r(p)\subset \overline{\mathbb{B}}_r(p)\subset B_{Cr}(p)$. 
\end{proof}

\subsection{Topological stability: proof of \autoref{thm:topstabintro}}

The goal of this section is to prove \autoref{thm:topstabintro}, whose statement is repeated below for the sake of clarity.

\begin{theorem}[Topological stability]\label{thm:topstabmain}
Let $(X,\dist,\haus^3)$ be a compact $\RCD(-2,3)$ space which is a topological manifold. There exists $\epsilon=\epsilon(X)>0$ such that if $(Y,\dist_Y,\haus^3)$ is an $\RCD(-2,3)$ space which is a topological manifold and $\dist_{\rm{GH}}(X,Y)<\epsilon$, then $X$ and $Y$ are homeomorphic.    
\end{theorem}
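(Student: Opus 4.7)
The plan is to deduce \autoref{thm:topstabmain} from the uniform local contractibility established in \autoref{thm:unifcontrtopRCD3main} together with two classical results from geometric topology. First I would transfer the noncollapsing from $X$ to $Y$: any compact $\RCD(-2,3)$ topological manifold satisfies $v(X):=\inf_{p\in X}\haus^3(B_1(p))>0$ by compactness, and by the volume convergence theorem of Cheeger--Colding this lower bound survives on any $Y$ with $\dist_{\mathrm{GH}}(X,Y)<\eps(X)$ at the level $v(X)/2$. Applying \autoref{thm:unifcontrtopRCD3main} with noncollapsing constant $v(X)/2$ then places both $X$ and $Y$ in a single family $\mathcal{F}$ of compact metric spaces of uniformly bounded diameter that are uniformly locally contractible, with quantitative constants depending only on $v(X)$.

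Next I would invoke Petersen's stability theorem \cite{Petersen90}: within any family of uniformly locally contractible, finite-dimensional compact metric spaces of uniformly bounded diameter, two members sufficiently GH-close are homotopy equivalent; moreover, given any $\delta$-GH isometry $\phi:X\to Y$, the homotopy equivalence may be chosen uniformly $\eps$-close to $\phi$, provided $\delta\le\delta_0(X,\eps)$. In our setting this produces, for $\delta$ small enough, a homotopy equivalence $f:X\to Y$ uniformly close to a prescribed almost isometry.

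Finally, to upgrade $f$ to a homeomorphism I would apply Jakobsche's approximation theorem \cite{Jakobsche} for closed topological $3$-manifolds, combined with the resolution of the Poincar\'e conjecture by Perelman \cite{PerelmanPoincare}. Jakobsche's theorem, in the form relevant here, asserts that if $f:X\to Y$ is a homotopy equivalence between closed $3$-manifolds which is also an $\eta$-map (point preimages have diameter $<\eta$), and if $\eta$ is sufficiently small depending on the topology of $X$, then $f$ can be uniformly approximated by a homeomorphism; the Poincar\'e conjecture is needed to rule out fake $3$-cells that would otherwise obstruct this passage. Composing the two steps yields a homeomorphism $\tilde f:X\to Y$ uniformly $2\eps$-close to the original $\delta_0$-GH isometry $\phi$, establishing both \autoref{thm:topstabmain} and the refined perturbative statement from the Remark following it.

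The main obstacle I anticipate is the third step: the hypotheses of Jakobsche's theorem must be matched carefully to the output of Petersen's theorem. In particular, the $\eta$-map smallness threshold depends on the global topology of $X$ itself (not merely on $v(X)$), so one must carefully track how the Petersen approximation constants compare with the topological data of $X$, and verify that the uniform $\eps$-closeness of $f$ to an almost isometry translates into a genuine $\eta$-map bound with $\eta$ below Jakobsche's threshold. Handling the non-prime case in the decomposition of $X$ and ensuring that the approximation respects it, again modulo the Poincar\'e conjecture, completes the geometric-topological core of the argument.
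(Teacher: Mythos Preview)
Your proposal is correct and follows essentially the same route as the paper: volume convergence to transfer noncollapsing, uniform local contractibility from \autoref{thm:unifcontrtopRCD3main}, then Petersen's theorem \cite{Petersen90} followed by Jakobsche's $\alpha$-approximation theorem \cite{Jakobsche} (modulo the Poincar\'e conjecture). The paper sidesteps the matching obstacle you anticipate by formulating both Petersen's output and Jakobsche's input via the single notion of $\eps$-equivalence, so no separate $\eta$-map verification is required.
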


The proof of \autoref{thm:topstabmain} will be based on
\autoref{thm:unifcontrtopRCD3}, the homotopic stability results from \cite{Petersen90}, and some abstract results in geometric topology from \cite{Jakobsche}. 

We record below some of the relevant terminology and the results we will rely on.

\begin{definition}[$\eps$-equivalence]
Given metric spaces $(X,\dist_X)$ and $(Y,\dist_Y)$ and a continuous map $f:X\to Y$, we say that $f$ is an \emph{$\eps$-equivalence} if there exists a continuous map $g:Y\to X$ with the following properties. There exist homotopies $F$ and $G$ of $f\circ g$ and $g\circ f$ with the identities of $Y$ and $X$, respectively, such that the $F$-flow line of any point in $Y$ and the image through $f$ of the $G$-flow line of any point in $X$ have diameter less than $\eps$ in $Y$.
\end{definition}

The following statement is borrowed from \cite{Petersen90}.
The notion of dimension used in the statement is the Lebesgue covering dimension.

\begin{theorem}\label{thm:petersen}
Fix $n\in\setN$ and $\eps>0$. Let $\mathcal{F}$ be a family of uniformly contractible $n$-dimensional metric spaces. Then there exists $\delta=\delta(\eps,n)>0$ such that if $X,Y\in\mathcal{F}$ satisfy $\dist_{\mathrm{GH}}(X,Y)\le \delta$, then they are $\eps$-equivalent. 
\end{theorem}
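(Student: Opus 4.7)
The plan is to reproduce Petersen's classical nerve-theoretic argument, producing continuous quasi-inverse maps $f\colon X\to Y$ and $g\colon Y\to X$ by extending a discrete assignment of ``corresponding'' points through uniform contractibility, and then constructing the required homotopies by the same method applied to the cylinder. The role of the bounded covering dimension $n$ is to guarantee that every extension terminates after at most $n+1$ skeleton steps, so that the common contractibility modulus $\rho$ of the family $\mathcal{F}$ is iterated only a bounded number of times; this lets me choose a single pair of scales $(\sigma,\delta)$ depending only on $n$, $\eps$, and $\mathcal{F}$.

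First I would fix a modulus $\rho\colon[0,\infty)\to[0,\infty)$ (with $\rho(r)\ge r$) for the uniform contractibility of $\mathcal{F}$, pick $\sigma=\sigma(\eps,n)$ so small that the iterate $\rho^{n+2}(C(n)\sigma)<\eps$, and set $\delta:=\sigma/100$. Given $X,Y\in\mathcal{F}$ with $\dist_{\mathrm{GH}}(X,Y)\le\delta$, I would fix a $\delta$-correspondence $R\subset X\times Y$ and, using that the covering dimension of $X$ is at most $n$, select a finite open cover of $X$ by balls $B_\sigma(x_i)$ of multiplicity at most $n+1$, with associated nerve $K$ and subordinate partition of unity defining a continuous $\phi\colon X\to K$. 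I would then build $f_K\colon K\to Y$ by induction on the skeleta: on each vertex $i$ pick $y_i\in Y$ with $(x_i,y_i)\in R$; on a $k$-simplex with vertices $i_0,\dots,i_k$ the $y_{i_j}$ lie in a common ball of $Y$ of radius $\le 10\sigma$, hence the already-defined map on the boundary of the simplex lands in a ball of radius $\le\rho^k(10\sigma)$ and uniform contractibility supplies an extension inside a ball of radius $\rho^{k+1}(10\sigma)$. Setting $f:=f_K\circ\phi\colon X\to Y$ and repeating with the roles of $X$ and $Y$ exchanged yields $g\colon Y\to X$; by design $\dist(f(x),y)+\dist(g(y),x)\lesssim\rho^{n+1}(10\sigma)$ whenever $(x,y)\in R$.

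For the homotopy $F\colon Y\times[0,1]\to Y$ between $f\circ g$ and $\Id_Y$ I would apply the same skeleton-by-skeleton extension to a cover of the cylinder $Y\times[0,1]$ of multiplicity at most $n+2$, starting from the discrete data $F(y_i,0)=y_i$ and $F(y_i,1)=f(g(y_i))$, which lie automatically in a common small ball of $Y$ thanks to the closeness of $f\circ g$ to the correspondence. Each extension to a higher skeleton increases the ambient ball radius by one application of $\rho$, so after at most $n+2$ steps every flow line $t\mapsto F(y,t)$ sits inside a ball of radius $\rho^{n+2}(C(n)\sigma)<\eps$. The homotopy $G\colon X\times[0,1]\to X$ between $g\circ f$ and $\Id_X$ is produced identically, and the requirement that the image through $f$ of each $G$-flow line has diameter less than $\eps$ in $Y$ follows because the construction of $f$ only sends balls of $X$ of radius $\le\sigma$ into balls of $Y$ of radius $\le\rho^{n+1}(10\sigma)$, so composing with $f$ adds only a controlled amount to the track.

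The main obstacle will be the bookkeeping. At every stage of the skeleton induction one must verify simultaneously that the discrete data to be extended already lie in a common ball whose radius matches the bound produced by the previous stage (this requires the multiplicity bound $\le n+1$ supplied by $\operatorname{dim}_c\le n$), and that the filled-in extension stays inside the next-larger ball determined by $\rho$. Closing up these nested ball radii so that a single $\sigma$ (and hence $\delta$) serves simultaneously for the construction of $f$, of $g$, and of the two homotopies is the place where the proof is delicate; it also explains why the statement is genuinely a result about \emph{families} of uniformly contractible spaces, as one needs a uniform modulus $\rho$ to execute the inductive extension.
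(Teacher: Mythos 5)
Your proposal is essentially Petersen's original nerve-and-skeleton-extension argument; the paper does not reprove this theorem but simply cites \cite{Petersen90}, and the same technique is exactly what the paper itself adapts in \autoref{retraction} and in the contractibility arguments of Section \ref{subsec:loccontr}, so you are on the intended route. The only detail worth flagging is the endpoint matching for the homotopies $F$ and $G$: to make the nerve-built map on $Y\times[0,1]$ restrict \emph{exactly} to $\Id_Y$ and $f\circ g$ at $t=0,1$ (rather than merely approximately), one uses covers of the open cylinder whose elements shrink as they approach the two boundary components, as in the proof of \autoref{retraction} — a standard device that your sketch glosses over but that does not affect correctness.
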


The following is usually referred to as \emph{$\alpha$-approximation theorem}. In the 3-dimensional case it is due to \cite{Jakobsche}. We remark that the original statement in \cite{Jakobsche} had the validity of the Poincar\'e conjecture among the assumptions.

\begin{theorem}\label{thm:alphaapprox}
Let $(X,\dist)$ be a closed topological 3-manifold. For any $\alpha>0$ there exists $\eps=\eps(\alpha,X)>0$ such that if $(Y,\dist_Y)$ is a closed topological 3-manifold and $f:Y\to X$ is an $\eps$-equivalence, then there exists a homeomorphism $f':Y\to X$ such that $\dist_X(f,f')<\alpha$.
\end{theorem}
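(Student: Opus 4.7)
The plan is to follow the two-stage $\alpha$-approximation scheme of Chapman--Ferry, adapted to dimension $3$, with the resolution of the Poincar\'e conjecture supplying the local ball-recognition that handle straightening provides in higher dimensions. The first stage replaces the $\eps$-equivalence $f: Y \to X$ by a genuinely cell-like map $\tilde f: Y \to X$ within $\alpha/2$ of $f$; the second stage invokes the cell-like approximation theorem for closed $3$-manifolds to produce a homeomorphism $f'$ within $\alpha/2$ of $\tilde f$, giving $\dist_X(f, f') < \alpha$.

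For the first stage, I would fix $\alpha > 0$ and use compactness of $X$ (and then of $Y$, which is a closed topological $3$-manifold) to choose $\eps \ll \alpha$ so small that, in terms of the controlling homotopies $F: f \circ g \simeq \mathrm{id}_X$ and $G: g \circ f \simeq \mathrm{id}_Y$ of the $\eps$-equivalence, each point preimage $f^{-1}(x)$ satisfies two controlled properties. First, any two points $y, y' \in f^{-1}(x)$ are joined in $Y$ by a path obtained by concatenating the $G$-flow lines from $y$ and $y'$ to the common endpoint $g(x)$, and the $f$-image of this path stays $2\eps$-close to $x$, so the decomposition $\mathcal{D} = \{f^{-1}(x)\}_{x \in X}$ has nondegenerate elements that are small when measured in $X$. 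Second, any loop in $f^{-1}(x)$ is $G$-homotoped to a loop in a small $Y$-neighborhood of $g(x)$, which is null-homotopic in $Y$ since closed topological manifolds are uniformly locally contractible. Together these show that $\mathcal{D}$ is a cell-like decomposition whose elements are small and nearly shrinkable; a standard Bing-shrinking perturbation, controlled in $X$ so that the image of each $f^{-1}(x)$ eventually collapses to a single point, yields a cell-like map $\tilde f$ at distance less than $\alpha/2$ from $f$.

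For the second stage, the cell-like approximation theorem for closed $3$-manifolds asserts that any cell-like map between closed $3$-manifolds can be approximated by homeomorphisms; in dimension $3$ this rests fundamentally on the Poincar\'e conjecture, which is used to recognize the preimage of a small $3$-cell under $\tilde f$, up to a controlled straightening, as a simply connected compact $3$-manifold with boundary $S^2$, and hence as a $3$-ball. Applying it to $\tilde f$ produces the desired $f'$. The principal obstacle is the first stage, specifically verifying the Bing shrinking criterion: one must show not only that each fiber $f^{-1}(x)$ is small and cell-like in the controlled sense described above, but also that the fibers can be \emph{simultaneously} collapsed to points through a pseudo-isotopy of $Y$ that is small when pushed down to $X$. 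In dimensions $\ge 5$ this simultaneous shrinkability is obtained through handle theory; in dimension $3$ one must instead combine the $\eps$-equivalence control with Bing's tame approximation machinery for $2$-spheres, in the same spirit as in the proof of \autoref{prop:GballinR}, but carried out uniformly along the whole decomposition and at the controlled scale dictated by $\alpha$.
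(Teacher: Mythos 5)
You should first note that the paper does not prove this statement at all: it is quoted verbatim from Jakobsche \cite{Jakobsche}, with the only remark being that Perelman's work removes the Poincar\'e hypothesis from Jakobsche's original formulation. So there is no ``paper proof'' to match; what has to be judged is whether your two-stage reduction is itself a valid argument, and it is not.

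The gap is in your first stage. The fibers $f^{-1}(x)$ of an $\eps$-equivalence do not form a cell-like decomposition, and the control you invoke cannot make them into one. First, the fibers need not be connected: two points $y,y'\in f^{-1}(x)$ are joined by the concatenation of their $G$-flow lines through $g(x)$, but that path is not contained in $f^{-1}(x)$, only in $f^{-1}(B_{2\eps}(x))$. Second, and more seriously, cell-likeness of a compactum $C\subset Y$ requires that $C$ contract in \emph{every} neighbourhood of itself in $Y$. The homotopy $G$ contracts $f^{-1}(x)$ only inside $f^{-1}(B_{C\eps}(x))$ --- a single neighbourhood at the fixed scale $\eps$ --- and gives no information about neighbourhoods $f^{-1}(B_\delta(x))$ with $\delta\ll\eps$, which are cofinal among neighbourhoods of the fiber. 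This one-scale-versus-all-scales discrepancy is exactly the difference between an $\eps$-equivalence for a single $\eps$ and a cell-like map (by Lacher's characterization, cell-like $=$ fine homotopy equivalence, i.e.\ $\eps$-equivalence for every $\eps$). Your appeal to uniform local contractibility of $Y$ also slips between the two control spaces: the $G$-homotopy is small when pushed forward by $f$ into $X$, but the tracks in $Y$ are a priori large, so the loop in $f^{-1}(x)$ is not null-homotoped in a small $Y$-neighbourhood of $g(x)$.

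Consequently the proposed ``standard Bing-shrinking perturbation'' producing a cell-like map $\tilde f$ within $\alpha/2$ of $f$ is not a perturbation argument at all: Bing shrinking is a tool for showing that an already cell-like (indeed shrinkable) decomposition map is a near-homeomorphism, not for converting a map with non-cell-like fibers into one with cell-like fibers while keeping it close to the original. Producing a nearby cell-like map from a single-scale controlled homotopy equivalence is precisely the hard content of the $\alpha$-approximation theorem; in dimension $\ge 5$ it is done by Chapman--Ferry via handle straightening and controlled Whitehead-torsion arguments, and in dimension $3$ Jakobsche argues directly with a fine handle decomposition, the loop theorem/Dehn's lemma, and the Poincar\'e conjecture, not through the cell-like approximation theorem. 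Your second stage (CE approximation in dimension $3$, where Poincar\'e enters through the cellularity of cell-like sets) is a correct citation of known results, but as it stands the proposal reduces the theorem to an unproved claim that contains all of its difficulty.
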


\begin{proof}[Proof of \autoref{thm:topstabmain}]
 By volume convergence, Bishop--Gromov and \autoref{thm:unifcontrtopRCD3main}, if $\overline{\epsilon}>0$ is sufficiently small then the family $\mathcal{F}$ of all $\RCD(-2,3)$ spaces $(Y,\dist_Y,\haus^3)$ that are topological $3$-manifolds and such that $\dist_{\rm{GH}}(X,Y)<\overline{\epsilon}$ is a family of uniformly contractible $3$-dimensional metric spaces. The conclusion follows combining \autoref{thm:petersen} and \autoref{thm:alphaapprox}.
\end{proof}

\subsection{Homeomorphism of (iterated) cross-sections}

In this section, we conclude the proof of \autoref{thm:omeo n-4 sym}, restated below for clarity.

\begin{theorem}\label{thm:omeo n-4 sym 2}
Let $(X^n,\dist)$ be a noncollapsed Ricci limit space of dimension $n\ge 4$.
\begin{itemize}
    \item[(i)] If $\R^{n-4}\times C(Z^3)$ is an $(n-4)$-symmetric tangent cone at $x\in X$, then $(Z^3,\dist_Z)$ is homeomorphic to a topological $3$-manifold whose universal cover is $S^3$.

    \item[(ii)] If all tangent cones at $x\in X$ are $(n-4)$-symmetric, i.e., each is isometric to $\mathbb{R}^{n-4}\times C(Z)$, then all the cross-sections $Z$ must be homeomorphic to each other.
\end{itemize}
\end{theorem}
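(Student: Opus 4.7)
Part (i) is precisely the content of \autoref{thm:topreg}, already established in this section, so nothing remains to prove there. The task is therefore to show part (ii): that when all tangent cones at $x\in X$ are $(n-4)$-symmetric, the cross-sections of the three-dimensional conical factors are mutually homeomorphic.

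The plan is to combine three ingredients: (a) the topological regularity of each cross-section given by part (i); (b) the topological stability \autoref{thm:topstabintro} for compact $\RCD(-2,3)$ topological manifolds; and (c) the connectedness of the space of tangent cones at a point of a noncollapsed Ricci limit, due to Colding--Naber. More precisely, write each tangent cone at $x$ as $\R^{n-4}\times C(Z)$ with $(Z,\dist_Z,\haus^3)$ a compact $\RCD(2,3)$ space; by the splitting theorem, the Euclidean factor is canonical, and the cross-section $Z$ is determined by the tangent cone up to isometry. The map which associates to a tangent cone $\R^{n-4}\times C(Z)$ its cross-section $Z$ is continuous in the pointed Gromov--Hausdorff topology, so the family
\begin{equation}
    \mathcal{Z}_x := \{Z \mid \R^{n-4}\times C(Z)\text{ is a tangent cone at }x\}
\end{equation}
is connected in the Gromov--Hausdorff topology on compact metric spaces.

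Next I would verify that $\mathcal{Z}_x$ is a precompact and uniformly noncollapsed family of $\RCD(-2,3)$ topological manifolds, so that \autoref{thm:topstabintro} applies uniformly along $\mathcal{Z}_x$. Precompactness follows from $\operatorname{diam}(Z)\le\pi$ together with a uniform upper bound on $\haus^3(Z)$, both of which come from the $\RCD(2,3)$ condition. The uniform noncollapsing $\haus^3(B_1(z))\ge v(x)>0$ follows from Bishop--Gromov together with the fact that $\haus^3(Z)$ is determined by the density $\theta_x = \lim_{r\to 0}\haus^n(B_r(x))/(\omega_n r^n)>0$, which is a fixed positive number depending only on $x$. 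Finally, by part (i) each $Z\in\mathcal{Z}_x$ is a topological $3$-manifold.

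With these preparations, the conclusion is immediate. For each $Z_0\in\mathcal{Z}_x$, \autoref{thm:topstabintro} provides $\eps(Z_0)>0$ such that every $Z\in\mathcal{Z}_x$ with $\dist_{\mathrm{GH}}(Z,Z_0)<\eps(Z_0)$ is homeomorphic to $Z_0$. Hence the equivalence relation ``homeomorphic to'' has open equivalence classes in $\mathcal{Z}_x$; being a partition into open sets, each class is also closed, and by connectedness of $\mathcal{Z}_x$ there is a single class. The only delicate point is verifying continuity of the cross-section map and the uniform geometric control on $\mathcal{Z}_x$; once these are in place the stability theorem does all the work, and no further argument specific to dimension $n$ is required.
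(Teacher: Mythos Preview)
Your proposal is correct and follows essentially the same route as the paper: part (i) is \autoref{thm:topreg}, and for part (ii) you combine connectedness of the family of cross-sections with the topological stability \autoref{thm:topstabintro} to obtain a clopen partition into homeomorphism classes. The paper proceeds identically, though it packages the ``delicate point'' you flag (well-definedness and continuity of the cross-section map, together with compactness, connectedness, and constancy of $\haus^3$-volume on the family $\mathcal{C}_x^{n-4}$) into a separate lemma (\autoref{lemma:crossconn}), proved via an injectivity statement for iterated spherical suspensions (\autoref{susp}); the connectedness of tangent cones is attributed to \cite[Theorem 4.2]{CheegerJiangNaber} rather than Colding--Naber.
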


\begin{remark}
\autoref{thm:omeo n-4 sym 2} implies that if all tangent cones at $x$ are $(n-4)$-symmetric, then they are all homeomorphic. However, the statement is strictly stronger as the following example illustrates. We let $Z_1:=S^3$ be the $3$-sphere endowed with the round metric of constant sectional curvature equal to $1$ and $Z_2:=\Sigma^3$ be the Poincar\'e homology sphere endowed with a metric of constant sectional curvature equal to $1$. Notice that $Z_1$ is not homeomorphic to $Z_2$. On the other hand, $\setR^2\times C(Z_1)=\setR^6$ is homeomorphic to $\setR^2\times C(Z_2)$. Indeed, denoting by $S^2\Sigma^3$ the double suspension over $\Sigma^3$, we have $\setR^2\times C(Z_2)=C(S^2\Sigma^3)$ and, by the double suspension theorem, $S^2\Sigma^3$ is homeomorphic to $S^5$. Hence, $\setR^2\times C(Z_2)$ is homeomorphic to $\setR^6$ as well.
\end{remark}

The proof of  \autoref{thm:omeo n-4 sym} will be based on \autoref{thm:topreg} and \autoref{thm:topstabmain}.
We are going to rely on the fact that the set of (possibly iterated) cross-sections of tangent cones at a given point of an $\RCD(K,n)$ space $(X,\dist,\haus^n)$ is compact and connected. Moreover, the volume is constant on this set. The statement is certainly known to experts, although it does not appear in the literature in this form. The argument is classical: see for instance the proofs of \cite[Theorem 4.2]{CheegerJiangNaber} or \cite[Lemma 2.1]{LytchakStadler}.

\begin{lemma}\label{lemma:crossconn}
Let $(X,\dist,\haus^n)$ be an $\RCD(K,n)$ metric measure space and let $x\in X$ be given. Let $0\le k\le n-2$ be such that all tangent cones at $x$ are $k$-symmetric, i.e., they are isometric to $\setR^k\times C(Z)$ for some metric space $(Z,\dist_Z)$.
Let $\mathcal{C}^k_x$ be the collection of all such iterated cross-sections $(Z,\dist_Z)$. Then $\mathcal{C}_x^k$ is a compact and connected subset of the class of $\RCD(n-k-2,n-k-1)$ metric measure spaces (endowed with the topology induced by the Gromov--Hausdorff distance). Moreover, the total volume $\haus^{n-k-1}$ is constant on $\mathcal{C}^k_x$.
\end{lemma}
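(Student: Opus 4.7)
The plan is to treat four pieces in sequence: (a) the $\RCD(n-k-2,n-k-1)$ structure of each cross-section, (b) constancy of $\haus^{n-k-1}(Z)$ on $\mathcal{C}^k_x$, (c) pmGH compactness, and (d) pmGH connectedness. Pieces (a)--(c) are essentially bookkeeping built from tools already recalled in Section~\ref{sec:prel}; the genuine content lies in (d), handled by the classical continuous-orbit argument.

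First, fix $Z\in\mathcal{C}^k_x$ and let $Y=\setR^k\times C(Z)$ be the corresponding iterated tangent cone. As a noncollapsed pmGH limit of rescalings of $(X,\dist,\haus^n,x)$, $Y$ is a noncollapsed $\RCD(0,n)$ metric measure cone. Peeling off the $k$ Euclidean factors via the splitting theorem and iterating Ketterer's characterization of cones --- $C(W)$ with $\haus^n$ is $\RCD(0,n)$ iff $(W,\haus^{n-1})$ is $\RCD(n-2,n-1)$ --- yields $(Z,\dist_Z,\haus^{n-k-1})\in\RCD(n-k-2,n-k-1)$. For (b), the density $\theta_X(x):=\lim_{r\to 0}\haus^n(B_r(x))/(\omega_n r^n)$ exists and is positive by Bishop--Gromov combined with the noncollapsing assumption, and is preserved under noncollapsed pmGH limits of rescalings. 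A direct computation on $Y$ identifies $\theta_Y(0^k,o)$ with an explicit dimensional multiple of $\haus^{n-k-1}(Z)$, forcing $\haus^{n-k-1}(Z)$ to depend only on $\theta_X(x)$.

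For (c), every element of $\mathcal{C}^k_x$ is an $\RCD(n-k-2,n-k-1)$ space of diameter at most $\pi$ (Bonnet--Myers) with the fixed total volume just computed, hence lies in a pmGH-compact class; a diagonal argument upgrades a pmGH limit of iterated cross-sections to a genuine iterated cross-section, yielding closedness and therefore compactness. For (d), consider the continuous curve $\gamma:(0,1]\to(\text{pointed noncollapsed }\RCD\text{ spaces})$ defined by $\gamma(r):=(X,r^{-1}\dist,r^{-n}\haus^n,x)$; its image is precompact in the pmGH topology. The set of tangent cones at $x$ is exactly the accumulation set of $\gamma$ as $r\to 0^+$, and a standard topological fact --- the accumulation set of a continuous curve in a compact metrizable space is always connected --- shows this set to be connected. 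Postcomposing with the map $\setR^k\times C(W)\mapsto W$ transfers connectedness to $\mathcal{C}^k_x$.

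The main subtlety is the continuity of the cross-section map $Y\mapsto Z$: although the product decomposition $Y=\setR^k\times C(Z)$ is not unique (even modulo isometries of the Euclidean factor), the cross-section $Z$ itself is unique up to isometry, and pmGH convergence $Y_n=\setR^k\times C(Z_n)\to Y=\setR^k\times C(Z)$ forces $Z_n\to Z$ in pmGH --- by slicing unit distance spheres around the cone tip and invoking the noncollapsed stability of cones. Once this continuity is granted, the four steps combine to yield the statement.
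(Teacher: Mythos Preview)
Your approach is correct and runs parallel to the paper's. Both arguments dispatch (a)--(c) with the same standard tools and reduce (d) to the well-known connectedness of the set of tangent cones at $x$ (which the paper quotes from \cite{CheegerJiangNaber} and you derive as the $\omega$-limit set of the continuous curve $r\mapsto (X,r^{-1}\dist,x)$ in a compact space). The difference is in how connectedness is transferred to $\mathcal{C}_x^k$: the paper uses the \emph{suspension} map $\Phi_k:\mathcal{C}_x^k\to\mathcal{C}_x$, $Z\mapsto S^kZ$, which is obviously continuous, and shows it is a bijection between compact spaces (hence a homeomorphism); you instead use the inverse map $Y=\setR^k\times C(Z)\mapsto Z$ and argue continuity directly.

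The one soft spot in your write-up is precisely that continuity claim. Your hint ``slicing unit distance spheres around the cone tip'' only recovers the full cross-section $S^kZ_n$ of $Y_n=C(S^kZ_n)$, so at best it yields $S^kZ_n\to S^kZ$, not $Z_n\to Z$. To close the gap you must know that $S^kZ\cong S^kZ'$ forces $Z\cong Z'$; you assert this (``the cross-section $Z$ itself is unique up to isometry'') but do not justify it, and it is not entirely trivial. The paper isolates this as a separate lemma and proves it via cone-splitting: assuming $Z'$ is not itself a suspension, any isometry $\setR^k\times C(Z)\to\setR^k\times C(Z')$ fixing the tip must carry $\setR^k\times\{o\}$ into $\setR^k\times\{o'\}$ (else an extra line would split), whence $C(Z)\cong C(Z')$. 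Once this injectivity is in hand, your continuity follows cleanly by the subsequence argument using the compactness established in (c); the paper's direction simply avoids having to spell this out, since continuity of $\Phi_k$ is immediate.
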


\begin{proof}
    Precompactness of $\mathcal{C}_x^k$, as well as the fact that each of its elements is an $\RCD(n-k-2,n-k-1)$ space, are well known; the latter statement follows from \cite{Gigli13,Ketterer15}.
    Moreover, if $Z_i\to Z$ in the Gromov--Hausdorff sense and each $Z_i\in\mathcal{C}_x^k$,
    then each $\setR^k\times C(Z_i)$
    is a tangent cone. Since these converge to $\setR^k\times C(Z)$, a diagonal argument shows that the latter is also a tangent cone, establishing compactness of $\mathcal{C}_x^k$.

We let $\mathcal{C}_x$ be the set of all cross-sections of tangent cones at $x$. The proof of \cite[Theorem 4.2]{CheegerJiangNaber} generalizes verbatim to the present setting. Hence $\mathcal{C}_x$ is a compact and connected set, with respect to the topology induced by the Gromov--Hausdorff distance. Moreover, the volume $\haus^{n-1}$ is constant on $\mathcal{C}_x$, by Bishop--Gromov.

We consider the map $\Phi_k:\mathcal{C}^k_x\to \mathcal{C}_x$ defined by $\Phi_k(Z):=S^kZ$, where $S^kZ$ denotes the $k$-times iterated spherical suspension over $Z$. Under the present assumptions, $\Phi_k$ is a surjective map. Moreover, by \autoref{susp} below, it is easily seen that $\Phi_k$ is injective and continuous. Since $\mathcal{C}_x^k$ and $\mathcal{C}_x$ are compact, they are homeomorphic. The connectedness of $\mathcal{C}_x^k$ follows.  

In order to complete the proof, it is sufficient to note that $\mathcal{H}^{n-1}(S^kZ)=\mathcal{H}^{n-1}(S^kZ')$ if and only if $\haus^{n-k-1}(Z)=\haus^{n-k-1}(Z')$, for any $Z,Z'\in \mathcal{C}_x^k$. The constancy of $\haus^{n-k-1}$ on $\mathcal{C}_x^k$ follows from the constancy of $\haus^{n-1}$ on $\mathcal{C}_x$.
\end{proof}


\begin{lemma}\label{susp}
    Given two spaces $(Z,\dist_Z)$ and $(Z',\dist_{Z'})$
    as above, if $S^kZ$ is isometric to $S^kZ'$ then $Z$ is isometric to $Z'$.
\end{lemma}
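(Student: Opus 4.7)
My plan is to pass to metric cones and exploit the uniqueness of the de Rham--type isometric splitting for $\RCD(0,N)$ spaces, a consequence of the splitting theorem in the $\RCD$ setting.

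First, the isometry $S^kZ \cong S^kZ'$ induces an isometry of Euclidean cones $C(S^kZ)\cong C(S^kZ')$, which under the canonical identification $C(S^jY)\cong \mathbb{R}^j\times C(Y)$ becomes
\begin{equation}
\mathbb{R}^k\times C(Z)\cong \mathbb{R}^k\times C(Z')\, ,
\end{equation}
an isometry of $\RCD(0,n)$ spaces by \cite{Ketterer15} (these spaces are noncollapsed, so the $\haus^n$-measure is intrinsic and the distinction between metric and metric measure isometry disappears). Writing $C(Z)\cong \mathbb{R}^a\times C(V)$ and $C(Z')\cong \mathbb{R}^{a'}\times C(V')$ with $V, V'$ not themselves spherical suspensions (equivalently, with $C(V), C(V')$ non-splitting), the uniqueness of the maximal Euclidean factor in the isometric splitting of an $\RCD(0,N)$ space forces $a=a'$ and $C(V)\cong C(V')$.

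Second, a non-splitting metric cone has a unique vertex: two distinct vertices would concatenate their rays into a line, contradicting the splitting theorem applied to the non-splitting factor. Hence the isometry $C(V)\cong C(V')$ sends vertex to vertex and restricts to an isometry $V\cong V'$ of the unit spheres. To identify $Z$ with $S^aV$, vertex uniqueness is applied once more to the tangent cone at a cone vertex $o_Z$ of $C(Z)\cong \mathbb{R}^a\times C(V)$: since the tangent cone at $o_Z$ is isometric to $C(Z)$ itself, and tangent cones at points of $\mathbb{R}^a\times C(V)$ split $\mathbb{R}^a$, the $C(V)$-component of $o_Z$ must be the unique vertex $o_{C(V)}$; composing with a translation in the Euclidean factor then realises $Z$ as the unit sphere around $(0,o_{C(V)})$ in $\mathbb{R}^a\times C(V)$, which is $S^aV$ by definition. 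The same argument gives $Z'\cong S^aV'$, so $V\cong V'$ yields the conclusion $Z\cong Z'$.

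The core technical input is the uniqueness of the isometric $\mathbb{R}^k$-splitting for $\RCD(0,N)$ spaces, which is what allows to cancel the Euclidean factor in the first step. A secondary subtlety is that the cone vertex of $\mathbb{R}^a\times C(V)$ is only unique up to translations in the Euclidean factor, and this requires care when recovering $Z$ from its cone; however, since translations act by isometries on the ambient product, every choice of vertex produces the same unit sphere up to isometry.
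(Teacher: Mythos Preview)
Your proof is correct, but it takes a more elaborate route than the paper's. The paper first reduces, by increasing $k$ if necessary, to the case where $Z'$ is not itself a spherical suspension (equivalently $C(Z')$ does not split); then, for an isometry $h:\setR^k\times C(Z)\to\setR^k\times C(Z')$ mapping tip to tip, cone-splitting immediately forces $h(\setR^k\times\{o\})=\setR^k\times\{o'\}$ (any cone vertex of the target outside $\setR^k\times\{o'\}$ would force an extra $\setR$-factor), and the orthogonal fibers $\{0\}\times C(Z)$, $\{0\}\times C(Z')$ are then identified intrinsically as the sets equidistant from a fixed sphere in the Euclidean factor. This yields $C(Z)\cong C(Z')$ directly, without ever decomposing $C(Z)$ further.

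Your approach instead splits both $C(Z)$ and $C(Z')$ down to their maximal Euclidean factors and invokes the uniqueness of the de~Rham-type decomposition for $\RCD(0,N)$ spaces; this is valid, but then you must reassemble $Z\cong S^aV$ from $C(Z)\cong\setR^a\times C(V)$, which costs an extra vertex-location argument. A small wording issue in that step: the point is that tangent cones at $(p,q)\in\setR^a\times C(V)$ with $q\neq o_{C(V)}$ split an \emph{additional} Euclidean factor (the radial direction in $C(V)$), hence $\setR^{a+1}$, contradicting that $C(Z)\cong\setR^a\times C(V)$ has maximal Euclidean factor $\setR^a$; your phrasing ``tangent cones \dots\ split $\setR^a$'' undersells this. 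In the end both arguments rest on the same cone-splitting principle, but the paper's reduction to $Z'$ non-suspension makes the proof shorter and avoids the reconstruction step entirely.
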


\begin{proof}
    Up to increasing $k$, we can assume without loss of generality that $Z'$ is not a metric suspension.
    Taking the metric cones, we have a (bijective) isometry $h:\R^k\times C(Z)\to\R^k\times C(Z')$
    such that $h(0,o)=(0,o')$, where $o$ and $o'$ denote the tips of $C(Z)$ and $C(Z')$ respectively.
    We claim that $h(\R^k\times\{o\})\subseteq \R^k\times\{o'\}$. Once this is done, the inclusion must be an equality (as a distance-preserving map between Euclidean spaces is linear), and $h$ restricts to an isometry $\{0\}\times C(Z)\to\{0\}\times C(Z')$,
    as $\{0\}\times C(Z)$ consists of the points equidistant from all the points in the sphere $S^{k-1}\times\{o\}$, and similarly in the second product. Hence, $Z$ is isometric to $Z'$.

The claim follows from cone-splitting, since we assumed that $Z'$ is not a spherical suspension and hence $\setR^k\times C(Z')$ does not split a factor $\setR^{k-1}$; see for instance \cite{CheegerNaber13}.

\end{proof}

\begin{proof}[Proof of \autoref{thm:omeo n-4 sym}]
Let $x\in X$ be as in the assumptions of the theorem. Let $\mathcal{C}_x^{n-4}$ be the collection of all metric spaces $(Z,\dist_Z)$ such that there is some tangent cone at $x$ isometric to $\setR^{n-4}\times C(Z)$. By \autoref{lemma:crossconn}, $\mathcal{C}_x^{n-4}$ is a compact and connected subset of the class of $\RCD(2,3)$ spaces. Moreover, the $\haus^3$-volume is constant on $\mathcal{C}_x^{n-4}$. By \autoref{thm:topreg}, any $(Z,\dist_Z)\in \mathcal{C}_x^{n-4}$ is a topological $3$-manifold whose universal cover is homeomorphic to $S^3$. By \autoref{thm:unifcontrtopRCD3}, all the elements of $\mathcal{C}^{n-4}_x$ are locally uniformly contractible. Indeed, they are $3$-dimensional $\RCD(2,3)$ topological manifolds with constant and hence uniformly lower bounded $\haus^3$-volume.  

By \autoref{thm:topstabmain}, for any $(Z,\dist_Z)\in \mathcal{C}_x^{n-4}$ there is $\epsilon=\epsilon(Z)>0$ such that, for every $(Z',\dist_{Z'})\in \mathcal{C}^{n-4}_x$ with $\dist_{\rm{GH}}(Z,Z')\le \epsilon$, it holds that $Z$ and $Z'$ are homeomorphic. The statement follows by the connectedness of $\mathcal{C}^{n-4}_x$ that we obtained in \autoref{lemma:crossconn}.
\end{proof}

\subsection{Proof of \autoref{cor:4d}}
The first part of the statement follows from \autoref{thm:omeo n-4 sym}. Notice indeed that in dimension $4$ the assumption of $(n-4)$-symmetry of all tangent cones is trivially met at all points. 
\medskip

To conclude the proof, it is enough to check that for every $x\in X\setminus \mathcal{S}^0$ there exists a tangent cone whose cross-section is homeomorphic to $S^3$. By definition, if $x\in X\setminus \mathcal{S}^0$ there exists a split tangent cone $C(Z^3)=C(\Sigma^2)\times \R$. By \autoref{thm:noRP^2main}, we know that $\Sigma^2$ is homeomorphic to the two-sphere; hence, $Z^3$ is homeomorphic to the spherical suspension over $\Sigma^2$, itself homeomorphic to $S^3$.


\frenchspacing

\end{document}